\let\old@tocline\@tocline
\let\section@tocline\@tocline
\newcommand{\subsection@dotsep}{4.5}
\newcommand{\subsubsection@dotsep}{4.5}
	\leaders\hbox{$\m@th
		\mkern \subsection@dotsep mu\hbox{.}\mkern \subsection@dotsep mu$}\hfill
\let\subsection@tocline\@tocline
\let\@tocline\old@tocline
	\leaders\hbox{$\m@th
		\mkern \subsubsection@dotsep mu\hbox{.}\mkern \subsubsection@dotsep mu$}\hfill
\let\subsubsection@tocline\@tocline
\let\@tocline\old@tocline
\let\old@l@subsection\l@subsection
\let\old@l@subsubsection\l@subsubsection
\def\@tocwriteb#1#2#3{%
	\begingroup
	\@xp\def\csname #2@tocline\endcsname##1##2##3##4##5##6{%
		\ifnum##1>\c@tocdepth
		\else \sbox\z@{##5\let\indentlabel\@tochangmeasure##6}\fi}%
	\csname l@#2\endcsname{#1{\csname#2name\endcsname}{\@secnumber}{}}%
	\endgroup
	\addcontentsline{toc}{#2}%
	{\protect#1{\csname#2name\endcsname}{\@secnumber}{#3}}}%
\newlength{\@tocsectionindent}
\newlength{\@tocsubsectionindent}
\newlength{\@tocsubsubsectionindent}
\newlength{\@tocsectionnumwidth}
\newlength{\@tocsubsectionnumwidth}
\newlength{\@tocsubsubsectionnumwidth}
\newcommand{\settocsectionnumwidth}[1]{\setlength{\@tocsectionnumwidth}{#1}}
\newcommand{\settocsubsectionnumwidth}[1]{\setlength{\@tocsubsectionnumwidth}{#1}}
\newcommand{\settocsubsubsectionnumwidth}[1]{\setlength{\@tocsubsubsectionnumwidth}{#1}}
\newcommand{\settocsectionindent}[1]{\setlength{\@tocsectionindent}{#1}}
\newcommand{\settocsubsectionindent}[1]{\setlength{\@tocsubsectionindent}{#1}}
\newcommand{\settocsubsubsectionindent}[1]{\setlength{\@tocsubsubsectionindent}{#1}}
\renewcommand{\l@section}{\section@tocline{1}{\@tocsectionvskip}{\@tocsectionindent}{}{\@tocsectionformat}}%
\renewcommand{\l@subsection}{\subsection@tocline{1}{\@tocsubsectionvskip}{\@tocsubsectionindent}{}{\@tocsubsectionformat}}%
\renewcommand{\l@subsubsection}{\subsubsection@tocline{1}{\@tocsubsubsectionvskip}{\@tocsubsubsectionindent}{}{\@tocsubsubsectionformat}}%
\newcommand{\@tocsectionformat}{}
\newcommand{\@tocsubsectionformat}{}
\newcommand{\@tocsubsubsectionformat}{}
\def\csname toc@1format\endcsname{\@tocsectionformat}
\def\csname toc@2format\endcsname{\@tocsubsectionformat}
\def\csname toc@3format\endcsname{\@tocsubsubsectionformat}
\newcommand{\settocsectionformat}[1]{\renewcommand{\@tocsectionformat}{#1}}
\newcommand{\settocsubsectionformat}[1]{\renewcommand{\@tocsubsectionformat}{#1}}
\newcommand{\settocsubsubsectionformat}[1]{\renewcommand{\@tocsubsubsectionformat}{#1}}
\newlength{\@tocsectionvskip}
\newcommand{\settocsectionvskip}[1]{\setlength{\@tocsectionvskip}{#1}}
\newlength{\@tocsubsectionvskip}
\newcommand{\settocsubsectionvskip}[1]{\setlength{\@tocsubsectionvskip}{#1}}
\newlength{\@tocsubsubsectionvskip}
\newcommand{\settocsubsubsectionvskip}[1]{\setlength{\@tocsubsubsectionvskip}{#1}}
\patchcmd{\tocsection}{\indentlabel}{\makebox[\@tocsectionnumwidth][l]}{}{}
\patchcmd{\tocsubsection}{\indentlabel}{\makebox[\@tocsubsectionnumwidth][l]}{}{}
\patchcmd{\tocsubsubsection}{\indentlabel}{\makebox[\@tocsubsubsectionnumwidth][l]}{}{}
\newcommand{\@sectypepnumformat}{}
\renewcommand{\contentsline}[1]{%
	\expandafter\let\expandafter\@sectypepnumformat\csname @toc#1pnumformat\endcsname%
	\csname l@#1\endcsname}
\newcommand{\@tocsectionpnumformat}{}
\newcommand{\@tocsubsectionpnumformat}{}
\newcommand{\@tocsubsubsectionpnumformat}{}
\newcommand{\setsectionpnumformat}[1]{\renewcommand{\@tocsectionpnumformat}{#1}}
\newcommand{\setsubsectionpnumformat}[1]{\renewcommand{\@tocsubsectionpnumformat}{#1}}
\newcommand{\setsubsubsectionpnumformat}[1]{\renewcommand{\@tocsubsubsectionpnumformat}{#1}}
\renewcommand{\@tocpagenum}[1]{%
	\hfill {\mdseries\@sectypepnumformat #1}}
\let\oldappendix\appendix
\renewcommand{\appendix}{%
	\leavevmode\oldappendix%
	\addtocontents{toc}{%
		\protect\settowidth{\protect\@tocsectionnumwidth}{\protect\@tocsectionformat\sectionname\space}%
		\protect\addtolength{\protect\@tocsectionnumwidth}{2em}}%
}
\let\oldtableofcontents\tableofcontents
\renewcommand{\tableofcontents}{%
	\vspace*{-\linespacing}
	\oldtableofcontents}
\let\oldmarginpar\marginpar
\renewcommand\marginpar[1]{\-\oldmarginpar[\raggedleft\footnotesize #1]%
	{\raggedright\footnotesize #1}}
\theoremstyle{plain}
\newtheorem{thm}{Theorem}[section]
\newtheorem{lemma}[thm]{Lemma}
\newtheorem{example}[thm]{Example}
\newtheorem{prop}[thm]{Proposition}
\newtheorem{principle}[thm]{Principle}
\newtheorem{cor}[thm]{Corollary}
\theoremstyle{definition}
\newtheorem{definition}[thm]{Definition}
\newtheorem{remark}[thm]{Remark}
\newtheorem{ex}[thm]{Example}
\theoremstyle{remark}
\numberwithin{equation}{section}
\renewcommand{\S}{\mathbb{S}}
\renewcommand{\P}{\mathbb{P}}
\renewcommand{\L}{\mathbb{L}}
\newcommand{\F}{\mathbb{F}}
\newcommand{\D}{\mathbb{D}}
\newcommand{\N}{\mathbb{N}}
\newcommand{\Z}{\mathbb{Z}}
\newcommand{\Q}{\mathbb{Q}}
\newcommand{\R}{\mathbb{R}}
\newcommand{\C}{\mathbb{C}}
\newcommand{\SA}{\mathcal{A}}
\renewcommand{\SS}{\mathscr{S}}
\newcommand{\SM}{\mathcal{M}}
\newcommand{\SC}{\mathcal{C}}
\newcommand{\SB}{\mathcal{B}}
\newcommand{\SE}{\mathscr{E}}
\newcommand{\SF}{\mathscr{F}}
\newcommand{\SG}{\mathscr{G}}
\newcommand{\SL}{\mathcal{L}}
\newcommand{\A}{\mathcal{A}}
\newcommand{\La}{\Lambda}
\newcommand{\la}{\lambda}
\renewcommand{\a}{\alpha}
\renewcommand{\d}{\delta}
\newcommand{\dd}{\partial}
\newcommand{\sse}{\subseteq}
\newcommand{\lr}{\longrightarrow}
\newcommand{\x}{\times}
\newcommand{\Diff}{\operatorname{Diff}}
\newcommand{\GL}{\operatorname{GL}}
\newcommand{\PGL}{\operatorname{PGL}}
\newcommand{\Sh}{\operatorname{Sh}}
\newcommand{\Aug}{\operatorname{Aug}}
\newcommand{\Spec}{\operatorname{Spec}}
\newcommand{\wt}{\widetilde}
\newcommand{\st}{\text{st}}
\def\Op{{\mathcal O}{\it p}}
\newcounter{daggerfootnote}
\newcommand{\bC}{\mathbb{C}}
\newcommand{\bF}{\mathbb{F}}
\newcommand{\bG}{\mathbb{G}}
\newcommand{\bH}{\mathscr{H}}
\newcommand{\bR}{\mathbb{R}}
\newcommand{\bZ}{\mathbb{Z}}
\newcommand{\bT}{\mathbb{T}}
\newcommand{\cA}{\mathcal{A}}
\newcommand{\cB}{\mathcal{B}}
\newcommand{\cC}{\mathcal{C}}
\newcommand{\cL}{\mathcal{L}}
\newcommand{\cM}{\mathcal{M}}
\newcommand{\cR}{\mathcal{R}}
\begin{document}
	
	\title{Legendrian Weaves\\\vspace{0.5cm} {\bf\footnotesize{-- N-graph Calculus, Flag Moduli and Applications --}}}
	\subjclass[2010]{Primary: 53D10. Secondary: 53D15, 57R17.}
	
	\author{Roger Casals}
	\address{University of California Davis, Dept. of Mathematics, Shields Avenue, Davis, CA 95616, USA}
	\email{casals@math.ucdavis.edu}
	
	\author{Eric Zaslow}
	\address{Northwestern University, Department of Mathematics, Evanston, IL 60208-2730, USA}
	\email{zaslow@math.northwestern.edu}
	
\maketitle

\begin{abstract} We study a class of Legendrian surfaces in contact five-folds by encoding their wavefronts via planar combinatorial structures. We refer to these surfaces as Legendrian weaves, and to the combinatorial objects as $N$-graphs. First, we develop a diagrammatic calculus which encodes contact geometric operations on Legendrian surfaces as multi-colored planar combinatorics. Second, we present an algebro-geometric characterization for the moduli space of microlocal constructible sheaves associated to these Legendrian surfaces. Then we use these $N$-graphs and the flag moduli description of these Legendrian invariants for several new applications to contact and symplectic topology.

Applications include showing that any finite group can be realized as a subquotient of a 3-dimensional Lagrangian concordance monoid for a Legendrian surface in $(J^1\S^2,\xi_\st)$, a new construction of infinitely many exact Lagrangian fillings for Legendrian links in $(\S^3,\xi_\st)$, and performing $\F_q$-rational point counts that distinguish Legendrian surfaces in $(\R^5,\xi_\st)$. In addition, the manuscript develops the notion of Legendrian mutation, studying microlocal monodromies and their transformations. The appendix illustrates the connection between our $N$-graph calculus for Lagrangian cobordisms and Elias-Khovanov-Williamson's Soergel Calculus.\vspace{1cm}
\end{abstract}
\setcounter{tocdepth}{1}
\tableofcontents

\hspace{0.25cm} Legendrian fronts arise naturally in several areas: in topology, as Cerf diagrams of families of smooth functions; in differential equations, as Stokes diagrams of an irregular singularity; and in analysis, as wavefront sets of distributions, generalizing the original context of wavefronts in geometric optics. This article studies Legendrian surfaces through the combinatorics of their wavefronts and develops the theory of $N$-graphs, planar structures encoding front singularities. The moduli space of simple sheaves microsupported on the Legendrian surface becomes an incidence problem for flags of vector spaces, as dictated by the $N$-graph. We exploit the connections between the combinatorics of $N$-graphs, algebraic geometry and cluster algebras to obtain results in contact and symplectic topology.

\section{Introduction}\label{sec:intro}

Legendrian knots in contact 3--manifolds \cite{Etnyre05, Geiges08} are central to the study of 3--dimensional contact geometry \cite{Bennequin83, Eliashberg93,Gompf98}. The study of Legendrian knot invariants makes extensive use of their planar front projections, both in the context of Floer theory \cite{EliashbergGiventalHofer00,Chekanov02,Ng03} and microlocal analysis \cite{KashiwaraSchapira,GKS_Quantization,STZ_ConstrSheaves}. Higher-dimensional Legendrian submanifolds have proven equally instrumental in the study of higher-dimensional symplectic and contact topology, including the development of Legendrian Kirby Calculus \cite{Eliashberg90a,Gompf98,CasalsMurphyPresas} and Lagrangian skeleta \cite{RSTZ14,Nadler17,Starkston}.

In the case of 6--dimensional symplectic manifolds and their 5--dimensional contact boundaries \cite{CieliebakEliashberg12,CasalsMurphy}, spatial front projections for Legendrian surfaces are available \cite{ArnoldSing,ArnoldGivental01}. First, this article develops a multi-colored planar diagrammatic calculus for the manipulation of such Legendrian surfaces in 5--dimensional contact manifolds and their Lagrangian projections in 4-dimensional symplectic manifolds. This diagrammatic calculus is first used for the efficient computation of microlocal Legendrian isotopy invariants, as we prove and illustrate throughout the manuscript. Then we provide several new applications, including new results in higher-dimensional contact geometry and low-dimensional symplectic topology. We also expect that this concrete description will prove itself useful for further results, such as the computation of symplectic invariants of Weinstein manifolds \cite[Section 6.4]{GPS3} and homological mirror symmetry \cite{Nadler_LG1,TreumannZaslow}, see also \cite[Section 4.4]{CasalsMurphy} and Remark \ref{rmk:HMS}.

Finally, even if Legendrian weaves are a specific class of Legendrian surfaces, we can actually use them to prove new results, such as Theorems \ref{thm:symmetries_intro} and \ref{thm:ThurstonLinksIntro} below, and the diagrammatic calculus presented here is already being used successfully in a variety of recent developments \cite{CasalsHonghao,CasalsSkel,CGGS,CasalsNg,GSW,GSW2}. In studying this manuscript, we hope that the reader will find these Legendrian surfaces as useful and fascinating as we have.

%
%
%

\subsection{Summary of Contributions} Let $G$ be an $N$-graph\footnote{Informally, an $N$-graph $G\sse C$ is a collection of trivalent graphs on $C$ decorated with labels $i\in[1,N]$ such that graphs with successive labels can only intersect at hexavalent vertices, where the six radiating half-edges on the surface must interlace. See Definition \ref{def:Ngraph} for details, and note that a $2$-graph is simply an embedded trivalent graph.} drawn on a smooth surface $C$. The notion of an $N$-graph, combinatorial in nature, is first defined in Section \ref{sec:NGraphsLegWeaves}. In a nutshell, our main contributions are as follows:

\begin{itemize}
	\item[A.] {\bf Diagrammatic Calculus and Legendrian Weaves}. The construction of a Legendrian surface $\Lambda(G)$ in the five-dimensional jet space $(J^1C,\xi_\st)$ associated to the $N$-graph $G$, along with a description of Legendrian Surface Reidemeister moves in terms of combinatorial $N$-graphs moves. Likewise, we show that Legendrian surgeries and Legendrian mutations, which we introduce, can be reflected by the diagrammatics of $N$-graphs. This is part of a general calculus of multi-colored planar diagrams that, as we show, captures Legendrian surfaces and 3-dimensional Lagrangian cobordisms between them. The translation from five-dimensional contact topology to such planar diagrammatics allows us to study contact topology through combinatorics and graph theory. In fact, we use this combinatorial perspective to construct Lagrangian and Legendrian surfaces that prove new results in contact topology.\\ 
	
	\item[B.] {\bf The Microlocal Sheaf Theory of $N$-Graphs}. A Legendrian surface $\Lambda\sse(J^1C,\xi_\st)$ specifies a category of constructible sheaves on $C\times \bR$ with singular support constrained by $\Lambda$.  When $\Lambda = \Lambda(G)$ for an $N$-graph $G$, we show that the moduli stack of objects $\SM(G)$ has a combinatorial description in terms of flag varieties, which we introduce in Section \ref{sec:flag}.  This space solves an incidence moduli problem for flags of subspaces in an $N$-dimensional $k$-vector space $V$, with $k$ a field, as dictated by the $N$-graph $G$.  This stack is typically an algebraic variety and can be studied by algebraic geometric and representation-theoretic techniques.  Following \cite{GKS_Quantization,STZ_ConstrSheaves,TreumannZaslow}, this space is shown to be a Legendrian invariant for surfaces $\La(G)$ and can be used to distinguish Legendrian isotopy types. In addition, we explicitly give formulas for the microlocal monodromies along certain cycles of $H_1(\La(G),\Z)$ in terms of generalized cross-ratios of flags, and their transformation under Legendrian mutations.\\
	
	\item[C.] {\bf Applications of $N$-Graph Calculus}. First, in Section \ref{sec:app} we use the diagrammatics in (A) to study the flag moduli spaces $\SM(G)$ in (B), including their rational point counts over finite fields $\F_q$. This allows us to distinguish many Legendrian surfaces, up to Legendrian isotopy and, independently, show that for any finite group $\mathbb{G}$, there exists a Legendrian surface in $(\R^5,\xi)$ whose 3-dimensional Lagrangian concordance monoid has $\mathbb{G}$ as a subquotient. Second, Section \ref{sec:app2} explains how to apply $N$-graph calculus to systematically study Lagrangian fillings of Legendrian links in $(\S^3,\xi_\st)$. In particular, we use Legendrian mutations to give new families of Legendrian links which admit infinitely many Lagrangian fillings.\\
	
	Finally, given $N$-triangulations $(C,\tau)$ of the smooth surface $C$, we construct $N$-graphs $G(\tau)$ such that the Lagrangian projections of the Legendrian surfaces $\La(G(\tau))$ relate to the Goncharov-Kenyon conjugate surfaces \cite[Section 1.1.1]{GoncharovKenyon13} associated to an $N$-triangulation.\footnote{See also \cite[Section 2.1]{Goncharov_IdealWebs}, and \cite[Section 4.2]{STWZ} describes the conjugate surface as a Lagrangian.} In Section \ref{sec:constr} we provide the construction of $G(\tau)$. In Section \ref{sec:app3}, we provide an example of how Hitchin's non-Abelianization map is described from this viewpoint. This provides a context for the symplectic study of the cluster structures associated to moduli spaces of framed local systems of Fock-Goncharov \cite{FockGoncharovII,FockGoncharov_ModuliLocSys}, and certain classes of Gaiotto-Moore-Neitzke's spectral networks \cite{GMN_Wallcrossing,GMN_SpecNet13,GMN_Cluster,GMN_SpecNetSnakes14}. In particular, the microlocal sheaf theory of $\La(G(\tau))$ connects, through the moduli space $\SM(G(\tau))$ in (B), with their spaces of flag configurations \cite[Section 3]{Goncharov_IdealWebs}.\\

\end{itemize}

\subsection{Main Results}

We now elaborate upon these topics and state our results.

\begin{center}
	\begin{figure}[h!]
		\centering
		\includegraphics[scale=0.9]{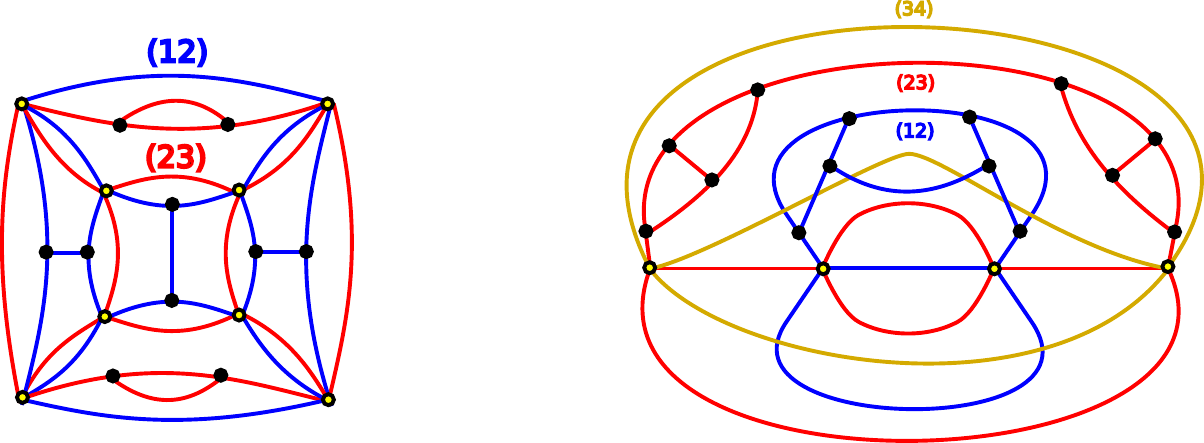}
		\caption{$3$-graph (left) and $4$-graph (right) on the 2-sphere $\S^2$. These correspond to Legendrian surfaces in the contact 5-space $(J^1\S^2,\xi_\st)$, respectively of genus 3 and 4. The geometric meaning of these two figures is explained in detail in Section \ref{sec:NGraphsLegWeaves}, in Subsections \ref{ssec:Ngraphs} and \ref{ssec:LegSing}.}
		\label{fig:NGraphExample}
	\end{figure}
\end{center}


{\bf Diagrammatic Calculus and Legendrian Weaves}.  Weinstein manifolds \cite{CieliebakEliashberg12,CieliebakEliashberg14,CasalsMurphy}, the symplectic counterpart of Stein manifolds, place Legendrian submanifolds at the forefront of higher-dimensional contact and symplectic topology.  In this manuscript, we define and study a new class of Legendrian surfaces $\La(G)$ in contact 5-manifolds, associated to an $N$-graph $G$, building on our previous works \cite{CasalsMurphy2,TreumannZaslow}.  Prior work on Legendrian surfaces \cite{Conormal2,Conormal1} has focused on the class of Legendrian tori $\La_K\sse (T^\infty\S^3,\xi_{\rm std})$ arising as the conormal torus of a smooth knot $K\sse\S^3$. The Legendrian surfaces $\La(G)$ we study provide a second infinite family of Legendrian submanifolds whose contact topology and sheaf invariants can be understood. Their geometry is governed by the combinatorial data of the $N$-graph $G$. Figure \ref{fig:NGraphExample} depicts two examples of $N$-graphs, representing Legendrian surfaces of genus 3 (left) and 4 (right).

We study three geometric operations for Legendrian surfaces in 5-dimensional contact manifolds. These are Legendrian isotopies \cite{ArnoldSing,CieliebakEliashberg12,Geiges08}, exact Lagrangian cobordisms \cite{Arnold76SurgeryI,BourgeoisSabloffTraynor15,EkholmHondaKalman16}, and Legendrian mutations, which we define in Section \ref{sec:moves}. Lagrangian cobordisms of indices 1 and 2 correspond to Legendrian 0- and 1-surgeries. We establish a correspondence between each of these three types of geometric operations and the combinatorics of $N$-graphs. In addition, we describe a combinatorial stabilization of an $N$-graph, which can be understood as a five-dimensional analogue of the Markov stabilization of a Legendrian braid \cite{Rolfsen76,PrasolovSossinsky}. Part of these results are summarized in the following two theorems (see Section \ref{sec:moves} for details), which are developed in the text:

\begin{thm}[Diagrammatics for Legendrian Weave Calculus I]\label{thm:DiagrammaticsI}
	Let $G$ be a local $N$-graph. The combinatorial moves in Figures \ref{fig:IntroReidemeister} and \ref{fig:IntroCuspMoves} are Legendrian isotopies for $\La(G)$.\hfill$\Box$
\end{thm}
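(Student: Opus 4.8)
The plan is to verify, one diagram at a time, that the combinatorial $N$-graph moves in Figures~\ref{fig:IntroReidemeister} and~\ref{fig:IntroCuspMoves} lift to Legendrian isotopies of the associated weaves $\La(G)\sse(J^1C,\xi_\st)$. First I would recall that the Legendrian surface $\La(G)$ is built by assigning to each point of $C$ a front over $\bR$ determined locally by the colored graph: in the complement of $G$ the sheets are disjoint graphs of functions, along an edge labeled $i$ the sheets $i$ and $i+1$ undergo a standard cusp (an $A_2$ front singularity), and at a hexavalent vertex one has the $D_4^+$ front singularity whose six half-edges interlace. Because each move in the figures is \emph{local} — it modifies $G$ only inside a disk $\D\sse C$ and leaves $G$ unchanged outside — it suffices to exhibit a compactly supported Legendrian isotopy of the pieces of $\La(G)$ sitting over $\D$, relative to the boundary front over $\dd\D$. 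So the global statement reduces to a finite list of local front computations in $J^1\D$.

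Next, for each move I would produce the explicit isotopy by writing down a one-parameter family of generating functions (or, equivalently, of wavefronts) interpolating the two sides. The Reidemeister-type moves come in two flavors: (i) moves involving edges of a single color $i$ — these are exactly the classical front Reidemeister moves for the pair of sheets $(i,i+1)$, together with the trivalent-vertex relations, and they are handled by the well-known Legendrian front calculus for surfaces, i.e. the Arnol'd/Givental theory of wavefront perestroikas and the moves in \cite{ArnoldSing,ArnoldGivental01}; (ii) moves involving two adjacent colors $i,i+1$, where a hexavalent vertex is created, destroyed, or slid past an edge or another vertex — here I would model the $D_4$ singularity by its versal unfolding $x^3 - 3xy^2 + \text{(lower order)}$ over the parameter plane and check that the candidate $N$-graph move corresponds to a path in the base of the unfolding along which the front family stays a Legendrian isotopy (no tangency to the fibers of $J^1\D\to\D$ is created except along the prescribed singular locus). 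The colored "distant commutation" moves — where the modified regions have non-adjacent labels $|i-j|\ge 2$ — are essentially trivial, since the corresponding sheets never interact, so the two fronts are literally isotopic by a product isotopy. Throughout, one must check that the boundary front over $\dd\D$ is fixed, which is immediate from locality of the move.

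The main obstacle I expect is the bookkeeping at the hexavalent vertices: one has to be careful that the interlacing condition on the six half-edges is preserved along the isotopy and that the candidate move is genuinely realizable as a \emph{Legendrian} (not merely smooth) isotopy of fronts — i.e. that at every time the family is the wavefront of an embedded Legendrian, with only the allowed generic front singularities ($A_2$ cusps along curves, $A_3$ swallowtails at isolated points, $D_4^\pm$ at the vertices, and transverse double curves). Concretely, the delicate cases are the moves that push a hexavalent vertex across an edge of the third color and the move that merges two hexavalent vertices, since there the front passes through a codimension-one degeneration and I must confirm the degeneration is one of the permitted perestroikas rather than a non-generic one. Once the local model for each such move is pinned down — which is a finite check, since the list of moves in Figures~\ref{fig:IntroReidemeister} and~\ref{fig:IntroCuspMoves} is finite — the theorem follows by concatenating the compactly supported local isotopies. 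I would organize the write-up as a table: for each move, a picture of the front family and a one-line identification of the perestroika or product isotopy realizing it.
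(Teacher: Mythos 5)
Your overall strategy — reduce to compactly supported local front homotopies and verify each move is a legitimate Legendrian isotopy — is the same as the paper's. The problem is that you have the local dictionary between $N$-graph features and front singularities wrong, and the whole detailed verification you outline would be computing with the wrong local models.

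In the construction of $\La(G)$ (Definition~\ref{def:LegWeave1}), an $i$-edge of $G$ is realized as an $A_1^2$-\emph{crossing} of sheets $i$ and $i+1$, i.e.\ a transverse double curve — not an $A_2$ cusp. A trivalent ($i$-monochromatic) vertex is realized as a $D_4^-$-singularity of the front (an isolated branch point, which is \emph{not} generic for real fronts). A hexavalent vertex is realized as an $A_1^3$-singularity, which is just three planes meeting transversely at a point — not $D_4^+$. Cusps ($A_2$, $A_2A_1$) and swallowtails ($A_3$) do \emph{not} appear in $\Sigma(G)$ itself; they only enter when one satellites $\La(G)$ along a Legendrian embedding of $C$, and they do appear inside the intermediate stages of the homotopies proving the moves in Figure~\ref{fig:IntroCuspMoves}, which is precisely why those moves are stated in the extended calculus of Propositions~\ref{prop:CuspMoves}--\ref{prop:CuspMovesII}. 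Modeling the hexavalent vertex by a versal unfolding of a $D_4$ singularity, as you propose, is therefore the wrong starting point: the candy twist (Move~I), for instance, is proved in the paper by sliding an $A_1^3$ configuration through a non-generic but still embedded moment where the six-valent vertex temporarily fails to interlace (Figure~\ref{fig:Reidemeister1Proof3}); the push-through (Move~II) is a $D_4^-$ branch point passing through a plane transverse to it. Neither of these stories is visible from the $D_4^+$/cusp picture.

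A second, smaller gap: you list the ``allowed'' front singularities as the generic list plus $D_4^\pm$, but the point of this construction is that $\La(G)$ is deliberately non-generic within its isotopy class, and $D_4^-$ in particular is not a stable singularity of real spatial fronts (Remark~\ref{rmk:D4minus}). Checking that ``no tangency to the fibers is created except along the prescribed singular locus'' is not the right criterion here; one must check that each instantaneous front — including the degenerate ones in the middle of the homotopy, which are not on the generic list at all — has distinct tangent planes at its singular strata so that it still lifts to an \emph{embedded} Legendrian. Once you fix the local dictionary ($A_1^2$ for edges, $D_4^-$ for trivalent vertices, $A_1^3$ for hexavalent ones) and adopt that embeddedness criterion, your proposed table of per-move front homotopies is essentially what the paper does in the proof of Theorem~\ref{thm:surfaceReidemeister} and Propositions~\ref{prop:CuspMoves}--\ref{prop:CuspMovesII}.
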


\begin{center}
	\begin{figure}[h!]
		\centering
		\includegraphics[scale=0.8]{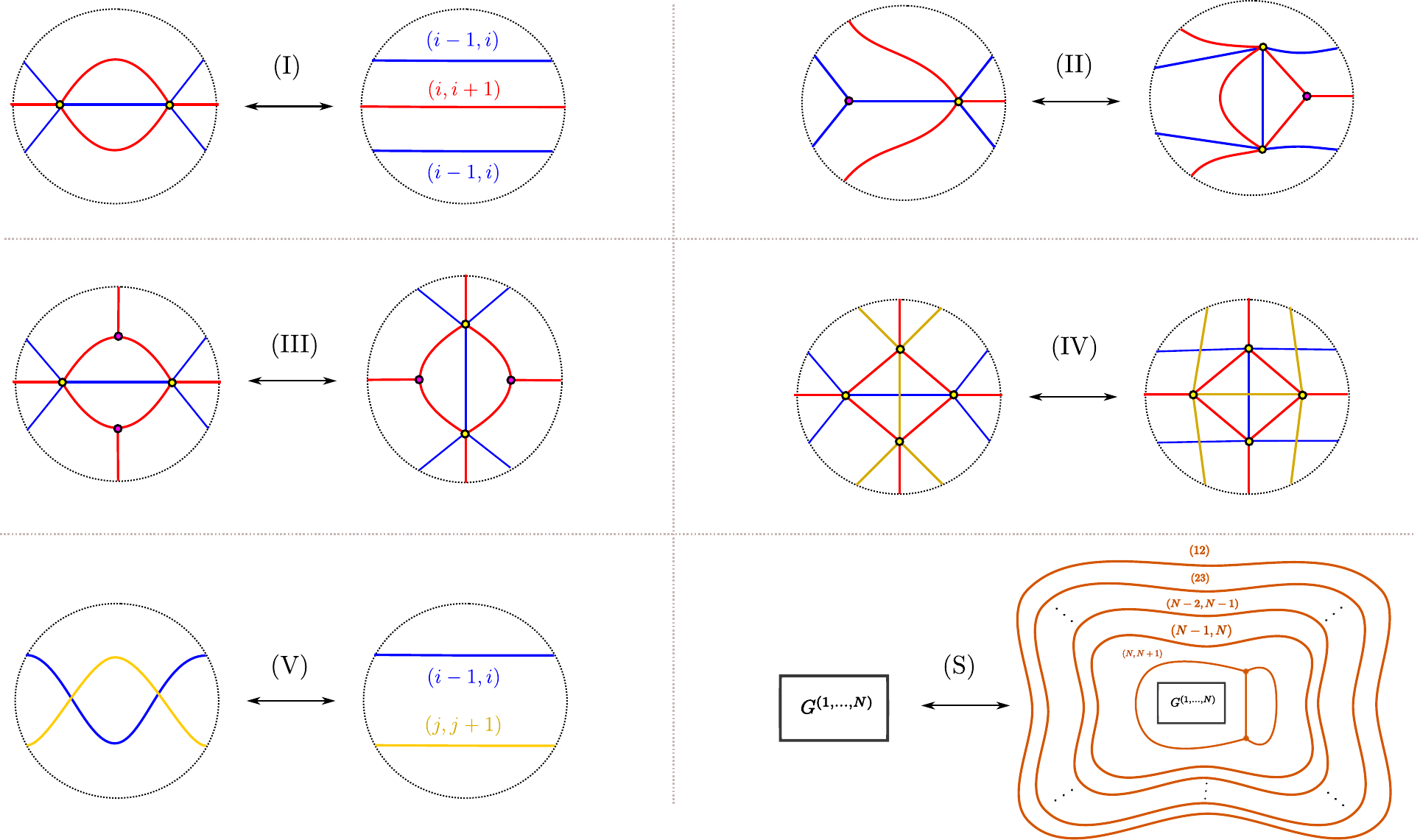}
		\caption{Combinatorial Moves for Legendrian Isotopies of Surfaces $\La(G)$. Moves I--V are local Legendrian isotopies in the 1-jet space $(J^1\R^2,\xi_\st)$. Move S in the lower right is local in  $(J^1\S^2,\xi_\st)$ after satelliting to the Legendrian unknot $\La_0\sse(\R^5,\xi_\st)$. See Section \ref{sec:NGraphsLegWeaves} for precise details on the geometric concepts represented by these pictures.}
		\label{fig:IntroReidemeister}
	\end{figure}
\end{center}

\begin{center}
	\begin{figure}[h!]
		\centering
		\includegraphics[scale=0.7]{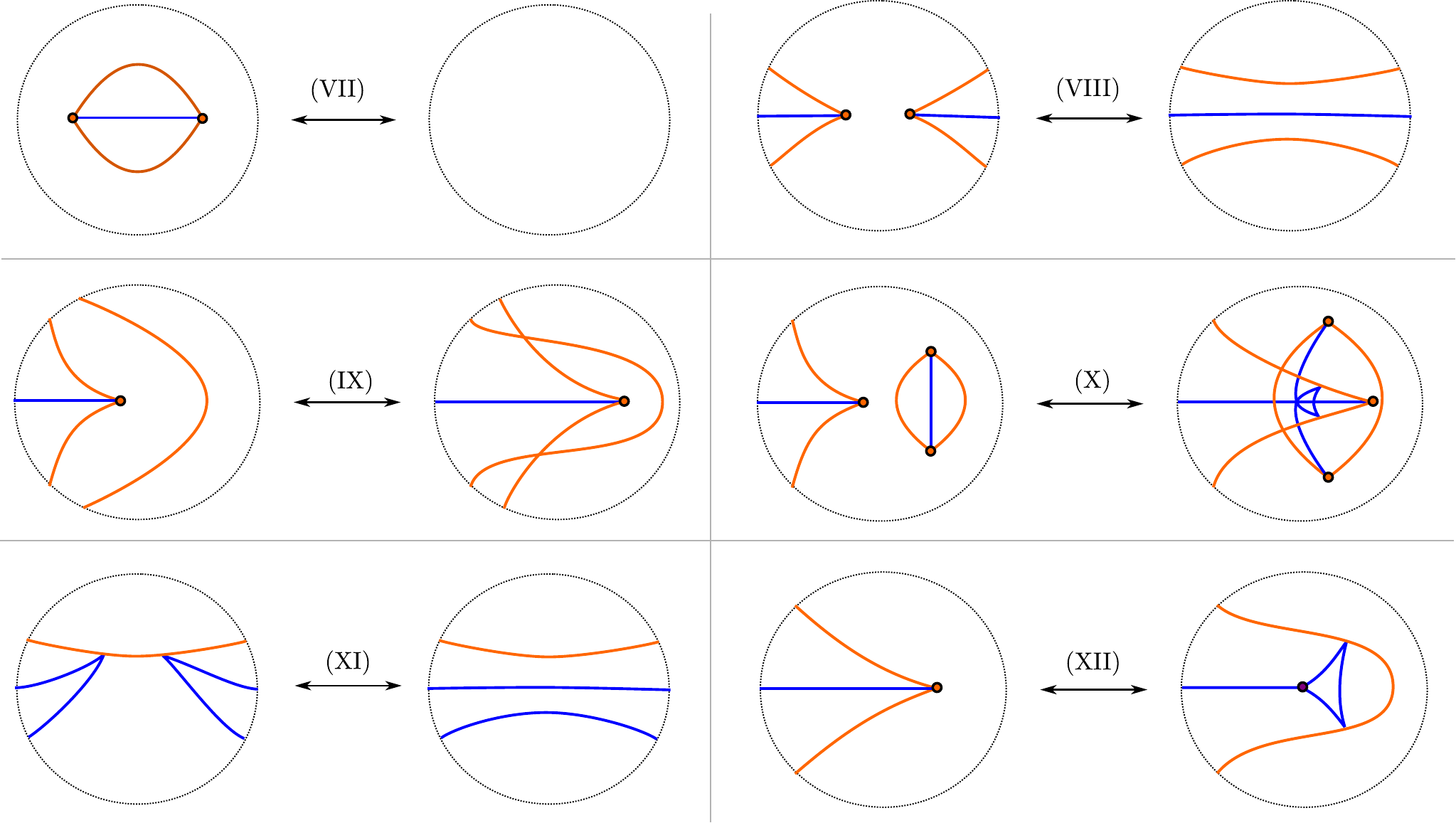}
		\caption{Combinatorial Moves for Legendrian Isotopies of Surfaces $\La(G)$. These are homotopies of spatial wavefronts involving $A_3$-swallowtail singularities. Section \ref{sec:NGraphsLegWeaves} explains the geometric meaning of these pictures.}
		\label{fig:IntroCuspMoves}
	\end{figure}
\end{center}

\begin{thm}[Diagrammatics for Legendrian Weave Calculus II]\label{thm:DiagrammaticsII}
	Let $G$ be a local $N$-graph. The combinatorial moves in Figure \ref{fig:IntroSurgeries} are Legendrian surgeries, of indices 0, 1 and 2, Legendrian mutations and connected sums with the standard and Clifford tori.\hfill$\Box$
\end{thm}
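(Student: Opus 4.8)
The plan is to localize each move and then exhibit an explicit cobordism or isotopy in the symplectization. Every move in Figure \ref{fig:IntroSurgeries} alters the $N$-graph $G$ only inside an embedded disk $D\sse C$ and leaves $G$ untouched on $C\sm D$; since $\La(G)$ is assembled locally from $G$ (Section \ref{sec:NGraphsLegWeaves}) and depends on $G$ only up to Legendrian isotopy (Theorem \ref{thm:DiagrammaticsI}), it suffices to compare the restricted Legendrian surfaces $\La(G)|_{J^1D}$ and $\La(G')|_{J^1D}$, which share a common Legendrian boundary, and to produce between them, rel boundary, the asserted operation; regluing the unchanged exterior then propagates the conclusion to all of $C$. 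So the first step is to fix explicit front models for $\La(G)$ over a disk near each elementary feature of an $N$-graph — a generic point of an $i$-edge (two sheets meeting along an $A_1$ cusp edge), a trivalent $i$-vertex (an $A_2$ swallowtail), and a hexavalent $(i,i{+}1)$-vertex (two transverse cusp edges realizing the prescribed interlacing of the six half-edges) — ideally as wavefronts of explicit generating families, so that the weave front over $D$ is their superposition.

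For the three surgery moves I would read off the local Legendrian surface before and after the move and identify the handle. Using the dictionary between Legendrian $j$-surgeries on fronts and the corresponding Lagrangian handle attachments in the symplectization \cite{BourgeoisSabloffTraynor15,EkholmHondaKalman16,Arnold76SurgeryI}, the content reduces to: the move that creates a small contractible cycle of the weave is the birth of a closed sheet (index $0$); the move that fuses two sheets across a band is a Lagrangian saddle (index $1$); and the capping move is an index $2$ handle. I would verify each by producing the wavefront homotopy realizing the move, checking that it crosses exactly one standard non-Legendrian front transition of the expected type, and reading the attaching sphere of the handle off the vanishing cycle; the index is then forced by dimension, so the real work is the explicit local model and the embeddedness of the handle trace.

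For Legendrian mutation I would take the definition from Section \ref{sec:moves}: mutation along an embedded cycle of $\La(G)$ supported in a disk is, by definition, the local $N$-graph replacement pictured, so here the tasks are (i) checking that the move outputs a bona fide $N$-graph independent of auxiliary choices, and (ii) confirming that it implements the mutation of the microlocal local system from contribution (B) — the latter following from the cross-ratio formulas for microlocal monodromies and their mutation rule established in the text, which I would cite. For the connected-sum moves I would first record the minimal $N$-graphs presenting the standard and Clifford Legendrian tori in $(\R^5,\xi_\st)$ and then observe that each such move inserts exactly that graph into a disk meeting $\La(G)$ in a single trivial sheet; since inserting a disjoint Legendrian summand into a Darboux ball and tubing it in is precisely the contact connected sum, the identification is immediate once the front of the inserted graph is matched with the standard front of the relevant torus.

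The main obstacle is carrying out step one uniformly: pinning down generating families for the weave front near swallowtails and, above all, near hexavalent vertices with the correct interlacing, in a form precise enough that the subsequent handle and cobordism constructions — including orientations and the exact vanishing cycles of the surgery moves — are genuinely rigorous rather than picture-level. Once those normal forms and attaching data are in hand, each move becomes a finite verification, but getting them exactly right is where the care lies.
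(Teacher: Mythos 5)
Your first step already goes wrong at the level of the local models, and the error is not cosmetic. You describe the weave front near a generic point of an $i$-edge as ``two sheets meeting along an $A_1$ cusp edge,'' the front near a trivalent vertex as ``an $A_2$ swallowtail,'' and the front near a hexavalent vertex as ``two transverse cusp edges.'' None of these is correct. By Definition~\ref{def:LegWeave1}, an $i$-edge produces a transverse crossing of two sheets of the front, i.e.\ an $A_1^2$ singularity --- there is no cusp. A trivalent vertex produces a $D_4^-$ singularity, not an $A_2$- or $A_3$-type singularity: the three branches of the $D_4^-$ front all have distinct, non-vertical tangent planes, and the singular locus of $\pi(\La(G))$ near it consists of three half-lines of $A_1^2$ crossings; nothing cusps. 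A hexavalent vertex produces an $A_1^3$ triple crossing of three transversely meeting sheets, again with no cusp edges whatsoever. The $A_2$ cusp edges and $A_3$ swallowtails you take as the starting data for the local model in fact appear only after one either generically perturbs a $D_4^-$ singularity (Figure~\ref{fig:D4Generic}) or satellites the weave along a closed Legendrian companion. Since your entire strategy is to build explicit generating families for the local fronts and then read the handles off them, starting from the wrong germs means the ``finite verification'' you describe would be verifying the wrong thing. Moreover, the real work in the paper's proof of Theorem~\ref{thm:Legsurgeries} is precisely the front calculus needed to manipulate those perturbed $D_4^-$ fronts --- the Moves VII--XII of Propositions~\ref{prop:CuspMoves} and \ref{prop:CuspMovesII}, which govern the interaction of $A_2$, $A_3$, $A_1^2$ and $D_4^\pm$ germs --- none of which your outline engages with.

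There are two further gaps worth naming. For mutation you assert that ``mutation along an embedded cycle of $\La(G)$ supported in a disk is, by definition, the local $N$-graph replacement pictured,'' and then propose to verify it by citing the cross-ratio mutation rule from contribution (B). This inverts the logical order and is also circular: Legendrian mutation (Definition~\ref{def:LegendrianMutation}) is defined intrinsically by a front modification in a Darboux ball, and what needs to be \emph{proved} is that the $N$-graph exchange realizes that front modification. The paper does this geometrically in Theorem~\ref{thm:LegMutations} via the identification of the two local Lagrangian projections with the two Polterovich surgeries of a pair of transversely intersecting Lagrangian planes, quoting \cite[Theorem~6.3]{CasalsMurphyPresas}; only afterwards (Lemma~\ref{lem:clustertrans}) is the effect on microlocal monodromy computed. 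Finally, for the Clifford sum you claim the identification with the Clifford torus is ``immediate once the front of the inserted graph is matched with the standard front of the relevant torus,'' but the paper has to actually produce that match: the front of the inserted $4$-vertex $2$-graph is identified with the Legendrian lift of the vanishing cycle of the Landau--Ginzburg superpotential $W(z_1,z_2,z_3) = z_1z_2z_3$, using \cite[Section~3.3]{Nadler_LG1} and an explicit stereographic projection. That computation is the content, not a triviality.
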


\begin{center}
	\begin{figure}[h!]
		\centering
		\includegraphics[scale=0.8]{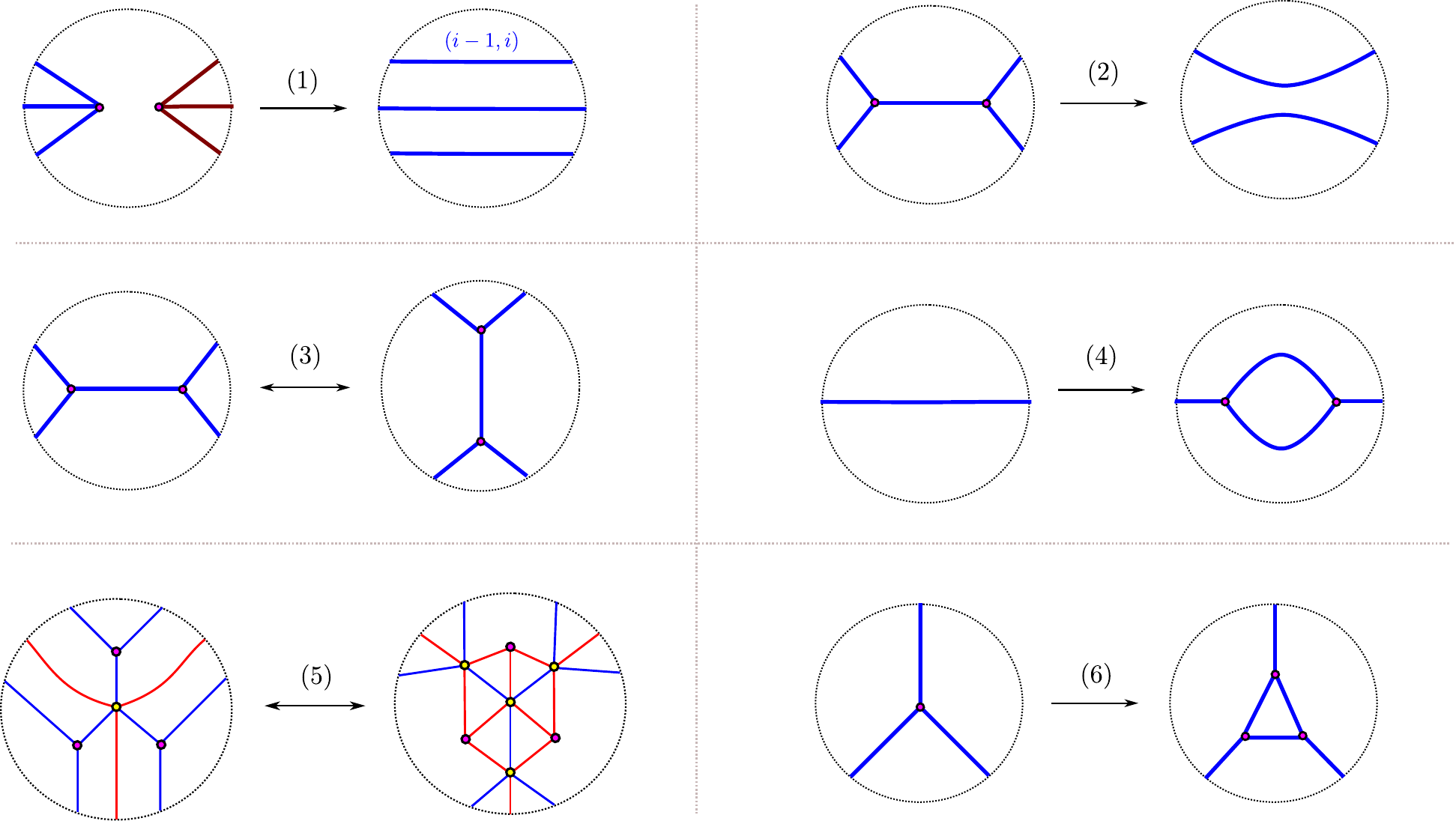}
		\caption{Table of Combinatorial Moves for Surfaces $\La(G)$ corresponding to Legendrian Surgeries, mutations and tori connected sums. See Section \ref{sec:NGraphsLegWeaves} for details.}
		\label{fig:IntroSurgeries}
	\end{figure}
\end{center}

Theorems \ref{thm:DiagrammaticsI} and \ref{thm:DiagrammaticsII} provide an efficient diagrammatic calculus to manipulate the Legendrian surfaces $\La(G)$ associated to $N$-graphs $G$. We refer to the Legendrian surfaces $\La(G)$ as {\it Legendrian weaves}, due to the resemble of their Legendrian fronts to a weaving pattern -- see Definition \ref{def:LegWeave2}. Theorems \ref{thm:DiagrammaticsI} and \ref{thm:DiagrammaticsII} are geometric in nature and are proven by manipulating Legendrian fronts for Legendrian surfaces in five dimensions. This is the content of Section \ref{sec:moves}, as part of our study of generic three-dimensional front singularities and their homotopies. In addition, Section \ref{sec:constr} provides several combinatorial constructions of Legendrian surfaces $\La\sse(\S^5,\xi_\st)$ which are used in our applications in Sections \ref{sec:app}, \ref{sec:app2} and \ref{sec:app3}.

\begin{remark}
The Legendrian weaves $\La(G)\sse(J^1C,\xi_\st)$ associated to an $N$-graph $G\sse C$ admit spatial wavefronts $\pi(\La(G))\sse C\times\R$ with front singularities solely of types\footnote{The $A_1^2$-singularity corresponds to a crossing, and the $A_1^3$-singularity is given by three planes intersecting transversely at a point. The $A_2$-singularity corresponds to a simple cusp, $A_3$-singularities are swallowtails, and $A_2A_1$-singularities are obtained by intersecting a cusp with a linear space.} $A_1^2$, $A_1^3$ and $D_4^-$, following V.I.~Arnol'd's notation \cite{Arnold76SurgeryI,ArnoldSing}. That said, their satellites $\iota(\La(G))\sse(Y,\xi_\st)$ typically acquire $A_2,A_2A_1$ and $A_3$ singularities. Satellites will be introduced and discussed in Section \ref{ssec:Satellite}; as an example, the satellite of $\La(G)$ along the standard Legendrian unknot $\La_0\sse(\R^5,\xi_\st)$ necessarily develops $A_2$-singularities. In addition, the standard 5-dimensional Legendrian Reidemeister surface moves include the creation of $A_3$ singularities, and the interaction of $A_2$ and $A_3$ singularities yield a $D_4^+$ singularity. These Legendrian singularities and 3-dimensional Reidemeister moves will also be discussed in Section \ref{sec:moves}.\hfill$\Box$
\end{remark}


{\bf The Microlocal Sheaf Theory of $N$-Graphs}. The relationship between sheaf theory and contact and symplectic geometry \cite{Sheaves3,Nadler_MicrolocalBrane,GKS_Quantization,Sheaves2} provides invariants of Lagrangian and Legendrian submanifolds up to Hamiltonian and contact isotopies \cite{STZ_ConstrSheaves,STWZ,CasalsHonghao}. These invariants are an alternative to the more analytical Floer-theoretic methods \cite{EkholmEtnyreSullivan05a,EkholmEtnyreNgSullivan13a,EkholmEtnyreNgSullivan13}, and have recently been shown to contain equivalent data \cite{GPS3,GPS1,GPS2}.  

Let $G$ be an $N$-graph on $C$, $\Lambda(G) \sse J^1(C)$ its Legendrian surface, and $\SC(G)$ the category of simple constructible sheaves on $C\times \bR$ microlocally supported along $\Lambda(G).$  In Section \ref{sec:flag}, we describe the moduli space of objects in $\SC(G)$ in terms of the combinatorics of $G$. Specifically, we define the {\it flag moduli space} $\SM(G)$ of an $N$-graph $G\sse C$, an algebraic stack -- often a variety -- as being described by explicit relations among elements in the flag variety $\GL(N,k)/B$, where $B$ is the Borel subgroup of upper triangular matrices. Already when $N=2,$ the number of rational $\bF_q$-points of $\SM(G)$, for a finite field $\bF_q$ is, up to a factor, the chromatic polynomial of the dual graph evaluated at $q+1 = |(GL(2,\bF_q)/B)(\bF_q)|$ \cite{TreumannZaslow}, and hence the moduli stack $\SM(G)$ geometrizes a familiar graph-theoretic construction.

For general $N$, this algebraic space $\SM(G)$ is the moduli space of an incidence problem between flags and their stabilizing monodromies. It has two particular virtues. First, $\SM(G)$ changes explicitly under certain combinatorial moves of the $N$-graph $G$ --- thus, each time we can simplify $G$ with our moves from Theorems \ref{thm:DiagrammaticsI} and \ref{thm:DiagrammaticsII}, we get closer to solving the moduli problem via purely diagrammatic techniques. Second, $\SM(G)$ is an invariant of the Legendrian isotopy class of $\Lambda(G) \sse J^1(C)$. In short, $\SM(G)$ is defined purely in terms of the combinatorics of the $N$-graph $G$, in a manner we understand, and we show it geometrically describes the following invariant:

\begin{thm}\label{thm:intro2}
	Let $C$ be a closed, smooth surface and $G\sse C$ an $N$-graph. The flag moduli space $\SM(G)$ is isomorphic to the moduli space of microlocal rank-one sheaves\footnote{Microlocal rank-one sheaves are also called microlocally simple or just simple \cite[Chapter 7]{KashiwaraSchapira}.} on $C\times\R$ microlocally supported along $\La(G)\sse(J^1C,\xi_\st)$.\hfill$\Box$
\end{thm}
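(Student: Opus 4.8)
The plan is to identify the two moduli stacks by an explicit local-to-global argument. The key geometric input is that the Legendrian weave $\La(G)$ has, by construction, a front $\pi(\La(G)) \sse C \times \R$ whose singularities are only of types $A_1^2$, $A_1^3$, and $D_4^-$, and these singularities are stratified by the combinatorics of $G$. First I would recall the general description of simple constructible sheaves with prescribed singular support: away from the caustic and codimension-one strata, such a sheaf is an $N$-sheeted local system on $C$, i.e.\ it records an $N$-dimensional vector space $V_x$ over each point $x$ in the complement of the front projection, together with parallel transport. At a smooth point of the wavefront — i.e.\ along an edge of color $i$ in the $N$-graph — the microlocal rank-one condition forces the sheaf to be given by a codimension-one \emph{inclusion} of stalks, so that crossing an $i$-colored edge corresponds to a hyperplane $F^i \sse V$, or dually a line; tracking all $N-1$ sheets at a generic fiber organizes this data into a point of the full flag variety $\GL(N,k)/B$. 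The first step of the proof is therefore to set up this dictionary carefully on the open strata and to verify that the microlocal monodromy (rank one, i.e.\ an isomorphism of the microlocal stalk with $k$) is exactly the condition that distinguishes ``simple'' from arbitrary objects of $\SC(G)$.

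The second step is the local analysis at the three singularity types. At an $A_1^2$-crossing (two sheets of the front meeting transversely), the compatibility of the microstalk conditions on the four local regions is equivalent to a transversality/genericity condition between two flags — the familiar $N=2$ picture of two lines in general position, generalized. At the $A_1^3$-point (three sheets meeting transversely) one gets three pairwise conditions, and at the hexavalent vertex where $i$- and $(i+1)$-colored edges meet — the $D_4^-$ singularity, by the interlacing condition in the definition of an $N$-graph — the local sheaf category is governed by a configuration of flags related by an $s_i s_{i+1} s_i = s_{i+1} s_i s_{i+1}$ braid relation; this is precisely where the hexavalent combinatorics of $G$ enters, and it is exactly the local relation imposed in the definition of $\SM(G)$ in Section~\ref{sec:flag}. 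I would prove each local statement by an explicit computation with constructible sheaves on a disk (or, equivalently, on the germ of the relevant Arnol'd normal form), using that any simple sheaf microsupported in a swallowtail/$D_4$ front is rigid up to the flag data. This local-model step is the one I expect to be the main obstacle: the $D_4^-$ analysis requires a genuinely $3$-dimensional front computation rather than a reduction to the $N=2$ Treumann--Zaslow case, and care is needed to match orientations and coorientations so that the cross-ratio/monodromy formulas come out with the right signs.

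Third, I would globalize: a simple sheaf on all of $C \times \R$ microsupported along $\La(G)$ is determined by the local flag data subject to the local compatibility relations on every stratum of $G$, plus the gluing of the ambient local system over the complement of the front, which is precisely the combinatorial incidence problem defining $\SM(G)$. Conversely, any solution of that incidence problem can be assembled into a sheaf by gluing the local models, since the local models are unique (being rigid) and the cocycle conditions for gluing reduce to the stated relations. This yields a morphism of stacks $\SM(G) \to \cM(\La(G))$ which is bijective on $k$-points for every field $k$ and on families, hence an isomorphism; alternatively one checks it is an isomorphism on tangent/obstruction spaces using the microlocal Morse/deformation theory and concludes by an infinitesimal criterion. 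The invariance statement — that $\SM(G)$ depends only on the Legendrian isotopy type of $\La(G)$ — then follows formally: it is an incarnation of the general Guillermou--Kashiwara--Schapira / Shende--Treumann--Zaslow theorem that the category $\SC(\La)$ is a Hamiltonian/contact isotopy invariant, combined with the identification just established.
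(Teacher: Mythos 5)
The overall strategy is the same as the paper's: identify objects of the sheaf category with configurations of flags by a local-to-global argument, computing the local moduli at each singularity type of the front and then gluing, with a separate check of local-system compatibility when $C$ is not simply connected. So you have the right architecture.

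However, there is a concrete error in the singularity-to-vertex dictionary that derails the local analysis. You assign the $D_4^-$ singularity to the \emph{hexavalent} vertex, and treat the $A_1^3$ point as a separate, vertex-free feature. In fact the correspondence in Definition~\ref{def:LegWeave1} is the other way around: a \emph{trivalent} $i$-vertex of $G$ produces a $D_4^-$-singularity between the $i$th and $(i{+}1)$st sheets, while a \emph{hexavalent} $(i,i{+}1)$-point produces an $A_1^3$-singularity of three pairwise-transverse sheets. The hexavalent vertex is precisely the big wavefront of a Reidemeister~III move, and that is exactly why its local sheaf analysis is governed by the braid relation $s_is_{i+1}s_i = s_{i+1}s_is_{i+1}$ — you correctly guessed the braid relation but attached it to the wrong singularity. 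Because of this swap, your proposal never actually treats the local model at a trivalent vertex, which is the genuinely subtle case: $D_4^-$ is a non-generic front singularity, and the paper's proof either imports the $N=2$ analysis of Treumann--Zaslow (and observes that the two-sheet local model already suffices, contrary to your expectation that a full $3$-dimensional computation is unavoidable), or perturbs $D_4^-$ into three $A_3$-swallowtails and uses GKS quantization of Reidemeister~I. Without this step, the argument does not establish that the moduli of sheaves near a branch point of the cover $\La(G)\to C$ agrees with the flag condition from Definition~\ref{def:flagmoduli}, and the trivalent vertices are exactly what make $\La(G)$ a higher-genus surface rather than an unlink of spheres, so the gap is not cosmetic. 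Correcting the dictionary (trivalent $\leftrightarrow$ $D_4^-$, hexavalent $\leftrightarrow$ $A_1^3$) and supplying the $D_4^-$ local computation would bring your argument in line with the paper's.
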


After the work of Guillermou-Kashiwara-Schapira \cite{GKS_Quantization}, which constructs an equivalence of sheaf categories from a Legendrian isotopy, we conclude that
the algebraic isomorphism type of the moduli stack $\SM(G)$ is a Legendrian isotopy invariant of the Legendrian surface $\La(G)\sse (J^1C,\xi_\st)$.  In fact, it will remain a Legendrian isotopy invariant for certain satellites along $C\sse (\bR^5,\xi_\st)$, yielding a Legendrian invariant for $\La(G)\sse (\R^5,\xi_\st)$. Theorem \ref{thm:intro2}, proven in Section \ref{sec:flag}, is a generalization to $N\geq 2$ of \cite[Section 4]{TreumannZaslow} and the 2-dimensional surface analogue of the results in \cite{STZ_ConstrSheaves,STWZ}, where the computation of the moduli space of microlocal rank-1 sheaves for 1-dimensional Legendrian braid closures in $\R^3$ is expressed in algebraic combinatorial terms.\\

{\bf Applications of $N$-Graph Calculus}. Sections \ref{sec:app}, \ref{sec:app2} and \ref{sec:app3} exhibit a gallery of computations and uses of the flag moduli space $\SM(G)$, including the study of $\SM(G)$ as a complex variety and its finite $\F_q$-counts. For instance, our techniques readily prove the following sample result:

\begin{thm}[Flag Moduli for Ladder Graphs]\label{thm:intro1} Let $\SL_n\sse\S^2$ be the $(2n)$-runged ladder 3-graph of Figure \ref{fig:NLadderIntro}, and let $\F_q$ a finite field.  Then the flag moduli space $\SM(\SL_n)$ has orbifold point count
	
	$$|\SM(\SL_n)(\F_q)|=\frac{q^{2n-3}-q^{n-2}+q^{n-1}+q-1}{(q-1)^2}$$
	
In particular, the Legendrian 3-links of 2-spheres $\La(\SL_n)$ and $\La(\SL_m)$ are Legendrian isotopic if and only if $n=m$.\hfill$\Box$
\end{thm}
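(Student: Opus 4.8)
The plan is to compute the flag moduli space $\SM(\SL_n)$ directly from the combinatorics of the ladder $3$-graph using the flag-variety incidence description established in Section \ref{sec:flag}, and then extract the $\F_q$-point count, which by Theorem \ref{thm:intro2} is a Legendrian isotopy invariant.

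\emph{Step 1: Unwind the moduli problem.} The $(2n)$-runged ladder $\SL_n$ on $\S^2$ has a predictable structure: two parallel ``rails'' of one color, joined by $2n$ rungs of the other color, with hexavalent vertices where they meet. I would read off from the general recipe of Section \ref{sec:flag} which flags $V_\bullet\in\GL(3,k)/B$ are attached to the faces of $\S^2\setminus\SL_n$ and which transversality/genericity conditions (codimension-one incidences across edges, plus the hexagonal-vertex condition that consecutive flags stay in general position) cut out $\SM(\SL_n)$. Because a $3$-graph involves flags in a $3$-dimensional space, each face carries a full flag $0\subset L\subset P\subset k^3$, and crossing an $i$-colored edge changes only the $i$-th step; the upshot is that $\SM(\SL_n)$ is the space of chains of flags with prescribed adjacent relative positions, modulo the residual $\PGL(3,k)$ (or the stabilizer of a pinned initial flag). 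This is precisely a configuration-of-flags space of the sort appearing in \cite{STWZ,Goncharov_IdealWebs}.

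\emph{Step 2: Stratify and count.} I would count $|\SM(\SL_n)(\F_q)|$ by a transfer-matrix / recursive argument along the ladder. Fix the flag on the first face; each successive rung imposes one ``new'' line (or plane) constrained to be transverse to at most two previously chosen ones, contributing a factor like $q$ or $q-1$ depending on how many genericity conditions are active, while the two end faces of the sphere (the ``closing up'' of the ladder on $\S^2$) impose global monodromy conditions that couple the two ends. Carrying this bookkeeping through gives a rational function in $q$; matching it against the claimed
$$|\SM(\SL_n)(\F_q)|=\frac{q^{2n-3}-q^{n-2}+q^{n-1}+q-1}{(q-1)^2}$$
is then a finite verification, ideally cross-checked in the base case $n=1$ or $n=2$ against a direct hand computation (and, for $N=2$ sub-ladders, against the chromatic-polynomial formula of \cite{TreumannZaslow}). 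The denominator $(q-1)^2$ reflects the residual two-torus of automorphisms (the diagonal torus acting on the flags that is not killed by pinning), which is why the count is an \emph{orbifold} point count.

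\emph{Step 3: Deduce the classification.} Since $\SM(\SL_n)$ is a Legendrian isotopy invariant of $\La(\SL_n)\sse(J^1\S^2,\xi_\st)$ (Theorem \ref{thm:intro2} together with \cite{GKS_Quantization}), two such weaves with $n\neq m$ cannot be Legendrian isotopic once we show their $\F_q$-counts differ for some $q$. The leading term of the count is $q^{2n-3}/(q-1)^2$, so the counts for distinct $n$ are distinct rational functions of $q$ (equivalently, evaluating at any fixed large prime power $q$ separates them); hence $\La(\SL_n)\simeq\La(\SL_m)$ forces $n=m$. The converse is trivial since $\SL_n=\SL_m$ as $N$-graphs when $n=m$.

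\emph{Main obstacle.} The delicate point is Step 2: correctly tracking \emph{which} genericity conditions are independent as one marches along the ladder and then \emph{closes it up on the sphere}. The sphere topology means the ladder's two ends are glued, so the naive product of local factors overcounts; one must identify the monodromy constraint relating the first and last flags (an element of a Borel, or a relative position, that must match), and then quotient by the correct automorphism group to land on the orbifold count rather than a coarse count. Getting the precise power of $(q-1)$ in the denominator — i.e.\ pinning down the stabilizer — is where I expect the bookkeeping to be most error-prone, and where a careful low-$n$ sanity check is indispensable.
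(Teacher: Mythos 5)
Your Steps 1 and 3 are correct and match the paper's logic: the moduli problem is read off from the $N$-graph faces and edges as in Section~\ref{sec:flag}, and the invariance of $\SM$ under Legendrian isotopy (Theorem~\ref{thm:intro2} plus \cite{GKS_Quantization}) plus the distinctness of the leading powers $q^{2n-3}$ immediately gives the classification. The gap is in Step~2, which you leave essentially undone, and where you also miss the structural observation that makes the count tractable.

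The count is \emph{not} a single transfer-matrix product with a monodromy correction at the seam. The paper first proves a local rigidity result (Lemma~\ref{lem:ACflags}): across any ``hexagonal edge'' of the ladder, the two flags $\cA$ and $\cC$ on the rail faces (the north- and south-pole regions of $\S^2$) can only be in relative position $e$ or $w_0$ — the four intermediate Bruhat positions admit \emph{no} compatible flag $\cB$ in the rung face. This dichotomy is what organizes the computation into a sum of two contributions with completely different character: when $\cA$ and $\cC$ are totally transverse, a propagation argument forces every rung flag $\cB_i$ to be a single fixed flag, so that entire stratum contributes only $1/(q-1)$; when $\cA=\cC$, the rung flags carry a genuine chain of degrees of freedom which, after pinning coordinates, reduces to counting $\F_q$-solutions of a single bilinear equation $\alpha\cdot y=0$ subject to two linear relations, and \emph{that} count is what produces the nontrivial numerator $q^{2n-3}-q^{n-2}+q^{n-1}$. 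Your proposal — ``each successive rung imposes one new line or plane constrained to be transverse to at most two previously chosen ones,'' then ``match against the claimed formula'' — does not anticipate this case split, does not identify why four of the six relative positions are empty, and replaces the actual computation with a promissory note. Without the analogue of Lemma~\ref{lem:ACflags}, a na\"ive march along the ladder would have to carry all relative positions of the flag pair $(\cA,\cC)$ as hidden state, and the closing-up constraint you highlight ($\cB_0=\cB_{2n}$) is only one of the two nontrivial global conditions, the other being the allowed relative position of the pole flags themselves. So the approach as stated has a genuine missing idea and an unexecuted core step, even though its scaffolding is sound.
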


The infinitely many Legendrian surfaces $\La(\SL_n)$ in Theorem \ref{thm:intro1}, $n\in\N$, are pairwise smoothly isotopic. The distinct finite $\F_q$-counts of their flag moduli space $\SM(\La(\SL_n))(\F_q)$ give a direct proof that they are not Legendrian isotopic as Legendrian surfaces in $(\R^5,\xi_\st)$. Also, adding the ladder $3$-graphs in Theorem \ref{thm:intro1} into a face of an arbitrary $N$-graph $G$ typically changes the flag moduli space of $\SM(G)$ and thus produces another Legendrian surface, smoothly isotopic but {\it not} Legendrian isotopic to $\La(G)$.

In general, the computation of these Legendrian invariants translates into an incidence moduli problem, which can itself be simplified with our diagrammatic techniques, and then possibly solved with methods from algebraic geometry. In particular, we will understand the effect of combinatorial moves for $N$-graphs $G$ on the Legendrian invariants $\SM(\La(G))$. This will frequently allow for the computation
 of this moduli stack and distinguish Legendrian weaves up to Legendrian isotopy. This yields a wide range of results in the vein of Theorem \ref{thm:intro1}, as we will illustrate.  From this perspective, Legendrian weaves, which are in general surfaces of any genus, constitute an attractive complement to the family of knot conormals. \\

\begin{center}
	\begin{figure}[h!]
		\centering
		\includegraphics[scale=0.65]{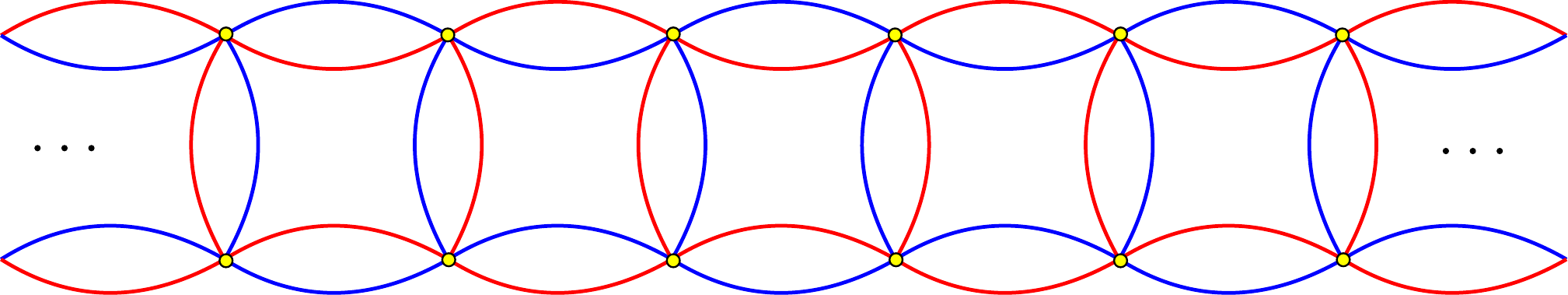}
		\caption{The bipartite Ladder 3-Graph $\SL_n$, where the right and left sides are identified after $2n$ rungs.}
		\label{fig:NLadderIntro}
	\end{figure}
\end{center}

We now illustrate a second application of our flag moduli stacks, detailed in Section \ref{sec:app}. Let $\La \subset (\S^5,\xi_\st)$ be an embedded Legendrian surface and let $\cL(\La)$ be the space of embedded Legendrian surfaces which are Legendrian isotopic $\La$, with base point $\La$.  Let $\L(\La)$ be the monoid of 3-dimensional exact Lagrangian concordances in the symplectization $(\S^5\times\R(t),e^t\la_\st)$, up to Hamiltonian isotopy, based at $\La$. 
The flag moduli spaces $\SM(G)$ will be used to show the following result:

\begin{thm}\label{thm:symmetries_intro} Let $\bG$ be an arbitrary finite group. Then there exists a Legendrian surface $\La_{\bG}\sse(\S^5,\xi_\st)$ such that
	\begin{itemize}
		\item[(i)] $\bG$ is a subquotient of the fundamental group $\pi_1(\cL(\La_\bG))$,
		\item[(ii)] $\bG$ is a subquotient of the 3-dimensional Lagrangian concordance monoid $\L(\La_\bG)$.
	\end{itemize}
	In fact, the latter is the image of the former via the graph map $\mbox{gr}:\pi_1(\cL(\La))\lr\L(\La)$.
\end{thm}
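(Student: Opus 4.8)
The plan is to reduce the statement to the construction of a single well‑chosen $N$‑graph together with a monodromy computation on its flag moduli space, using the combinatorial calculus of Theorems~\ref{thm:DiagrammaticsI}--\ref{thm:DiagrammaticsII} and the invariance of $\SM$ from Theorem~\ref{thm:intro2}.

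First I would set up the relevant monodromy representation. A based loop $\gamma$ in $\cL(\La)$ is a Legendrian self‑isotopy $\{\La_t\}_{t\in\S^1}$; by Theorem~\ref{thm:intro2} combined with the Guillermou--Kashiwara--Schapira equivalence, such an isotopy induces an algebraic automorphism $\rho(\gamma)$ of the moduli stack $\SM(\La)$, and homotopic isotopies induce the same one, so $\rho\colon\pi_1(\cL(\La))\to\Aut(\SM(\La))$ is a well‑defined homomorphism. On the other hand, the graph map $\mbox{gr}$ sends $\gamma$ to the trace (Lagrangian suspension) $\Sigma_\gamma\subset(\S^5\times\R,e^t\la_\st)$ of the isotopy, an exact Lagrangian concordance from $\La$ to $\La$; exact Lagrangian concordances act on moduli of microlocal sheaves functorially and compatibly with GKS, so there is a monoid map $\sigma\colon\L(\La)\to\op{End}(\SM(\La))$ with $\rho=\sigma\circ\mbox{gr}$, and $\sigma$ carries $\mbox{gr}(\pi_1(\cL(\La)))$ into $\Aut(\SM(\La))$ since loops are invertible. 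Hence it suffices to produce, for each finite $\bG$, a Legendrian weave $\La_\bG\subset(\S^5,\xi_\st)$ with $\bG$ a subquotient of $\op{im}(\rho)$: then $H:=\rho^{-1}(\bG)\le\pi_1(\cL(\La_\bG))$ satisfies $H/\ker(\rho|_H)\cong\bG$, giving (i); $\mbox{gr}(H)\le\L(\La_\bG)$ satisfies $\mbox{gr}(H)/\ker(\sigma|_{\mbox{gr}(H)})\cong\bG$, giving (ii); and $\rho=\sigma\circ\mbox{gr}$ makes the subquotient of (ii) the $\mbox{gr}$‑image of the subquotient of (i), which is the last assertion.

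Second I would build $\La_\bG$ combinatorially. Fix a faithful permutation representation $\bG\hookrightarrow S_n$, say the left regular one with $n=|\bG|$. I would assemble an $N$‑graph $G_\bG$ on $\S^2$ out of $n$ symmetrically arranged copies of a ladder‑type gadget as in Theorem~\ref{thm:intro1}, so that $\SM(G_\bG)$ is (an open piece of) a configuration of $n$ decorated flags on which $S_n$ acts faithfully by permuting the blocks; the $\F_q$‑point‑count and cross‑ratio invariants of contribution~B certify that distinct permutations act by distinct automorphisms. Each generating adjacent transposition is realized geometrically: braiding two neighbouring blocks past one another on $\S^2$ is a local Legendrian isotopy built from the moves of Figures~\ref{fig:IntroReidemeister} and~\ref{fig:IntroCuspMoves} (equivalently, a pair of Legendrian mutations along the $1$‑cycles separating the blocks that returns $G_\bG$ to itself), so it defines an element of $\pi_1(\cL(\La(G_\bG)))$; using the explicit formulas for microlocal monodromies in generalized cross‑ratios of flags and their transformation under mutation, one identifies $\rho$ of this loop with the corresponding transposition of blocks on $\SM(G_\bG)$. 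This gives $S_n\le\op{im}(\rho)$, hence $\bG\le\op{im}(\rho)$. Finally take $\La_\bG:=\iota(\La(G_\bG))\subset(\S^5,\xi_\st)$ a satellite chosen, as in the discussion after Theorem~\ref{thm:intro2}, so that $\La(G_\bG)$ embeds as a Legendrian surface in $\bR^5$ with its flag moduli and monodromy action preserved.

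The main obstacle is precisely this realization step: proving that the combinatorial $S_n$‑action on $\SM(G_\bG)$ is literally induced by Legendrian self‑isotopies and computing the induced automorphisms honestly. Concretely it requires (a) choosing $G_\bG$ so that the block‑swapping loops are genuine compositions of the moves of Theorems~\ref{thm:DiagrammaticsI} and~\ref{thm:DiagrammaticsII}, and (b) the cross‑ratio bookkeeping identifying $\rho$ of each such loop with a transposition, together with a faithfulness check, via $\F_q$‑counts in the spirit of Theorem~\ref{thm:intro1}, that no nontrivial permutation acts trivially on $\SM(G_\bG)$. A secondary point is checking that passing to the $\bR^5$‑satellite neither collapses $\SM$ nor kills the monodromy action, which follows from the satellite invariance already used for Theorem~\ref{thm:intro2}. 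Once these are in place, the reduction of the first paragraph delivers (i), (ii), and the compatibility with $\mbox{gr}$ at once.
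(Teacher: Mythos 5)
Your first paragraph is sound and mirrors what the paper uses implicitly: a loop of Legendrian surfaces induces, via the GKS sheaf quantization, an automorphism of $\SM(\La)$, and the same map factors through the graph construction, so it suffices to realize $\bG$ inside the image of $\pi_1(\cL(\La))\lr\Aut(\SM(\La))$. Where you diverge from the paper is in the construction step, and that is where there is a genuine gap. Your claim that ``braiding two neighbouring blocks past one another'' is realized ``(equivalently) by a pair of Legendrian mutations along the $1$-cycles separating the blocks'' is not correct: Legendrian mutation, as defined in Definition~\ref{def:LegendrianMutation} and Theorem~\ref{thm:LegMutations}, is precisely the operation that \emph{changes} the Legendrian isotopy class in a Darboux ball, so it cannot be part of a loop in $\cL(\La_\bG)$. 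Setting the mutation parenthetical aside, the construction is not carried out: you have not produced an $N$-graph $G_\bG\sse\S^2$ together with explicit Legendrian isotopies realizing a faithful $S_n$-action on $\SM(G_\bG)$, and the assertion that $n$ symmetrically arranged ladder gadgets would have $S_n$ (rather than, say, a dihedral or cyclic group) acting on the flag moduli via loops is not justified; you flag this yourself as ``the main obstacle,'' but it is precisely the content of the theorem.

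The paper's proof resolves exactly this obstacle with an idea that is absent from your proposal. It does not try to realize $S_n$ on $\S^2$; instead it invokes Hurwitz's theorem to find, for each Hurwitz group $\mathbb{H}$, a compact Riemann surface $C$ with $\Aut(C)\cong\mathbb{H}$ and an $\mathbb{H}$-invariant triangulation $T$. The generators of $\mathbb{H}$ are concrete rotations of the hyperbolic metric on $C$, hence diffeomorphisms isotopic to the identity preserving $G(T)$ setwise, and so determine genuine loops in $\cL(\La(G'))$, where $G'$ adds a bigon on each edge of $G(T)$ equivariantly. Faithfulness of the induced action on $\SM(G')$ is then automatic because $\mathbb{H}$ acts faithfully on the set of edges of $T$, hence on the $(\C^*)^{e(T)}$ factor of $\SM(G')\cong\SM(G(T))\times(\C^*)^{e(T)}$ coming from Theorem~\ref{thm:FlagLegSurgeries}, which is visible on $H^*(\SM(G'),\Q)$ via K\"unneth. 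Arbitrary finite $\bG$ is then handled by embedding $\bG\le A_n$ for $n$ large and using that $A_n$ is a Hurwitz group. This Hurwitz-surface device is precisely what makes the realization issue (a) and the faithfulness issue (b) of your proposal disappear; without it, or a worked-out substitute, your argument does not close.
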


Theorem \ref{thm:symmetries_intro} essentially states that the study of the 3-dimensional Lagrangian concordance monoid can be as complicated as any finite group. The proof of Theorem \ref{thm:symmetries_intro} will exhibit the advantage of using combinatorial constructions on an $N$-graph $G$ to extract contact and symplectic information in $5$- and $6$-dimensions. Note that for 1-dimensional max-tb Legendrian torus links, T. K\'alm\'an provided finite cyclic subgroups of the 2-dimensional Lagrangian concordance monoid \cite{Kalman}, and J. Sabloff and M. Sullivan provided\footnote{The results of \cite{SabloffSullivan16} are stronger in higher-dimensions, but for Legendrian surfaces the only finite subgroups of the special orthogonal group $SO(2)$ must be cyclic -- see \cite[Remark 4.7]{SabloffSullivan16}.} finite cyclic subgroups of the 3-dimensional Lagrangian monoid for certain Legendrian surfaces \cite{SabloffSullivan16}. Sections \ref{sec:flag}, \ref{sec:app} and \ref{sec:app2} contain several computations and applications of the flag moduli spaces $\SM(G)$.

\begin{remark}
The Legendrian DGA of a Legendrian knot in $(\R^3,\xi_\st)$ can be computed algorithmically. The computation of Floer-theoretic invariants of general Legendrian submanifolds in arbitrary higher-dimensions represents a challenge \cite{Rizell_TwistedSurgery,EkholmEtnyreSullivan05b,EkholmEtnyreSullivan05a} --- see \cite{RutherfordSullivan1,RutherfordSullivan2} for progress in this direction. The class of Legendrian 2-tori arising as knot conormals is understood \cite{Ng10,Conormal2,Conormal1} and our results, in line with Theorem \ref{thm:intro1} and Theorem \ref{thm:symmetries_intro}, aim at achieving both a geometric and sheaf-theoretic understanding for the class of Legendrian weaves $\La(G)$.\hfill$\Box$
\end{remark}

For a third class of applications, consider an $N$-graph $G\sse\D^2$ with boundary. The Lagrangian projection of the Legendrian weave $\La(G)$ yields\footnote{A combinatorial criterion for embeddedness, which will be useful, is described in Lemma \ref{lem:free}.} an exact Lagrangian filling of a Legendrian link in $(\S^3,\xi_\st)$, associated to $\dd G$. In Section \ref{sec:app2} we will construct different $N$-graphs $G_1,G_2$ with $\dd G_1=\dd G_2$, and explain how microlocal monodromies can be used to show that the Lagrangian projections of the Legendrian weaves $\La(G_1)$ and $\La(G_2)$ are {\it not} Hamiltonian isotopic relative to their 1-dimensional Legendrian boundaries. In fact, $N$-graph calculus, in combination with Legendrian mutations, allows us to construct {\it infinitely} many distinct embedded Lagrangian fillings for certain Legendrian knots. The following family of Legendrian links is studied in detail in Subsection \ref{ssec:QuiverMutationsGeometric}:

\begin{thm}\label{thm:ThurstonLinksIntro} Let $\La_{s,t}=\La(\beta_{s,t})\sse(\S^3,\xi_\st)$ be the Legendrian link given by the standard satellite of the positive braid
	$$\beta_{s,t}=(\sigma_1^3\sigma_2)(\sigma_1^3\sigma_2^2)^s\sigma_1^3\sigma_2(\sigma_2^2\sigma_1^3)^t(\sigma_2\sigma_1^3)(\sigma_2^{t+1}\sigma_1^2\sigma_2^{s+2}),\qquad s,t\in\N,s,t\geq1.$$
	Then $\La_{s,t}\sse(\S^3,\xi_\st)$ admits infinitely many embedded exact Lagrangian fillings in $(\D^4,\la_\st)$ realized as $3$-graphs $G_{s,t}\sse\D^2$ and their Legendrian mutations.
\end{thm}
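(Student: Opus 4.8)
The plan is to exhibit, for each pair $(s,t)$ with $s,t\ge 1$, an explicit $3$-graph $G_{s,t}\sse\D^2$ whose boundary Legendrian $\dd G_{s,t}$ is Legendrian isotopic to $\La_{s,t}=\La(\beta_{s,t})$, and then to use Legendrian mutations to generate an infinite orbit of pairwise non-Hamiltonian-isotopic fillings. First I would build $G_{s,t}$: the braid $\beta_{s,t}$ is a positive braid word in $\sigma_1,\sigma_2$, and each $\sigma_i$-block contributes a standard ``brick'' of the $3$-graph (a trivalent piece in the appropriate color, with hexavalent vertices encoding the color interactions $\sigma_1\leftrightarrow\sigma_2$); concatenating these bricks according to the word $\beta_{s,t}$ produces a $3$-graph in $\D^2$ whose $\dd G_{s,t}$ reads off $\beta_{s,t}$. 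Embeddedness of the Lagrangian projection of $\La(G_{s,t})$, i.e.\ that this is a genuine embedded exact Lagrangian filling in $(\D^4,\la_\st)$, is checked via the combinatorial freeness criterion of Lemma \ref{lem:free}; since $\beta_{s,t}$ is a positive braid the associated $3$-graph is a forest-like (tree) weave on each face, so the criterion applies. This realizes one filling; applying Theorems \ref{thm:DiagrammaticsI} and \ref{thm:DiagrammaticsII} shows that the various ways of assembling the bricks give Legendrian-isotopic weaves, so the isotopy type of $\La_{s,t}$ is well-defined.

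Next I would set up the mutation dynamics. Each $3$-graph $G_{s,t}$ carries a distinguished collection of embedded $I$-cycles (long $\mathrm{I}$-cycles in the weave, supported on the ``short edges''), and Legendrian mutation along such a cycle produces a new $3$-graph $G'$ with $\dd G'=\dd G_{s,t}$, hence a new Lagrangian filling of $\La_{s,t}$. The combinatorics of these cycles, together with the rule for how mutation changes the $N$-graph, is arranged so that the exchange graph of mutations matches the cluster mutation pattern of a specific quiver $Q_{s,t}$ — this is the content of Subsection \ref{ssec:QuiverMutationsGeometric}, where the word $\beta_{s,t}$ is reverse-engineered precisely so that $Q_{s,t}$ is mutation-infinite (indeed $Q_{s,t}$ contains, as a full subquiver on a mutation-invariant set of vertices, the Markov-type quiver or a quiver of affine/indefinite type whose mutation class is infinite). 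Thus one gets infinitely many $3$-graphs $G^{(i)}_{s,t}$, all with the same Legendrian boundary, obtained by iterated Legendrian mutation.

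To distinguish these fillings I would use the flag moduli space and microlocal monodromies. By Theorem \ref{thm:intro2}, $\SM(G^{(i)}_{s,t})$ is the moduli of microlocal rank-one sheaves on $\La(G^{(i)}_{s,t})$, and its restriction to the boundary sits inside the (fixed) sheaf moduli of $\La_{s,t}$; the microlocal monodromies along the $I$-cycles give a distinguished collection of functions (generalized cross-ratios of flags) on this moduli, and the formulas in Part B describe how they transform under Legendrian mutation — exactly by the cluster $\mathcal X$-mutation of the corresponding coordinates. Because the cluster variables in a mutation-infinite class take infinitely many distinct values (e.g.\ their degrees/denominators grow), the induced collections of boundary monodromy data are pairwise inequivalent, so the fillings $\La(G^{(i)}_{s,t})$ are pairwise non-Hamiltonian-isotopic rel boundary; in particular the underlying embedded exact Lagrangian fillings in $(\D^4,\la_\st)$ are pairwise distinct. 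This completes the argument.

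The main obstacle is the middle step: proving that the Legendrian-mutation exchange graph of $G_{s,t}$ is genuinely infinite and faithfully tracks the cluster mutation of $Q_{s,t}$. One must show (i) that each mutation along the chosen $I$-cycles stays within embedded weaves (again via Lemma \ref{lem:free}, which requires controlling how freeness behaves under mutation), (ii) that no hidden Legendrian isotopy identifies two weaves in different mutation classes — this is where the invariance and the explicit transformation formulas for $\SM(G)$ and microlocal monodromies do the essential work — and (iii) that the combinatorially infinite cluster pattern really does inject into the set of fillings, i.e.\ that distinct cluster seeds give distinct monodromy data rather than coincidentally equal ones. Verifying this last injectivity, i.e.\ separating infinitely many points of an infinite-type cluster variety by the finitely-computable monodromy invariants attached to $\dd G_{s,t}$, is the technical heart of Subsection \ref{ssec:QuiverMutationsGeometric}.
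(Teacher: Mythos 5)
Your outline tracks the paper's proof closely: construct a free $3$-graph filling $G_{s,t}$, realize an infinite sequence of quiver mutations (here of the rank-$6$ hyperbolic Coxeter quiver $\overline{L}_5$) as Legendrian $3$-graph mutations, and separate the resulting fillings by their microlocal monodromies as cluster seeds. The one detail you misstate is the nature of the critical persistent cycle: it is the ${\sf Y}$-tree $\gamma_1$ (with $s+t+2$ ${\sf Y}$-pieces), not a long ${\sf I}$-cycle, and the technical core you correctly flag as the hard part is precisely the combinatorial verification, via the local mutation rules of Subsection~\ref{ssec:legmutation_local}, that after each period $\mu_{s+t+3}\cdots\mu_2\mu_1$ this cycle remains an embedded multiplicity-one ${\sf Y}$-tree while $\gamma_2,\dots,\gamma_{s+t+3}$ remain monochromatic edges.
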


The 3-graphs representing the infinitely many Lagrangian fillings in Theorem \ref{thm:ThurstonLinksIntro} are diagrammatically interesting, with their complexity increasing as we geometrically realize the iterates in an infinite sequence of quiver mutations. For instance, Figure \ref{fig:ThurstonQuiverIntro} depicts an example of a Lagrangian filling associated to such a 3-graph, obtained after five mutations. Fortunately, the local mutations rules that we develop in Section \ref{ssec:legmutation_local} will allow us to control certain infinite sequences of $N$-graphs mutations and construct infinite sequences of pairwise distinct Lagrangian fillings.

\begin{center}
\begin{figure}[h!]
		\centering
		\includegraphics[scale=0.85]{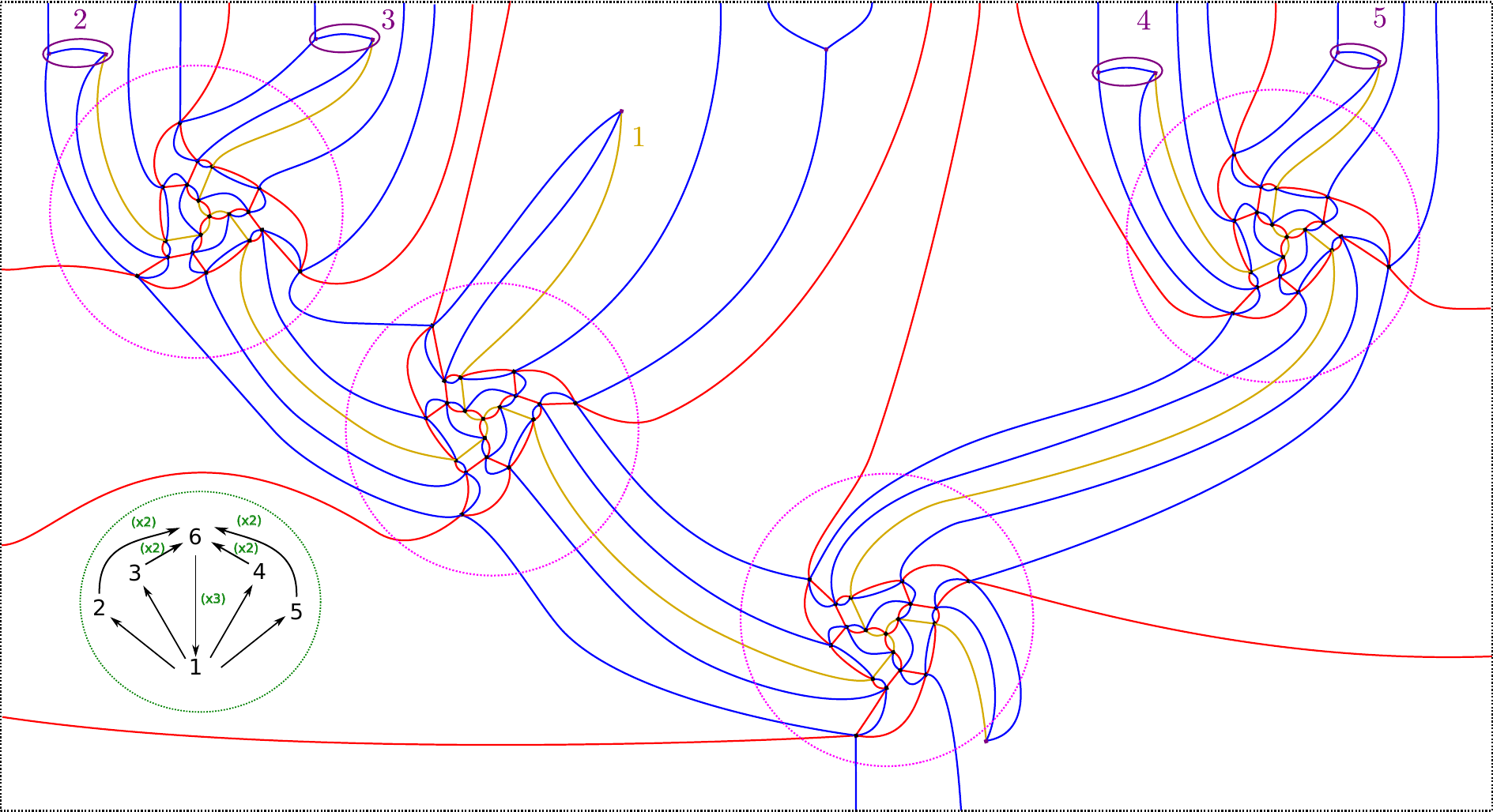}
		\caption{The 3-graph for one of the infinitely many Lagrangian fillings of the Legendrian link $\La_{1,1}\sse(\S^3,\xi_\st)$, as featured in Theorem \ref{thm:ThurstonLinksIntro}. Iterative 3-graph mutations will yield new 3-graphs $G\sse\D^2$ representing pairwise non-Hamiltonian isotopic Lagrangian fillings of $\La_{1,1}$.}
		\label{fig:ThurstonQuiverIntro}
	\end{figure}
\end{center}
	
Theorem \ref{thm:ThurstonLinksIntro} is an appropriate complement to the recent results \cite{CasalsHonghao}, as the construction of the infinitely many Lagrangian fillings in Theorem \ref{thm:ThurstonLinksIntro} is obtained directly by Legendrian mutations.\footnote{In contrast, the construction for torus links given by the first author in \cite{CasalsHonghao} uses Lagrangian concordances of infinite order. In that context, see also the upcoming work \cite{GSW} which will show that the square of the Donaldson-Thomas transformation \cite{GoncharovLinhui_DT} is a Lagrangian concordance, oftentimes of infinite order.} In more generality, Section \ref{sec:app2} develops the relation between the cluster algebra associated to the intersection quiver of a Lagrangian filling and the Legendrian mutations from Section \ref{ssec:legmutation}. In particular, $N$-graph calculus can serve as an effective tool to show that a given Legendrian link admits infinitely many Lagrangian fillings, in case the quiver is of infinite mutation type\footnote{This is generically the case.} and its vertices are represented by mutable 1-cycles in the $N$-graph $G$. In fact, any Legendrian link $\La(\beta)\sse(\S^3,\xi_\st)$ associated to a positive braid $\beta\in\mbox{Br}^+_N$ admits a Lagrangian filling -- oftentimes many -- given by an $N$-graph $G\sse\D^2$.\\


A final application of $N$-graph calculus for Legendrian weaves develops the connection of symplectic topology to V. Fock and A. Goncharov's cluster varieties of framed local systems \cite{FockGoncharov_ModuliLocSys} (see also \cite{Goncharov_IdealWebs,STWZ}), and should relate to the spectral networks of Gaiotto-Moore-Neitzke \cite{GMN_Wallcrossing,GMN_SpecNet13,GMN_SpecNetSnakes14}. For that, consider $N\in\N$ and $\tau$ an ideal $N$-triangulation of the smooth punctured surface $C$. In Section \ref{sec:constr}, we present a new construction that associates an $N$-graph $G(\tau)$ to an ideal $N$-triangulation $(C,\tau)$. In particular, each ideal $N$-triangulation $\tau$ yields a Legendrian surface $\La(G(\tau))\sse(J^1C,\xi_\st)$. In general, different $N$-triangulations lead to smoothly isotopic Legendrian surfaces which are {\it not} Legendrian isotopic, and they are distinguished by their flag moduli space $\SM(G(\tau))$. This also relies on the connection between microlocal monodromies and cluster algebras.
\begin{center}
	\begin{figure}[h!]
		\centering
		\includegraphics[scale=0.8]{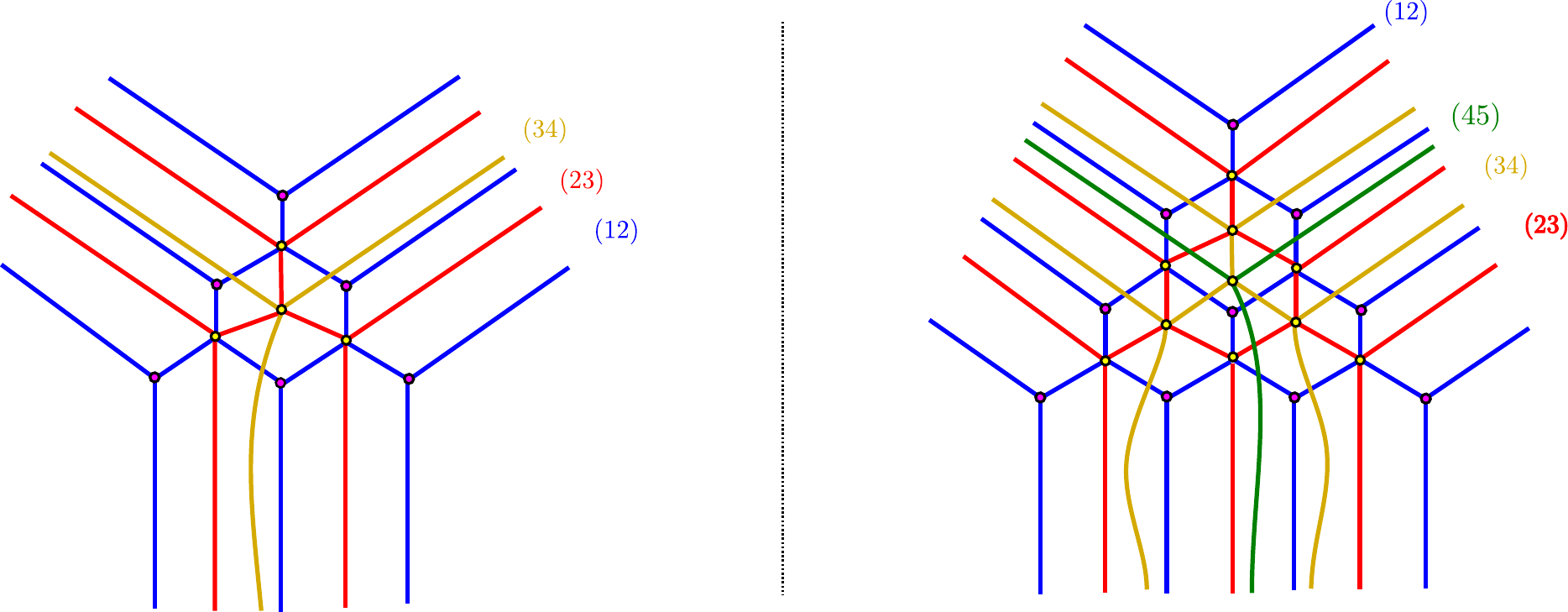}
		\caption{The Legendrian weave associated to a 4-triangle (left) and to a 5-triangle (right). The open Legendrian surface for the 4-triangle has genus one and two boundary components. The Legendrian surface for the 5-triangle has genus two and three boundary components.\hfill$\Box$}
		\label{fig:IntroNTriang}
	\end{figure}
\end{center}
The $N$-graph $G(\tau)$ and the Legendrian weave $\La(G(\tau))$ are both constructed with a local model on an $N$-triangle. Figure \ref{fig:IntroNTriang} depicts a Legendrian weave associated to the 4- and 5-graphs dual to 4- and 5-triangles. We will prove that their local flag moduli space is a complex torus by using Theorem \ref{thm:DiagrammaticsI} and the flag moduli space results from Section \ref{sec:flag}. The precise statement, proven in Section \ref{sec:app3}, reads as follows:

\begin{thm}\label{thm:FlagModuli_Ntriangle_Intro}
	Let $G(t_N)$ be the $N$-graph associated to an $N$-triangle $t_N$, and let $k$ a field. The flag moduli space of $G(t_N)$ is a ${N-1\choose 2}$-dimensional complex torus, i.e.
	$$\SM(t_N,G(t_N);k)\cong(k^*)^{{N-1\choose 2}}.$$
\end{thm}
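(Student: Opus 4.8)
The plan is to identify $\SM(t_N, G(t_N); k)$ with the configuration space of three flags in general position in $k^N$, and then to compute that space. First I would recall the explicit shape of $G(t_N)$ from Section~\ref{sec:constr}: it is the weave built from the $N-1$ monochromatic trivalent graphs of colors $1,\dots,N-1$ on the triangle $t_N$, consecutive colors meeting at hexavalent vertices in a brick-wall pattern. One reads off that the open Legendrian $\La(G(t_N))$ carries $\binom{N-1}{2}$ interior $1$-cycles $\gamma_1,\dots,\gamma_m$, $m=\binom{N-1}{2}$ — one for each interior lattice point of the $N$-fold subdivided triangle — which together with $\dd C$ generate the relevant part of $H_1(\La(G(t_N)))$. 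By Theorem~\ref{thm:intro2} and the flag-moduli description of Section~\ref{sec:flag}, $\SM(t_N,G(t_N);k)$ is the affine variety obtained from a product of copies of $\GL(N,k)/B$ — one flag $F_R$ per region $R$ of the complement of $G(t_N)$, taken modulo the overall $\GL(N,k)$-action — by imposing the incidence relations dictated by $G(t_N)$: each edge of color $i$ forces the two adjacent flags to agree away from position $i$, and each hexavalent vertex forces the local general-position/interlacing condition on the six surrounding flags.

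Next I would simplify and induct on $N$. Using the Legendrian isotopies of Theorem~\ref{thm:DiagrammaticsI} I would push the outermost color-$(N-1)$ strand to one side of the triangle, exhibiting $G(t_N)$ — up to $N$-graph moves — as $G(t_{N-1})$ glued to a monochromatic strip. Tracking the incidence relations along that strip shows that the new region flags are determined by the old ones up to exactly $N-2$ parameters valued in $k^*$, namely the microlocal monodromies along the $N-2$ new interior cycles, matching $\binom{N-1}{2}-\binom{N-2}{2}=N-2$. The base case $N=2$ is a single trivalent graph carrying no interior cycle, with flag moduli a point and $\binom{1}{2}=0$. Carried through, this identifies $\SM(t_N,G(t_N);k)$ with $\operatorname{Conf}_3^{\circ}\!\big(\GL(N,k)/B\big)$, the space of triples $(F_1,F_2,F_3)$ of flags in pairwise general position — one associated to each vertex of the triangle — taken modulo $\GL(N,k)$.

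Finally I would compute $\operatorname{Conf}_3^{\circ}$ directly. Normalize $F_1$ and $F_2$ to a fixed opposite pair of coordinate flags; this uses up $\GL(N,k)$ down to the diagonal torus $T$, acting effectively through $(k^*)^{N-1}$. Genericity puts $F_3$ in the big Bruhat cell relative to $F_1$, so $F_3=u\cdot F_1$ with $u$ lower unipotent ($\binom{N}{2}$ coordinates) and all minors pairing $F_2$ with $F_3$ nonvanishing; the residual torus rescales these coordinates, and the invariant ring on the genericity locus is freely generated by the $\binom{N}{2}-(N-1)=\binom{N-1}{2}$ Laurent monomials which are precisely the triple ratios of $(F_1,F_2,F_3)$ of Fock--Goncharov \cite{FockGoncharov_ModuliLocSys,Goncharov_IdealWebs}. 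Hence $\SM(t_N,G(t_N);k)\cong(k^*)^{\binom{N-1}{2}}$, and the microlocal monodromy formulas in terms of generalized cross-ratios identify these triple ratios with the monodromies along $\gamma_1,\dots,\gamma_m$.

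The main obstacle is the inductive step: showing that the full system of hexavalent-vertex incidence relations on the $N$-triangle collapses to ``three flags in general position'' — with neither residual moduli nor hidden constraints beyond genericity. The torus computation of the last paragraph and the base case $N=2$ are routine; the care is entirely in bookkeeping how each move of Theorem~\ref{thm:DiagrammaticsI} acts on the region flags and in checking that peeling off the outermost strand contributes exactly $N-2$ new $k^*$-factors, so that the induction closes.
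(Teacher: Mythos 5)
Your strategy of inducting on $N$ and peeling off a row matches the paper's, but the ``monochromatic strip'' you propose to peel does not describe the actual geometry: the color $N-1$ in $G(t_N)$ appears at the single \emph{innermost} hexagonal vertex, not as an outer boundary strand, and the row the paper adds in passing from $t_N$ to $t_{N+1}$ involves \emph{all} $N$ colors in $N$ combinatorial stages (one new $(N-1,N)$-hexagonal vertex, then cascading hexagonal vertices of lower color, ending with $N$ trivalent color-$1$ vertices). The paper tracks the flag constraints stage by stage using Lemma~\ref{lem:hexagonal} at interior hexagonal vertices and the residual column of gauge freedom in $\PGL_{N+1}$ relative to $\PGL_N$, obtaining $N-1$ new $k^*$ factors (your $N-2$ is the $t_{N-1}\!\to\! t_N$ count, which is consistent). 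So the induction is the right engine, but the strip bookkeeping needs to be redone with the correct multi-color combinatorics, not a monochromatic peel.

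More serious is the identification $\SM(t_N, G(t_N); k) \cong \operatorname{Conf}_3^\circ(\GL_N/B)$ asserted at the end of your second paragraph and leaned on in the third. This is false for $N\geq 4$: the flag moduli space is only a \emph{cluster chart} inside the configuration space of pairwise-transverse flags, namely the open locus where all the mixed flag minors appearing in the Fock--Goncharov triple-ratio coordinates are nonvanishing. Pairwise $w_0$-transversality controls only minors built from two flags at a time; a determinant such as $\Delta(a_1\, a_2\, b_1\, c_1)$ in $k^4$ --- two vectors from $F_1$, one each from $F_2$ and $F_3$ --- may vanish for a pairwise-transverse triple, so $\operatorname{Conf}_3^\circ$ strictly contains $(k^*)^{{N-1\choose 2}}$ once $N\geq 4$. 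Your assertion that genericity gives ``all minors pairing $F_2$ with $F_3$ nonvanishing'' is therefore wrong on two counts: those are not the relevant minors (the triple ratios mix all three flags), and even that set is not forced by pairwise transversality. The two errors cancel numerically --- you land on the right torus --- but as written the argument makes a false claim about $\operatorname{Conf}_3^\circ$, which would make the theorem false if the identification actually held. The paper avoids all of this by inducting directly on the weave combinatorics and never passing through the configuration space; the Richardson-variety discussion preceding the proof is motivational, not the proof itself.
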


The combinatorial number ${N-1\choose 2}$ appears geometrically as the rank of the first homology class of the Legendrian weave $\La(G(t_N))$. Now, the class of Legendrian weaves $\La(G(\tau))$ arising from ideal $N$-triangulations $\tau$ of punctured surfaces is of central interest in the study of moduli spaces of framed local systems for the Lie group $\GL(N,\C)$ \cite{FockGoncharov_ModuliLocSys}. Indeed, the Legendrian surface $\La(G(\tau))$ is a compactification of the Legendrian lift of the Goncharov-Kenyon Lagrangian conjugate surface $L_\tau\sse (T^*C,\la_\st)$, see \cite{Goncharov_IdealWebs,STWZ}. Thus, the non-Abelianization technique, expressing higher-rank local systems in $S$ in terms of rank-one local systems on $L_\tau$, can also be recovered by studying these Legendrian weaves $\La(\tau)$ -- see Section \ref{ssec:Nonabelianization} for an explicit computation. In particular, the set of Legendrian surfaces $\{\La(\tau)\}_\tau$ provides a symplectic geometric realization of the set of cluster charts in this moduli spaces of framed local systems. This parallels the work of \cite{STWZ} on conjugate surfaces. See Section \ref{sec:app3} for details.\\

{\bf Basic Notation and Color Code.} The germs of singularities of caustics and wavefronts are referred to according to the classical notation from the theory of singularities, following V.I. Arnol'd \cite{ArnoldSing}. Given a subset $X\sse Y$ of a smooth manifold $Y$, we denote by $\Op(X)$ an arbitrarily small but fixed open neighborhood of it, following M. Gromov \cite{Gromov86}.

Regarding colors, the two colors \color{blue} blue \color{black} and \color{red} red \color{black} are associated to edges with adjacent transpositions, i.e. edges with consecutive transpositions $(i-1,i),(i,i+1)$, for a choice $2\leq i\leq N-1$. The same holds for colors \color{red} red \color{black} and \textcolor{Dandelion}{yellow} \color{black} used together. The {\it three} colors \color{blue} blue\color{black}, \color{red} red \color{black} and \textcolor{Dandelion}{yellow} \color{black} together denote edges labeled by three consecutive transpositions $(i-1,i),(i,i+1)$ and $(i+1,i+2)$, respectively, for a choice $2\leq i\leq N-2$. In a diagram with the two colors \color{blue} blue \color{black} and \textcolor{Dandelion}{yellow}, without \color{red}red\color{black}, these two colors denote any edges with disjoint transpositions. The color \textcolor{orange}{orange} will exclusively be used to denote cusp edges, corresponding to edges of $A_2$-singularities. Finally, we use \textcolor{purple}{purple} dots (or black dots) for $D_4^-$ singularities, \textcolor{Dandelion}{yellow} dots for $A_1^3$ singularities and \textcolor{orange}{orange} dots for $A_3$-swallowtail singularities.\hfill$\Box$\\

{\bf Acknowledgements.} We thank Honghao Gao and Kevin Sackel for their thorough reading of the initial version of this manuscript, and Honghao Gao, Eugene Gorsky and Harold Williams for many valuable comments. We also thank the referees for their suggestions and comments. We are grateful to Dylan Thurston for providing key examples of quiver mutations, and to Ben Elias for discussions on Soergel calculus. We also thank J. Etnyre, O. Lazarev, I. Le, L. Ng, J. Sabloff, L. Traynor and D. Treumann for discussions, questions and interest in this work. R.~Casals is supported by the NSF grant DMS-1841913, a BBVA Research Fellowship and the Alfred P. Sloan Foundation. E.~Zaslow is supported by the NSF grant DMS-1708503.\hfill$\Box$\\



\section{$N$-graphs and Legendrian Weaves}\label{sec:NGraphsLegWeaves}

In this section we introduce the notion of an $N$-graph $G$ and construct the Legendrian surface $\La(G)$ associated to it. The interaction between the combinatorics of $G$ and the contact geometric invariants of $\La(G)$ is the starting focus of this article. The reader is referred to \cite{Graph1,Graph2} for introductory material on graph theory and to \cite{Etnyre05,Geiges08} for the basics of contact topology.


\subsection{$N$-graphs}\label{ssec:Ngraphs}
Let $C$ be a smooth surface and $N\in\N$ a natural number. An embedded graph $G\sse C$ is said to be trivalent if all its vertices have degree three.  Such a vertex is depicted on the left in Figure \ref{fig:VertexType}.

\begin{center}
	\begin{figure}[h!]
		\centering
		\includegraphics[scale=0.6]{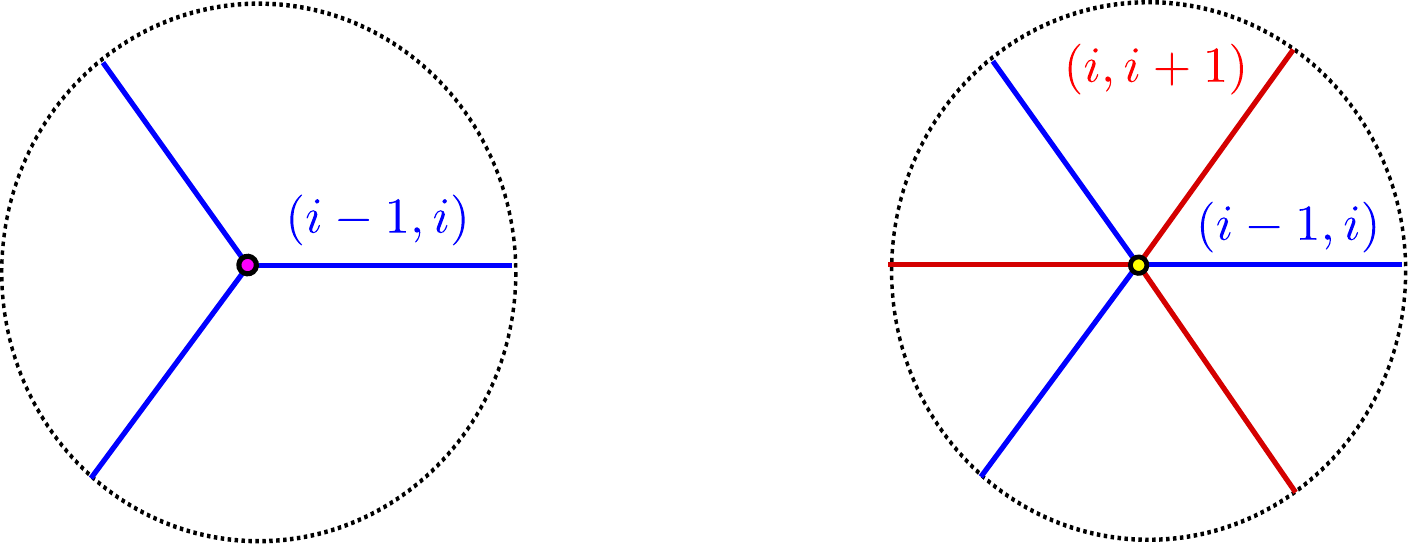}
		\caption{Trivalent vertex (left) and Hexagonal Point (right).}
		\label{fig:VertexType}
	\end{figure}
\end{center}

\begin{definition}
Let $J$ and $K$ be two trivalent graphs embedded in $C$, having an isolated intersection point at a common vertex $v\in J\cap K.$  The intersection $v$ is said to be {\it hexagonal} if the six half-edges in $C$ incident to $v$ interlace, i.e. alternately belong to $J$ and $K$.\hfill$\Box$
\end{definition}

The right diagram in Figure \ref{fig:VertexType} depicts a hexagonal vertex, where the graph $J$ is labeled $(i-1,i)$ in blue and $K$ is labeled $(i,i+1)$ in red. These hexagonal intersection points will be referred to as {\it hexagonal $(i,i+1)$-points}.

\begin{definition}\label{def:Ngraph}
An $N$-graph $G$ on a smooth surface $C$ is a set $G=\{G_i\}_{1\leq i\leq N-1}$ of $N-1$ embedded trivalent graphs $G_i\sse C$, possibly empty or disconnected, such that $G_i$ is allowed to intersect $G_{i+1}$ only at hexagonal points, $1\leq i\leq N-2.$\hfill$\Box$
\end{definition}

Two examples of $N$-graphs on the plane $C=\R^2$ are depicted in Figure \ref{fig:NGraphExample}. The (trivalent) vertices are depicted by {\it purple} or {\it black} dots and the hexagonal intersection points by {\it yellow} dots. Note that $G_i,G_j\sse C$ are allowed to intersect (anywhere) if $j\neq i,i\pm 1$, and they may intersect non-transversely.

\begin{remark}
	\label{rmk:bicolored}
	We can think of an $N$-graph as an immersed graph with colored edges, the color $i$ corresponding to the graph $G_i$, $1\leq i\leq N-1$.  Edges labeled by numbers differing by two or more may pass through one another (hence the immersed property, which is met generically), but not at a vertex. In particular, a 3-graph is a bicolored graph with monochromatic trivalent vertices and interlacing hexagonal vertices.\hfill$\Box$
\end{remark}

Consider $\tau(N):=\{(i,i+1)\in S_N:1\leq i\leq N-1\}\sse S_N$ the subset of simple transpositions and denote $\tau_i:=(i,i+1)$. We label the edges of an $N$-graph $G=\{G_i\}$ which belong to the graph $G_i$ with the transposition $\tau_i$, as we have done in Figure \ref{fig:NGraphExample}. These edges will also be referred to as $\tau_i$-edges, or $i$-edges. By definition, the trivalent vertices belonging to the graph $G_i$ have three incident $\tau_i$-edges. The hexagonal points in $G_i\cap G_{i+1}$ have six edges incident to it, alternately labeled with the transpositions $\tau_i$ and $\tau_{i+1}$ in $\tau(N)$. Figure \ref{fig:VertexType} depicts the local model for the trivalent vertices of the cubic graph $G_{i-1}$ and a hexagonal intersection point in $G_i\cap G_{i-1}$. Observe that a $2$-graph is, by definition, an embedded trivalent graph.

The study of $N$-graphs brings the combinatorial ingredients of the article, and we provide in Section \ref{sec:constr} several combinatorial constructions of $N$-graphs. For now, we introduce its geometric counterpart, the Legendrian surface associated to an $N$-graph.


\subsection{Singularities of wavefronts}\label{ssec:SingWave}

The Legendrian surface $\La(G)$ associated to an $N$-graph $G\sse C$ is an embedded Legendrian in the $1$-jet space $(J^1C,\xi_\st)$. The Legendrian surface $\La(G)$ is described by using germs of Legendrian wavefronts \cite[Section 3.1]{ArnoldSing} in the Darboux chart $(\R^5,\xi_\st)$, where the contact 4-distribution $\xi_\st$ is defined as
$$\xi_\st=\ker \a_\st,\mbox{ where }\a_\st:=dz-y_1dx_1-y_2dx_2,$$
and $(x_1,x_2,y_1,y_2,z)\in\R^5$ are Cartesian coordinates in $\R^5$. This is the local model for any contact 4-distribution in the neighborhood of a point \cite[Theorem 2.5.1]{Geiges08}. Since $\la_\st=y_1dx_1+y_2dx_2$ is the Liouville form of the cotangent bundle $(T^*\R^2,\omega_\st)$, this Darboux chart $(\R^5,\xi_\st)$ is contactomorphic to the 1-jet space $(J^1\R^2,\ker\{dz-\la_\st\})$.

The Legendrian fibration $\pi:\R^5\lr\R^3$, $\pi(x_1,x_2,y_1,y_2,z)=(x_1,x_2,z)$ allows us to assign a smoothly embedded Legendrian surface $\La(\Sigma)\sse\R^5$ in the domain of $\pi$ to certain singular surfaces $\Sigma\sse\R^3$ in its target. The coordinates $(y_1,y_2)$ of the Legendrian $\La(\Sigma)$ assigned to $\Sigma$ are
$$y_1=x_1\mbox{-slope of the tangent plane } T_{(x_1,x_2,z)}\Sigma,$$
$$y_2=x_2\mbox{-slope of the tangent plane } T_{(x_1,x_2,z)}\Sigma.$$
In a local parametrization $\sigma:\R^2\lr\R^3$ of $\Sigma$, $\sigma(u,v)=(u,v,z(u,v))$, this reads
$$y_1=\partial_uz(u,v),\quad y_2=\partial_vz(u,v).$$
This assignment is dictated by the vanishing of the contact 1-form $\a_\st$ along $\La=\La(\Sigma)$. The three-dimensional case is explained in detail in \cite[Section 3.2]{Geiges08}, the general case is discussed in \cite[Chapter 5]{ArnoldGivental01}, \cite[Section 3.2]{EkholmEtnyreSullivan05b} and \cite[Section 2]{CasalsMurphy}. The germs of singularities of $\Sigma$ that lift to an embedded Legendrian $\La$, and equivalently, the singularities of the map $\pi|_\La$, are restricted. These are known as singularities of fronts, or equivalently, Legendrian singularities \cite{ArnoldGivental01}. By definition, singular surfaces $\Sigma$ obtained as the image of an embedded Legendrian submanifold via a Legendrian mapping are referred to as (wave)fronts.

\begin{remark}
The classification of generic singularities of spatial fronts $\Sigma\sse\R^3$ is stated in \cite[Theorem 3.1.1]{ArnoldGivental01}, and that of generic singularities of a 1-parametric family of spatial fronts $\Sigma\sse\R^3$ is explained in \cite[Theorem 3.4.2]{ArnoldGivental01}.\hfill$\Box$
\end{remark}

The main spatial wavefronts $\Sigma$ that we use in the course of this article use three different germs of singularities of Legendrian fronts: $A_1^2,A_1^3$ and $D_4^-$, which we now describe. We emphasize that these are singularities of the wavefront projections only:  the corresponding local Legendrian surfaces are all smooth.

\subsubsection{The $A_1^2$ germ} This germ is obtained as a product of a 2-dimensional planar front times an interval. It is described by the germ of the singular surface
$$\Sigma(A_1^2)=\{(x_1,x_2,z)\in\R^3:\quad (x_1^2-z^2)=0\}$$
at the origin. This wavefront is informally called an $A_1^2$-\emph{crossing}, or a {\it crossing}, and the set of points $\{(x_1,x_2,z)\in\Sigma(A_1^2): x_1=0,z=0\}$ is referred to as an edge, or segment, of {\it $A_1^2$-crossings}. This spatial front is depicted on the left in Figure \ref{fig:SingWave}. Its Legendrian lift $\La(\Sigma(A_1^2))\sse(\R^5,\xi_\st)$ consists of two disjoint embedded Legendrian 2-disks.

\subsubsection{The $A_1^3$ germ} The wavefront $A_1^3$ is given by the germ at the origin of the singular surface
$$\Sigma(A_1^3)=\{(x_1,x_2,z)\in\R^3: (x_1-z)(x_1+z)(z-x_2)=0\}\sse\R^3,$$
This spatial front is depicted in the center of Figure \ref{fig:SingWave}. Considered as a germ, the origin is the $A_1^3$-wavefront singularity, and the codimension-1 singular strata consists of six half-lines of $A_1^2$ singularities. The Legendrian lift $\La(\Sigma(A_1^3))$ of the $A_1^3$ germ to $(\R^5,\xi_\st)$ consists of three disjoint embedded Legendrian 2-disks.

\subsubsection{The $D_4^-$ germ} The third germ $\Sigma(D_4^-)=\mbox{Im}(\delta_4^-)\sse\R^3$ of a Legendrian singularity that we use is given by the germ at the origin for the image of the map
$$\delta_4^-:\R^2\lr\R^3,\quad \delta_4^-(x,y)=\left(x^2-y^2,2xy,\frac{2}{3}(x^3 - 3xy^2)\right).$$
The $D_4^-$-singularity of the spatial wavefront $\mbox{Im}(\delta_4^-)$ is at $(0,0,0)\in\R^3$. The front $\mbox{Im}(\delta_4^-)$ itself also has three half-lines of $A_1^2$-crossings, intersecting at the origin. This is depicted in the right of Figure \ref{fig:SingWave}. The Legendrian lift $\La(\mbox{Im}(\delta_4^-))\sse(\R^5,\xi_\st)$ of the $D_4^-$ spatial front is an embedded Legendrian 2-disk.  We refer the reader to \cite{ArnoldSing,TreumannZaslow} for more descriptions --- see also Remark \ref{rmk:D4minus} below.
\begin{center}
\begin{figure}[h!]
\centering
  \includegraphics[scale=0.75]{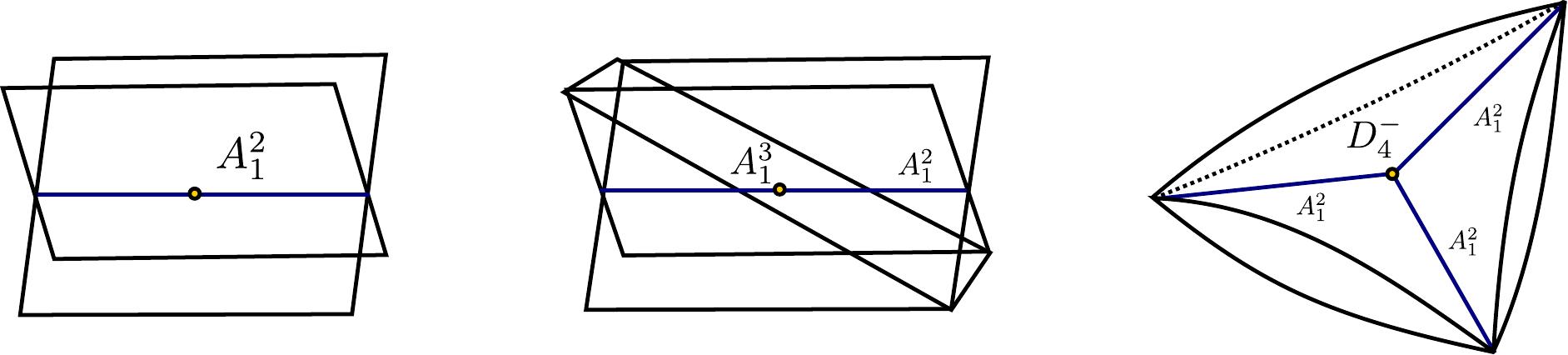}
  \caption{The $A_1^2$ spatial front (left), the germ of the $A_1^3$ Legendrian singularity (center) and the $D_4^-$ Legendrian wavefront (right).}
  \label{fig:SingWave}
\end{figure}
\end{center}
The connection of the above three Legendrian singularities with the Weyl groups, justifying their nomenclature, can be found in \cite[Section 3.3]{ArnoldGivental01}. It might be relevant to notice that $D_4^-$ is not the germ of a singularity for a {\it generic} Legendrian wavefront, but still a valid singularity for a given spatial wavefront. In addition, it is known that the singularity $D_4^-$ is generic in 1-parameter families of Legendrian fronts \cite[Section 3.3]{ArnoldSing}. As a result, most of the Legendrians we construct are non-generic, in their isotopy class, with respect to the fixed Legendrian projection.  This rigidification simplifes the analysis and
combinatorics.

\begin{remark}
\label{rmk:D4minus}
The $D_4^-$ Legendrian singularity has the property that its singular strata, excluding $A_1^2$ singularities, is a point, which lies in real codimension $2$. This is not the case for the majority of Legendrian surface singularities, such as the Legendrian $A_3$-swallowtail, cusp-edges $A_2A_1$ and the purse wavefront $D_4^+$, the former two even being generic. (These singularities feature in Section \ref{sec:moves}.) The geometric reason for this codimension-2 phenomenon is the existence of the {\it holomorphic} Legendrian surface singularity
$$t:\C\longrightarrow (J^1(\C,\C),\ker\{dw_1-w_2dw_3\}),\quad (w_1,w_2,w_3)=t(w)=\left(w^2,w,\frac{2}{3}w^3\right),$$
whose real part is the real Legendrian singularity $D_4^-$. This holomorphic map is the complexification of the real simple cusp singularities appearing in generic front projections of embedded Legendrian knots in a Darboux chart $(\R^3,\xi_\st)$.\hfill$\Box$
\end{remark}

We also use the $A_2,A_2A_1$ front singularities, geometrically represented by a simple cusp in $\R^2$ times an interval, and its intersection with a $2$-plane. These $A_2$-singularities do not directly arise from an $N$-graph $G\sse C$, but rather from satelliting the smooth surface $C$ to a Legendrian surface in a contact 5-manifold $(Y,\xi)$, typically $(\S^5,\xi_\st)$.


\subsection{Legendrian Weaves}\label{ssec:LegSing}

Let $G\sse C$ be an $N$-graph, as introduced in Subsection \ref{ssec:Ngraphs} above. The principle that associates a Legendrian $\La(G)$ to the $N$-graph $G$ is that $G$ dictates the configuration of $A_1^2$ singularities (crossings) of its Legendrian wavefront. This is possible because the singularities introduced in Subsection \ref{ssec:SingWave} are uniquely determined by their $A_1^2$ front singularities. Let us explain the construction in detail.

First, we choose the ambient contact manifold, where the embedded Legendrian surface $\La(G)$ belongs, to be the 1-jet space of the smooth surface $C$. That is,
$$\La(G)\sse (J^1C,\xi_\st)=(\{(x,z)\in T^*C\times \R\},\ker\{dz-\la_\st\}),$$
where $\la_\st\in\Omega^1(T^*C)$ is the Liouville form \cite[Section 1.4]{Geiges08}, and see \cite[Example 2]{ArnoldSing} and \cite[Example 2.5.11]{Geiges08} for details on the $1$-jet space. The local germs described in Subsection \ref{ssec:SingWave} above and the Legendrian front projection $\pi:(J^1C,\xi_\st)\lr C\times\R$ allow us to assign a Legendrian $\La(\Sigma)\sse (J^1C,\xi_\st)$ to a spatial wavefront $\Sigma\sse C\times\R$ in the target, as follows.

The construction of the front $\La(G)\sse (J^1C,\xi_\st)$ is obtained by gluing local wavefront models in $U_i\times\R$, $i\in I$, $U_i\cong\D^2$, which are the targets of front projections in the Darboux charts $(J^1 U_i,\xi_\st)\cong(J^1\D^2,\xi_\st)$, for $i\in I$. This is formalized in the following definition:

\begin{definition}\label{def:LegWeave1}
Let $\D_N=\D^2\times\{1\}\cup\ldots\cup \D^2\times\{N\}\sse\D^2\times\R$.  We consider $\D_N$ as a disconnected, horizontal wavefront.  Let $P\sse\D^2\times\{0\}$ be one of the following four local models of an $N$-graph $G\sse\D^2$:

\begin{itemize}
 \item[1.] A unique $i$-edge in $\D^2$, as drawn at the bottom of the second column in Figure \ref{fig:Weaving}.
 \item[2.] A unique trivalent $i$-vertex, as shown at the bottom of the third column in Figure \ref{fig:Weaving}.
 \item[3.] A unique hexagonal $(i,i+1)$-point, depicted in the fourth column in Figure \ref{fig:Weaving}.
 \item[4.] The empty set.
\end{itemize}

Here, recall that an $i$-edge is an edge belonging to the graph $G_i\sse G$ of the $N$-graph $G\sse \D^2$, for $1\leq i\leq (N-1)$. By definition, the Legendrian wavefront $\D_N(P)\sse\D^2\times\R$ associated to $P$ is obtained as follows:

\begin{itemize}
\item[-] If $P$ is a $i$-edge, insert an $A_1^2$-intersection along the two sheets $\D^2\times\{i\}$ and $\D^2\times\{i+1\}$ of the wavefront $\D_N$. This $A_1^2$ intersection must be inserted such that the image of the $A_1^2$ singular locus coincides with $P$ under the projection $\D^2\times\R\lr\D^2$ onto the first factor.\\

\item[-] If $P$ is a trivalent $i$-vertex, introduce a $D_4^-$-singularity between the two sheets $\D^2\times\{i\}$ and $\D^2\times\{i+1\}$ in the wavefront $\D^2_N$. This $D_4^-$ singularity must be introduced such that, under the projection $\D^2\times\R\lr\D^2$ onto the first factor, the image of the $A_1^2$-crossings coincides with the three edges of $P$ and the $D_4^-$ singular point is mapped to the unique trivalent vertex of $P$.\\

\item[-] If $P$ is a hexagonal $(i,i+1)$-point, insert an $A_1^3$-intersection along the three disjoint sheets $\D^2\times\{i\},\D^2\times\{i+1\}$ and $\D^2\times\{i+2\}$ of the wavefront $\D^2_N$. The pattern for the $A_1^3$-wavefront must be inserted such that, under the projection $\D^2\times\R\lr\D^2$ onto the first factor, the origin in the $A_1^3$-singularity maps to the unique vertex of $P$, and the six half-lines of $A_1^2$-crossings map to the six edges emanating from the vertex.
\end{itemize}

These wavefronts are depicted in Figure \ref{fig:Weaving}. For $P$ empty we use the front $\D^2\times\{1\}\cup\ldots\cup \D^2\times\{N\}\sse\D^2\times\R$. We refer to the wavefronts $\D_N(P)$ as being obtained from the wavefront $\D_N$ by weaving according to the pattern $P$.\hfill$\Box$
\end{definition}

\begin{center}
\begin{figure}[h!]
\centering
  \includegraphics[scale=0.8]{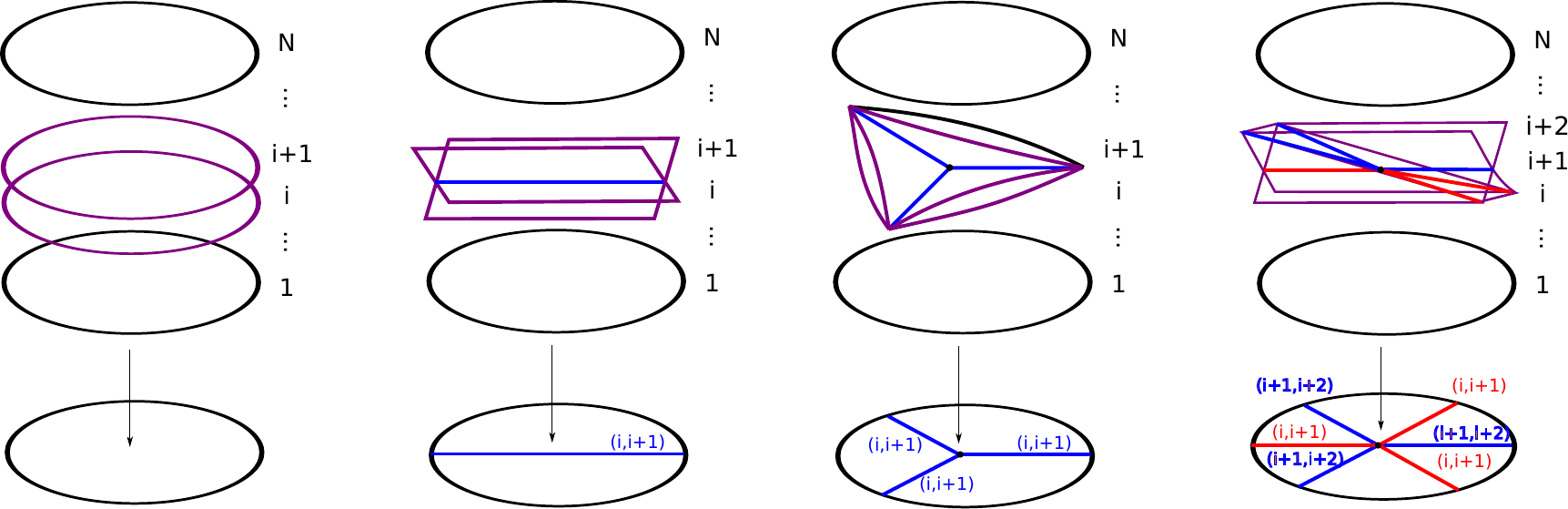}
  \caption{The leftmost wavefront is $\D_N$, then from left to right we find $\D_N(P)$ where $P$ is an edge, a trivalent vertex and a hexagonal vertex.}
  \label{fig:Weaving}
\end{figure}
\end{center}

Definition \ref{def:LegWeave1} describes how to weave the wavefont $\D_N\sse\D^2\times \R$, which we have fixed, according to a pattern $P\sse \D^2\times\{0\}$. 
To glue models, let $\{U_i\}_{i\in I}$ be a finite cover of $C$ by open 2-disks $U_i \cong \D^2$, refined as necessary so that each $U_i$ contains no more than one non-empty feature $P$ of the $N$-graph $G.$
Now, let us consider two 2-disks $U_1,U_2\sse C$ and two corresponding patterns $P_1,P_2$ therein.

Suppose that the patterns $P_1$ and $P_2$ coincide along the intersection $U_1\cap U_2$.  Then we say that $P_1\cup P_2$ defines a pattern in $U_1\cup U_2$. By definition, the wavefront $\Sigma(P_1\cup P_2)$ associated to $P_1\cup P_2$ is obtained by considering the set-theoretical union of $\D_N(P_1)$ and $\D_N(P_2)$ in $(U_1\cup U_2)\times\R$. For brevity of notation, we will say that $\Sigma(P_1\cup P_2)$ is obtained by weaving $\D^2_N\cup \D^2_N\sse (U_1\times\R)\cup (U_2\times\R)$ according to the pattern $P_1\cup P_2$. Finally, the Legendrian surface associated to an $N$-graph is defined as follows:

\begin{definition}\label{def:LegWeave2}
Let $C$ be a smooth surface and $G\sse C$ an $N$-graph, the Legendrian weave
$$\La(G)\sse (J^1C,\xi_\st)$$
is the embedded Legendrian surface whose wavefront $\Sigma(G)\sse C\times\R$ is obtained by weaving the wavefront $C\times\{1\}\cup\ldots\cup C\times\{N\}\sse C\times\R$ according to the pattern $G\sse C$.\hfill$\Box$
\end{definition}

Let $\{\varphi_t\}_{t\in[0,1]}\sse\Diff^c(C)$, $\varphi_0=\mbox{Id}$, be a compactly supported isotopy of the smooth surface $C$. Then the Legendrian surfaces $\La(\varphi_t(G))\sse(J^1C,\xi_\st)$, as described in Definition \ref{def:LegWeave2}, are Legendrian isotopic, relative to their boundaries. Hence, for the purposes of this article, our $N$-graphs $G\sse C$ are considered up to such planar isotopies. Similarly, Legendrian fronts in $\R^3$ are to be considered up to homotopy of fronts.\\

Thanks to Definition \ref{def:LegWeave2}, the wealth of contact topology invariants \cite{EliashbergGiventalHofer00,EkholmEtnyreSullivan05b,GKS_Quantization,STZ_ConstrSheaves,CasalsMurphy} can be used to define algebraic structures associated to $N$-graphs $G\sse C$. For instance, the articles \cite{CasalsMurphy2,TreumannZaslow} show that the chromatic polynomial of (the dual of) a trivalent graph $G$ -- which is a 2-graph -- is contained in the Floer-theoretical invariants of the Legendrian weave $\La(G)$. Conversely, from a contact topology perspective, the connection to combinatorics and algebraic geometry provides a new tool for computing contact invariants of higher-dimensional Legendrian submanifolds. This will be the focus of subsequent sections.

\begin{remark}\label{rmk:1dim}
The one-dimensional analogue of a Legendrian weave is a Legendrian braid, i.e. a positive braid. Indeed, an $N$-graph in a one-manifold $I$ is defined to be a set of points, each point labeled with a permutation in $\tau(N)\sse S_N$. The only planar front singularity that we can use is $A_1^2$, corresponding to a crossing, necessarily positive.  Thus, 1-dimensional weaving consists of introducing positive crossings to the $N$ strands
$$\D^1_N=I\times\{1\}\cup\ldots\cup I\times\{N\}\sse I\times\R$$
and concatenating them side by side. This is precisely the front for an $N$-strand positive braid \cite{PrasolovSossinsky}, which lifts to a Legendrian link in $(J^1S^1,\xi_\st)$ \cite[Section 3.3.1]{Geiges08}. The Legendrian weaves introduced in Definition \ref{def:LegWeave2} are thus the Legendrian surface generalization of Legendrian braids.\hfill$\Box$
\end{remark}


\subsection{Smooth Topology of Weaves}\label{ssec:homology} 
Let $G$ be an $N$-graph in a surface $C$, in this subsection we address the smooth topology of the Legendrian surface $\La(G)\sse(J^1C,\xi_\st)$.\footnote{This is necessary for our applications, especially in the study of microlocal monodromies and Lagrangian fillings in Section \ref{sec:app2} and the non-Abelianization map in Section \ref{sec:app3}.} The smooth invariants of $\La(G)$ are the first homology $H_1(\La(G),\Z)$, in particular its genus $g(\La(G))\in\N$, and the number of boundary components $|\dd\La(G)|$. For simplicity, we assume that $C$ is a closed surface, and thus $\dd\La(G)=\emptyset$.  We also assume that $G$ is a connected $N$-graph, i.e.~ the union of the graphs $G_i$, $i\in I$, is a connected topological subspace of $C$.

The surface $\La(G)$ is a branched $N$-fold cover over $C$ simply branched over the trivalent vertices of $G$. Indeed, the image of $\La(G)$ by the projection $J^1C\lr T^*C$ along the Reeb $\R$-direction yields an immersed surface $L(G)\sse T^*C$, and the canonical projection $T^*C\lr C$ restricts to $L(G)$ as an $N$-fold branched cover. The branch set is the image of the set of $D_4^-$ singularities. As a result, the genus of $\La(G)$ is provided by the Riemann-Hurwitz formula
$$\chi(\La(G))=N\chi(C)-v(G),\;\mbox{i.e. }g(\La(G))=\frac{1}{2}(v(G)+2-N\chi(C))$$
where $v(G)$ is the number of (trivalent) vertices of $G$.

\begin{remark}
If the surface $C$ has boundary, each boundary component of $\dd C$ contributes to a piece of the boundary $\dd\La(G)$ of the Legendrian surface $\La(G)\sse(J^1C,\xi_\st)$. Let $\kappa\in\N$ be the number of cycles in the (minimal length) factorization of the monodromy of the branched cover along a given boundary component of $\dd C$. Then, that one boundary component of $C$ contributes to $\kappa$ distinct boundary components for the Legendrian surface $\La(G)$.\hfill$\Box$
\end{remark}

\begin{example}
The Legendrian weaves $\La(G_1),\La(G_2)\sse (J^1(\S^2),\xi_\st)$ associated to the $3$- and $4$-graphs in Figure \ref{fig:NGraphExample} are closed Legendrian surfaces of genus $3$ and $4$, respectively. Should the graphs $G_1,G_2\sse\R^2$ be considered in the 2-plane $\R^2$, instead of the 2-sphere $\S^2$, the Legendrian surfaces $\La(G_1),\La(G_1)\sse (J^1(\R^2),\xi_\st)$ have genus $3$ and $4$, with $3$ and $4$ boundary components, respectively.\hfill$\Box$
\end{example}

Now, the $\Z_2$-monodromy of $\La(G)$ along a non-trivial 1-cycle of the base $C$ is trivial, and thus the contributions of the graph $G$ to $H_1(\La(G),\Z)$, as expressed by the above formula, can be considered by studying planar pieces. Let us then assume that $g(C)=0$ and construct 1-cycles in $H_1(\La(G),\Z)$ in terms of the edges of the $N$-graph.

\begin{center}
\begin{figure}[h!]
\centering
  \includegraphics[scale=0.6]{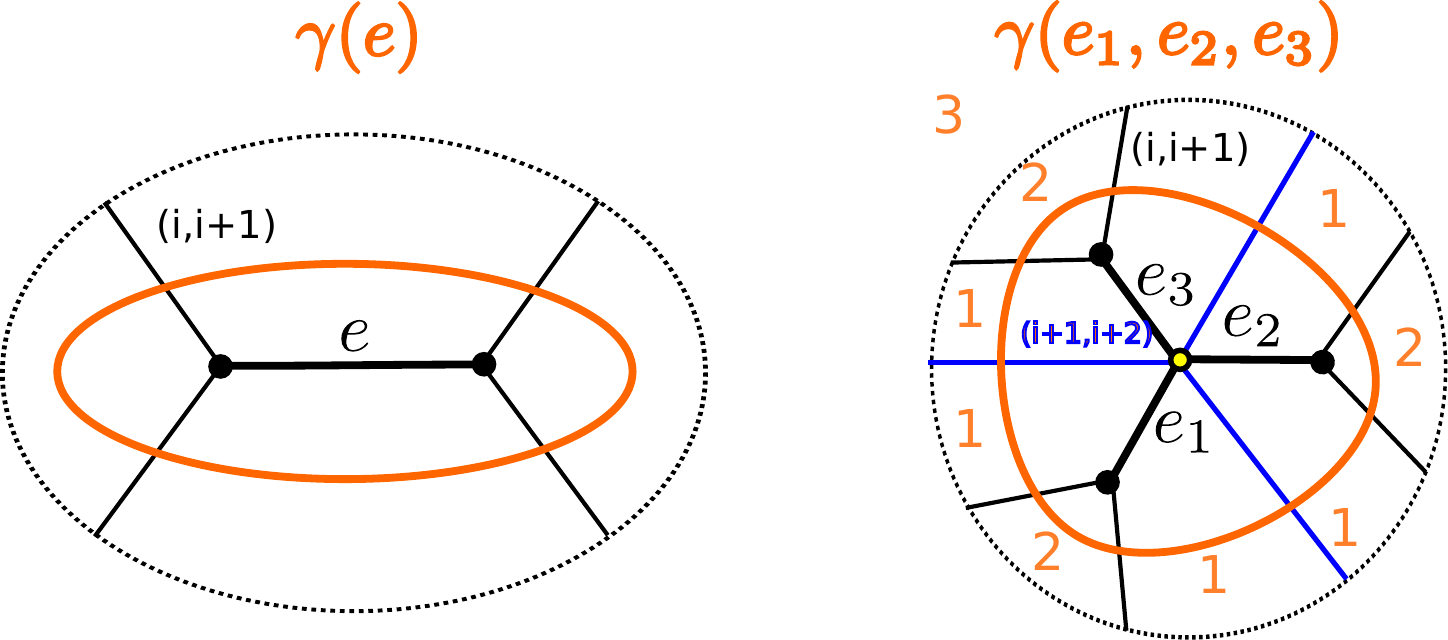}
  \caption{Two combinatorial descriptions of $1$-cycles in $H_1(\La(G),\Z)$.}
  \label{fig:1Cycle}
\end{figure}
\end{center}

There are two direct descriptions of 1-cycles $\gamma\in H_1(\La(G),\Z)$:

\begin{itemize}
 \item[1.] Each edge $e$ of the graph $G$ connecting two trivalent vertices defines a 1-cycle $\gamma(e)\in H_1(\La(G),\Z)$. The projection of this 1-cycle onto the pattern $P$ with two trivalent vertices is depicted in orange on the left of Figure \ref{fig:1Cycle}. In order to construct $\gamma(e)$ from the orange curve, lift a point in the orange curve to the annulus $\La(P)$, to either one of the two sheets, and uniquely follow the lift along the orange curve. Since the lift is isotopic to one of the boundary components of the annulus, it generates $H_1(\La(P),\Z)\cong\Z$. The 1-cycle $\gamma(e)$ is drawn directly in the wavefront projection in Figures \ref{fig:1CycleFront}. We refer to this type of 1-cycles as {\it monochromatic} edges or (short) $\sf I$-cycles.
 
 \begin{center}
 	\begin{figure}[h!]
 		\centering
 		\includegraphics[scale=0.7]{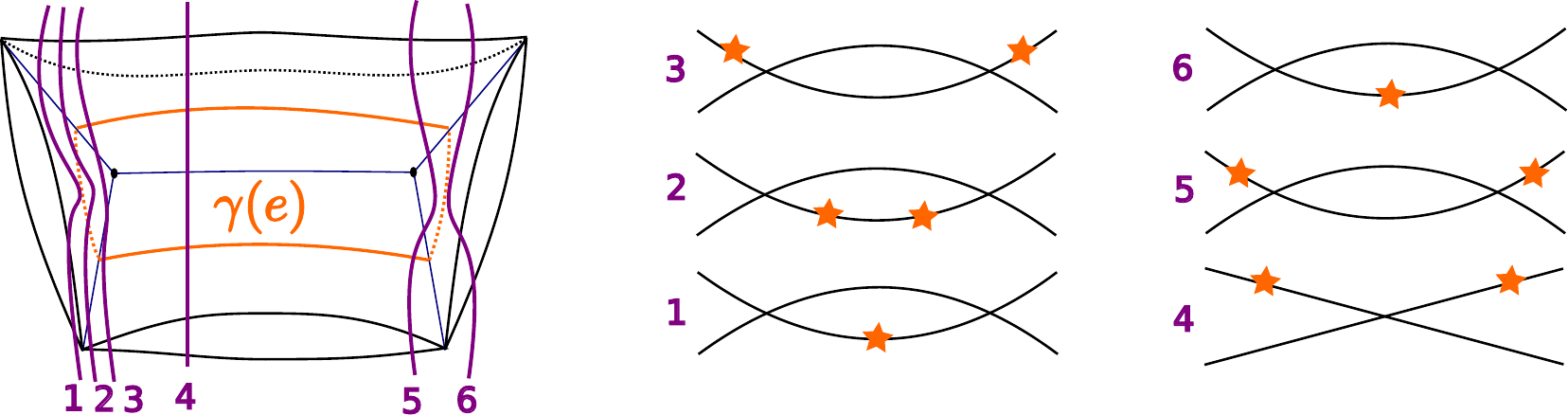}
 		\caption{The first type of $1$-cycle $\gamma(e)$ drawn in the wavefront (left) and in a vertical slicing (right). Each slice on the left is labeled by a number. The $1$-cycle $\gamma(e)$ appears as five-pointed stars in each slice as shown on the right.}
 		\label{fig:1CycleFront}
 	\end{figure}
 \end{center}
 
 There is a simple extension of this construction, depicted in Figure \ref{fig:1CycleExtended}. Consider a trivalent vertex $v\in G_i$ and a linear chain of edges $e_1,e_2,\ldots,e_k$ in $G$ such that $e_1$ connects $v$ to a hexagonal vertex, $e_i$ connect two hexagonal vertices for $2\leq i\leq k-1$ and $e_k$ connects the free hexagonal vertex in $e_{k-1}$ to a trivalent vertex.  Suppose further that $e_j$ and $e_{j+1}$ meet at opposite rays of the hexagonal vertex between them, $1\leq j \leq k-1.$  Then the orange curves in the patterns all lift to $1$-cycles which are essential\footnote{The topology of $\La(P)$ is that of an annulus union disjoint $2$-disks.} in the surfaces $\La(P)$ for the corresponding patterns $P$. These 1-cycles are referred to as {\it long edges} or {\it long $\sf I$-cycles}.\\
 
 \begin{center}
\begin{figure}[h!]
\centering
  \includegraphics[scale=0.7]{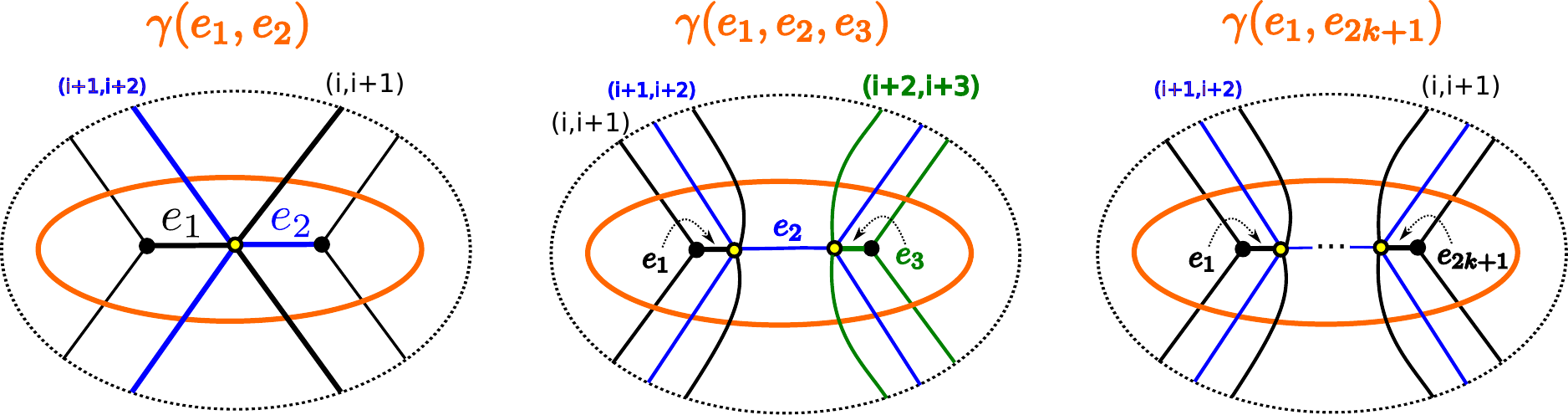}
  \caption{Descriptions of $1$-cycles in $H_1(\La(G),\Z)$ of the first type, generalizing $\gamma(e)$ on the left of Figure \ref{fig:1Cycle}. The lift of the orange curves generate the first homology $H_1(\La(P),\Z)\cong\Z$ for the corresponding patterns $P$.}
  \label{fig:1CycleExtended}
\end{figure}
\end{center}
 
 \item[2.] Three edges $e_1,e_2,e_3$ of a graph $G_i$ connecting a hexagonal vertex with three trivalent vertices in $G_i$ defines a cycle $\gamma(e_1,e_2,e_3)\in H_1(\La(G),\Z)$. This is depicted on the right in Figure \ref{fig:1Cycle}. The 1-cycle $\gamma(e_1,e_2,e_3)$ is drawn in the wavefront projection in Figure \ref{fig:1CycleFront2}. We refer to this type of 1-cycles $\gamma(e_1,e_2,e_3)$ as a $\sf Y$-cycle.
\end{itemize}

\begin{center}
\begin{figure}[h!]
\centering
  \includegraphics[scale=0.7]{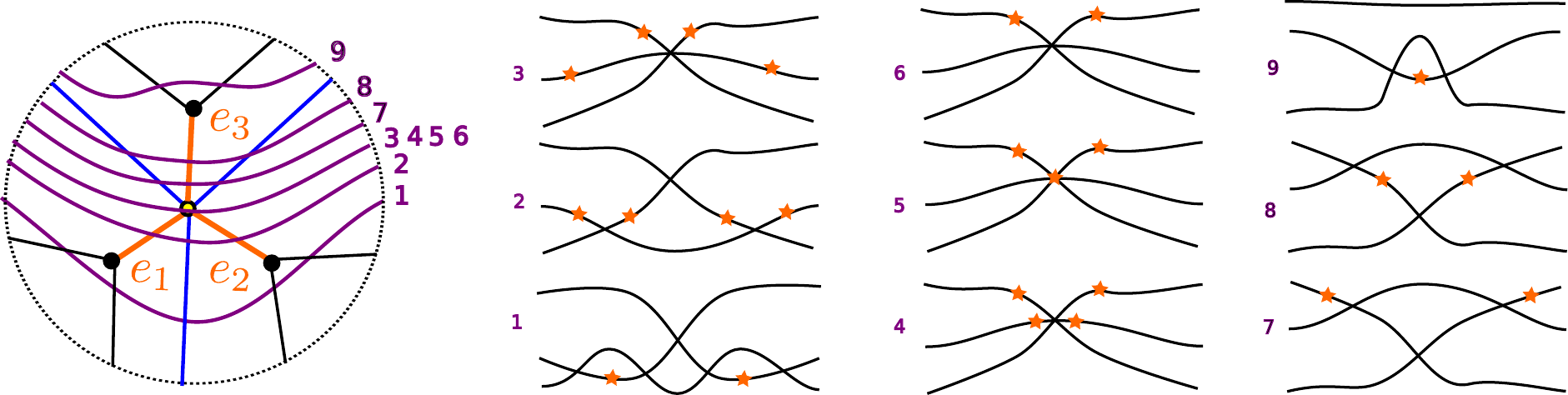}
  \caption{The second type of $1$-cycle $\gamma(e_1,e_2,e_3)$ drawn in a slicing of the wavefront associated to the pattern on the left.}
  \label{fig:1CycleFront2}
\end{figure}
\end{center}

We can also combine the above two constructions to associate a 1-cycle to any tree with leaves on trivalent vertices that passes directly through any hexagonal vertices, i.e.~entering and exiting along opposing edges, see Figure \ref{fig:ThurstonQuiver} for an example. For such a tree, we refer to the pieces corresponding to edges as $\sf I$-pieces, or edges, and the pieces that go through a hexagonal vertex as $\sf Y$-pieces. In addition, we can decorate such 1-cycles with a number, indicating higher multiplicity\footnote{Higher multiplicities will rarely feature in this manuscript, only in relation to Theorem \ref{thm:ThurstonLinks}.}. If we require the curves in the Legendrian surface to be connected, then higher multiplicity in general requires these curves to be immersed.

\begin{remark}
Let $\La(G)\sse(J^1C,\xi_\st)$ be a connected surface, and $G\sse C$ a connected $N$-graph. The trivalent vertices of the $N$-graph $G\sse C$ can be assumed to belong to $G_1$. This follows once we impose certain equivalence relations on the set of $N$-graphs, which is done in Section \ref{sec:moves}.\hfill$\Box$
\end{remark}


\subsection{Combinatorial Homology}\label{ssec:combhomology} Let $G\sse C$ be an $N$-graph.  We present a combinatorial model for the (chain-level) simplicial homology of $\La(G)$. This can be achieved in general, but for this subsection we assume that $G$ is a planar 3-graph, i.e.~$C=\S^2$ and $N=3$.  We will think of $G$ as bicolored --- see Remark \ref{rmk:bicolored}.  This will ease notation, while containing the essential idea for higher $N\in\N$ and higher-genus $C$. Note that the results in this subsection will not be used in the rest of the manuscript, we have included them for completeness.

The edges, faces and vertices of $G$ lift to edges, faces and vertices of the Legendrian surface $\Lambda:=\Lambda(G)$. Let us suppose that $G$ and $\Lambda$ are connected, and that the faces of $G$ define a polyhedral decomposition $(F,E,V)$ of the sphere.
This decomposition lifts to a polyhedral decomposition of $\Lambda$, as follows. Each face, edge and hexagonal vertex of $G$ has three lifts to $\Lambda;$ each trivalent vertex has two lifts. This yields
$$\chi(\Lambda) = 3\cdot 2 -v,$$
where $v=|V(G)|$ is the number of trivalent vertices. For a point $P \in \S^2$, we write $P_1, P_2, P_3$ for the (up to) three pre-images in non-decreasing order of the $z$-coordinate. If $P$ is on $G$, we must choose a nearby point to define the ordering of $z$ coordinates of sheets.
If $P$ is a trivalent vertex with label $(12)$, in blue, then $P_1 = P_2$ while $P_2 = P_3$ for a label $(23)$, in red. Lifts of edges and faces are labeled analogously. The chain complex $C_\bullet$ associated to this polyhedral decomposition of $\Lambda$ computes the homology $H_*(\Lambda;\bZ)$. There is a simplified chain complex that computes $H_1(\Lambda;\bZ),$ which we now explain.

Lift each edge $e = (P,Q)$ labeled $(i,i+1)$ to a one-chain as follows (here $i = 1$ or $2$).  In the (any) orientation of the plane, if $A$ is the sheet with lower $z$ value in the region to the left of $\overline{PQ}$ and $B$ is the sheet with lower $z$ value to the right of $\overline{PQ}$ then lift $e$ to the chain $\overline{P_BQ_B}-\overline{P_AQ_A}$; this only depends on $e$ and not the ordering of $P$ and $Q$. Write $\hat{e}$ for this lift of $e$.  Extending by linearity, we get a map $\bZ^E \to C_1.$

The embedded bicolored graph $G$ is the union $G = G_B \cup G_R$ of embedded blue and a red graphs intersecting at hexagonal vertices,
where $G_B = (F_B,E_B,V_B)$, $F_B$ denotes the number of faces of the graph $G_B$, and $E_B$ the number of edges and $V_B$ the number of vertices. (Similarly for $G_R= (F_R,E_R,V_R)$.)
We define a complex $A_\bullet$ as follows.  $A_2 := \bZ^{F_B}\oplus \bZ^{F_R},$ $A_1 := \bZ^E = \bZ^{E_B}\oplus \bZ^{E_R},$
and $A_0$ is the image $\partial \hat{A_1}$, where $\hat{A_1}$ is the image of $A_1$ in $C_1$ under $e\mapsto \hat{e}.$
A monochromatic face $f\in \bZ^{F_B} \subset A_2$ has a lift to $C_2$ as $f_1 - f_2$, whereas $f\in \bZ^{F_R}\subset A_2$ lifts to $f_2-f_3.$
Summarizing, we have
$$\xymatrix{C_2\ar[r]&C_1\ar[r]&C_0\\ A_2\ar[u]&A_1\ar[u]\ar[r]&A_0\ar@{^{(}->}[u]},$$
where the map $A_1\to A_0$ sends $e$ to $\partial \hat{e}.$
The missing differential $A_2\to A_1$ is defined as follows.  For a monochromatic face $f\in \bZ^{F_B}$ or $\bZ^{F_R} \subset A_2,$
$$\partial f = \sum_{\text{boundary edges}} e - \sum_{\text{interior edges}} e,$$
which we extend by linearity.

\begin{prop}
	\label{prop:chain}
	$A_\bullet$ is a chain complex and $A_\bullet \lr C_\bullet$ is a chain map.
\end{prop}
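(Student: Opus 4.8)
The plan is to verify the two assertions separately: first that $A_\bullet$ is a chain complex, i.e.\ that $\partial^2 = 0$ on $A_2 \to A_1 \to A_0$, and second that the vertical maps assemble into a chain map $A_\bullet \to C_\bullet$. For the chain map statement, the only nontrivial squares are the square $A_2 \to A_1$ over $C_2 \to C_1$ and the square $A_1 \to A_0$ over $C_1 \to C_0$; the latter commutes essentially by definition, since $A_0$ was \emph{defined} as $\partial \hat A_1 \subseteq C_0$ and the map $A_1 \to A_0$ is $e \mapsto \partial \hat e$, while the vertical $A_1 \to C_1$ is $e \mapsto \hat e$ and the vertical $A_0 \hookrightarrow C_0$ is the inclusion. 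So the real content is the square over $C_2 \to C_1$: for a monochromatic face $f \in \mathbb{Z}^{F_B}$, one must check that the lift of $\partial f = \sum_{\text{bdry}} e - \sum_{\text{int}} e$ to $C_1$, namely $\sum_{\text{bdry}} \hat e - \sum_{\text{int}} \hat e$, equals $\partial(f_1 - f_2)$ (and similarly $\partial(f_2 - f_3)$ for red faces). This is a local computation around the boundary of the face $f$: walking along $\partial f$, the face $f$ lies on one side of each boundary edge, so in the formula $\hat e = \overline{P_B Q_B} - \overline{P_A Q_A}$ exactly one of the two sheets $A,B$ is the sheet carrying the lift $f_1$ (for blue) and the other carries $f_2$; tracking which is which along the walk produces precisely $\partial f_1 - \partial f_2$, with the interior-edge sign flip accounting for edges traversed "from the wrong side" when $f$ is not simply connected or an edge appears twice in $\partial f$.

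For $\partial^2 = 0$ on $A_\bullet$, I would argue by comparison: once the chain map property is established and one knows $A_1 \to A_0$ is, by construction, the restriction of the differential $C_1 \to C_0$ to the subcomplex spanned by the $\hat e$, it suffices to check that the image of the composite $A_2 \to A_1 \to C_1$ lands in $\ker(\partial \colon C_1 \to C_0)$, which follows from $\partial \circ \partial = 0$ in $C_\bullet$ together with the fact — just proven — that $A_2 \to A_1 \to C_1$ agrees with $A_2 \to C_2 \xrightarrow{\partial} C_1$. Then $A_2 \to A_1 \to A_0$ is the corestriction of $A_2 \to C_2 \to C_1 \to C_0$, which vanishes. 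One subtlety: $A_0$ is defined as $\partial \hat A_1$, so I should make sure the image of $A_2 \to A_1$ under $e \mapsto \partial\hat e$ is genuinely zero in $C_0$, not merely zero in some quotient; but that is exactly $\partial^2 = 0$ in $C_\bullet$ applied to the element $\sum \pm \hat e \in C_1$ which we've identified with $\partial(f_1 \mp f_2) \in \partial C_2$.

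The main obstacle I anticipate is bookkeeping the signs and the left/right sheet conventions in the face-boundary computation: the definition of $\hat e$ depends on a choice of planar orientation and on which sheet has lower $z$-value to each side of $\overline{PQ}$, and near a monochromatic blue edge the sheets $1$ and $2$ are the ones that come together while $3$ sits above, so "lower sheet to the left/right" must be matched carefully against "$f_1 - f_2$" versus the red case "$f_2 - f_3$". The interior-edge sign in $\partial f$ (edges of $G_B$ that lie in the interior of the face $f$, i.e.\ are traversed twice by the boundary walk, once in each direction) is precisely what makes the lifts cancel correctly; I would handle this by reducing to the two local pictures — a face meeting an edge transversally once, and a face whose boundary walk uses an edge twice — and checking each by hand, then invoking linearity. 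No deep input is needed beyond Definition~\ref{def:LegWeave2} and the description of the lifts of edges and faces given just above the proposition; everything is a finite local verification.

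\begin{proof}
We verify the two claims in turn. That $A_\bullet \to C_\bullet$ is a chain map amounts to commutativity of the two squares
\[
\xymatrix{A_2\ar[r]\ar[d] & A_1\ar[d] & & A_1\ar[r]\ar[d] & A_0\ar@{^{(}->}[d]\\ C_2\ar[r] & C_1 & & C_1\ar[r] & C_0,}
\]
the vertical maps being $f\mapsto f_1-f_2$ (on $\mathbb{Z}^{F_B}$) and $f\mapsto f_2-f_3$ (on $\mathbb{Z}^{F_R}$), $e\mapsto\hat e$, and the inclusion $A_0\hookrightarrow C_0$ respectively. The right square commutes by the very definition of the map $A_1\to A_0$, $e\mapsto\partial\hat e$. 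For the left square, fix a monochromatic blue face $f\in\mathbb{Z}^{F_B}$. Walking once around $\partial f$ (with the induced orientation from the planar orientation), each boundary edge $e$ of $f$ in $G_B$ is traversed, and along that walk $f$ lies to one fixed side of $\overline{PQ}$; by the definition of $\hat e$ the sheet on that side carrying the lower $z$-value is, near a blue edge, one of the two sheets that come together there, and tracking the planar orientation identifies the contribution of $e$ to $\sum_{\text{bdry}}\hat e-\sum_{\text{int}}\hat e$ with $\overline{P_1Q_1}-\overline{P_2Q_2}$ up to the correct boundary sign. Summing over $\partial f$, and noting that an edge $e\subset\mathrm{int}(f)$ is traversed twice in opposite directions (hence the minus sign in the definition of $\partial f$ produces the matching lift on the subtracted copy), we obtain exactly $\partial(f_1)-\partial(f_2)=\partial(f_1-f_2)$. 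The red case is identical with $(f_1,f_2)$ replaced by $(f_2,f_3)$. Hence the left square commutes.

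It remains to see $A_\bullet$ is a chain complex, i.e.\ $\partial\circ\partial=0$ for $A_2\to A_1\to A_0$. By the commutativity just proved, the composite $A_2\xrightarrow{\partial}A_1\xrightarrow{\ \hat{(\ )}\ }C_1$ equals $A_2\to C_2\xrightarrow{\partial}C_1$, whose image lies in $\ker(\partial\colon C_1\to C_0)$ since $\partial^2=0$ in $C_\bullet$. Therefore, applying $\partial\colon C_1\to C_0$ to $\widehat{\partial f}\in C_1$ gives $0$ for every $f\in A_2$. But by definition the map $A_1\to A_0$ is $e\mapsto\partial\hat e$, so $\partial(\partial f)\in A_0$ is the image of $\widehat{\partial f}$ under $\partial\colon C_1\to C_0$, which we have just shown vanishes. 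Hence $\partial\circ\partial=0$ on $A_\bullet$, completing the proof.
\end{proof}
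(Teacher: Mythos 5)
Your reduction of $\partial^2=0$ on $A_\bullet$ to the chain map property is a genuine shortcut that the paper does not take: instead of directly verifying that $\sum_{\text{bdry}}\partial\hat e - \sum_{\text{int}}\partial\hat e$ vanishes by a vertex-by-vertex local analysis (trivalent vertices contribute nothing; hexagonal vertices require analyzing the $6\times 3$ matrix $\partial\vert_h$ in Equation~\eqref{eq:d} and identifying its rank-$2$ kernel), you observe that $A_1 \to A_0$ is literally the corestriction of $\partial\colon C_1\to C_0$ to $\hat A_1\subset C_1$, so once the left square $A_2\to A_1$ over $C_2\to C_1$ commutes, $\partial^2=0$ on $A_\bullet$ is inherited from $\partial^2=0$ on $C_\bullet$. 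This is correct and tidier. What it loses is the explicit local matrix at a hexagonal vertex, which the paper's approach produces as a byproduct and which is used essentially in the proofs that follow: the kernel generators there carry geometric meaning (long edges and ${\sf Y}$-cycles), and the rank computation of $\partial\vert_h$ is invoked when counting $|A_0|$ and checking invariance under candy twist and push-through in the subsequent proposition. So the paper's longer route earns its keep.

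There is, however, a concrete misreading in your verification of the left square. In the formula
\[
\partial f = \sum_{\text{boundary edges}} e \;-\; \sum_{\text{interior edges}} e,
\]
the \emph{interior edges} of a blue face $f\in\bZ^{F_B}$ are \emph{not} boundary edges of $f$ that happen to be traversed twice by a boundary walk; they are the edges of the \emph{opposite color} (red, here) lying in the interior of $f$. A face of $G_B$ is cut out by $G_B$ alone, so red edges can and generically do run through it, and these edges contribute to $\partial(f_1 - f_2)$ in $C_\bullet$ because the polyhedral decomposition of $\Lambda$ is lifted from the full decomposition of $\S^2$ by $G = G_B\cup G_R$, not by $G_B$ alone. (The proof of the lemma immediately following makes this reading explicit: ``let $f_c$ be the unique (opposite color) monochromatic face containing $e_c$ in its interior.'') Your paragraph built around ``an edge $e\subset\mathrm{int}(f)$ traversed twice in opposite directions'' therefore does not verify the commutativity of the left square for any $f$ that contains a red edge — which is the typical case for a $3$-graph — and the sketch needs to be redone with the correct meaning of interior edges. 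The high-level deduction of $\partial^2=0$ from the chain map remains valid once that verification is repaired.
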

\begin{proof}
	Let $f\in A_2$ be a blue face. A similar argument will work for red faces. We need to check that $A_\bullet$ is a chain complex, i.e. the differential squares to zero. This reads
	$$\sum_{\text{boundary edges}} \partial \hat{e} - \sum_{\text{interior edges}} \partial \hat{e} = 0,$$
	for a face $f$. The left hand side of this equality is a (formal) sum of some of the vertices of the graph, some trivalent, some hexagonal. Thus, this imposes a condition at all the interior and exterior vertices of $f$.
	In fact, the condition is null at an interior vertex, since it must be monochromatic and hence trivalent, and $\partial \hat{e}$ is
	zero over any trivalent vertex.
	Likewise for an exterior trivalent vertex, there is nothing to check and it remains to discuss exterior hexagonal vertices.  For an exterior hexagonal vertex, a local study is needed, as follows.
	
	Let $h$ be a hexagonal vertex and let us study the differential restricted to it.  Let $e_1,e_2,e_3$ be three attached blue half-edges, with $e_4 = e_1' , e_5 = e_2' ,e_6 = e_3'$ the opposite red half-edges, respectively.
	Let $h_1,h_2,h_3$ be the three preimages of $h$.  We can restrict the differential $A_1\vert_h\to A_0\vert_h$ to edges intersecting $h$ and
	points over $h$, and in the chosen basis it takes the form
	\begin{equation}
	\label{eq:d}
	\partial\vert_h = \begin{pmatrix}-1&0&1\\1&-1&0\\0&1&-1\\
	1&0&-1\\-1&1&0\\0&-1&1\end{pmatrix}.
	\end{equation}
	The kernel is generated by $e_1+e_1', e_2+e_2', e_3+e_3', e_1 + e_2 + e_3.$  Note that the last generator could also have been taken to be $e_1 + e_2 - e_3'$. The first three represent long two-colored edges passing straight
	through the hexagonal vertex, while the last is a monochromatic $\sf Y$ shape. In more detail, the element $e_1+e_1'$ of the kernel is, diagramatically, given by a (bi-colored) edge passing through the hexagonal vertex. Similarly for $e_2+e_2'$ and $e_3+e_3'$, they represent straight edges passing through the hexagonal vertex, starting blue and then turning red, or viceversa. The element $e_1 + e_2 + e_3$ of the kernel is given by the $\sf Y$-shaped union of the three blue edges coming out of a hexagonal vertex. The element $e_1 + e_2 - e_3'$ also belongs to the kernel, and it represents two blue edges {\it and} the red edge in between (with a minus sign). The terms arising in $\partial^2 f$ which involve a boundary hexagonal vertex are given by the image of such configurations in the kernel, and thus they must (and do) vanish.  This concludes the calculation
	that $(A_\bullet,\partial)$ is a chain complex.
	
	To check that $A_\bullet \to C_\bullet$ is a chain map, we must show that for $f\in \bZ^{F_B}\subset A_2$, we have
	$$\partial f_1 - \partial f_2 =
	\sum_{e\; \text{exterior}} {\hat{e}} - \sum_{e\; \text{interior}} {\hat{e}}.$$  This is shown by direct calculation.

\end{proof}
Let us now prove the following lemma before showing that $A_\bullet$ is quasi-isomorphic to $C_\bullet$ in degree one, and thus computes the first homology $H_1(\Lambda;\bZ)$.
\begin{lemma}
	In the notation above, $H_2(A_2)=0$.
\end{lemma}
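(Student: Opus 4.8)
The plan is to avoid touching the explicit differential $\partial\colon A_2\to A_1$ altogether and instead to compare $A_\bullet$ with the genuine polyhedral chain complex $C_\bullet$ of $\La=\La(G)$, using the chain map $\iota_\bullet\colon A_\bullet\to C_\bullet$ furnished by Proposition \ref{prop:chain}. Since $A_\bullet$ is concentrated in degrees $0,1,2$, proving the lemma amounts to showing that $\partial\colon A_2\to A_1$ is injective. Set up notation: let $F(G)$ denote the set of faces of the bicolored graph $G=G_B\cup G_R$; each $\psi\in F(G)$ is disjoint from all crossings and branch points, hence (using that $G$ is connected, so $\psi$ is a disk) it has exactly three lifts $\psi_1,\psi_2,\psi_3$ to $\La$, and $\{\psi_i:\psi\in F(G),\ i=1,2,3\}$ is a free basis of $C_2$. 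Every $\psi\in F(G)$ lies in a unique blue face $f(\psi)$ of $G_B$ and a unique red face $g(\psi)$ of $G_R$; conversely the bottom, middle, and top sheets are globally defined over any blue or red face, so that as $2$-chains $f_1=\sum_{\psi\sse f}\psi_1$, $f_2=\sum_{\psi\sse f}\psi_2$ for a blue face $f$, and $g_2=\sum_{\psi\sse g}\psi_2$, $g_3=\sum_{\psi\sse g}\psi_3$ for a red face $g$.

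First I would check that the comparison map $\iota_2\colon A_2\to C_2$ — which by definition sends a blue face $f$ to $f_1-f_2$ and a red face $g$ to $g_2-g_3$ — is injective: if $\sum_f c_f(f_1-f_2)+\sum_g d_g(g_2-g_3)=0$ in $C_2$, then reading off the coefficient of $\psi_1$ forces $c_{f(\psi)}=0$ for every $\psi$, hence all $c_f=0$, and then the coefficient of $\psi_3$ forces all $d_g=0$. Next, given $\alpha\in\ker(\partial\colon A_2\to A_1)$, commutativity of the square in Proposition \ref{prop:chain} gives $\partial(\iota_2\alpha)=\iota_1(\partial\alpha)=0$ in $C_1$, so $\iota_2\alpha\in Z_2(C_\bullet)$; since $C_\bullet$ computes $H_*(\La;\bZ)$ and there is no $C_3$, we get $\iota_2\alpha\in H_2(\La;\bZ)$. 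Now $\La$ is closed, connected, and orientable — it is a branched cover of the oriented $\S^2$, hence a covering of $\S^2$ minus the branch points away from a finite set, and removing finitely many points does not affect orientability — so $H_2(\La;\bZ)=\bZ\cdot[\La]$ with $[\La]=\sum_{\psi\in F(G)}(\psi_1+\psi_2+\psi_3)$ (all sheets appear with the same sign, the cover being oriented). Finally I would introduce the linear functional $\nu\colon C_2\to\bZ$ given by $\nu(\psi_i)=1$ for all $\psi$ and all $i$: it annihilates $f_1-f_2$ and $g_2-g_3$, hence all of $\operatorname{im}\iota_2$, whereas $\nu([\La])=3\,|F(G)|\neq0$. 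Writing $\iota_2\alpha=n[\La]$ and applying $\nu$ gives $n=0$, so $\iota_2\alpha=0$, and injectivity of $\iota_2$ yields $\alpha=0$. Hence $H_2(A_\bullet)=0$.

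The two steps needing care are the identification of $\iota_2$ on the explicit basis of $C_2$ together with the resulting injectivity, and the verification that $[\La]\notin\operatorname{im}\iota_2$; both reduce to straightforward bookkeeping once the face-by-face decompositions of the monochromatic lifts $f_1,f_2,g_2,g_3$ are in place. I expect the only (mild) obstacle to be making sure those decompositions are correct and that the sheet labels are used consistently across blue and red regions; the orientability input used to pin down $H_2(\La;\bZ)=\bZ$ is harmless as explained above, and no use is made of the explicit formula for $\partial\colon A_2\to A_1$.
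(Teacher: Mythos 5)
Your proof is correct, and it takes a genuinely different route from the paper's. The paper argues directly with the explicit matrix of $\partial\colon A_2 \to A_1$: it picks a hexagonal vertex $h$ (which must exist by connectedness), writes out the adjacency relations among the six surrounding faces, derives the two linear constraints $\sum a_i' = 2\sum a_i$ and $\sum a_i = 2\sum a_i'$ forcing those six coefficients to vanish, and then propagates vanishing outward by a boundary argument. You instead sidestep the formula for $\partial\colon A_2\to A_1$ entirely by routing through the comparison chain map $\iota_\bullet\colon A_\bullet \to C_\bullet$: you check $\iota_2$ is injective by reading coefficients of $\psi_1$ and $\psi_3$ in the basis of $C_2$, observe that any $\alpha \in \ker\partial$ has $\iota_2\alpha \in Z_2(C_\bullet) = H_2(\La;\bZ) = \bZ[\La]$ (using that $\La$ is connected, closed, and orientable as a branched cover of $\S^2$), and finish with the trace functional $\nu$ that kills $\operatorname{im}\iota_2$ but not $[\La]$. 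Both approaches are sound; yours is arguably more conceptual and avoids the explicit rank computation at hexagonal vertices (and even the existence of such a vertex, relying only on connectedness of $\La$), at the cost of invoking the global topology of $\La$. The paper's argument stays entirely within the combinatorics of the bicolored graph, which is in the spirit of that subsection, and the hexagonal-vertex analysis it performs is reused in the subsequent proposition for counting dimensions, so it is not wasted effort there.
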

\begin{proof}
	This says that $\partial: A_2\to A_1$ is injective.  Suppose $\partial f=0$.  Let $h$ be a hexagonal vertex, which must exist since both $\Gamma$ and $\Lambda$ are assumed connected. Label the edges adjacent to $h$ by $e_1, e_2, e_3, e_4 = e_1', e_5 = e_2', e_6 = e_3'$ as in the proof of Proposition \ref{prop:chain}.
	For $1\leq c\leq 6$, let $f_c$ be the unique (opposite color) monochromatic face containing $e_c$ in its interior, and again we notate $f_4 = f_1'$, etc.
	Now for $i = 1,2,3,$ write $i,j,k$ for cyclically ordered elements of $\{1,2,3\},$ i.e. $j = i+1 \mod 3,$ etc.
	Then $e_i$ is an exterior edge of $f_j'$ and $f_k'$ and by definition an interior edge of $f_i.$
	If we write $f = \sum_{i=1}^3 a_i f_i + a_i' f_i' + \cdots,$ then we must have $a_i + a_j = a_k'$ for all $i$, and therefore $\sum a_i' = 2\sum a_i.$  By the same token, $\sum a_i = 2\sum a_i',$ and therefore
	all $a_i$ and $a_i'$ are zero.
	
	The faces $f_c$ with coefficients $a_c\neq 0$ must therefore have no hexagonal vertices on their boundary or interior. That said, the union $U$ of such faces must have a boundary, and therefore the coefficient of any face on the boundary of $U$ must be zero. By iterating this argument, all coefficients are zero.
\end{proof}

The 3-graphs associated to a 3-triangulation, and the 3-graph moves named {\it candy twists} and {\it push-through}, will be defined in Section \ref{sec:moves}. We will use them now just in this particular proposition as part of  this isolated subsection.\footnote{This subsection on combinatorial homology is included for completeness, but it will not be used in the rest of the article.} Now we establish the point of this subsection:

\begin{prop}
	Let $\Gamma$ be a 3-graph for a 3-triangulation, or any graph related by candy twists or push-through moves. Then
	$$H_1(A_\bullet) \cong H_1(C_\bullet) \cong H_1(\Lambda;\bZ).$$
\end{prop}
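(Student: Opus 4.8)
The plan is to show that the inclusion of chain complexes $A_\bullet \hookrightarrow C_\bullet$ (via $e \mapsto \hat e$, $f \mapsto f_1 - f_2$ or $f_2 - f_3$) induces an isomorphism on $H_1$, and then invoke the polyhedral decomposition of $\Lambda$ to identify $H_1(C_\bullet)$ with $H_1(\Lambda;\bZ)$. The second isomorphism is essentially by construction: the complex $C_\bullet$ is the simplicial (polyhedral) chain complex of the lifted polyhedral decomposition of $\Lambda$ described earlier in Subsection \ref{ssec:combhomology}, so $H_\bullet(C_\bullet) \cong H_\bullet(\Lambda;\bZ)$, and in particular $H_1(C_\bullet) \cong H_1(\Lambda;\bZ)$. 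The content is therefore the first isomorphism $H_1(A_\bullet) \cong H_1(C_\bullet)$.

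First I would set up an Euler-characteristic bookkeeping argument. We already have the previous lemma showing $H_2(A_\bullet) = 0$, i.e.\ $\partial\colon A_2 \to A_1$ is injective, and $A_0$ is by definition $\partial\hat{A_1} \subseteq C_1$, so $H_0(A_\bullet) = 0$ as well (every $0$-chain in $A_0$ is a boundary). Hence $\chi(A_\bullet) = \dim_\bQ H_1(A_\bullet)\otimes\bQ$ over $\bQ$ (working rationally first, to later bootstrap to $\bZ$). On the other side, the Riemann--Hurwitz computation gives $\chi(\Lambda) = 6 - v$, and using that $\Lambda$ is connected ($H_0(C_\bullet)$ rank $1$) we get $\op{rank} H_1(C_\bullet) = 1 - \chi(\Lambda) + \op{rank} H_2(C_\bullet)$, with $\op{rank} H_2(C_\bullet) = 1$ since $\Lambda$ is a closed connected orientable surface. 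So $\op{rank} H_1(C_\bullet) = 2 - \chi(\Lambda) = v - 4$ plus corrections for $F = F_B + F_R$, $E$, $V$; I would compute $\chi(A_\bullet) = F_B + F_R - E + |A_0|$ in terms of the combinatorics of the bicolored graph $G$ (using $V_B + V_R = (\text{trivalent vertices counted with color}) + (\text{hexagonal vertices counted twice})$ and the Euler relations for $G_B$, $G_R$, $G$ on $\S^2$) and check it matches $v - \chi(\Lambda) + \text{(rank of }H_2)$. This shows the two $H_1$'s have the same rank; combined with the injectivity of $A_\bullet \to C_\bullet$ in degree $1$ (which follows because $A_1 = \bZ^E \to C_1$ is literally $e\mapsto\hat e$, an injection onto a sub-lattice), we get that $H_1(A_\bullet)\otimes\bQ \to H_1(C_\bullet)\otimes\bQ$ is a surjection between equal-dimensional spaces, hence an isomorphism rationally.

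To upgrade to $\bZ$-coefficients I would argue that $H_1(C_\bullet)$ is free (true for a closed orientable surface) and that the cokernel of $H_1(A_\bullet) \to H_1(C_\bullet)$ is trivial, i.e.\ every $1$-cycle in $C_1$ is homologous to one coming from $A_1$. Here is where I expect the main obstacle: one must produce, for an arbitrary polyhedral $1$-cycle $\gamma$ on $\Lambda$, an explicit homology to a $\bZ$-combination of the lifted edges $\hat e$ modulo $\partial(\text{lifted monochromatic faces})$. The clean way is to use the hypothesis that $\Gamma$ is a $3$-graph for a $3$-triangulation (or is reached from one by candy twists / push-through moves): for such graphs one has an explicit basis of $H_1(\Lambda;\bZ)$ by the $\sf I$-cycles and $\sf Y$-cycles constructed in Subsection \ref{ssec:homology} (the long $\sf I$-cycles $e_i + e_i'$ and the $\sf Y$-cycles $e_1 + e_2 + e_3$ appearing as kernel generators of $\partial\vert_h$ in \eqref{eq:d}), and each of these manifestly lies in the image of $A_1 \to C_1$. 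So I would (i) verify that these $\sf I$- and $\sf Y$-classes are genuine cycles in $A_\bullet$ and span $H_1(A_\bullet)\otimes\bQ$ — or at least a full-rank sublattice — and (ii) verify they span $H_1(\Lambda;\bZ)$ over $\bZ$ for the base case of a $3$-triangulation by a direct Mayer--Vietoris / handle-decomposition count on the $N$-triangle local models (Figure \ref{fig:IntroNTriang}), assembling the closed surface from its triangle pieces. Finally, (iii) I would check candy-twist and push-through invariance: each such move is a local modification of $G$ that changes $(A_\bullet, C_\bullet)$ by an elementary (chain-homotopy) equivalence, so if the conclusion holds before the move it holds after. The push-through move in particular replaces a local configuration of hexagonal/trivalent vertices by another with the same contribution to $\partial\vert_h$-type relations, and candy twists are a $D_4^-$-reconfiguration that visibly preserve the lattice spanned by $\sf I$- and $\sf Y$-cycles; I would spell these out diagrammatically rather than formula-by-formula. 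The genuinely delicate point, and the one I would budget the most space for, is (ii): getting the integral (not merely rational) statement right for the $3$-triangulation base case, making sure no torsion or index-$2$ phenomenon slips in from the two-versus-three-lift discrepancy at trivalent versus hexagonal vertices.
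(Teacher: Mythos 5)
Your proposal follows essentially the same route as the paper: observe $H_1(C_\bullet)\cong H_1(\Lambda;\bZ)$ by construction; reduce the first isomorphism to an Euler-characteristic comparison using $H_2(A_\bullet)=0$ (previous lemma) and $H_0(A_\bullet)=0$ (surjectivity of $\partial\colon A_1\to A_0$, by the very definition of $A_0$); compute $\chi(A_\bullet)$ for the $3$-triangulation base case; and verify invariance of $\chi(A_\bullet)$ under candy twist and push-through. In the paper this is carried out via the explicit tallies $|F_B|=|V_T|+|E_T|$, $|F_R|=|V_T|$, $|E_B|=2|E_T|+3|F_T|$, $|E_R|=|E_T|$, and $|A_0|=2|F_T|$ (each hexagonal vertex contributing two realized dimensions through the rank-two matrix in \eqref{eq:d}, trivalent vertices contributing none), yielding $h_1(A_\bullet)=-\chi(A_\bullet)=3|F_T|-4=h_1(\Lambda)$; the two moves change $(|A_2|,|A_1|,|A_0|)$ by $(2,6,4)$ and $(1,3,2)$, both Euler-characteristic-neutral.

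Two caveats. First, your deduction that injectivity of $A_1\hookrightarrow C_1$ forces $H_1(A_\bullet)\otimes\bQ\to H_1(C_\bullet)\otimes\bQ$ to be surjective is a non sequitur: injectivity of a chain map in a single degree gives neither injectivity nor surjectivity on homology (a cycle in $C_1$ need not be homologous to anything in the image of $A_1$, and a cycle in $A_1$ that bounds in $C_\bullet$ need not bound in $A_\bullet$). The paper's own phrasing at this point — ``since $A_\bullet\to C_\bullet$ is a chain map, we need only compare dimensions'' — is similarly terse and does not itself justify why equal rank suffices. Second, your third-paragraph concern about upgrading the rank equality to an integral isomorphism is a genuine one: equal rank plus a chain map does not give an isomorphism of abelian groups without something extra (e.g.\ surjectivity on $H_1$ and torsion-freeness of $H_1(A_\bullet)$), and the paper stops at the rank count without addressing this. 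Your proposed remedy via the explicit {\sf I}- and {\sf Y}-cycle generators, which manifestly lie in the image of $A_1$ and which are known to span $H_1(\Lambda;\bZ)$ in the triangulation case, is a sensible way to close the gap — but it is work the paper itself does not carry out.
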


Before the proof, a {\it warning}: $H_1(A_\bullet) \ncong H_1(C_\bullet)$ in general.  Here is an example of a weave with topology of the twice-punctured plane. 
\begin{center}
	\begin{tikzpicture}
	\draw[ultra thick,blue] (-1.2,1)--(-.5,0)--(-1.2,-1);
	\draw[ultra thick,blue] (-.5,0)--(.5,0);
	\draw[thick,red] (-1.5,0)--(-.5,0);
	\draw[thick,red] (1.5,0)--(.5,0);
	\draw[ultra thick,blue] (1.2,1)--(.5,0)--(1.2,-1);
	\draw[thick,red] (-.5,1.3)--(-.5,-1.3);
	\draw[thick,red] (.5,1.3)--(.5,-1.3);
	\draw[thick,blue] (-1.5,1)--(-1.2,1)--(-1.2,1.3);
	\draw[thick,blue] (-1.5,-1)--(-1.2,-1)--(-1.2,-1.3);
	\draw[thick,blue] (1.5,1)--(1.2,1)--(1.2,1.3);
	\draw[thick,blue] (1.5,-1)--(1.2,-1)--(1.2,-1.3);
	\draw [thick, dotted, black] (-1.2,1) to[out=-20, in = 200]  (1.2,1);
	\end{tikzpicture}
\end{center}
Despite $b_1=2$, there is only one 1-cycle in $H_1(A_\bullet),$ represented by the tree with four leaves -- the sum of edges $\widehat{e}$ darkened in the picture.  A choice for another generating 1-cycle is clear:  it is a branch cut connecting the two trivalent vertices in the top (or bottom) -- pictured as a dotted black curve.  This class can be represented in $C_\bullet$, but the chain connecting the two hexagonal vertices is not in $A_\bullet$.  One could accommodate such chains with further notational complexity, but we will not require them for our applications.

\begin{proof}
	We need to prove the first equality only.  
	Since $A_\bullet \to C_\bullet$ is a chain map, we need only compare the dimensions of their first homology groups.
	We prove this first for the 3-graph $\Gamma_T$ of a 3-triangulation $T = (F_T,E_T,V_T)$, then show that the result is invariant under
	candy twist and push-through moves.  
	
	By definition, $\partial: A_1\to A_0$ is surjective, so since by the lemma, $\partial:  A_2\to A_1$ is injective, we know $\dim H_1(A_\bullet) = -\chi(A_\bullet).$
	On the other hand, we know $\chi_\Lambda = 6 - v = 6 - 3|F_T| = 2 - h_1(\Lambda),$ or $h_1(\Lambda) = 3|F_T| - 4.$
	We recall that each face of $T$ has three blue vertices.  It also has one hexagonal vertex which is a vertex of the blue and red graphs comprising $\Gamma_T.$ 
	It similarly easy to compute that
	$|F_B| = |V_T| + |E_T|,$ $|F_R| = |V_T|,$ $|E_B| = 2|E_T|+3|F_T|,$ $|E_R| =|E_T|,$ $|V_B| = 3|F_T|,$ $|V_R| = 0.$ 
	Now $|A_2| = |F_B| + |F_R| = 2|V_T| + |E_T|,$ $|A_1| = |E_B| + |E_R| = 3|E_T| + 3|F_T|$, and 
	$|A_0| = 2|F_T|$ is computed by noting that each hexagonal vertex contributes two possible dimensions to $|A_0|$ via the rank-two matrix in Equation \eqref{eq:d}, and these
	dimensions are realized as boundaries, while each trivalent vertex contributes nothing.
	We get $h_1(A_\bullet) = -\chi(A_\bullet) = -2\chi_T +3|F_T| = 3|F_T| - 4 = h_1(\Lambda),$
	as claimed.
	
	It remains to compute what happens after push-through or a candy-twist move.  In fact, since the result only depends on the Euler characteristic of $A_\bullet$,
	we only need to show that this is invariant under candy twist and push-through.  But these change the dimensions of $(A_2,A_1,A_0)$ by $(2,6,4)$ and $(1,3,2)$,
	respectively:  a local argument shows again that the two possible dimensions that a hexagonal vertex contributes to $A_0$ are in fact realized, and the result follows.	
\end{proof}


\section{Combinatorial Constructions}\label{sec:constr}

In this section we introduce two combinatorial constructions for $N$-graphs, focusing primarily on how to associate an $N$-graph to a given $N$-triangulation. The notion of an $N$-triangulation was introduced in \cite[Section 1.15]{FockGoncharov_ModuliLocSys}, and has since had an central role in higher Teichm\"uller theory \cite{Goncharov_IdealWebs,GoncharovLinhui_DT}.  Legendrian weaves associated to an $N$-triangulation, via our construction, place contact topology in the context of the recent developments in exact WKB analysis \cite{GMN_SpecNet13,GMN_SpecNetSnakes14,Kuwagaki20} and quiver Fukaya categories \cite{BridgelandSmith_QuadDiff,Smith_QuiverFuk}.


\subsection{$N$-Triangulations}

Let $N\in\N$ be a natural number, and consider the triangle $$t_N:=\{(x,y,z)\in\R^3:x+y+z=N\quad x,y,z\geq0\}.$$
Subtriangulate this triangle $t_N$ with the planes
$$(\{x=s\}\cup\{y=s\}\cup\{z=s\})\cap t_N,\quad 0\leq s\leq N,$$
which we refer to as an $N$-subdivision of the triangle $t_1$, following \cite{FockGoncharov_ModuliLocSys,GMN_SpecNetSnakes14}.  This subtriangulation has $N^2$ triangles.

\begin{center}
	\begin{figure}[h!]
		\centering
		\includegraphics[scale=0.85]{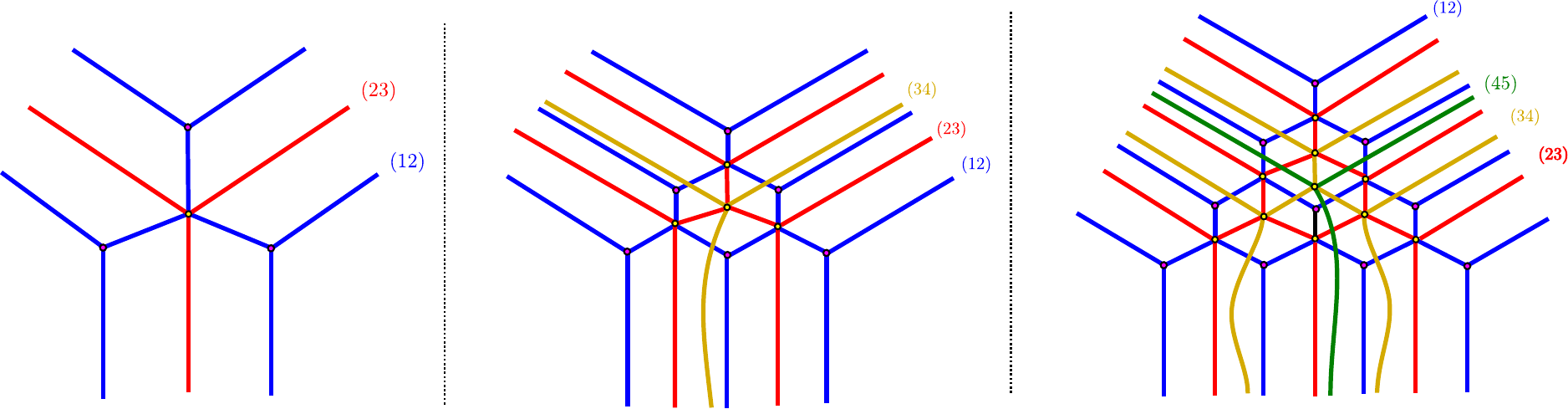}
		\caption{The Legendrian weaves $\La(G(t_3))$, $\La(G(t_4))$ and $\La(G(t_5))$ associated to a $3$-, $4$- and $5$-triangles.}
		\label{fig:NTriangles}
	\end{figure}
\end{center}

Now, let $(C,T)$ be a triangulation $T$ of a smooth closed surface $C$ and subdivide each triangle $t_1\in T$ according to the $N$-subdivision above. This yields a triangulation $T_N$ of the surface $C$. By definition, an $N$-triangulation on $C$ is any triangulation isotopic to $T_N$ for some triangulation $(C,T)$.


\subsection{Local Models}\label{ssec:Ngraph_Ntriangle} The $N$-graph associated to an $N$-triangulation is obtained by gluing local models for the $N$-graph $G(t_N)$ associated to each triangle $t_N$. We provide a definition of this local $N$-graph, in terms of the following construction. The reader content with using Figure \ref{fig:NTriangles} as a definition is invited to defer reading these technical descriptions.

{\it Construction.} Consider the triangles in $t_N$ which point up, i.e.~have a unique vertex with highest $z$-value. For each of these ${N \choose 2}$ triangles, we insert a $\tau_1$-trivalent vertex dual to it --- that is, a trivalent vertex associated with the permutation $(12)$ and such that the edges of this piece of $2$-graph intersects orthogonally with the edges of each triangle. By definition, the rest of the $N$-graph $G(t_N)$ is then {\it uniquely} determined by extending the edges from these $\tau_1$ vertices such that wherever three $\tau_i$-edges collide, we insert a hexagonal vertex with three edges in $\tau_i$ and three edges in $\tau_{i+1}$. That is, the two rules to generate the $N$-graph for an $N$-triangulation are:
\begin{itemize}
	\item[(i)] Insert exactly one $(12)$-trivalent vertex at the center of each upward pointing triangle,	
	\item[(ii)] In the collision of three $\tau_i$-edges, a $(\tau_i,\tau_{i+1})$ hexagonal vertex is inserted.
\end{itemize}

We stress that the original triangles are not part of the $N$-graph.

This construction of $G(t_N)$ can be considered as a {\it dynamical} description, in contrast with the static definition given by the second construction below Remark \ref{rmk:second}. Indeed, in this first construction one starts by placing the $\tau_1$-vertices and lets the edges grow symmetrically from these trivalent vertices, such that each edge intersects the interior edges of the $N$-triangulation at the middle point. These edges must collide in the interior of the triangle, and these collisions are resolved via the insertion of hexagonal vertices, creating $\tau_{i+1}$-edges. This insertion of hexagonal vertices is iteratively performed when the $\tau_{i+1}$-edges collide, creating $\tau_{i+2}$-edges, and the process terminates when exactly three $\tau_{N-1}$-edges are created at a unique hexagonal vertex.

Thus, given the triangle $t_N$, we obtain a local model for an $N$-graph. The boundary conditions for this local model are such that the $N$-graphs associated to two $N$-triangles $t_N$ and $t'_N$, which share an edge of the underlying $t_1$ and $t'_1$, match together.

\begin{remark}\label{rmk:second}
	This description, according to these two rules above, captures the properties of the spectral network associated to the WKB singular foliation for an $SU(2)$ quadratic differential lifted via the unique $N$-dimensional irreducible representation of $SU(2)$ --- see Sections 2 and 4 in \cite{GMN_SpecNetSnakes14}. The dynamical component, induced by the growing of the edges from vertices, corresponds with the time evolution of the differential equation defining the WKB system.\hfill$\Box$\end{remark}

We can also give a second succinct description of $G(t_N)$ as follows. Following Definition \ref{def:Ngraph}, it suffices to describe the image of the graphs $G_i$, $1\leq i\leq N-1$. The trivalent graph $G_i$ will be given by the vertices and edges of an hexagonal regular lattice: a finite number of vertices lying inside the triangle $t_N=\{x+y+z=N-1\}$ and with external edges extending to the boundary of $t_N$. Let $\vartheta_i$ be the set of points of $t_N$ all of whose coordinates lie in $\Z_{\geq0}+i/3$. Then the vertices of the trivalent graph $G_i$ are precisely the points in $\vartheta_i\cup\vartheta_{i+1}$. Note that the intersection between $G_i$ and $G_{i+1}$ is precisely given by the points in $\vartheta_{i+1}$, and the trivalent vertices are uniquely specified by the hexagonal lattice condition.

\begin{remark}
Both these constructions provide a Legendrian front for the Legendrian lift of certain exact Lagrangian spectral curve for a local spectral network. In particular, this shows that the BPS graphs studied in \cite{Gabella_BPSGraphs}, introduced as an interpolation between spectral networks and BPS quivers, are in fact the set of $A_1^2$ singularities of the Legendrian front $\Sigma(G(t_N))$ between the first two sheets.\hfill$\Box$
\end{remark}

Note that the boundary of this local $N$-graph $G(t_N)$ can be compactly described as follows. Consider the permutation
$$\Delta_N:=\prod_{i=1}^{N-1}\left(\prod_{j=i}^1 \tau_{j}\right)=\tau_1\cdot(\tau_2\tau_1)\cdot(\tau_3\tau_2\tau_1)\cdot\ldots\cdot (\tau_{N-1}\tau_{N-2}\cdot\ldots\cdot \tau_2\tau_1)\in S_N,$$
which is the projection to the Coxeter group $S_N$ of the Garside element of the braid group $B_N$ in $N$-strands, i.e. a braid half-twist in $N$-strands. Then the edges of the $N$-graph associated to $t_N$ along each of the three edges of $t_1$ are precisely given by the ordered terms in $\Delta_N$. That is, there exists an isotopy of the $N$-graph such that as one travels along an edge of $t_1$, the edges of the $N$-graph that we encounter are first $\tau_1$, then $\tau_2$ and $\tau_1$, then $\tau_3$, $\tau_2$, and $\tau_1$ and iteratively until reaching $\tau_1$ for the $(N-1)$th time. This is equivalent to the association $\tau(r_i)$, $i=1,\ldots,2N-3$, in the construction of $G(t_N)$ above.

For context, these permutations along the boundary are particularly relevant for the study of Legendrian surface weaves with boundary, whose Lagrangian projections yield interesting Lagrangian fillings of their Legendrian boundary links. The braid description of these Legendrian links is determined precisely by these permutations -- see Section \ref{sec:app2}. We see again, confer Remark \ref{rmk:1dim}, that it is useful to think of Legendrian weaves as two-dimensional Legendrian braids:  their one-dimensional boundaries are positive braids.


\subsection{Global Model} Given that the boundary conditions for the $N$-graphs in the local models for $t_N$ allow for gluing, we define the $N$-graph $G\sse C$ associated to a global $N$-triangulation of $C$ to be the $N$-graph obtained by concatenating the local models $G(t_N)$ along each triangle $t_N$ in the $N$-triangulation. We study the flag moduli space invariants for these $N$-graphs and their associated Legendrian weaves in Sections \ref{sec:app} and \ref{sec:app3}. 
Note that the genus of these Legendrian weaves increases as $N\in\N$, or the number of triangles, increases.

\begin{remark} Trivalent vertices are dual to triangulations of surfaces. In particular, triangulations of surfaces with a large group of symmetries yield particularly interesting $2$-graphs. From this perspective, Riemann surfaces with a conformal automorphism group of large order give rise to highly symmetric $2$-graphs. For instance, Riemann surfaces associated to tilings of the hyperbolic plane $\mathbb{H}^2$ with Schl\"afli symbol $\{n,3\}$ are highly symmetry, with $\{7,3\}$ being the Klein quartic, $\{8,3\}$ giving Bolza's surface and $\{12,3\}$ the $M(3)$ surface. We expect the flag moduli space associated to the Legendrian surfaces of these $2$-graphs, as defined in Section \ref{sec:flag}, to be algebraic spaces with correspondingly large symmetry. We begin an exploration of this kind with our Theorem \ref{thm:symmetries} in Section \ref{sec:app}.\hfill$\Box$
\end{remark}


\subsection{Bicubic graphs}\label{ssec:bipartite} Here is a second construction of $3$-graphs in a smooth surface $C$, strictly disjoint from the class of $3$-graphs arising from $3$-triangulations. 

By definition, a graph is \emph{bicubic} if it is both trivalent (cubic) and bipartite.  Now consider an embedded bicubic graph $G\sse C$, and replace each vertex of $G$ with a hexagonal vertex, doubling the edges as in Figure \ref{fig:BipartiteWeave}.

\begin{center}
	\begin{figure}[h!]
		\centering
		\includegraphics[scale=0.6]{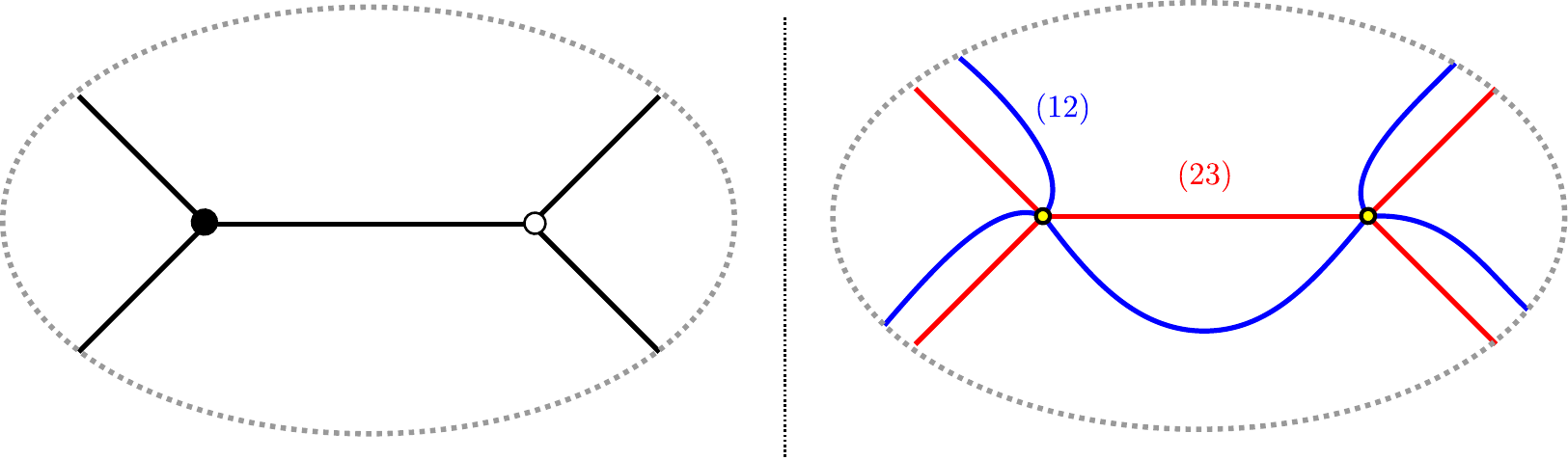}
		\caption{3-graph associated to an edge in a bicubic graph.}
		\label{fig:BipartiteWeave}
	\end{figure}
\end{center}

The bipartite condition on the graph guarantees that these local models can be glued together, uniquely up to isotopy, yielding a $3$-graph in $C$. Note that this $3$-graph is entirely built from hexagonal vertices, and no trivalent vertex is used.  As a result, the topology of the Legendrian weave associated to such a $3$-graphs is always that of a $3$-component link of Legendrian $2$-spheres. We will study a family of such $3$-graphs in Section \ref{sec:app}.

\begin{example}
\label{ex:cubegraph} The bicubic graph $G\sse \S^2$ associated to the 1-skeleton of a 3-dimensional cube, depicted in Figure \ref{fig:BipartiteWeave2}, yields a 3-component Legendrian link $\La(G)\sse(J^1(\S^2),\xi_\st)$. The flag moduli space $\SM(G)$ will show that these three Legendrian spheres, even after satellited to a Darboux ball $(\R^5,\xi_\st)$ are Legendrian knotted (and {\it smoothly} unknotted).\hfill$\Box$
\end{example}

\begin{center}
	\begin{figure}[h!]
		\centering
		\includegraphics[scale=0.6]{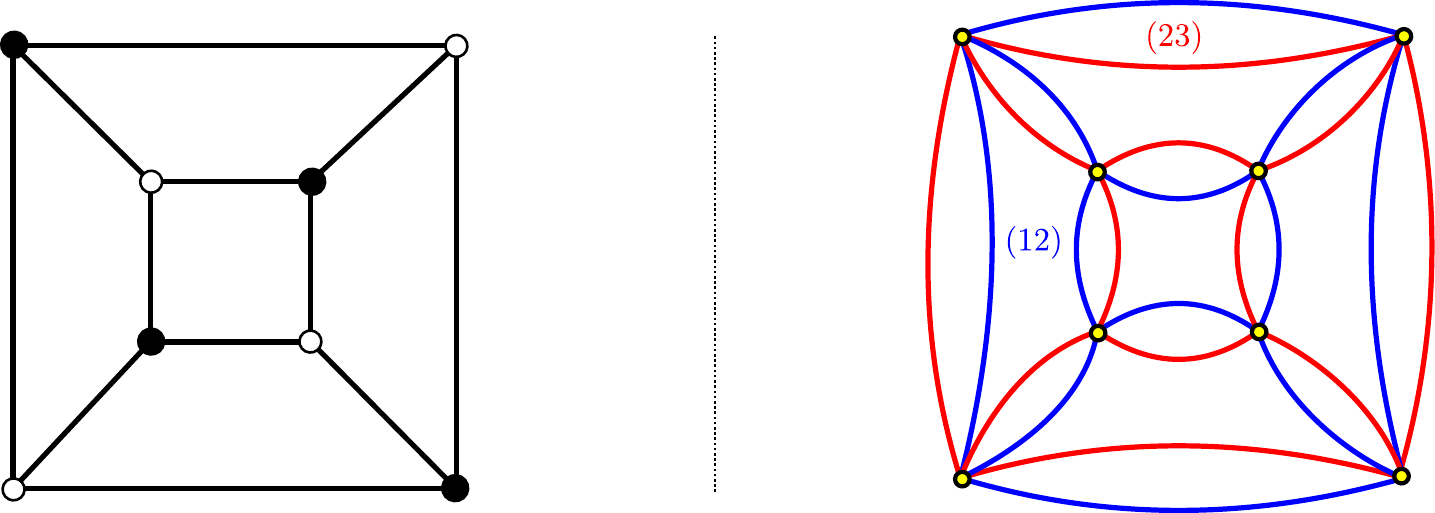}
		\caption{3-graph associated to the 1-skeleton of the cube and the 3-component Legendrian link of 2-spheres.}
		\label{fig:BipartiteWeave2}
	\end{figure}
\end{center}

\begin{remark} Not every 3-graph which is exclusively formed by hexagonal vertices arises from a bicubic graph, even up to candy-twist equivalence (see Section \ref{sec:moves}). In particular, two vertices may have just a single edge connecting them, with no vertices connected by three edges.  Figure \ref{fig:HexCounterExample} shows such an example.
	
	\begin{center}
		\begin{figure}[h!]
			\centering
			\includegraphics[scale=0.6]{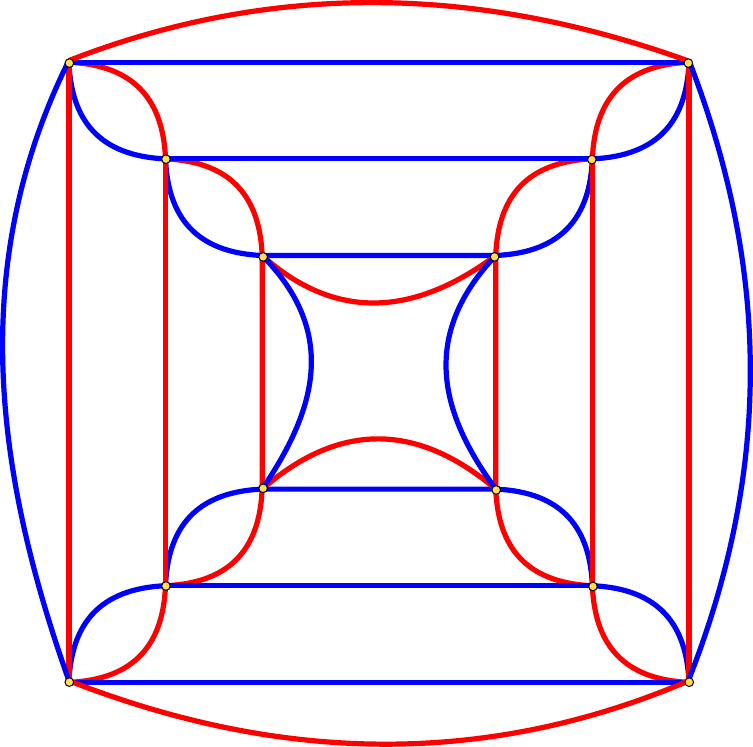}
			\caption{3-graph with only hexagonal vertices which does not arise from a bicubic graph, even up to Move I.}
			\label{fig:HexCounterExample}
		\end{figure}
	\end{center}

This example can be generalized in several ways: by adding more interior squares with one edge connecting adjacent vertices and/or replacing the squares with $2n$-gons.\hfill$\Box$
\end{remark}

\begin{ex}({\bf An Explosion of Examples.}) Bicubic graphs can be readily generated as follows.  Let $P'$ be a polytope, not necessarily regular, and $G'$ its edge graph, i.e. $G'$ is the one skeleton of $P'$. Suppose that $P'$ has $v'$ vertices, $e'$ edges and $f'$ faces. By definition, the \emph{explosion} of the polytope $P'$ is the polytope $P$ formed by first truncating at the vertices and then truncating the resulting polytope along the original edges of $P'.$  Then the 1-skeleton of $P$ is cubic and has a unique bipartite coloring, up to an overall black-white swap, so therefore is bicubic. Note that $P$ has $v = 4e'$ vertices, $e = 6e'$ edges, and $f = v' + e' + f'$ faces.

Even degenerate polytopes $P'$ give interesting examples.  For instance, if $P'$ is the degenerate polytope with two $n$-gon faces ($v' = n, e' = n, f' = 2$), then $P$ is a $2n$-gon prism ($v = 4n, e = 6n, f = 2n+2$).  The cube edge graph described in Example \ref{ex:cubegraph} is the bicubic graph which arises when $P'$ has just two bigon faces.\hfill$\Box$
\end{ex}


\section{Diagrammatic Calculus For Legendrian Weaves}\label{sec:moves}

Let $G\sse C$ be an $N$-graph.  The geometric objects that we are interested in are the Legendrian weaves $\La(G)\sse(J^1C,\xi_\st)$ and their invariants up to Legendrian isotopy. In this section we introduce a series of combinatorial operations that can be performed to an $N$-graph $G$, and we show how they affect the Legendrian isotopy type of $\La(G)$. The geometric understanding of the Legendrian isotopy type through this diagrammatic calculus allows us to significantly simplify computations of algebraic invariants associated to $\La(G)$ in Section \ref{sec:flag}.  Algebraic computations, using the results in this section, are detailed in Sections \ref{sec:app} and \ref{sec:app2}. Let us begin with the combinatorial moves in $G$ that preserve the Legendrian isotopy type of $\La(G)$.


\subsection{Surface Reidemeister Moves}\label{ssec:ReidemeisterMoves}

Let $\La\sse (J^1(C),\xi_\st)$ be a Legendrian surface, a Legendrian isotopy $\{\La_t\}_{\{t\in[0,1]\}}$ will generically induce singularities of the Legendrian fibration $J^1C\lr C\times \R$. As a result, the front sets $\Sigma(\La_t)$ and their singularities will restructure as the parameter $t\in[0,1]$ ranges along a 1-parameter family. These modifications of the Legendrian fronts are referred to as {\it perestroikas}, or Reidemeister moves \cite[Chapter 3]{ArnoldSing}.

\begin{remark} The three classical 1-dimensional Reidemeister moves have been the main method of study for smooth knots in geometric topology, since first introduced \cite{Reidemeister,AlexanderBriggs}. The corresponding seven moves for smooth surfaces are known as Roseman moves, after \cite[Theorem 1]{Roseman95}. The corresponding Legendrian Reidemeister, and Legendrian Roseman moves, for Legendrian knots, and Legendrian surfaces, follow from the classification of (stable) wavefront singularities in dimensions $\dim(\La)\leq3$ \cite[Theorem 13]{ArnoldICM74}. We will refer to Legendrian Roseman moves as surface Legendrian Reidemeister moves.\hfill$\Box$
\end{remark}

The combinatorial operations inducing surface Legendrian Reidemeister moves are the content of the following theorem. In the moves, the local pieces of the $N$-graphs are actually 3- or 4-graphs. The color code follows our standard notation: blue and red are adjacent colors (corresponding to adjacent transpositions), red and yellow are adjacent colors, and blue and yellow are disjoint colors.

\begin{thm}\label{thm:surfaceReidemeister} Let $G_1,G_2$ be one of the pairs of $N$-graphs depicted in Figures \ref{fig:Reidemeister1}, \ref{fig:Reidemeister2}, \ref{fig:Reidemeister3}, \ref{fig:Reidemeister4}, \ref{fig:Reidemeister5}, \ref{fig:Reidemeister6} and \ref{fig:Reidemeister6Prime}. Then the associated Legendrian surface $\La(G_1)$ is Legendrian isotopic to $\La(G_2)$ relative to their boundaries. That is, $\mbox{Moves I}, \mbox{II}, \mbox{III}, \mbox{IV}, \mbox{V}, \mbox{VI}$ and $\mbox{VI'}$ are local surface Legendrian Reidemeister moves.
\end{thm}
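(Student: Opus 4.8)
The plan is to prove each of the seven moves by explicitly exhibiting the Legendrian isotopy at the level of spatial wavefronts in the jet space. Since by Definition \ref{def:LegWeave2} the Legendrian surface $\La(G)$ is determined by the configuration of its $A_1^2$-crossings (together with the $A_1^3$ and $D_4^-$ singularities forced by them), it suffices to realize, for each pair $(G_1,G_2)$, a compactly supported homotopy of fronts $\{\Sigma_t\}_{t\in[0,1]}$ with $\Sigma_0=\Sigma(G_1)$, $\Sigma_1=\Sigma(G_2)$, staying within the class of wavefronts that lift to embedded Legendrians in $(J^1C,\xi_\st)$, and fixed near the boundary of the local chart. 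First I would recall that a homotopy of fronts through generic Legendrian singularities corresponds precisely to a Legendrian isotopy (this is the content of the classification of stable singularities in dimension $\leq 3$, cited above via Arnol'd), so the content of the theorem is entirely local and combinatorial: one must draw the intermediate fronts.

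The proof divides into cases by move. For Moves I, II, III, I would work with the three-sheeted front of a $3$-graph: Move I (a ``candy twist'' type relation among trivalent vertices connected by an edge, i.e.\ among $D_4^-$ singularities) is realized by sliding the two $D_4^-$ points past each other while the intervening $A_1^2$-edge shrinks and reappears on the other side; Move II (involving a hexagonal vertex meeting a trivalent vertex, or a bigon of $i$-edges) is realized by a cancellation/creation of a pair of $D_4^-$ points joined by a short edge, analogous to the first Reidemeister move for fronts of Legendrian knots cusping off; Move III is the familiar braid-like relation pushing an edge through a trivalent or hexagonal vertex, realized by passing one sheet-crossing locus across another. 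For Moves IV and V (the disjoint-color and ``far commutation'' moves, involving blue--yellow) the homotopy is essentially a product of a lower-dimensional front homotopy with an interval, since non-adjacent sheets cross transversely and independently; these are the easiest. For Moves VI and VI$'$ one passes to the four-sheeted front of a $4$-graph and exhibits the reordering of two hexagonal $(i,i+1)$- and $(i+1,i+2)$-points, or of a hexagonal point past a trivalent vertex of the far color, again by an explicit slide; here the interlacing condition at hexagonal vertices must be checked to be preserved throughout.

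In each case the verification has two parts: (a) that the one-parameter family of fronts never develops a singularity worse than the allowed stable list ($A_1^2$, $A_1^3$, $D_4^\pm$, $A_2$, $A_2A_1$, $A_3$), so that it is genuinely a Legendrian isotopy, and (b) that the family is supported in the interior of the local model, so the Legendrians agree near the boundary and the moves are local. For (a) the key observation is that near any moment $t_0$ where singular strata collide one sees exactly the standard perestroikas among these germs (collisions of $D_4^-$ points producing transient $D_4^+$ or $A_3$ points, etc.), all of which are listed as generic in $1$-parameter families by \cite[Section 3.4]{ArnoldGivental01}; for (b) one simply checks that the boundary data of $G_1$ and $G_2$ (the cyclic word of edge-colors read around $\partial \D^2$) coincide, which is visible in the figures.

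The main obstacle, and the bulk of the genuine work, is Move I together with the $4$-graph moves VI and VI$'$: these are where the combinatorics of how $D_4^-$ and $A_1^3$ singular loci reorganize is most intricate, and where one must genuinely track the sheet-by-sheet picture rather than appeal to a product structure. A clean way to handle Move I is to use the holomorphic $D_4^-$ singularity of Remark \ref{rmk:D4minus}: two nearby $D_4^-$ points joined by an edge can be deformed by moving their branch points in the complex $w$-parameter, and the two combinatorially distinct configurations of trivalent vertices correspond to moving the branch point around the two sides of a crossing, which is manifestly an isotopy; the real slices then give the desired front homotopy. For VI and VI$'$ the analogous trick is to realize the relevant four-sheeted front as governed by a configuration of three commuting/braiding sheet swaps and invoke the corresponding elementary moves in the symmetric group $S_4$, lifting each Coxeter relation to an explicit front homotopy. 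I would present the argument figure-by-figure, deferring the detailed pictures of the intermediate fronts to the accompanying figures in Section \ref{sec:moves}, and remark that once Moves I--III are established for $3$-graphs the remaining moves follow by the same techniques applied one color-band at a time.
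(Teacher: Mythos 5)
Your overall strategy — exhibiting explicit compactly supported homotopies of spatial fronts and checking that they lift to embedded Legendrians — is correct and is what the paper does. However, you have misidentified several of the moves, and in one case the misidentification is fatal. Move~I is the \emph{candy twist}, a relation involving two hexagonal \emph{$A_1^3$-points}, not two trivalent $D_4^-$-points connected by an edge. The picture you describe for Move~I (sliding two $D_4^-$-points past each other while the intervening $A_1^2$-edge shrinks and reappears on the other side) is in fact the local Legendrian \emph{mutation} of Definition~\ref{def:graphmut} and Theorem~\ref{thm:LegMutations}, which is emphatically \emph{not} a Legendrian isotopy — it is precisely the move that changes Legendrian isotopy type. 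Consequently the suggested trick via the holomorphic $D_4^-$ germ of Remark~\ref{rmk:D4minus} has no purchase on Move~I, and a ``proof'' built on that picture would be proving a false statement. Similarly, your description of Move~II as a creation/annihilation of a $D_4^-$ pair does not match the actual move (a single $D_4^-$-point being pushed through an $A_1^3$-point, realized in the paper by a $2$-plane crossing the $D_4^-$-cone), and you have swapped the content of Moves~IV and~V (Move~IV is the three-adjacent-colors $A_3$-Zamolodchikov move; Move~V is the disjoint-color far-commutation).

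A second, structural issue: your step~(a) asserts that the front homotopy should stay within the stable singularity list and that all collisions are ``standard perestroikas.'' This is both imprecise and at odds with the proof strategy that actually works. The $D_4^-$ germ is \emph{not} a stable singularity of spatial fronts (only of their $1$-parameter families), so a truly generic homotopy would immediately break every $D_4^-$-vertex into a triangle of $A_3$-swallowtails, destroying the $N$-graph structure you are trying to preserve. The paper deliberately works with \emph{non-generic} families: the central front in the homotopy for Move~I has a six-valent non-hexagonal singular point which is not stable, and the justification is not genericity but a direct check that the tangent planes of the distinct sheets at that singular moment remain pairwise transverse, so the Legendrian lift stays embedded. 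Without that embeddability criterion replacing the appeal to generic perestroikas, your argument has a gap at precisely the moments where the moves happen. The correct test throughout is the lifting criterion of Subsection~\ref{ssec:SingWave} (no vertical tangent planes, distinct sheets have distinct tangents at coincidences), not membership in the stable list.
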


\begin{center}
		\begin{figure}[h!]
			\centering
			\includegraphics[scale=0.65]{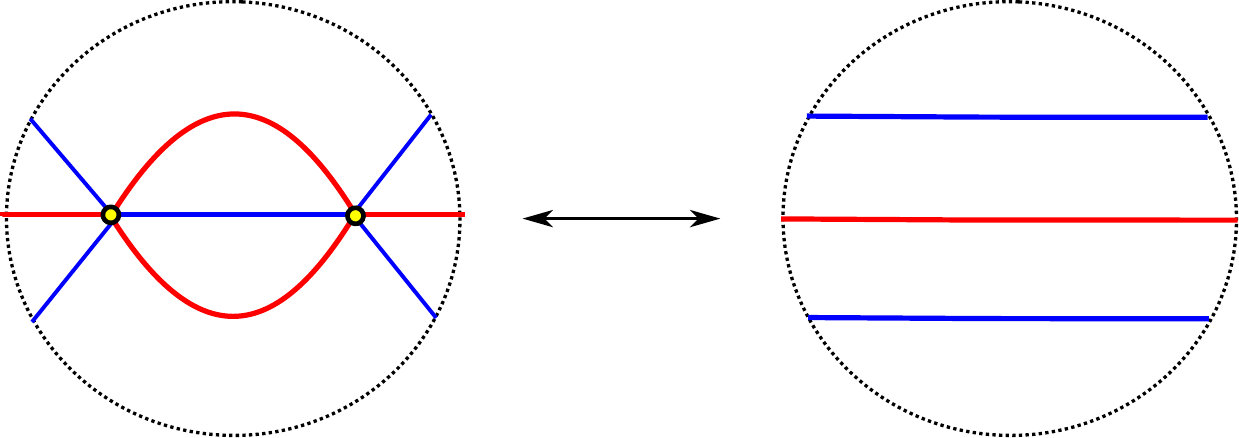}
			\caption{(Move I) The first pair of local $N$-graphs $G_1$, on the left, and $G_2$ on the right.  We refer to this move as \emph{candy twist}.}
			\label{fig:Reidemeister1}
		\end{figure}
\end{center}

\begin{center}
	\begin{figure}[h!]
		\centering
		\includegraphics[scale=0.65]{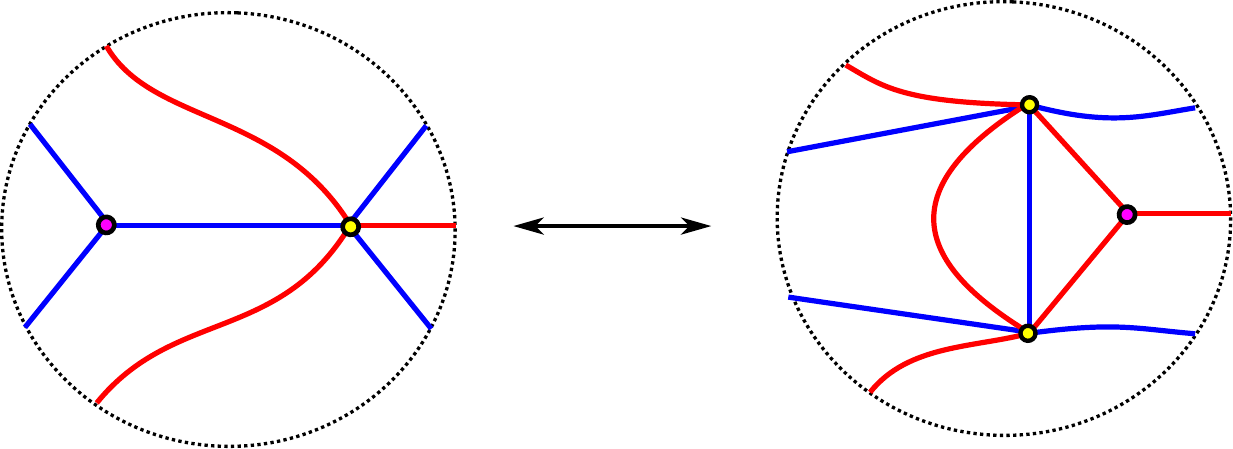}
		\caption{(Move II) The second pair of local $N$-graphs $G_1$, on the left, and $G_2$ on the right. We refer to this move as the \emph{push-through}, since the trivalent vertex gets pushed through the hexagonal vertex.}
		\label{fig:Reidemeister2}
	\end{figure}
\end{center}

\begin{center}
	\begin{figure}[h!]
		\centering
		\includegraphics[scale=0.65]{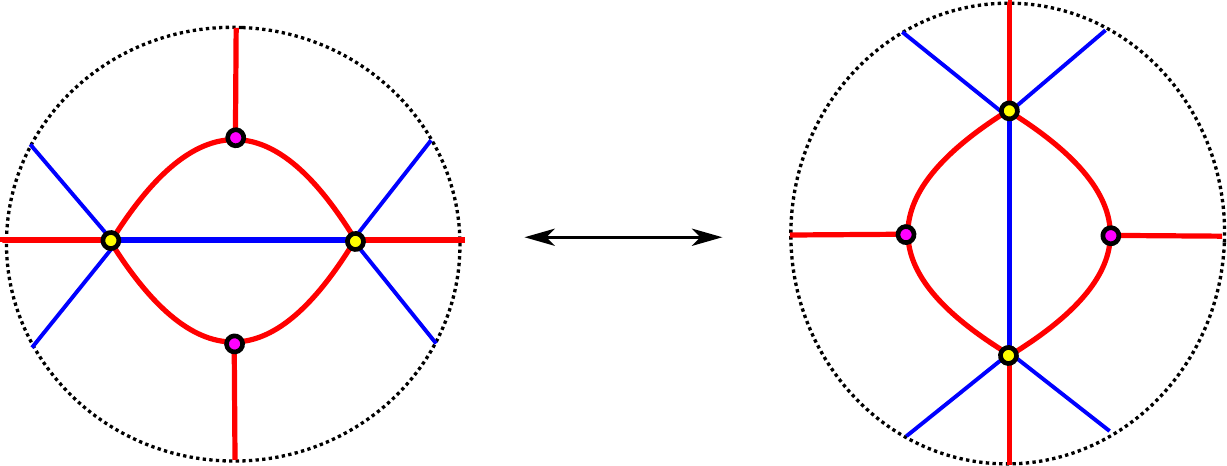}
		\caption{(Move III) The third pair of local $N$-graphs $G_1$, on the left, and $G_2$ on the right.  We refer to this move as the \emph{flop}.}
		\label{fig:Reidemeister3}
	\end{figure}
\end{center}

\begin{center}
	\begin{figure}[h!]
		\centering
		\includegraphics[scale=0.65]{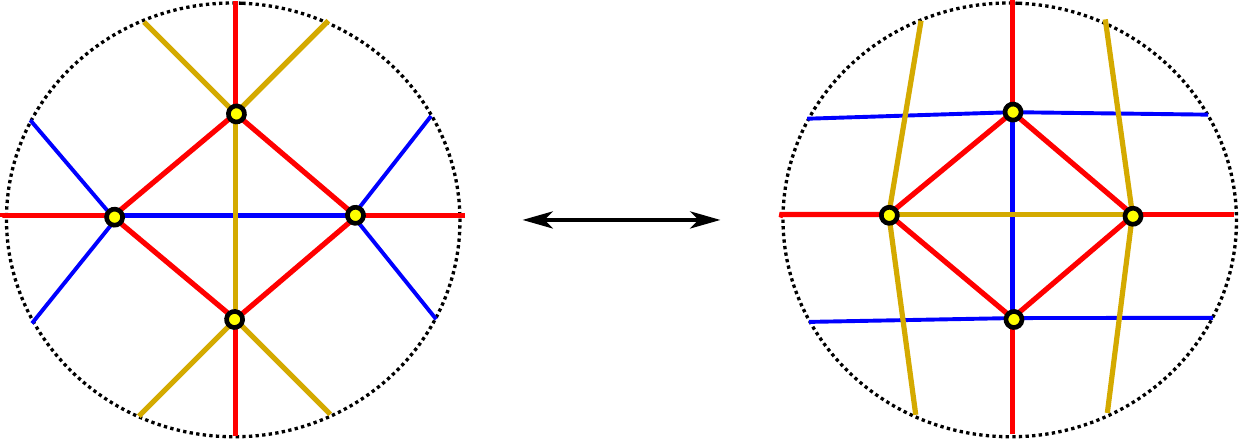}
		\caption{(Move IV) The fourth pair of local $N$-graphs $G_1$, on the left, and $G_2$ on the right. Note we must have $N\geq 4$. This moves implies the $A_3$ generalized Zamolodzhikov relation depicted in Figure \ref{fig:A3Zamolodzhikov}.}
		\label{fig:Reidemeister4}
	\end{figure}
\end{center}

\begin{center}
	\begin{figure}[h!]
		\centering
		\includegraphics[scale=0.65]{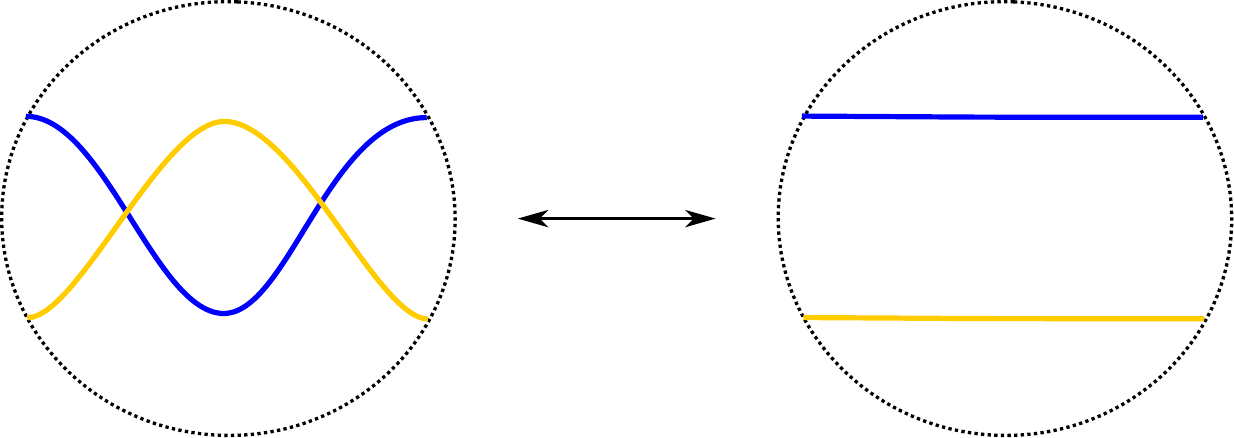}
		\caption{(Move V) The fifth pair of local $N$-graphs $G_1$, on the left, and $G_2$ on the right, with $N\geq 4$. The blue and yellow colors are associated to disjoint transpositions.}
		\label{fig:Reidemeister5}
	\end{figure}
\end{center}

\begin{center}
	\begin{figure}[h!]
		\centering
		\includegraphics[scale=0.65]{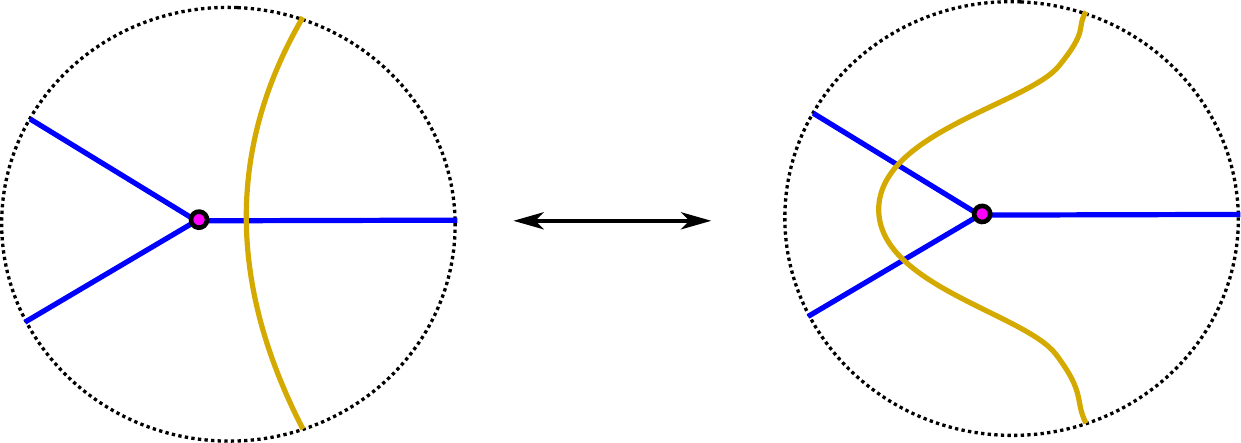}
		\caption{(Move VI) The sixth pair of local $N$-graphs $G_1$, on the left, and $G_2$ on the right, with $N\geq 4$.}
		\label{fig:Reidemeister6}
	\end{figure}
\end{center}

\begin{center}
	\begin{figure}[h!]
		\centering
		\includegraphics[scale=0.65]{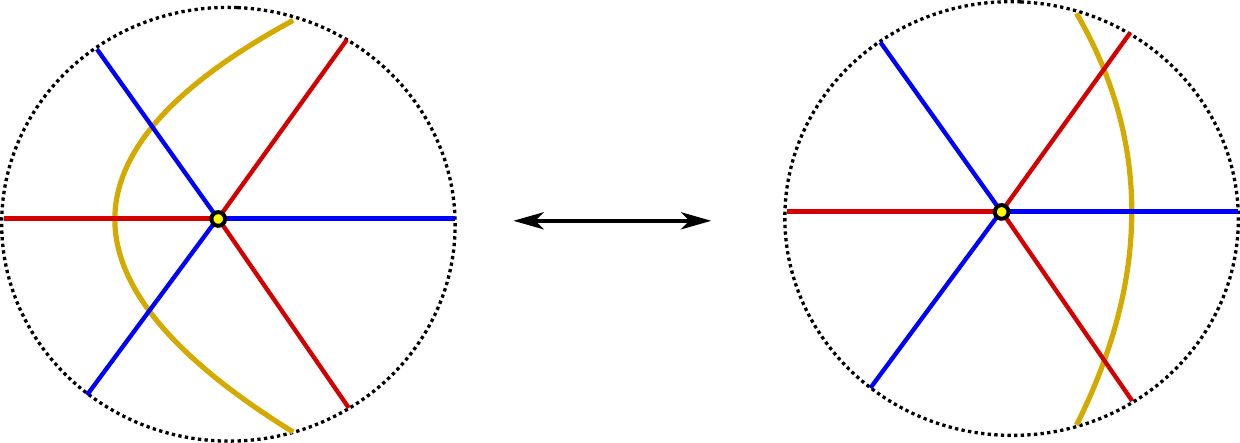}
		\caption{(Move VI') Variation on the sixth pair of local $N$-graphs $G_1$, on the left, and $G_2$ on the right, with $N\geq 4$.}
		\label{fig:Reidemeister6Prime}
	\end{figure}
\end{center}

\begin{proof} Let us start with Move I, the candy twist, as depicted in Figure \ref{fig:Reidemeister1}. It illustrates the method of proof for these surface Legendrian Reidemeister moves. There are essentially three equivalent viewpoints: exhibiting the Legendrian isotopy as $N$-graphs, visualizing the surface wavefronts explicitly in $(J^1\R^2,\xi_\st)$, or studying these surface wavefronts as families of (possibly singular) Legendrian links. In the first perspective, we need to justify that all the $N$-graphs lift to embedded Legendrian surfaces. In the second, the challenge is visualizing the actual front and ensuring that all the singularities lift to Legendrian embeddings. In the third perspective, visualization is simplified, with the trade-off of having to draw several movies of links. The second perspective is the strongest, as it readily implies the other two.
	
The first perspective is drawn in Figure \ref{fig:Reidemeister1Proof}. The left and rightmost 3-graphs lift to Legendrian weaves, yielding embedded Legendrian surfaces (with boundary). The diagram in the center of Figure \ref{fig:Reidemeister1Proof} does {\it not} immediately lift to an embedded Legendrian surface, as the six-valent vertex is {\it not} a hexagonal vertex -- the colors of the edges around it are not alternating, which is the condition for the hexagonal vertices introduced in Section \ref{sec:NGraphsLegWeaves}.
	
	\begin{center}
		\begin{figure}[h!]
			\centering
			\includegraphics[scale=0.8]{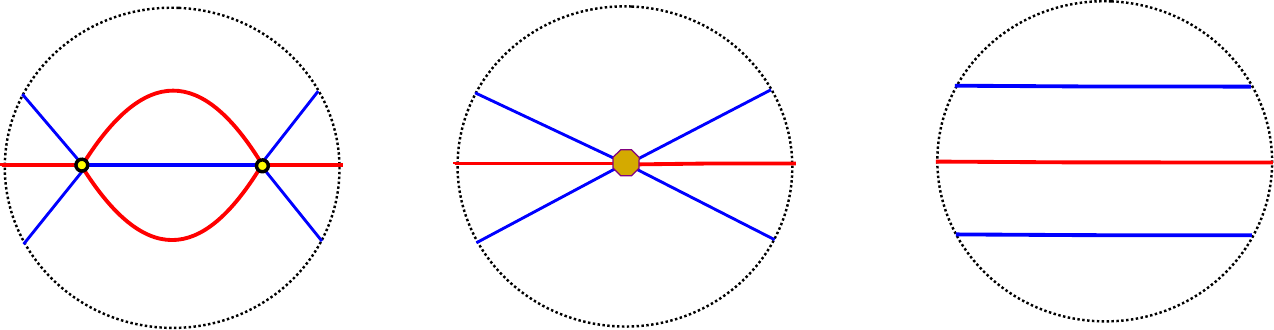}
			\caption{The 3-graph movie showing that the candy move - Move I - is a Legendrian isotopy. The geometric meaning of the central picture (not a 3-graph) is explained in the text.}
			\label{fig:Reidemeister1Proof}
		\end{figure}
	\end{center}

Nevertheless, the center diagram in Figure \ref{fig:Reidemeister1Proof} does in fact come from a Legendrian wavefront whose Legendrian lift is an embedded surface. Indeed, we have depicted such a front in the second front of Figure \ref{fig:Reidemeister1Proof3}.

\begin{center}
	\begin{figure}[h!]
		\centering
		\includegraphics[scale=0.85]{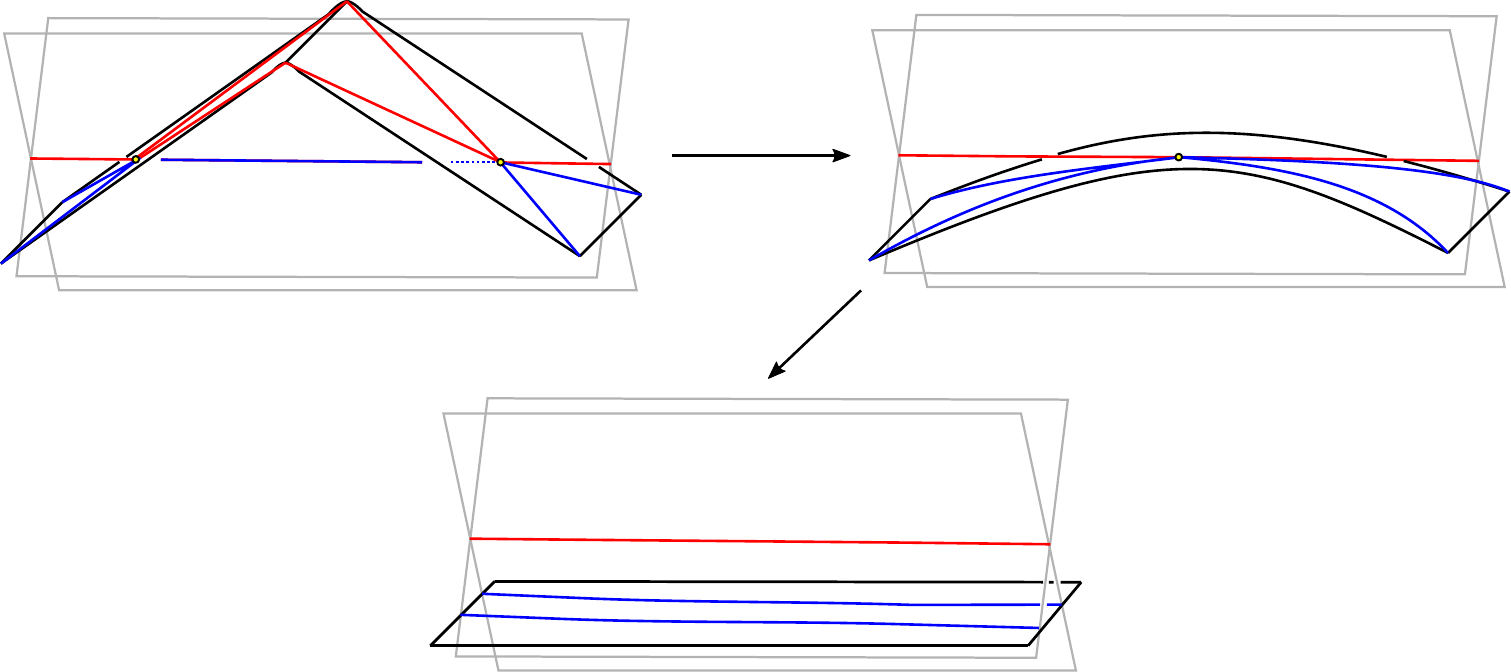}
		\caption{The homotopy of Legendrian wavefronts associated to Move I.}
		\label{fig:Reidemeister1Proof3}
	\end{figure}
\end{center}

The movie of wavefronts in Figure \ref{fig:Reidemeister1Proof3} geometrically constructs the homotopy of Legendrian fronts which lifts to the Legendrian isotopy corresponding to Move I. The three fronts in Figure \ref{fig:Reidemeister1Proof3} lift to {\it embedded} Legendrian surfaces, as the singularities are all Legendrian and there are no vertical tangent planes. The singularities at the beginning of Figure \ref{fig:Reidemeister1Proof3} are segments of $A_1^2$-crossings, and two isolated $A_1^3$ points. The singularities at the end of Figure \ref{fig:Reidemeister1Proof3} are just segments of $A_1^2$-crossings. The singularity in the middle of the movie, not corresponding to an $A_1^2$ segment, is {\it not} a stable front singularity, but it does lift to an embedded Legendrian surface, and thus the homotopy of fronts actually represents a Legendrian isotopy. Indeed, the tangent spaces at that singularity intersect transversely, and hence their lifts are disjoint. This concludes that Move I combinatorially represents a surface Legendrian Reidemeister move.

\begin{remark}
For completeness, in Figure \ref{fig:Reidemeister1Proof2} we have drawn the homotopy of surface fronts from Figure \ref{fig:Reidemeister1Proof3} as a movie (of movies).  It is thus a 2-homotopy of Legendrian links.  These three movies of links, one per each column, are obtained by slicing each of the respective fronts in Figure \ref{fig:Reidemeister1Proof3} from left to right. This is the third viewpoint we mentioned above.\hfill$\Box$

\begin{center}
	\begin{figure}[h!]
		\centering
		\includegraphics[scale=0.7]{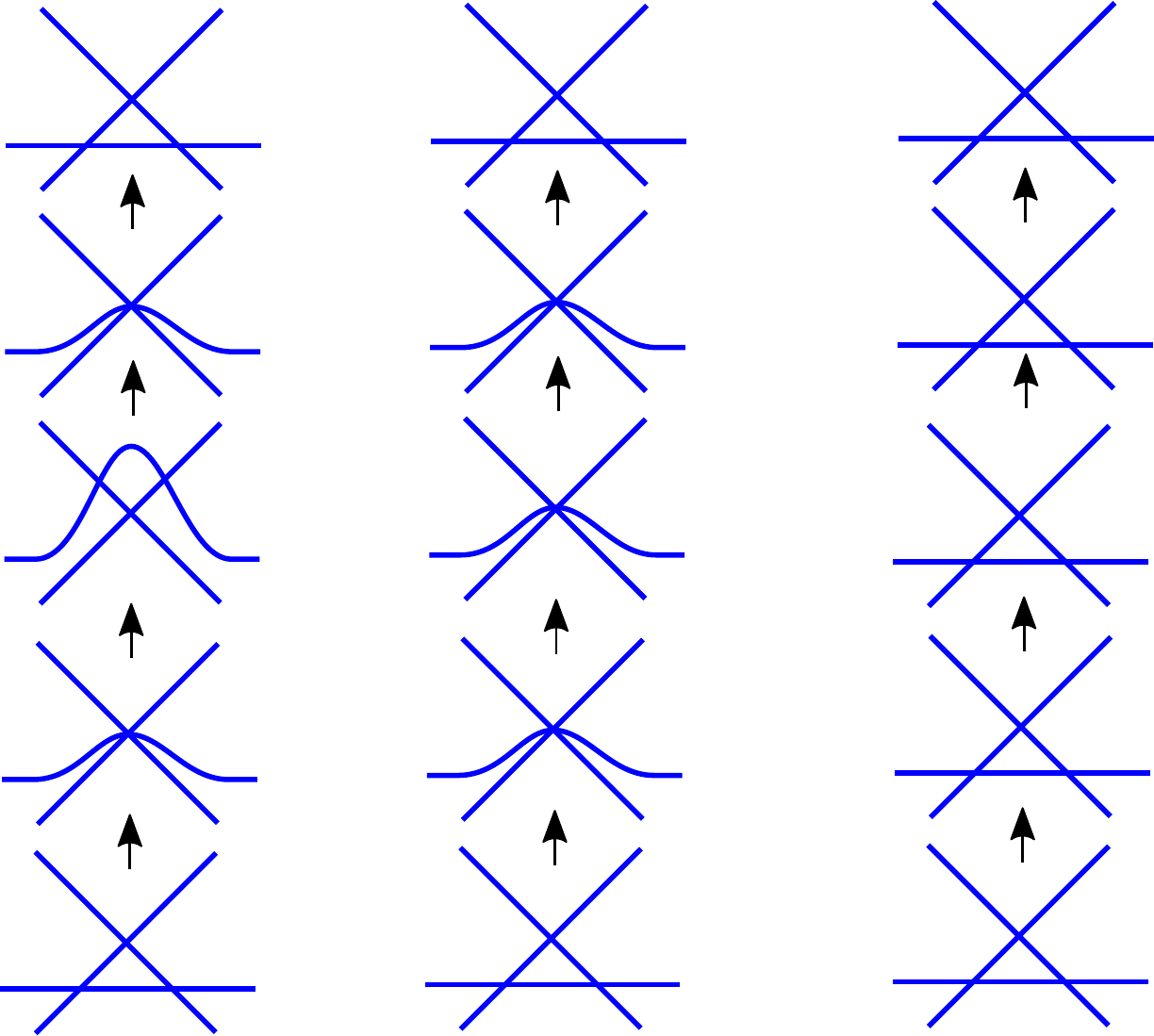}
		\caption{The proof that Move I is a Legendrian isotopy by (transversely) slicing each of the Legendrian wavefronts in Figure \ref{fig:Reidemeister1Proof3}.}
		\label{fig:Reidemeister1Proof2}
	\end{figure}
\end{center}
\end{remark}

Let us now justify Move II, where a $D_4^-$-singularity pushes-through an $A_1^3$-singularity. The resulting front has a $D_4^-$-singularity and {\it two} $A_1^3$-singularities. The clearest proof that this is a Legendrian isotopy comes from carefully drawing and examining the right homotopy of fronts. In this case, the required movie of fronts is depicted in Figure \ref{fig:Reidemeister2Proof1}. These Legendrian fronts start with the front whose $A_1^2$-singularities yield the 3-graph $G_1$ on the left of Move II, and end with the front whose $A_1^2$-singularities yield the 3-graph $G_2$ on the right of Move II.

\begin{center}
	\begin{figure}[h!]
		\centering
		\includegraphics[scale=1]{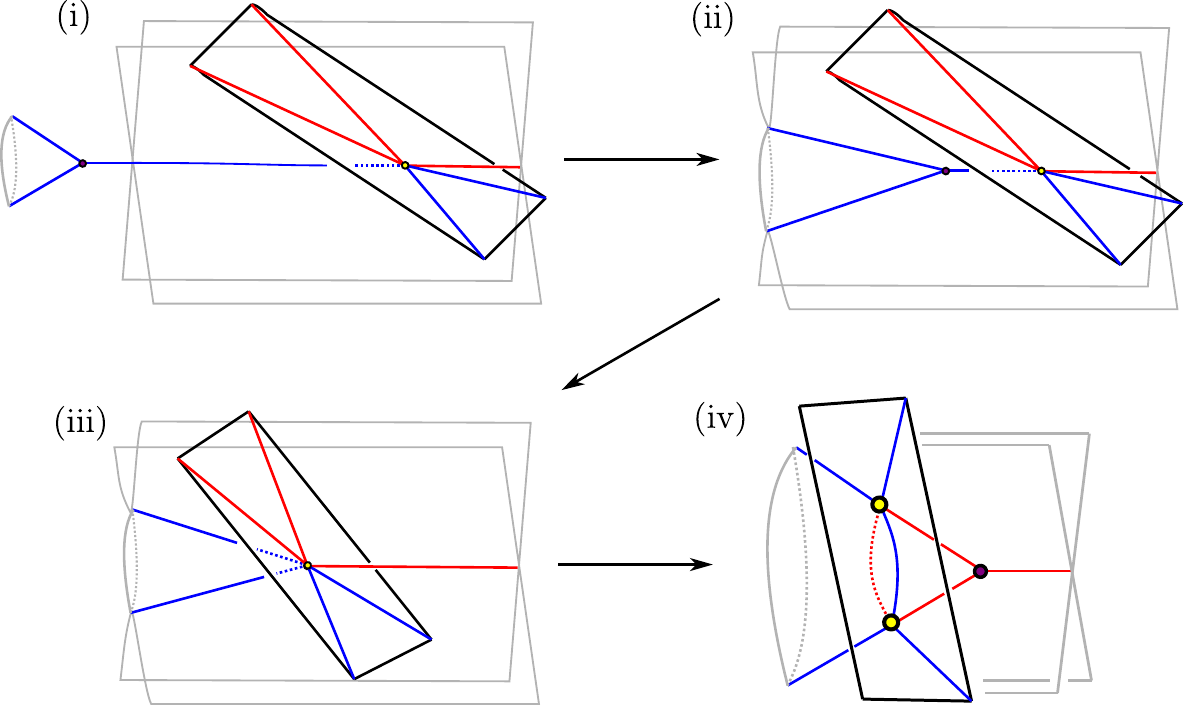}
		\caption{The homotopy of Legendrian fronts inducing Move II.}
		\label{fig:Reidemeister2Proof1}
	\end{figure}
\end{center}

These fronts describe a neighborhood $\R^3$ of a $D_4^-$-singularity with a 2-plane $\Pi\sse\R^3$ which starts away from the $D_4^-$-singularity. This 2-plane $\Pi$ is drawn with a tilt in its slope. The homotopy of fronts consists of this 2-plane $\Pi$ moving towards the $D_4^-$-singularity and {\it crossing} through it. There exists a unique moment in this isotopy in which the $D_4^-$-singularity is contained in the 2-plane $\Pi$. The $A_1^2$-singularities right before that moment give rise to $G_1$ for Move II, and right after this moment the $A_1^2$-singularities give rise to $G_2$ for Move II. Since the 2-plane $\Pi$ is not vertical, and the tangent 2-planes of the different branches at the $D_4^-$-singularity in all moments are distinct, each of the fronts in this homotopy lift to embedded Legendrian surfaces. Thus, the movie of fronts in Figure \ref{fig:Reidemeister2Proof1} shows that there exists a Legendrian isotopy with $A_1^2$-singularities as dictated by Move II, and $\La(G_1)$ and $\La(G_2)$ are Legendrian isotopic relative to their boundaries. This concludes Move II.

For Move III, we can proceed analogously by drawing a homotopy of fronts which lifts to a Legendrian isotopy. Nevertheless, Move III can actually be deduced as a combination of Moves I and II.  We leave it as an exercise for the reader to visualize the spatial Legendrian fronts, and instead explain how to deduce Move III from the previous two moves, as follows. Starting with one side of Move III, push both trivalent vertices through in the clockwise direction using Move II. This is depicted in the first two steps of Figure \ref{fig:Reidemeister3Seq}.

\begin{center}
	\begin{figure}[h!]
		\centering
		\includegraphics[scale=0.7]{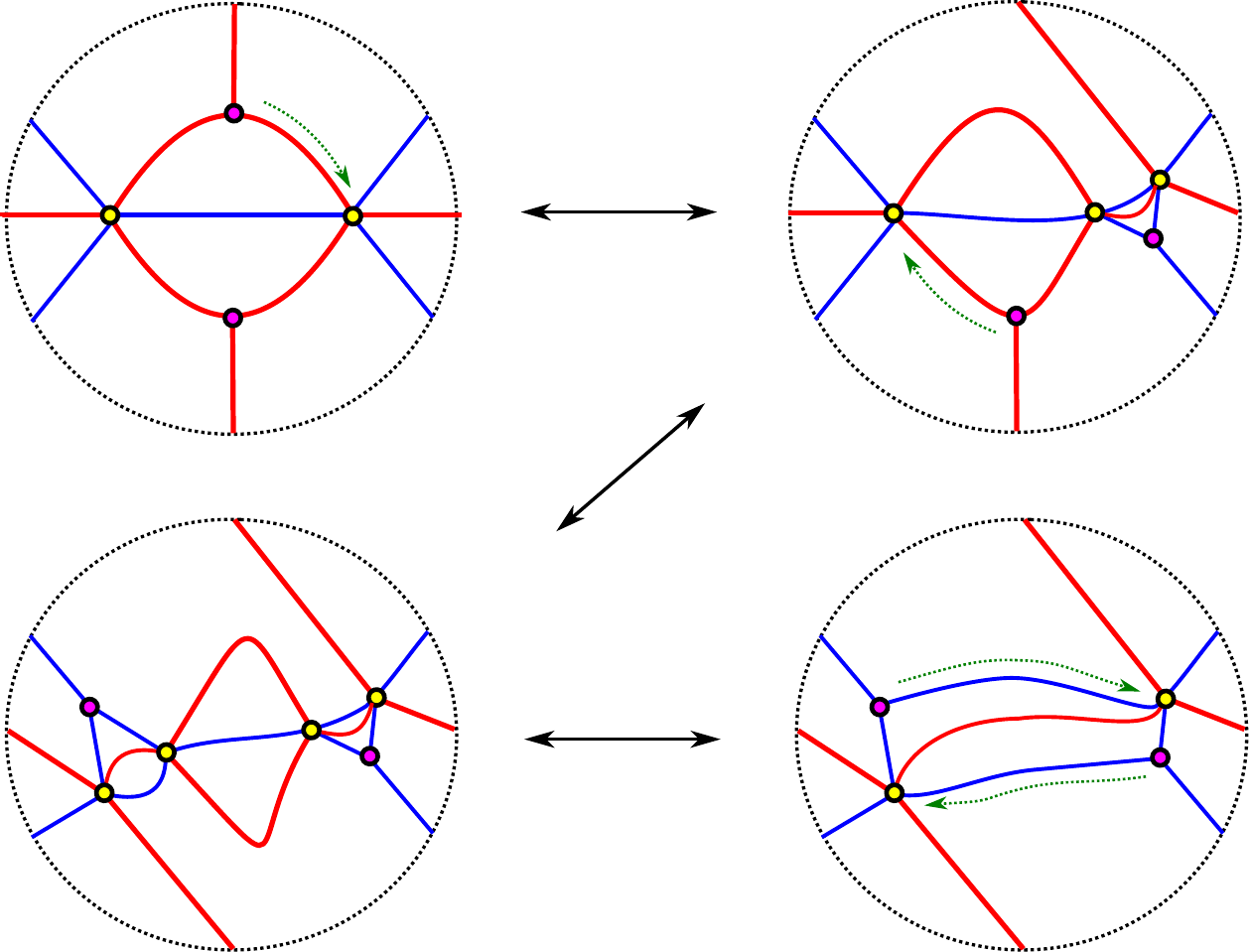}
		\caption{Deducing Move III from Moves I and II. The first step pushes a red trivalent vertex through a hexagonal vertex. The second step does the same for the other red trivalent vertex. The third step is a simplification undoing a candy twist. The dashed green lines indicate push-through moves that are about to occur.}
		\label{fig:Reidemeister3Seq}
	\end{figure}
\end{center}

This creates additional hexagonal vertices and the two trivalent vertices do change color. Perform Move II twice more, pushing-through these trivalent vertices again, and then cancel  two pairs of hexagonal vertices with a candy twist (Move I) to obtain the right hand side of Move III. Alternatively, first undo the candy twist as in the third step of Figure \ref{fig:Reidemeister3Seq}: this yields a 3-graph which is identical to a partial rotation of the initial 3-weave with red and blue switched. Iterating this again, i.e. pushing the two blue trivalent vertices through, as indicated by the dashed green lines in Figure \ref{fig:Reidemeister3Seq}, and undoing a candy twist yields the right hand side of Move III.

Let us now show that Move IV is a Legendrian isotopy. The corresponding spatial wavefronts consist of configurations of four 2-planes. The graph $G_1$ on the left of Move IV is obtained as the $A_1^2$-singularities, i.e. intersections, of the union of the four 2-planes

$$\pi_x=\{(x,y,z)\in\R^3:x+0.0001z=0\},\quad \pi_y=\{(x,y,z)\in\R^3:y+0.0001z=0\},$$
$$\pi_z=\{(x,y,z)\in\R^3:z=0\}, \quad \pi_1=\{(x,y,z)\in\R^3:x+y+z=1\}.$$

These intersections and 2-planes are depicted, with the corresponding colors, in Figure \ref{fig:Reidemeister4_3DVersion}. Now consider the 2-planes $\pi_t=\{(x,y,z)\in\R^3:x+y+z=t\}$, $t\in[-1,1]$. The homotopy of spatial wavefronts is locally given by the union $\pi_x\cup\pi_y\cup\pi_z\cup\pi_t$, $t\in[-1,1]$.

\begin{center}
	\begin{figure}[h!]
		\centering
		\includegraphics[scale=0.55]{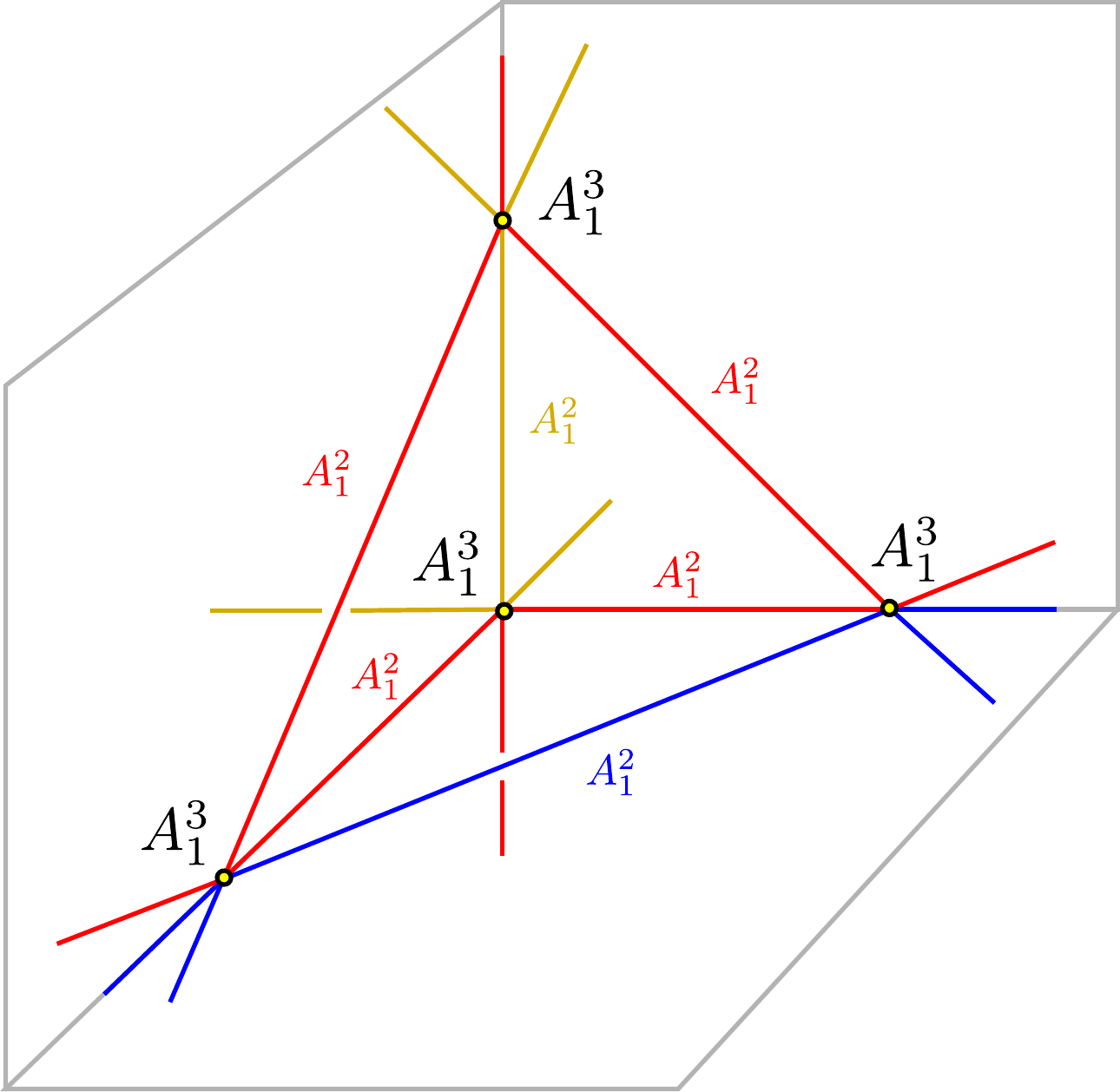}
		\caption{Front for the start of Move IV. The lines depict the intersections of the union of the four 2-planes $\pi_x\cup\pi_y\cup\pi_z\cup\pi_1$.}
		\label{fig:Reidemeister4_3DVersion}
	\end{figure}
\end{center}

This homotopy is not relative to the boundary, as the 2-planes $\pi_t$, $t\in[-1,1]$, change the boundary conditions --- but this is easily corrected by only pushing a compact piece of $\pi_t$, $t\in[-1,1]$ through the triple intersection point $\pi_x\cap\pi_y\cap\pi_z$. The $A_1^2$-singularity pattern of the resulting spatial wavefront is precisely as in the right graph $G_2$ in Move IV, as required.

Let us now address Move V, which depicts the local transition between two 4-graphs $G_1,G_2$ in Figure \ref{fig:Reidemeister5}. The corresponding spatial fronts consist of four 2-planes $\pi_1,\pi_2,\pi_3,\pi_4\sse\R^3$, where the only non-empty intersections are $\pi_1\cap\pi_2$, corresponding to the blue segment in $G_1$ (and $G_2$), and $\pi_3\cap\pi_4$, corresponding to the yellow segment in $G_1$, and $G_2$.
\begin{center}
	\begin{figure}[h!]
		\centering
		\includegraphics[scale=0.4]{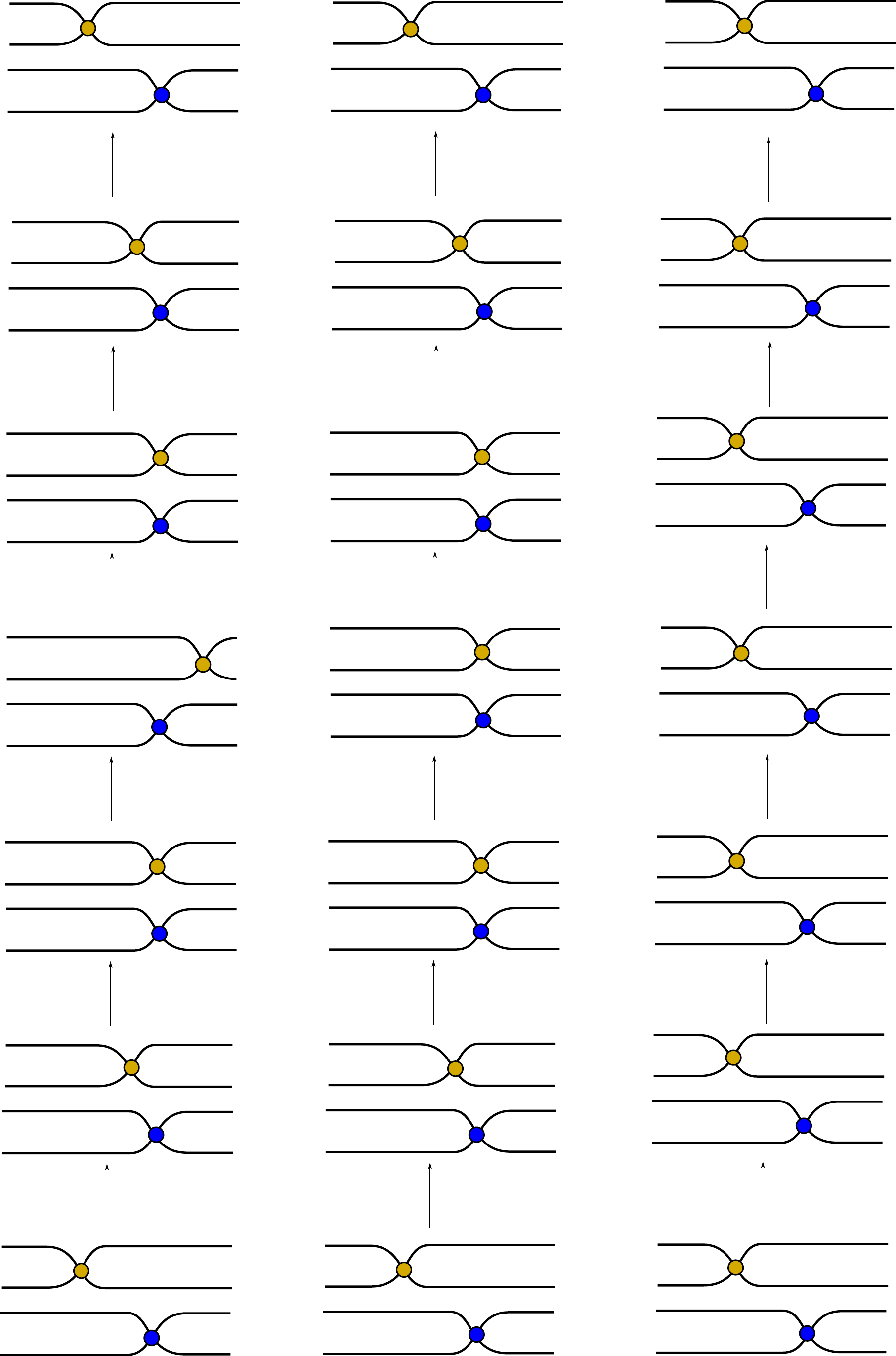}
		\caption{Each column represent slices of a spatial front. The $A_1^2$-singularities of the left column gives rise to $G_1$ in Figure \ref{fig:Reidemeister5}, and the $A_1^2$-singularities of the right column gives rise to $G_2$.}
		\label{fig:Reidemeister5Proof}
	\end{figure}
\end{center}
The fact that the fronts giving $G_1$ and $G_2$ are homotopic as Legendrian fronts is proven in Figure \ref{fig:Reidemeister5Proof}. Each of the columns in the figure represents a spatial surface front, with the links in the columns corresponding to slices. The corresponding intersections, dictating the $A_1^2$-singularities, are marked with the same color as in Figure \ref{fig:Reidemeister5}. The union of these slices in Figure \ref{fig:Reidemeister5Proof} yield spatial fronts which lift to embedded Legendrian surfaces, and thus the movie of columns in Figure \ref{fig:Reidemeister5Proof} exhibits a Legendrian isotopy from $\La(G_1)$ to $\La(G_2)$. Therefore, Move V is a surface Legendrian Reidemeister move. Move VI in Figure \ref{fig:Reidemeister6} follows with the same argument as for Move V, with a segment of $A_1^2$-singularities passing above, and disjointly, a $D_4^-$-singularity --- and likewise for Move VI'.  This concludes the proof of Theorem \ref{thm:surfaceReidemeister}.
\end{proof}

\begin{remark}
The Legendrian Reidemeister moves in Theorem \ref{thm:surfaceReidemeister} provide a symplectic geometric realization of A-type Soergel calculus. Moves I and V should be compared to \cite[Figure 4.4]{Soergel3}. Move II and Move VI are known as two-color associativity of type $A_1\times A_1$, with Coxeter exponent $m_{st}=2$, and of type $A_2$, with Coxeter exponent $m_{st}=3$, and Move IV corresponds to the $A_3$ relation \cite[Figure 4.7]{Soergel3}. It should be emphasized that the notation in Soergel calculus follows the notation for (rank three) parabolic subgroup of finite Coxeter groups, whereas we use the notation for Lie algebras whose irregular Weyl orbits yield spatial wavefronts. See Appendix \ref{ssec:SoergelCalculus} for further details.\hfill$\Box$
\end{remark}

Theorem \ref{thm:surfaceReidemeister} contains the Reidemeister moves that we use in the course of the article. They are all the possible (generic) Legendrian surface moves with only $D_4^-$ and $A_1^2$ Legendrian singularities in the endpoints of the Legendrian isotopy. The complete set of surface Reidemeister moves \cite[Section 3.3]{ArnoldSing} also includes the moves associated to the $A_4$ and $D_4^+$-singularities, which will require the interaction of $A_2$-cusp edges $A_2$ and $A_3$-swallowtails.

Theorem \ref{thm:surfaceReidemeister} allows one to make local modifications to an $N$-graph $G_1$ and obtain an $N$-graph $G_2$ such that the Legendrian surfaces $\La(G_1)\cong \La(G_2)\sse(J^1C,\xi_\st)$ are Legendrian isotopic. For the case $C=\S^2$, we define in Subsection \ref{ssec:Stabilization} an additional combinatorial move, which we refer to as a {\it stabilization}, going from an $N$-graph $G_1$ to a $(N+1)$-graph $G_2$. This requires a discussion on satellite constructions for Legendrian weaves, which is useful on its own, and also needed for Subsection \ref{ssec:Legsurgeries}.


\subsection{Legendrian Satellite Weaves}\label{ssec:Satellite} Let $G\sse C$ be an $N$-graph.  The Legendrian surface $\Lambda(G)$ defined by the weaving construction lies in the contact 5-manifold $(J^1C,\xi_\st)$.  Now, consider a contact 5-manifold $(Y,\xi)$ and a Legendrian embedding $\iota:C\longrightarrow (Y,\xi)$.  The Weinstein Neighborhood Theorem \cite[Section 7]{WeinsteinNeigh71} for Legendrian submanifolds gives a contactomorphism

$$\wt\iota:(J^1C,\xi_\st)\longrightarrow (\Op(\iota(C)),\xi|_{\Op(\iota(C))}),$$

where $\Op(A)$ is a sufficiently small neighborhood of $A\sse Y$, and such that the restriction to the zero section $C\sse J^1C$ is the initial Legendrian embedding $\iota$. In particular, any Legendrian $\La\sse (J^1C,\xi_\st)$ yields a Legendrian $\wt\iota(\La)\sse (Y,\xi)$. Thus, the contact 1-jet spaces serve as {\it local} contact manifolds, and a Legendrian embedding of $C$ in an arbitrary ambient contact 5-manifold allows one to embed a Legendrian weave there as well. In this context, the Legendrian surface $\wt\iota(\La)\sse (Y,\xi)$ is called the $\iota$-{\it satellite} of $\La\sse (J^1C,\xi_\st)$ and the Legendrian surface $\iota(C)\sse (Y,\xi)$ is called the {\it companion}. This terminology parallels the theory of satellite knots, as introduced in \cite{Schubert53_Satellite}, and see also \cite{Satellite2,Satellite1}. Notice that the smooth topology of $\La$ and its satellite $\wt\iota(\La)$ is identical, only the ambient contact manifold (and thus the Legendrian embedding type) are affected by this Legendrian satellite construction.

\begin{example}\label{ex:stdsatellite}
Let $(Y,\xi)=(\S^5,\xi_\st)$, $C=\S^2$, and let $\iota=\iota_0$ be the Legendrian embedding of the standard Legendrian unknot $\iota_0:\S^2\lr\S^5$. Given any Legendrian $\La\sse (J^1\S^2,\xi_\st)$, we will refer to $\wt\iota_0(\La)\sse (\S^5,\xi_\st)$ as the standard satellite of $\La$. Since $(\S^5\setminus \{{\rm pt}\},\xi_\st) \cong (\bR^5,\xi_\st)$, and the image $\wt\iota_0(\La)$ will avoid some point, this surface can be equivalently considered in $\wt\iota_0(\La)\sse (\R^5,\xi_\st)\cong (J^1(\bR^2),\xi_\st)$. It can thereupon be described by its front projection to $\bR^3 = \bR^2 \times \bR$. This is depicted in Figure \ref{fig:SatelliteUnknot}.\hfill$\Box$
\end{example}

\begin{center}
	\begin{figure}[h!]
		\centering
		\includegraphics[scale=0.65]{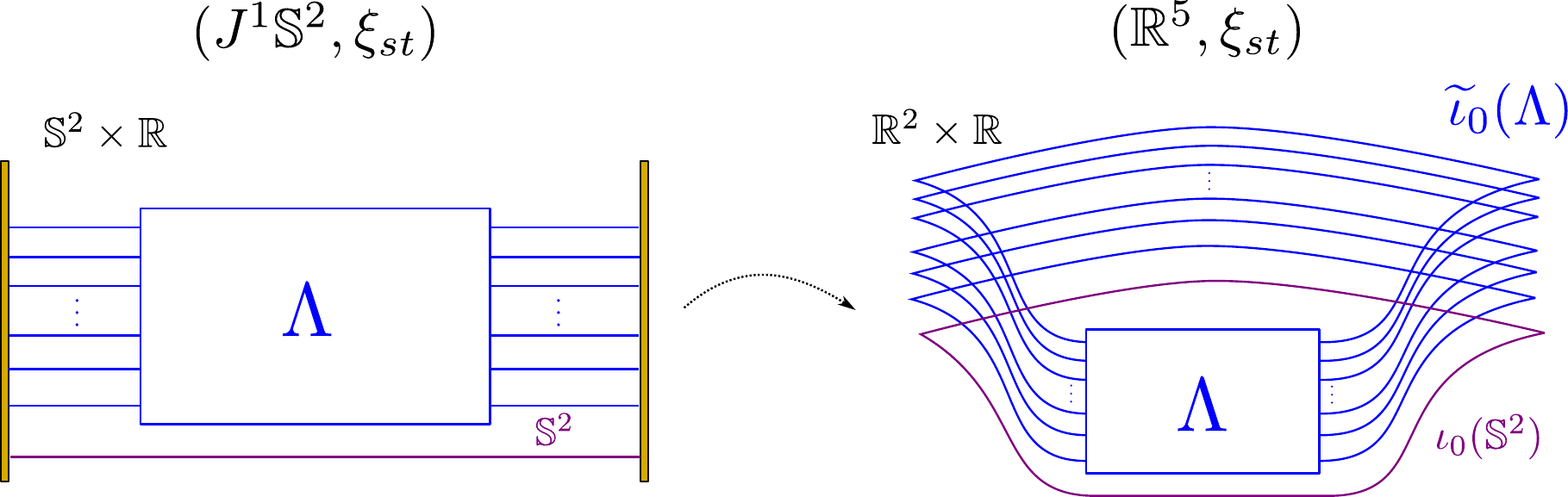}
		\caption{A Legendrian surface $\La\sse(J^1\S^2,\xi_\st)$ drawn in the front projection $\S^2\times\R$ (Left). The satellite $\wt\iota_0(\La)$ of $\La$ along the standard Legendrian unknot $\iota_0:\S^2\lr(\R^5,\xi_\st)$, drawn in the front projection $\R^2\times\R$ (Right). These pictures are schematic and ought to be rotated symmetrically along their central vertical axis so that the wavefronts for $\La$ and $\wt\iota_0(\La)$ are indeed surfaces in a 3-dimensional ambient space.}
		\label{fig:SatelliteUnknot}
	\end{figure}
\end{center}

In case no Legendrian embedding $\iota$ is specified and $C=\S^2$, the notation $\iota(\La)$ will implicitly refer to the {\it standard} satellite $\wt\iota_0(\La)\sse(\R^5,\xi_\st)$ as in Example \ref{ex:stdsatellite} and Figure \ref{fig:SatelliteUnknot}. It is often the case that the Legendrians $\La(G)\sse (J^1C,\xi_\st)$ that we introduce in this work do not have an {\it a priori} name nor they have been previously studied. Interestingly, for a certain variety of graphs $G\sse \S^2$ we will see how their standard Legendrian satellites are actually related to well-known Lagrangian surfaces, e.g. see Subsection \ref{ssec:FirstComputations}.

In addition, and in line with Markov's Theorem for smooth 1-dimensional braids \cite{Birman74Braids,PrasolovSossinsky}, the satellite operation is also required for a meaningful {\it stabilization} operation. Finally, note also that even if $\La(G)\sse (J^1C,\xi_\st)$ has no $A_2$-cusp edges, the spatial wavefronts for its standard satellite $\wt\iota_0(\La(G))$ will {\it always} have $A_2$-cusp edges, as any front for the standard Legendrian unknot $\La_0\sse(\R^5,\xi_\st)$ must have $A_2$-cusp edges. We now discuss $A_2$-cusp edges and $A_3$-swallowtail singularites, which are required for such a {\it stabilization} operation and Theorems \ref{thm:Legsurgeries} and \ref{thm:LegMutations} below, regarding Legendrian surgeries and Legendrian mutations.


\subsection{Cusp Edges and Swallowtail Singularities}\label{ssec:cuspedges} Let $G\sse C$ be an $N$-graph, the Legendrian weave $\La(G)\sse(J^1C,\xi_\st)$ associated to $G$ is determined by its front $\pi(\La(G))\sse C\times\R$. By definition, these fronts only have $D_4^-,A_1^2$ and $A_1^3$ singularities. The latter two are stable, i.e. a generic Legendrian isotopy $\La_t\sse(J^1C,\xi_\st)$, $t\in[0,1]$, such that $\La(G)=\La_0$, will have each of the $A_1^2$ and $A_1^3$ singularities of the front $\pi(\La_0)$ persist for $\pi(\La_t)$, $t\in(0,1]$. In contrast, $D_4^-$ is not:  the fronts $\pi(\La_t)$, $t\in(0,\varepsilon]$, will {\it not} have any $D_4^-$-singularity for $\varepsilon\in\R^+$ small enough.

	\begin{center}
	\begin{figure}[h!]
		\centering
		\includegraphics[scale=0.7]{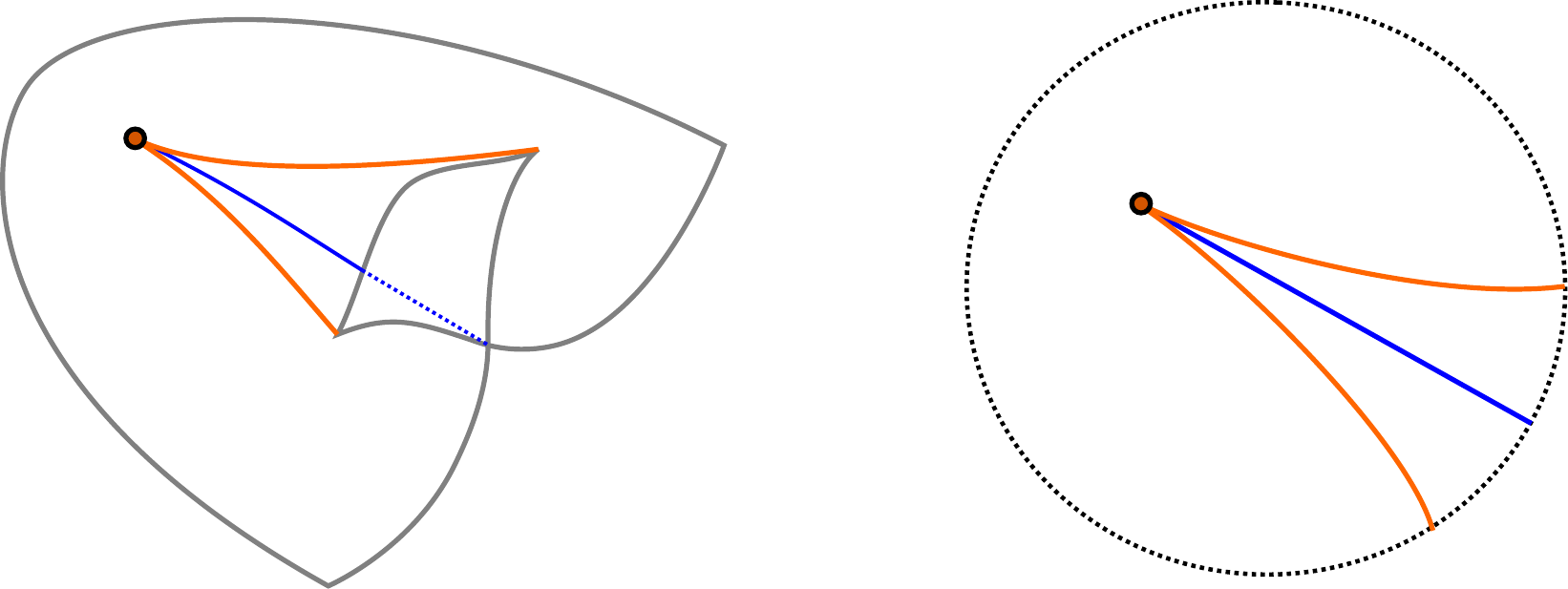}
		\caption{The Legendrian front of an $A_3$-swallowtail singularity (left). The planar diagrammatic depiction in our calculus (right).}
		\label{fig:A3Swallowtail}
	\end{figure}
\end{center}

The generic (stable) singularities of fronts in $3$-dimensional space are $A_1^2,A_1^3,A_2,A_2A_1$ and $A_3$, as shown in \cite[Section 3.2]{ArnoldSing}. These singularities are depicted in Figure \ref{fig:StableSing}. The appearance of $A_2,A_2A_1$ and $A_3$ singularities in a generic front forces us to extend our combinatorial diagrammatics, as our Legendrian isotopies will (typically) be generic. In the figures for this subsection, and only this subsection, we will draw edges around a hexagonal vertex with the same color -- this will simplify our diagrams, which are no longer $N$-graphs due to the presence of $A_2$-cusp edges.

\begin{center}
	\begin{figure}[h!]
		\centering
		\includegraphics[scale=0.7]{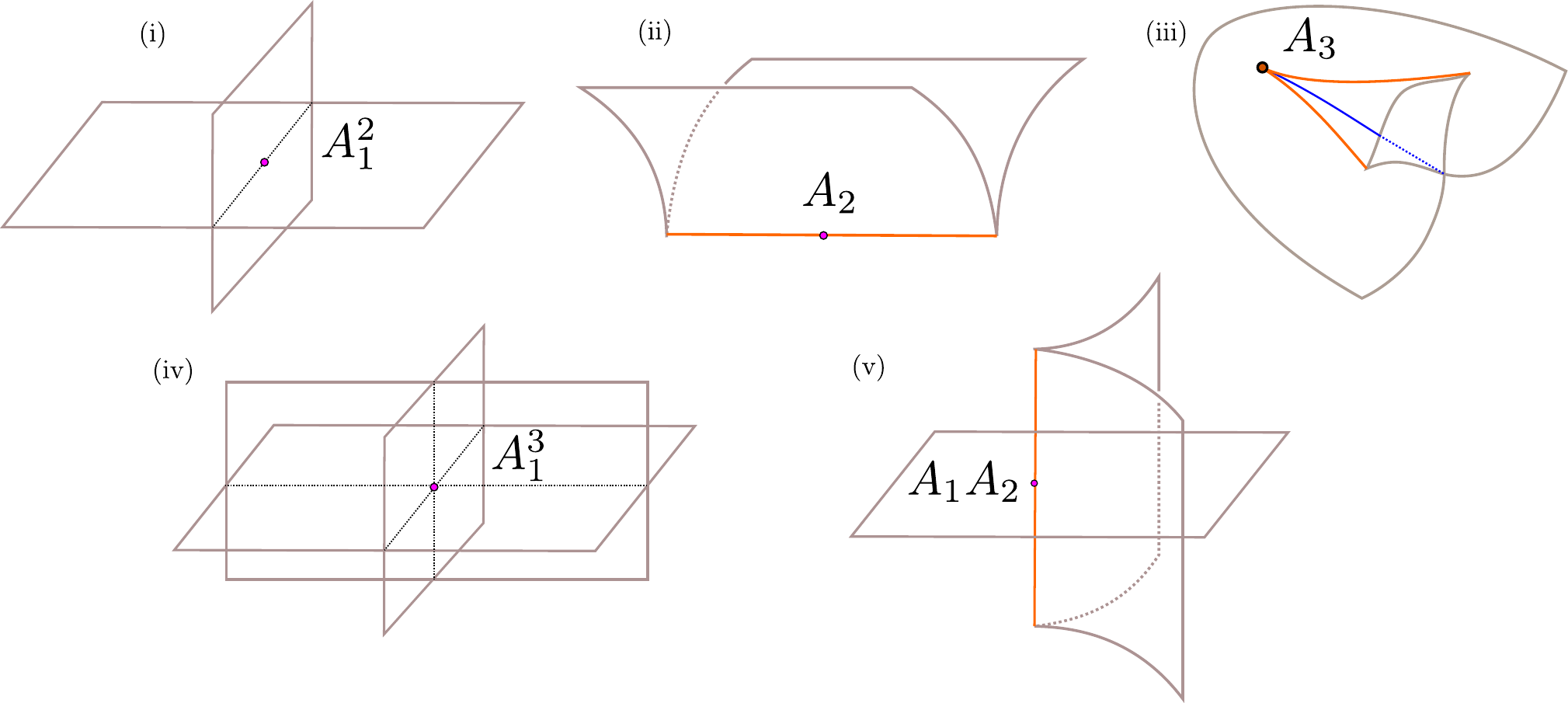}
		\caption{The generic Legendrian singularities of wavefronts in 3-space. The depicted $A_3$-singularity is known as the $A_3$-swallowtail, and the center $A_2$-singularity in the first row is referred to as the $A_3$-cusp edge. Note that the two $D_4^\pm$-singularities are {\it not} generic.}
		\label{fig:StableSing}
	\end{figure}
\end{center}

We extend the diagrammatics with the following rule: {\it orange} segments will denote $A_2$-cusp edges of singularities, and {\it orange} dots will stand for $A_3$-swallowtail singularities. Figure \ref{fig:A3Swallowtail} depicts on its left a genuine spatial front for the $A_3$-swallowtail singularity. The singularities of this front consist of a segment of $A_1^2$-crossings, shown in blue, two $A_2$-cusp edges, in orange, and a unique $A_3$-swallowtail point. The planar diagram through which we represent this front is shown on the right of Figure \ref{fig:A3Swallowtail}. It is simply a vertical view of the front (from above or below) with the $A_1^2,A_2$ and $A_3$-singularities marked.

\begin{remark} For the same reasons that we label $A_1^2$ singularities with transpositions, in order to indicate which two sheets are crossing, we should label $A_2$-cusp edges with the corresponding information. This is necessary information in order to recover the actual (homotopy type of the) Legendrian front, and thus the Legendrian itself. That said, in this article, it should be clear from context where such $A_2$-cusp edges lie, so these labels will be omitted.\hfill$\Box$
\end{remark}

The $D_4^-$-singularities are the central pieces in the construction of our Legendrian weaves $\La(G)\sse(J^1C,\xi_\st)$. It is important to emphasize that $D_4^-$ is {\it not} a generic singularity of a real spatial front, despite the fact that its complexification is a stable holomorphic Legendrian singularity. In particular, in our upcoming study of Legendrian surgeries, we will need generic Legendrian isotopies starting at $\La(G)$, whose fronts will break the non-generic $D_4^-$ into {\it generic} singularities of real spatial wavefronts.

The generic deformation of the $D_4^-$-singularity is depicted in Figure \ref{fig:D4Generic} (left). It contains three $A_3$-swallowtails arranged in a triangle and connected by $A_2$-cusp edges. Following our convention above, the associated planar diagram is shown in Figure \ref{fig:D4Generic} (right).

	\begin{center}
	\begin{figure}[h!]
		\centering
		\includegraphics[scale=0.65]{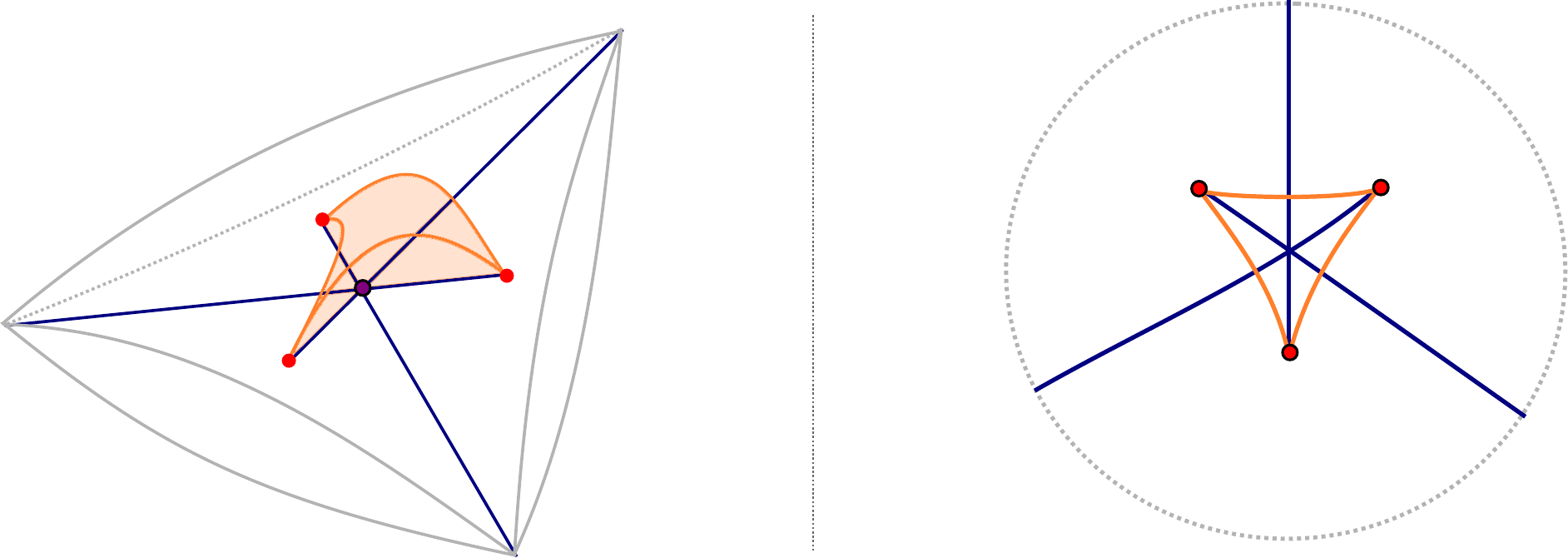}
		\caption{The spatial wavefront for a generic perturbation of the $D_4^-$-singularity (left). The associated planar diagram for this stable spatial wavefront (right). Note that the $A_1^2$-edges around the hexagonal vertex all drawn with the same color (blue), following the convention in this subsection.}
		\label{fig:D4Generic}
	\end{figure}
\end{center}


\subsection{Legendrian Front Calculus with Cusp Singularities}\label{ssec:CuspCalculus} Let us continue our development of a diagrammatic front calculus for Legendrian surfaces, this time including $A_2$-cusp edges and $A_3$-swallowtails. Proposition \ref{prop:CuspMoves} below is used to prove Proposition \ref{prop:CuspMovesII} and also Theorem \ref{thm:Legsurgeries}, in the upcoming Subsection \ref{ssec:Legsurgeries}.

\begin{prop}\label{prop:CuspMoves} Let $G\sse C$ be an $N$-graph, $N\in\N$. The four moves in Figure \ref{fig:CuspMoves} are achieved by compactly supported Legendrian isotopies, relative to the boundary.
	\begin{center}
		\begin{figure}[h!]
			\centering
			\includegraphics[scale=0.75]{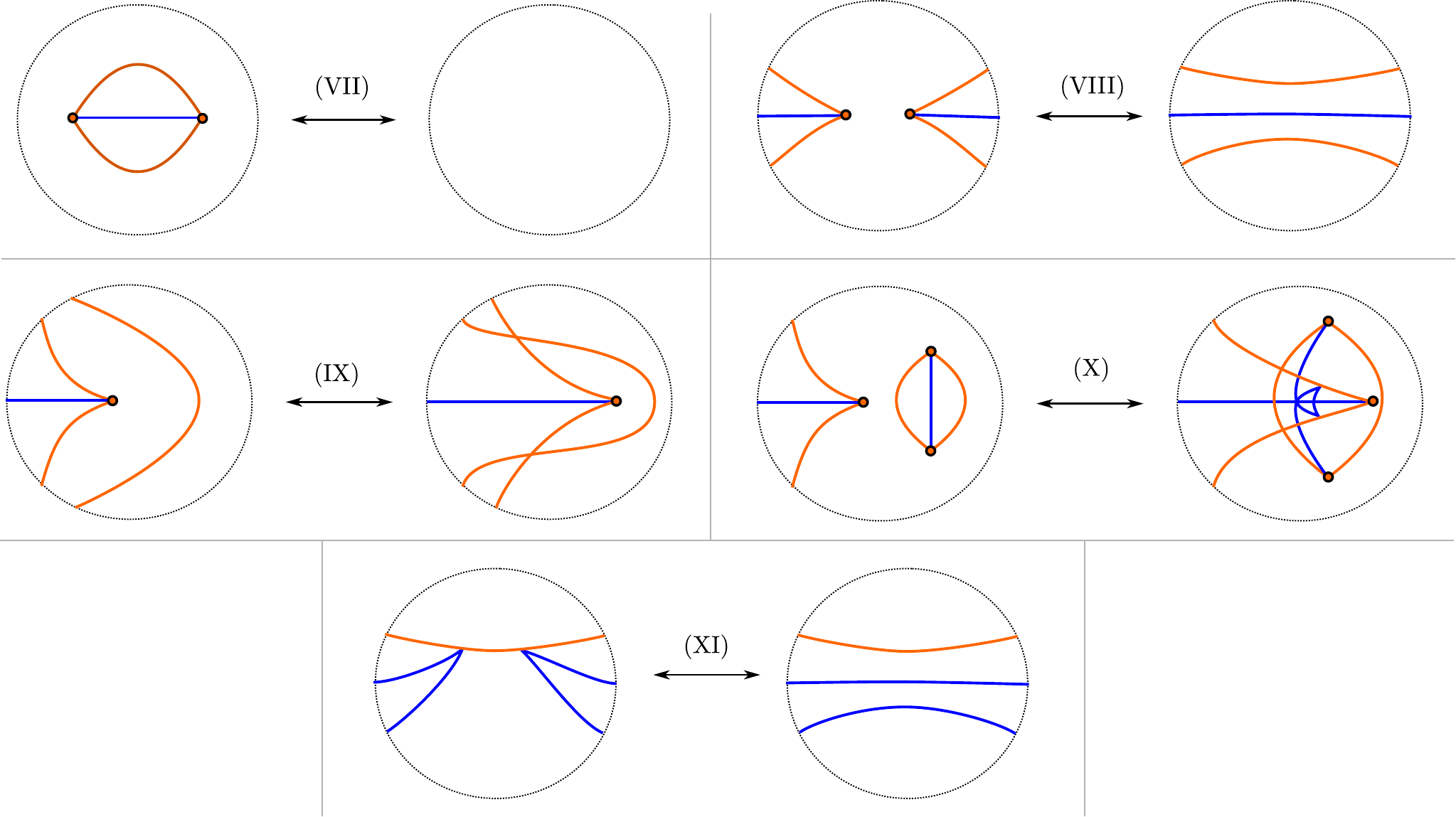}
			\caption{The five Legendrian front moves in Proposition \ref{prop:CuspMoves}. The moves are referred to as Move VII (upper left), Move VIII (upper right), Move IX (center left), Move X (center right) and Move XI (lower center).}
			\label{fig:CuspMoves}
		\end{figure}
	\end{center}
\end{prop}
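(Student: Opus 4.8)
The plan is to prove each of these moves by the same template used in the proof of Theorem \ref{thm:surfaceReidemeister}: realize the move by an explicit homotopy of Legendrian wavefronts $\{\Sigma_t\}_{t\in[0,1]}$ inside a Darboux chart $(J^1\D^2,\xi_\st)$ containing its support, and verify that every member of the homotopy lifts to an embedded Legendrian surface. Recall that, for a spatial front $\Sigma\sse\D^2\times\R$, the lift $\La(\Sigma)$ is embedded precisely when (a) no branch of $\Sigma$ acquires a vertical tangent plane, so that the slopes $y_1=\partial_u z$, $y_2=\partial_v z$ defining the lift stay finite, and (b) at each point where several branches of $\Sigma$ meet, their tangent planes are pairwise distinct, so that the corresponding sheets of the lift are disjoint. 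Granting (a) and (b) at every time $t$, the family $\{\La(\Sigma_t)\}_{t\in[0,1]}$ is a Legendrian isotopy; arranging $\Sigma_t$ to be independent of $t$ near $\partial(\D^2\times\R)$ --- which, as in the proof of Move IV, is achieved by only sweeping a compactly supported portion of the moving sheets --- makes the isotopy compactly supported relative to the boundary.

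For the moves in which an $A_2$-cusp edge, an $A_3$-swallowtail, or a strand of $A_1^2$-crossings is transported past a transverse feature of the front (another crossing strand, a $D_4^-$ point, a hexagonal $A_1^3$ point, or another cusp edge), the homotopy is simply a translation of that feature across the locus in question, exactly as in the proofs of Moves II, V and VI: the moving feature is drawn with a generic, nonzero slope relative to the tangent planes along the locus it crosses, so condition (b) holds at every $t$, including the single instant at which the two loci coincide and a non-generic but still liftable front momentarily appears. For the move creating or cancelling a pair of $A_3$-swallowtails along an $A_2$-cusp edge, we instead invoke the standard swallowtail perestroika of Legendrian fronts \cite[Section 3.3]{ArnoldSing}, whose local normal form lifts to an embedded Legendrian at every time. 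Finally, several of the moves in Figure \ref{fig:CuspMoves} can be deduced from the remaining ones together with the generic perturbation of the $D_4^-$-singularity recorded in Figure \ref{fig:D4Generic} and the Reidemeister moves of Theorem \ref{thm:surfaceReidemeister}, in the same manner that Move III was obtained from Moves I and II; where this shortcut is available we use it in place of drawing a front homotopy from scratch.

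The main obstacle is the bookkeeping at the non-generic intermediate fronts appearing when the loci being transported collide --- for instance when a cusp edge sweeps through a $D_4^-$ vertex, or past another cusp edge, so that several tangent planes come together at once. In those instants condition (b) is not visible from the planar diagram alone and must be checked against explicit slopes: as with the choice of planes $\pi_x,\pi_y,\pi_z,\pi_1$ in the proof of Move IV, one fixes generic linear models for the participating sheets and verifies directly that their $1$-jets along the collision locus are pairwise distinct, so the sheets of the lift stay disjoint, while no sheet develops a vertical tangent plane. Once this transversality is secured for each of Moves VII--XI, embeddedness of the whole family follows and the proof is complete.
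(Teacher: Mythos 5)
Your template --- realize each move by a homotopy of fronts $\{\Sigma_t\}$, then check (a) no vertical tangencies and (b) pairwise-distinct tangent planes at every intersection point --- is exactly the template used in the paper, and it handles Moves VII, VIII, X and XI in the manner you describe: VII/VIII are the standard swallowtail birth/death perestroika (the paper instead realizes these as big fronts of a movie doing and undoing an R1 move, then homotoping that movie to a constant one), and X/XI are transports of a feature across a transverse one, with a single non-generic but still liftable instant. The paper's proofs of X and XI also show the explicit slicings, but your transversality reasoning is a valid substitute there.

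The gap is Move IX, which you never actually identify or prove. It is not a transport of one feature past a transverse one, and it is not a swallowtail creation or annihilation; it is the $D_4^+$ (purse) perestroika, which is a \emph{single-sheet} folding phenomenon. Your criterion (b) is silent there: at the $D_4^+$ instant there are no distinct branches whose tangent planes you could compare --- the front is locally the singular image of one embedded Legendrian disk, and one must instead exhibit that disk directly. The paper does this in two ways: via the slicing argument of Figure \ref{fig:MoveIXProof}, interpolating between ``R1 on the upper arc'' and ``R1 on the lower arc'' through the movie in which both R1 moves occur simultaneously, and alternatively via the generating family $F(x,y,\xi_1,\xi_2,\xi_3)=x^2y+y^3+\xi_1y^2+\xi_2y+\xi_3x$ for the $D_4^+$ germ. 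You should either run one of those two arguments or invoke the $D_4^+$ perestroika explicitly from the classification in \cite[Section 3.3]{ArnoldSing}, as you did for the swallowtail perestroika. Your remaining claim that ``several of the moves can be deduced from the remaining ones together with the generic perturbation of the $D_4^-$-singularity'' does not rescue this: Move IX involves the $D_4^+$ germ, not $D_4^-$, and it is not obtained from Moves VII, VIII, X, XI and the moves of Theorem \ref{thm:surfaceReidemeister}.
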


\begin{proof} Moves VII and VIII, on the creation and fusion of two $A_3$-swallowtails singularities are immediate from the 3-dimensional First Reidemeister Move R1. Indeed, the left-to-right 1-dimensional Legendrian slices in Move VII correspond to a concatenation of R1 and its inverse, i.e. an R1 is performed, corresponding to the appearance of the leftmost $A_3$-swallowtail, and then the same R1 is undone, corresponding to the appearance of the rightmost $A_3$-swallowtail. This movie of 1-dimensional Legendrian slices can be isotoped to a movie with no R1 fronts, whose (big) front corresponds to the right of Move VII, with no swallowtails. For Move VIII, the R1 moves are performed in reverse order. That is, the left-to-right 1-dimensional Legendrian slices correspond to the inverse of an R1 move (a pair of cusps being undone) and then the exact same R1 move. This homotopy of 1-dimensional Legendrian fronts can be itself homotoped to a constant homotopy, which the local $N$-graph depicted in the right of Move VIII.
	
	For Move IX, we proceed with our slicing techniques. The 1-dimensional vertical left-to-right slices of the two fronts for Move IX are depicted in the left and right columns of Figure \ref{fig:MoveIXProof}. In the left column, the Reidemeister R1 move is performed for the upper piece of the 1-dimensional Legendrian knot.  In the right column, the R1 move is performed for the lower piece of the 1-dimensional Legendrian knot. The homotopy of Legendrian surface fronts is achieved by the center column in Figure \ref{fig:MoveIXProof}, where both R1 are performed simultaneously.
	
	\begin{center}
		\begin{figure}[h!]
			\centering
			\includegraphics[scale=0.5]{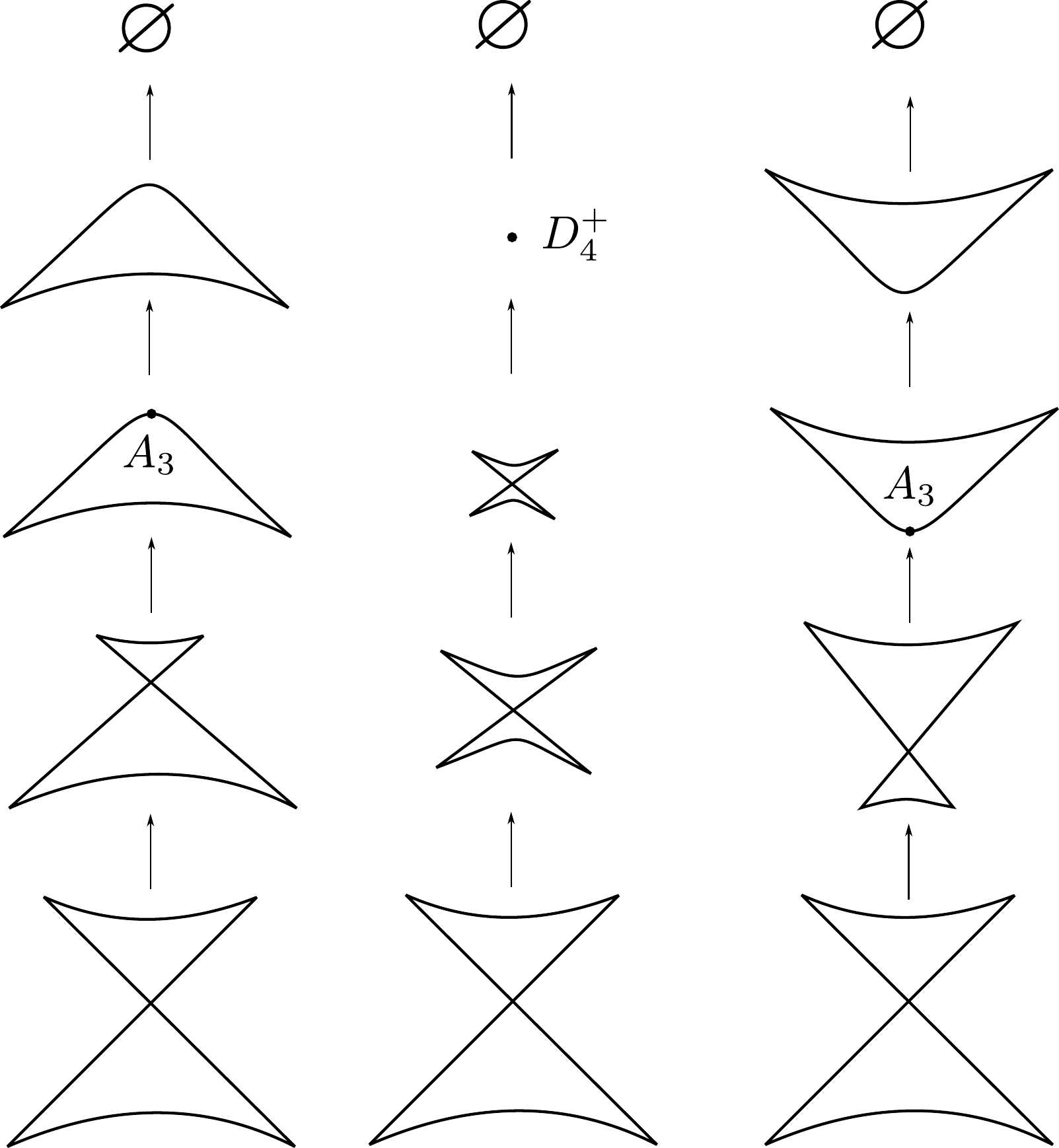}
			\caption{The homotopy of surface fronts showing that Move IX is a Legendrian Reidemeister move. The left-to-right slices for the left diagram in Move IX are depicted in the left column, whereas the slices for the right diagram in Move IX are depicted in the right column.}
			\label{fig:MoveIXProof}
		\end{figure}
	\end{center}
	
	Since the homotopy of fronts preserves the boundary conditions, this lifts to a Legendrian isotopy of embedded Legendrian surfaces, thus proving that Move IX is a Legendrian Reidemeister move. The fact that Move IX is a Legendrian Reidemeister move also follows carefully from visualizing the critical fronts associated to the generating family
	$$D_4^+:\quad F(x,y,\xi_1,\xi_2,\xi_3)=x^2y+y^3+\xi_1y^2+\xi_2y+\xi_3x,$$
	which leads to the above families in Figure \ref{fig:MoveIXProof}.
	
	Move X consists of a sliding for a $A_3$-swallowtail along an $A_1^2$-crossing line, as depicted in the top row of Figure \ref{fig:MoveXProof}, in Figures \ref{fig:MoveXProof}.(a) and \ref{fig:MoveXProof}.(b). The realistic surface fronts are depicted in the bottom row of Figure \ref{fig:MoveXProof}, in Figures \ref{fig:MoveXProof}.(A) and \ref{fig:MoveXProof}.(B), where the $A_3$-swallowtail singularity has been moved past the $A_1^2$-segment of singularities.
	
	\begin{center}
		\begin{figure}[h!]
			\centering
			\includegraphics[scale=0.9]{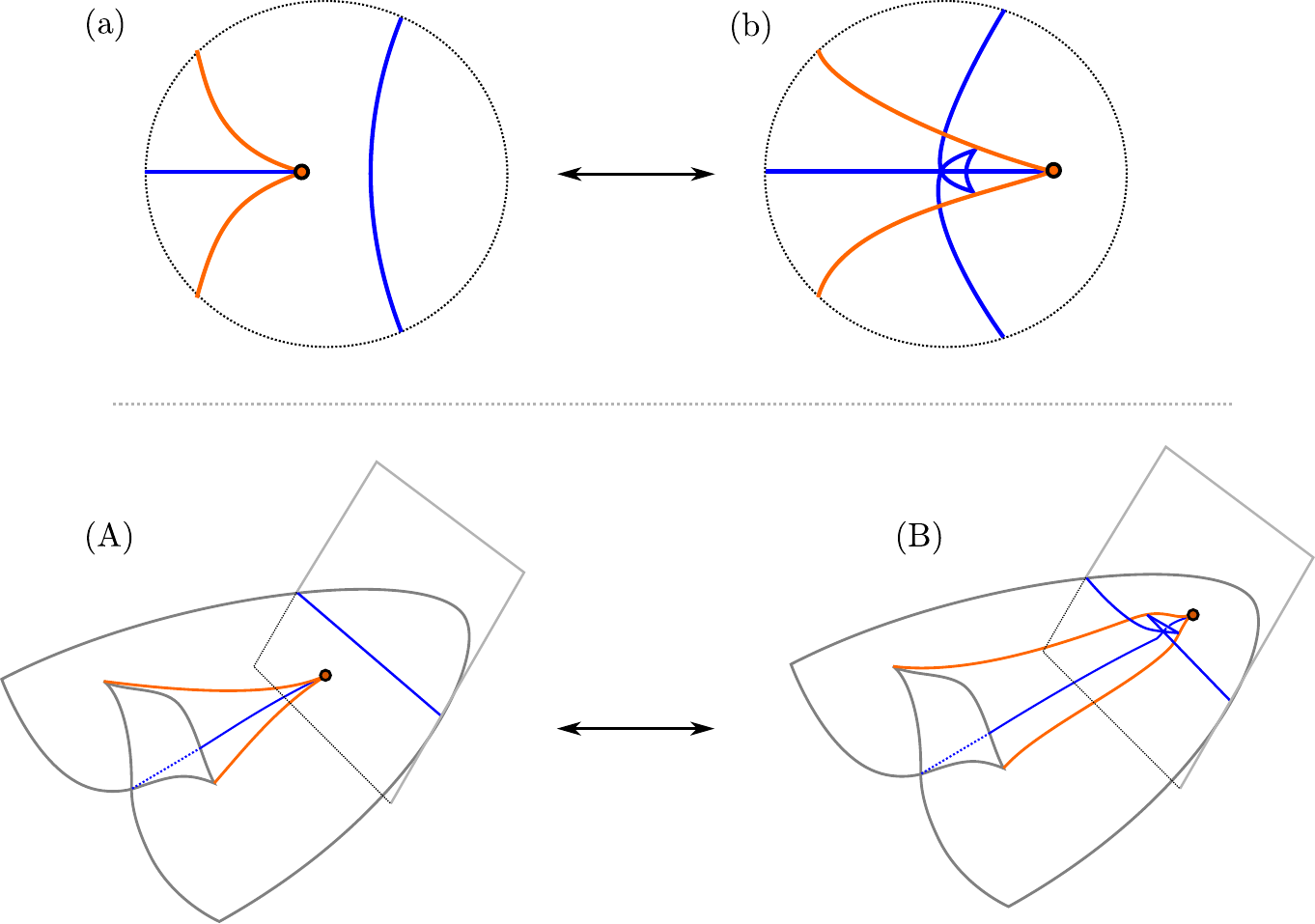}
			\caption{The front depiction of the non-trivial part in Move X. The $A_3$-swallowtail singularity slides across a orthogonal $A_1^2$-line, changing sheets as it slides through.}
			\label{fig:MoveXProof}
		\end{figure}
	\end{center}
	
	The sliding lifts to a Legendrian isotopy, as the interaction between the $A_3$-swallowtail and the $A_1^2$-line only sees a critical moment, where a $A_3A_1^2$ singularitiy appears. At this critical stage, the slopes are all distinct and non-vertical, thus the $A_3$-swallowtail is allowed to move past with a homotopy of fronts. This concludes that Move X is a Legendrian Reidemeister move.
	
	\begin{center}
		\begin{figure}[h!]
			\centering
			\includegraphics[scale=0.45]{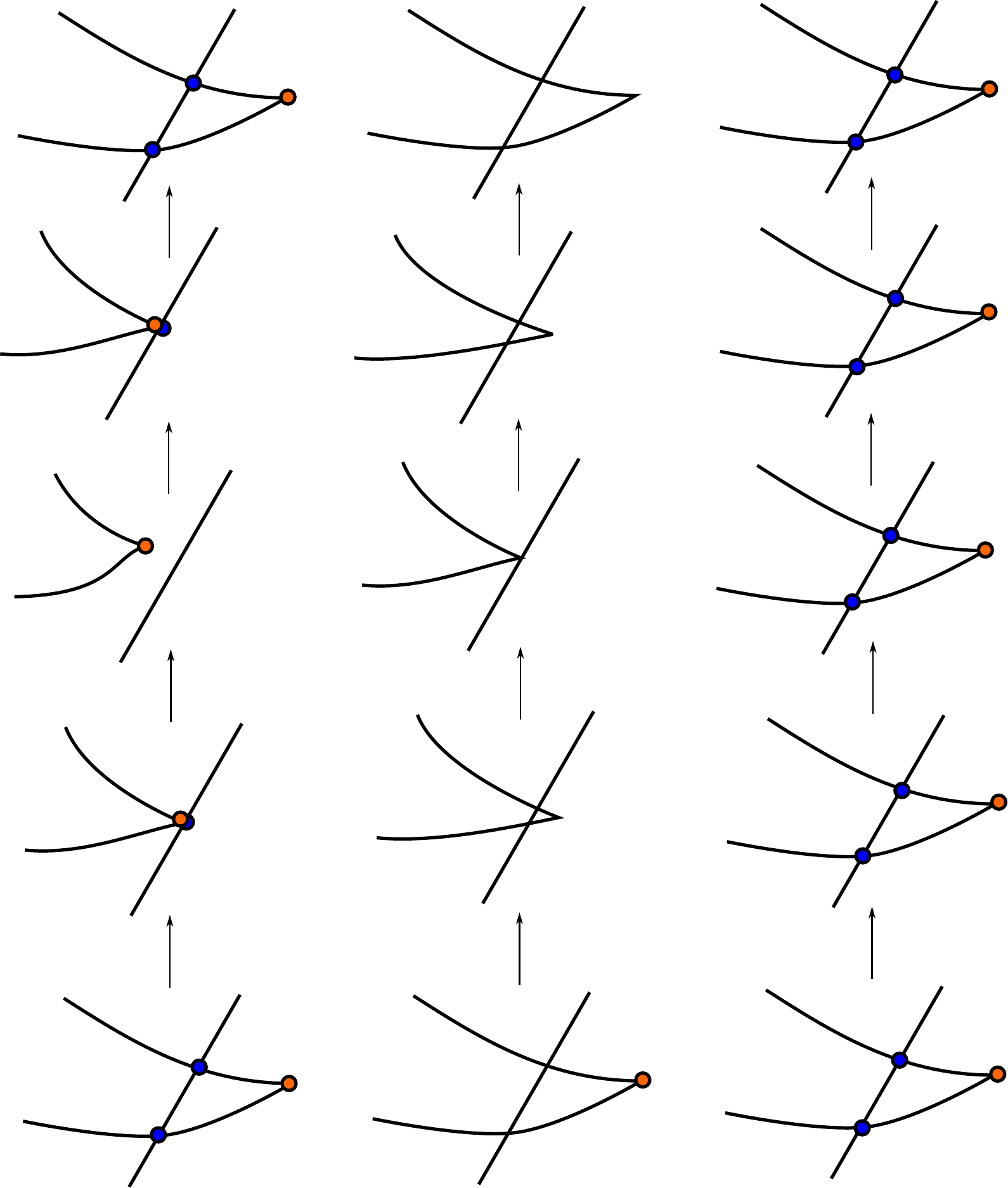}
			\caption{The homotopy of fronts for Move XI. The left front diagram of Move XI is obtained as the union of the slices in the left column, whereas the right front in Move XI is the union of the slices in the right column.}
			\label{fig:MoveXIProof}
		\end{figure}
	\end{center}
	
	Finally, Move XI is proven in Figure \ref{fig:MoveXIProof}. The middle singularity corresponds to the generic spatial front $A_2A_1$-singularity. In short, Move XI is obtained by performing a homotopy which interpolates between a constant movie of Legendrian links, and a movie consisting of doing a Reidemeister R2 move and then undoing it, as in the left column of Figure \ref{fig:MoveXIProof}.
\end{proof}

\begin{remark} It would appear that Reidemeister moves for Legendrian knots have been mastered by the vast majority of contact topologists. This does not seem to be the case in higher dimensions, including the Legendrian singularities appearing in surface fronts. Should the reader be interested in that, \cite{BennequinBourbaki,ArnoldSing} provides a starting presentation of the generic singularities of surface fronts. Our present manuscript develops the diagrammatic calculus adding to that classification, which allows us to manipulate fronts in a versatile manner. The combination of the results of this article, along with \cite{ArnoldSing}, should permit the reader to be fluent in the manipulation of wavefronts for Legendrian surfaces in contact 5-manifolds.\hfill$\Box$
\end{remark}

Let us now address the move shown in Figure \ref{fig:TechnicalMoves1}, which we prove in the following:

\begin{prop}\label{prop:CuspMovesII}
	The combinatorial move depicted in Figure \ref{fig:TechnicalMoves1} is realized by a compactly supported Legendrian isotopy of surfaces in a 5-dimensional Darboux ball $(J^1\R^2,\xi_\st)$, relative to the boundary.
	
	\begin{center}
		\begin{figure}[h!]
			\centering
			\includegraphics[scale=0.75]{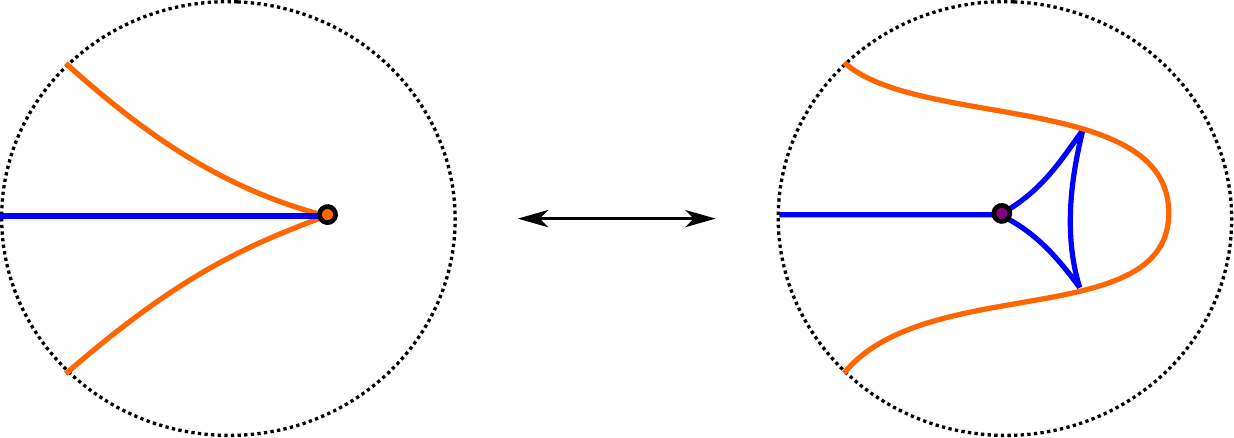}
			\caption{(Move XII) This move allows us to exchange $A_3$-swallowtail singularities with $D_4^-$-singularities in the presence of a $A_2$-cusp edge.}
			\label{fig:TechnicalMoves1}
		\end{figure}
	\end{center}
\end{prop}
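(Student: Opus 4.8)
The plan is to prove Move XII in the same spirit as the moves in Proposition \ref{prop:CuspMoves}: rather than manipulating $N$-graphs directly, we exhibit a compactly supported homotopy of spatial wavefronts in a Darboux ball $(J^1\R^2,\xi_\st)$ joining the two sides of Figure \ref{fig:TechnicalMoves1}, and verify that every front along this homotopy lifts to an embedded Legendrian surface -- that is, that no vertical tangent planes appear and that the branches meeting along each singular stratum have pairwise distinct slopes. Since $D_4^-$ is non-generic while $A_2$- and $A_3$-strata are generic, the natural route is to pass through the generic perturbation of $D_4^-$ recorded in Figure \ref{fig:D4Generic} -- three $A_3$-swallowtails arranged in a small triangle and joined by $A_2$-cusp edges -- and then reorganize these swallowtails and cusp edges using the moves already established in Proposition \ref{prop:CuspMoves}.

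Concretely, I would proceed as follows. First, on the side of Figure \ref{fig:TechnicalMoves1} carrying the $D_4^-$-singularity, perturb it into the swallowtail-triangle model of Figure \ref{fig:D4Generic}, keeping track of which pair of consecutive sheets is involved at each swallowtail and each cusp edge. Second, route the hypothesized $A_2$-cusp edge into position: use Move X to slide the relevant $A_3$-swallowtails along the $A_1^2$-crossing lines, and Move XI to push the cusp edge across the $A_2A_1$-strata it meets. Third, once the picture is in standard position, apply Move VIII to fuse the pair of swallowtails that have been brought together (or, symmetrically, re-collapse the triangle into a $D_4^-$ on the other side by reversing the perturbation of Figure \ref{fig:D4Generic}); reading off the $A_1^2$-locus of the resulting front then gives exactly the right-hand side of Figure \ref{fig:TechnicalMoves1}. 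As an independent check, the whole homotopy can be produced from a generating family: up to affine rescaling the $D_4^-$ germ is presented by $F_q(x,y)=\tfrac{2}{3}(x^3-3xy^2)+q_1x+q_2y$, and adjoining a third fibre variable $\eta$ carrying the cusp edge, together with the versal term $t(x^2+y^2)$, yields a one-parameter family of generating families whose critical-value sets interpolate between the two diagrams.

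The main obstacle is the bookkeeping that the diagrams of this subsection deliberately suppress. One must check that the sheet labels (the transpositions $\tau_i$ and $\tau_{i+1}$) carried by the three swallowtails and three cusp edges produced by perturbing $D_4^-$ are compatible with those of the given $A_2$-cusp edge, so that Moves X, XI and VIII are genuinely applicable and land on the asserted diagram -- a sign or ordering mistake here would change the move. The second, more geometric point is to verify that at the single critical instant of the homotopy, where the $A_2$-cusp edge meets the swallowtail triangle and a codimension-one $A_3A_1$- (or $A_2A_1$-) interaction occurs, all branches still have pairwise distinct, non-vertical slopes, exactly as in the critical-moment arguments for Moves IX and X; this is what guarantees the lift stays an embedded Legendrian along the whole family, so the homotopy of fronts is a Legendrian isotopy relative to the boundary. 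With these two verifications in hand, the proposition follows.
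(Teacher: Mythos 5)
Your high-level strategy is essentially the paper's, read in the reverse direction: the paper starts from the $A_3$-swallowtail side of Figure \ref{fig:TechnicalMoves1}, uses Move VII to create a cancelling pair of swallowtails (so that three are present), rearranges, and then collapses all three into a $D_4^-$; you propose starting from the $D_4^-$ side, perturbing into the swallowtail triangle of Figure \ref{fig:D4Generic}, rearranging, and then fusing two swallowtails via Move VIII. Since Move VII and Move VIII are inverses and the $D_4^-$-perturbation is reversible, these are the same plan up to orientation, and you correctly flag the two checks that make the plan rigorous (sheet-label bookkeeping, and non-degeneracy of slopes at the single codimension-one instant).

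However, there is a concrete gap in the intermediate ``routing'' step. You propose to reposition the $A_2$-cusp edge using only Move X (an $A_3$-swallowtail sliding along an $A_1^2$-crossing line) and Move XI (a cusp edge passing through an $A_2A_1$-interaction). Neither of these changes \emph{which} swallowtail a cusp edge is attached to, nor the pair of sheets a cusp edge interpolates between; they only slide existing singular strata past one another. What the paper actually needs at the crucial moment is Move IX, the $D_4^+$-move: it is precisely the local homotopy (generated by the family $F(x,y,\xi)=x^2y+y^3+\xi_1 y^2+\xi_2 y+\xi_3 x$) that exchanges the $A_2$-cusp edge emanating from an $A_3$-swallowtail for a cusp edge on a different pair of sheets. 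Without the $D_4^+$-exchange you cannot bring two of the three swallowtails from the $D_4^-$-perturbation into the configuration where Move VIII applies while simultaneously leaving the remaining swallowtail attached to the externally supplied $A_2$-cusp edge; your outlined sequence X, XI, VIII will stall at exactly the point where the external cusp edge and the swallowtail triangle must be re-soldered. The generating-family paragraph does not resolve this either: adjoining $t(x^2+y^2)$ to the $D_4^-$ germ governs the internal splitting into three swallowtails, but the interaction with the extra variable $\eta$ for the independent cusp sheet is exactly the $D_4^+$ unfolding, which is the missing move. Adding an explicit application of Move IX after the Move X slide would close the gap and bring your argument in line with the paper's.
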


\begin{proof} Let us start with the left front in Figure \ref{fig:TechnicalMoves1}. Apply Move VII to create a canceling pair of $A_3$-swallowtails, as shown in the beginning of Figure \ref{fig:TechnicalMoves1Proof}. Now slide the $A_3$-swallowtail by performing a Move X, and use the $D_4^+$-singularity, i.e. Move IX to exchange the $A_2$-cusp edge where the $A_3$-swallowtail connects. This is depicted in the first and second steps of Figure \ref{fig:TechnicalMoves1Proof}. The next two steps in Figure \ref{fig:TechnicalMoves1Proof} consists of Legendrian isotopies where no singularities interact with each other, it is a plain homotopy of fronts with the {\it same} singularities. Finally, the last step consists in joining the three existing $A_3$-swallowtails into a {\it single} $D_4^-$-singularity, as depicted at the end of Figure \ref{fig:TechnicalMoves1Proof}.
	
	\begin{center}
		\begin{figure}[h!]
			\centering
			\includegraphics[scale=0.75]{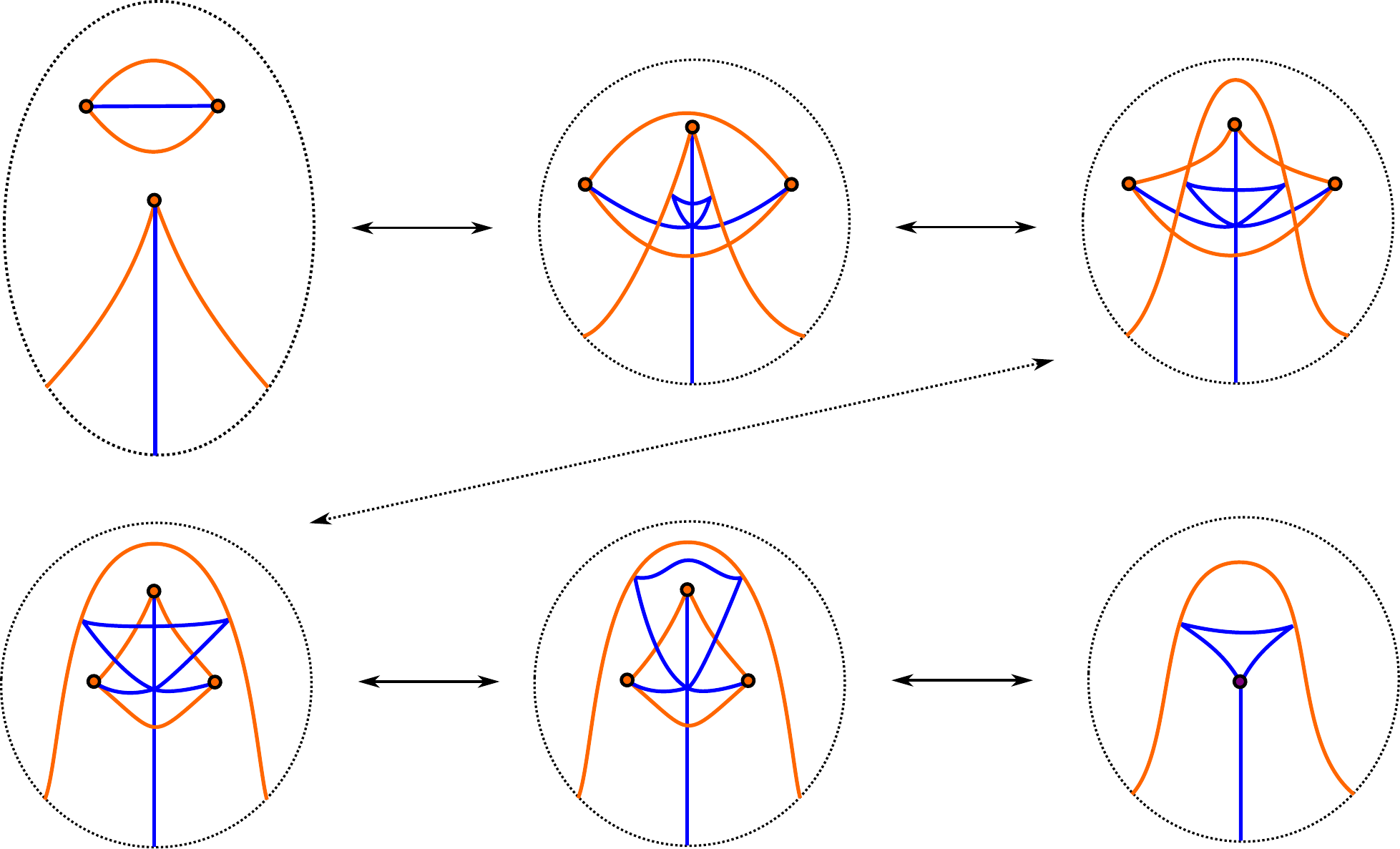}
			\caption{The homotopy of fronts for Move XII. The initial $A_3$-swallowtail requires two additional swallowtails to become a $D_4^-$-singularity, and certain intermediate moves. The homotopy realizing this can be read in this picture.}
			\label{fig:TechnicalMoves1Proof}
		\end{figure}
	\end{center}
\end{proof}


\subsection{Legendrian Surgeries}\label{ssec:Legsurgeries} The theory of Legendrian surgeries was initiated in \cite{Arnold76SurgeryI,Arnold79SurgeryII} in the study of critical points of the time function with respect to a Legendrian wavefront. Its modern description in terms of Lagrangian handle attachments is  described in \cite[Theorem 4.2]{BourgeoisSabloffTraynor15} and \cite[Section 4]{Rizell_Surgeries}. A Legendrian surgery on $\La\sse(Y,\xi)$ is an operation which inputs an isotropic sphere within $\La$, bounding ambiently, and outputs a Legendrian $\wt\La\sse (Y,\xi)$. The Legendrians $\La$ and $\wt\La$ are not even homotopy equivalent, and thus Legendrian surgery is a useful method to create {\it new} Legendrians by modifying the topology of a given Legendrian $\La$.

In the context of Legendrian surfaces, there are different types of Legendrian surgeries \cite[Figure 48]{ArnoldSing}. The following result characterizes the combinatorial operations that correspond to Legendrian $0$-surgeries, $1$-surgeries and Legendrian connected sums.

\begin{thm}[Legendrian Surgeries]\label{thm:Legsurgeries}	
Let $G\sse C$, $G_1\sse C_1$ be $N$-graphs and $G_2\sse C_2$ an $M$-graph, for $N,M\in\N$. The following statements hold:
\begin{itemize}
	\item[1.] $($0-Surgery$)$ The combinatorial move of adding an $i$-edge and two vertices along an existing $i$-edge corresponds to a Legendrian $0$-surgery. This move is shown in the upper right diagram in Figure \ref{fig:LegendrianSurgeries}.\\
	
	\item[2.] $($1-Surgery$)$ The combinatorial move of removing an $i$-edge between two trivalent vertices corresponds to a Legendrian $1$-surgery. This move is shown in the lower left of Figure \ref{fig:LegendrianSurgeries}.\\
		
	\item[3.] $($Connect Sum$)$ The kissing of two trivalent vertices $v_1\in G_1$ and $v_2\in G_2$, where $G_1\sse C_1, G_2\sse C_2$ are two disjoint graphs, corresponds to a connect sum
	$$\iota(\La(G_1))\#\iota(\La(G_2))\sse(\R^5,\xi_\st),$$
	for any satellite $\iota:\La\lr(\R^5,\xi_\st)$. This is shown in the upper left of Figure \ref{fig:LegendrianSurgeries}.\\
	
	\item[4.] $($Clifford Sum$)$ The combinatorial move of substituting a trivalent vertex by a triangle corresponds to a connected sum of $\iota(\La(G))$ with a Clifford 2-torus $\bT_c^2\sse(\R^5,\xi_\st)$. This move is shown in the lower right of Figure \ref{fig:LegendrianSurgeries}.\\
\end{itemize}

The 0-surgeries, 1-surgeries are local in any $\La(G)\sse (J^1C,\xi_\st)$. In contrast, the connected sum in the third item requires to geometrically satellite the Legendrian weaves $\La(G_1)\sse (J^1 C_1,\xi_\st)$ and $\La(G_2)\sse (J^1 C_2,\xi_\st)$ via any Legendrian embedding
$$\iota:C_1\cup C_2\lr(\R^5,\xi_\st).$$

\begin{center}
	\begin{figure}[h!]
		\centering
		\includegraphics[scale=0.9]{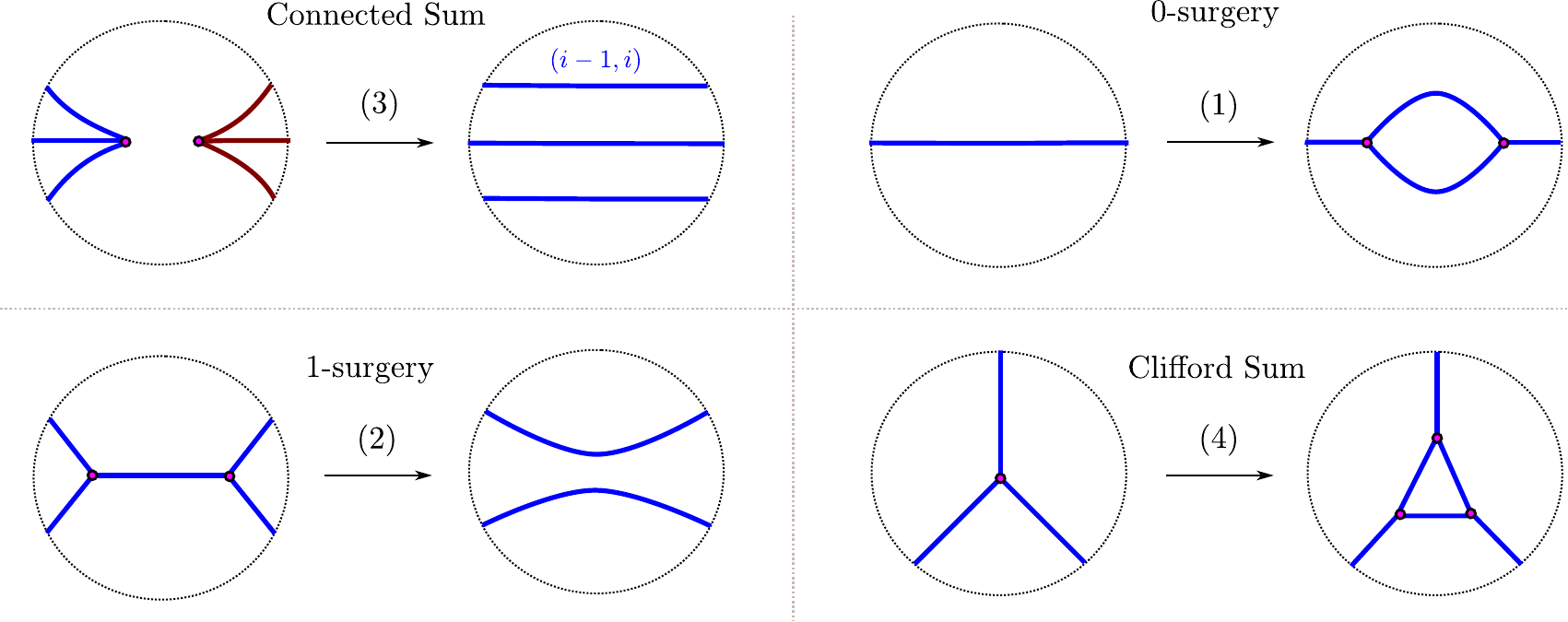}
		\caption{The Legendrian Surgery Moves in Theorem \ref{thm:Legsurgeries}}
		\label{fig:LegendrianSurgeries}
	\end{figure}
\end{center}
\end{thm}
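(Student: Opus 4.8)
The plan is to prove all four statements by the front-theoretic method already used for Theorem~\ref{thm:surfaceReidemeister} and Propositions~\ref{prop:CuspMoves} and \ref{prop:CuspMovesII}: for each combinatorial move on $G$ I would exhibit an explicit $1$-parameter family of spatial wavefronts interpolating between $\Sigma(G)$ and the modified front, verify that every intermediate front has only admissible Legendrian singularities ($A_1^2, A_1^3, A_2, A_2A_1, A_3, D_4^\pm$) with pairwise-transverse, non-vertical branch tangent planes — so that the family lifts to an embedded Lagrangian trace — and then identify that trace with the standard local model of the relevant operation from \cite{Arnold76SurgeryI,ArnoldSing,BourgeoisSabloffTraynor15,Rizell_Surgeries}. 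As a preliminary sanity check, the Riemann--Hurwitz formula $\chi(\La(G)) = N\chi(C) - v(G)$ of Subsection~\ref{ssec:homology} already matches the predictions: items~1 and~4 add two trivalent vertices, so $\chi(\La(G))$ drops by $2$, the effect of a single index-$1$ Lagrangian handle, i.e. a Legendrian $0$-surgery; item~2 removes two trivalent vertices, so $\chi(\La(G))$ rises by $2$, an index-$2$ Lagrangian handle, i.e. a Legendrian $1$-surgery; and the kissing of item~3 drops the total $\chi$ by $2$, as a connected sum of surfaces must.

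For items~1 and~2, which are mutually inverse, it is enough to treat the $0$-surgery (upper right of Figure~\ref{fig:LegendrianSurgeries}) and read the $1$-surgery off the time-reversed movie. Working in $(J^1\R^2,\xi_\st)$ near the $i$-edge of $G$, the front is two sheets $\D^2\times\{i\}$, $\D^2\times\{i+1\}$ meeting along an $A_1^2$-line; after the move a short segment of this edge is replaced by a bigon bounded by two trivalent $i$-vertices, and by Definition~\ref{def:LegWeave1} the local front of $\La(G')$ acquires two $D_4^-$ points joined by $A_1^2$-arcs, with the core of the annular local model $\La(P)$ — the short $\sf I$-cycle $\gamma(e)$ of Subsection~\ref{ssec:homology} — appearing as a new $H_1$-generator. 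Perturbing each $D_4^-$ generically into a triangle of three $A_3$-swallowtails joined by $A_2$-cusp edges (Figure~\ref{fig:D4Generic}) and sliding/merging these swallowtails using Move~X, the $D_4^+$ Move~IX and Move~XII, one builds the front of an index-$1$ Lagrangian handle attached along an arc with endpoints on the two sheets — exactly the local model of \cite[Theorem~4.2]{BourgeoisSabloffTraynor15}, \cite[Section~4]{Rizell_Surgeries}. Equivalently, transverse slicing of the surface fronts into movies of $1$-dimensional Legendrian fronts makes the move visibly a pinch/saddle cobordism; the Morse function recording the slicing parameter has a single index-$1$ critical point on the trace, pinning down the handle index. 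Reversing the movie gives item~2: removing an $i$-edge between two trivalent $i$-vertices collapses $\gamma(e)$ and attaches an index-$2$ Lagrangian handle, the reconnection ambiguity being absorbed by Move~III.

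For item~3, first satellite $G_1$ and $G_2$ by $\iota$ into a Darboux ball $(\R^5,\xi_\st)$ using the Weinstein neighbourhood contactomorphism of Subsection~\ref{ssec:Satellite} together with the embeddedness criterion of Lemma~\ref{lem:free}; near $v_1$ and $v_2$ one sees two $D_4^-$-fronts on disjoint Legendrian surfaces. Kissing the two vertices is precisely the local move of item~1 performed on a $0$-sphere with one point on each component, so the same front movie (with the two $D_4^-$'s perturbed and joined through a small $A_2$-cusped neck) exhibits the standard Legendrian connected-sum model and produces $\iota(\La(G_1))\#\iota(\La(G_2))\sse(\R^5,\xi_\st)$. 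For item~4, replacing a trivalent vertex $v$ by a small triangle adds two trivalent vertices and so is again a $0$-surgery; the front over a neighbourhood of the triangle consists of three $D_4^-$'s around a triangular face joined by $A_1^2$-arcs, and a direct local computation — or combining item~1 with Move~III — shows its lift differs from the original by connect-summing a torus whose two $H_1$-generators are the central $\sf Y$-cycle $\gamma(e_1,e_2,e_3)$ and an edge-cycle $\gamma(e_j)$ of the triangle. Matching this local front with the standard front of the Legendrian Clifford torus $\bT_c^2\sse(\R^5,\xi_\st)$ identifies the operation as the claimed Clifford connected sum.

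I expect the main obstacle to be items~1 and~2: rigorously certifying that the explicit front homotopy — which passes through a non-generic $D_4^-$ configuration and a cascade of $A_3$-swallowtail interactions, in the spirit of the proof of Proposition~\ref{prop:CuspMovesII} — has a Lagrangian trace that is genuinely an index-$1$ handle rather than merely some Lagrangian cobordism of the right Euler characteristic. The safest route is not to match the abstract handle a posteriori but to construct the index-$1$ Lagrangian handle's front directly in the $1$-jet model and check it agrees on the nose with the interpolating family; embeddedness of the trace is then automatic from the admissibility-of-singularities bookkeeping above, and a Morse-theoretic critical-point count identifies the index. For items~3 and~4 the only extra subtlety is global embeddedness inside the Darboux ball after satelliting, which is handled by Lemma~\ref{lem:free} and the Weinstein neighbourhood theorem already invoked in Subsection~\ref{ssec:Satellite}.
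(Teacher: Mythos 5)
Your plan for items 1--3 matches the paper's: isotope the woven front using the moves from Subsections~\ref{ssec:cuspedges}--\ref{ssec:CuspCalculus} until a standard surgery model is visible and cite \cite{Arnold76SurgeryI,BourgeoisSabloffTraynor15,Rizell_Surgeries}; the Riemann--Hurwitz bookkeeping is a useful sanity check not spelled out in the paper. One correction to the framing of the plan: a 1-parameter family of spatial wavefronts $\Sigma_t$ in which every $\Sigma_t$ has only admissible Legendrian singularities is a Legendrian \emph{isotopy}, whose trace is a Lagrangian concordance and cannot change the topology, so it cannot realize a $0$- or $1$-surgery. What the paper actually constructs (Figures~\ref{fig:1Surgery},~\ref{fig:2Surgery},~\ref{fig:ConnectedSum}) is a Legendrian isotopy carrying $\Sigma(G')$ to a front on which the $A_2$-cusp circles exhibit the standard surgery band of \cite{Arnold76SurgeryI,BourgeoisSabloffTraynor15}, then a single band-surgery step, then an isotopy to $\Sigma(G)$; your later slicing/saddle-cobordism picture is the right one, so phrase the plan that way.

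Item~4 has a genuine gap. First, the $1$-cycle you call ``the central $\sf Y$-cycle $\gamma(e_1,e_2,e_3)$'' does not exist: a $\sf Y$-cycle requires a hexagonal vertex (Subsection~\ref{ssec:homology}), whereas the triangle replacing a trivalent $i$-vertex is pure $2$-graph data --- three trivalent $i$-vertices and no hexagonal vertex --- so the new $H_1$-classes are $\sf I$-cycles of the triangle's edges. Second, and more seriously, ``combining item~1 with Move~III'' cannot identify the summand as the \emph{Clifford} torus: item~1 yields the connected sum with the standard Legendrian torus $\bT^2_\st$ (Remark~\ref{rkm:surgeries}(ii)), which is not Legendrian isotopic to $\bT_c^2$ --- their flag moduli $k^*$ versus $k\setminus\{0,1\}$ already differ, cf.\ Theorem~\ref{thm:FlagLegSurgeries} --- so Euler-characteristic counting and the Reidemeister moves of Theorem~\ref{thm:surfaceReidemeister} alone cannot pin down which torus you are summing. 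The paper closes this by first reducing, via item~3, to the connected sum $\iota(\La(G))\#\iota(\La(G_c))$ where $G_c=(\dd\Delta^3)^{(1)}\sse\S^2$ is the tetrahedral $2$-graph, and then separately identifying $\iota(\La(G_c))$ with the Legendrian lift of the vanishing cycle of the superpotential $W(z_1,z_2,z_3)=z_1z_2z_3$, following \cite{Nadler_LG1}, and comparing with the cone front for $\bT_c^2$. That identification is an essential extra input your sketch does not supply; the ``direct local computation'' you allude to is precisely the missing step.
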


Theorem \ref{thm:Legsurgeries} will be proven below. The Legendrian weaves in the statements involve only $D_4^-$ and $A_1^2$ (and $A_2$-cusp edges for the connected sum, due to the satellite operation). Nevertheless the manipulation of their fronts in the proof of Theorem \ref{thm:Legsurgeries} requires the use of further Legendrian front moves, involving $A_3$-swallowtails and $A_2A_1$-singularities and their interaction with the $A_2,A_1^2$ and $D_4^-$-germs, as developed in Subsection \ref{ssec:CuspCalculus} above.

\begin{remark}\label{rkm:surgeries} $(i)$ Should the reader be solely interested in the satellited Legendrian surface $\iota(\La(G))\sse(\R^5,\xi_\st)$, the connected sum operation in Theorem \ref{thm:Legsurgeries}.(3) is the strongest of the four statements (and the hardest to prove). Indeed, the satellite analogue of Items 1,2 and 4 follow from Item 3. That said, Items 1,2 do {\it not} follow from Item 3 locally.\\
	
\noindent $(ii)$ Note also that the $\iota$-satellite of the Legendrian 0-surgery depicted in Move (1) of Figure \ref{fig:LegendrianSurgeries}, and Theorem \ref{thm:Legsurgeries}.(1), corresponds to a Legendrian connected sum with the standard Legendrian 2-torus in $(\R^5,\xi_\st)$. This is the 2-torus whose front is obtained by $\S^1$-front spinning of the saucer front for the standard Legendrian unknot in $(\R^3,\xi_\st)$. See \cite[Section 4.1]{Rizell_TwistedSurgery}, and Figure 6 therein, and also \cite{BourgeoisSabloffTraynor15,Rizell_Surgeries}.
	\hfill$\Box$
\end{remark}

We recall that, by definition, the index of an elementary exact Lagrangian cobordism is the Morse index of its unique critical point, see \cite[Section 4.1]{BourgeoisSabloffTraynor15} and \cite[Section 4]{Rizell_Surgeries}. Note that elementary index-$k$ exact Lagrangian cobordisms are also referred to as Lagrangian $k$-handle attachments.  In particular, the Legendrian convex end of an elementary index-$k$ exact Lagrangian cobordism is a Legendrian $(k-1)$-surgery on the Legendrian concave end. In combination with Theorem \ref{thm:surfaceReidemeister}, Theorem \ref{thm:Legsurgeries} yields the following two moves:

\begin{cor}\label{cor:alt2handle} The two $N$-graph moves in Figure \ref{fig:Alternative2Handle} corresponds to a Legendrian 1-surgery, i.e. upon performing (2'), or (2''), there exists an elementary index-2 exact Lagrangian cobordism from the Legendrian weave on the left to the Legendrian weave on the right.
	
In fact, in Move (2') the Lagrangian 2-disk is attached along the 1-cycle represented by the (bi)chromatic horizontal edge between the two trivalent vertices. In Move (2'') the Lagrangian 2-disk is attached along the 1-cycle represented by the (blue) tripod at the hexagonal vertex uniting the three trivalent vertices.
\begin{center}
	\begin{figure}[h!]
		\centering
		\includegraphics[scale=0.9]{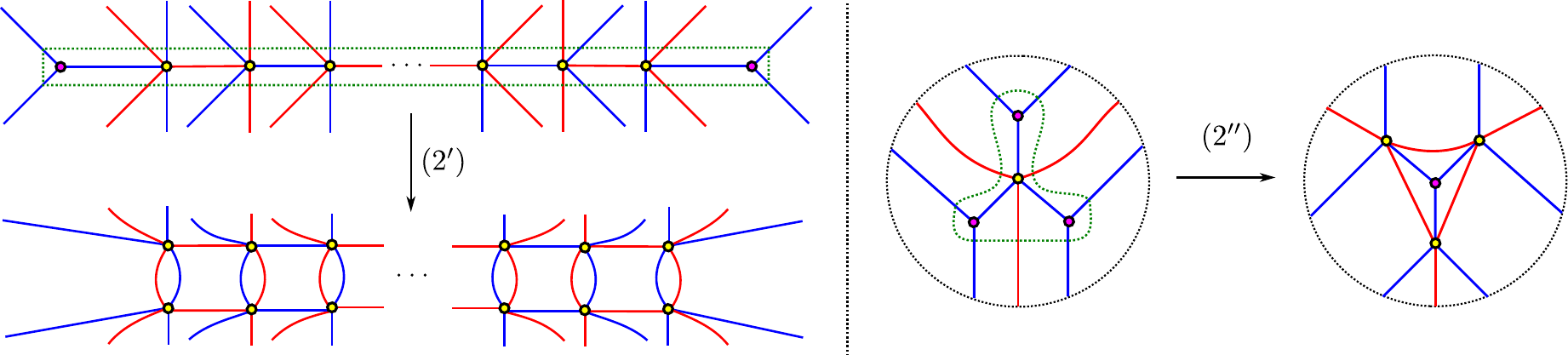}
		\caption{The two Legendrian Surgery Moves in Corollary \ref{cor:alt2handle}, both representing Lagrangian 2-handle attachments.}
		\label{fig:Alternative2Handle}
	\end{figure}
\end{center}

\end{cor}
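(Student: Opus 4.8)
The strategy is to realise each of the two moves as a concatenation of the elementary index-$2$ exact Lagrangian cobordism of Theorem~\ref{thm:Legsurgeries}.(2) with trivial cobordisms coming from the surface Legendrian Reidemeister moves of Theorem~\ref{thm:surfaceReidemeister}. The standard fact to invoke is that concatenating an elementary index-$2$ exact Lagrangian cobordism $W$ (from $\La$ to $\La'$) with the trace cobordisms of Legendrian isotopies $\La_1\simeq\La$ and $\La'\simeq\La_1'$ again produces an elementary index-$2$ exact Lagrangian cobordism, now from $\La_1$ to $\La_1'$, whose attaching Legendrian $1$-sphere is the image of that of $W$ under the first isotopy. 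Thus it is enough, for each move, to exhibit a sequence of Moves I--VI, VI' carrying the left-hand $N$-graph $G_1$ to an $N$-graph $G_1'$ which contains a clean monochromatic $i$-edge $e$ between two trivalent vertices with $G_2$ obtained from $G_1'$ by deleting $e$ as in Theorem~\ref{thm:Legsurgeries}.(2), and then to check that the short $\sf I$-cycle $\gamma(e)$ used there is carried by the isotopy to the $1$-cycle named in the statement.

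For Move~(2'), the left-hand graph contains a long $\sf I$-cycle: a chain of edges of alternating colors joining two trivalent vertices and passing through a string of hexagonal vertices at opposite rays. The plan is to straighten this chain by repeatedly applying the push-through Move~II, sliding the trivalent vertices along it, and to cancel the auxiliary hexagonal vertices so produced using candy twists (Move~I), exactly as in the derivation of Move~III from Moves~I and~II in the proof of Theorem~\ref{thm:surfaceReidemeister}. After finitely many such steps the two trivalent vertices are joined by a single monochromatic edge $e$, Theorem~\ref{thm:Legsurgeries}.(2) applies, and since each intermediate Legendrian isotopy carries the running $1$-cycle to the next one, the Lagrangian $2$-disk is attached along the long $\sf I$-cycle, as claimed; reading the picture backwards from $G_2$ then identifies the right-hand graph.

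For Move~(2''), the attaching cycle is the $\sf Y$-piece formed by three monochromatic edges leaving a hexagonal vertex $h$ and ending at trivalent vertices. Here I would first apply push-through moves (Move~II) at $h$, which deform the tripod into a chain through hexagonal vertices with two trivalent endpoints --- the local effect on the $1$-cycle being the passage from $e_1+e_2+e_3$ to $e_1+e_2-e_3'$ recorded in Subsection~\ref{ssec:combhomology} --- after which the argument reduces to the situation of Move~(2') and Theorem~\ref{thm:Legsurgeries}.(2), with the attaching $1$-cycle equal to the original blue tripod. Alternatively, one can draw the governing front homotopy directly, using the cusp calculus of Subsection~\ref{ssec:CuspCalculus}, exactly as in the proof of Theorem~\ref{thm:Legsurgeries}.(2).

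The genuine point to check --- and the only real obstacle --- is the transport of the attaching sphere: a priori Moves~I--VI, VI' preserve only the homology class of a $1$-cycle in $\La(G)$, whereas the surgery requires its ambient isotopy class, so one must follow the cycle through the front pictures (as in Figures~\ref{fig:1CycleFront}--\ref{fig:1CycleFront2}) rather than merely homologically, and at the same time make sure that every auxiliary hexagonal vertex created by a push-through is indeed cancelled, so that $G_1'$ is honestly $G_1$ together with a single clean monochromatic edge. Granting the moves of Theorem~\ref{thm:surfaceReidemeister} and the surgery of Theorem~\ref{thm:Legsurgeries}.(2), both items of the corollary then follow, together with the stated identification of the attaching $1$-cycles.
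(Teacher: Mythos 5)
Your proposal is correct and follows essentially the same route as the paper: for Move~(2') the paper likewise applies a sequence of push-through Moves~II to slide one trivalent vertex until it meets the other across a monochromatic edge and then invokes the edge-removal surgery of Theorem~\ref{thm:Legsurgeries}, and for Move~(2'') the paper's Figure~\ref{fig:Alternative2HandleProof} also decomposes the move into surface Reidemeister moves that reduce the tripod to the Move~(2') configuration before applying the same surgery. Your extra remarks about cancelling auxiliary hexagonal vertices via Move~I and about tracking the attaching circle isotopically (not just homologically) are sensible fleshings-out of details the paper leaves to the figures, but they do not change the argument.
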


\begin{proof}[Proof of Theorem \ref{thm:Legsurgeries}] We start by proving that adding an $i$-edge with two trivalent vertices to an existing $i$-edge effects a Legendrian 0-surgery, i.e. a Lagrangian $1$-handle attachment. The homotopy of spatial fronts is depicted in Figure \ref{fig:1Surgery}, according to the conventions in Subsection \ref{ssec:cuspedges}. The detailed description reads as follows. We first generically perturb the two $D_4^-$-singularities in the first spatial front, which yields the second front. Performing Move VIII and then Move I yields the third and fifth fronts, respectively, in Figure \ref{fig:1Surgery}. Note that the homotopy from the third to the fourth front does not involve any change in the singularities of fronts, as the blue segment of $A_1^2$-singularities intersecting the orange $A_2$-cusp segment lies strictly below it in 3-space. The homotopy from the fifth to the sixth front emphasizes the yellow band where the (reverse) 1-surgery is to be performed. The step from the sixth to the seventh fronts is precisely the reverse surgery: the $A_2$-cusp edges in the seventh front are surgered along the yellow band \cite{Arnold76SurgeryI,BourgeoisSabloffTraynor15}, in the sixth front, to obtain the fifth front. The seventh front is homotopic to the eighth front by Move VII.

\begin{center}
	\begin{figure}[h!]
		\centering
		\includegraphics[scale=0.5]{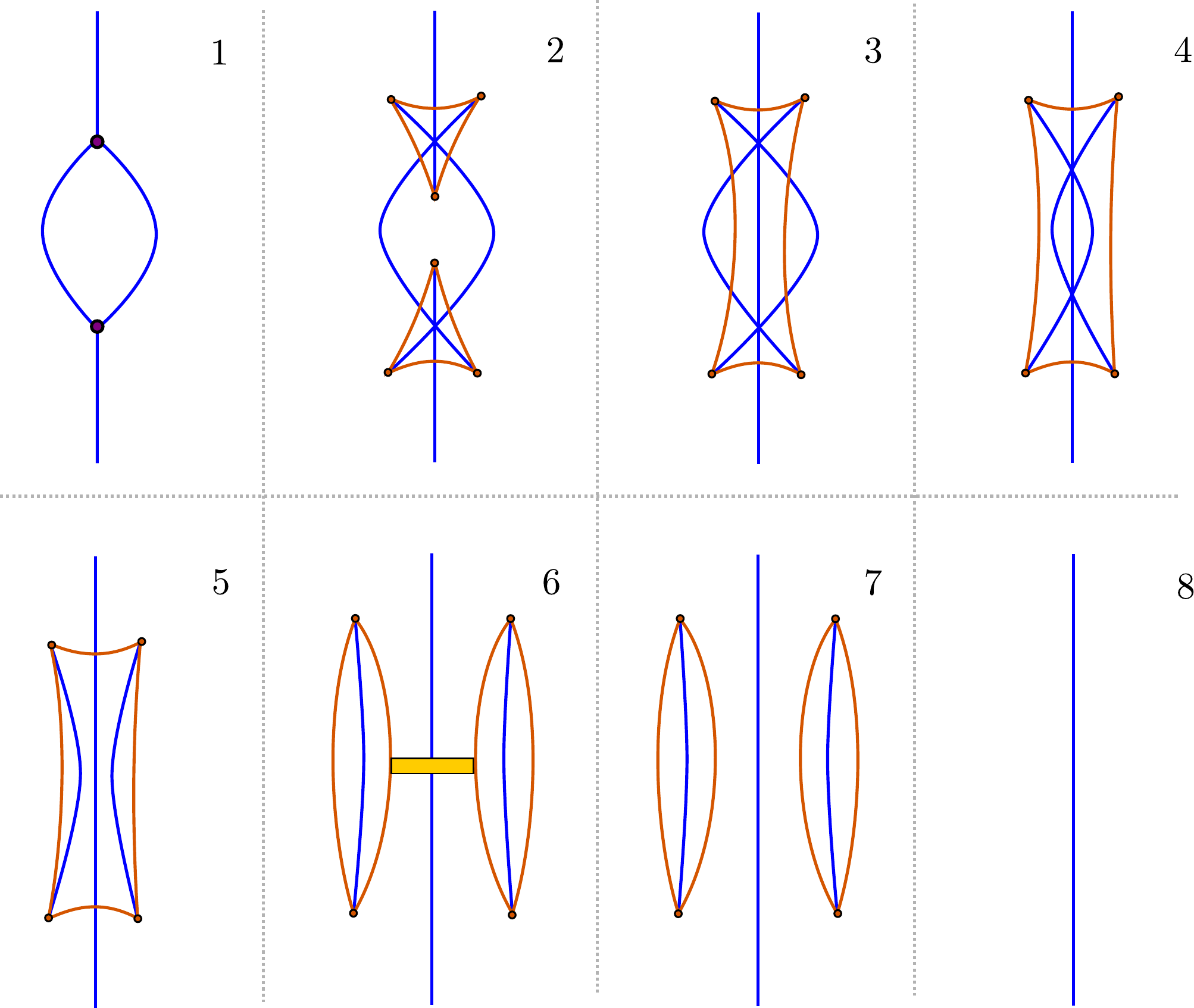}
		\caption{The diagrammatic homotopy of spatial fronts associated to the Legendrian 1-surgery move. It shows that the first front is a Legendrian 0-surgery on the eight front, i.e. the result of a Lagrangian 1-handle attachment.}
		\label{fig:1Surgery}
	\end{figure}
\end{center}

Let us now show that removing an $i$-edge corresponds to a Lagrangian $2$-handle attachment, i.e. a Legendrian 1-surgery. The homotopy of fronts is depicted in Figure \ref{fig:2Surgery}. Starting with the first front, generically perturbing yields the second front and two applications of Move VIII give the third front. In the fourth front we have shown the Legendrian 2-disk (in yellow) along which we perform the 1-surgery \cite{ArnoldSing,BourgeoisSabloffTraynor15},  the result of which is the fifth front. Indeed, the 1-surgery opens up the inner circle of $A_2$-cusp edges and adds two horizontal (Legendrian) 2-disks.  As a result, the effect on its diagrammatic representation is removing the inner circle of $A_2$-cusps, as shown in the fifth front. The application of Move I gives the sixth front, which is readily homotopic to the seventh front. The eighth front is then obtained by performing a Move VII.

\begin{center}
	\begin{figure}[h!]
		\centering
		\includegraphics[scale=0.5]{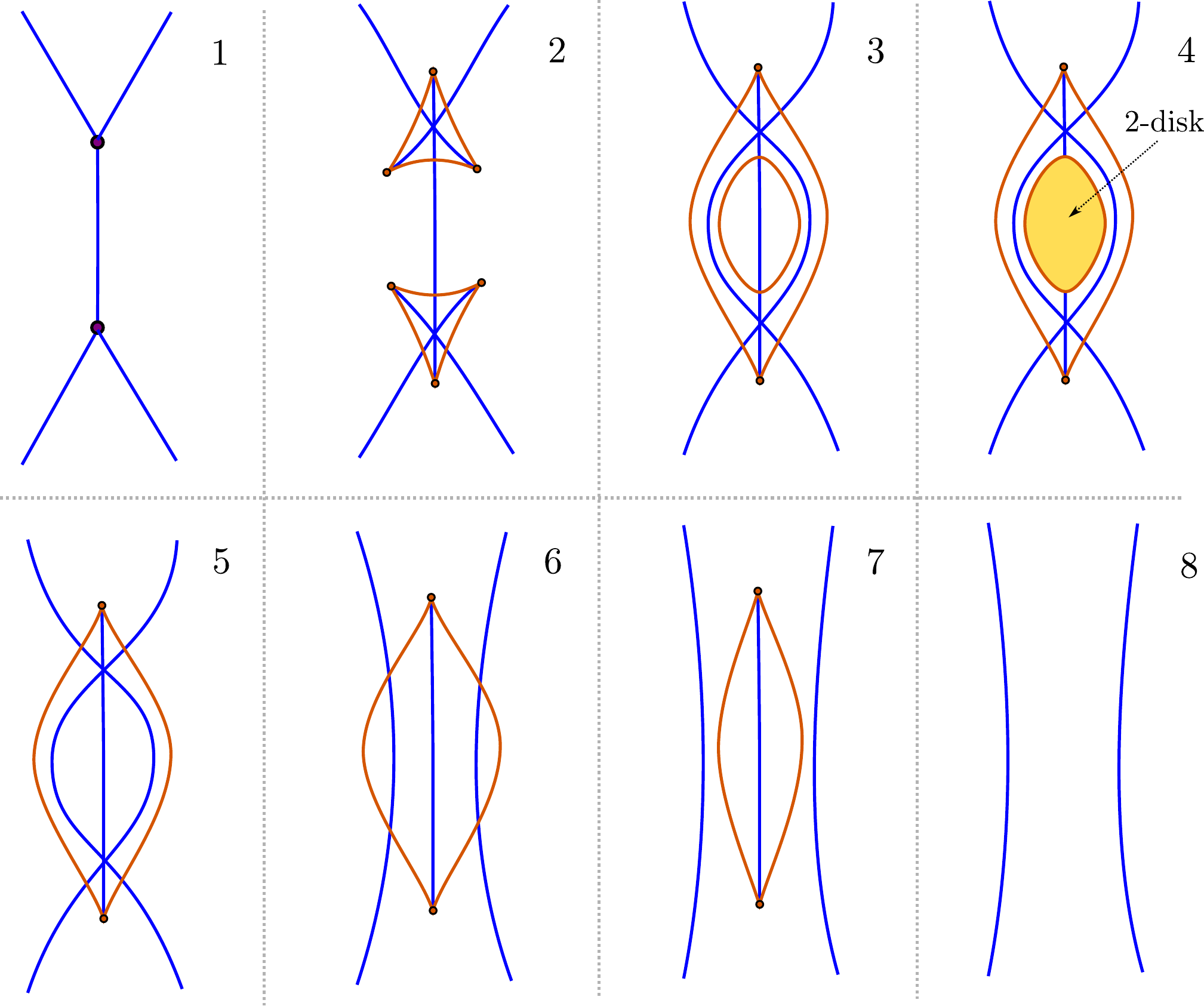}
		\caption{The diagrammatic homotopy of spatial fronts associated to the Legendrian 1-surgery move. It shows that the eighth front is a Legendrian 1-surgery on the first front, i.e. the result of a Lagrangian 2-handle attachment. Note that diagrams 3 and 4 in the first row are the same. The difference is that in diagram 4 we have depicted (in yellow) the 2-disk along which the Legendrian 1-surgery is performed, which overlaps with part of diagram 3 (and thus this part is not depicted in diagram 4).}
		\label{fig:2Surgery}
	\end{figure}
\end{center}

Now, we prove that joining two trivalent vertices in distinct graphs $G_1\sse C_1,G_2\sse C_2$ is realized by a Legendrian surface connected sum, which is a Lagrangian $1$-handle attachment (a Legendrian 0-surgery) whose attaching $0$-sphere has its two points belonging to different boundary components. The required homotopy of fronts is shown in Figure \ref{fig:ConnectedSum}. In this case, we must satellite the Legendrian weaves $\La(G_1),\La(G_2)$ to a Darboux ball $(\R^5,\xi_\st)$. From the perspective of spatial fronts, we must locally add a $A_1^2$-curve and two $A_2$-cusp edges as depicted in the first front of Figure \ref{fig:ConnectedSum}. The Legendrian 0-surgery is performed from the first front to the second, along the Legendrian band given by the red dotted line. The homotopy from the second front to the third consists of four applications of Move XI. Then, we use Move XII to obtain the fourth front. The fifth front is achieved by applying Move VII, and the sixth front consists of two applications of Move XI.

\begin{center}
	\begin{figure}[h!]
		\centering
		\includegraphics[scale=0.5]{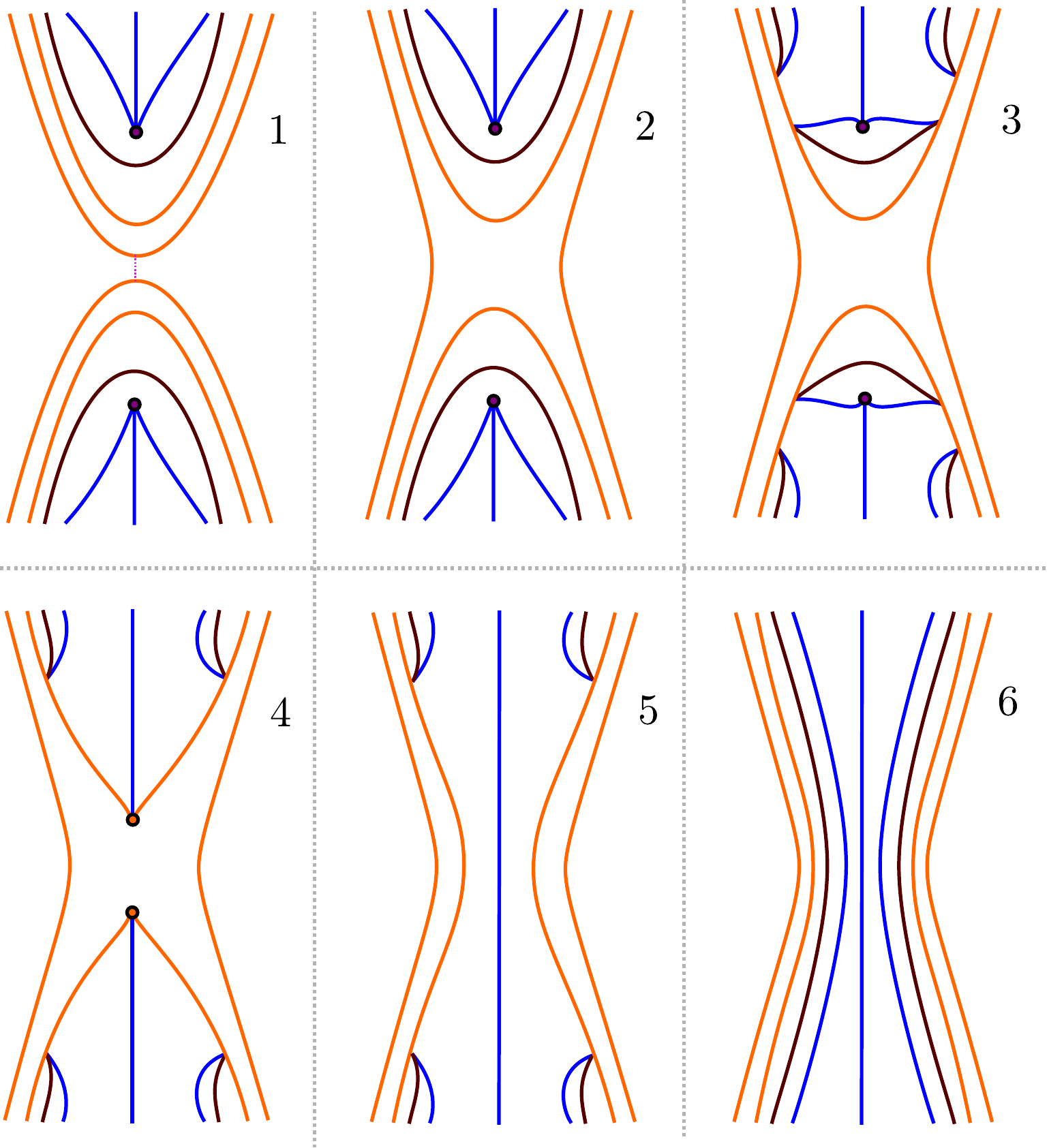}
		\caption{The diagrammatic homotopy of spatial fronts associated to the Legendrian connected sum.}
		\label{fig:ConnectedSum}
	\end{figure}
\end{center}

Finally, substituting a trivalent vertex by a triangle corresponds to a connected sum with the four vertex graph $G_c\sse\S^2$ in the left of Figure \ref{fig:CliffordSum}. One then shows that the spatial front of the Legendrian weave $\iota(\La(G_c))\sse(\R^5,\xi_\st)$ is front equivalent to the front on the right of Figure \ref{fig:CliffordSum}, which is known to be the Legendrian lift of the Clifford torus \cite{Rizell_TwistedSurgery,CasalsMurphy}. In brief, this can be shown by first identifying the Legendrian 2-torus associated to the Clifford graph with the vanishing cycle associated to the superpotential $W:\C^3\lr\C$, $W(z_1,z_2,z_3)=z_1z_2z_3$. This superpotential has a singular Lagrangian thimble
$$L=\{(z_1,z_2,z_3)\in\C^3: W(z_1,z_2,z_3)\in\R_{\geq0},|z_1|=|z_2|=|z_3|\},$$
whose intersection with the contact unit 5-sphere $(\S^5,\xi_\st)\sse \C^3$ is a Legendrian 2-torus $L_W$. It is shown in \cite[Section 3.3]{Nadler_LG1} that the Clifford graph is a front for this 2-torus $L_W$. In order to obtain the cone front from Figure \ref{fig:CliffordSum} (on the right), one stereographically projects from $(\S^5,\xi_\st)\sse \C^3$ to $(\R^5,\xi_\st)$ with the contactomorphism provided in \cite[Proposition 2.1.8]{Geiges08} and draws the (image of $L_W$ in the) front projection. The resulting front for $L_W$ is precisely the one drawn on the right of Figure \ref{fig:CliffordSum}.

\begin{center}
	\begin{figure}[h!]
		\centering
		\includegraphics[scale=0.6]{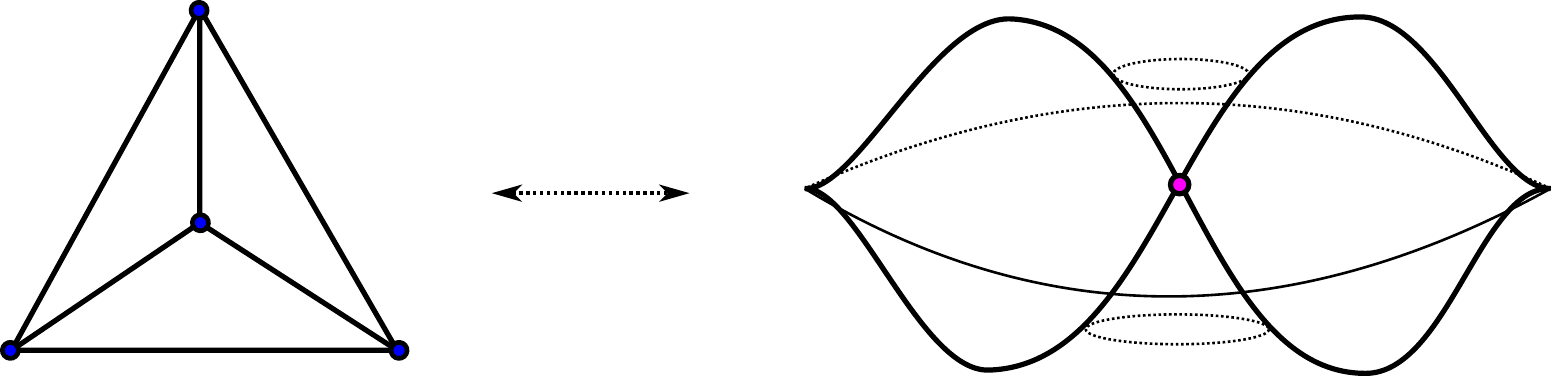}
		\caption{The Clifford graph $G_c\sse\S^2$ and a simplified spatial front for the satellited Legendrian $\iota(\La(G_c))\sse(\R^5,\xi_\st)$.}
		\label{fig:CliffordSum}
	\end{figure}
\end{center}
\end{proof}

\begin{proof}[Proof of Corollary \ref{cor:alt2handle}]
In Figure \ref{fig:Alternative2Handle}, Move (2') follows by applying a sequence of Moves II to the leftmost trivalent vertex, pushing that vertex through {\it all} the hexagonal vertices -- until it is connected to the rightmost trivalent vertex with a monochromatic edge -- and then using Move (3) in Theorem \ref{thm:Legsurgeries}. Move (2'') is more interesting, and its proof is shown in Figure \ref{fig:Alternative2HandleProof}.

\begin{center}
	\begin{figure}[h!]
		\centering
		\includegraphics[scale=0.95]{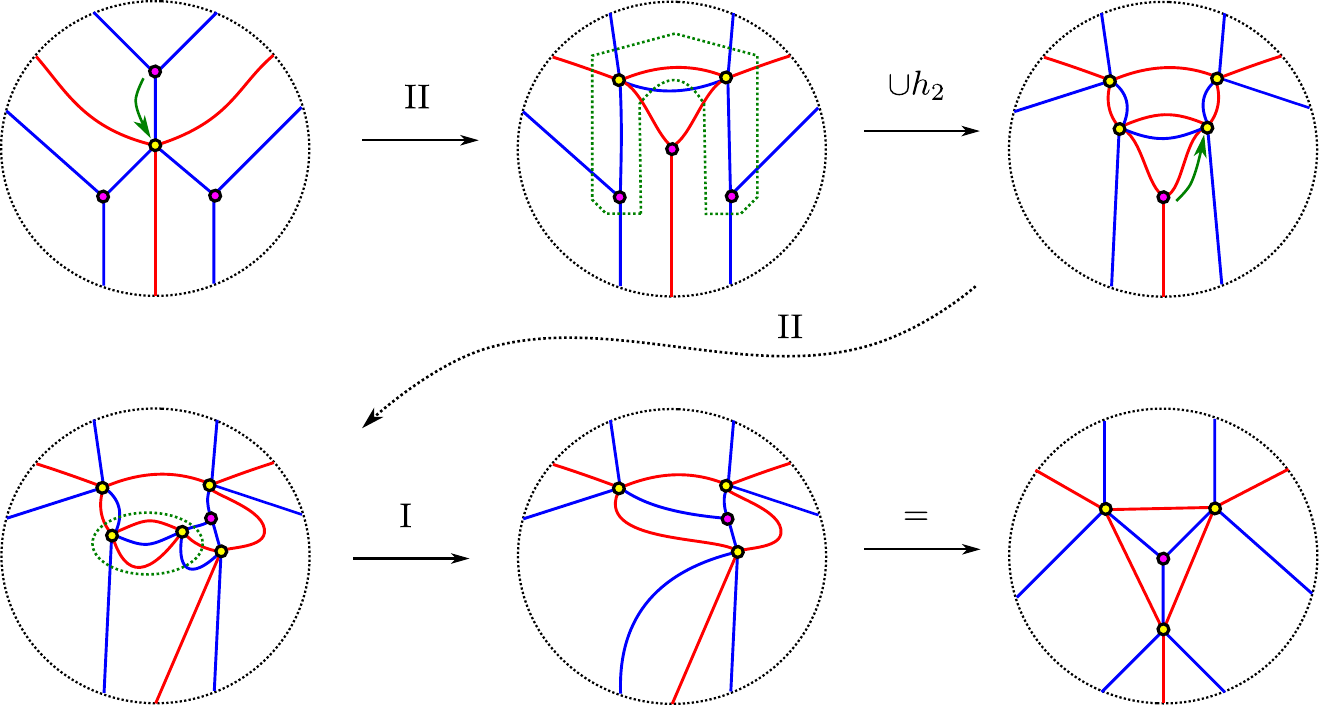}
		\caption{The Lagrangian 2-handle attachment in Move (2'') decomposed as a sequence of surface Reidemeister moves, from Theorem \ref{thm:surfaceReidemeister}, and Move (2) in Theorem \ref{thm:Legsurgeries}, in the guise of Corollary \ref{cor:alt2handle}.}
		\label{fig:Alternative2HandleProof}
	\end{figure}
\end{center}
\end{proof}

Theorem \ref{thm:Legsurgeries} provides a useful and efficient way to describe Legendrian surfaces in terms of $N$-graph combinatorics. Its statement is as strong as possible, in that the conclusion is on the Legendrian isotopy type of the associated Legendrian weaves. The computation of algebraic invariants then follows as a consequence of our geometric understanding.

In particular, we have following.

\begin{cor}\label{cor:blowup}
Let $G\sse C$ be an $N$-graph and $v\in G$ a trivalent vertex. The blow-up combinatorial move on $G$, given by an insertion of a triangle at the vertex $v$, is a twisted $0$-surgery on $\iota(\La(G))$.\hfill$\Box$
\end{cor}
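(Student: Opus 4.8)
The plan is to deduce Corollary \ref{cor:blowup} from item (4) of Theorem \ref{thm:Legsurgeries} (the Clifford sum) together with the framing discussion in Remark \ref{rkm:surgeries}(ii). The blow-up move at $v$ --- replacing the trivalent vertex $v$ by a small triangle, so that the three edges formerly incident to $v$ become incident to the three vertices of the triangle --- is precisely the combinatorial move appearing in Theorem \ref{thm:Legsurgeries}.(4). Hence, after satelliting, the Legendrian weave of the blown-up $N$-graph is the Legendrian connected sum $\iota(\La(G))\#\bT_c^2$ with a Clifford $2$-torus $\bT_c^2\sse(\R^5,\xi_\st)$, performed in a neighborhood of the vertex $v$.

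Next I would reinterpret this connected sum as a twisted $0$-surgery. The Clifford torus $\bT_c^2$ is diffeomorphic to $\S^2$ with a single $1$-handle attached ($\chi(\bT_c^2)=0$), so it is obtained from a Legendrian $2$-sphere by an elementary index-$1$ exact Lagrangian cobordism, i.e. a Legendrian $0$-surgery, whose attaching $0$-sphere has both feet on the same component. Such a $0$-surgery carries a discrete framing datum with two values --- the obstruction lives in a space with $\pi_0=\Z/2$, cf. \cite{BourgeoisSabloffTraynor15,Rizell_Surgeries,Rizell_TwistedSurgery} --- and the two outcomes, applied to a connected Legendrian, are the connected sum with the standard front-spun Legendrian $2$-torus in the untwisted case (this is exactly Remark \ref{rkm:surgeries}(ii)) and the connected sum with the Clifford torus in the twisted case. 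Since the Clifford torus is not Legendrian isotopic to the standard front-spun torus (they are distinguished, e.g., by the flag moduli invariants of Section \ref{sec:flag}, or more classically by their behaviour with respect to exact Lagrangian fillings, cf. \cite{Rizell_TwistedSurgery,CasalsMurphy}), the $0$-surgery producing $\bT_c^2$ out of $\S^2$ must be the twisted one.

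Finally I would absorb the sphere summand: $\iota(\La(G))\#\S^2\cong\iota(\La(G))$, and under this isotopy the $1$-handle that turned the $\S^2$-summand into a torus becomes a twisted Lagrangian $1$-handle attached to $\iota(\La(G))$ in a neighborhood of (the image of) $v$; the framing is a local invariant of the attachment and is not affected by absorbing the far-away $\S^2$. Therefore the satellited blown-up weave is the result of a twisted $0$-surgery on $\iota(\La(G))$, which is the assertion of Corollary \ref{cor:blowup}.

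The main obstacle is the middle step: making rigorous that the surgery is \emph{twisted} and not untwisted. This rests on (a) the fact that an index-$1$ Lagrangian handle attached along a $0$-sphere both of whose feet lie on a single component has exactly two framings, and (b) that the Clifford and standard front-spun Legendrian $2$-tori in $(\R^5,\xi_\st)$ are genuinely distinct. Item (a) is standard obstruction theory for Lagrangian handle attachments, and (b) may be cited or re-derived from the invariants developed later in this paper. One can also circumvent this entirely with a direct front computation: re-run the homotopy of spatial wavefronts used in the proof of Theorem \ref{thm:Legsurgeries}.(4) (Figures \ref{fig:CliffordSum} and \ref{fig:D4Generic}), but with a twisted Lagrangian band in place of the connect-sum band, and check that the resulting $A_1^2$-singularity pattern is exactly the triangle inserted at $v$.
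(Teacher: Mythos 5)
Your proposal is correct in its essential step and reaches the right conclusion, but it expands the argument considerably beyond what the paper records, and the expansion is not strictly necessary given the paper's conventions. Both you and the paper begin by invoking Theorem \ref{thm:Legsurgeries}.(4) to identify the blown-up weave with $\iota(\La(G))\#\bT_c^2$. At that point the paper stops: in the paragraph immediately following the corollary it states, as a matter of definition (with a pointer to \cite[Section 4]{Rizell_TwistedSurgery}), that a twisted $0$-surgery \emph{is} a connected sum with a non-standard Legendrian $2$-torus; since $\bT_c^2$ is the non-standard torus in question, the corollary follows with no further argument. You instead work with the intrinsic handle-theoretic notion of a twisted Lagrangian $1$-handle and re-derive the equivalence --- arguing that there are two framings of the $0$-sphere, that the untwisted one reproduces Remark \ref{rkm:surgeries}(ii), that the twisted one must yield $\bT_c^2$ because $\bT_c^2\not\cong\bT^2_\st$, and then absorbing the unknotted sphere summand. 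This is more informative for a reader who does not already know Rizell's definition, and it correctly flags the two points that do need justification: the $\Z/2$ framing ambiguity and the non-isotopy $\bT_c^2\not\cong\bT^2_\st$, both of which are indeed established (the latter via the flag moduli $\P^1\setminus\{0,1,\infty\}$ versus $k^*$, Section \ref{sec:flag}). Two small cautions: in the final step you should say explicitly that the absorbed sphere summand is the \emph{standard} Legendrian unknot $\La_0$ (connect-summing with an arbitrary Legendrian $2$-sphere is not an isotopy), and the claim that the $1$-handle retains its twisted framing after moving its feet onto $\iota(\La(G))$ deserves a sentence --- it is a local invariant of the attaching region, so it is fine, but it is an implicit move that should be named. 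With those caveats, your proof is a valid (and somewhat more self-contained) alternative to the paper's one-line definitional observation.
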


The blow-up procedure was first studied in \cite[Section 5]{TreumannZaslow}.  It is depicted in Figure \ref{fig:LegendrianSurgeries} (lower right). By definition, a twisted $0$-surgery is a connected sum with a non-standard Legendrian torus in $(\S^5,\xi_\st)$. For now, we refer to \cite[Section 4]{Rizell_TwistedSurgery} for more details.

A consequence of Corollary \ref{cor:blowup} is that the Legendrian isotopy type of $\iota(\La)$ is independent of the choice of vertex $v\in G$, because a twisted $0$-surgery is independent of the choice of $0$-sphere at which it is performed (since all pair of points are isotopic in a connected surface). This question was initially asked in \cite{TreumannZaslow} in the study of the dependence of the sheaf invariants in terms of $v$. Since the Legendrian isotopy type of $\iota(\La)$ is independent of $v$, the algebraic invariants are also independent of $v$.

Finally, note that the Legendrian 0-surgery in Theorem \ref{thm:Legsurgeries}.(1) can be understood as a Legendrian connected sum with the 2-graph $G\sse\S^2$ shown in Figure \ref{fig:StandardTorusGraph} (Left). In fact, the standard Legendrian satellite $\iota(\La(G))$ for this 4-vertex 2-graph is the standard Legendrian 2-torus, a Legendrian front of which is shown in Figure \ref{fig:StandardTorusGraph} (Right). Indeed, they are both obtained from the standard Legendrian unknot by a 0-surgery (which yields a unique Legendrian isotopy class of Legendrian 2-tori) and thus they must be Legendrian isotopic.

\begin{center}
	\begin{figure}[h!]
		\centering
		\includegraphics[scale=0.8]{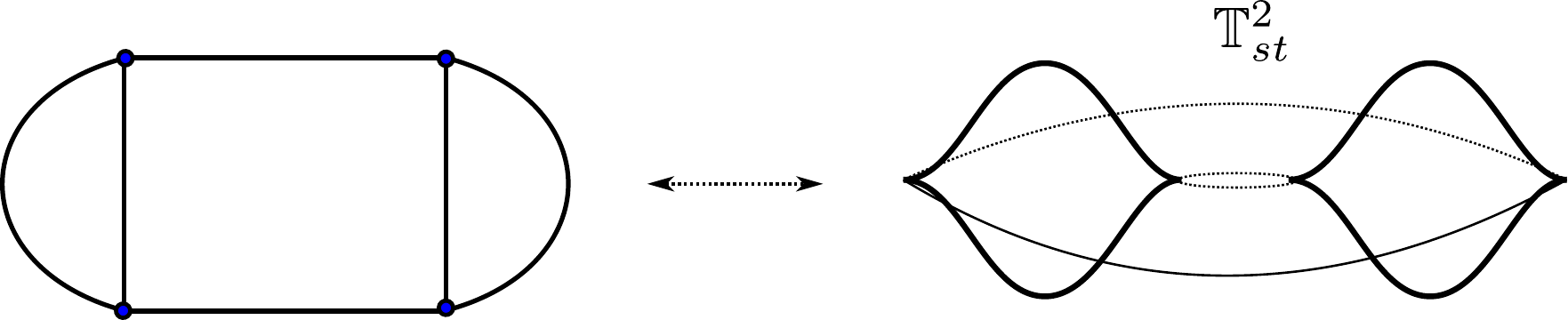}
		\caption{A $2$-graph $G$ in the 2-sphere $\S^2$ (Left) and a Legendrian front for its Legendrian weave $\iota(\La(G))$ (Right). This is the standard Legendrian 2-torus $\bT^2_{\st}\sse(\R^5,\xi_\st)$, given by Legendrian front spinning of the 1-dimensional standard Legendrian unknot $\La_0\sse(\S^3,\xi_\st)$.}
		\label{fig:StandardTorusGraph}
	\end{figure}
\end{center}

\begin{remark}
The Legendrian 0- and 1-surgeries in Theorem \ref{thm:Legsurgeries} physically correspond to partial puncture degenerations in the context of spectral networks \cite{GMN_SpecNet13,GMN_SpecNetSnakes14}. Indeed, the Legendrian weaves obtained as the Legendrian lift of the Lagrangian hyperk\"ahler rotation of the spectral curve of a diagonalizable Higgs field are related by the Legendrian surgeries in Theorem \ref{thm:Legsurgeries}. For instance, the process of a full puncture [1,1,1] degenerating to a simple [2,1] puncture in a punctured $3$-sphere is precisely a Legendrian $0$-surgery \cite[Section 6]{Gabella_BPSGraphs}.  
\hfill$\Box$
\end{remark}

The Reidemeister moves in Subsection  \ref{ssec:ReidemeisterMoves} and the stabilization operation in Subsection \ref{ssec:Stabilization} preserve the Legendrian isotopy type of the (satellite) Legendrian weaves. The Legendrian surgeries discussed in Theorem \ref{thm:Legsurgeries} generically change the topology of $\La(G)$. The natural next step is to modify the Legendrian isotopy type of $\La(G)$ without changing its topology, which we will discuss in Subsection \ref{ssec:legmutation}. For now, we study an explicit example and present the stabilization operation.


\subsection{Example of a Closed Legendrian Weave}\label{ssec:RealLifeExample} Let us illustrate our spatial front calculus in an example. Consider the triangulation of $C=\S^2$ given by a tetrahedron, and the 3-graph $G$ associated to this triangulation according to Section \ref{sec:constr}. This 3-graph is shown in Figure \ref{fig:3TriangN3Part1} (upper left). The 3-graph $G$ is depicted in the plane as an {\it unfolded} triangulation, thus the triangles should be identified according to the faces of the tetrahedron: the outer three vertices of the dashed triangle are identified, and the dashed lines are glued accordingly. In particular, the 3-graph $G$ has twelve trivalent vertices and four hexagonal vertices. The question is to describe the Legendrian isotopy type of this Legendrian surface $\iota(\La(G))\sse(\R^5,\xi_\st)$. In addition, we would like to compute Legendrian invariants, such as the augmentation variety of 3-dimensional Lagrangian fillings in $(\D^6,\omega_\st)$. In this context, understanding the Legendrian isotopy type readily implies the computation of this Legendrian invariant.

We will exploit Theorem \ref{thm:surfaceReidemeister} and Theorem \ref{thm:Legsurgeries} to understand this Legendrian weave, and note that the closed surface $\iota(\La(G)):=\wt\iota_0(\La(G))$ has genus $4.$
First, we describe the sequence of Legendrian moves and surgeries in Figure \ref{fig:3TriangN3Part1}. 
In Diagram (1) on the upper left, first note that there are three blue triangles each having one vertex in the central triangle, one each in two outer triangles, and passing through one glued edge.
There is another blue triangle with one vertex on each of the outer triangles.
By Theorem \ref{thm:Legsurgeries}, we conclude that Diagram (1) corresponds geometrically to a connected sum of the weave from Diagram (2) with four copies of the Clifford 2-torus
$\mathbb{T}^2_c$.  The 3-graph of Diagram (2) is still complicated, so we use Theorem \ref{thm:surfaceReidemeister} to simplify.  First apply Move III, flopping the four vertices in the upper right of the 3-graph.  This brings us to Diagram (3).  Now do a Move I to undo the newly appearing candy twist.  

\begin{center}
	\begin{figure}[h!]
		\centering
		\includegraphics[scale=0.85]{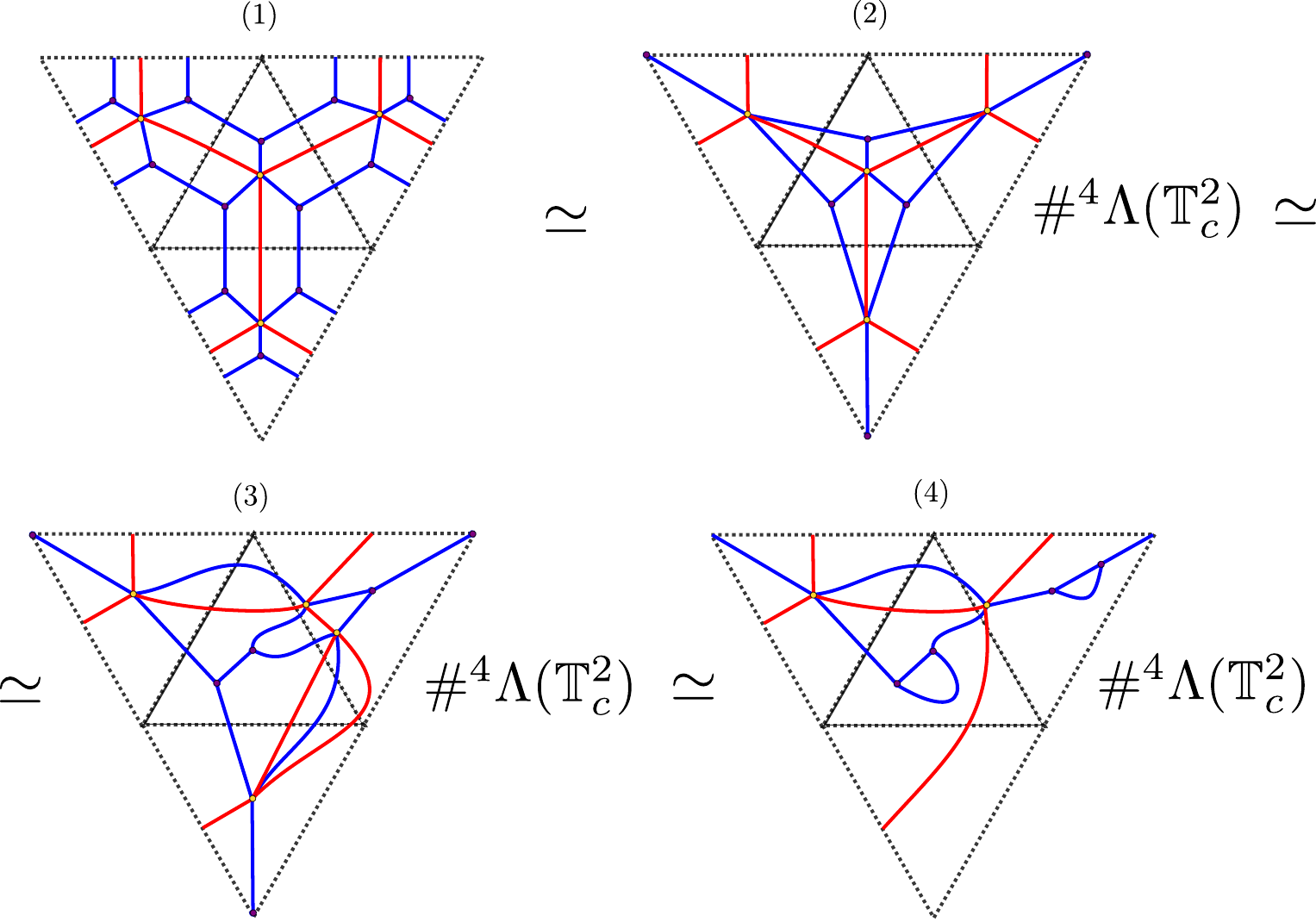}
		\caption{Simplification of a 3-graph with Theorem \ref{thm:surfaceReidemeister} and Theorem \ref{thm:Legsurgeries}.}
		\label{fig:3TriangN3Part1}
	\end{figure}
\end{center}

This brings us to Diagram (4).  So we have proven that the standard satellite $\iota(\La(G))$ is Legendrian isotopic to $\iota(\La(G'))\#_{i=1}^4\mathbb{T}^2_c$, where $G'$ is the 3-graph in Diagram (4) of Figure \ref{fig:3TriangN3Part1}.  It now suffices to understand the Legendrian $\iota(\La(G'))\sse(\R^5,\xi_\st)$.\\

\noindent {\bf Assertion}: Let $G'\sse\S^2$ be the 3-graph in Figure \ref{fig:3TriangN3Part2} (upper left). The Legendrian 2-sphere $\iota(\La(G'))$ is Legedrian isotopic to the standard Legendrian unknot $\La_0\sse(\R^5,\xi_\st)$.\\

\noindent {\bf Proof of the assertion}: By Theorem \ref{thm:Legsurgeries}, we can undo the two bigons in Diagram (5) of Figure \ref{fig:3TriangN3Part2}, and understand them as two connect sums with the standard Legendrian 2-torus $\mathbb{T}^2_\st$, defined as any Lagrangian 1-handle attachment to the standard Legendrian unknot $\La_0\sse(\R^5,\xi_\st)$.

\begin{center}
	\begin{figure}[h!]
		\centering
		\includegraphics[scale=0.85]{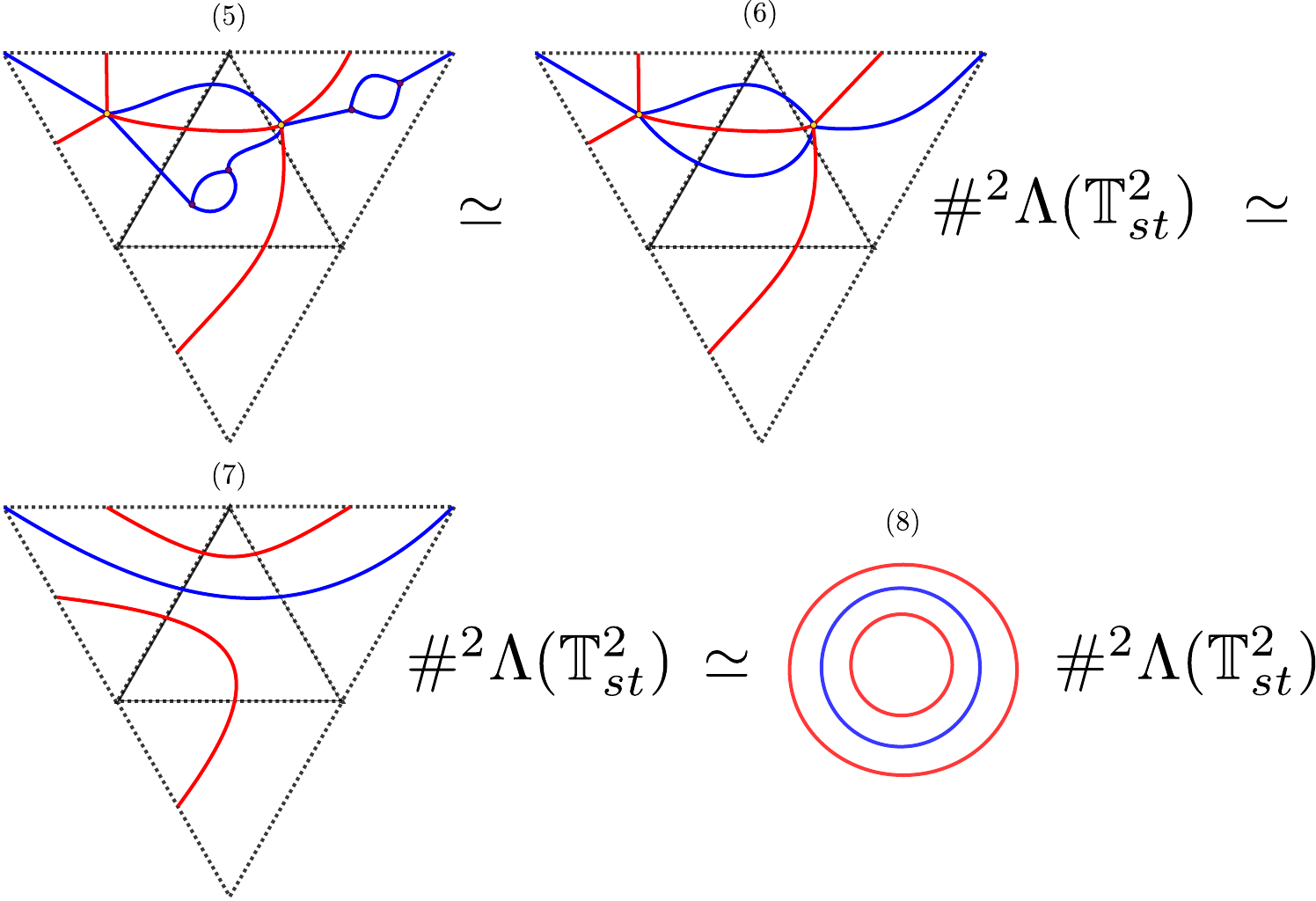}
		\caption{Diagrammatic proof that the standard satellite of the Legendrian 2-sphere associated to the 3-graph in Diagram (5) is the standard Legendrian unknot two-sphere $\La_0\sse(\R^5,\xi_\st)$.}
		\label{fig:3TriangN3Part2}
	\end{figure}
\end{center}

By applying Move I in Theorem \ref{thm:surfaceReidemeister} to the 3-graph in Diagram (6), we arrive at the 3-graph $G''$ in Diagram (7) of Figure \ref{fig:3TriangN3Part2}, which simplifies to the three concentric circles of alternating colors in Diagram (8). The Legendrian weave $\iota(\La(G''))\sse(\R^5,\xi_\st)$ is readily seen to be the standard 3-component unlink $\La_0\cup\La_0\cup\La_0\sse(\R^5,\xi_\st)$. Indeed, in 3-dimensional contact topology, the standard satellite of an $N$-stranded braid along the unknot -- with its standard saucer front -- creates a $w_0^{-1}$ of crossings at each side of the braid, where $w_0\in S_N$ is the longest element. That is, a $S^0$-worth of $w_0^{-1}$ crossings. In particular, a positive braid given by $w_0^2$, which consist of a $S^0$-worth of $w_0$-crossings, will get satellited to the standard Legendrian $N$-component unlink. See e.g. \cite[Section 2.2]{CasalsNg}. By $S^1$-symmetrically rotating this picture, we conclude that an $N$-weave in $S^2$ given by ${N\choose 2}$ concentric circles whose colors exactly give $w_0$ will be satellited along the standard 2-dimensional unknot to a standard Legendrian $N$-component unlink. Here the case at hand is $N=3$ and it suffices to note that red-blue-red represents $w_0\in S_3$. 

In conclusion, $\iota(\La(G'))$ is obtained by performing Lagrangian 1-handle attachments to $\La_0\cup\La_0\cup\La_0\sse(\R^5,\xi_\st)$, and thus $\iota(\La(G'))$ must be the standard Legendrian unknot.\hfill$\Box$

The conclusion of the above discussion is that the Legendrian isotopy type of the Legendrian surface $\iota(\La(G))\sse(\R^5,\xi_\st)$ associated to 3-triangulation of the tetrahedron, i.e.~Diagram (1) of Figure \ref{fig:3TriangN3Part1}, is that of the connected sum of four copies of the Clifford 2-torus $\mathbb{T}^2$. Hence, we now have a complete geometric understanding of $\iota(\La(G))$. In particular, this readily implies \cite{Sivek_Bordered,Rizell_TwistedSurgery} that the $\C$-moduli of objects of the category of microlocal rank-one sheaves in $\R^3$ supported in $\iota(\La(G))$ is isomorphic to $(\C\setminus\{0,1\})^4$.\hfill$\Box$


\subsection{$N$-Graph Stabilization}\label{ssec:Stabilization} The Reidemeister moves introduced in Theorem \ref{thm:surfaceReidemeister} constitute combinatorial operations on a given $N$-graph $G$ which yield the same Legendrian isotopy type for the associated Legendrian weave $\La(G)$, as a Legendrian in $(J^1C,\xi_\st)$. In particular, the resulting graph is still an $N$-graph.

In this section we discuss a different type of combinatorial move, where the number of sheets $N\in\N$ is increased. This operation, which we call {\it stabilization}, inputs an $N$-graph $G\sse C$ and outputs an $(N+1)$-graph $s(G)\sse C$. The main property of stabilization, proven in Theorem \ref{thm:graph_stab} below, is that it preserves the Legendrian isotopy type of the standard Legendrian satellite $\iota(\La(G))\sse(\R^5,\xi_\st)$, and as a result it is a non-characteristic operation.

\begin{remark}
	The relative homology class of the surface $\La(G)\sse J^1C$ has order $N$, and thus no combinatorial operation that modifies the number $N\in\N$ of sheets for a Legendrian weave will ever yield a Legendrian isotopic surface in the 1-jet space $J^1C$. Therefore, preserving the Legendrian isotopy type for the (standard) {\it satellite} is the optimal statement for a stabilization operation.\hfill$\Box$
\end{remark}

Let us describe the Legendrian weave stabilization. Given an $N$-graph $G$, the first step is to introduce a {\it ladybug} trivalent graph $B$ in $(N,N+1)$ as depicted in blue in the left of Figure \ref{fig:Stabilization_Steps} in such a way that $G$ is completely contained in one face\footnote{The construction is independent of the choice of such face.} of $B$, i.e.~ $G$ is inside one of the wings of the ladybug $B$. The second step is the introduction of {\it descending halos} centered at an $(N+1)$-graph $G$, which consists of a nested set of $N-1$ circles of $A_1^2$-crossings indexed by the permutations $(N-1,N), (N-2,N-1), \cdots (23), (12)$ reading outward. This is depicted in the right of Figure \ref{fig:Stabilization_Steps}.

\begin{center}
	\begin{figure}[h!]
		\centering
		\includegraphics[scale=0.75]{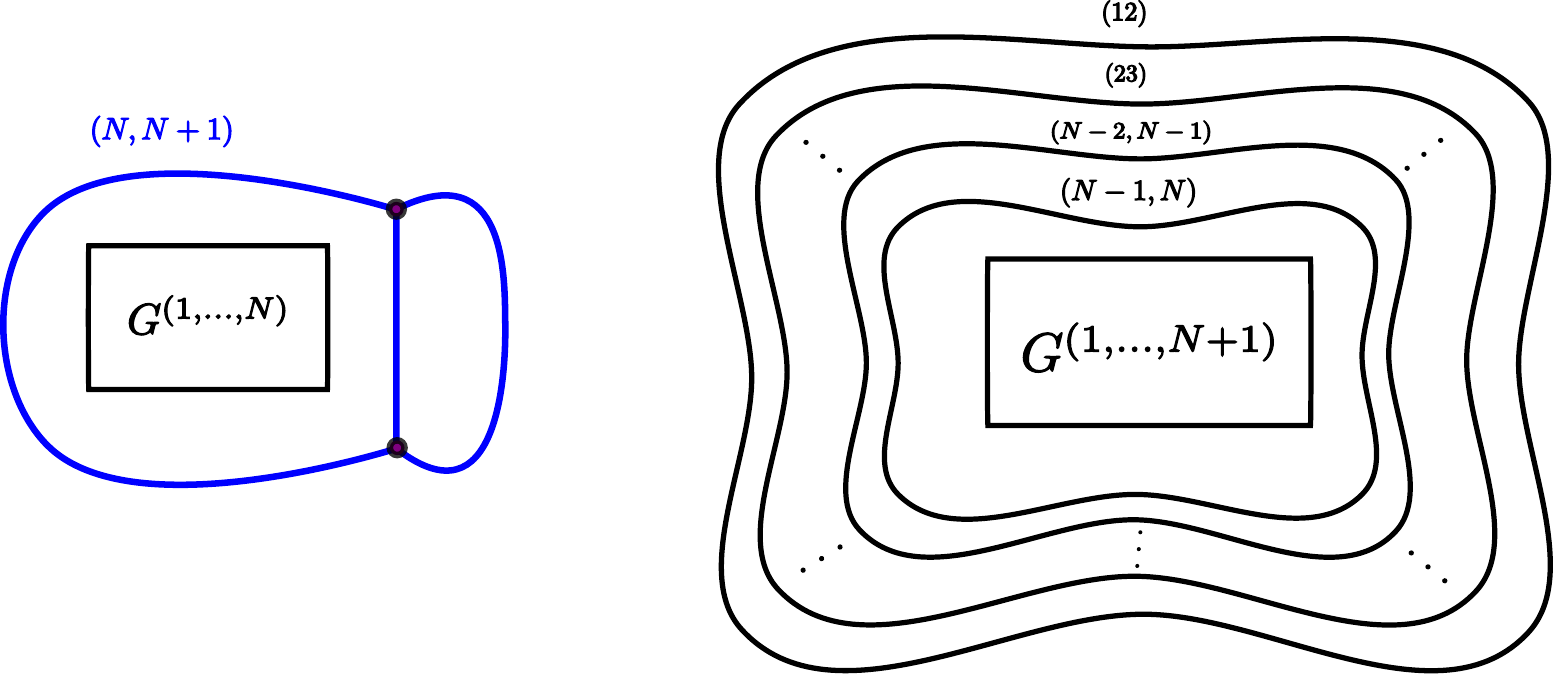}
		\caption{Ladybug graph $B$ around $G$ (left) and halos centered at $G$ (right).}
		\label{fig:Stabilization_Steps}
	\end{figure}
\end{center}

The concatenation of these two operations leads to the following:

\begin{definition}\label{def:Legstabilization}
	Let $G\sse C$ be an $N$-graph.  The stabilization of $G$ is the $(N+1)$-graph $s(G)\sse C$ obtained from $G$ by placing a ladybug $B$ around $G$, labeled with the transposition $(N,N+1)$, and a sequence of descending halos centered at the $(N+1)$-graph $G\cup B$.\hfill$\Box$
\end{definition}

Figure \ref{fig:Stabilization23} depicts the stabilization for the cases $N=2,3$.  The ladybug graph $B$ is shown in blue.

\begin{center}
	\begin{figure}[h!]
		\centering
		\includegraphics[scale=0.85]{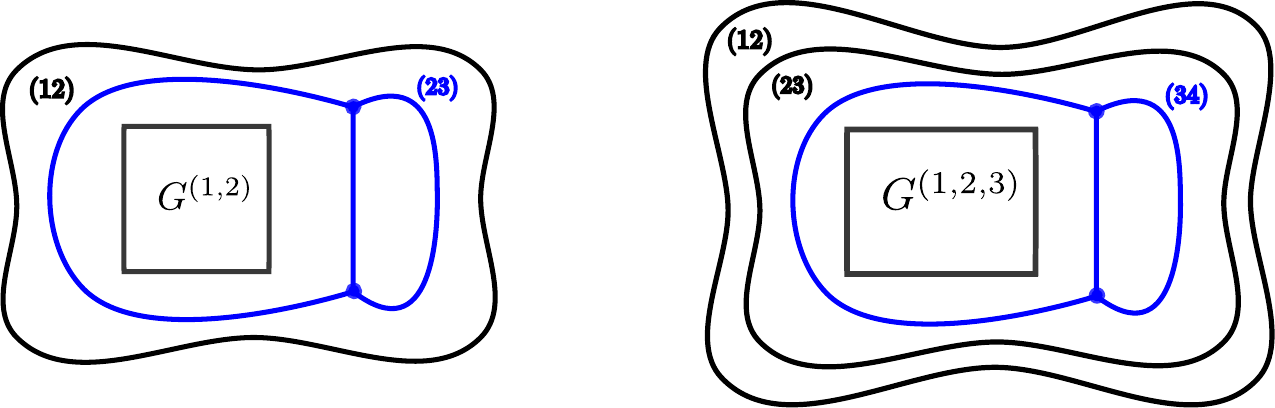}
		\caption{Stabilization of a $2$-graph (left) and of a $3$-graph (right).}
		\label{fig:Stabilization23}
	\end{figure}
\end{center}

The stabilization in Definition \ref{def:Legstabilization} is the Legendrian surface generalization of the Type II Markov move for smooth $N$-strand braids \cite{Markov35Moves,Birman74Braids}. The main property of graph stabilization is the following geometric result:

\begin{thm}\label{thm:graph_stab}
	Let $G\sse \S^2$ be an $N$-graph. Then the standard satellites $\iota(\La(G))$ and $\iota(\La(s(G)))$ are Legendrian isotopic in $(\S^5,\xi_\st)$.
\end{thm}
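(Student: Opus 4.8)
The plan is to realize the stabilization $s(G)$ geometrically on the level of spatial fronts for the standard satellites and exhibit a Legendrian isotopy, using the front calculus developed in Subsections \ref{ssec:CuspCalculus}, \ref{ssec:Legsurgeries} and Theorem \ref{thm:surfaceReidemeister}. First I would note that, exactly as in the 1-dimensional case (Remark \ref{rmk:1dim} and the discussion in Subsection \ref{ssec:RealLifeExample}), the standard satellite $\iota(\La(G))\sse(\R^5,\xi_\st)$ of an $N$-graph has a front obtained by $S^1$-front spinning, and the $N$ sheets near the Legendrian unknot companion are assembled with a $w_0$-worth of $A_1^2$-crossings on each side (the concentric-circle picture from Diagram (8) of Figure \ref{fig:3TriangN3Part2}). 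The role of the \emph{descending halos} in Definition \ref{def:Legstabilization} is precisely to upgrade the ${N\choose 2}$-circle pattern for $N$ sheets to the ${N+1\choose 2}$-circle pattern for $N+1$ sheets: the extra nested circles labeled $(N-1,N),(N-2,N-1),\dots,(12)$ are exactly the additional crossings that appear when one more strand is satellited along the standard saucer front of $\La_0$.

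\textbf{Key steps.} (1) Reduce to a local statement near the companion: choose a Darboux chart where the companion $\iota_0(\S^2)\sse(\R^5,\xi_\st)$ has its standard front, so that $\iota(\La(G))$ is $G$ woven into the $N$-sheeted spun front. (2) Analyze the ladybug graph $B$ labeled $(N,N+1)$: its two lifts bound an annulus in $\La(G\cup B)$, and since it encircles all of $G$, the weave $\La(G\cup B)$ is obtained from $\La(G)$ by adding a single trivial sheet $\S^2\times\{N+1\}$ that touches the $N$-th sheet along $B$; this is a front homotopy away from $G$ itself, contributing nothing to the Legendrian type of the satellite, much as a Markov stabilization by $\sigma_N^{\pm 1}$ is trivial on the braid closure. (3) Analyze the halos: show via Moves VII--XI and the satellite discussion (the $w_0^2$-to-unlink computation cited from \cite[Section 2.2]{CasalsNg} and used in Subsection \ref{ssec:RealLifeExample}) that attaching the descending halos to $G\cup B$ exactly converts the $N$-sheet spinning pattern into the $(N+1)$-sheet spinning pattern for the {\it same} Legendrian woven according to $G$. (4) Conclude that $\iota(\La(s(G)))$ and $\iota(\La(G))$ have front-homotopic spatial wavefronts, hence are Legendrian isotopic in $(\S^5,\xi_\st)$.

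\textbf{Main obstacle.} The delicate point is Step (3): one must check that the descending halos interact with $G$ only in the ``asymptotic'' region near the companion and not with the $D_4^-$, $A_1^2$ and $A_1^3$ singularities of $\La(G)$ itself. Concretely, since $G$ sits entirely inside one wing of the ladybug and inside the innermost halo circle, the new nested circles of $A_1^2$-crossings are disjoint in the base from the support of $G$; the issue is then purely about how $N+1$ spun sheets organize near $\La_0$, which is the content of the $w_0$-satellite normal form. I expect the bulk of the work to be drawing the movie of spatial fronts that implements the passage from ${N\choose 2}$ to ${N+1\choose 2}$ concentric crossing circles — this is where Moves I, VII, VIII, X, XI are invoked repeatedly, analogously to Figures \ref{fig:1Surgery}, \ref{fig:2Surgery} and \ref{fig:ConnectedSum} — together with a careful bookkeeping argument that the ladybug contributes the branch point needed to raise the branched-cover degree by one without changing the smooth topology of the satellite, consistent with the Riemann--Hurwitz count in Subsection \ref{ssec:homology}.
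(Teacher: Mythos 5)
Your high-level picture is right — the proof must be a front-homotopy of spatial wavefronts near the companion, carried out with the cusp-edge calculus (Moves~VII--XII), and you correctly identify that the heavy lifting is drawing the movie of fronts and that the halos live away from $G$ in $\S^2$. That much matches the paper, which indeed peels off the satellite's extra structure with exactly these moves (Figures~\ref{fig:Stabilization23_Proof1}--\ref{fig:Stabilization23_Proof3}), treating $N=2$ explicitly and noting the general case is identical.

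However, your Step~(2) contains a genuine gap. You assert that adding the ladybug $B$ alone (before halos) ``is a front homotopy away from $G$ itself, contributing nothing to the Legendrian type of the satellite.'' This is not correct, and it also cannot be correct in the way you state it: $\La(G)$ and $\La(G\cup B)$ live over $N$ and $N+1$ sheets respectively, so they represent different classes in $H_2(J^1\S^2;\Z)\cong\Z$ and cannot be connected by any front homotopy in $J^1\S^2$. After satelliting, $\iota(\La(G\cup B))$ is a closed $(N+1)$-graph weave whose satellite introduces a $w_0^{(N+1)}$-worth of concentric $A_1^2$-circles, not the $w_0^{(N)}$-pattern of $\iota(\La(G))$; the difference is precisely $N$ extra circles of crossings per end, which is what the $N-1$ descending halos, together with the two $D_4^-$ branch points of the ladybug, are designed to absorb. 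In other words, the ladybug and the halos are not independent corrections that can be argued away one at a time: the Markov analogy already requires both. The paper's proof never isolates the ladybug; it starts from the full $\iota(\La(s(G)))$ and performs a single intertwined sequence of cusp-circle exchanges, $R_1^2$ cancellations (Move~XI) against the innermost cusp circle, half-moon eliminations, and Moves~XII/VII, in which the ladybug, the halos, and the satellite's own crossing circles interact at every stage. Your plan would need to be reorganized along these lines: you cannot first remove $B$ and then fix the sheet count with halos, because neither piece in isolation is a Legendrian isotopy of satellites.

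One smaller point: the Riemann--Hurwitz bookkeeping you mention confirms that $\chi(\La(s(G)))=\chi(\La(G))$ (the ladybug's two trivalent vertices exactly cancel the extra sheet), but this only shows the two surfaces are abstractly diffeomorphic, which is necessary but far from sufficient — the content of the theorem is the Legendrian isotopy, and that requires the explicit movie of fronts.
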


\begin{proof} Let us provide a detailed proof for the case $N=2$, where the stabilization is a 3-graph. The argument for higher $N\geq3$ is identical. Consider the standard satellite closure $\iota(\La(s(G)))$, which yields the diagram on the left of Figure \ref{fig:Stabilization23_Proof1} -- we refer the reader to Figure \ref{fig:SatelliteUnknot} for the front of the standard satellite closure. The standard satellite closure of a 3-graph introduces three circles of $A_1^2$-crossings, drawn in dark grey, and three circular cusp edges, drawn in orange.\footnote{For a general $N$-graph, a front for the standard satellite closure of the Legendrian weave contains $N$ additional sheets, $(N+1),(N+2),\ldots,2N$. The bottom $N$ sheets $1,\ldots,N$ are woven according to $G$, and the top horizontal $N$ sheets are parallel. The bottom and top sheets are then connected by circles worth of $A_1^2$-crossings, according to the half-twist $\Delta\in\mbox{Br}^+_N$, and $N$ circles worth of $A_2$-cusp edges -- see Figure \ref{fig:SatelliteUnknot}.} Perform a Legendrian isotopy which exchanges the $(12)$-circle of $A_1^2$-crossings with the adjacent $(34)$-circle of $A_1^2$-crossings; this gives the diagram in the right of Figure \ref{fig:Stabilization23_Proof1}. This move is possible thanks to the cusp sliding shown in the first two columns of Figure \ref{fig:Stabilization23_Proof1Slice}.
	
	\begin{center}
		\begin{figure}[h!]
			\centering
			\includegraphics[scale=0.75]{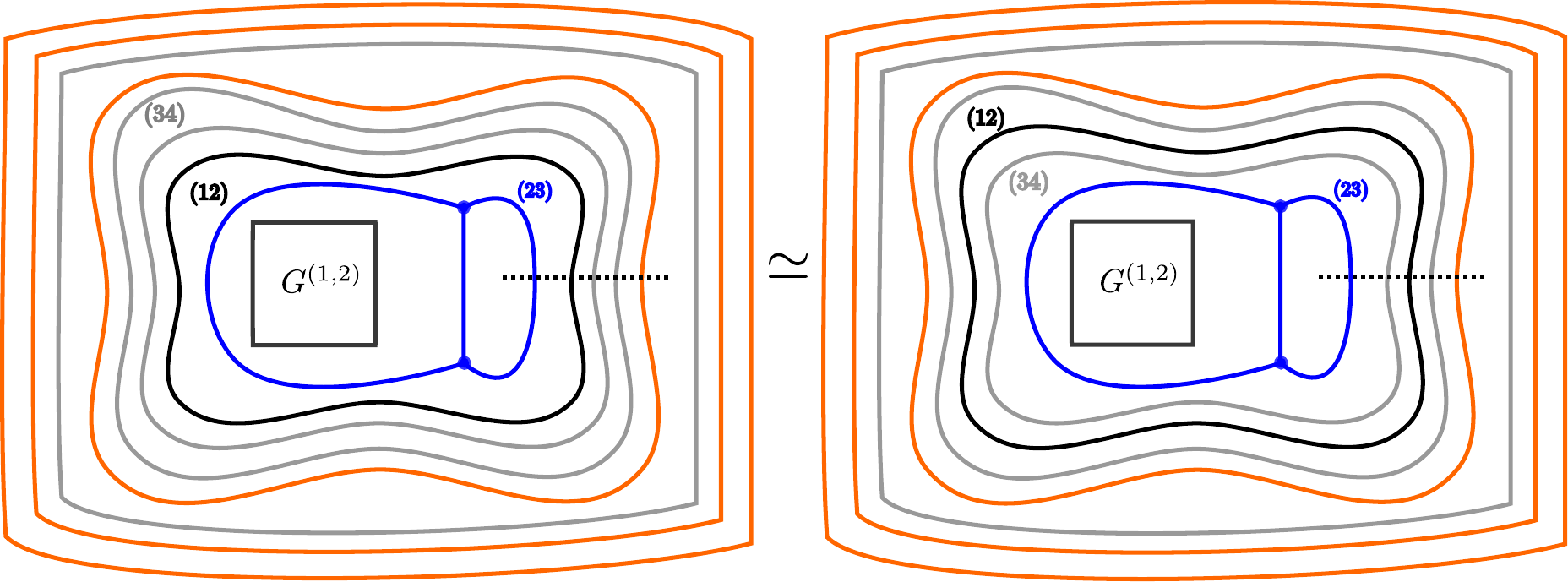}
			\caption{Exchange of $(12)$ and $(34)$ circles of $A_1^2$-crossings.}
			\label{fig:Stabilization23_Proof1}
		\end{figure}
	\end{center}
	
	\begin{center}
		\begin{figure}[h!]
			\centering
			\includegraphics[scale=0.7]{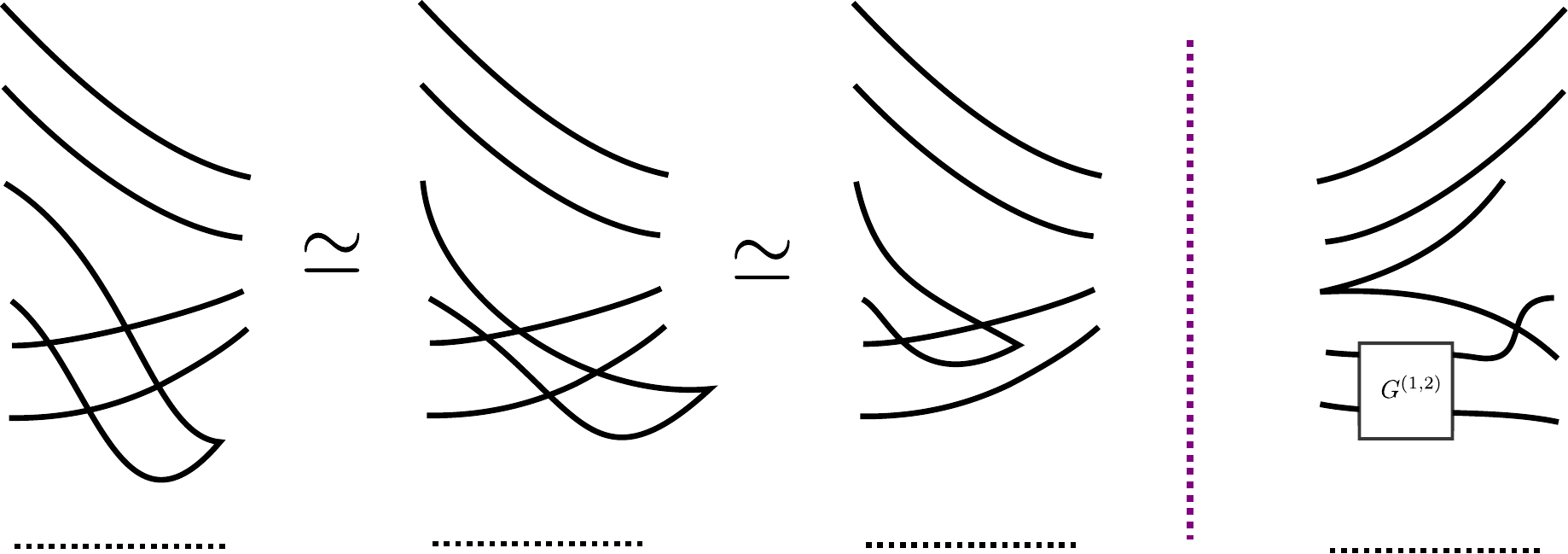}
			\caption{The left three diagrams depict slices in the dotted segments for Figure \ref{fig:Stabilization23_Proof1}. The rightmost diagram depicts a slice for the dotted segment in the right of Figure \ref{fig:Stabilization23_Proof2}.}
			\label{fig:Stabilization23_Proof1Slice}
		\end{figure}
	\end{center}
	
	Then use the innermost cusp circle and perform a Move XI, also denoted $R_1^2$ as it consists of two Reidemeister I moves, to remove two of the $A_1^2$-crossings as in the left of Figure \ref{fig:Stabilization23_Proof2}, this corresponds in the slice to the third column of Figure \ref{fig:Stabilization23_Proof1Slice}. Iterate with an $R_1^2$ in the same cusp edge with the $(34)$-circle of crossings and the ladybug piece $B$, arriving at rightmost diagram in Figure \ref{fig:Stabilization23_Proof2}.
	
	Finally, eliminate the two half-moons in the cusp edge and isotope the cusp edge above the graph $G^{(1,2)}$, which is possible thanks to the configuration shown at the rightmost column of Figure \ref{fig:Stabilization23_Proof1Slice}. The resulting diagram is that on the left of Figure \ref{fig:Stabilization23_Proof3}, which is Legendrian isotopic to the diagram on its right.
	 by applying two Moves XII, from Figure \ref{fig:TechnicalMoves1}, and an inverse Move VII from Figure \ref{fig:CuspMoves}.
	
	\begin{center}
		\begin{figure}[h!]
			\centering
			\includegraphics[scale=0.75]{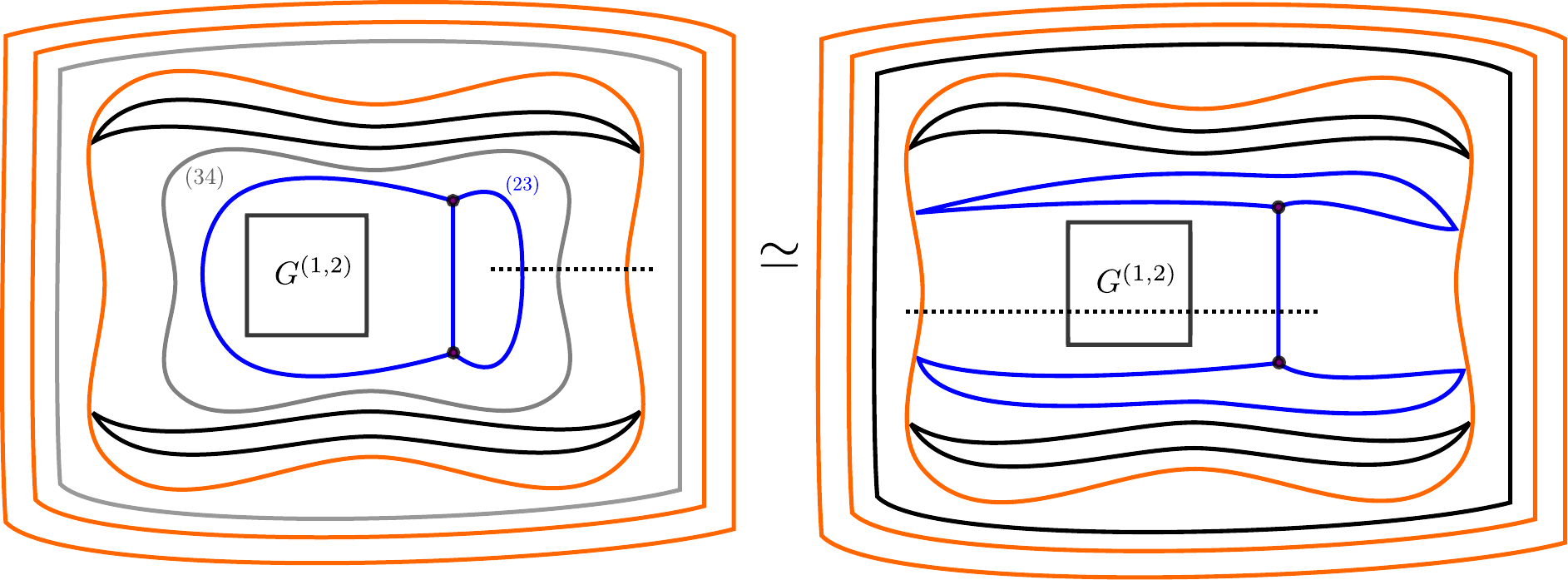}
			\caption{Performing an $R_1^2$-move with $(34)$ and the ladybug.}
			\label{fig:Stabilization23_Proof2}
		\end{figure}
	\end{center}
	
	\begin{center}
		\begin{figure}[h!]
			\centering
			\includegraphics[scale=0.75]{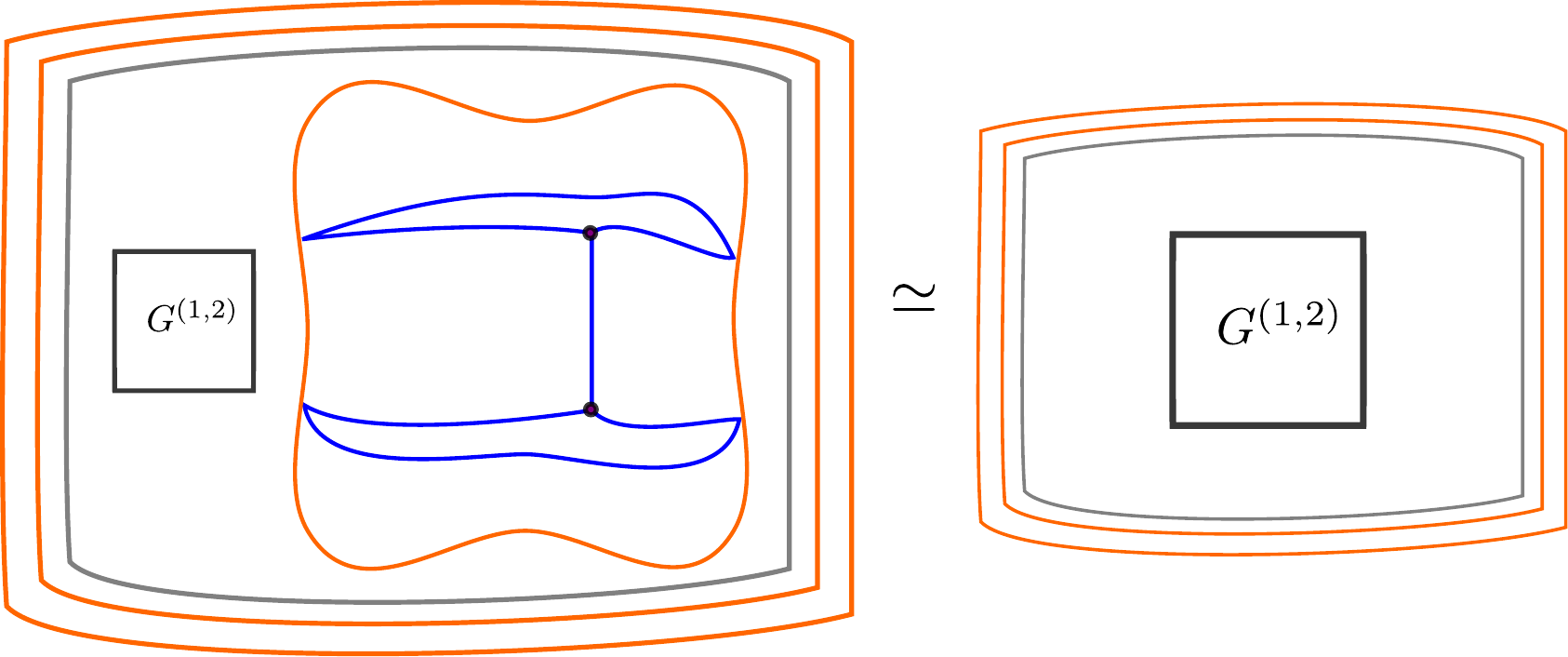}
			\caption{From $N=2$ to $N=3$ (left) and $N=3$ to $N=4$ (right).}
			\label{fig:Stabilization23_Proof3}
		\end{figure}
	\end{center}
\end{proof}

In this manuscript, Reidemeister moves in Subsection \ref{ssec:ReidemeisterMoves} and the Stabilization in Theorem \ref{thm:graph_stab} form the set of combinatorial moves that is available to us when manipulating an $N$-graph, if the Legendrian isotopy type of the associated (satellite) Legendrian weave is to be preserved.


\subsection{Legendrian Mutations}\label{ssec:legmutation} We now discuss the $N$-graph combinatorics of Legendrian mutations, a new geometric operation that we define in this manuscript. This operation inputs a Legendrian surface $\La\sse (\R^5,\xi_\st)$ and an isotropic 1-cycle $\gamma\sse\La$, and outputs a Legendrian surface $\mu_\gamma(\La)\sse (\R^5,\xi_\st)$. The Legendrian surface $\mu_\gamma(\La)\sse (\R^5,\xi_\st)$ will be ambiently (relatively) smoothly isotopic to $\La$, and oftentimes not Legendrian isotopic to $\La$. The choice of notation aims at emphasizing its relation to the wall-crossing phenomenon \cite{GMN_Wallcrossing,KontsevichSoibelman_MotivicDT,KontsevichSoibelman_Wallcrossing}, Lagrangian mutation \cite{Polterovich_Surgery,Auroux_Wallcrossing2,Auroux_Wallcrossing} and \cite[Chapter 10]{FOOO_Anomaly}, and quiver mutations \cite{FominZelevinsky_ClusterI,Vianna_Thesis}.

\begin{definition}\label{def:graphmut}
Let $G\sse C$ be an $N$-graph and $e\in G$ and $i$-edge between two trivalent vertices. The mutation of $G$ along $e$ is the $N$-graph $\mu_e(G)$ obtained by performing the exchange depicted in Figure \ref{fig:LegendrianMutation} (left), also shown in Figure \ref{fig:IntroSurgeries} (3).\hfill$\Box$
\end{definition}

By Theorem \ref{thm:LegMutations} below, the Legendrian weaves $\La(G)$ and $\La(\mu_e(G))$ will be mutation-equivalent, according to the upcoming \ref{def:LegendrianMutation} -- this motivates Definition \ref{def:graphmut} from the perspective of contact topology. Note that the operation in Definition \ref{def:graphmut} is the simplest possible mutation, corresponding to the combinatorics associated to a Whitehead move, i.e.~an edge flip in the context of triangulations dual to $2$-graphs. Indeed, consider the two unique non-degenerate triangulations $T_1,T_2$ of the square, the dual $2$-graphs $G_1,G_2$ differ precisely by a mutation along their unique internal edge.


Correspondingly, the standard satellites of their associated Legendrian weaves are two Legendrian cylinders with coinciding Legendrian boundary, smoothly isotopic relative to their boundary but which are {\it not} Legendrian isotopic relative to their boundary.

In general, given a $1$-cycle $\gamma\in\La(G)$ which is expressed combinatorially in $G$, it is possible to describe the mutation of $G$ along such $1$-cycle $\gamma$. The mutated graph $\mu_\gamma(G)$ can either be defined in an {\it ad hoc} way, or rather be understood as a graph which is equivalent via Reidemeister moves, as in Subsection \ref{ssec:ReidemeisterMoves}, to the mutated graph $\mu_e(\gamma)(G')$. Here $G'$ is Reidemeister equivalent to $G$ and $e(\gamma)$ is an $i$-edge between trivalent vertices such that $[e(\gamma)]=[\gamma]\in H_1(\La(G),\Z)$ under the canonical identification $H_1(\La(G),\Z)\cong H_1(\La(G'),\Z)$ given by a Legendrian isotopy. Here is the definition:

\begin{definition}[Legendrian Mutation]\label{def:LegendrianMutation} Two Legendrian surfaces $\La_0,\La_1\sse(\R^5,\xi_\st)$ are mutation-equivalent if and only if there exists a compactly supported Legendrian isotopy $\{\wt\La_t\}_{t\in[0,1]}$ relative to the boundary $\dd\La_0$, with $\wt\La_0=\La_0$, and a Darboux ball $(B,\xi_\st)$ such that
\begin{itemize}
	\item[(i)] The two restrictions $\wt\La_1|_{(\R^5\setminus B)}=\La_1|_{(\R^5\setminus B)}$ coincide away from this Darboux ball,
	\item[(ii)] There exists a global front projection $\pi:\R^5\lr\R^3$ such that each of the spatial fronts $\pi|_B(\wt\La_1)$ and $\pi|_B(\La_1)$ respectively coincide with each of the two fronts in Figure \ref{fig:LegendrianMutationFront}.
\end{itemize}	
\hfill$\Box$
	
	\begin{center}
		\begin{figure}[h!]
			\centering
			\includegraphics[scale=0.6]{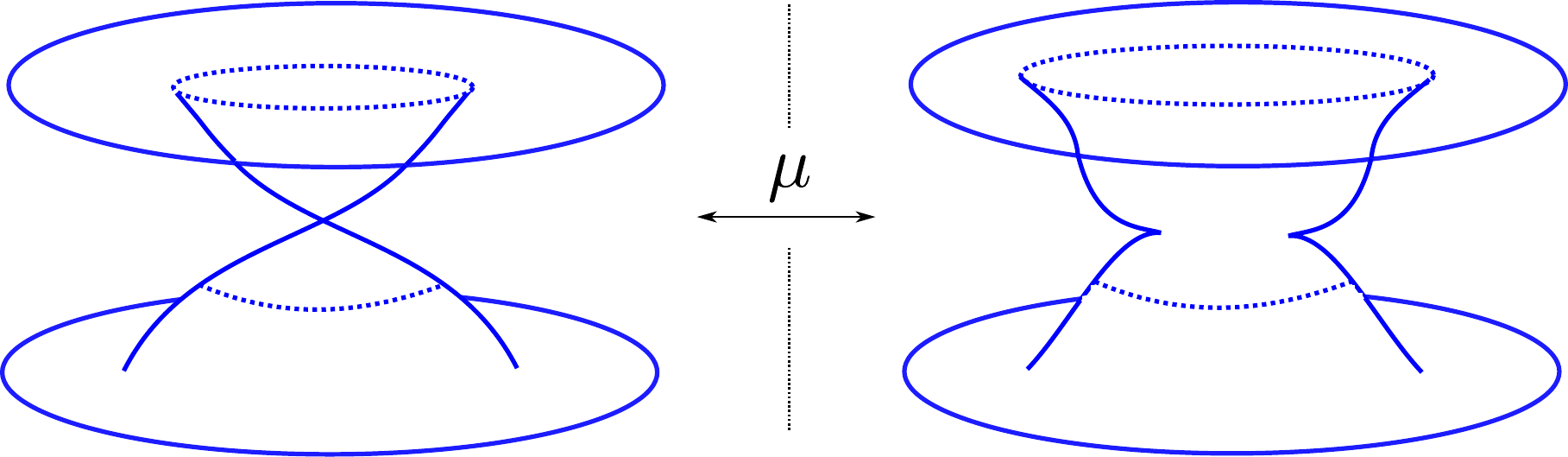}
			\caption{Legendrian mutation in a local spatial wavefront.}
			\label{fig:LegendrianMutationFront}
		\end{figure}
	\end{center}
\end{definition}

The two fronts depicted in Figure \ref{fig:LegendrianMutationFront} coincide at their boundaries and lift to Legendrian cylinders. These Legendrian cylinders are {\it not} Legendrian isotopic relative to their boundary. Indeed, compactifying the upper sheet of the fronts with an $A_2$-cusp edge and a flat 2-disk, and the lower sheet with a different $A_2$-cusp edge and a flat 2-disk, yields the standard Legendrian unknot $\La_0\sse(\R^5,\xi_\st)$ for the left front in Figure \ref{fig:LegendrianMutationFront}, and a loose Legendrian 2-sphere $\mathfrak{s}(\La_0)$ for the right front in Figure \ref{fig:LegendrianMutationFront}. The Legendrians $\La_0,\mathfrak{s}(\La_0)\sse(\R^5,\xi_\st)$ are not Legendrian isotopic \cite{EkholmEtnyreSullivan05b,EkholmEtnyreSullivan05a}.

A strong motivation for the study of the above mutations is the production of Legendrian surfaces which are {\it not} Legendrian isotopic, even though they belong to the same formal Legendrian isotopy class \cite{Gromov86,EliashbergMishachev02}. In order to distinguish Legendrian isotopy classes we will be using {\it flag moduli spaces}, which synthesize Legendrian invariants coming from the study of microlocal sheaves in terms of algebraic geometry.

\begin{remark}
The conic Legendrian singularity for the front in Figure \ref{fig:LegendrianMutationFront} (left) is {\it not} a generic singularity. It is explained in detail in \cite{Rizell_TwistedSurgery,CasalsMurphy}, and its generic perturbation contains four $A_3$-swallowtail singularities.\hfill$\Box$
\end{remark}

\begin{thm}[Legendrian Mutations]\label{thm:LegMutations} Let $G_1,G_2$ be one of the pairs of $N$-graphs depicted in Figure \ref{fig:LegendrianMutation}. Then the associated Legendrian surface $\La(G_1)$ is a Legendrian mutation of $\La(G_2)$ relative to their boundaries.
	
	\begin{center}
		\begin{figure}[h!]
			\centering
			\includegraphics[scale=0.9]{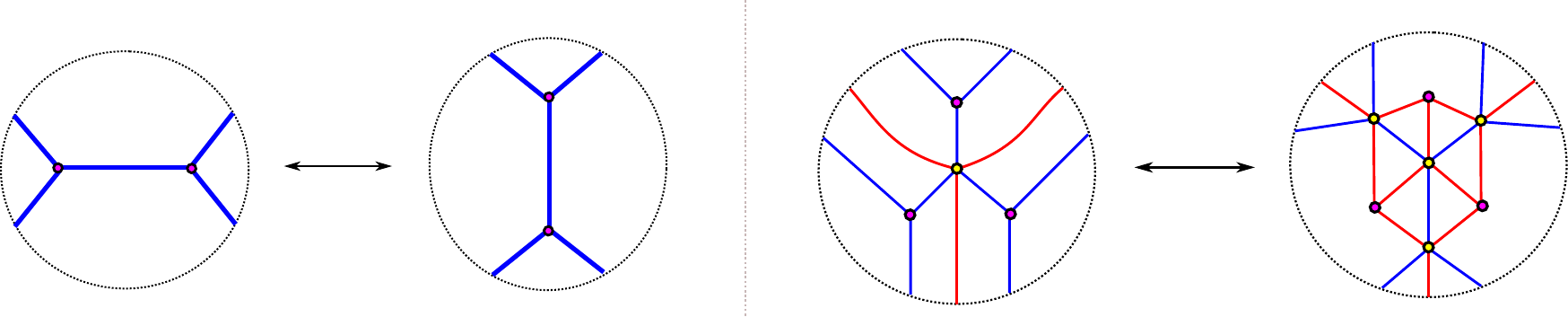}
			\caption{The Legendrian Mutation Moves in Theorem \ref{thm:LegMutations}}
			\label{fig:LegendrianMutation}
		\end{figure}
	\end{center}
\end{thm}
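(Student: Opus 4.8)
The plan is to verify Definition \ref{def:LegendrianMutation} directly, by a local computation with spatial wavefronts using the diagrammatic calculus of Subsections \ref{ssec:ReidemeisterMoves} and \ref{ssec:CuspCalculus}. For a pair $(G_1,G_2)$ from Figure \ref{fig:LegendrianMutation} the two $N$-graphs coincide outside a sub-disk $D_0\sse\D^2$ (recall $\mu_e$ from Definition \ref{def:graphmut} is a local move), so the Legendrian weaves $\La(G_1)$ and $\La(G_2)$ are literally equal over the complement of $D_0$; choosing $D_0$ small, the part of each weave over $D_0$ lies in a compact set which we enclose in a Darboux ball $B\sse(J^1\D^2,\xi_\st)\sse(\R^5,\xi_\st)$. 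What remains is to produce, for $\La(G_1)$, a compactly supported Legendrian isotopy relative to the boundary whose effect over $B$ is to bring the spatial front to the left-hand picture of Figure \ref{fig:LegendrianMutationFront}, and to identify the front of $\La(G_2)$ over $B$ with the right-hand picture; then conditions (i) and (ii) of Definition \ref{def:LegendrianMutation} hold with this $B$ and a fixed front projection $\pi$.

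The key step is to put the local front of $\La(G_1)$ near the mutating edge $e$ into normal form. Since $e$ is an $i$-edge between two trivalent $i$-vertices, only the sheets numbered $i$ and $i+1$ of the horizontal wavefront $\D_N$ take part; the remaining sheets are parallel copies carried along trivially. Over the edge $e$ and its two trivalent endpoints, sheets $i$ and $i+1$ form two copies of the $D_4^-$ germ $\mbox{Im}(\delta_4^-)$ joined by a segment of $A_1^2$-crossings, so the surface they bound is an annulus (compare Subsection \ref{ssec:homology}) whose generating $1$-cycle is $\gamma(e)$. I would then generically perturb each $D_4^-$ into its stable resolution with three $A_3$-swallowtails (Figure \ref{fig:D4Generic}, Remark \ref{rmk:D4minus}) and use Moves VII--XII of Propositions \ref{prop:CuspMoves} and \ref{prop:CuspMovesII} to homotope this resolved ``neck'' to exactly the left diagram of Figure \ref{fig:LegendrianMutationFront}; the two $A_2$-cusp edges and the flat $2$-disks appearing in the compactification described after Definition \ref{def:LegendrianMutation} come from the four outer $i$-edges emanating from the two $D_4^-$ vertices together with the adjacent faces. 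Running the identical sequence of moves on $\mu_e(G_1)=G_2$, whose local $N$-graph is the flipped edge configuration, produces the right diagram of Figure \ref{fig:LegendrianMutationFront}, in which the two sheets are rejoined in the mutated way; by the left--right symmetry of the $N$-graph move this is the same computation with the roles of the relevant pairs of edges exchanged.

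Assembling these pieces, the isotopy $\{\wt\La_t\}$ bringing $\La(G_1)$ to its normal form is a concatenation of the moves above, each compactly supported and relative to $\dd\La(G_1)$, and over $B$ it terminates at the left front of Figure \ref{fig:LegendrianMutationFront} while leaving $\La(G_1)=\La(G_2)$ untouched over the complement; hence $\La(G_1)$ is a Legendrian mutation of $\La(G_2)$ in the sense of Definition \ref{def:LegendrianMutation}. If Figure \ref{fig:LegendrianMutation} also contains a $\sf Y$-cycle version of the move, it reduces to the edge case by first applying Moves I and II from Theorem \ref{thm:surfaceReidemeister} to replace the $\sf Y$-configuration by an $i$-edge between two trivalent vertices carrying the same class in $H_1(\La(G),\Z)$.

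The main obstacle I expect is the normalization step, exactly as in the proof of Theorem \ref{thm:Legsurgeries}: because $D_4^-$ is not a generic front singularity one cannot manipulate it directly and must first pass to its stable resolution, then track the $A_3$-swallowtails and the $A_2A_1$-crossings carefully through the sequence of Moves VII--XII, checking at every stage that no vertical tangent planes are created so that each intermediate front lifts to an embedded Legendrian surface. Verifying that the resolved front is genuinely homotopic, rel boundary, to the model front of Figure \ref{fig:LegendrianMutationFront} --- rather than to some combinatorially similar but inequivalent front --- is the technical heart of the argument; drawing the movie of $2$-dimensional slices, as was done throughout Section \ref{sec:moves}, is the most reliable way to pin it down.
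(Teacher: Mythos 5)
Your proposal takes a genuinely different route from the paper's proof, and the key step is not carried out.

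For the monochromatic-edge pair (Figure \ref{fig:LegendrianMutation}, left), the paper does \emph{not} perform the front normalization you describe. Instead it passes to the Lagrangian projection: the Legendrian lifts of the two model fronts in Figure \ref{fig:LegendrianMutationFront} are identified with the two Polterovich surgeries of a pair of transversely intersecting Lagrangian planes $\R^2\times\{0\}$ and $\{0\}\times\R^2$ in $(\R^4,\omega_\st)$, citing \cite[Theorem 6.3]{CasalsMurphyPresas}. The paper then introduces a \emph{singular} $2$-graph (Figure \ref{fig:LegendrianMutationsProof}, center) whose woven front lifts to the union of the two planes --- this is the cone over the singular Hopf-braid front --- and observes that the two sides of the exchange move are precisely the two Polterovich resolutions of that singular configuration. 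Since Polterovich surgeries and their Legendrian lifts are Legendrian mutations by the cited result, the theorem follows for the $\sf I$-cycle case. For the $\sf Y$-cycle pair, the paper reduces to the edge case using Move II (push-through) three times and Move III (flop), essentially as you propose, though you cite Moves I and II.

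The gap in your proposal is at what you yourself flag as the technical heart: you assert that resolving the two $D_4^-$ vertices into their $A_3$-swallowtail configurations and applying Moves VII--XII brings the local front to the left diagram of Figure \ref{fig:LegendrianMutationFront}, but you do not exhibit the move sequence, and this is exactly the step that is hard. Resolving two $D_4^-$ germs yields six $A_3$-swallowtails, while the generic perturbation of the conic model front in Figure \ref{fig:LegendrianMutationFront} has four (see the remark after Definition \ref{def:LegendrianMutation}); you would need to cancel a pair via Move VIII and then carefully track the remaining cusp edges and $A_2A_1$ crossings rel boundary to land on the normal form rather than a combinatorially similar but inequivalent one. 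Without this explicit homotopy your argument is a plausible plan rather than a proof. (A side remark: the $A_2$-cusp edges and flat $2$-disks you mention are \emph{not} part of the local Darboux-ball fronts in Figure \ref{fig:LegendrianMutationFront}; in the paper they appear only in the subsequent compactification argument distinguishing $\La_0$ from the loose $\mathfrak{s}(\La_0)$, not in the normal form itself, so this part of your sketch is a misreading.) The paper's route via Polterovich surgery avoids the front-normalization problem entirely by importing the hard geometry from \cite{CasalsMurphyPresas}; your more self-contained front-calculus route may well be completable, but as stated it is incomplete.
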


\begin{proof} Let us start by showing that the exchange move in Figure \ref{fig:LegendrianMutation} (left) corresponds to a Legendrian mutation,
as in Definition \ref{def:LegendrianMutation}. By \cite[Theorem 6.3]{CasalsMurphyPresas}, the Lagrangian projections $\Pi(\La_0),\Pi(\La_1)\sse\R^4$ of
the Legendrian lifts of the fronts $\pi(\La_0),\pi(\La_1)\sse\R^3$ in Figure \ref{fig:LegendrianMutationFront} correspond to the two
Polterovich surgeries associated to the normal crossing of two Lagrangian planes $\R^2\times\{0\},\{0\}\times\R^2\sse(\R^4,\omega_\st)$.
The Lagrangian projection of the Legendrian lifts for each two $2$-graphs in the exchange move in Figure \ref{fig:LegendrianMutation} (left) are exact Lagrangian
fillings $L_1,L_2$ of the Hopf link $\La_{Hopf}\sse(\S^3,\xi_\st)\cong\dd(\R^4,\omega_\st)$. Indeed, the 2-stranded braid word at the boundary of the 2-weave is $\sigma_1^4$, as there are four blue edges arriving at the boundary, and then note that the $(-1)$-framed closure of $\sigma_1^4$ in $(\R^3,\xi_\st)$ is the Hopf link. See Section \ref{sec:app2} for more details on Lagrangian fillings.
Thus, it suffices to show that $L_1,L_2\sse(\R^4,\xi_\st)$ are the positive and negative Polterovich surgeries of the two
Lagrangian planes $\R^2\times\{0\},\{0\}\times\R^2\sse(\R^4,\omega_\st)$ at their intersection points.
Indeed, Figure \ref{fig:LegendrianMutationsProof} (center) depicts the 2-graph for the {\it singular} Legendrian whose Lagrangian projections is the Lagrangian union $(\R^2\times\{0\})\cup(\{0\}\times\R^2)$.
	
\begin{center}
	\begin{figure}[h!]
	\centering
	\includegraphics[scale=0.7]{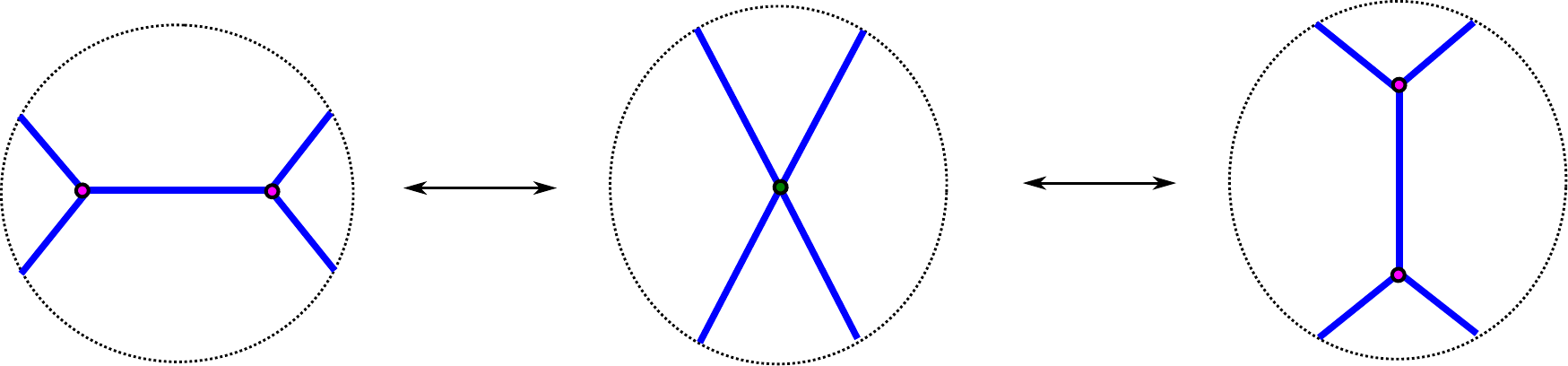}
	\caption{The 2-graphs associated to a Legendrian Mutation. The middle 2-graph yields a spatial front which lifts to a {\it singular} Legendrian surface, consisting of the union of two 2-planes intersecting at a point.}
	\label{fig:LegendrianMutationsProof}
	\end{figure}
\end{center}

The 2-graph in Figure \ref{fig:LegendrianMutationsProof} (center) describes a topological surface which is  the union of 2-planes intersecting at a point, both for the Lagrangian surfaces in $(\R^4,\omega_\st)$ {\it and} the Legendrian surfaces in $(\R^5,\xi_\st)$. Topologically, the front in Figure \ref{fig:LegendrianMutationsProof} (center) is the cone over the annular projection of the $(2,4)$-braid, with singular crossings\footnote{This is consistent with the fact that the Hopf link is the boundary of two transversely intersecting planes in the 4-ball $\D^4$. For the max-tb {\it Legendrian} Hopf link, these two planes should be taken to be Lagrangian.}.

Finally, the Lagrangian projections of the Legendrian lifts of Figure \ref{fig:LegendrianMutationsProof} (left) and Figure \ref{fig:LegendrianMutationsProof} (right) are realized as Polterovich surgeries of the corresponding Lagrangian projection in Figure \ref{fig:LegendrianMutationsProof} (center). Since the Legendrian lifts of Polterovich surgeries are Legendrian mutations \cite[Theorem 6.3]{CasalsMurphyPresas}, this concludes the first part of Theorem \ref{thm:LegMutations}.

Let us now show that the exchange move in Figure \ref{fig:LegendrianMutation} (right) also corresponds to a unique Legendrian mutation. This is proven directly through the homotopy of fronts in Figure \ref{fig:LegendrianMutationsN2N3Proof}.

Indeed, the first step in Figure \ref{fig:LegendrianMutationsN2N3Proof}, starting from the upper left, consists of applying Move II, pushing a trivalent vertex through a hexagonal vertex. The second and third steps are also a direct application of a Move II, pushing the remaining two trivalent vertices through the newly created {\it two} hexagonal vertices. The fourth move, starting at the left of the second row, is a mutation of 2-graphs. This yields the 3-graph at the center of the second row, the arrow being labeled by the letter $\mu$. Finally, we apply a Move III, flopping the four vertices nearest to the center, in order to achieve the 3-graph at the right of Figure \ref{fig:LegendrianMutation} (right). This shows that the exchange move in Figure \ref{fig:LegendrianMutation} (right) is a Legendrian mutation.

\begin{center}
	\begin{figure}[h!]
		\centering
		\includegraphics[scale=0.8]{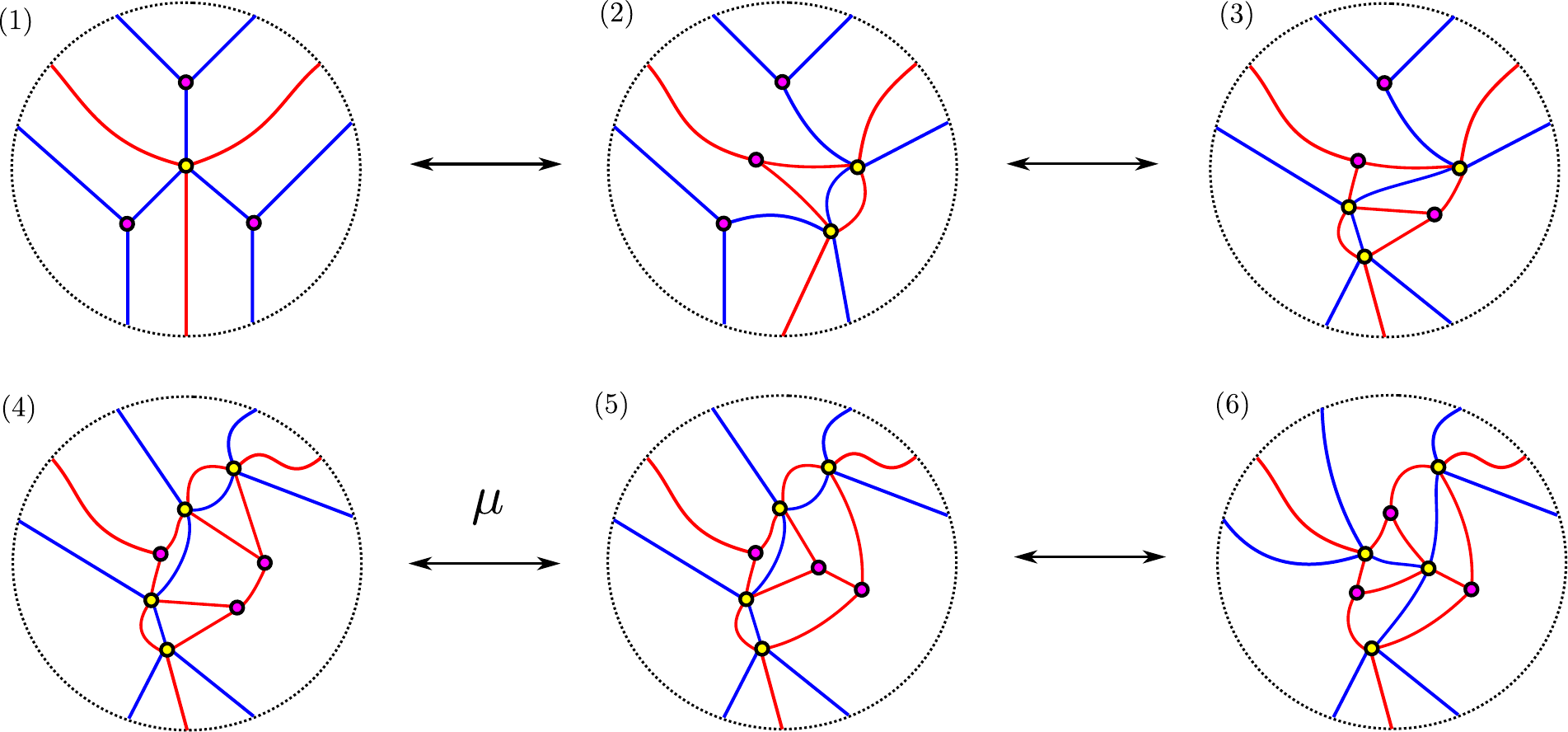}
		\caption{The Legendrian mutation for $3$-graphs as a sequence of Legendrian isotopies and  $2$-graph mutation.}
		\label{fig:LegendrianMutationsN2N3Proof}
	\end{figure}
\end{center}
\end{proof}

For our applications to Lagrangian fillings, it is important to understand how 1-cycle representatives of classes in $H_1(\La(G),\Z)$ change under the mutations depicted in Figure \ref{fig:LegendrianMutation}. Following Subsection \ref{ssec:homology}, we focus on 1-cycles represented by monochromatic edges -- or more generally {\it long} edges -- and by $\sf Y$-cycles. Figure \ref{fig:Mutation_GraphCycles} explicits shows how to transport certain $\sf I$-cycles along the mutation. (See Section \ref{sec:NGraphsLegWeaves}, specifically Subsection \ref{ssec:homology}, for the definition of $\sf I$-cycles.) In addition, mutation along a long edge is dictated by the following:

\begin{center}
	\begin{figure}[h!]
		\centering
		\includegraphics[scale=0.85]{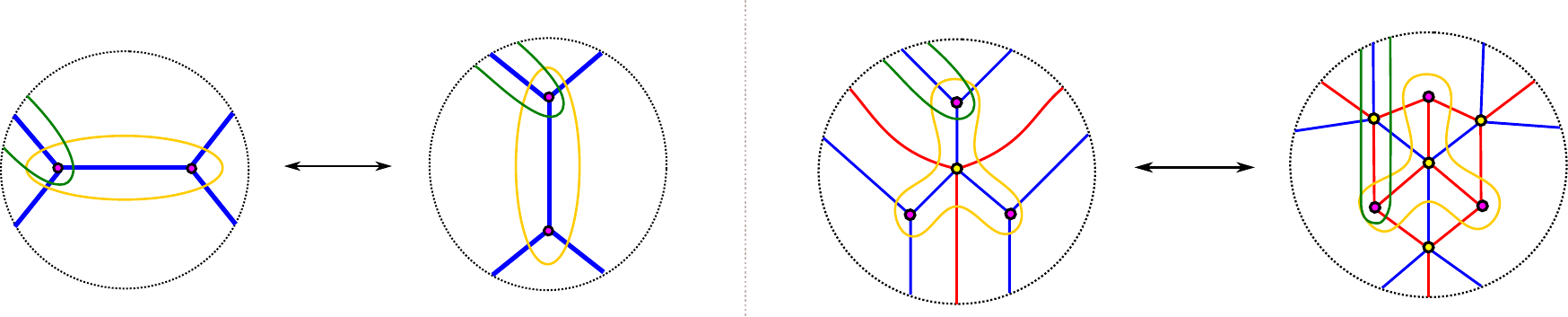}
		\caption{The 2-graph mutation with the additional information of the 1-cycles, before and after the 2-graph mutation (Left). The $\sf Y$-cycle and an incident 1-cycle transforming before and after a mutation along the $\sf Y$-cycle (Right).}
		\label{fig:Mutation_GraphCycles}
	\end{figure}
\end{center}

\begin{cor}\label{cor:LongEdgeMutation}
Let $[\gamma]\in H_1(\La(G),\Z)$ be represented by a long edge in an $N$-graph $G$, as shown in the first row of Figure \ref{fig:Mutation_LongEdge}. Then the Legendrian mutation $\mu_\gamma(\La(G))$ is the Legendrian weave associated to the graph $\mu_\gamma(G)$ as depicted in the second row of Figure \ref{fig:Mutation_LongEdge}.

\begin{center}
	\begin{figure}[h!]
		\centering
		\includegraphics[scale=0.75]{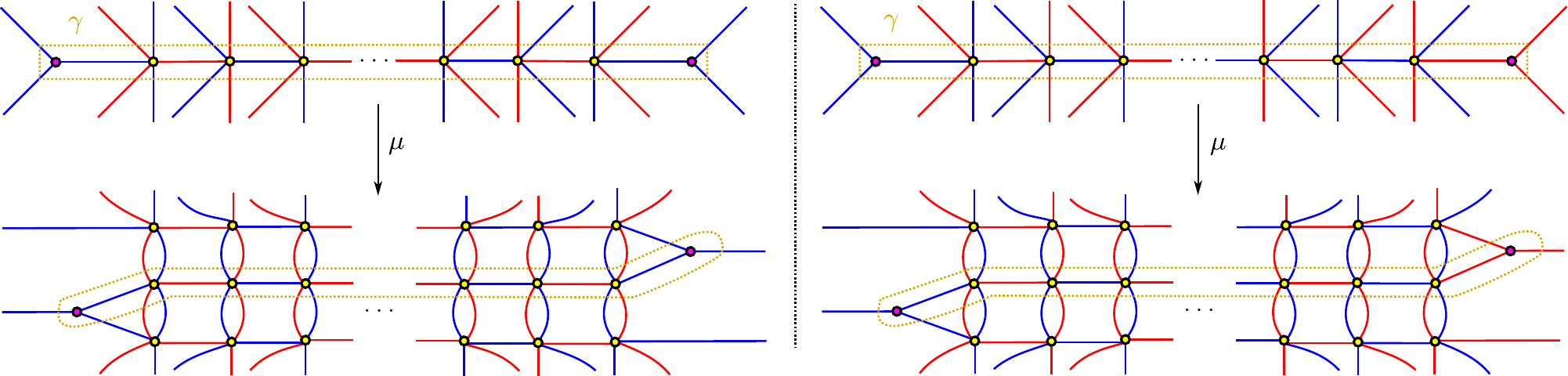}
		\caption{The two cases, left and right, of a Legendrian mutation along a 1-cycle $\gamma$ represented by a long-edge.}
		\label{fig:Mutation_LongEdge}
	\end{figure}
\end{center}
\end{cor}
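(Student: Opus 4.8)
The plan is to reduce Corollary \ref{cor:LongEdgeMutation} to the elementary Legendrian mutation of Theorem \ref{thm:LegMutations} by first contracting the long edge to a monochromatic edge between two trivalent vertices. By Definition \ref{def:LegendrianMutation} together with Definition \ref{def:graphmut}, it suffices to produce a sequence of surface Legendrian Reidemeister moves (Theorem \ref{thm:surfaceReidemeister}) carrying $G$ to a graph $G'$ in which the class $[\gamma]\in H_1(\La(G),\Z)$ is represented by a single $i$-edge $\bar e$ between two trivalent vertices, then to perform the elementary mutation $\mu_{\bar e}$ of Theorem \ref{thm:LegMutations}, and finally to reverse the Reidemeister moves.

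Recall from Subsection \ref{ssec:homology} that a long edge is a linear chain $e_1,e_2,\dots,e_k$, with $e_1$ joining a trivalent vertex $v$ to a hexagonal vertex $h_1$, each $e_j$ for $2\le j\le k-1$ joining two hexagonal vertices through opposing rays, and $e_k$ joining $h_{k-1}$ to a trivalent vertex $w$. First I would slide $v$ along the chain: applying Move II (push-through) at $h_1$ pushes $v$ across $h_1$, changing its colour and producing a pair of auxiliary hexagonal vertices disjoint from the rest of $\gamma$; iterating Move II at $h_2,\dots,h_{k-1}$ transports $v$ all the way to $w$, so that after $k-1$ push-throughs the two endpoints are joined by a single monochromatic edge $\bar e$. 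Tracking the {\sf I}-cycle through each push-through as in Figure \ref{fig:Mutation_GraphCycles} shows that $[\bar e]=[\gamma]$ under the canonical identification of first homology induced by the intervening Legendrian isotopy. Then I would invoke Theorem \ref{thm:LegMutations} to mutate along $\bar e$, and finally reverse every push-through, using inverse Move II's and candy twists (Move I) to cancel the auxiliary hexagonal vertices, sliding the mutated local picture back out along the chain. The outcome is a graph that agrees with $G$ away from a neighbourhood of $\gamma$ and equals, near $\gamma$, the configuration drawn in the second row of Figure \ref{fig:Mutation_LongEdge}; the left and right cases of that figure correspond to the two colourings of $e_1$ relative to the chain and are handled identically.

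The step I expect to be the main obstacle is the combinatorial bookkeeping of this reversal: one must check that each auxiliary hexagonal vertex created during the contraction is eliminated upon expansion, that the resulting local diagram matches Figure \ref{fig:Mutation_LongEdge} on the nose --- including which pair of opposing rays the chain enters and exits at each hexagonal vertex it traverses --- and that every intermediate $N$-graph lifts to an embedded Legendrian weave, so that the whole modification is supported inside a single Darboux ball, as Definition \ref{def:LegendrianMutation} requires. This is a routine but careful colour-by-colour verification; the shortest nontrivial instance is the $3$-graph exchange of Figure \ref{fig:LegendrianMutation} (right), whose proof in Figure \ref{fig:LegendrianMutationsN2N3Proof} already exhibits the pattern for general $k$.
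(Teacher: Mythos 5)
Your proposal is correct and follows essentially the same route as the paper: the text immediately following Corollary~\ref{cor:LongEdgeMutation} states that one ``push[es]-through the left-most trivalent vertex through all the hexagonal vertices until the long $\sf I$-cycle becomes a short $\sf I$-cycle. Then we mutate at the short cycle, and push-through one of the trivalent vertices back to the left,'' which is precisely your contract--mutate--expand strategy via Move~II, Theorem~\ref{thm:LegMutations}, and the cycle-tracking of Figure~\ref{fig:Mutation_GraphCycles}.
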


Theorem \ref{thm:LegMutations} and Corollary \ref{cor:LongEdgeMutation} describe mutations along $\sf Y$-cycles and $\sf I$-cycles, either monochromatic or long edges. In general, we might be interested in mutating along a cycle $\gamma$ which is a tree, both with $\sf Y$-pieces and $\sf I$-piece, as introduced in Section \ref{ssec:homology}. Thus, we now develop {\it local} rules for Legendrian mutations that will allow us to mutation along any such cycle $\gamma$. These rules also imply Corollary \ref{cor:LongEdgeMutation}. All these rules are obtained and proven in the same manner: one simplifies the weave with equivalence moves -- using  Section \ref{sec:moves} -- until the cycle to be mutated becomes a {\it short} $\sf I$-cycle. Then we apply the short $\sf I$-cycle mutation in Figure \ref{fig:LegendrianMutation} (left) and rearrange the weave with moves to the required configuration. For instance, for Corollary \ref{cor:LongEdgeMutation} (and so Figure \ref{fig:Mutation_LongEdge}), we push-through the left-most trivalent vertex through all the hexagonal vertices until the long $\sf I$-cycle becomes a short $\sf I$-cycle. Then we mutate at the short cycle, and push-through one of the trivalent vertices back to the left.



\subsection{Diagrammatic Rules for $N$-graph Mutations}\label{ssec:legmutation_local} Let $\gamma$ be a 1-cycle in an $N$-graph, given by a tree with $\sf Y$-pieces and $\sf I$-pieces. In this subsection we gather the necessary rules for performing a general mutation along $\gamma$ and also diagrammatically carrying a 1-cycle after the mutation at $\gamma$. The rules are {\it local}, either near a hexagonal vertex or a trivalent vertex, and there are three cases that we need to draw: Legendrian mutation being performed at a $\sf Y$-piece, at a $\sf I$-piece, and mutation near a trivalent vertex.

First, we draw the rules for the effect of mutating at a cycle which contains $\sf Y$-pieces:

\begin{itemize}
	\item[(i)] Figure \ref{fig:InternalYVertex} shows how the $\sf Y$-cycle at which we mutate transforms, this cycle is depicted in green. Note that the resulting cycle locally contains only one $\sf Y$-piece.
	\item[(ii)] Figure \ref{fig:InternalYVertex1} explains how to transform the {\it other} $\sf Y$-cycle, in ochre (a darker yellow), under mutation at the green $\sf Y$-cycle in Figure \ref{fig:InternalYVertex}.
	\item[(iii)] Figure \ref{fig:InternalYVertex3} then depicts the transformation of edge $\sf I$-cycles through a hexagonal vertex under mutation at the green $\sf Y$-cycle in Figure \ref{fig:InternalYVertex}.
	\item[(iv)] Finally, Figure \ref{fig:InternalYVertex2} provides the last information needed for carrying any cycle upon mutating at the green $\sf Y$-cycle in Figure \ref{fig:InternalYVertex}. These are the three ways in which a 1-cycle must be continued if the 1-cycle is coming from the extremes of one of the sides.
\end{itemize}

Second, the rules for mutating at a long edge of an $\sf I$-piece of a 1-cycle:
\begin{itemize}
	\item[(v)] Figure \ref{fig:InternalYVertex4} shows how to transform an $\sf I$-piece upon mutation at the green $\sf I$-piece.
	\item[(vi)] Figure \ref{fig:InternalYVertex5} then depicts the transformation of a $\sf Y$-piece of a cycle, in ochre, upon mutation at the green $\sf I$-piece in Figure \ref{fig:InternalYVertex4}.
\end{itemize}

Finally, the local rules for mutating near a trivalent vertex are shown in Figure \ref{fig:InternalYVertex6}. These rules are derived by performing Legendrian Reidemeister moves, especially Move II, until the given cycle at which we want to mutate becomes a monochromatic (short) edge. Then a monochromatic edge mutation is performed, as in Theorem \ref{thm:LegMutations}, and Legendrian Reidemeister moves are performed back to the starting configuration. The two non-canceling applications of a push-through move, before and after a monochromatic edge mutation, are responsible for the tripling behavior seen in the diagrams.

\begin{center}
	\begin{figure}[h!]
		\centering
		\includegraphics[scale=0.8]{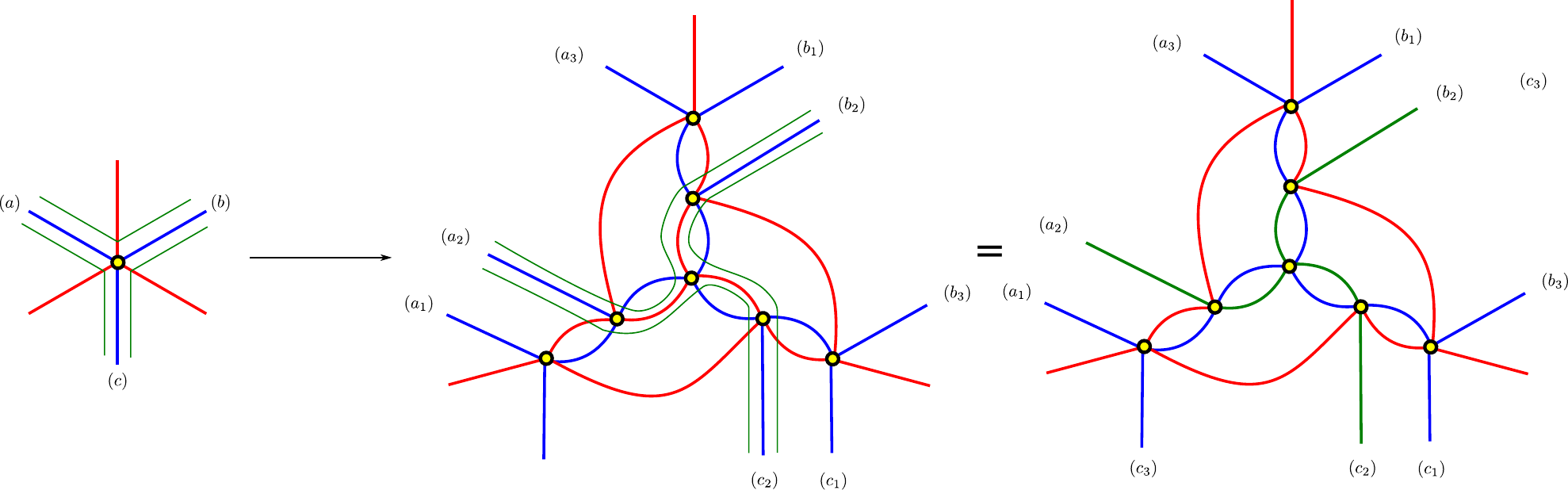}
		\caption{Case Mutation at $\sf Y$-cycle: Internal Mutation along $\sf Y$-piece in green.}
		\label{fig:InternalYVertex}
	\end{figure}
\end{center}

\begin{center}
	\begin{figure}[h!]
		\centering
		\includegraphics[scale=0.5]{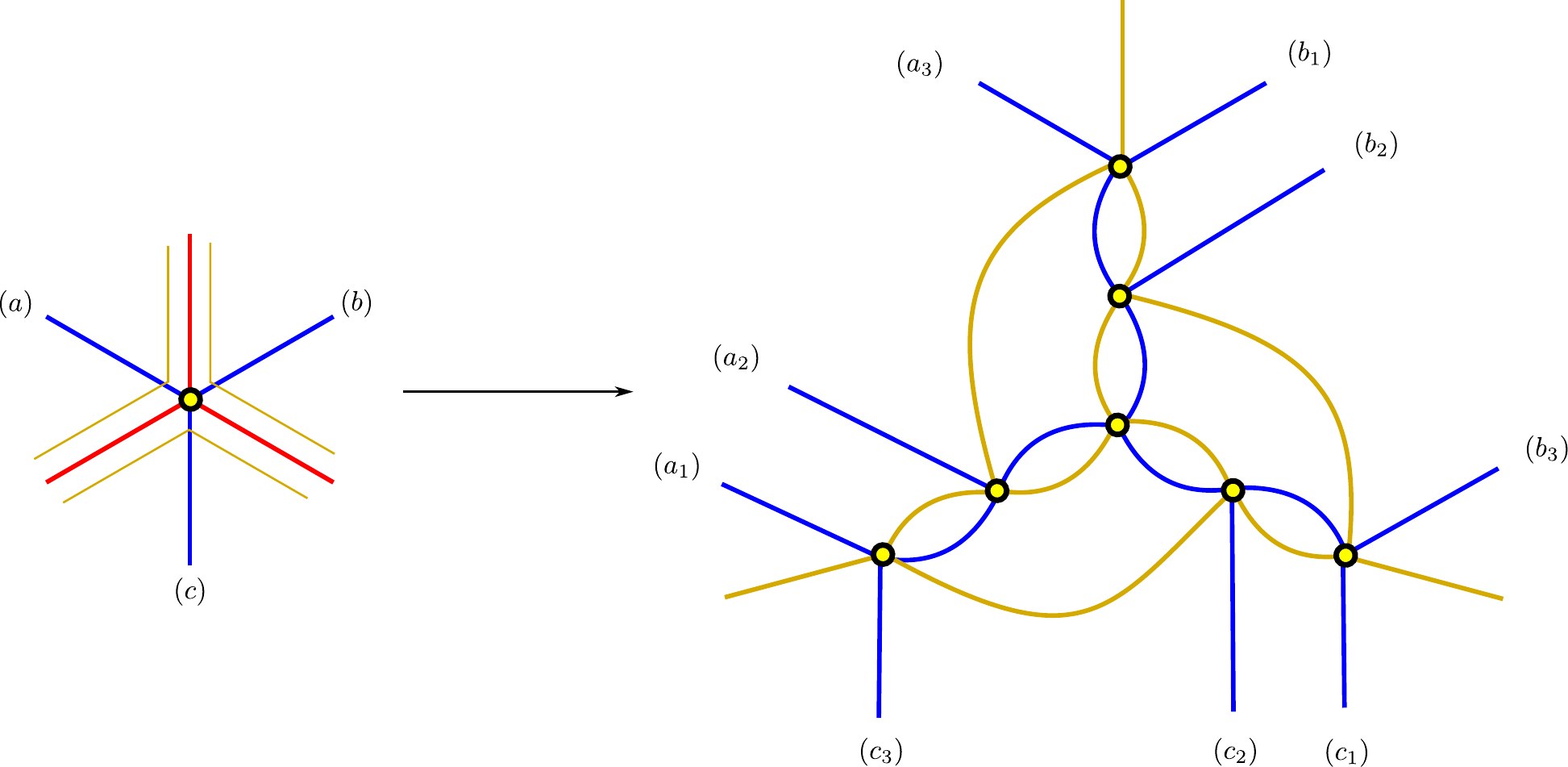}
		\caption{Case Mutation at $\sf Y$-cycle in Figure \ref{fig:InternalYVertex}: Effect for ochre $\sf Y$-cycle of Internal Mutation along $\sf Y$-cycle in green in Figure \ref{fig:InternalYVertex}.}
		\label{fig:InternalYVertex1}
	\end{figure}
\end{center}

\begin{center}
	\begin{figure}[h!]
		\centering
		\includegraphics[scale=0.5]{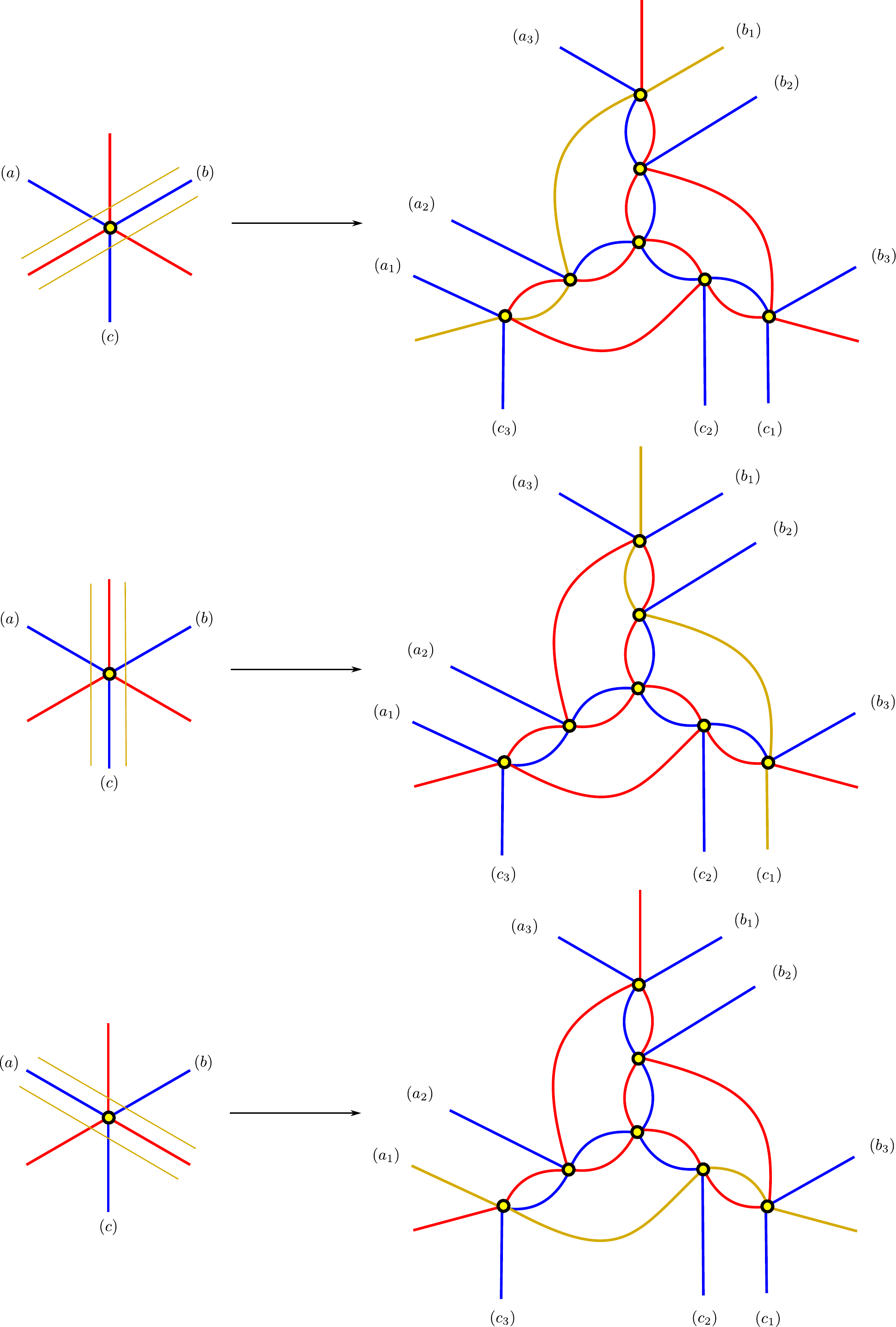}
		\caption{Case Mutation at $\sf Y$-cycle in Figure \ref{fig:InternalYVertex}: Effect for ochre $\sf I$-cycle of Internal Mutation along $\sf Y$-cycle in green.}
		\label{fig:InternalYVertex3}
	\end{figure}
\end{center}

\begin{center}
	\begin{figure}[h!]
		\centering
		\includegraphics[scale=0.6]{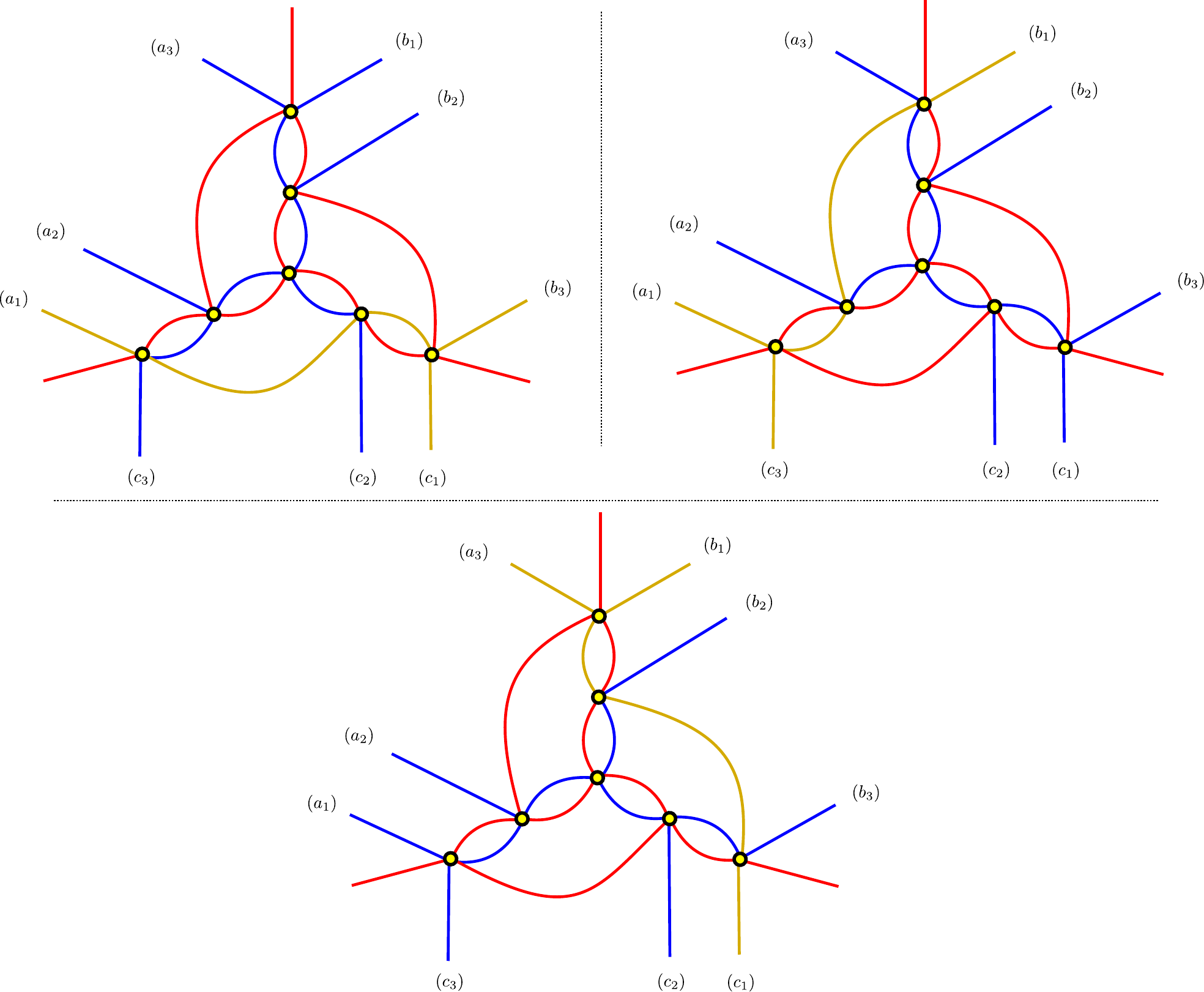}
		\caption{Case Mutation at $\sf Y$-cycle in Figure \ref{fig:InternalYVertex}: Effect for side $\sf I$-cycles of Internal Mutation along $\sf Y$-cycle in green.}
		\label{fig:InternalYVertex2}
	\end{figure}
\end{center}

\begin{center}
	\begin{figure}[h!]
		\centering
		\includegraphics[scale=1.1]{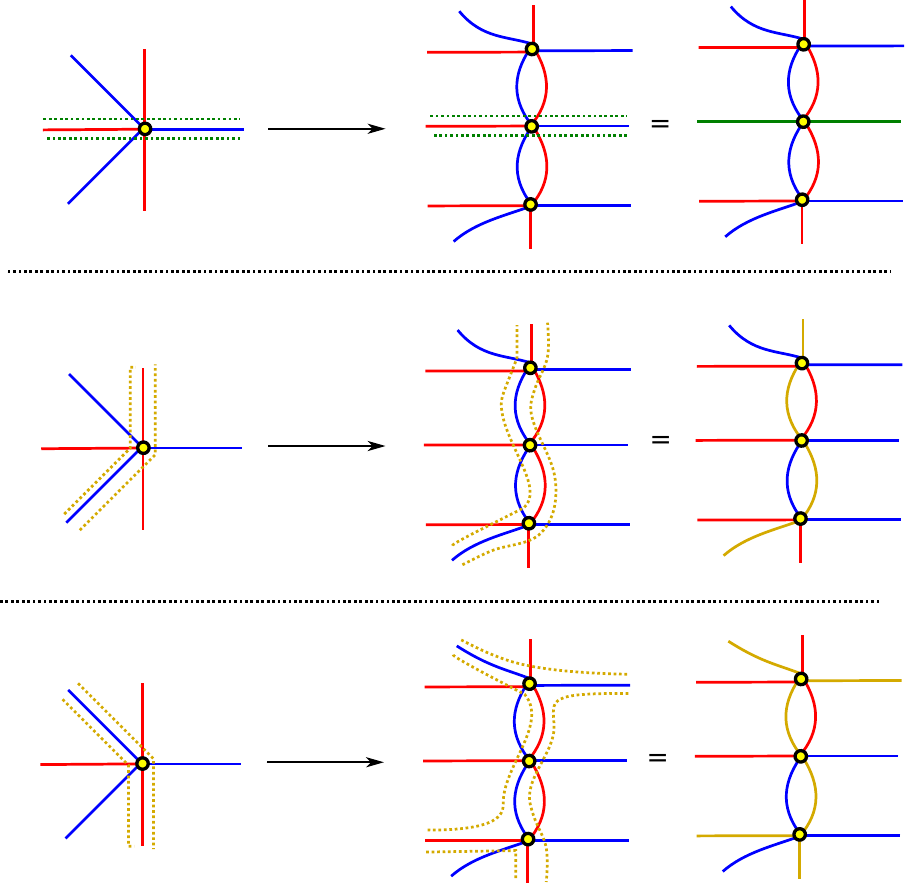}
		\caption{Case Mutation at $\sf I$-cycle in green (upper Left). In second and third row: effect of this mutation for ochre $\sf I$-cycle of Internal Mutation along $\sf I$-cycle in green.}
		\label{fig:InternalYVertex4}
	\end{figure}
\end{center}

\begin{center}
	\begin{figure}[h!]
		\centering
		\includegraphics[scale=1]{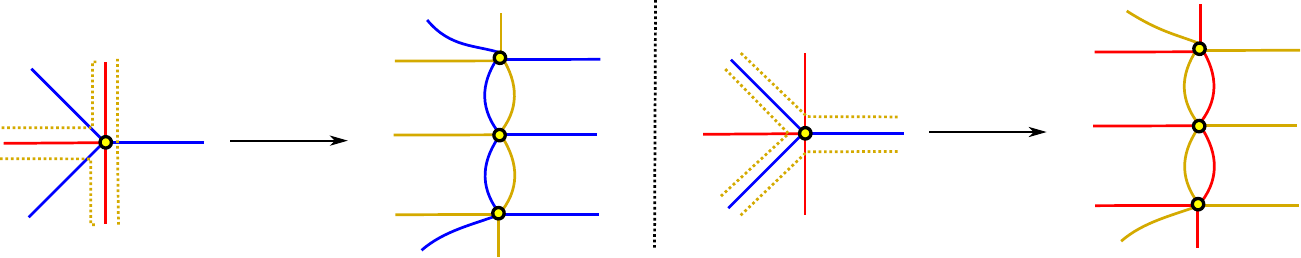}
		\caption{Case Mutation at horizontal $\sf I$-cycle as in Figure \ref{fig:InternalYVertex4}: Effect for ochre $\sf Y$-cycle of Internal Mutation along $\sf I$-cycle in green in Figure \ref{fig:InternalYVertex4} (left).}
		\label{fig:InternalYVertex5}
	\end{figure}
\end{center}

\begin{center}
	\begin{figure}[h!]
		\centering
		\includegraphics[scale=1]{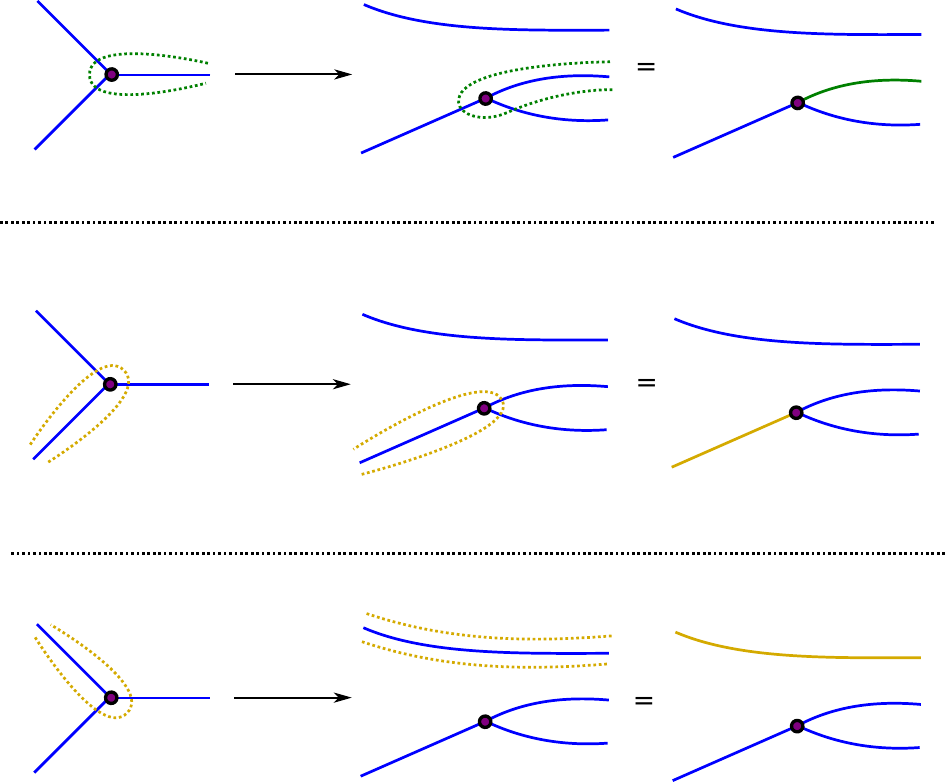}
		\caption{Case Mutation near trivalent vertex for green cycle (first row). Second and third rows: Effect for ochre $\sf Y$-cycle of Internal Mutation at green cycle near trivalent vertex.}
		\label{fig:InternalYVertex6}
	\end{figure}
\end{center}

\newpage


\subsection{Sufficiency For Stabilized Legendrians} Finally, we conclude this section by introducing the following combinatorial idea, motivated by the topology of Legendrian surfaces in 5-dimensional contact manifolds.

\begin{definition}\label{def:bridge} An $N$-graph $G\sse C$ is said to have a \emph{bridge} if there exists two disjoint 2-disks $D_1,D_2\sse C$ such that the complement $G\setminus(G\cap D_1\cup G\cap D_2)$ consists of $(N-1)$ disjoint strands with labels $\tau_1,\tau_2,\ldots,\tau_{N-1}$ consecutive with respect to a transverse oriented curve in $C\setminus(D_1\cup D_2)$.\hfill$\Box$
\end{definition}

For the $N=2$ case, where $G$ is a trivalent graph, a bridge for $G$ according to Definition \ref{def:bridge} coincides with the standard graph-theoretic notion of a bridge \cite{Graph1,Graph2}. A general $N$-graph $G\sse C$ with a bridge is depicted in Figure \ref{fig:StabilizedChart} (left), and an example of a $4$-graph with a bridge is shown in Figure \ref{fig:StabilizedChart} (right).

\begin{center}
	\begin{figure}[h!]
		\centering
		\includegraphics[scale=0.8]{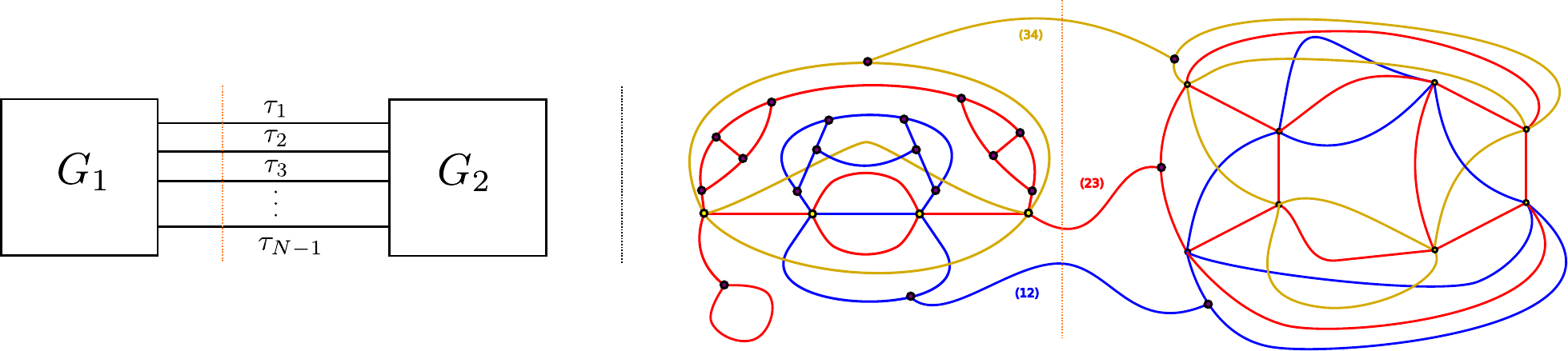}
		\caption{Structure of an $N$-graph with a bridge (left) and instance of a $4$-graph with a bridge (right).}
		\label{fig:StabilizedChart}
	\end{figure}
\end{center}

The geometric motivation for this definition is based on the theory of loose Legendrian surfaces, also known as stabilized Legendrians \cite{Loose}. This class of loose Legendrians are known to satisfy an $h$-principle and has proven to be very useful in the study of Weinstein manifolds \cite{CieliebakEliashberg12,CasalsMurphy}. The reader is referred to \cite{CieliebakEliashberg12,Loose} for further details. For the present manuscript, we will assume known its definition and state the following property:

\begin{prop}\label{prop:bridge}
Let $G\sse C$ be an $N$-graph with a bridge. Then $\iota(\La(G))$ is a loose Legendrian surface.
\end{prop}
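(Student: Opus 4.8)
The plan is to exhibit, inside the Legendrian front of $\La(G)$, an explicit \emph{loose chart}: a domain contactomorphic to a neighborhood of a stabilized Legendrian disk. The definition of a bridge already hands us the geometric mechanism. First I would localize: by Definition \ref{def:bridge}, there are disjoint $2$-disks $D_1, D_2 \sse C$ such that over the complementary annular region $A := C\setminus(D_1\cup D_2)$ — more precisely over a neighborhood of a transverse oriented curve $\mu$ crossing the bridge — the $N$-graph $G$ restricts to $N-1$ parallel arcs labeled $\tau_1,\ldots,\tau_{N-1}$ in the consecutive order dictated by the transverse curve. Over this strip, the weave $\La(G)$ therefore has wavefront consisting of $N$ sheets with a single $A_1^2$-crossing segment between sheets $i$ and $i+1$ for each $i$, appearing in the linear order $\tau_1, \tau_2,\ldots,\tau_{N-1}$ along $\mu$. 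By Remark \ref{rmk:1dim}, the one-dimensional slice of this front is exactly the front of the positive braid $\sigma_1\sigma_2\cdots\sigma_{N-1}$ on $N$ strands, i.e.\ a \emph{single} Coxeter-type sweep.

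\textbf{Key steps, in order.} (1) Reduce to the local model over the bridge: the loose-chart axiom is local, so it suffices to produce a loose chart inside the piece of $\La(G)$ lying over $\Op(\mu)\x[-\e,\e]$ in $C$, thought of as a piece of $J^1(I\x I)$. (2) Identify the local front with a stabilization. The crucial observation is that a strand of sheets crossing in the order $\tau_1,\ldots,\tau_{N-1}$ and then \emph{nothing else} is, after lifting to the Legendrian, a front containing a zig-zag: one can isotope the front so that two adjacent sheets — say sheets $N-1$ and $N$, which only interact through the single $\tau_{N-1}$ crossing — develop an $A_2$-cusp edge pair (via Move VII of Proposition \ref{prop:CuspMoves}) bounding a region in which, transverse to $\mu$, one sees the standard Legendrian stabilization of an arc. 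Concretely, in one transverse slice the two sheets $\{N-1,N\}$ cross once; this one-crossing two-strand tangle, capped appropriately in the surface direction, is precisely the front of a stabilized Legendrian (compare the discussion after Definition \ref{def:LegendrianMutation}, where capping a single zig-zag of a weave front produces a loose $\mathfrak{s}(\La_0)$). (3) Verify the loose chart axioms. Using the coordinates on $\Op(\mu)\x I$, I would write down an explicit model front — the product (in the $\mu$-direction) of a stabilized Legendrian arc front with an interval, then perturbed so the extra sheets $1,\ldots,N-2$ sit as disjoint parallel sheets below — and check that its size relative to the stabilization's width satisfies the largeness inequality in the definition of a loose chart \cite{Loose}. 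Because the bridge gives us an \emph{entire} annular strip on which no other features of $G$ intrude, there is room to make the chart arbitrarily large, which is exactly what looseness of a surface chart requires.

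\textbf{Main obstacle.} The delicate point is step (2)–(3): turning the combinatorial statement ``the $N-1$ crossings appear consecutively along a transverse curve'' into an honest loose chart of the right dimensions. One must be careful that the presence of the other $N-2$ crossings $\tau_1,\ldots,\tau_{N-2}$ in the same strip does not obstruct the stabilization region for the top two sheets — but since those crossings involve strictly lower sheets and the bridge condition forces them to be disjoint parallel arcs (no trivalent or hexagonal vertices in the strip), the top pair of sheets is genuinely a standalone two-strand sub-front, and its stabilization chart can be taken in a slab of $J^1(I\x I)$ disjoint from the lower sheets. I would also need to confirm that the capping at the two ends $D_1, D_2$ does not matter: looseness is detected in the interior chart, independent of how the weave closes up over $D_1, D_2$ (indeed, independent of whether $\La(G)$ is open or closed). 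Having set up the local model, the verification that it contains a loose chart — equivalently, that the front there agrees with that of a stabilized Legendrian times an interval, up to the required inequality — follows the standard template for recognizing loose charts in front projections, so I would cite \cite{Loose, CieliebakEliashberg12} for that recognition principle rather than re-prove it. This yields that $\iota(\La(G))$, having a loose chart, is a loose Legendrian surface. $\hfill\Box$
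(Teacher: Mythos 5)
Your proposal identifies the right starting point (the $1$-dimensional slice over the bridge) but the mechanism you use to produce the loose chart cannot work, and it conceals exactly the ingredient the paper's proof actually uses. You claim that "one can isotope the front so that two adjacent sheets... develop an $A_2$-cusp edge pair (via Move VII of Proposition \ref{prop:CuspMoves})" and thereby exhibit a stabilization. But Move VII is a \emph{Legendrian isotopy}: it creates a canceling pair of $A_3$-swallowtails, encoding an $R_1$ move followed by its inverse, and both sides of Move VII represent Legendrian isotopic surfaces. Since looseness is a Legendrian-isotopy invariant, Move VII can never convert a non-stabilized local piece into a stabilized one. The discussion after Definition \ref{def:LegendrianMutation} that you cite is also not a precedent: there, the loose $\mathfrak{s}(\La_0)$ arises by \emph{compactifying} the front with new cusp edges and $2$-disks (an honest change of Legendrian), not by performing an isotopy move. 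Over the bridge strip, the restricted weave is just $N$ sheets with one $A_1^2$-crossing per adjacent pair, which is two smooth Legendrian pieces crossing transversely in the front; nothing there is stabilized, and no isotopy of it alone can make it so. Taken at face value your argument would prove any weave with even a single crossing is loose, which is false (the Clifford torus, for example, has non-empty sheaf moduli and is not loose).

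The actual proof in the paper uses precisely the structure you discard when you write that looseness is "independent of how the weave closes up over $D_1, D_2$." The cusps that create the stabilization are \emph{not} produced by Move VII; they are the genuine $A_2$-cusp edges introduced by the satelliting construction $\iota(\cdot)$ along the standard Legendrian unknot $\La_0\sse(\R^5,\xi_\st)$. One takes the $1$-dimensional slice along the transverse curve, obtaining the $N$-strand braid $\sigma_1\sigma_2\cdots\sigma_{N-1}$, then passes to its satellite closure (a rainbow-type closure with cusps on both sides). A sequence of Reidemeister $R_2$ moves between the bridge crossings and the satellite cusps then exhibits a zig-zag in the front; the loose chart lives there, and the global dilation freedom of $(\R^5,\xi_\st)$ makes it arbitrarily large. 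In short, the satelliting is not an afterthought but the source of the stabilization, and a purely local argument over the bridge strip cannot close the gap.
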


\begin{proof}
The proof is a simple argument in the theory of spatial fronts. Indeed, consider the 1-dimensional front slice along the dashed orange line in Figure \ref{fig:StabilizedChart}. The braid shown along this slice is depicted in Figure \ref{fig:StabilizedChartProof} (left). Its closure as a satellite of the standard Legendrian unknot is shown in Figure \ref{fig:StabilizedChartProof} (center). This Legendrian link is isotopic, via a sequence of Reidemeister II moves, to the Legendrian link given by the front in Figure \ref{fig:StabilizedChartProof} (right). The loose chart is exhibited in yellow in this figure. Note that this chart has arbitrarily large thickness due to the dilation freedom in $(\R^5,\xi_\st)$ and the fact that our front is global. This proves that $\iota(\La(G))$ is a loose Legendrian if $G$ has a bridge.
\end{proof}

\begin{center}
	\begin{figure}[h!]
		\centering
		\includegraphics[scale=0.7]{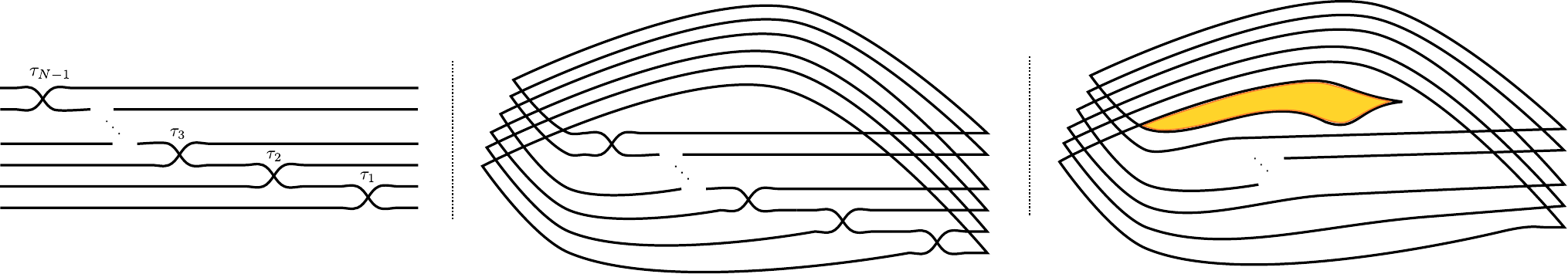}
		\caption{The front for the Legendrian link obtained in a 3-dimensional slice of a bridge (left). The front for the corresponding satellite closure (center) and a homotopic front exhibiting a loose chart (right).}
		\label{fig:StabilizedChartProof}
	\end{figure}
\end{center}

Proposition \ref{prop:bridge} immediately has the following consequence.

\begin{cor}\label{coro:bridge}
Let $G\sse C$ be an $N$-graph with a bridge. Then $\iota(\La(G))\sse(\S^5,\xi_\st)$ admits no exact Lagrangian filling $L\sse(\D^6,\omega_\st)$.\hfill$\Box$
\end{cor}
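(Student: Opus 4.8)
The plan is to derive Corollary \ref{coro:bridge} immediately from Proposition \ref{prop:bridge} together with the standard obstruction to exact Lagrangian fillability for loose Legendrians. First I would invoke Proposition \ref{prop:bridge}: since $G\sse C$ has a bridge, the satellite Legendrian $\iota(\La(G))\sse(\S^5,\xi_\st)$ is a loose Legendrian surface. The key input is then the theorem of Eliashberg--Murphy (building on the $h$-principle for loose Legendrians \cite{Loose}) that a loose Legendrian submanifold of a contact manifold admits no exact Lagrangian filling inside any Weinstein (or more generally, Liouville) filling; equivalently, exact Lagrangian fillability forces the Legendrian end to be non-loose. Applying this to the Weinstein pair $(\D^6,\omega_\st)$ with convex boundary $(\S^5,\xi_\st)$ yields the conclusion: there is no exact Lagrangian filling $L\sse(\D^6,\omega_\st)$ of $\iota(\La(G))$.

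In more detail, the argument would run as follows. Suppose for contradiction that $L\sse(\D^6,\omega_\st)$ is an exact Lagrangian filling of $\iota(\La(G))\sse(\S^5,\xi_\st)$. Attaching the negative half of the symplectization to $\D^6$ produces a Liouville domain into which $L$ extends to an exact Lagrangian that is cylindrical over $\iota(\La(G))$ near infinity. The standard argument (see \cite{CieliebakEliashberg12}, and for the Legendrian-fillability version the work surrounding \cite{Loose}) shows that the existence of such a filling forces $\iota(\La(G))$ to be non-loose: an exact Lagrangian filling gives, via linearized contact homology or generating-family/sheaf-theoretic arguments, a nonzero algebraic invariant (e.g.\ an augmentation of the Legendrian contact DGA, or a nonempty moduli of microlocal sheaves), whereas loose Legendrians have trivial such invariants. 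This contradicts Proposition \ref{prop:bridge}, completing the proof.

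The main obstacle here is purely one of citation and bookkeeping rather than new mathematics: one must be careful that the ambient manifold is $(\S^5,\xi_\st)$ (so that the relevant $h$-principle and non-fillability statements apply, since loose Legendrians require the ambient contact manifold to be overtwisted in the complement, which holds here as $(\R^5,\xi_\st)$ is an open Darboux ball where there is room for the loose chart of arbitrarily large thickness — this is exactly the point emphasized at the end of the proof of Proposition \ref{prop:bridge}), and that ``exact Lagrangian filling'' is taken in the precise sense used elsewhere in the paper (Section \ref{sec:app2}), so that the relevant invariants obstructing looseness are defined. Since Proposition \ref{prop:bridge} already does all the geometric work of exhibiting the loose chart, the corollary is essentially a one-line consequence, and I would present it as such.

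\begin{proof}[Proof of Corollary \ref{coro:bridge}]
By Proposition \ref{prop:bridge}, the Legendrian surface $\iota(\La(G))\sse(\S^5,\xi_\st)$ is loose. A loose Legendrian submanifold of a contact manifold which bounds an exact Lagrangian filling inside a Weinstein domain would, via the linearized Legendrian contact homology of the filling, or equivalently via the nonemptiness of the associated moduli of microlocal sheaves, carry a nontrivial Floer-theoretic invariant; but loose Legendrians have vanishing such invariants \cite{EkholmEtnyreSullivan05b,EkholmEtnyreSullivan05a,Loose}. Hence $\iota(\La(G))\sse(\S^5,\xi_\st)$ admits no exact Lagrangian filling $L\sse(\D^6,\omega_\st)$.
\end{proof}
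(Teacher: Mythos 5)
Your proof is correct and matches the paper's own treatment: the paper presents this corollary as an immediate consequence of Proposition \ref{prop:bridge} together with the standard fact that a loose Legendrian has trivial Floer-theoretic and sheaf-theoretic invariants (no augmentations, empty moduli of microlocal rank-one sheaves), while an exact Lagrangian filling would produce such an invariant. One small inaccuracy in your discussion: the parenthetical claim that loose Legendrians require the ambient contact manifold to be overtwisted in the complement is not right. Looseness is a purely local condition on the Legendrian (existence of a loose chart of the prescribed thickness relative to width), and $(\S^5,\xi_\st)$ is in fact tight; the relevant point, which the proof of Proposition \ref{prop:bridge} makes correctly, is only that the loose chart can be arranged with arbitrarily large thickness due to the dilation freedom in a Darboux ball.
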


Corollary \ref{coro:bridge} should be contrasted with the fact that many of the Legendrian surfaces $\iota(\La(G))\sse(\S^5,\xi_\st)$ admit exact Lagrangian fillings. For instance, it follows from Theorem \ref{thm:Legsurgeries} that any 2-graph $G$ obtained from the unique two-vertex 2-graph by adding {\it bigons}, i.e. a 1-surgery, yields a Legendrian surface $\iota(\La(G)))$ which admits exact Lagrangian fillings. On the other hand, {\it simple} 2-graphs do not -- see \cite[Theorem 1.3]{TreumannZaslow}.\\

\begin{ex}[Exact Lagrangian Cobordisms To a Loose Legendrian] Consider the Legendrian Clifford 2-torus $\bT^2_c\sse(\S^5,\xi_\st)$ associated, via the standard satellite, to the 2-graph in Figure \ref{fig:CliffordToBarbell} (Left). By applying our combinatorial Legendrian surgery from Theorem \ref{thm:Legsurgeries}, Figure \ref{fig:LegendrianSurgeries}.(2), we obtain an (index-2) exact Lagrangian cobordism from $\bT^2_c$ to the Legendrian 2-sphere $\La_{l}$ associated Figure \ref{fig:CliffordToBarbell} (Right). By Proposition \ref{prop:bridge}, the Legendrian $\La_{l}$ is a loose Legendrian surface. This proves that the Legendrian Clifford 2-torus $\bT^2_c\sse(\S^5,\xi_\st)$ is a subloose Legendrian surface, and we will show in Section \ref{sec:app} that $\bT^2_c$ is {\it not} a loose Legendrian. In particular, this also proves that $\bT^2_c\sse(\S^5,\xi_\st)$ admits no 3-dimensional exact Lagrangian fillings $L\sse(\D^6,\la_\st)$ in the standard symplectic 6-disk. The points in the non-empty flag moduli associated to $\bT^2_c$ will in fact be geometrically represented by {\it non}-exact Lagrangian fillings. \hfill$\Box$
\end{ex}

\begin{center}
	\begin{figure}[h!]
		\centering
		\includegraphics[scale=0.75]{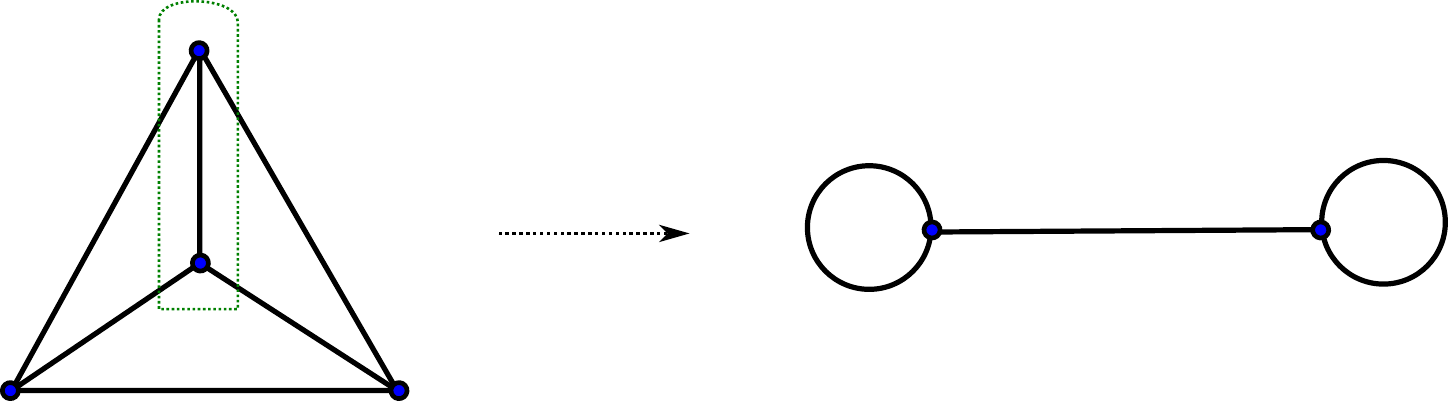}
		\caption{An exact Lagrangian cobordism from a non-loose Legendrian 2-torus to a loose Legendrian 2-sphere.}
		\label{fig:CliffordToBarbell}
	\end{figure}
\end{center}


\section{Flag Moduli Spaces}\label{sec:flag}

In this section we introduce one of the central algebraic invariants in this article, the flag\footnote{``Vexillary'' is the appropriate adjectival form of ``flag''. Hence, it should technically be named the {\it vexillary} moduli space. The word is possibly too obscure, and we thus favor {\it flag} moduli space, as in {\it flag} variety.} moduli space $\SM(G)$ of an $N$-graph $G$ and its associated Legendrian weave. We will prove that these flag moduli spaces are moduli spaces of constructible sheaves associated to a Legendrian weave, but we first present their explicit and self-contained definition.


\subsection{Preliminaries on the Flag Variety}\label{ssec:flagvariety}
Let $N\in\N$ be a natural number and $R$ a commutative ground ring, which will oftentimes be a field. We denote by $\GL_N$ the general linear group, a scheme whose value over $R$ is $\GL(N,R)$, and likewise for $\PGL_N$, the projective general linear group. By definition, a (full or complete) flag is an element

$$\SF^\bullet\in\{\SF^0\subset\SF^1\subset\SF^2\subset\cdots\subset \SF^{N-1}\subset\SF^N: \mbox{dim}\SF^i=i,0\leq i\leq N\},$$
i.e.~a sequence of nested linear subspaces $\SF^i\sse R^N=R\oplus\stackrel{(N)}{\ldots}\oplus R$, $0\leq i\leq N$. Let $B\sse GL_N$ be the Borel subgroup\footnote{This is a maximal Zariski closed and connected solvable algebraic subgroup. Since $B$ is a minimal parabolic subgroup of $\GL_N$ it preserves the {\it most} geometric linear structure in $R^N$, which is precisely a flag $\SF^\bullet$.} of upper triangular matrices preserving the standard coordinate flag. Since $GL_N$ acts transitively on the set of bases, the space that parametrizes such full flags is the homogeneous space $\cB=\GL_N/B$. This is an algebraic variety, known as the flag variety.

The relative position of two flags $(\SF^\bullet,\SG^\bullet)\in \cB\times\cB$ is encoded algebraically by the Bruhat decomposition
$$\GL_N=\displaystyle\bigsqcup_{w\in S_N}BwB,$$
where the symmetric group $S_N=W(\GL_N)$ is identified with the Weyl group. That is, the orbits of the diagonal action of $\GL_N$ on a pair of flags are indexed by the symmetric group $S_N.$  The dimension of the Bruhat cell $BwB$ is the length $\ell(w)$ of the permutation $w\in S_n$. By definition, $\SF^\bullet$ and $\SG^\bullet$ are in {\it transverse} position (or {\it totally transverse} or {\it completely transverse}) if their relative position is $w_0\in S_N$, where $w_0$ denotes the longest element in the Coxeter group $S_N$. Note that $\ell(w_0)={N \choose 2}$, $w_0\in S_N$, and that {\it totally transverse} is the generic relative position between two points in the flag variety $\cB$.
In particular, an elementary transposition $\tau_i\in S_N$ determines a relative position between two flags $\SF^\bullet$ and $\SG^\bullet$ in which only their $i$th vector spaces differ, and no others.

We will require a slight generalization of the above when the surface $C$ is not simply connected: compatible local systems of flags, rather than flags of subspaces of a fixed vector space. 
This will not be required for our applications in Sections \ref{sec:app}, \ref{sec:app2} and \ref{sec:app3}, so the reader is welcome to skip this paragraph.  Let $E\lr X$ be a local system on a topological space $X$. By a local system of flags, we mean a complete filtration (flag) $\SE^\bullet$ of $E$ by local systems $\SE^k$ such that the monodromy preserves the filtration.  In this sense, the flag itself makes global sense.   
Let $U\subset X$ be a subspace and let $\SF^\bullet$ be a flag of sub-local systems on $U$, so that $\SF^k \sse E$ for all $0\leq k \leq N$.  We say that $\SF^\bullet$ is {\it compatible} with $\SE^\bullet$ if the monodromies are:  specifically, for $\gamma\in \pi_1(U,u)$ and $v\in \SF^k$, $i_k(\gamma\cdot v) = i_{k,*}(\gamma) \cdot i_k(v),$ where the symbol $\cdot$ denotes (ambiguously) the action of any group on a vector space.
Note that by monodromy invariance, we may speak of the relative position of two compatible sub-local systems of flags $\SF^\bullet$ and $\SG^\bullet$ on subspaces $U$ and $U'$ of $X$.

With these algebraic preliminaries, we turn to describing the flag moduli space associated to an $N$-graph.


\subsection{Description of the Flag Moduli Space of an $N$-graph}\label{ssec:description_flagmoduli}

Let $G$ be an $N$-graph on a connected surface $C$, thought of as the union of the embedded graphs $G_i$.  By a {\it face} of $G$ we mean the closure of a connected component of the complement $C\setminus G$.  

We first give a general description of the flag moduli space for $C$ not necessarily simply connected.  We will not use this in our applications, so the reader is welcome to skip to the simpler Definition \ref{def:flagmoduli}, which is equivalent when $C$ is simply connected.

Let $\Sigma(G) \subset C\times \bR$ be the wavefront of the Legendrian weave, woven according to $G\sse C$. Call a {\it region} a connected component of the complement $(C\times \bR) \setminus \Sigma(G).$  

\begin{definition}\label{def:flagmoduli-general}
	
	Let $C$ be a connected surface and let $G \sse C$ be an $N$-graph.  The {\it framed flag moduli space} $\wt\SM(C,G)$ associated to $G$ is comprised of the following data.  
	
	\begin{itemize}
		\item[i)]  A rank-$N$ local system $E\lr C$, equivalently a vector space $V$ and a representation of the based fundamental group $\pi_1(C)$ on $V$.\\
		
		\item[ii)] For each face $F$ of the $N$-graph $G$, a compatible local system of flags $\SF^\bullet(F)$.\\
		
		\item[iii)] For each pair of adjacent faces $F_1,F_2$, sharing an $i$-edge $e$, their two associated compatible local systems of flags $\SF^\bullet(F_1),\SF^\bullet(F_2)$ are in relative position $\tau_i\in S_N$, and along the common edge $e$ we have chosen isomorphisms
		$$\SF^j(F_1)\cong \SF^j(F_2),\quad 0\leq j\leq N, j\neq i,$$
		and no other information, as $\SF^i(F_1)\ncong\SF^i(F_2)$.\\
		
		\item[iv)] By gluing, these isomorphisms define local systems in each region, since the $j$th step of a flag of local systems $\SF^j$ compatible with $E$ defines a local system on the
		region between the $j$th and $(j+1)$st sheets --- and these are not separated by a $\tau_i$ crossing of sheets when $j \neq i$.  We require that such local systems in regions, each of which are sub-local systems of $E$ via upward generization morphisms, are compatible with $E$.\footnote{This condition is not local in the $N$-graph, $G$.}
		
	\end{itemize}
	The group $\PGL_N$ acts on the space $\wt\SM(C,G)$ diagonally, i.e.~as isomorphisms of $E$ and on all flags of local systems at once. By definition, the {\it flag moduli space} of the $N$-graph $G$ is the quotient stack
	$$\SM(C,G):=\wt\SM(C,G)/\PGL_N.$$  We simply write $\SM(G)$ when $C$ is understood.
	\hfill$\Box$
\end{definition}

\begin{definition}\label{def:flagmoduli}
	
	Let $C$ be a connected, simply connected surface and let $G\sse C$ be an $N$-graph.  The {\it framed flag moduli space} $\wt\SM(C,G)$ associated to $G$ is comprised of tuples of flags, specifically:  
	
	\begin{itemize}
		\item[i)] There is a flag $\SF^\bullet(F)$ assigned to each face $F$ of the $N$-graph $G$.\\
		
		\item[ii)] 
		\label{def:flagmoduli-general-ii}
		For each pair of adjacent faces $F_1,F_2\sse C\setminus G$, sharing an $i$-edge, their two associated flags $\SF^\bullet(F_1),\SF^\bullet(F_2)$ are in relative position $\tau_i\in S_N$, i.e.~ they must satisfy
		$$\SF^j(F_1)=\SF^j(F_2),\quad 0\leq j\leq N, j\neq i,\mbox{ and }\SF^i(F_1)\neq\SF^i(F_2).$$
	\end{itemize}
	The group $\GL_N$ acts on the space $\wt\SM(C,G)$ diagonally, i.e. on all flags at once. By definition, the {\it flag moduli space} of the $N$-graph $G$ is the quotient stack
	$$\SM(C,G):=\wt\SM(C,G)/\PGL_N.$$
	We simply write $\SM(G)$ when $C$ is understood.
	\hfill$\Box$
\end{definition}

We will equivalently exchange between the linear and projective perspective for a full flag. In the projective setting, flags $\SF^\bullet$ (or local systems of flags) are understood as a sequence of nested projective planes $\P(\SF)^\bullet$, given by the projectivization of the linear spaces of the linear flag $\SF^\bullet$. For a ground field $R$, the moduli space $\SM(C,G;R)$ is representable by an Artin stack of finite type \cite{Artin1,Artin2}, and is typically an algebraic variety (unless $G$ is so symmetric that an admissible configuration of flags might be fixed by $\PGL_N$).

In Subsection \ref{ssec:constsheaves} we explain why the moduli space $\SM(C,G;R)$ is an invariant of the Legendrian isotopy type of the associated Legendrian weave $\La(G)\sse (J^1C,\xi_\st)$. The algebraic questions we are interested in this article are about the different properties and computations of the moduli $\SM(C,G;R)$ --- for instance the cardinality of $|\SM(C,G;\F_q)|$ over a finite field or how $\SM(C,G;R)$ changes upon performing the combinatorial moves in Section \ref{sec:moves}, including Legendrian mutations and surgeries. To ease notation, we will denote flags $\SF^\bullet$ by $\SF$.


\subsection{Sheaf Description of Flag Moduli and Invariance}\label{ssec:constsheaves}

Let $C$ be a smooth surface, $R$ a commutative ring, and $\Sh(C\times\R)$ the category of \emph{constructible sheaves,} i.e.~the $R$-linear dg-derived category of complexes of sheaves of $R$-modules on $C\times\R$ with constructible cohomology sheaves. For algebraic preliminaries on (derived) dg-categories we refer the reader to \cite{Keller_DerivingDG,DGDerived2,DGDerived1,OrlovLunts_DGCat}, and for simplicity we will choose $R$ a field. In this section, we use the identification $J^1(C) \cong T^{\infty,-}(C\times \bR)$ of the first jet bundle of $C$ with downward covectors of $C\times \bR$ --- see \cite[Section 2.1]{NRSSZ}. Now given an $N$-graph $G\sse C$,
the Legendrian $\Lambda(G) \subset J^1(C) \cong T^{\infty,-}(C\times \bR)\subset T^\infty(C\times\R)$ can
be used to define the subcategory $\Sh_{\Lambda(G)}(C\times\R) \subset \Sh(C\times\R)$ whose objects are constructible sheaves whose singular support at contact infinity is contained in $\Lambda(G) \subset T^\infty(C\times \bR)$ --- see \cite[Section 4]{TreumannZaslow}.  

We write $\cC(C,G) := \Sh^1_{\Lambda(G)}(C\times \R)_0 \subset \Sh_{\Lambda(G)}(C\times\R)$ for the subategory of microlocal
rank-one sheaves which are zero in a neighborhood of $C\times \{-\infty\},$ or $\cC(G)$ for short.  This has a simple description, which we now explain.
The dg-category $\Sh_{\Lambda(G)}(C\times\R)$ is itself a subcategory of sheaves constructible with respect to the stratification defined
by the front projection $\Sigma(G)$, and thus has a combinatorial description. 
By \cite[Theorem 8.1.11]{KashiwaraSchapira_Book}, it is equivalent to the dg-category of functors from the poset of strata to $k$-mod (chain complexes) --- see also
\cite[Section 2.3]{Nadler_MicrolocalBrane} and \cite[Section 3.3]{STZ_ConstrSheaves}. 
The subcategory cut out by $\cC(G)$ is the one whose objects are isomorphic to ones with the following properties:  the chain complex assigned to a neighborhood of $C\times \{-\infty\}$ is zero; the complexes in each region of $(C\times \R)\setminus\Sigma(G)$ are rank-one local systems (or just vector spaces if $C$ is simply connected); the morphisms assigned to all downward restriction maps are isomorphisms; and the upward morphisms from small open sets intersecting $\Sigma(G)$ to the regions above them which do not are codimension-one inclusions.

The combinatorial model for this description leads to the flag moduli space $\SM(G)$ of isomorphism classes of objects in $\cC(G).$ Indeed, the flag moduli space $\SM(G)$ associated to an $N$-graph $G\sse C$, as introduced in Definition \ref{def:flagmoduli}, relates to the category $\cC(G) := \Sh^1_{\Lambda(G)}(C\times\R)_0$ according to the following result, which itself generalizes \cite[Section 4.3]{TreumannZaslow} to $N$-graphs with $N\geq3$:

\begin{thm}\label{thm:sheaves}
	The flag moduli space $\SM(C,G;R)$ is isomorphic to the moduli space of objects in $\cC(G) := \Sh^1_{\Lambda(G)}(C\times\R)_0$, the subcategory of microlocal rank-$1$ objects in $\Sh_{\Lambda(G)}(C\times\R)$ supported away from $C\times \{-\infty\}.$ 
\end{thm}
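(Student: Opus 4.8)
The plan is to unwind the combinatorial model of $\Sh_{\Lambda(G)}(C\times\R)$ and match it, stratum by stratum, with the defining data of $\wt\SM(C,G)$ from Definition \ref{def:flagmoduli-general} (or Definition \ref{def:flagmoduli} in the simply connected case). First I would recall, via \cite[Theorem 8.1.11]{KashiwaraSchapira_Book}, that the category of sheaves constructible for the stratification induced by the front $\Sigma(G)\sse C\times\R$ is equivalent to the category of functors from the poset of strata (with its specialization order) to chain complexes; this reduces everything to linear algebra over the poset of regions and walls of $\Sigma(G)$. The next step is to identify which functors lie in the subcategory $\cC(G)=\Sh^1_{\Lambda(G)}(C\times\R)_0$: the singular-support condition along $\Lambda(G)$, together with the microlocal rank-one condition and the vanishing near $C\times\{-\infty\}$, translates (exactly as in \cite[Section 3.3]{STZ_ConstrSheaves} and \cite[Section 4.3]{TreumannZaslow}) into the conditions spelled out after Theorem \ref{thm:sheaves}: the complex near $C\times\{-\infty\}$ is zero, each region carries a rank-one local system, downward corestriction maps are isomorphisms, and upward corestriction maps across a sheet of $\Sigma(G)$ are codimension-one inclusions. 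I would prove these equivalences by the standard local calculation of microsupport for the three germs $A_1^2$, $A_1^3$, $D_4^-$ introduced in Subsection \ref{ssec:SingWave}, checking in each Darboux chart $(J^1\D^2,\xi_\st)$ that the microsupport condition is equivalent to the stated restriction-map conditions.

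With that dictionary in place, the heart of the argument is a bijection (in fact an isomorphism of moduli functors, hence of stacks) between such sheaf data and the flag data of Definition \ref{def:flagmoduli-general}. Given an object of $\cC(G)$, for each face $F$ of $G$ I would build the flag $\SF^\bullet(F)$ as follows: the region of $(C\times\R)\setminus\Sigma(G)$ lying just below the $j$th sheet over $F$ carries a local system $\SF^j$, the upward inclusions make $\SF^0\subset\SF^1\subset\cdots\subset\SF^N$ a filtration of the stalk $E$ over $C\times\{+\infty\}$ (equivalently of the ambient rank-$N$ local system), and the dimension count forces $\operatorname{rk}\SF^j=j$ — this is where the microlocal rank-one and the $A_1^2$-codimension-one conditions are used. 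Across an $i$-edge the two sheets $i$ and $i+1$ are the only ones that cross, so the local systems $\SF^j$ agree for $j\ne i$ and differ for $j=i$, which is precisely relative position $\tau_i$; the compatibility of local systems in regions with $E$ corresponds to condition (iv). Conversely, from flag data one reconstructs the functor on the poset of strata by assigning $\SF^j$ to the appropriate region and the evident inclusions/isomorphisms, and one checks this lands in $\cC(G)$ by the same local computations run in reverse. The two constructions are mutually inverse, and they are visibly equivariant for the $\PGL_N$ (resp.\ $\GL_N$) action — changing a global trivialization of $E$ acts diagonally on all flags — so they descend to an isomorphism of quotient stacks $\SM(C,G;R)\cong$ (moduli of objects in $\cC(G)$).

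The main obstacle I expect is the global, non-local clause: condition (iv) of Definition \ref{def:flagmoduli-general}, and the corresponding requirement that the region-local systems be compatible with $E$, is not checkable chart-by-chart, so one must argue that gluing the local flag data produced in each Darboux chart yields a genuine global object — i.e.\ that the upward-generization isomorphisms across faces are consistent around loops in $C$ and around the complement of $\Sigma(G)$. Concretely one must show that the $N$ sheets, read between successive $\tau_i$-walls, assemble into a filtration of a single local system on $C\times\{+\infty\}$ whose monodromy preserves the flags, which is exactly the content of ``compatible local system of flags.'' In the simply connected case (Definition \ref{def:flagmoduli}) this is automatic and the argument is purely local, so I would present that case first and then indicate the modification: replace vector spaces by local systems, relative positions by monodromy-equivariant relative positions, and invoke the fact that a constructible sheaf with the stated restriction-map behavior has a well-defined ``top'' local system $E$ obtained by taking the stalk near $C\times\{+\infty\}$, carrying the induced $\pi_1(C)$-action. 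Finally, the identification of moduli \emph{stacks} (not just their $R$-points) follows because both sides are described by the same groupoid-valued functor on $R$-algebras: tuples of flags / filtered sheaves with isomorphisms, modulo the diagonal $\PGL_N$; representability by an Artin stack of finite type is inherited from the flag-variety description already noted after Definition \ref{def:flagmoduli}.
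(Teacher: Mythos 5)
Your proposal is correct and follows essentially the same route as the paper: reduce to the combinatorial functor-on-poset model, do a local fiber-product analysis at the three germ types ($A_1^2$ crossing, $D_4^-$ trivalent vertex, $A_1^3$ hexagonal vertex), build the flag $\SF^\bullet(F)$ from the filtration by upward corestrictions, and then handle the non-simply-connected case by upgrading flags to compatible local systems of flags. The only stylistic difference is that the paper streamlines the local analysis at the $D_4^-$ and $A_1^3$ germs by invoking GKS quantization of the Reidemeister I and III moves — so that consistency at those codimension-two strata comes for free from Legendrian invariance — whereas you propose a direct microsupport computation at each germ, which also works but requires verifying by hand that the triple point and $D_4^-$ point impose no constraints beyond the pairwise $A_1^2$ edge conditions.
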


\begin{proof}
	We first assume that $C$ is simply connected.  The argument parallels that of \cite[Sections 6.2 and 6.3]{STZ_ConstrSheaves}, with the additions required by the strictly two-dimensional behavior.  The moduli space of objects is defined locally, meaning that it is the fiber product over its restriction-to-boundary maps of the moduli spaces $\wt\SM$ in neighborhoods of $C$.  We can assume that these neighborhoods of $C$ are chosen small enough so that they are contractible and contain no more than one ``feature'' of the given $N$-graph $G$.  That is, for some such neighborhood $U\sse C$, either $U\cap G$ is empty or contains part of an edge, a single trivalent vertex, or a single hexagonal vertex.  We then have a local study for each of these cases.
	
	In the case where $U$ is empty or contains part of an edge, the front of the Legendrian weave over $U$ is either $N$ parallel sheets or $N$ sheets with a single crossing labeled $\tau_i$, and can be identified with $\sigma \times \bR$, where $\sigma$ is a front of a one-dimensional Legendrian knot being either $N$ parallel lines or $N$ lines with a single crossing.  Then, since the $\bR$ factor is contractible, we can identify the moduli space $\wt\SM$ over $U$ using the one-dimensional study in \cite[Sections 6.2 and 6.3]{STZ_ConstrSheaves}, concluding that it is either the flag variety or pairs of $\tau_i$-transverse flags, respectively.
	
	The moduli $\Sh^1_{\Lambda(G)}(C\times\R)$ is local with respect to $G\sse C$ and the topology of the surface $C$, i.e. it is globally described as fibered products for the local pieces of $G\sse C$. 
	It therefore remains to show that $\Sh^1_{\Lambda(G)}(C\times\R)$ coincides with $\SM(C,G)$ for the local graphs $G_{tri}\sse\D^2$ and $G_{hex}\sse\D^2$, respectively given by a trivalent vertex and a hexagonal vertex, as introduced in Section \ref{sec:NGraphsLegWeaves}.  We do these in turn.
	
	The trivalent vertex case was studied in \cite[Section 4]{TreumannZaslow} for $2$-graphs, and we will make the needed adjustments to $N$-graphs.
	The computation for the local $N$-graph $G_{tri}$ consists of an analysis of the moduli of constructible sheaves supported at the $D_4^-$-wavefront singularity, as directly carried out in \cite{TreumannZaslow}.  The boundary conditions for an object in $\Sh^1_{\Lambda(G_{tri})}(\D^2\times\R)$ consist of a triple of flags $(\SF_1,\SF_2,\SF_3)$ such that $\SF_i\in S_{\tau_{k}}(\SF_j)$ for $i\neq j$, $1\leq i,j\leq 3$, if the edges of $G_{tri}$ are labeled by $\tau_k$.  This can be seen by combining the result for a neighborhood of a single crossing edge above, taking the fiber product over the spaces of flags in the empty neighborhoods in-between.  Then \cite[Section 4.1]{TreumannZaslow} implies that these are all the required conditions (and strata) and thus $\Sh^1_{\Lambda(G_{tri})}(\D^2\times\R)$ coincides with $\SM(\D^2,G_{tri})$. Note that the analysis in \cite[Subsection 4.1.2]{TreumannZaslow} restricts to the case where the local model is a 2-graph $G_{tri}\sse\D^2$, it is readily seen that this model suffices for the analysis of the local model $N$-graph $G_{tri}\sse\D^2$.
	
	\begin{center}
		\begin{figure}[h!]
			\centering
			\includegraphics[scale=0.75]{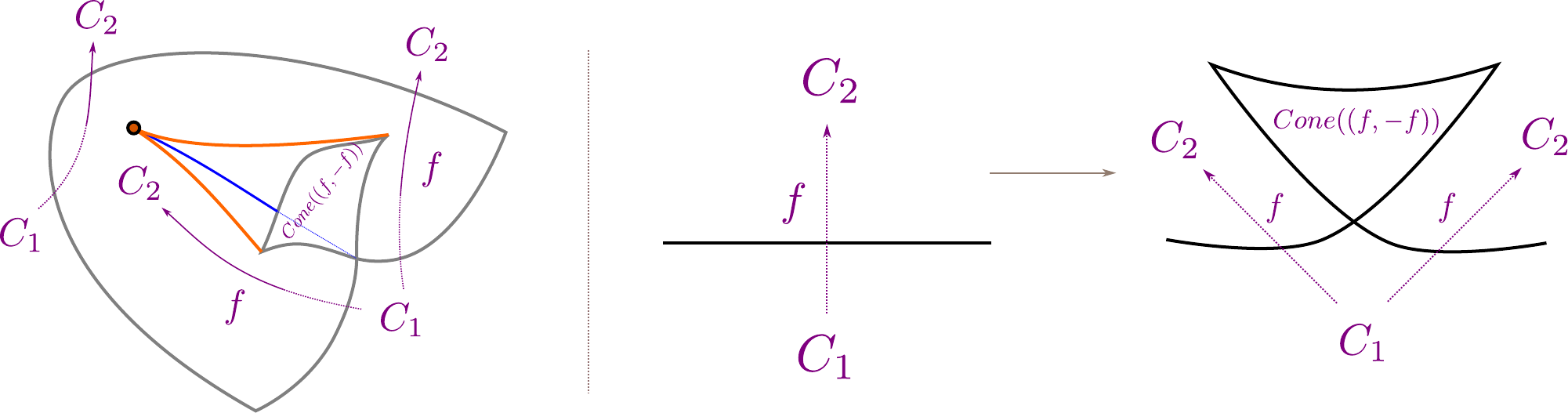}
			\caption{Constructible sheaf in $\R^3$ microlocally supported on the $A_3$-swallowtail singularity (left). The sheaf convolution given by the Guillermou-Kashiwara-Schapira \cite{GKS_Quantization} quantization upon performing a Reidemeister R1 move (right).}
			\label{fig:SheavesSwallowtail}
		\end{figure}
	\end{center}
	
	Alternatively, it is possible to directly conclude the analysis of the $D_4^-$-singularity by performing a generic perturbation of the $D_4^-$-wavefront, as depicted in Figure \ref{fig:D4Generic}, and studying the category of constructible sheaves supported at a $A_3$-swallowtail singularity. Indeed, Figure \ref{fig:SheavesSwallowtail} (left) shows the conditions for a constructible sheaf microlocally supported along the front of an $A_3$-swallowtail singularity, which consists of a choice of injective map $f:C_1\lr C_2$, where $C_1\cong R^k$ and $C_1\cong R^{k+1}$, for some $k\in\N$. The crucial fact is that the (stalk of the) sheaf in the remaining 3-dimensional open strata $I$ is uniquely determined to be the cone of the map $(f,-f):C_1\lr C_2\oplus C_2$. This is a consequence of the Guillermou-Kashiwara-Schapira quantization \cite[Theorem 3.7]{GKS_Quantization} of Legendrian isotopies: since the $A_3$-swallowtail is the big wavefront \cite{ArnoldSing} of the first Reidemeister move for 1-dimensional Legendrian fronts, it follows that the sheaves in the strata $I$ are uniquely determined by $f:C_1\lr C_2$ by the sheaf kernel associated to the first Reidemeister move. It is readily seen \cite{STZ_ConstrSheaves} that the result of the convolution with such a kernel yields the sheaf transformation in Figure \ref{fig:SheavesSwallowtail} (right). By the non-characteristic property of the category of microlocal sheaves \cite{GKS_Quantization}, the sheaves microlocally supported on the wavefront of the $D_4^-$-singularity is equivalent to that for a generic perturbation of such $D_4^-$-singularity. The generic perturbation consists of three $A_3$-swallowtails and the conditions for the constructible sheaves on these stratification follow from the above analysis. In conclusion, we obtain an isomorphism $\Sh^1_{\Lambda(G_{tri})}(\D^2\times\R)\cong\SM(\D^2,G_{tri})$.
	
	Let us now address the hexagonal vertex $G_{hex}$. Since the Legendrian weave $\Sigma(G_{hex})$ is the big wavefront of the third Reidemeister move for 1-dimensional Legendrian fronts, it suffices to understand the kernel of its quantization. Figure \ref{fig:SheavesHexVertex} shows the local transformation for constructible sheaves near the third Reidemeister move \cite[Section 4.4.3]{STZ_ConstrSheaves}.
	
	\begin{center}
		\begin{figure}[h!]
			\centering
			\includegraphics[scale=0.75]{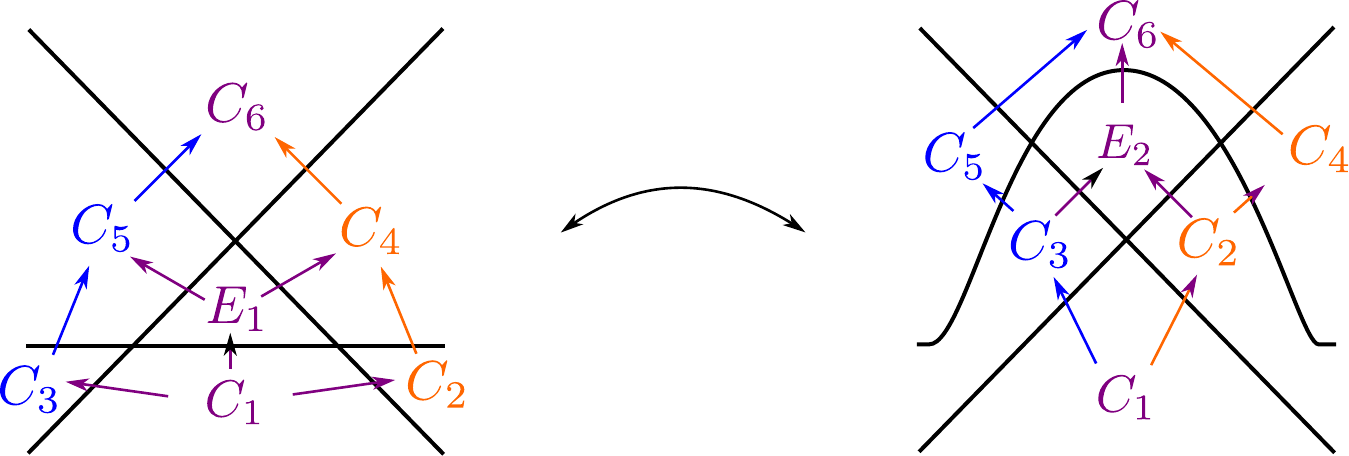}
			\caption{The explicit flag exchange given by the Guillermou-Kashiwara-Schapira \cite{GKS_Quantization} quantization upon performing a Reidemeister R3 move.}
			\label{fig:SheavesHexVertex}
		\end{figure}
	\end{center}
	
	In Figure \ref{fig:SheavesHexVertex}, the $C_i$, $1\leq i\leq5$ and $E_1,E_2$ are complexes of vectors spaces, which we can actually assume to be vector spaces \cite[Section 3.3]{STZ_ConstrSheaves}. If $C_1\cong R^k$, for some $k\in\N$, the microlocal rank 1 condition implies that $E_1,E_2,C_2,C_3\cong R^{k+1}$, $C_4,C_5\cong R^{k+2}$ and $C_6\cong R^{k+3}$. The four flags at one of the sides of the hexagonal vertex are
	$$\SF^{(1)}_1=C_1\lr C_3\lr C_5\lr C_6,\quad \SF^{(1)}_2=C_1\lr E_1\lr C_5\lr C_6,$$ 
	$$\SF^{(1)}_3=C_1\lr E_1\lr C_4\lr C_6,\quad \SF^{(1)}_4=C_1\lr C_2\lr C_4\lr C_6,$$
	and the four flags at the other side of the hexagonal vertex are
	$$\SF^{(2)}_1=C_1\lr C_3\lr C_5\lr C_6,\quad \SF^{(2)}_2=C_1\lr C_3\lr E_2\lr C_6,$$ 
	$$\SF^{(2)}_3=C_1\lr C_2\lr E_2\lr C_6,\quad \SF^{(2)}_4=C_1\lr C_2\lr C_4\lr C_6.$$
	The three crossings in Figure \ref{fig:SheavesHexVertex} (left) imply, from left to right, that
	$$\SF^{(1)}_1\in S_{\tau_{k+1}}(\SF^{(1)}_2),\SF^{(1)}_2\in S_{\tau_{k+2}}(\SF^{(1)}_3),\SF^{(1)}_3\in S_{\tau_{k+1}}(\SF^{(1)}_4).$$
	Similarly, the three crossings in Figure \ref{fig:SheavesHexVertex} (right) imply, from left to right, that
	$$\SF^{(2)}_1\in S_{\tau_{k+2}}(\SF^{(2)}_2),\SF^{(2)}_2\in S_{\tau_{k+1}}(\SF^{(2)}_3),\SF^{(2)}_3\in S_{\tau_{k+2}}(\SF^{(2)}_4).$$
	These are precisely the conditions for the flag moduli space $\SM(\D^2,G_{hex})$ in Definition \ref{def:flagmoduli}, and hence $\Sh^1_{\Lambda(G_{hex})}(\D^2\times\R)\cong\SM(\D^2,G_{hex})$.
	
	This concludes the argument for the case where $C$ is simply connected.
	We now turn to the case where $C$ is not simply connected.  There are no further local conditions.  The only additional concerns regard compatibilities of local systems.
	
	Let $\Sigma(G) \subset C\times \bR$ be the wavefront of the Legendrian weave, and recall that we call a {\it region} a connected component of the complement $(C\times \bR) \setminus \Sigma(G).$  A constructible sheaf in $\cC(G)$ restricts to a local system on each region, since there is no singular support away from the wavefront.  There are two distinguished regions $R_{\rm top}$ and $R_{\rm bot}$ containing neighborhoods of $C\times \{\infty\}$ and $C\times \{-\infty\}$, respectively.  A constructible sheaf in $\cC(G)$ restricts to $0$ in $R_{\rm bot}$ (by definition) and to a local system on $R_{\rm top} \sim C$ that we assign to be the data $E$ from Definition \ref{def:flagmoduli-general}(i). 
	Now, as explained in Definition \ref{def:flagmoduli-general}(iv), the data of a point in $\wt\SM(G)$ defines a local system in each region.
	Commutativity of sheaf restriction maps requires that a section which is parallel transported around a region and then included into $E$ arrives at the same place as a section which is included first and then parallel transported around $R_{\rm top},$ and this is the requirement of Definition \ref{def:flagmoduli-general}(iv).	
\end{proof}


\subsection{Local Flag Moduli Computations}\label{ssec:computations_flagmoduli} Let us prove the following useful lemmas on the flag moduli, which can be implicitly used when performing computations on $\SM(C,G;R)$. In this section, and subsequent computations, we will consider a ground field $R=k$, with $k=\C$ and finite fields $k=\F_q$ as the main fields of interest.

We start with the study of the flag moduli space at a trivalent vertex, as depicted in the left of Figure \ref{fig:VertexType}, and characterize that local flag moduli space.

\begin{lemma}\label{lem:trivalent}
Consider the neighborhood $\Op_3(N)$ of a $\tau_i$-trivalent vertex in an $N$-graph. Then the local moduli of flags $\SM(\Op_3,G;k)$ is set-theoretically a point, and the $\PGL_N$-action on $\wt\SM(\Op_3,G;k)$ has stabilizer $(k^*)^{N-2}\times k^{{N\choose 2}-1}$.
\end{lemma}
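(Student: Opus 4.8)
The plan is to compute the local moduli directly from the combinatorial description in Definition \ref{def:flagmoduli}, applied to the local $N$-graph $G_{tri}$ consisting of a single $\tau_i$-trivalent vertex. By Theorem \ref{thm:sheaves}, this combinatorial description computes the moduli of microlocal rank-one sheaves, so it suffices to analyze flags. First I would recall that the complement of $G_{tri}$ in $\Op_3$ has exactly three faces $F_1, F_2, F_3$, cyclically arranged around the vertex, each pair separated by a $\tau_i$-edge. Thus the framed moduli $\wt\SM(\Op_3, G_{tri}; k)$ is the space of triples of flags $(\SF_1, \SF_2, \SF_3)$ in $k^N$ such that $\SF_a$ and $\SF_b$ are in relative position $\tau_i$ for each $a \neq b$ (as noted in the proof of Theorem \ref{thm:sheaves}, via the fiber-product-over-intermediate-empty-regions argument).

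The key step is to identify this space of triples and its $\PGL_N$-orbit structure. Since $\SF_a, \SF_b$ agree in every step except the $i$-th, all three flags share the same spaces $W^0 \subset W^1 \subset \cdots \subset W^{i-1} \subset W^{i+1} \subset \cdots \subset W^N$, and differ only in a choice of line $\SF_a^i / W^{i-1}$ inside the two-dimensional space $W^{i+1}/W^{i-1} \cong k^2$. The condition that each pair is in relative position $\tau_i$ is precisely that these three lines in $\bP^1$ are pairwise distinct. Now $\PGL_N$ acts transitively on the partial flag $W^\bullet$ (omitting step $i$), and the stabilizer of $W^\bullet$ acts on $W^{i+1}/W^{i-1} \cong k^2$ through a group which surjects onto $\GL_2/($scalars$) = \PGL_2$, so it acts on the configuration space of three distinct points in $\bP^1$ transitively with trivial stabilizer (three distinct points in $\bP^1$ form a single $\PGL_2$-orbit, rigidly). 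Hence $\SM(\Op_3, G_{tri}; k)$ is set-theoretically a single point.

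For the stabilizer computation, I would fix the standard representative: $W^\bullet$ the coordinate partial flag, and the three lines in $\bP^1$ equal to $[0:1], [1:0], [1:1]$. The stabilizer in $\PGL_N$ is the subgroup of $\GL_N$ (mod scalars) preserving the coordinate partial flag $W^\bullet$ \emph{and} fixing each of these three points of $\bP^1$. Preserving the partial flag $W^\bullet$ means being block upper-triangular with respect to the flag refined at every step except $i$; counting dimensions, the parabolic preserving the full coordinate flag has dimension $\binom{N}{2} + N$, and enlarging by merging steps $i-1, i, i+1$ into a single $2\times 2$ block adds one off-diagonal coordinate, giving $\binom{N}{2} + N + 1$. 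Imposing that the induced element of $\PGL_2$ on $W^{i+1}/W^{i-1}$ fixes three distinct points forces it to be scalar, i.e.\ the $2\times 2$ block is scalar; this cuts the dimension by $3$ (the dimension of $\PGL_2$). Passing to $\PGL_N$ removes one more dimension, yielding $\binom{N}{2} + N + 1 - 3 - 1 = \binom{N}{2} + N - 3 = \left(\binom{N}{2} - 1\right) + (N-2)$. Identifying the group structure: the diagonal torus contributes $(k^*)^{N-1}$, but the scalar-block constraint identifies the two entries at positions $i$ and $i+1$ (the $i-1$ position is free, or rather the constraint ties together entries $i, i+1$), reducing to $(k^*)^{N-2}$ after quotienting by overall scalars; the unipotent radical contributes $k^{\binom{N}{2}-1}$ (the full unipotent $k^{\binom{N}{2}}$ minus the one relation forced by the block being scalar rather than merely upper-triangular). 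This gives exactly $(k^*)^{N-2} \times k^{\binom{N}{2}-1}$ as claimed.

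The main obstacle I anticipate is the careful bookkeeping in the stabilizer computation: correctly identifying which torus coordinate and which unipotent coordinate are killed by the "scalar $2\times 2$ block" condition, and verifying that the resulting group genuinely splits as the stated product $(k^*)^{N-2} \times k^{\binom{N}{2}-1}$ rather than merely having that as its dimension. I would handle this by writing the stabilizer explicitly in block-matrix form adapted to the partition $(1, \ldots, 1, 2, 1, \ldots, 1)$ of $N$ (with the $2$ in position $i$), checking that the Levi part is $(k^*)^{i-1} \times \{(\lambda I_2)\} \times (k^*)^{N-i-1} / k^*\cong (k^*)^{N-2}$ and the unipotent radical of the relevant parabolic has the claimed dimension, and confirming semidirect-product compatibility. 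The reduction to the $N=2$ or $2$-graph case handled in \cite[Section 4]{TreumannZaslow} provides a consistency check on the $\bP^1$-rigidity argument.
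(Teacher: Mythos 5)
Your proof is correct and takes essentially the same approach as the paper: both use transitivity of $\PGL_N$ on the configuration (you via the partial-flag-plus-three-points-in-$\bP^1$ reformulation, the paper by successively normalizing $\SF_1,\SF_2,\SF_3$ to standard flags) and then compute the stabilizer by explicit matrix bookkeeping, arriving at the same answer. The only cosmetic difference is that you start from the parabolic preserving the partial flag and impose the scalar-block condition all at once, whereas the paper starts from the Borel stabilizing $\SS_1$ and cuts one $k$-coordinate (fixing $\SS_2$) and one $k^*$-coordinate (fixing $\SS_3$) in turn.
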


\begin{proof}
Let $\SF_1,\SF_2,\SF_3$ be the three flags in $\Op_3(N)$. The $\GL_N$-action is transitive on the space of flags, and thus $\SF_1$ can be mapped to the standard flag $\SS_1$, defined by $$\SS^j_1=\{x_{j+1}=x_{j+2}=\ldots=x_{N-1}=x_N=0\},\mbox{ where }k=\Spec k[x_1,\ldots,x_N].$$
The $\GL_N$-action allows us to also map the two flags $\SF_2$ and $\SF_3$, respectively, to $\SS_2$ and $\SS_3$, defined by
$$\SS^j_2=\SS^j_3=\SS^j_1,\quad 0\leq j\leq N,\quad j\neq i,$$ $$\SS_2^j=\{x_{i}=x_{i+2}=\ldots=x_{N-1}=x_N=0\},$$ $$\SS_3^j=\{x_{i}-x_{i+1}=x_{i+2}=\ldots=x_{N-1}=x_N=0\}.$$
This implies that the quotient of the moduli $\wt\SM(\Op_3,G;R)$ by the gauge group $\PGL_N$ is set-theoretically a point. In order to recover its structure as a quotient stack, it suffices to identify the stabilizer of the triple of flags $\SS_1,\SS_2,\SS_3$. For that, notice that the stabilizer of $\SS_1$ is the projectivization of the Borel subgroup of upper triangular matrices, isomorphic to $(k^*)^{N-1}\times k^{{N\choose 2}}$. The condition of fixing the flag $\SS_2$ transversely cuts out a $k$-coordinate in the interior of the upper triangle, since it sets the $(i,i+1)$ entry equals to zero. This cuts the stabilizer down to $(k^*)^{N-1}\times k^{{N\choose 2}-1}$, and finally stabilizing $\SS_3$ imposes the equality of the two diagonal entries $(i,i)$ and $(i+1,i+1)$, thus transversely cutting down a $k^*$. The resulting stabilizer is $(k^*)^{N-2}\times k^{{N\choose 2}-1}$, as claimed.
\end{proof}

In its simplest instance of $N=2$, this is the statement that three distinct points in the projective line $\P^1(k)$ can be sent to $\{0,1,\infty\}$ with trivial stabilizer. A lesson from Lemma \ref{lem:trivalent} is that for any $N$, near at least one trivalent vertex of an $N$-graph, we are allowed to use the gauge group $\PGL_N$ and fix the flags around that vertex. The (proof of the) lemma also provides the (geometric) degrees of freedom left after this choice.

\begin{example}
Consider the $2$-graph $G$ associated to the triangulation of $C=S^2$ with two triangles. This $2$-graph $G$, dual to the triangulation, has two vertices, three edges and three faces. Then the flag moduli space $\SM(\S^2,G;\C)$ consists of a point $\{*\}$. In fact, this point $\{*\}$ of the flag moduli space geometrically corresponds to the conjecturally unique Lagrangian 3-disk filling of the standard Legendrian unknot $\La_0\sse(\S^5,\xi_\st)$.\hfill$\Box$ 
\end{example}

Lemma \ref{lem:trivalent} is a statement about a particular triple of flags. It ought to be noted that a {\it generic} triple of flags is part of a moduli space of dimension ${N-1\choose 2}$, with birational coordinates given by generalized triple ratios -- see Section \ref{sec:app2} and \cite[Section 9]{FockGoncharov_ModuliLocSys}. The flags appearing in the context of our $N$-graphs are in general a combination of non-generic flags, arising from the local vertices, with a flag being modified at exactly one degree when crossing an edge.

Let us now address our second local model at a vertex, that of a hexagonal vertex, as depicted in the right of Figure \ref{fig:VertexType}.

\begin{lemma}\label{lem:hexagonal} Consider the neighborhood $\Op_6(N)$ of a hexagonal vertex, with edges $\tau_i$, $\tau_{i+1}$ and consecutively ordered flags $\SF_j$, $j \in \bZ/6\bZ$. Then any pair of opposite flags $\SF_k, \SF_{k+3}$ determines the others.	
\end{lemma}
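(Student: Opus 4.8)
The plan is to analyze the configuration of six flags around a hexagonal vertex using the combinatorial structure uncovered in the sheaf-theoretic description from the proof of Theorem \ref{thm:sheaves}, specifically the explicit flag exchange in Figure \ref{fig:SheavesHexVertex}. Recall that the neighborhood $\Op_6(N)$ is the big wavefront of a third Reidemeister move, so the six flags $\SF_1,\dots,\SF_6$ (cyclically ordered) come in two groups of four, one group on each side of the hexagonal vertex, with $\SF_1$ and $\SF_4$ (say, an opposite pair) shared between the two sides. On each side, the four flags differ in a prescribed pattern: reading from $\SF_1$ to $\SF_4$ along one side, consecutive flags are related by $\tau_{k+1}, \tau_{k+2}, \tau_{k+1}$ (where $k$ is determined by the dimensions), and along the other side by $\tau_{k+2}, \tau_{k+1}, \tau_{k+2}$. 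In the notation of the sheaf proof, these data amount to the chain complexes $C_1 \subset C_2, C_3 \subset C_4, C_5 \subset C_6$ together with the auxiliary $E_1, E_2$, subject to the relative-position constraints listed there.

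First I would set up coordinates: fix the opposite pair $\SF_k$ and $\SF_{k+3}$, which in the $A_1^3$/three-sheet picture correspond to the "bottom" flag $C_1 \hookrightarrow C_6$ and the "top" flag obtained after all six crossings. Concretely, $\SF_k$ and $\SF_{k+3}$ differ by the length-three element $\tau_{k+1}\tau_{k+2}\tau_{k+1} = \tau_{k+2}\tau_{k+1}\tau_{k+2}$ of $S_N$ (the longest element of the parabolic $\langle \tau_{k+1},\tau_{k+2}\rangle \cong S_3$), so $\SF_k^j = \SF_{k+3}^j$ for $j \notin \{i, i+1, i+2\}$, and only the three steps $\SF^{i}, \SF^{i+1}, \SF^{i+2}$ are free to vary. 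The claim is then that specifying $\SF_k$ and $\SF_{k+3}$ in transverse-within-$S_3$ position forces each of the intermediate flags $\SF_{k+1}, \SF_{k+2}$ (and likewise $\SF_{k-1}, \SF_{k-2}$) uniquely. Here the key point is the factorization uniqueness in the Weyl group: given two flags $F, G$ at relative position the longest element $w_0^{(3)}$ of $S_3$, and given a reduced word $w_0^{(3)} = \tau_a \tau_b \tau_c$, there is a \emph{unique} chain of flags $F = H_0, H_1, H_2, H_3 = G$ with $H_{m-1}, H_m$ at relative position $\tau$ (adjacent) realizing that reduced word. This is the two-dimensional shadow of the fact that in the flag variety of $\mathrm{GL}_3$, a generic pair of flags determines a unique "hexagon" of intermediate flags; equivalently, the Bott–Samelson / double Bruhat cell for a reduced word of $w_0$ maps birationally, but here the non-genericity forced by $N$-graph flags (only one step changes across each edge, and $\SF^i(F_1) \neq \SF^i(F_2)$ is the \emph{only} freedom) pins it down exactly rather than up to a cross-ratio.

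The main step is therefore to prove this uniqueness lemma for $S_3$-configurations of flags: if $\SF_k, \SF_{k+3}$ are given with $\SF_{k+3} = w \cdot \SF_k$ for $w$ the longest element of the rank-two parabolic, then the chain $\SF_k, \SF_{k+1}, \SF_{k+2}, \SF_{k+3}$ realizing the reduced word $\tau_{k+1}\tau_{k+2}\tau_{k+1}$ is determined. I would argue this by reducing to $N=3$ (everything happens inside the three relevant sheets, the other steps of all six flags being constant), where it becomes: three lines $\ell = \SF^1_k$, $\ell' = \SF^1_{k+3}$ and two planes $p = \SF^2_k$, $p' = \SF^2_{k+3}$ in $k^3$ in general position, and one wants to recover the flags $\SF_{k+1} = (\ell, p'')$ and $\SF_{k+2} = (\ell'', p')$ sitting on the hexagon. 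The relative-position constraints ($\SF_{k+1}$ shares its line with $\SF_k$ and its plane with $\SF_{k+2}$, etc.) force $p'' = \ell + \ell'$ and $\ell'' = p \cap p'$, so there is literally no freedom — the diagonals of the hexagon are determined by its opposite vertices. The symmetric argument handles $\SF_{k-1}, \SF_{k-2}$ starting from the same opposite pair but using the other reduced word $\tau_{k+2}\tau_{k+1}\tau_{k+2}$; consistency of the two (they glue along $\SF_k$ and $\SF_{k+3}$) is automatic since both are determined by the same data.

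The hardest part will be bookkeeping the index shifts and making sure the "only one step changes per edge" condition from Definition \ref{def:flagmoduli} is invoked correctly: an $N$-graph hexagonal vertex carries edges labeled $\tau_i$ and $\tau_{i+1}$ in the surface, but in the three-sheet front picture these correspond to transpositions $\tau_{k+1}$ and $\tau_{k+2}$ in $S_N$ where $k = i-1$ or as dictated by which sheets are crossing (this is exactly the relabeling discussed in the sheaf proof, where $C_1 \cong R^k$). I would state the lemma cleanly as: across each of the six edges exactly one step of the flag jumps, and the jumping steps alternate between index $i+1$ and index $i+2$ going around; then the reconstruction of the "missing" intermediate flags from an opposite pair is the elementary projective-geometry fact above. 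Once that is in place the proof is short — I would phrase the writeup as an application of Theorem \ref{thm:sheaves} plus this $\mathrm{GL}_3$ hexagon lemma, noting that it is precisely the combinatorial content of the R3 move in Figure \ref{fig:SheavesHexVertex}.
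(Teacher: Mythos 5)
Your core argument is the same as the paper's: the unknown intermediate flags are pinned down because the step that could vary is forced to be the span of two known distinct subspaces (or, dually, an intersection). The paper, however, proves this in four sentences working directly with Definition~\ref{def:flagmoduli}: with $\SF_4|\SF_5$ and $\SF_6|\SF_1$ of type $\tau_{i+1}$, one has $\SF_5^i=\SF_4^i$ and $\SF_6^i=\SF_1^i$, so the common step $V:=\SF_5^{i+1}=\SF_6^{i+1}$ must contain both of these distinct $i$-planes inside $\SF_1^{i+2}=\SF_4^{i+2}$, hence $V$ is uniquely their span. Everything else you invoke --- the sheaf-theoretic reading via Theorem~\ref{thm:sheaves} and Figure~\ref{fig:SheavesHexVertex}, the Bott--Samelson/reduced-word factorization statement, the reduction to $N=3$ --- is conceptually sound but unnecessary scaffolding around that one-line span-and-intersection fact, and the paper's proof does not depend on Theorem~\ref{thm:sheaves}. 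Your packaging is more suggestive of the Bruhat-combinatorial picture; the paper's is shorter and self-contained at the level of Definition~\ref{def:flagmoduli}.

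One bookkeeping slip worth flagging: with hexagonal edges labelled $\tau_{k+1},\tau_{k+2}$, the flags around the vertex vary only at \emph{two} steps, $k+1$ and $k+2$, not the ``three steps'' you claim; step $k+3$ is shared by all six flags and serves as the ambient space, exactly as $\SF_1^{i+2}=\SF_4^{i+2}$ does in the paper's proof. Also, the remark that the Bott--Samelson map is ``birational, but... pins it down exactly rather than up to a cross-ratio'' is confused: for flags at relative position \emph{exactly} $w_0$ of the rank-two parabolic, the chain realizing any reduced word is always unique; there is never a cross-ratio ambiguity to kill. Neither issue undermines the conclusion, but both would need cleaning up before the argument reads correctly.
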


\begin{proof}
By symmetry, it suffices to show that the flags $\SF_1, \SF_4$ determine $\SF_5$ and $\SF_6$.  We assume that $\SF_4$ and $\SF_5$ are separated by a $\tau_{i+1}$ edge --- a similar argument will work if it is of type $\tau_i$.
By the prescribed transversality, we have $\SF^{j}_6=\SF^j_1$ and $\SF^{j}_5=\SF^{j}_4$ for $j\neq i+1$.
Now since $\SF_5^i\neq\SF_6^i$, and $\SF_1^{i+2}=\SF_4^{i+2}$, there exists a unique linear subspace $V\sse\SF_1^{i+2}$ which contains $\SF_5^i,\SF_6^i$.  So we must have $\SF^{i+1}_5=\SF^{i+1}_6=V$, uniquely determining the flags $\SF_5$ and $\SF_6$.
\end{proof}

A direct application of Lemma \ref{lem:hexagonal} is the invariance of the moduli of flags under the $N$-graph Reidemeister Move I from our Theorem \ref{thm:surfaceReidemeister} above:

\begin{cor}\label{cor:CandyInv} The flag moduli space $\SM(C,G;R)$ is invariant under the {\it candy} twist.\hfill$\Box$
\end{cor}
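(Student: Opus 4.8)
The plan is to verify that the flag moduli space is unchanged after applying Move I (the candy twist, Figure \ref{fig:Reidemeister1}). Recall that Move I replaces a configuration with two hexagonal vertices (and two extra $\tau_i$- and $\tau_{i+1}$-edges forming a bigon-like region) by a configuration with no vertices at all — i.e.\ the two strands just pass straight through. The key tool is Lemma \ref{lem:hexagonal}: at a hexagonal vertex, any pair of opposite flags $\SF_k,\SF_{k+3}$ determines all six flags. So the strategy is to set up a bijection, $\GL_N$-equivariantly, between framed data $\wt\SM$ on the two sides of the move and then pass to the quotient by $\PGL_N$.

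First I would fix notation: in the "twisted" side, label the faces around the two hexagonal vertices. There is an outer boundary where, say, four faces $A_1,\dots,A_4$ appear (matching the boundary data of the local piece), and a few interior faces created by the bigon. Using Lemma \ref{lem:hexagonal} at each of the two hexagonal vertices, I would show that all interior flags are uniquely determined by the flags on the outer boundary of the move, subject to precisely the transversality constraints that the boundary flags satisfy in the "untwisted" side (where the two strands pass straight through, imposing the single relative-position conditions $\tau_i$ and $\tau_{i+1}$ between consecutive outer faces). Conversely, given boundary flags satisfying the untwisted constraints, Lemma \ref{lem:hexagonal} produces a unique choice of interior flags making a valid point of $\wt\SM$ of the twisted side. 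This gives a bijection between the two framed moduli sets that is manifestly natural in the boundary data, hence compatible with the gluing of the local model into any ambient $N$-graph $G\sse C$.

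Second, I would check that this bijection is $\GL_N$-equivariant: the construction via Lemma \ref{lem:hexagonal} uses only linear-algebraic operations (intersections and spans of the given subspaces), so it commutes with the diagonal $\GL_N$-action. Therefore it descends to an isomorphism of the quotient stacks $\SM(C,G;R)\cong\SM(C,\mu_I(G);R)$, where $\mu_I(G)$ is $G$ after a candy twist. Alternatively — and this is the cleaner route given Theorem \ref{thm:sheaves} — I would invoke Theorem \ref{thm:surfaceReidemeister}, which says Move I is a Legendrian isotopy of $\La(G)$, together with the invariance of $\SM(C,G;R)$ under Legendrian isotopy (Theorem \ref{thm:intro2}/Theorem \ref{thm:sheaves} plus Guillermou--Kashiwara--Schapira \cite{GKS_Quantization}). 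Both arguments should be presented; the direct one is more self-contained while the sheaf-theoretic one is immediate.

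The main obstacle is bookkeeping rather than mathematical depth: carefully matching the boundary faces of the local twisted diagram with those of the untwisted one, and confirming that the transversality conditions imposed at the four crossing-edges on the twisted side (together with the determination from Lemma \ref{lem:hexagonal}) are exactly equivalent to the two transversality conditions on the untwisted side — no more, no fewer. In particular one must check there is no hidden extra condition (e.g.\ some genericity needed for Lemma \ref{lem:hexagonal}'s "unique subspace $V$" to exist) that could fail for the specific configurations arising here; since the hexagonal-vertex flags are forced to be in the stated relative positions, this genericity is automatic, but it deserves an explicit remark. Once that matching is pinned down, the equivariance and the descent to stacks are routine.
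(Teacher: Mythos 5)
Your approach matches the paper's own exactly: the paper gives precisely these two arguments — the direct one via Lemma \ref{lem:hexagonal}, noting that the interior faces of the local model are uniquely determined by two opposing boundary flags and these in turn determine the rest, and the sheaf-theoretic one via Theorem \ref{thm:DiagrammaticsI} together with the Guillermou--Kashiwara--Schapira invariance. Your additional remarks on $\GL_N$-equivariance and descent to the quotient stack are correct elaborations of steps the paper leaves implicit.
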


The candy twist -- Move I -- is the move depicted in Figure \ref{fig:Reidemeister1} above, and the proof of Corollary \ref{cor:CandyInv} follows immediately from Lemma \ref{lem:hexagonal}, since the interior faces of the local model are uniquely determined by two opposing boundary flags, and they in turn determine the remaining ones. Corollary \ref{cor:CandyInv} also follows from Theorem \ref{thm:DiagrammaticsI} and the Legendrian invariance proven in \cite[Theorem 3.7]{GKS_Quantization}. The invariance of the moduli of flags under the other moves in Theorem \ref{thm:surfaceReidemeister} can be proven similarly by direct means.

Lemma \ref{lem:hexagonal} discusses the flags in a neighborhood of a hexagonal vertex and allows for a computation of the local flag moduli space $\SM(\Op_6(N);R)$ at a hexagonal vertex, since it reduces it to the study of a quadruple of flags.

\begin{example}
Let us illustrate this point by computing $\SM(\Op_6(3);\C)$, which we claim is isomorphic to a point stabilized by the subgroup $(\C^*)^2\sse\PGL(3,\C)$. Indeed, the incidence problem at a hexagonal vertex is given by six flags
$$\SF_1=(p_1,l_1),\quad \SF_2=(p_2,l_1),\quad \SF_3=(p_2,l_2),$$
$$\SF_4=(p_3,l_2),\quad\SF_5=(p_3,l_3),\quad\SF_6=(p_1,l_3)$$
where $p_i$ and $l_i$, for $1\leq i\leq 4$, are points and lines in $\P^2(\C)$ and the notation $(p_i,l_i)$ stands for the projectivized flag $p_i\in l_i$. Since the three points $p_1,p_2,p_3$ are pairwise distinct, $\PGL(3,\C)$ acts on them transitively, and their stabilizer is the (projectivization) of a maximal torus in $\GL(3,\C)$, which is isomorphic to $(\C^*)^2$. Lemma \ref{lem:hexagonal} provides a more direct route: it suffices to observe that the $\PGL(3,\C)$-stabilizer of the two completely transverse flags $\SF_1,\SF_4$ is the set of diagonal matrices in $\PGL(3,\C)$, i.e. $(\C^*)^2$.\hfill$\Box$
\end{example}

It is an exercise to extend the argument for Lemma \ref{lem:trivalent} above in this context and show that:

\begin{lemma}\label{lem:hexagonal2}
Consider the neighborhood $\Op_6(N)$ of a $(\tau_i,\tau_{i+1})$-hexagonal vertex in an $N$-graph. Then the local moduli of flags $\SM(\Op_6,G;k)$ is set-theoretically a point, and the $\PGL_N$-action on $\wt\SM(\Op_6,G;k)$ has stabilizer $(k^*)^{2}\times\left((k^*)^{N-3}\times k^{{N\choose 2}-3}\right)$.\hfill$\Box$
\end{lemma}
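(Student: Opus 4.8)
The plan is to mimic the structure of the proof of Lemma \ref{lem:trivalent}, using Lemma \ref{lem:hexagonal} to reduce the six-flag incidence problem at a hexagonal vertex to the configuration of two transverse flags plus bookkeeping of the cut-down degrees of freedom. First I would fix notation: let $\SF_1,\dots,\SF_6$ be the six flags around a $(\tau_i,\tau_{i+1})$-hexagonal vertex, cyclically ordered so that consecutive flags $\SF_j,\SF_{j+1}$ differ in exactly one step, alternating between step $i$ and step $i+1$ (say $\SF_1,\SF_2$ across a $\tau_i$-edge, $\SF_2,\SF_3$ across a $\tau_{i+1}$-edge, and so on). By Lemma \ref{lem:hexagonal}, a pair of opposite flags, say $\SF_1$ and $\SF_4$, determines all the others; moreover $\SF_1$ and $\SF_4$ are in relative position $w = \tau_i\tau_{i+1}\tau_i = \tau_{i+1}\tau_i\tau_{i+1}$, which is the longest element of the parabolic $\langle\tau_i,\tau_{i+1}\rangle\cong S_3$ sitting inside $S_N$. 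So the data reduces to: a flag $\SF_1$, and a flag $\SF_4$ in relative position $w$ with respect to it, with the remaining flags freely determined.

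Next I would run the gauge-fixing. Using the transitive $\GL_N$-action, send $\SF_1$ to the standard flag $\SS_1$; its stabilizer in $\PGL_N$ is the projectivized Borel $B$, of dimension $(N-1) + {N\choose 2}$, i.e. isomorphic as a variety to $(k^*)^{N-1}\times k^{{N\choose 2}}$ (torus part times unipotent part). Then I would place $\SF_4$ in the standard position prescribed by $w$ within the $B$-orbit structure — concretely, $\SF_4$ is the flag obtained from $\SS_1$ by the permutation $w$ acting on coordinates $i,i+1,i+2$. Set-theoretically this shows $\SM(\Op_6,G;k)$ is a single point, exactly as for the trivalent case. The remaining work is to compute the stabilizer of the pair $(\SS_1,\SF_4)$ inside $B$: an element of $B$ fixing $\SF_4$ must, in the $3\times 3$ block on coordinates $i,i+1,i+2$, lie in $B \cap wBw^{-1}$, i.e. the opposite Borel within that $GL_3$ block, which forces that block to be diagonal, imposing the vanishing of the three entries $(i,i+1)$, $(i,i+2)$, $(i+1,i+2)$ and leaving a $(k^*)^2$ torus there; outside that block $B$ is unaffected. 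Counting: we lose $3$ unipotent coordinates and we replace the $(k^*)^3$ diagonal piece on those three indices by $(k^*)^2$ — which is most cleanly bookkept by saying the torus $(k^*)^{N-1}$ keeps $(k^*)^2$ from the block (the scaling on the three coordinates, modulo the PGL center, giving a $2$-torus) and $(k^*)^{N-3}$ from the rest, while the unipotent part drops from $k^{{N\choose 2}}$ to $k^{{N\choose 2}-3}$. This yields the claimed stabilizer $(k^*)^2\times\big((k^*)^{N-3}\times k^{{N\choose 2}-3}\big)$.

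The main obstacle I anticipate is the careful dimension/torus bookkeeping in the last step: disentangling how the global diagonal torus $(k^*)^{N-1}$ of $B$ splits under the stabilizer condition into the "block" $(k^*)^2$ and the "complement" $(k^*)^{N-3}$, and making sure the projectivization (quotient by the $\mathbb{G}_m$ center) is accounted for correctly so that the final count has the right rank. It is worth double-checking consistency with Lemma \ref{lem:hexagonal2}'s companion $N=3$ computation in the preceding example, where the formula gives stabilizer $(k^*)^2\times\big((k^*)^0\times k^0\big)=(k^*)^2$, matching the stated $(\C^*)^2$. A secondary point requiring a line of justification is that the unipotent conditions cut out by transversality are genuinely independent (a "transverse" cut in each of the three coordinate directions), so that the codimension really is $3$ and not less; this is immediate from the explicit description of $wBw^{-1}$ on the block but should be stated. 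Beyond these, the argument is routine and parallel to Lemma \ref{lem:trivalent}, so I would keep the write-up short, citing Lemma \ref{lem:hexagonal} for the reduction and Lemma \ref{lem:trivalent} for the method.
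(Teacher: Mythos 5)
Your proof is correct and is exactly the filling-in that the paper intends: the text before the lemma says ``It is an exercise to extend the argument for Lemma~\ref{lem:trivalent} above in this context,'' gives no explicit proof, and the companion $N=3$ example after Lemma~\ref{lem:hexagonal} already records the key reduction (via Lemma~\ref{lem:hexagonal}, opposite flags $\SF_1,\SF_4$ determine the rest, so the stabilizer is that of a totally transverse pair, i.e.~$B\cap wBw^{-1}$). Your bookkeeping -- three unipotent entries killed in the $3\times3$ block on coordinates $i,i+1,i+2$, torus unaffected, then projectivize -- gives $(k^*)^{N-1}\times k^{\binom{N}{2}-3}$, which matches the stated $(k^*)^2\times\bigl((k^*)^{N-3}\times k^{\binom{N}{2}-3}\bigr)$. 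The one point worth a sentence in a clean write-up, which you gesture at but do not fully justify, is \emph{why} $\SF_1$ and $\SF_4$ must actually be in relative position $w_0$ of the $\langle\tau_i,\tau_{i+1}\rangle$-parabolic (rather than something shorter in Bruhat order): this follows from the pairwise $\tau_i$- and $\tau_{i+1}$-inequalities among all six flags, not from Lemma~\ref{lem:hexagonal} alone, whose proof already assumes the six flags are given and valid.
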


Having computed the local models at trivalent and hexagonal vertices, in Lemmas \ref{lem:trivalent} and \ref{lem:hexagonal2}, we now address the local flag moduli space around a $\tau_i$-edge connecting two trivalent vertices for $1\leq i\leq N-1$. Thanks to our discussion in Subsection \ref{ssec:homology} on the homology of the associated Legendrian weaves, we know that this is the flag moduli space associated to a Legendrian cylinder. In contrast to Lemmas \ref{lem:trivalent} and \ref{lem:hexagonal2} above, we will now discover that the local flag moduli space around a monochromatic edge is (set-theoretically) non-trivial.

\begin{lemma}[Flag Cross-ratio]\label{lem:crossratio}
Let $G$ be an $N$-graph, and $e\in G$ a monochromatic edge between two trivalent vertices. The local flag moduli space $\SM(\Op(e),G;k)$ in a neighborhood $\Op(e)$ is isomorphic to $k^*$ with stabilizer $(k^*)^{N-2}\times k^{{N\choose 2}-1}$, under the $\PGL_N$-action. 
\end{lemma}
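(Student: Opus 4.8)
\emph{Proof proposal.} The plan is to reduce the incidence problem around $e$ to a configuration of four points on a projective line, so that the output is literally a cross-ratio. First I would unfold the local model. A $\tau_i$-edge $e$ joining two $\tau_i$-trivalent vertices $v_1,v_2$ cuts the disk $\Op(e)$ into four faces: two faces $F_t,F_b$ meeting along $e$ itself, one face $F_\ell$ behind $v_1$ and one face $F_r$ behind $v_2$, so that the three faces around $v_1$ are $\{F_t,F_b,F_\ell\}$ and the three around $v_2$ are $\{F_t,F_b,F_r\}$. By Definition \ref{def:flagmoduli}, the four associated flags $\SF_t,\SF_b,\SF_\ell,\SF_r$ agree in every step $j\neq i$; write $W:=\SF^{i-1}$ and $U:=\SF^{i+1}$ for the common bracketing steps, so each $\SF^i_\bullet$ is an $i$-dimensional space with $W\subset\SF^i_\bullet\subset U$, i.e. a point of the projective line $\P(U/W)$ (of dimension $\dim U/W=2$). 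Reading the transversality conditions off the pairs of adjacent faces at $v_1$ and $v_2$: the three points from $v_1$ are pairwise distinct, the three from $v_2$ are pairwise distinct, and there is \emph{no} relation between $\SF^i_\ell$ and $\SF^i_r$ because $F_\ell$ and $F_r$ are not adjacent in $\Op(e)$.

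Next I would run the gauge quotient, bootstrapping off Lemma \ref{lem:trivalent}. Apply that lemma to the triple of flags around $v_1$: the $\PGL_N$-action sends $(\SF_t,\SF_b,\SF_\ell)$ to the standard triple $(\SS_1,\SS_2,\SS_3)$ with residual stabilizer $H\cong(k^*)^{N-2}\times k^{{N\choose 2}-1}$. This simultaneously fixes the common partial flag (in particular $W$ and $U$) and the three points $\SF^i_t,\SF^i_b,\SF^i_\ell\in\P(U/W)$. The only remaining freedom is then the single flag $\SF_r$, which by the previous paragraph is an arbitrary point of $\P(U/W)$ distinct from $\SF^i_t$ and $\SF^i_b$, that is, a point of $\P(U/W)$ minus two points, hence a copy of $k^*$. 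This parameter is exactly (a coordinate on) the cross-ratio of the four step-$i$ subspaces, whence the name; so, after the normalization, $\wt\SM(\Op(e),G;k)$ is a copy of $k^*$ carrying a residual $H$-action, and it remains only to identify that action.

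Finally, the stabilizer: I claim $H$ acts trivially on this residual $k^*$, so the stabilizer of the full four-flag configuration is the same $H=(k^*)^{N-2}\times k^{{N\choose 2}-1}$ at every point and $\SM(\Op(e),G;k)\cong k^*$ with that stabilizer. For this, observe that an element of $H$ is upper triangular (it fixes $\SS_1$), has vanishing $(i,i+1)$-entry (it fixes $\SS_2$), and has equal $(i,i)$- and $(i+1,i+1)$-entries (it fixes $\SS_3$); such a matrix preserves $W$ and $U$, and the induced map on $U/W$ computed in the basis $\bar e_i,\bar e_{i+1}$ is scalar — the off-diagonal term dies because the $(i,i+1)$-entry is zero, and the two diagonal entries coincide. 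A scalar on $U/W$ fixes every point of $\P(U/W)$, in particular $\SF^i_r$ for any value; hence every $h\in H$ already stabilizes $(\SS_1,\SS_2,\SS_3,\SF_r)$, while conversely the stabilizer of the four-tuple is contained in the stabilizer of its first three entries, which is $H$. This yields the claimed stabilizer and completes the identification $\SM(\Op(e),G;k)\cong k^*$. The step I expect to require the most care is the combinatorial bookkeeping of the local model — getting the four faces and their adjacencies right, and hence the precise list of transversality conditions — since an off-by-one there would change whether the fourth flag contributes $k^*$, $\mathbb{A}^1$, or $\P^1$; everything after that is a direct continuation of the computation already carried out for Lemma \ref{lem:trivalent}.
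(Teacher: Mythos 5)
Your proof follows the same strategy as the paper's: normalize the triple of flags around $v_1$ via Lemma \ref{lem:trivalent}, leaving the flag at $F_r$ as the one remaining degree of freedom, and read off a $k^*$ of choices. You are, however, more careful than the paper's terse account on two points that it elides. First, you correctly track that $\SF^i_r$ is constrained to avoid only $\SF^i_t$ and $\SF^i_b$ --- not $\SF^i_\ell$, since $F_\ell$ and $F_r$ share no edge of the local model --- so the parameter lives in $\P^1$ minus \emph{two} points, which is $k^*$; the paper's phrase ``a fourth point in $\P^1(k)\setminus\{0,1,\infty\}$'' is imprecise as written (over $\F_q$ that set has $q-2$ rational points, not $q-1$), and your version is the one consistent with the stated answer $k^*$ and with Definition \ref{def:flagmoduli}. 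Second, you explicitly verify that the residual stabilizer $H=(k^*)^{N-2}\times k^{{N\choose 2}-1}$ acts by scalars on $U/W$, hence trivially on $\P(U/W)$, so it fixes $\SF^i_r$ for every choice; this check is needed to conclude the stabilizer is the same at every point of the $k^*$, and the paper leaves it implicit.
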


Lemma \ref{lem:crossratio} appears in the study of cluster coordinates for $2$-graphs in the works \cite{FockGoncharov_ModuliLocSys,TreumannZaslow}, yet a treatment of it here, in the context of $N$-graphs, seems in order. The interesting part in Lemma \ref{lem:crossratio} is the existence of a non-trivial flag moduli space around the edge $e\in G$. The stabilizer only appears due to the dependence on $N$. Note also that, by using Lemma \ref{lem:hexagonal}, the statement in Lemma \ref{lem:crossratio} can readily be generalized for a long edge $e$, i.e. an $\sf I$-cycle between two trivalent vertices, as described in Section \ref{sec:NGraphsLegWeaves}.

\begin{proof}
For an edge $e\in G$ between two trivalent vertices, it suffices to discuss the case of a monochromatic edge, since the push-through move preserves the flag moduli. In this case, let $v_1,v_2$ be the two endpoints of $e$. By Lemma \ref{lem:trivalent}, the local flag moduli space around $v_1$ can be fixed to be a point with stabilizer $(k^*)^{N-2}\times k^{{N\choose 2}-1}$. In this normalization, the flag moduli space around $v_2$ is determined in two of the sectors, and thus it is uniquely described by the remaining choice of flag. This is tantamount to the choice of a fourth point in $\P^1(k)\setminus\{0,1,\infty\}$, which yields a modulus of $k^*$. 
\end{proof}

In general, the existence of a non-trivial $1$-cycle $\gamma\in H_1(\La(G),\Z)$ provides the flag moduli space with a $k^*$ factor, which can be geometrically interpreted as being a contribution of the microlocal monodromy of the associated local system induced in the Legendrian surface $\La(G)$, as we explain in Section \ref{sec:app2}. The following example illustrates this point in the case of a $\sf Y$-cycle in $G$.

\begin{example}\label{ex:Ycycle}
Let us compute the local flag moduli space in an $N=3$ neighborhood of a $\sf Y$-cycle, as depicted in Figure \ref{fig:1Cycle} (Right). The configurations of points for this incidence problem are given by the following conditions:

\begin{itemize}
\item[(a)] Three distinct points $p_0,p_1,p_2$, and three points $q_i\in l_i=\langle p_i,p_{i+1}\rangle$, where the index $0\leq i\leq 2$ is understood modulo $3$,\\

\item[(b)] The triples $\{p_i,p_{i+1},q_i\}$, $0\leq i\leq 2$, are triples of distinct points.
\end{itemize}

The action of $\PGL_3$ allows us to set $p_0=[1:0:0]$, $p_1=[0:1:0]$ and $p_2=[0:0:1]$ with a $(k^*)^2$ Cartan stabilizer, and this stabilizer can then be used to fix $q_0=[1:1:0]\in l_0$ and $q_1=[0:1:1]$. The remaining choice of $q_2$ yields the $k^*$ contribution to the flag moduli space since it is a choice of a point $q_2\in l_2$ distinct from $p_1,p_2$.\hfill$\Box$
\end{example}

This concludes our local computations of flag moduli spaces $\SM(C,G)$. We now study the behavior of the invariant $\SM(C,G)$ under Legendrian surgery, and Sections \ref{sec:app} and \ref{sec:app3} will develop global computation of flag moduli spaces. Given an $N$-graph $G\sse C$, we ease notation by writing $\SM(G)$ for $\SM(C,G)$.


\subsection{Flag Moduli under Legendrian Surgeries} Let $G,G'$ be $N$-graphs such that $G'$ is obtained by Legendrian surgery on $G$, as described in Theorem \ref{thm:Legsurgeries}. The following result relates the flag moduli spaces $\SM(G)$ and $\SM(G')$ before and after Legendrian surgery.

\begin{thm}\label{thm:FlagLegSurgeries} Let $k$ be a field and $G$ an $N$-graph. For any $\tau_i$-edges of $G$, the flag moduli space $\SM(G)$ satisfies the following local relations:
	\begin{figure}[h!]
		\centering
		\includegraphics[scale=0.5]{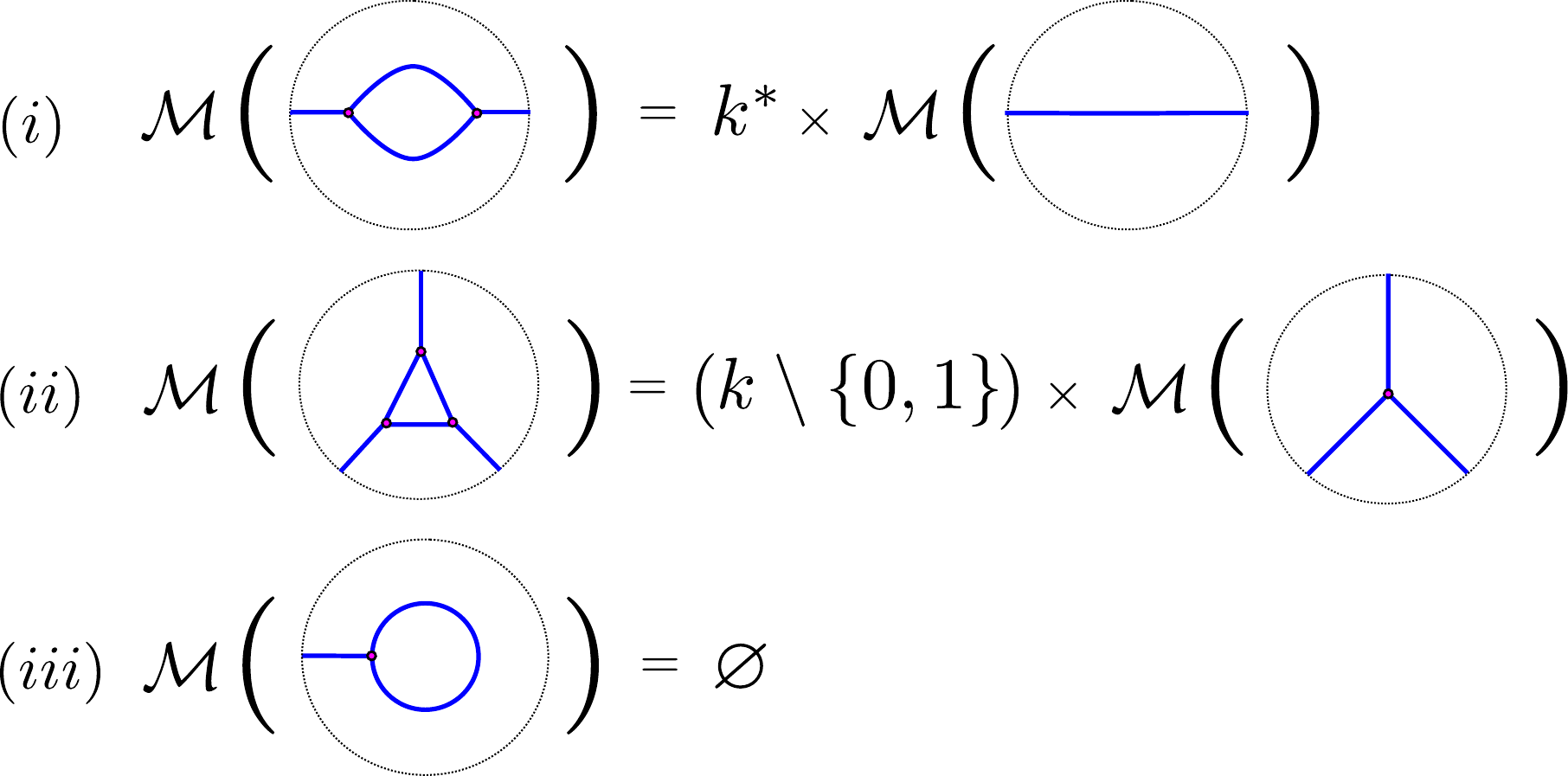}
		\caption{The change of the flag moduli spaces $\SM(G)$ under combinatorial changes in a piece of an $N$-graph $G$.}
		\label{fig:FlagModuliUnderLegendrianSurgeries}
	\end{figure}
\end{thm}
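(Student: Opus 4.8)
The strategy is to establish each of the local relations in Figure \ref{fig:FlagModuliUnderLegendrianSurgeries} by a direct computation with flags, using the locality of $\SM(G)$ with respect to $G\sse C$ (proven via the sheaf description in Theorem \ref{thm:sheaves}) to reduce everything to a model situation in a 2-disk $\D^2$. Since $\SM(G)$ is computed as a fiber product over its restriction-to-boundary maps, it suffices to analyze the moduli of flags on the piece of $G$ being modified, keeping the boundary configuration of flags fixed, and compare the fibers before and after the combinatorial move. The combinatorial moves in question are exactly the four items of Theorem \ref{thm:Legsurgeries}: the $0$-surgery (adding an $i$-edge with two vertices along an existing $i$-edge), the $1$-surgery (removing an $i$-edge between two trivalent vertices), the connect sum (kissing two trivalent vertices), and the Clifford sum (replacing a trivalent vertex by a triangle); I would treat them one at a time.

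\textbf{Key steps.} First, for the $0$-surgery: along an existing $i$-edge the boundary data is a pair of $\tau_i$-transverse flags $(\SF_1,\SF_2)$; after inserting an $i$-edge with two trivalent vertices in between, the new interior face carries a flag $\SF'$ which must be $\tau_i$-related to both $\SF_1$ and $\SF_2$ (by Lemma \ref{lem:trivalent}, each trivalent vertex forces its three incident flags into a fixed triple-of-flags configuration up to gauge). Fixing $\SF_1,\SF_2$ as the standard $\tau_i$-transverse pair, the flag $\SF'$ differs from both only in the $i$-th step, i.e.\ $\SF'^i$ is a line in $\SF_1^{i+1}/\SF_1^{i-1}\cong k^2$ distinct from $\SF_1^i$ and $\SF_2^i$; this is a choice of a point in $\P^1(k)\setminus\{0,\infty\}$, hence a $k^*$-factor. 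So the $0$-surgery multiplies the flag moduli by $k^*$ (up to the bookkeeping of stabilizers, which is handled as in Lemma \ref{lem:crossratio}). The $1$-surgery is the inverse operation: removing an $i$-edge between two trivalent vertices collapses exactly this $k^*$-parameter (the interior flag and the two trivalent-vertex constraints disappear and the two outer faces become merely $\tau_i$-related), so the flag moduli is divided by $k^*$ — which is consistent with Lemma \ref{lem:crossratio}, identifying $\SM(\Op(e))\cong k^*$ with the cross-ratio of the four flags around $e$. Second, for the connect sum: kissing two trivalent vertices $v_1\in G_1$, $v_2\in G_2$ in disjoint graphs introduces a new shared vertex region; since each $G_j$-side is already normalized to a point (Lemma \ref{lem:trivalent}) and the gluing imposes only a compatibility of flags that can be absorbed by the $\PGL_N$-action, no new modulus is created — the flag moduli of the connect sum is the product $\SM(G_1)\times\SM(G_2)$, matching the geometric connect-sum statement. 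Third, for the Clifford sum: replacing a trivalent vertex by a triangle inserts three trivalent vertices and one interior triangular face; the new interior flag is constrained by three $\tau_i$-relations (one per edge of the triangle) to the three surrounding faces. Analyzing this configuration — three flags pairwise $\tau_i$-related together with a fourth flag $\tau_i$-related to all three — yields an additional $k^*$-parameter (the cross-ratio of the four relevant lines), so the Clifford sum multiplies $\SM(G)$ by $k^*$, consistent with the connect sum with the Clifford $2$-torus $\bT^2_c$ (whose flag moduli is $\C\setminus\{0,1\}\simeq \C^*$, as recorded in the examples in Subsection \ref{ssec:RealLifeExample}).

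\textbf{Main obstacle.} The main technical difficulty is not in finding the $k^*$-factors — those are short cross-ratio computations — but in keeping exact track of the stabilizer subgroups under the $\PGL_N$-action, so that the statements are genuine isomorphisms of quotient stacks rather than just of coarse spaces. In particular, one must verify that the $k^*$-parameter introduced by a $0$-surgery (or Clifford sum) is a \emph{free} coordinate, i.e.\ that the residual gauge stabilizer after normalizing the trivalent-vertex flags (computed in Lemmas \ref{lem:trivalent} and \ref{lem:hexagonal2} to be $(k^*)^{N-2}\times k^{\binom{N}{2}-1}$) acts trivially on it; this is exactly the content of the computation in the proof of Lemma \ref{lem:crossratio} and must be re-run in each of the four cases. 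A secondary subtlety is the non-locality flagged in Definition \ref{def:flagmoduli-general}(iv) when $C$ is not simply connected: one must check that the surgery operations, which are supported in a disk, do not interfere with the global monodromy-compatibility conditions, so that the local relations of Figure \ref{fig:FlagModuliUnderLegendrianSurgeries} hold verbatim over an arbitrary connected $C$; this follows because all modifications take place inside a contractible chart where Definition \ref{def:flagmoduli} applies and the gluing to the rest of $C$ is unchanged.
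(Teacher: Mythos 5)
Your overall approach — computing the local relations directly with flags, exploiting the locality of $\SM(G)$ established in Theorem \ref{thm:sheaves}, and tracking the $\PGL_N$-stabilizers via Lemmas \ref{lem:trivalent} and \ref{lem:crossratio} — is a legitimate alternative to the paper's argument, which instead recognizes the moves geometrically via Theorem \ref{thm:Legsurgeries} (as connected sums with $\bT^2_\st$ or $\bT^2_c$, or as loose Legendrians via Proposition \ref{prop:bridge}) and then invokes the multiplicativity of the moduli of microlocal sheaves under Legendrian connected sum together with the known values $\SM(\bT^2_\st)=k^*$, $\SM(\bT^2_c)=k\setminus\{0,1\}$. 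Both strategies work in principle, and the paper explicitly allows for the direct flag computation.

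However, your direct computation contains an error in the Clifford-sum case. Replacing a trivalent vertex by a triangle introduces an interior face whose flag $\SF_0$ must be $\tau_i$-transverse to \emph{three} pairwise-distinct ambient flags $\SF_1,\SF_2,\SF_3$ (one from each of the three sectors). In the projectivized two-dimensional quotient $\SF^{i+1}/\SF^{i-1}$, this is a choice of a point in $\P^1(k)\setminus\{p_1,p_2,p_3\}\cong k\setminus\{0,1\}$, not $k^*$. Your parenthetical assertion that $\C\setminus\{0,1\}\simeq\C^*$ is false: as algebraic varieties these have $\F_q$-point counts $q-2$ and $q-1$ respectively, and topologically they have first Betti numbers $2$ and $1$. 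So the Clifford sum multiplies $\SM(G)$ by $k\setminus\{0,1\}$, which is precisely what the connect-sum-with-$\bT^2_c$ picture predicts. (The case of a $0$-surgery gives $\P^1\setminus\{2\text{ points}\}\cong k^*$, which you did compute correctly.)

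Separately, you did not address the third local relation in Figure \ref{fig:FlagModuliUnderLegendrianSurgeries}: when the $N$-graph acquires a bridge in the sense of Definition \ref{def:bridge}, the Legendrian weave is loose by Proposition \ref{prop:bridge}, and the category of microlocal rank-one sheaves is empty, so $\SM(G)=\varnothing$. This relation does not follow from a cross-ratio count and requires the looseness input; you would need to either cite Proposition \ref{prop:bridge} or observe directly that the transversality constraints around a bridge admit no solutions.
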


\begin{proof}
The relations can be verified with our description of the flag moduli space in Subsection \ref{ssec:description_flagmoduli}. We can also argue directly thanks to the geometry developed in Section \ref{sec:moves}. Indeed, the moduli of objects of the category of constructible sheaves microlocally supported at a Legendrian connected sum $\La_1\#\La_2$ is a direct product of the moduli of objects microlocally supported at $\La_1$ and those microlocally supported at $\La_2$. By Theorem \ref{thm:Legsurgeries}, the right and left graphs $G_r,G_l$ for the Relations $(i)$ and $(ii)$ geometrically correspond to Legendrian connected sums with the standard Legendrian 2-torus $\bT^2_\st$, and the Legendrian Clifford 2-torus $\bT^2_c$, respectively. The flag moduli for the former is $k^*$, and for the latter it is $k\setminus\{0,1\}$, which concludes $(i),(ii)$. Finally, the relation $(iii)$ follows from Proposition \ref{prop:bridge}, as there do not exist constructible sheaves microlocally supported at a loose Legendrian.
\end{proof}

Note that, by construction, there exists a 3-dimensional exact Lagrangian cobordism $L(G,G')$ from $\La(G)$ to $\La(G')$, in the symplectization of $(J^1C,\xi_\st)$. Thus, from the standard results in Floer theory \cite{EkholmEtnyreSullivan05a,EliashbergGiventalHofer00}, we expect\footnote{To our knowledge, these maps have yet to be studied in the context of microlocal sheaf theory. The expectation that they exist comes from the fact that the flag moduli space $\SM(G)$ should correspond to an augmentation variety for $\La(G)$, and these maps are known to exist between augmentation varieties.} a map from $\SM(G)\times H_1(L(G,G'),k)$ to $\SM(G')$. Theorem \ref{thm:FlagLegSurgeries} gives a strong indication of what these maps should be, i.e. for $(i),(ii)$, $\SM(G')$ is a $k^*$- or a $(k\setminus\{0,1\})$-bundle over $\SM(G)$, with the map being a section for this bundle projection.

\subsection{Non-characterstic Property of Stabilization} We conclude Section \ref{sec:flag} with an interesting and direct computation of flag moduli spaces. First, note that the proof of Theorem \ref{thm:graph_stab}, showing that the standard satellites of $\La(G)$ and $\La(s(G))$ are Legendrian isotopic, and Theorem \ref{thm:sheaves} imply the isomorphism $$\SM(\Lambda(G))\cong\SM(\Lambda(s(G))),$$
where $s(G)$ is the stabilization we introduced in Subsection \ref{ssec:Stabilization}. We will nevertheless provide a self-contained sheaf-theoretical proof of that equivalence, which we now illustrate in the case $N=2$.

\begin{proof}[Proof of flag moduli space equivalence $N=2$]
	In that case, the moduli of objects in the category $\SM(\Lambda(s(G)))$ parametrizes flags in $\P^2$ up to $\PGL(3,\C)$ equivalence abiding the constraints imposed by the 3-graph on the left of Figure \ref{fig:Stabilization_Sheaf}. We assume that the 2-graph $G=G^{1,2}$, before stabilizing, contains at least a vertex.
	
	\begin{center}
		\begin{figure}[h!]
			\centering
			\includegraphics[scale=0.75]{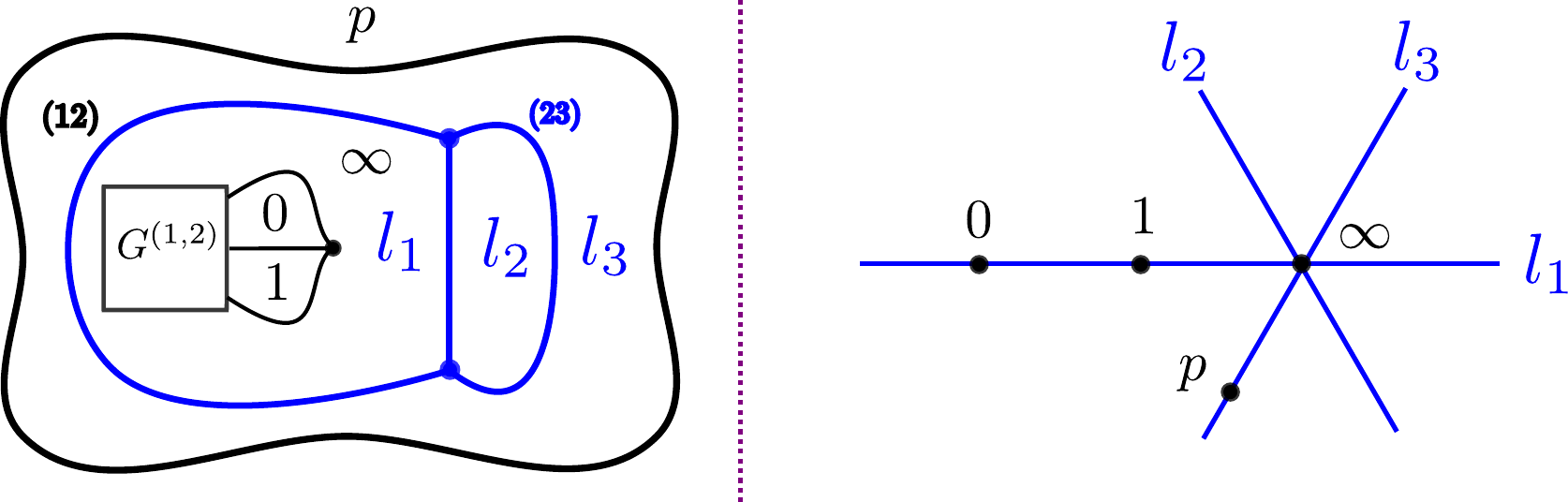}
			\caption{The flag configuration for $N=2$ stabilization.}
			\label{fig:Stabilization_Sheaf}
		\end{figure}
	\end{center}
	
	The graph $G^{(1,2)}$ imposes constraints on the points lying in a line $l_1\sse\P^2$, the ladybug changes this line to distinct lines $l_2,l_3$, also different from $l_1$, and the descending $(12)$-halo provides the freedom of a point $p\in l_3$. The fact that $G^{(1,2)}$ is contained in a wing of the ladybug implies that $l_1\cap l_2\cap l_3$ is a point, which for now we denote $\infty$. Let us show that this moduli space coincides with the moduli space of points in $l_1$ imposed by $G^{(1,2)}$. For that, note that the stabilizer of three non-collinear points $p_1,p_2,p_3\in\P^2$ is isomorphic to $\C^*\times\C^*$; indeed, it is isomorphic to the space of invertible diagonal matrices in $\PGL(3,\C)$. Geometrically, each of the $\C^*$ allows us to move any point in one of the three possible lines spanned by two of the three points $\{p_1,p_2,p_3\}$ around that line, on the complement of these two spanning points.
	
	Hence we can start by using the $\PGL(3,\C)$ and fix the points $1,p,\infty\in\P^2$ in the configuration shown in the right of Figure \ref{fig:Stabilization_Sheaf}, which determine the lines $l_1,l_3$. From the $\C^*\times\C^*$ we can use the first $\C^*$ in order to send the third point in $l_1$ imposed by $G^{(12)}$ to $0\in l_1$, and the second $\C^*$ to choose a point in the line $l=\langle 1,p\rangle$, which in turn determines a line $l_2\sse\P^2$ by taking its span with $\infty\in l_1\cap l_3$. This fixes the configuration of lines $l_1,l_2,l_3$ and the points ${0,1,\infty,p}\sse\P^2$ with $\{0,1,\infty\}\sse l_1$, and that is precisely the three points being fixed by the $\PGL(2,\C)$ symmetry acting in $G^{(12)}$.
\end{proof}

This argument is self-contained, yet hopefully illustrates how in general the {\it geometric} conclusion from Theorem \ref{thm:graph_stab}, and the invariance of the flag moduli space $\SM$ under Legendrian isotopy, are stronger and neater tools than the strict algebraic invariance of the flag moduli space. Let us now move forward with the following Sections \ref{sec:app}, \ref{sec:app2} and \ref{sec:app3}, which display several applications of the techniques developed in Sections \ref{sec:NGraphsLegWeaves}, \ref{sec:constr}, \ref{sec:moves} and \ref{sec:flag}, and in particular prove Theorems \ref{thm:intro1}, \ref{thm:symmetries_intro}, \ref{thm:ThurstonLinksIntro} and \ref{thm:FlagModuli_Ntriangle_Intro} stated in the introduction.


\section{Applications and Vexillary Computations}\label{sec:app}

In this section we study applications of our diagrammatic calculus for Legendrian weaves $\La(G)$ associated to an $N$-graph $G$, and their flag moduli spaces $\SM(G)$. In particular, we will prove Theorem \ref{thm:intro1} and Theorem \ref{thm:symmetries_intro}.


\subsection{First Pair of Computations}\label{ssec:FirstComputations} Let us start with two simple examples of Legendrian weaves and their flag moduli: the Legendrian Clifford torus and the double $t_4\cup t_4\sse\S^2$ of the 4-triangle $t_4$ in the 2-sphere $\S^2$.


\subsubsection{The Legendrian Clifford Torus} Let us consider the 2-graph $G=(\dd\Delta^3)^{(1)}\sse\S^2$ in Figure \ref{fig:CliffordSum2}, which has already featured in the proof of Theorem \ref{thm:Legsurgeries}. The flag moduli space $\SM(G)$ is readily seen to be the pair of pants $\P^1\setminus\{0,1,\infty\}$. Indeed, there are four contractible connected components in $\S^2\setminus G$, which implies that
$$\wt\SM(G)=\{(p_1,p_2,p_3,p_4)\in(\P^1)^4:p_i\neq p_j,\quad i\neq j\}$$

where $\P^1\cong\GL(2,\C)/B$ is the flag variety of lines in $\C^2$. Since $\PGL(2,\C)$ acts 3-transitively on $\P^1$, we can assume that $(p_2,p_3,p_4)=(0,1,\infty)$, and the quotient $\wt\SM(G)/PGL(3,\C)$ is given by
$$\SM(G)=\{\lambda\in\P^1:\lambda\neq 0,1,\infty\}.$$
This flag moduli space is shown in Figure \ref{fig:CliffordSum2} (left), which is uniquely determined by the choice of $\lambda\in\P^1\setminus\{0,1,\infty\}$.
\begin{center}
	\begin{figure}[h!]
		\centering
		\includegraphics[scale=0.7]{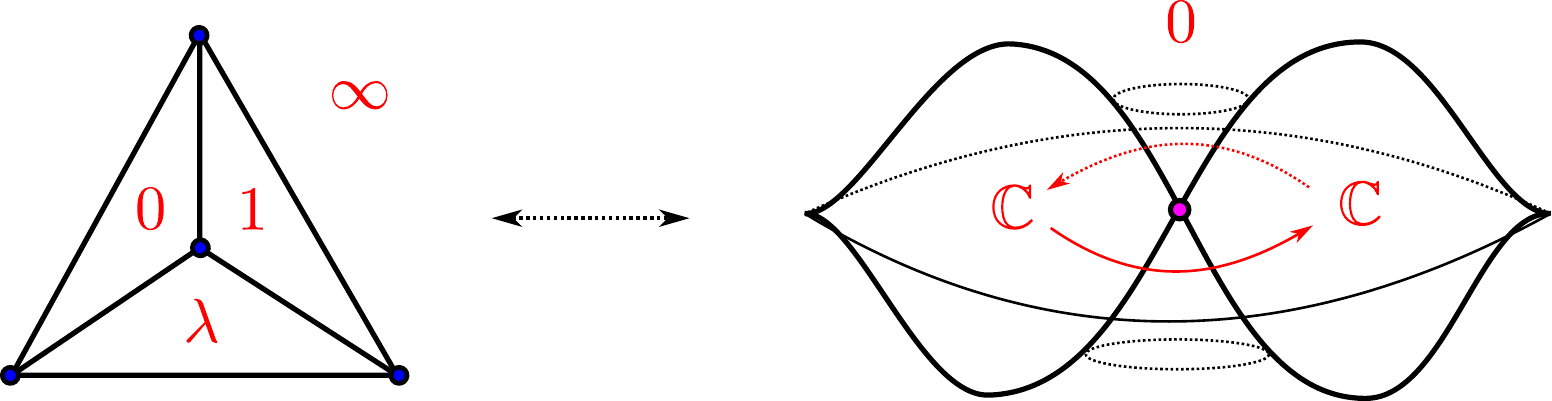}
		\caption{The tetrahedral 2-graph $G$ as a planar projection of the 1-skeleton $(\Delta^3)^{(1)}$ of the tetrahedron $\dd\Delta$ (left). A front projection for the Legendrian 2-torus $\iota(\La(G))$ (right).}
		\label{fig:CliffordSum2}
	\end{figure}
\end{center}

Let us illustrate the Legendrian geometry in this case. The Euler characteristic of the Legendrian weave $\La(G)$ is $\chi(\La(G))=2\cdot\chi(\S^2)-4=0$, and thus $\La(G)$ is a closed 2-torus. A different front for $\La(G)$ is depicted in Figure \ref{fig:CliffordSum2} (right), where the cone singularity \cite[Section 2]{CasalsMurphy} is used, in line with the description in \cite[Section 3]{Rizell_TwistedSurgery}. The flag moduli space $\SM(G)$ for the 2-graph $G$ is read in this front as the moduli space of constructible sheaves in $\R^3$ microlocally supported with rank-1 in the front Figure \ref{fig:CliffordSum2} (right). This latter moduli is given with the data of a 1-dimensional vector space $\C$ in the bounded region in the interior of the front and a linear monodromy map $\lambda:\C\lr\C$. The monodromy must be an isomorphism, and thus $\lambda\in\GL(1,\C)\cong\C^*$, and also satisfy the additional constraint imposed by the cone singularity. By generically perturbing this singularity, it is readily seen that the condition is that the monodromy $\la$ does not have $1$ has an eigenvalue, which in this case reduces to $\la\in\C\setminus\{0,1\}\cong\P^1\setminus\{0,1,\infty\}$. This is precisely the flag moduli space $\SM(G)$.\hfill$\Box$

\begin{remark}[\cite{Nadler_LG1,TreumannZaslow}]\label{rmk:HMS} This particular wavefront allows for a direct Legendrian analysis of the Landau-Ginzburg model $(\C^3,z_1z_2z_3)$, as follows. The regular fiber $F\sse\C^3$ of the superpotential is isomorphic to $F\cong(\C^*)^2$, and its Lagrangian skeleton is thus an exact 2-torus $\bT^2\sse F$, i.e. the vanishing cycle for the (non-isolated) singularity $W$. Its Legendrian lift
$$\La:=\{(z_1,z_2,z_3)\in\C^3:|z_1|=|z_2|=|z_3|=1/3,\arg(z_1)+\arg(z_2)+\arg(z_3)=0\}\sse(\S^5,\xi_\st),$$
has vanishing (singular) thimble the conic Lagrangian
$$L=\{(z_1,z_2,z_3)\in\C^3:W(z_1,z_2,z_3)\in\R^+,|z_1|=|z_2|=|z_3|\}.$$
By performing a real blow-up at the origin, we introduce a real 2-sphere $\S^2$ at the origin and a projection map $\pi:\La\lr\S^2$ from our Legendrian 2-torus onto this exceptional 2-sphere $\S^2$. In coordinates, the map $\pi(z_1,z_2,z_3)=(\Re(z_1),\Re(z_2),\Re(z_3))$ is just given by taking the real parts of the complex coordinates and realizes the Legendrian surface $\La\sse(\S^5,\xi_\st)$ as the Legendrian weave $\iota(\La(G))$ associated to the four-vertex 2-graph $G\sse\S^2$, given by the 1-skeleton of the tetrahedron. Thus, the mirror of the Landau-Ginzburg model $(\C^3,z_1z_2z_3)$ is the Legendrian 2-torus in $(J^1\S^2,\xi_\st)$ which satellites to the Clifford 2-torus $\bT^2_c\sse(\S^5,\xi_\st)$. This leads to the description of the A-model Landau-Ginzburg model $(\C^3,z_1,z_2,z_3)$, given by the category $\mu\mbox{Sh}_L(\C^3)$ of wrapped sheaves, as the bounded dg-category of finitely-generated torsion complexes on the flag moduli space $\SM(\bT^2_c)\cong\P^1\setminus\{0,1,\infty\}$.\hfill$\Box$
\end{remark}


\subsubsection{The Double of the 4-Triangle} Let us consider the 4-graph $G(t_4)$ associated to a 4-triangle $t_4$, as depicted in Figure \ref{fig:Example3Graph2Triang} (left), and described in Section \ref{sec:constr}. Let $G=G(t_4)\cup_{\dd}G(t_4)\sse\S^2$ be the 4-graph obtained by gluing two copies of this 4-graph along their boundaries, i.e. $G$ is the 4-graph associated to the 4-triangulation of $\S^2$ with two underlying $t_1$-triangles. The 4-graph $G$ is depicted in Figure \ref{fig:Example3Graph2Triang} (right), where the circle at the boundary is identified to a unique point, which is a hexagonal vertex.

\begin{center}
	\begin{figure}[h!]
		\centering
		\includegraphics[scale=0.7]{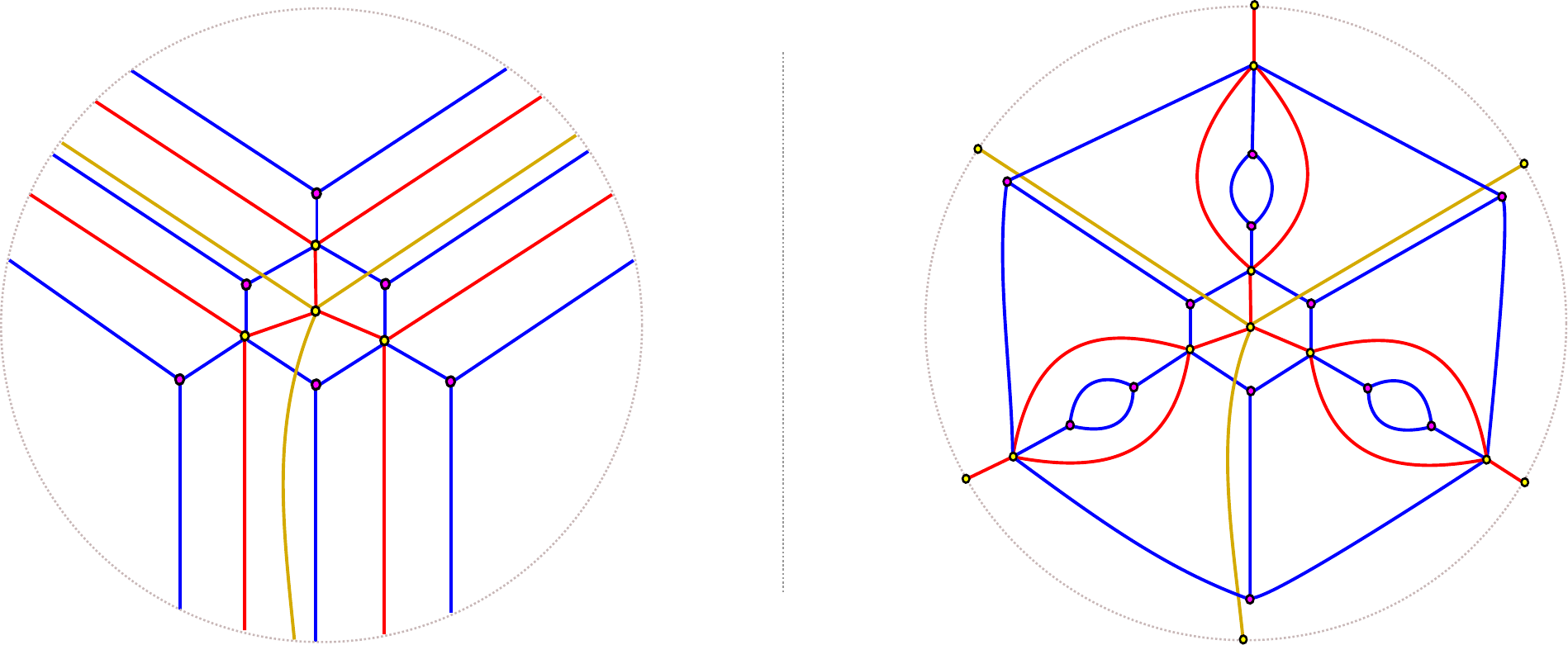}
		\caption{The local 4-graph $G(t_4)$ associated to a 4-triangle $t_4$ (left). The global 4-graph $G(\tau_4)$ given by the 4-triangulation $\tau_4$ of the 2-sphere $\S^2$ with two triangles.}
		\label{fig:Example3Graph2Triang}
	\end{figure}
\end{center}

For the computation of the flag moduli space $\SM(G)$, we employ our geometric techniques in Section \ref{sec:moves}. Theorem \ref{thm:Legsurgeries} allows us to remove the initial three (blue) $\tau_1$-bigons, by considering a direct sum with three copies of the standard Legendrian 2-torus $\mathbb{T}^2_{st}$, see Section \ref{sec:moves}. By applying Move I in Theorem \ref{thm:surfaceReidemeister} three times, we obtain the 3-graph in Figure \ref{fig:Example3Graph2Triang2} (left). Further removing three of the bigons, we reach the 3-graph $G_0$ in Figure \ref{fig:Example3Graph2Triang2} (right). The framed flag moduli space $\wt\SM(G_0)$ for the 3-graph $G_0$ is given by the choice of two flags $\SF_1=(p_1,l_1,\pi_1),\SF_2=(p_2,l_2,\pi_2)\in \GL_4/B$ in projective 3-space, and a choice of three points $p_3,p_4,p_5\in\P_k^3$ such that
\begin{itemize}
	\item[-] $(l_1,\pi_1)$ and $(l_2,\pi_2)$ are completely transverse, i.e. $l_1\not\in\pi_2$ and $l_2\not\in\pi_1$, and $p_1\neq p_2$,
	\item[-] $p_3\in l_1$, $p_3\neq p_1$,
	\item[-] $p_4\in l_2$, $p_3\neq p_2$,
	\item[-] $p_5\in \pi_1\cap\pi_2$, $p_3\neq p_1$.
\end{itemize} 
In particular, $\SM(G_0)\cong\wt\SM(G_0)/\PGL_4$, and the flag moduli space $\SM(G_0)$ is described by the data above. By Theorem \ref{thm:Legsurgeries}, and the fact that each bigon contributes to $k^*$ once the Legendrian weave is connected, we deduce that our original flag moduli space must be isomorphic to $\SM(G)\cong\SM(G_0)\times(k^*)^4$.

\begin{center}
	\begin{figure}[h!]
		\centering
		\includegraphics[scale=0.75]{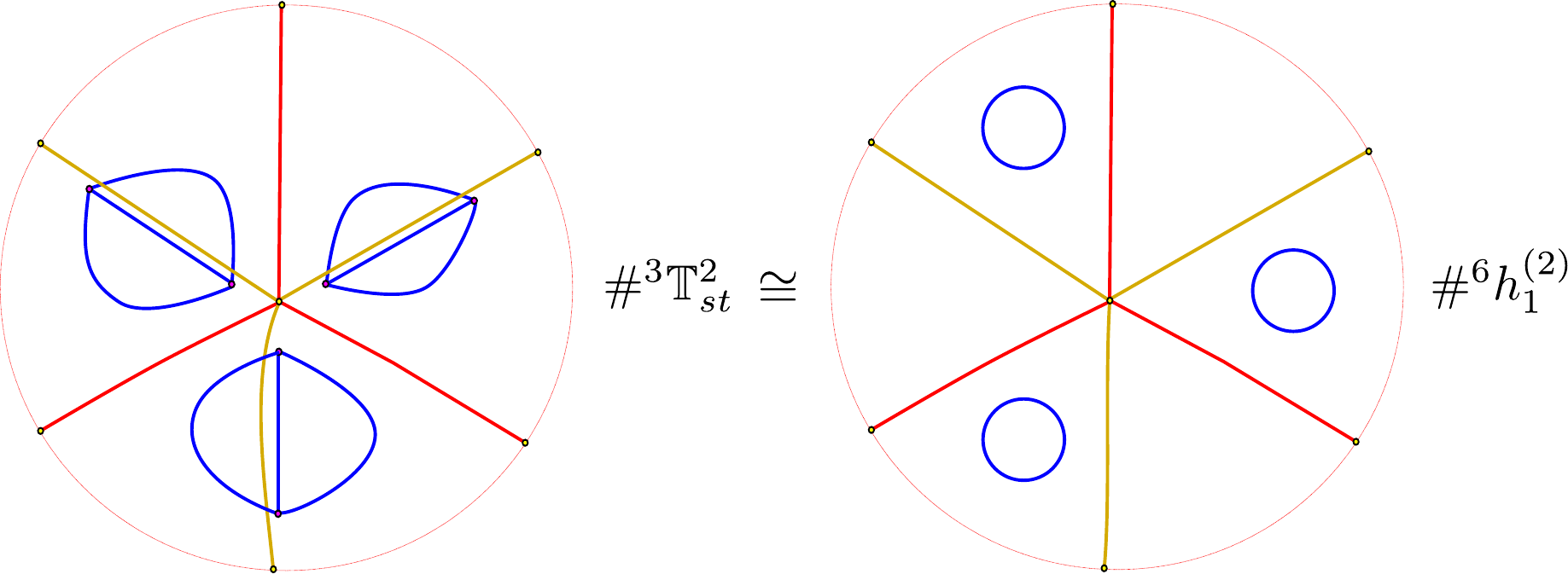}
		\caption{The 4-graph $G(\tau_4)$ in Figure \ref{fig:Example3Graph2Triang} after three index 1 anti-surgeries - accounted by the connected sums with $\mathbb{T}^2_{st}$ - and simplified with Move I (right). The 4-graph obtained by three additional index anti-surgeries (right).}
		\label{fig:Example3Graph2Triang2}
	\end{figure}
\end{center}
This simplification, from the original $4$-graph $G$ to $G_0$, allows for a direct description above of the flag moduli space $\SM(G)$, from which further information can be readily extracted. For instance, the $\F_q$-rational count for $\SM(G)(\F_q)$ is immediately:

$$|\SM(G)(\F_q)|=\frac{(q-1)^{5}}{(q^4-1)(q^4-q)(q^4-q^2)(q^4-q^3)}\cdot q^3\cdot\frac{(q^4-1)(q^3-1)(q^2-1)}{(q-1)^3}(q+1)q,$$

as $|\mbox{PGL}(4,\F_q)|=(q^4-1)(q^4-q)(q^4-q^2)(q^4-q^3)(q-1)^{-1}$, the rightmost multiplicative factor is the count for the two flags $\SF_1,\SF_2$,  and the $q^3$ factors stands for the final choice of $(p_3,p_4,p_5)$.\hfill$\Box$


We conclude this initial gallery of computations with the following:

\begin{ex}[Concentric Circles]\label{ssec:concentric} Let $\tau=(\tau_{i_1},\tau_{i_2},\ldots,\tau_{i_n})$ be an ordered collection of $n$ simple transpositions $\tau_{i_j}\in S_{N-1}$, $1\leq j\leq n$, $n\in\N$. Consider the $N$-graph $G(\tau)\sse \S^2$ described by $n$ concentric circles $C_i\sse\S^2$, $1\leq i\leq n$, with center on the North Pole, and strictly increasing radius. This $N$-graph is depicted in Figure \ref{fig:ConcentricCircles} (left).

\begin{center}
	\begin{figure}[h!]
		\centering
		\includegraphics[scale=0.55]{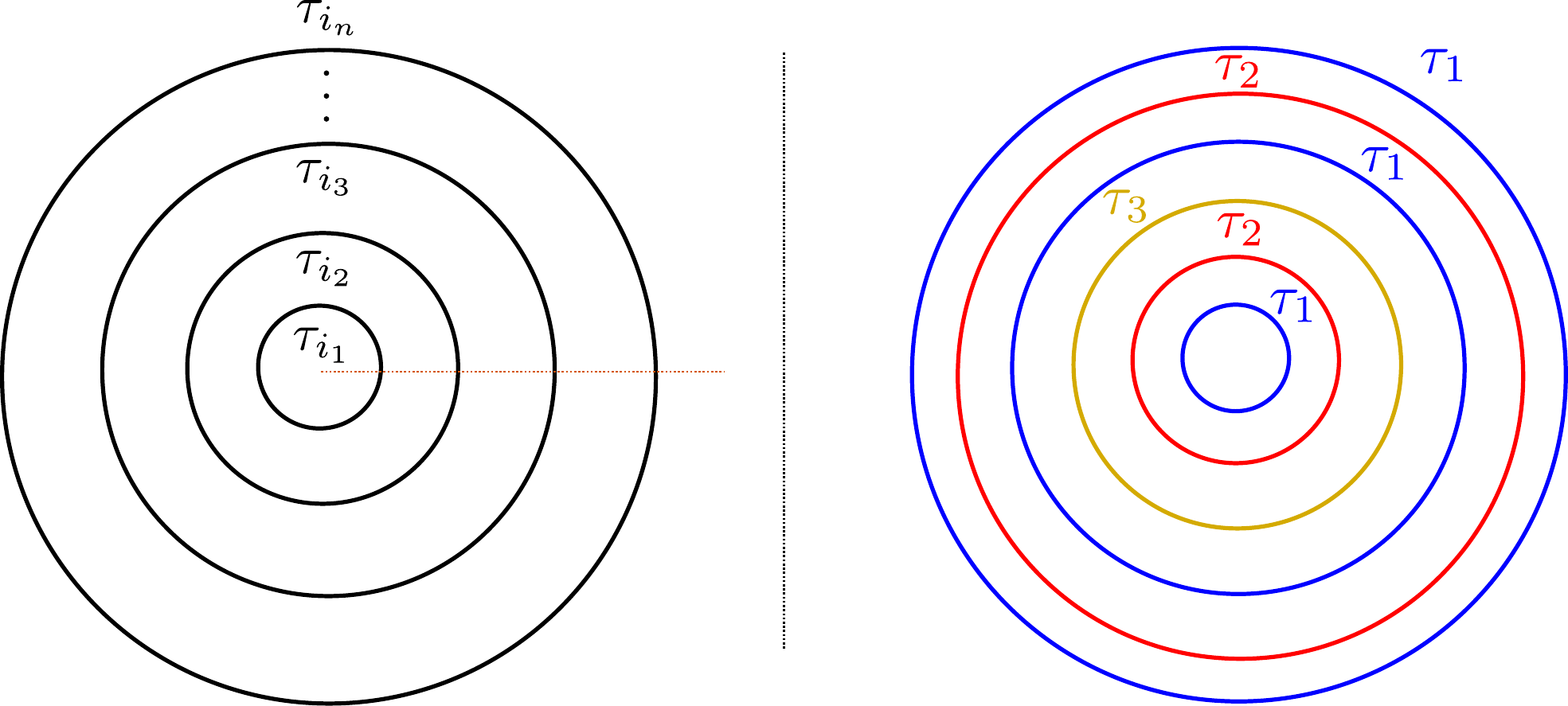}
		\caption{The $N$-graph $G(\tau)$ associated to the sequence of transpositions $\tau=(\tau_{i_1},\tau_{i_2},\ldots,\tau_{i_n})$ (left). The 4-graph $G(\tau)$ associated to $\tau=(\tau_1,\tau_2,\tau_3,\tau_1,\tau_2,\tau_1)$ (right).}
		\label{fig:ConcentricCircles}
	\end{figure}
\end{center}

The Legendrian weave $\La(G(\tau))\sse (J^1\S^2,\xi_\st)$ is a radial version of the $N$-stranded positive braid closure of $\beta=\sigma_{i_1}\sigma_{i_2}\cdot\ldots\cdot \sigma_{i_n}$. Smoothly, it is a link of $N$ two-spheres $\S^2$.  The moduli space of rank-one sheaves in $\R^2$ supported along the positive braid $\beta$ is the open Bott-Samelson variety $O(\beta)$ \cite{STZ_ConstrSheaves,OBS,CasalsHonghao}. 
By Section \ref{ssec:description_flagmoduli}, since $C = \S^2$ is simply connected, there is no further monodromy information and $\cM(G(\tau)) = O(\beta)$.
In particular, the links with different $n$ have a different number of points over $\bF_q$ and cannot be Legendrian iosotopic.
We note further that \cite[Theorem 6.34]{STZ_ConstrSheaves} relates this number to the HOMFLY-PT polynomial of the (topological) knot in $\bR^3$ defined by the braid $\beta.$\hfill$\Box$
\end{ex}

\subsection{Symmetry groups for Legendrian weaves} Let $\bG$ be an arbitrary finite group and $\La\sse(\S^5,\xi_\st)$ a Legendrian surface, with underlying smooth surface $S(\La)$. Let $\cL(\La)$ be the space of embedded Legendrian surfaces in $(\S^5,\xi_\st)$ Legendrian isotopic to the Legendrian surface $\La$, with base point $\La$. In addition, let $\L(\La)$ be the monoid of 3-dimensional exact Lagrangian concordances in the symplectization $(\S^5\times\R(t),e^t\la_\st)$, up to Hamiltonian isotopy, based on the Legendrian surface $\La\sse(\S^5,\xi_\st)$. Let $\varphi_t:S(\La)\lr(\S^5,\xi_\st)$ be a $\S^1$-family of Legendrian embeddings, $t\in\S^1$. Then the graph map
$$\mbox{gr}:\pi_1(\cL(\La))\lr\L(\La),\quad [\varphi_t]\longmapsto (\varphi_t(S(\La)),t),$$
allows us to relate loops of Legendrian surfaces with Lagrangian concordances.

These spaces $\cL(\La),\L(\La)$ are challenging to study. Already in the 1-dimensional case of Legendrian links $\La\sse(\S^3,\xi_\st)$, it was only established recently that there exist Legendrian links such that the fundamental groups $\pi_1(\cL(\La))$ can admit (infinite order) non-Abelian subgroups \cite[Corollary 1.6]{CasalsHonghao}, and $\L(\La)$ actually contains elements of infinite order \cite[Corollary 1.7]{CasalsHonghao}. To our knowledge, the only previous result about the fundamental group $\pi_1(\cL(\La))$ or the monoid $\L(\La)$ for $\La\sse(\R^5,\xi_\st)$ a Legendrian surface was proven in \cite{SabloffSullivan16}, where Legendrian surfaces $\La_{\Z_n}$, $n\in\N$, were built such that $\pi_1(\cL(\La_{\Z_n}))$ admits the finite cyclic group $\Z_n$ as a subgroup. Legendrian weaves and their flag moduli space are well-suited to address these questions. We present the following result for Legendrian surfaces in $(\S^5,\xi_\st)$:

\begin{thm}\label{thm:symmetries} Let $\bG$ be an arbitrary finite group. Then there exists a Legendrian surface $\La_{\bG}\sse(\S^5,\xi_\st)$ such that
\begin{itemize}
	\item[(i)] $\bG$ is a subquotient of the fundamental group $\pi_1(\cL(\La_\bG))$,
	\item[(ii)] $\bG$ is a subquotient of the 3-dimensional Lagrangian concordance monoid $\L(\La_\bG)$.
\end{itemize}
In fact, the latter is the image of the former via the graph map $\mbox{gr}:\pi_1(\cL(\La))\lr\L(\La)$.
\end{thm}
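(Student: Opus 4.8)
The plan is to reverse-engineer a Legendrian surface whose flag moduli space carries an action of the given finite group $\bG$, and then translate that symmetry of the moduli space into a symmetry of the Legendrian, hence into elements of $\pi_1(\cL(\La_\bG))$ and $\L(\La_\bG)$. First I would realize $\bG$ as a group of symmetries of a finite combinatorial object that can be encoded in an $N$-graph: concretely, take a Cayley-type graph or a highly symmetric triangulation $(C,T)$ of a closed surface $C$ on which $\bG$ acts by simplicial automorphisms (for instance, start from a $2$-complex with $\pi_1 = \bG$, or use that every finite group acts freely on some closed surface, or take a regular tiling as in the Remark on $\{n,3\}$-tilings and Riemann surfaces with large automorphism group). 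Applying the construction of Section~\ref{sec:constr}, the associated $N$-graph $G$ (say $N=2$, a trivalent graph dual to $T$) inherits a $\bG$-action by orientation-preserving diffeomorphisms of $C$ permuting the edges, vertices and faces of $G$. By the discussion after Definition~\ref{def:LegWeave2}, a compactly supported isotopy of $C$ carrying $G$ to $\varphi(G)$ induces a Legendrian isotopy of $\La(G)$; choosing a $\bG$-equivariant family and satelliting into $(\S^5,\xi_\st)$ via $\iota_0$ (as in Example~\ref{ex:stdsatellite}) produces, for each $g\in\bG$, a loop (or at least a path returning to $\La_\bG$) in $\cL(\La_\bG)$, giving a set-theoretic map $\bG \to \pi_1(\cL(\La_\bG))$.

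The key point making this map encode $\bG$ rather than collapsing to something trivial is the flag moduli space $\SM(G)$, which by Theorem~\ref{thm:intro2} (equivalently Theorem~\ref{thm:sheaves}) is a Legendrian isotopy invariant, and on which $\bG$ acts algebraically because $\SM(G)$ is built functorially from the combinatorics of $G$. So I would: (1) compute $\SM(G)$ explicitly using the local lemmas of Subsection~\ref{ssec:computations_flagmoduli} together with the diagrammatic simplification moves of Theorems~\ref{thm:surfaceReidemeister} and \ref{thm:Legsurgeries}, arranging the triangulation $T$ so that $\SM(G)$ is, say, a torus $(k^*)^r$ or an explicit affine variety on which the $\bG$-action is faithful (this is where the choice of the symmetric triangulation must be tuned — one wants the induced permutation of faces/cross-ratio coordinates to give an injective $\bG \hookrightarrow \mathrm{Aut}(\SM(G))$); (2) observe that the composite $\bG \to \pi_1(\cL(\La_\bG)) \to \mathrm{Aut}(\SM(G))$, where the second arrow is monodromy of the local system of moduli spaces over $\cL(\La_\bG)$ (Guillermou–Kashiwara–Schapira quantization \cite{GKS_Quantization}), is the faithful action from (1). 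Faithfulness of the composite forces the subgroup of $\pi_1(\cL(\La_\bG))$ generated by the images of $\bG$ to surject onto $\bG$; hence $\bG$ is a subquotient of $\pi_1(\cL(\La_\bG))$, proving (i).

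For (ii), I would apply the graph map $\mbox{gr}:\pi_1(\cL(\La))\lr\L(\La)$ to the loops constructed above: each loop $[\varphi_t]$ maps to the exact Lagrangian concordance $(\varphi_t(S(\La)),t)$ in the symplectization, taken up to Hamiltonian isotopy, and the monodromy action on $\SM$ factors through $\L(\La_\bG)$ as well (concordances act on moduli spaces of microlocal sheaves via their Floer/sheaf-theoretic functoriality, compatibly with $\mathrm{gr}$). Therefore the image $\mathrm{gr}(\langle \bG\rangle) \subseteq \L(\La_\bG)$ still surjects onto the faithful quotient $\bG \subseteq \mathrm{Aut}(\SM(G))$, exhibiting $\bG$ as a subquotient of $\L(\La_\bG)$, and by construction this subquotient is the image under $\mathrm{gr}$ of the one in (i), giving the final sentence of the statement.

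\textbf{Main obstacle.} The delicate step is (1)–(2): one must produce a triangulation with enough symmetry that the induced $\bG$-action on $\SM(G)$ is genuinely faithful (not just nontrivial), and one must verify that the monodromy representation $\pi_1(\cL(\La_\bG)) \to \mathrm{Aut}(\SM(G))$ actually realizes this action — i.e., that the equivariant isotopies we wrote down are not null-homotopic in $\cL$ in a way that kills the $\bG$-action. The natural fix is to choose $T$ coming from a space with $\pi_1 = \bG$ (a presentation $2$-complex, thickened to a surface), so that the cross-ratio/monodromy coordinates of $\SM(G)$ literally record the $\bG$-representation data, making faithfulness transparent; controlling the ${N-1\choose 2}$-type dimension counts and checking the action is faithful on the explicit variety (using Lemmas~\ref{lem:trivalent}, \ref{lem:hexagonal2}, \ref{lem:crossratio}) is then the bulk of the work.
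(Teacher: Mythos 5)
Your high-level strategy matches the paper's: build a $\bG$-symmetric $N$-graph, use the symmetry to produce loops in $\cL(\La)$, and detect the group via its faithful action on the flag moduli space through GKS invariance and the graph map. However, there are two concrete technical ideas the paper uses that you do not supply, and both are load-bearing.

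\textbf{How the faithful action on $\SM$ is actually obtained.} You hope to ``tune the triangulation so that $\SM(G)$ is, say, a torus $(k^*)^r$ on which the $\bG$-action is faithful,'' and acknowledge this as the bulk of the work. The paper's device is cleaner and you should know it: starting from the dual $2$-graph $G(T)$ of a triangulation $T$, the authors add a bigon along every edge to form $G'$. By Theorem~\ref{thm:Legsurgeries} (and Theorem~\ref{thm:FlagLegSurgeries}/\cite{Rizell_TwistedSurgery}) this gives $\SM(G') \cong \SM(G(T)) \times (\C^*)^{e(T)}$, so by K\"unneth $H^*(\SM(G'),\Q)$ contains the factor $H^*((\C^*)^{e(T)},\Q)$. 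Since $\bG$ acts faithfully on the edge set of $T$, it permutes these $(\C^*)$-factors faithfully, and hence acts faithfully on this piece of the cohomology. This sidesteps entirely the need to compute $\SM(G(T))$ explicitly or check faithfulness case by case, which is what your sketch would require.

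\textbf{How to realize an arbitrary finite $\bG$.} You list several candidates for the underlying symmetric object, but one of them is a red herring: a $2$-complex with $\pi_1=\bG$ has nothing to do with a $\bG$-action by automorphisms, and this confusion is exactly the point where the argument would stall. The paper's route is: invoke Hurwitz's theorem to get, for each Hurwitz group $\mathbb{H}$, a Riemann surface $C(\mathbb{H})$ with $\mathrm{Aut}(C)\cong\mathbb{H}$, together with an $\mathbb{H}$-invariant triangulation $T(\mathbb{H})$ (descending from the $(2,3,7)$-tiling). The generators $x,y$ of $T(2,3,7)$ are rotations, which is what lets the authors write down contact isotopies $\phi_t(x),\phi_t(y)$ realizing the generating loops in $\cL(\La(G'))$. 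Then, since $A_n$ is a Hurwitz group for $n\geq 168$ and every finite $\bG$ embeds into some $A_n$, one passes from Hurwitz groups to arbitrary finite groups. Your proposal leaves this existence/loop-construction step unresolved — merely asserting that ``a compactly supported isotopy of $C$ carrying $G$ to $\varphi(G)$ induces a Legendrian isotopy'' presumes that each $\varphi\in\bG$ is realized by an isotopy of $C$ starting at the identity, which is exactly what the Hurwitz rotation picture supplies and what a generic faithful surface action does not obviously provide.

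In summary: you have the correct global plan (symmetric graph $\Rightarrow$ loops $\Rightarrow$ faithful $\bG$-action on $\SM$ $\Rightarrow$ subquotient of $\pi_1(\cL)$ and, via $\mathrm{gr}$, of $\L$), but both of the specific mechanisms that turn the plan into a proof — the bigon trick for faithfulness and the Hurwitz-plus-$A_n$ reduction for existence of the symmetric surface and the loops — are missing, and the presentation-$2$-complex suggestion would lead you astray.
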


\begin{proof} The argument is structured in two parts. First, we describe a construction of a 2-graph $G'$ given a triangulation of a surface. Second, we use this construction to prove the statement. The second part has itself two steps: in the first step, the statement is proven only for those finite groups $\bG$ which are Hurwitz groups $\mathbb{H}$.\footnote{A Hurwitz  group $\mathbb{H}$ is  any  finite  group  which  can  be  generated  by  an  element   $x$   of  order   2  and  an  element  $y$   of  order   3  whose  product   $xy$    has  order  7.   Equivalently,  a  Hurwitz  group  is  any  finite  nontrivial  quotient  of  the  $(2,3,7)$-triangle  group $T(2,3,7):=\langle x,  y \|  x^2   =   y^3   =  (xy)^7  =  1\rangle$.} In the second step, the case of Hurwitz groups is used to conclude the statement for an arbitrary finite group.
	
First, we begin by describing a construction of $2$-graphs. Let $(C,T)$ be a closed smooth surface, $T$ a triangulation with $e(T)$ edges, and $G(T)$ the trivalent 2-graph dual to the triangulation $T$. Consider the 2-graph $G'$ obtained by adding a bigon at each edge of $G(T)$, using Move 4 in Figure \ref{fig:IntroSurgeries}. By Theorem \ref{thm:Legsurgeries}, specifically Remark \ref{rkm:surgeries}.$(ii)$, the Legendrian $\iota(\La(G'))$ is obtained by performing a connected sum of $\iota(\La(G(T)))$ with $e(T)$ copies of the standard Legendrian torus $\bT_\st^2\sse(\S^5,\xi_\st)$. Then \cite[Proposition 4.6]{Rizell_TwistedSurgery}, or Theorem \ref{thm:FlagLegSurgeries}, implies that the complex flag moduli space $\SM(G')$ is isomorphic to the product $\SM(G(T))\times(\C^*)^{e(T)}$, and thus $H^*(\SM(G'),\Q)\cong H^*(\SM(G(T),\Q)\otimes H^*((\C^*)^{e(T)},\Q)$ by the K\"unneth formula.

Second, we will now prove the statement in the case that $\bG$ is assumed to be an arbitrary but fixed Hurwitz group $\mathbb{H}$. By virtue of Hurwitz' theorem \cite{Hurwitz,Hurwitz2}, there exists a compact Riemann surface $C=C(\mathbb{H})$ whose automorphism group is (isomorphic to) $\mathbb{H}$; this surface $C$ is called a Hurwitz surface in the literature. The topological surface underlying the Riemann surface $C$ admits a triangulation $T(\mathbb{H})$ with symmetry group $\mathbb{H}$. In particular, the dual graph $G=G(T(\mathbb{H}))$ also has symmetry group $\mathbb{H}$. Let us now consider the 2-graph $G'$, associated to $G$ as in the paragraph above, where the edge bigons are added such that $\mathbb{H}$ is still a subgroup of the symmetry group of $G'$. Note that, by construction, $\mathbb{H}$ acts faithfully on the set of edges of the triangulation $T$, and thus $\mathbb{H}$ also acts faithfully on the $1\otimes H^*((\C^*)^{e(T)},\Q)\sse H^*(\SM(G'),\Q)$ piece of the cohomology of the flag moduli space $\SM(G')$.

Now, the generators $x,y$ of the triangle group $T(2,3,7)$ are geometrically given by rotations of the Poincar\'e hyperbolic disk, namely $x$ is a rotation of angle $\pi$ about the vertices of the $(2,3,7)$-Schwarz triangle and $y$ corresponds to a rotation of angle $2\pi/3$. Since a rotation $\rho$ is smoothly isotopic to the identity, as a diffeomorphism, there exists a contact isotopy $\phi_t(\rho)$, $t\in[0,1]$, of $(J^1 C,\xi_\st)$ such that $\phi_0(\rho)=id$ and $\phi_1(\rho)$ set-wise fixes the weave front associated to $G'$, and thus the Legendrian surface $\La(G')$ associated to it. This contact isotopy $\phi_t(\rho)$ defines an element of $\pi_1(\cL(\La(G')))$, and its graph an element of $\L(\La(G'))$. The flag moduli space $\SM(G')$ is a Legendrian isotopy invariant of the Legendrian surface $\La(G')\sse(\S^5,\xi_\st)$, and this contact isotopy induces an automorphism of $\SM(G')$. In particular, there are Legendrian isotopies $\phi_t(x)$ and $\phi_t(y)$ associated to the generators $x,y$ of any Hurwitz group $\mathbb{H}$, $x$ rotating $\pi$ and $y$ rotating $2\pi/3$. Thus the subgroup $\langle\phi_t(x),\phi_t(y)\rangle\sse \pi_1(\cL(\La(G')))$ acts by automorphisms in $\SM(G')$. Since $\mathbb{H}$ acts faithfully in the cohomology $H^*(\SM(G'))$, as pointed out above, $\mathbb{H}$ is a subquotient of $\pi_1(\cL(\La(G')))$, namely, it is a quotient of the subgroup $\langle\phi_t(x),\phi_t(y)\rangle$. The argument for $\L(\La(G'))$ is identical, and this concludes the required statement for Hurwitz groups $\mathbb{H}$.

Finally, to conclude the general statement, let $\bG$ be an arbitrary finite group and assume the result holds for Hurwitz groups, which is proven above. Then $\bG$ is a subgroup of the alternating group $A_n$ for large enough $n\in\N$. By \cite[Section 3]{AlternatingHurwitz}, see also \cite{Hurwitz2}, $A_n$ is a Hurwitz group $\bG(C)$ for $n\geq168$, and thus $\bG$ injects into such a Hurwitz group $\bG(C)$.\footnote{Note that $A_n$, for $n\leq 168$, is a subgroup of $A_m$, for a greater $m\geq n$, and thus all cases $A_n$ are covered.} The argument above thus implies that $\bG$ is a subquotient for $\pi_1(\cL(\La(G')))$ and $\L(\La(G'))$. Hence, the choice of weave $\La(G')$ completes the proof of Theorem \ref{thm:symmetries}.
\end{proof}

We do not know whether or not a result analogous to Theorem \ref{thm:symmetries} holds for 1-dimensional Legendrian knots $\La\sse(\S^3,\xi_\st)$. That could be a good question in low-dimensional contact topology. Any answer -- positive or negative -- would be of interest.


There is a complement to Theorem \ref{thm:symmetries} for certain groups $\bG$ of infinite order, including non-Abelian groups such as $\mbox{PSL}(2,\Z)$, by using results of the first author. Indeed, the Legendrian weave associated to the 4-graph $G(\tau)$ with the eighteen concentric circles
$$\tau=(\tau_1,\tau_2,\tau_1,\tau_2,\tau_1,\tau_2,\tau_1,\tau_2,\tau_1,\tau_2,\tau_1,\tau_2,\tau_1,\tau_2,\tau_1,\tau_2,\tau_1,\tau_2)$$
represents a 3-component Legendrian link $\La(G(\tau))$ of $2$-spheres. The geometric $\mbox{Br}_3$-braid action constructed in \cite{CasalsHonghao}, modulo its center $Z(\mbox{Br}_3)$, acts faithfully on the flag moduli space  $\SM(\La(G(\tau)))$. This flag moduli space is described in Example \ref{ssec:concentric}. Then \cite[Theorem 1.1]{CasalsHonghao} shows that the modular group $\mbox{PSL}(2,\Z)$ acts faithfully on the cluster charts for the space obtained by forgetting the monodromies in the Grothendieck resolution $\SM(\La(G(\tau)))$. Hence, $\mbox{PSL}(2,\Z)$ is a subquotient of $\pi_1(\L(\La(G(\tau))))$ and $\cL(\La(G(\tau)))$ for these Legendrian weaves $\La(G(\tau))$.


\subsection{Flag Moduli and Bipartite Graphs}\label{ssec:HexVertex} In Section \ref{sec:constr}, we introduced the construction of a 3-graph $G\sse C$ associated to an embedded eponymous bipartite graph $G$. This subsection explains how to compute flag moduli spaces for such 3-graphs.

We will employ a useful notation, local to this subsection. If $a,b\in V^3$ are distinct vectors in a 3-dimensional vector space $V$, we denote by $ab$ the unique 2-plane spanned by $a,b$. Similarly, given two 2-planes $A,B,\sse V^3$, the intersection $A\cap B$ will be denoted by $AB$.

At a hexagonal vertex, traveling between opposite faces requires crossing three edges of alternating colors, and thus opposite
faces are assigned completely transverse flags $\cA=(a,A)=aA$ and $\cB=(b,B)=bB$. Note that a single such pair $\cA,\cB$ determines the remaining four regions, by Lemma \ref{lem:hexagonal}: if crossing {\it red, blue, red} from $\cA$ to $\cB$, the flags in succession are $(a,A), (AB,A), (AB, B), (b,B)$. If crossing {\it blue, red, blue,} the flags are $(a,A), (a,ab), (b,ab), (b,B)$. This is depicted as follows:

$$
\begin{tikzpicture}[scale=1.5]
\tikzstyle{hex}=[rectangle,draw=black!100,fill=white!20,thick,minimum size=3.5mm]
\node[hex] (a) at (0,0) {};
\node (b) at (1,0) {};
\node (c) at (.5,.866) {};
\node (d) at (-.5,.866) {};
\node (e) at (-1,0) {};
\node (f) at (-.5,-.866) {};
\node (g) at (.5,-.866) {};
\draw[thick,red] (a) -- (b);
\draw[thick,blue] (a) -- (c);
\draw[thick,red] (a) -- (d);
\draw[thick,blue] (a) -- (e);
\draw[thick,red] (a) -- (f);
\draw[thick,blue] (a) -- (g);
\node at (0,.8) {$(a,A)$};
\node at (0,-.8) {$(b,B)$};
\node at (.866,.4) {$(a,ab)$};
\node at (-.866,-.4) {$(AB,B)$};
\node at (-.866,.4) {$(AB,A)$};
\node at (.866,-.4) {$(b,ab)$};
\end{tikzpicture}
\qquad
\qquad
\begin{tikzpicture}[scale=1.5]
\node (a) at (0,0) {$\bullet$};
\node (b) at (1,1) {$\bullet$};
\node (c) at (1.3,0) {};
\node (d) at (1,-.3) {};
\node (e) at (-.3,-.3) {};
\node (f) at (1.3,1.3) {};
\draw[thick] (-.3,0) -- (1.3,0);
\path[thick] (1,1.3) edge (1,-.3);
\draw[thick] (e) edge (f);
\node at (.15,-.2) {$a$};
\node at (1.15,.8) {$b$};
\node at (1.3,-.2) {$AB$};
\node at (.3,.55) {$ab$};
\node at (-.5,0) {$A$};
\node at (1,1.5) {$B$};
\end{tikzpicture}
$$

Now consider an edge of the bicubic graph $G$. In the associated 3-graph, this edge generates two hexagonal vertices which are connected by two adjacent edges of different colors. This local configuration is said to be a hexagonal edge. Let us denote the two flags on opposite regions along the axis connecting the hexagonal vertices by $\cA = aA$ and $\cC = cC$. Let $\cB$ be the flag in the interior region of the hexagonal edge, transverse to both $\cA$ and $\cC$. There are two further conditions on the flag $\cB$:
$$\quad AB \subset C, \qquad c \subset ab.$$

The Weyl group $W(A_2)\cong S_3$ is the symmetric group on three elements, and thus there are six possible relative positions for the two flags $\SA,\SC\in\GL_3/B$. Here we consider the case of a finite field $k=\F_q$. In a hexagonal edge, the relative position of the two outer flags $\cA,\cC$ is restricted:

\begin{lemma}\label{lem:ACflags}
The two outer flags $\cA,\cC$ in a hexagonal edge must coincide or be completely transverse. In addition, with $\cA,\cC$ fixed, number the of choices of flag $\cB$ in the interior of the hexagonal edge is $q^3$, in the case $\cA=\cC$, and $q-1$, in the case $\cA\neq\cC$.
\end{lemma}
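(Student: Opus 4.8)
The plan is to analyze the two conditions $AB\subset C$ and $c\subset ab$ together with the transversality requirements, and to split into cases according to the relative position of the flags $\cA=(a,A)$ and $\cC=(c,C)$. First I would recall that $\cB=(b,B)$ must be completely transverse to both $\cA$ and $\cC$, so in particular $b\notin A$, $b\notin C$, $a\notin B$, $c\notin B$, $A\neq B$, $B\neq C$. I would then unwind what $AB\subset C$ says: since $AB$ is a line (a $1$-dimensional subspace, being the intersection of two distinct $2$-planes in a $3$-dimensional space), the condition $AB\subset C$ forces $AB=AC$ as well (both are the line $A\cap C$, once we know $AB\subseteq C$ and $AB\subseteq A$). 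Dually, $c\subset ab$ forces the point $c$ to lie on the line $ab=a\vee b$.

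\textbf{Step 1: the outer flags coincide or are transverse.} The key observation is that the relative position of $\cA$ and $\cC$ is measured by the pair $(\dim(a\cap C),\dim(A\cap c))$ — equivalently by whether $a\in C$ and whether $c\in A$ (here I abuse notation writing $c\in A$ for the point $c$ lying in the plane $A$). I would argue that $AB\subset C$ together with $b\notin C$ implies $a\in AB\subset C$, hence $a\subset C$. Symmetrically, $c\subset ab$ together with $a\notin B$ (and $b\notin C$, etc.) should give $c\subset A$: indeed $c\in ab$ and one checks $ab\cap$(something forced by the other constraints) lands $c$ in $A$. Once $a\subset C$ and $c\subset A$, the standard dictionary for relative positions of flags in $\GL_3/B$ says the only possibilities are $\cA=\cC$ (relative position the identity) or $\cA,\cC$ completely transverse (relative position $w_0$); the four intermediate positions $\tau_1,\tau_2,\tau_1\tau_2,\tau_2\tau_1$ are excluded because each of them has exactly one of "$a\subset C$", "$c\subset A$" failing. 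I would verify this last claim by a short case check on the six Bruhat cells. The main subtlety here is making sure the derivation "$c\subset ab\Rightarrow c\subset A$" is correct and not merely $c\subset A$ "generically"; I expect this is where I would need to be careful, possibly using $AB\subset C$ a second time to pin down $ab$.

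\textbf{Step 2: counting the interior flags $\cB$.} With $\cA,\cC$ fixed, I would enumerate the valid $\cB=(b,B)$.

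In the case $\cA=\cC=(a,A)$: the constraint $AB\subset A$ is automatic, and $AB\subset C=A$ is automatic, so $AB\subset C$ imposes nothing; likewise $c=a\subset ab$ is automatic. The only conditions on $\cB$ are the transversality conditions $b\notin A$ and $a\notin B$ (with $A\neq B$ following). So I count pairs $(b,B)$ with $b$ a point of $\mathbb{P}^2(\F_q)$ not on the line $A$, and $B$ a line through $b$ not through $a$. The number of points off $A$ is $q^2$; for each such $b$, the pencil of lines through $b$ has $q+1$ members, exactly one of which passes through $a$, leaving $q$. This gives $q^2\cdot q=q^3$, as claimed. (I should double-check that every such $(b,B)$ is indeed completely transverse to $\cA$, i.e.\ that no further incidence can occur — it cannot, since transversality of two flags in $\GL_3/B$ is precisely $b\notin A$ and $a\notin B$.)

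In the case $\cA\neq\cC$ completely transverse: now $A\neq C$, so $AC$ is a well-defined line, and the condition $AB\subset C$ forces $AB=AC$, which in turn forces $B\supseteq AC$ (a plane containing a given line), i.e.\ $B$ lies in the pencil of planes through $AC$; that pencil has $q+1$ members. Dually $c\subset ab$ forces the point $b$ to lie on the line $a\vee c$ (which is well-defined since $a\neq c$), a line with $q+1$ points. But $b$ and $B$ are not independent: once $b\in a\vee c$ and $B\supseteq AC$ are chosen, one must have $b\in B$, which typically determines $B$ from $b$ (since a generic point of $a\vee c$ lies on a unique plane of the pencil through $AC$) — so really the free parameter is just $b\in a\vee c$, and then I subtract the forbidden positions of $b$. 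The forbidden values are those making $\cB$ fail transversality with $\cA$ or $\cC$: $b=a$, $b=c$ (and I should check whether any of the induced $B$ would equal $A$ or $C$, or pass through $a$ or $c$ improperly). Removing $b=a$ and $b=c$ from the $q+1$ points of $a\vee c$ leaves $q-1$. This matches the asserted count. The bookkeeping of exactly which special points of $a\vee c$ to exclude, and confirming no over- or under-counting in the $b\leftrightarrow B$ correspondence, is the part I expect to require the most care.

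\textbf{Expected main obstacle.} The genuinely delicate point is Step 1 — establishing that the two incidence conditions $AB\subset C$ and $c\subset ab$, in the presence of the transversality of $\cB$ with both $\cA$ and $\cC$, force the relative position of $\cA$ and $\cC$ into exactly the two extreme Bruhat cells. I would approach this by fixing coordinates adapted to $\cB$ (say $\cB$ the standard flag), writing $\cA$ and $\cC$ in terms of their relative position to $\cB$, and then translating the two conditions into explicit vanishing/non-vanishing of minors; the case analysis over the six relative positions of $\cA,\cC$ is then finite and mechanical, but one must be careful that "transverse to $\cB$" is used symmetrically for both outer flags. Everything in Step 2 is then a straightforward count of points and lines in $\mathbb{P}^2(\F_q)$.
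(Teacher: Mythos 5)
Your Step 2 (the counting) is correct and matches the paper exactly: in the coincident case the choices are a point $b\notin A$ ($q^2$ ways) and a line $B\ni b$ with $a\notin B$ ($q$ ways), giving $q^3$; in the transverse case the free choice is $b$ on the line $ac$ with $b\neq a,c$, giving $q-1$.

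The gap is in Step 1, and it is fatal to your main argument rather than merely delicate. You assert that ``$AB\subset C$ together with $b\notin C$ implies $a\in AB\subset C$, hence $a\subset C$.'' This is false: $AB=A\cap B$, and while $a\subset A$, the complete transversality of $\cA$ and $\cB$ forces $a\not\subset B$, so $a$ is \emph{never} in $AB$. The dual claim that $c\subset ab$ forces $c\subset A$ fails for the same reason. Worse, even if you could derive that both $a\subset C$ and $c\subset A$ always hold, you would have proved the wrong thing: in the completely transverse case $w_0$ one has by definition $a\not\subset C$ and $c\not\subset A$, so a valid derivation of ``$a\subset C$ and $c\subset A$'' would rule out precisely the cell the lemma needs to keep. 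Your stated dictionary for the Bruhat cells is also off --- in Types $\tau_1$ and $\tau_2$ one has $a=c$ or $A=C$, so \emph{both} incidences hold; the pair $(a\subset C,\,c\subset A)$ does not separate $\{0,\tau_1,\tau_2\}$ from $w_0$ in the way you claim.

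What actually works --- and what the paper does --- is the finite, mechanical case analysis you relegate to a fallback at the very end: run through the six Bruhat types of $(\cA,\cC)$ and, in Types $\tau_1,\tau_2,\tau_1\tau_2,\tau_2\tau_1$, use the two constraints to produce a direct contradiction. For example, in Type $\tau_1\tau_2$ one has $a\subset C$ and $AB\subset C$; since $a$ and $AB$ are distinct lines spanning $A$ (distinct because $a\not\subset B$), this gives $A\subset C$, hence $A=C$, contradicting $A\neq C$. Each intermediate type dies by a similarly short argument. There is no shortcut via a global claim about the incidences $a\subset C$, $c\subset A$; you have to inspect the cells.
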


\begin{proof} Let us analyze their possible relative positions, labeled according to the elements $W(A_2)=\{0,1,2,12,21,121\}$:
\begin{itemize}
	\item[-] Type 0:  $\cC = \cA.$
	Then the conditions are automatic, and $\cB$ is
	simply transverse to $\cA = \cC.$
	There are $q^3$ such choices.\\
	
	\item[-] Type 1:  $c = a, C \neq A.$  The second condition is then automatic, but $C \supset a=c$ and $C \supset AB$ means $C=A.$ This is a contradiction.\\
	
	\item[-] Type 2:  $c \neq a, C = A.$  The first condition is then automatic, but $c \subset C = A$ and $c\subset ab$ means $c = a$.
This is a contradiction.\\

	\item[-] Type 12:  $a\neq c, C \neq A$ but $a\subset C$.  Then $a\subset C$ and $AB \subset C$ means $C = A$  
This is a contradiction.\\

	\item[-] Type 21:  $a\neq c, C \neq A$ but $c\subset A.$  Then $c\subset A$ and $c \subset ab$ means $c = a.$ 
This is a contradiction.\\

	\item[-] Type 121: In this case, the flag $\cB$ is determined by \emph{either} equivalent choice:  a line $b$ in $ac$ not equal to $a$ or $c$ (then $B$ is the plane $bAC$) or a plane $B$ containing $AC$
	not equal to $A$ or $C$ (then $b$ is $acB$).
	The number of such choices is $q-1$.
\end{itemize}
Therefore, this flag $\cB$ has either $q^3$ or $q-1$ internal degrees of freedom, respectively, after fixing the outer flags $\cA$ and $\cC$ to be
either equal or completely transverse.  The other configurations have no solutions.
\end{proof}

We now apply Lemma \ref{lem:ACflags} and the discussion above to prove Theorem \ref{thm:intro1} in the introduction.

\subsection{Non-isotopic Links of Legendrian Spheres} Let $n\in\N$ and consider the bipartite Ladder Graph $L_n\sse\S^2$ depicted in Figure \ref{fig:NLadder} (bottom). The number $n\in\N$ denotes half the number of square faces, and the right and left sides of the bipartite graph are identified in $\S^2$. In particular, $\S^2\setminus L_n$ has $2n+2$ connected components, $2n$ squares and two 2-disks, at the north and south poles of $\S^2$. We consider its associated 3-graph $\SL_n\sse\S^2$, as described in Section \ref{sec:constr}, which is shown in Figure \ref{fig:NLadder} (bottom). The Legendrian weave $\La(\SL_n)\sse (J^1(\S^2),\xi_\st)$ consists of a 3-component link of Legendrian 2-spheres, independent of $n\in\N$.

\begin{center}
	\begin{figure}[h!]
		\centering
		\includegraphics[scale=0.65]{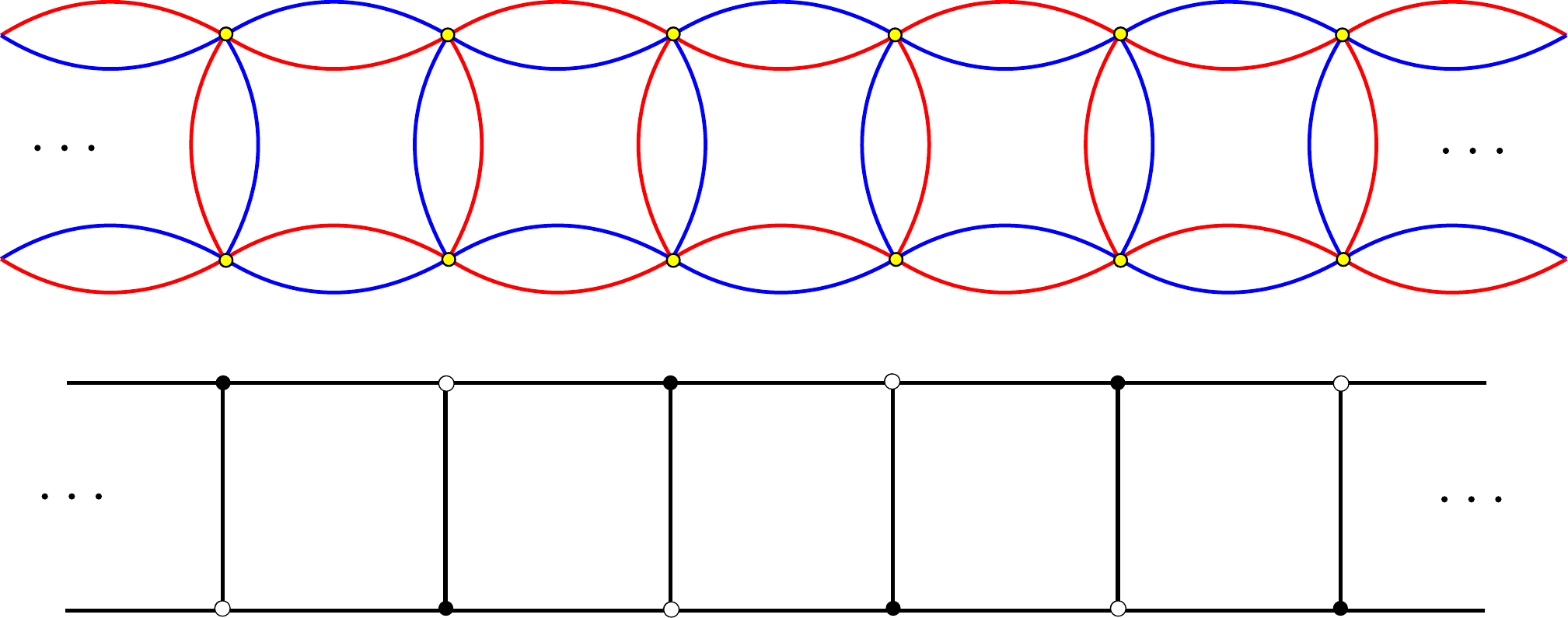}
		\caption{The bipartite Ladder Graph $L_n$, where the right and left sides are identified after $n$ rungs (bottom). The 3-graph $\SL_n$ associated to $L_n$ (top).}
		\label{fig:NLadder}
	\end{figure}
\end{center}

Note that the Legendrian link $\La(\SL_n)\sse(J^1(\S^2),\xi_\st)$ is smoothly isotopic to the surface unlink, as the codimension of this smooth embedding is three. We now show that the Legendrian isotopy type of the Legendrian link $\La(\SL_n)\sse(J^1(\S^2),\xi_\st)$ is different for each $n\in\N$. This will be achieved by counting the number of points of their flag moduli spaces $\SM(\SL_n)$ over a finite field. The precise statement reads:

\begin{thm}[Theorem \ref{thm:intro1}]\label{thm:finitecounts} Let $\SL_n\sse\S^2$ be the $(2n)$-runged ladder graph and $\F_q$ a finite field, $q$ a prime power. Then the flag moduli space $\SM(\SL_n)$ has orbifold point count
	
$$|\SM(\SL_n)(\F_q)|=\frac{q^{2n-3}-q^{n-2}+q^{n-1}+q-1}{(q-1)^2}.$$

Hence, the Legendrian surface links $\La(\SL_n)$ and $\La(\SL_m)$ are Legendrian isotopic iff $n=m$.
\end{thm}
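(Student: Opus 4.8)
The plan is to compute the $\F_q$-point count of $\SM(\SL_n)$ directly from the combinatorial description of the flag moduli space in Subsection \ref{ssec:description_flagmoduli}, using the local analysis of hexagonal vertices and hexagonal edges carried out above in Lemmas \ref{lem:hexagonal}, \ref{lem:hexagonal2} and \ref{lem:ACflags}. Since $\SL_n$ is built entirely from hexagonal vertices (no trivalent vertices appear, as $\SL_n$ comes from a bicubic graph via the construction in Subsection \ref{ssec:bipartite}), the Legendrian weave $\La(\SL_n)$ is a link of $3$ Legendrian spheres and the moduli problem is an incidence problem for flags in $\GL_3/B$ attached to the $2n+2$ faces of $\S^2\setminus L_n$: two polar $2$-disk faces and $2n$ square faces arranged in a cyclic ladder. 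The first step is to set up these $2n+2$ flags $\cF_0, \cF_1, \ldots, \cF_{2n}$ (with $\cF_{2n+1}=\cF_1$ cyclically, and $\cF_0$ the common face on, say, the south pole) and record the constraints: adjacent square faces across a rung are related as the two outer flags $\cA,\cC$ of a hexagonal edge, hence by Lemma \ref{lem:ACflags} must be equal or completely transverse; and each square face is related to the polar faces across the horizontal edges of the ladder, again by the hexagonal-edge conditions.

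The second step is to quotient by the $\PGL_3$-action and organize the count. Following the style of the computations in Subsection \ref{ssec:FirstComputations} and Example \ref{ssec:concentric}, I would fix a generic pair of flags using $\PGL_3$ (for instance two completely transverse flags, whose stabilizer is the maximal torus $(\F_q^*)^2$), then count the remaining free choices face by face, weighting by the orbifold factor $1/|\mathrm{Stab}|$ to obtain the orbifold (stacky) point count. By Lemma \ref{lem:ACflags}, at each rung the pair of outer flags contributes either a ``degenerate branch'' (the two flags coincide, giving $q^3$ choices for the interior flag $\cB$ but forcing a collapse elsewhere) or a ``transverse branch'' (the flags are totally transverse, giving $q-1$ interior choices). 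Propagating these binary choices cyclically around the $2n$-rung ladder produces a transfer-matrix-style sum: the count should reduce to a trace of the $n$-th or $(2n)$-th power of a small matrix over $\F_q[q]$, or equivalently a sum over which subset of rungs is ``degenerate,'' subject to the global compatibility imposed by the two polar faces. Expanding this sum and simplifying — collecting the geometric series in $q$ — should yield the closed form $\dfrac{q^{2n-3}-q^{n-2}+q^{n-1}+q-1}{(q-1)^2}$, where the $(q-1)^2$ in the denominator is exactly the $(\F_q^*)^2$-stabilizer of the normalizing flags.

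The third and final step is the distinguishing conclusion. By Theorem \ref{thm:intro2} (equivalently Theorem \ref{thm:sheaves}) together with the Guillermou--Kashiwara--Schapira invariance \cite{GKS_Quantization}, the isomorphism type of $\SM(\SL_n)$ — and in particular its orbifold $\F_q$-point count as a function of $q$ — is a Legendrian isotopy invariant of $\La(\SL_n)\sse(J^1\S^2,\xi_\st)$. Since the rational functions $q\mapsto \dfrac{q^{2n-3}-q^{n-2}+q^{n-1}+q-1}{(q-1)^2}$ have distinct degree (the leading term is $q^{2n-5}$) for distinct $n$, they agree as functions of $q$ only when $n=m$; evaluating at any single sufficiently large prime power $q$ already distinguishes them. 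Hence $\La(\SL_n)$ and $\La(\SL_m)$ are Legendrian isotopic if and only if $n=m$, which is the asserted statement.

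The main obstacle I anticipate is the bookkeeping in the second step: correctly identifying \emph{which} pairs of faces in the ladder play the role of the outer flags $\cA,\cC$ of a hexagonal edge versus which pairs are governed by the plain adjacency condition (relative position $\tau_i$), and then handling the global cyclic closure together with the two polar faces so that the degenerate/transverse branches combine with the right signs. A careful case analysis — perhaps splitting according to how many rungs are in the degenerate configuration and checking that the polar-face constraints are consistent in each case — is what turns the raw product of local counts into the stated rational function, and getting the alternating term $-q^{n-2}$ and the $+q^{n-1}$ correct is where an error is most likely to creep in; this is best cross-checked against the small cases $n=1,2$ by hand.
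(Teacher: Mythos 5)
Your overall framework (quotient by $\PGL_3$, count flags face by face, invoke invariance and degree of the rational function for the distinguishing conclusion) is in the right spirit, and steps one and three are essentially correct. But step two contains a structural misreading that would derail the computation.

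Look again at which pair of faces plays the role of the outer flags $\cA,\cC$ of a hexagonal edge in Lemma~\ref{lem:ACflags}. For the rung hexagonal edges of $\SL_n$, the two outer flags along the axis are the \emph{same} pair for every rung: the flags $\SA,\SC$ at the two polar disks. They are \emph{not} a pair of adjacent square faces. Consequently the degenerate-vs-transverse dichotomy of Lemma~\ref{lem:ACflags} is applied once, globally, to $(\SA,\SC)$ — not independently at each of the $2n$ rungs. There is no $2^{2n}$-fold sum over ``which subset of rungs is degenerate,'' and no transfer matrix to take a trace of; the count splits into exactly two cases. The paper's proof reflects this: either $\SA,\SC$ are completely transverse, in which case the horizontal hexagonal-edge conditions propagate the single choice $\SB_0$ all the way around the ladder and force $\SB_i=\SB_0$ for all $i$, giving a mutually transverse triple $\SA,\SB_0,\SC$ and an orbifold contribution of $\tfrac{1}{q-1}$; or $\SA=\SC$, in which case the $\SB_i$ are parametrized step by step, each step fixing either the line or the point of $\SB_{i+1}$ and leaving a single $q$-worth of freedom, and the cyclic closure $\SB_0=\SB_{2n}$ becomes a small linear system (in the paper's variables, $\alpha_n=0$, $X=0$, $\alpha\cdot y=0$) whose solution count $(q^{n-1}-1)q^{n-2}+q^{n-1}$ supplies precisely the $-q^{n-2}$ and $+q^{n-1}$ terms you flagged as the danger spot.

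The upshot is that the weights ``$q^3$ per degenerate rung'' and ``$q-1$ per transverse rung'' from Lemma~\ref{lem:ACflags} cannot simply be multiplied around the cycle: the choices at adjacent rungs are strongly coupled by the intermediate hexagonal vertices (Lemma~\ref{lem:hexagonal}), so the $2n$ interior flags are far from independent. A per-rung product overcounts by several powers of $q$, and a transfer matrix built from those local weights would not even have the right growth rate — the true answer grows like $q^{2n-5}$, not $q^{6n}/|\PGL_3|$. If you want to salvage your plan, the matrix you need to iterate is not the two-by-two ``degenerate/transverse'' matrix but the transition encoding how $\SB_i$ determines $\SB_{i+1}$ in the $\SA=\SC$ branch, and the cyclic trace condition is exactly the linear system the paper solves.
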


\begin{proof}
Let us consider the two flags $\mathcal{A},\SC\in\GL(3,\C)/B$ located in the strata corresponding to the neighborhoods of the north and south poles. We have shown these flags in Figure \ref{fig:NLadderProof}. The flags in the vertical regions will be denoted $\SB_i$, $0\leq i\leq 2n-1$, with the cyclic condition $\SB_0=\SB_{2n}$.

\begin{center}
	\begin{figure}[h!]
		\centering
		\includegraphics[scale=0.75]{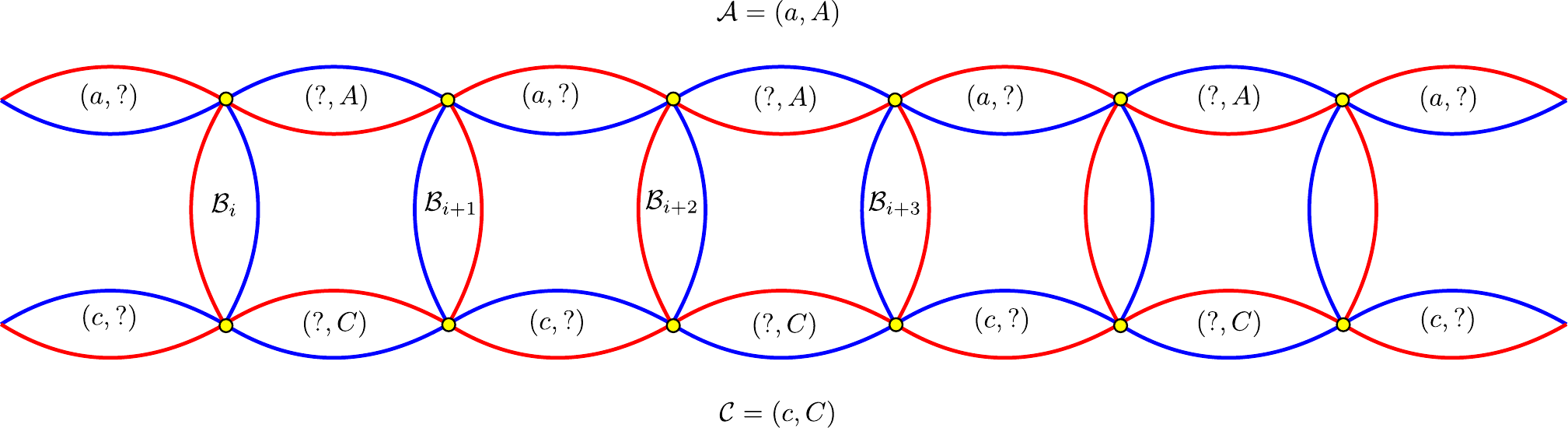}
		\caption{The flag configuration at a point of the flag moduli space $\SM(\SL_n)$ where $\SA=(a,A)$, $\SC=(c,C)$ are the inner and outer flags. Observe that the choice of $\SA,\SC$ partially fills the flags in the horizontal eye-shaped regions.}
		\label{fig:NLadderProof}
	\end{figure}
\end{center}

By Lemma \ref{lem:ACflags}, the existence of the flags $\mathcal{B}_i$ in the vertical hexagonal edges, $0\leq i\leq 2n-1$, as in Figure \ref{fig:NLadderProof}, implies that the relative position of $\SA,\SC$ must either be trivial, i.e.~$\SA=\SC$, or completely transverse, i.e. the projective lines $A\neq C$ are distinct, and $a\not\in C$ and $c\not\in A$. The $\F_q$-count is divided into these two cases.

First, let us consider the case where $\SA$ and $\SC$ are completely transverse, i.e. they belong to the Bruhat $\GL(3,\C)$-orbit labeled by $w=(12)(23)(12)\in W(A_2)$. We claim that after choosing the flag $\SB_0=(b,B)$, the remaining flags $\SB_i$, $1\leq i\leq 2n-1$ are
uniquely determined. The resulting flag configuration is shown in Figure \ref{fig:NLadderProof2}.

\begin{center}
	\begin{figure}[h!]
		\centering
		\includegraphics[scale=0.75]{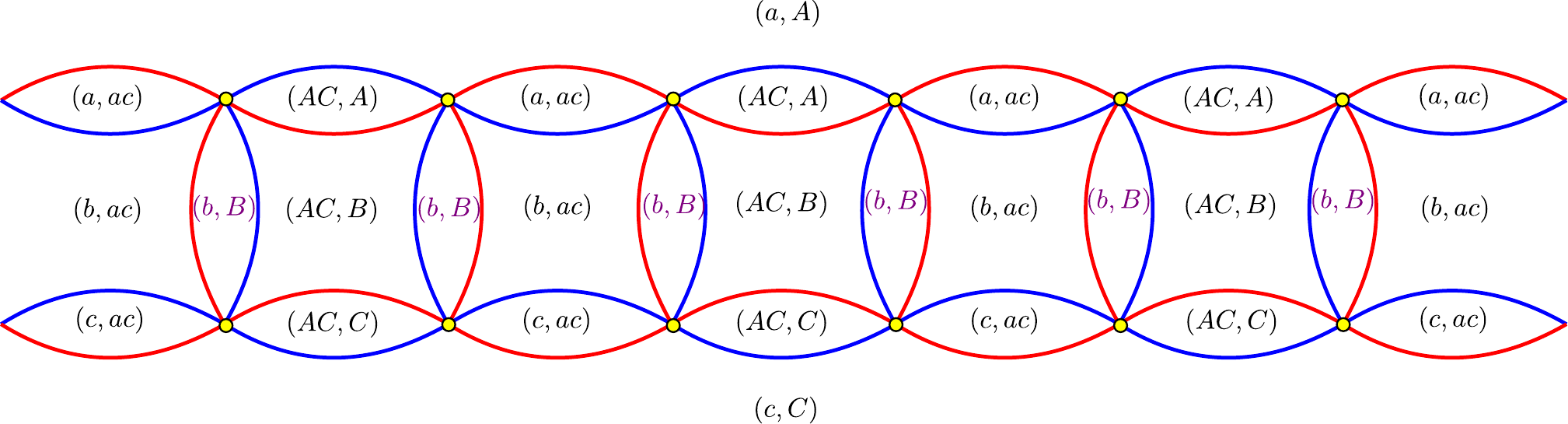}
		\caption{Flag configuration at a point of the flag moduli space $\SM(\SL_n)$ in the case $\SA=(a,A)$ is completely transverse to $\SC=(c,C)$. For these configurations, the choice of flag $(b,B)$ uniquely determines the point in the flag moduli.}
		\label{fig:NLadderProof2}
	\end{figure}
\end{center}

Let us prove this. Since $\SA$ and $\SC$ are completely transverse, they determine the flags $(AC,A)$,$(a,ac)$ in the horizontal eye-shaped spaces in the upper row, and the flags $(AC,C)$,$(c,ac)$ in the corresponding horizontal spaces along the bottom. The additional choice of $\SB_0=(b,B)$ determines the flags $(AC,B),(b,ac)$ in the left and right regions adjacent to that of $\SB_0$. Note that $B\neq ac$ and $b\in B\cap ac$. Similarly, $b\neq AC$ and the two points $AC,b\in\P^2$ span the line $B$. The flag $\SB_1$ must have $b\in\P^2$ as its point, and its line must contain $AC,b\in\P^2$. Hence the flag $\SB_1=(b,B)$ is uniquely determined, and coincides with $\SB_0$. By an analogous reasoning, $\SB_1$ determines the flag $(AC,B)$ on the adjacent region at its right, and hence the line in $\SB_2$ must be $B$. Since the point in $\SB_2$ must be the intersection $B\cap ac$, we conclude $\SB_2=(b,B)$ and thus $\SB_2=\SB_1=\SB_0$. Iteratively applying these two steps, we show that $\SB_i=\SB_0$ for all $1\leq i\leq 2n-1$. The cyclic condition $\SB_0=\SB_{2n}$ is automatically verified in this case. In conclusion, in this completely transverse case, the choices are the three flags $\SA,\SB_1,\SC$, being pairwise completely transverse. This configuration is depicted in Figure \ref{fig:NLadderLines} (left).
	
	\begin{center}
		\begin{figure}[h!]
			\centering
			\includegraphics[scale=0.75]{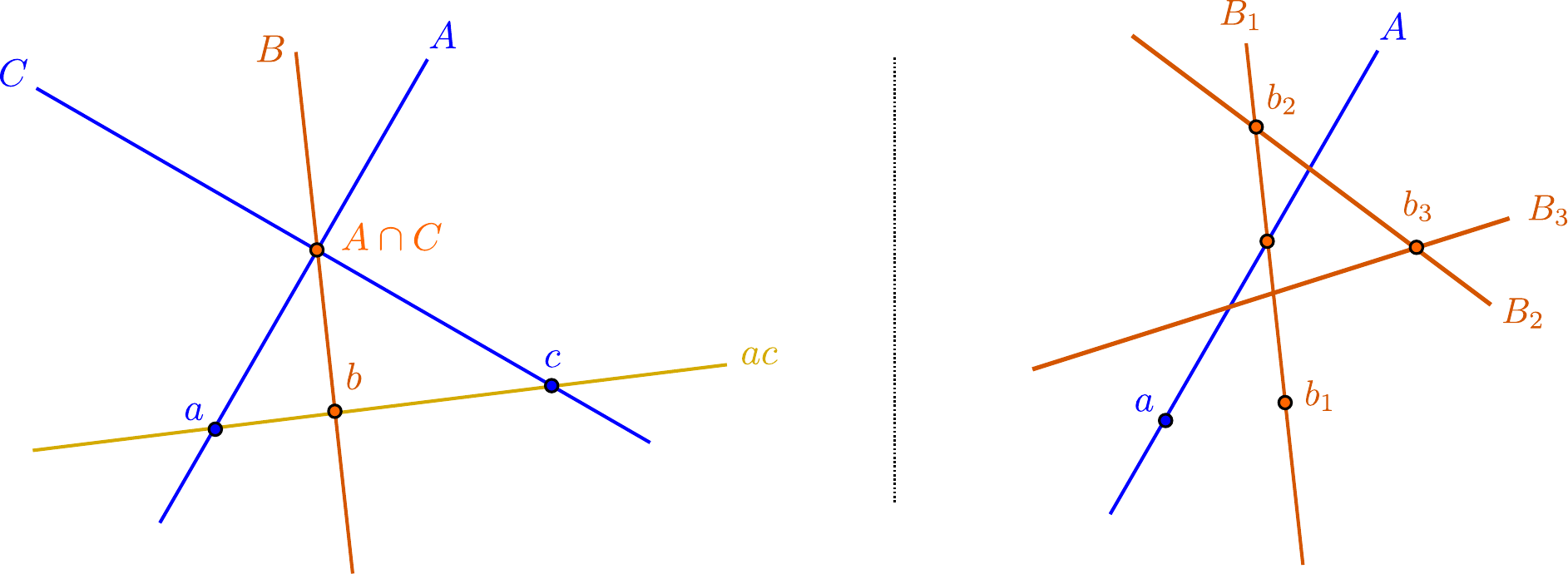}
			\caption{The projective flags $\SA,\SC$ and $\SB=(b,B)$ in the case $\SA,\SC$ are completely transverse (left). The configuration of projective flags in the case $\SA=\SC$, where admissible flags $(b_1,B_1),(b_2,B_1),(b_2,B_2),(b_3,B_2),(b_3,B_3)$ are depicted (right).}
			\label{fig:NLadderLines}
		\end{figure}
	\end{center}

The counts over a finite field are
$$|\mbox{PGL}(3,\F_q)|=\frac{(q^3-1)(q^3-q)(q^3-q^2)}{q-1},\quad |\P^2(\F_q)|=|\P^2(\F_q)^*|=\frac{(q^3-1)}{(q-1)}=q^2+q+1,$$
and a projective line $\P^1(\F_q)$ has $q+1$ points. Also, note that there are $|\P^1(\F_q)|=q+1$ choices of lines through a point. Now, the choice of the flag $\SA=(a,A)$ gives a count of $|\P^2(\F_q)|\cdot|\P^1(\F_q)|$. The choice of the completely transverse flag $\SC=(c,C)$ gives $q^3$, as we must have $a\not\in C$, and $c\in C$ but $c\neq A\cap C$. The line $B$ in the third transverse flag $\SB_0=(b,B)$ must contain the point $A\cap C$, and its point $b=B\cap ac$ is uniquely determined by the choice of such $B$. Since $B$ must be distinct from $A$ and $C$, we get $q-1$ choices for the line $B$. This yields a total count of

$$\frac{((1+q+q^2)(1+q))\cdot(q^3)\cdot(q-1)}{(1+q+q^2)(q^3-q)(q^3-q^2)}=\frac{1}{q-1},$$
for the case where the flags $\SA,\SC$ are completely transverse. Thus, $\SA,\SB_0,\SC$ can be fixed, mutually completely transverse, and a factor of $(q-1)^{-1}$ remains.

Second, let us consider the case where $\SA=\SC$. In this case, the flags $\SB_i$, $1\leq i\leq n$, will not all be equal. We proceed with the same systematic analysis as before. The initial choice is $\SB_1=(b_1,B_1)$, and this determines the flags $(b_1,ab_1),(AB_1,B_1)$ in the left and right adjacent regions of $\SB_1$. In turn, this determines the line in $\SB_2$ to be $B_1\sse\P^2$. The point in $\SB_2$ remains undetermined at this stage, and this is a choice of $b_2\in \SB_2$, with a count of $q$, since $b_2\in B_1$ and $b_2\neq A\cap B_1$. This is depicted in Figure \ref{fig:NLadderProof2}.  The choice of the point $b_2\in\SB_2$ readily determines the point in the flag $\SB_3$, whose line is undetermined. There are exactly $q$ choices for a line $B_2\sse\P^2$ in $\SB_3$, as it must contain $b_2$ and be different from $B_1$. This is an iterative process, where the count of choices that determine the flag $\SB_i$, $2\leq i\leq n$ is exactly $q$, either because of the choice of a point or a line. The flag configuration is depicted in Figure \ref{fig:NLadderProof3}.

\begin{center}
	\begin{figure}[h!]
		\centering
		\includegraphics[scale=0.75]{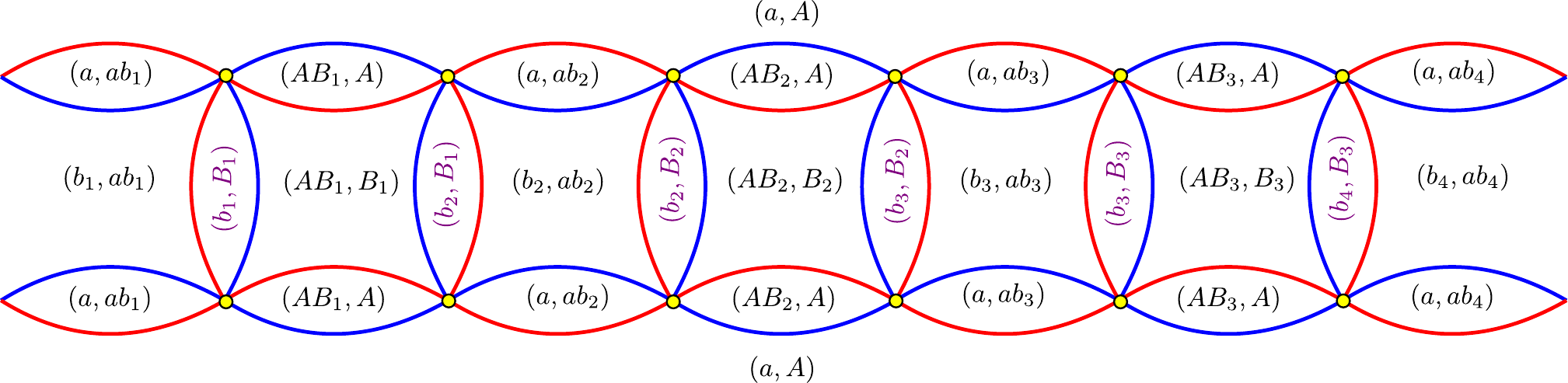}
		\caption{Flag configuration at a point of the flag moduli space $\SM(\SL_n)$ in the case $\SA=\SC$. For these configurations, the sequence of flags $(b_i,B_i)$ are part of the choice that determine the points in the flag moduli.}
		\label{fig:NLadderProof3}
	\end{figure}
\end{center}

At this stage of the case $\SA=\SC$, we need to impose the cyclic condition $\SB_0=\SB_{2n}$ given by the ladder graph. This is not automatic, and it will actually reduce the naive count of $q^{2n}$ for the choices of $\SB_i$, $0\leq i\leq 2n-1$. Let us use the $\mbox{PGL}(3,\F_q)$ symmetry to fix the flags $\SA=\SC$ and $\SB_0$.  We will now use affine coordinates, so the flag $\SA$ will be understood as a line $a\sse\F_q^3$ and a plane $A\sse\F_q^3$. Thus, we assume that the line $a\sse\F_q^3$ in $\cA$ is spanned by $\begin{pmatrix}1\\0\\0\\\end{pmatrix}$ and the
plane $A\sse\C^3$ is the kernel of the covector $(0,0,1)$, and the flag $\cB_0$ is given by the pair $\begin{pmatrix}0\\0\\1\end{pmatrix}$, $(1,0,0)$. Note that this flag configuration has a residual isotropy group isomorphic to $(\bF_q^\times)^2$, and we will divide our count for fixed $\cA,\cB_0$ by the isotropy factor of $(q-1)^2$.

Let us parametrize the remaining degrees of freedom for flags $\cB_i$, $1\leq i\leq\SB_{2n-1}$ by the choice of coordinates $x_i\in\F_q$ and $a_i\in\F_q$, respectively used for each line $b_i$ and plane $B_i$, $1\leq i\leq n$. By labeling lines and planes by their normalized vectors and covectors, we obtain the description:
$$\begin{array}{rclrcl}
\cB_0:\quad b_0 &=& \begin{pmatrix}0\\0\\1\end{pmatrix}&\quad B_0 &=& \left(1, 0, 0\right)\\
\cB_1:\quad b_1 &=& \begin{pmatrix}0\\x_1\\1\end{pmatrix}&\quad B_0 &=& \left(1, 0, 0\right)\\
\cB_2:\quad b_1 &=& \begin{pmatrix}0\\x_1\\1\end{pmatrix}&\quad B_1 &=& \left(1, a_1, -a_1 x_1\right)\\
\cB_3:\quad b_2 &=& \begin{pmatrix}-a_1 x_2\\x_1 + x_2\\1\end{pmatrix}&\quad B_1 &=& \left(1, a_1 , -a_1 x_1\right)\\
\cB_4:\quad b_2 &=& \begin{pmatrix}-a_1 x_2\\x_1 + x_2\\1\end{pmatrix}&\quad B_2 &=& \left(1, a_1 + a_2, -a_1x_1 - a_2(x_1+x_2) \right)\\
\cB_5:\quad b_3 &=& \begin{pmatrix}-a_1 x_2-(a_1+a_2)x_3\\x_1 + x_2+x_3\\1\end{pmatrix}&\quad B_2 &=& \left(1, a_1 + a_2, -a_1x_1 - a_2(x_1+x_2)\right)\\
\cB_6:\quad b_3 &=& \begin{pmatrix}-a_1 x_2-(a_1+a_2)x_3\\x_1 + x_2+x_3\\1\end{pmatrix}&\quad B_3 &=&
\left(1, a_1 + a_2 + a_3, -a_1x_1 \right.\\
&&&&&\left. - a_2(x_1+x_2)-a_3(x_1+x_2+x_3) \right)\\
&\vdots&&&\vdots&\\
\end{array}
$$

$$\cB_{2k}:\; b_{k} = \begin{pmatrix}-\sum_{i=2}^k \left(\sum_{j=1}^{i-1}a_j\right) x_i \\ \sum_{j=1}^k x_j \\1\end{pmatrix}, \quad B_{k} =
\left(1, \sum_{j=1}^k a_j, -\sum_{i=1}^k a_i\left(\sum_{j=1}^i x_j\right)\right)$$

Since the dot product $B_k\cdot b_k=0$ for all $1\leq k\leq n$, the 2-planes $B_k$ contain the points $b_k$, as required. Define the new variables
$$\alpha_i = \sum_{j=1}^i a_j,\quad y_i = x_{i+1},\quad X = \sum_{j=1}^n x_j,$$
and the vectors $\alpha=(\alpha_1,\alpha_2,\ldots,\alpha_{n-1})$, $y=(y_1,\ldots,y_{n-1})$. This is an allowed change of variables, as it is a triangular and invertible transformation. The equation $\SB_0=\SB_{2n}$ gives four equalities. Two of the equalities are $\alpha_n=0$, $X=0$. The third equation reads
$$\alpha\cdot y=0,\quad\mbox{i.e. }\sum_{i=1}^{n-1} \alpha_i y_i=0.$$
The fourth equation, imposed by the vanishing of the third coordinate of $B_{2n}$ is dependent on the first three equations, as $b_{2n}\in B_{2n}$. We are now in position to count solutions of this system over $\F_q$:

\begin{itemize}
	\item[(i)] Suppose that the vector $\alpha\in(\F_q)^{n-1}$ is non-vanishing.  There are $(q^{n-1}-1)$ such possibilities for $\alpha.$ Then the equation $\alpha\cdot y=0$ imposes exactly one linear relation among the $y_i$ variables, $1\leq i\leq n-1$. This yields a choice of $q^{n-2}$ possibilities for the vector $y$. The contribution in this case is thus $(q^{n-1}-1)q^{n-2}$.\\

	\item[(ii)] Suppose that instead $\alpha=0$ is the zero vector.  Then the equation $\alpha\cdot y=0$ is vacuous. The choice of an arbitrary vector $y\in\F_q^{n-1}$ completes the count with a factor of $q^{n-1}$.
\end{itemize}

In conclusion, the case $\SA=\SC$ yields a total count of

$$\frac{(q^{n-1}-1)q^{n-2}+q^{n-1}}{(q-1)^2}.$$

Finally, adding together the two cases for the relative position of the two flags $\SA,\SC$, we obtain a finite field count of

$$|\SM(\SL_n)(\F_q)|=\frac{1}{(q-1)}+\frac{(q^{n-1}-1)q^{n-2}+q^{n-1}}{(q-1)^2}=\frac{q^{2n-3}-q^{n-2}+q^{n-1}+q-1}{(q-1)^2}.$$

\end{proof}

Note also that the proof of Theorem \ref{thm:finitecounts} shows that the moduli space of $n$-gons $\SM_n$ \cite{NGons1,NGons2} admits an embedding into our flag moduli space $\SM(\SL_n)(\C)$. In the next section, we will consider $N$-graphs $G\sse\D^2$ with {\it non-empty} boundary $\dd G\neq\varnothing$, which feature prominently in our study of Lagrangian fillings through $N$-graphs $G$.


\section{Microlocal Monodromies and Lagrangian Fillings}\label{sec:app2}

This section explains how to use $N$-graphs $G$ in order to study 2-dimensional exact Lagrangian cobordisms between 1-dimensional Legendrian links in $(\S^3,\xi_\st)$ -- in particular, the study of their exact Lagrangian fillings. Briefly, the Legendrian mutations we developed in Section \ref{sec:moves} will be used to construct Lagrangian fillings, and we use microlocal monodromies -- and the connection to cluster algebras -- to distinguish them. The proof of Theorem \ref{thm:ThurstonLinksIntro}, using these two steps to build infinitely many distinct Lagrangian fillings for a class of Legendrian knots, is also given here.


\subsection{Exact Lagrangian Cobordisms}\label{ssec:ExactLagrangianFillings} This manuscript has heretofore focused on the study of Legendrian surfaces in an ambient 5-dimensional contact manifold. In fact, the theory of $N$-graphs and Legendrian weaves that we have developed is also useful for studying exact Lagrangian fillings of 1-dimensional Legendrian links $\La\sse(\S^3,\xi_\st)$ and, more generally, exact Lagrangian cobordisms between such Legendrian links. This is also the context in which applications to both Spectral Networks and Soergel Calculus should arise.

There are two advantages to studying exact Lagrangian fillings $L$ of $\dd L\sse(\S^3,\xi_\st)$ from the perspective of $N$-graphs. First, the manipulation of their Hamiltonian isotopy class $L\sse(\D^4,\omega_\st)$ becomes combinatorial, as do operations such as Polterovich surgery (see Theorem \ref{thm:Legsurgeries}). Second, the computation of cluster coordinates for the augmentation variety $\Aug(\La)$ associated to the Legendrian link $\dd L=\La\sse(\S^3,\xi_\st)$ is accessible.

\begin{remark}
The cluster structures in the coordinate rings of $\Aug(\La)$ have proven to be an effective method for proving new results for Legendrian knots in the 3-sphere \cite{STWZ,CasalsHonghao}. We do not know how to prove these cited results using Floer-theoretic methods (such as the Legendrian DGA \cite{Chekanov02,Etnyre05}), nor is there currently a Floer-theoretic description\footnote{As far as we know, this remains an open question even if the exact Lagrangian filling is given by a pinching sequence \cite{EkholmHondaKalman16,YuPan,YuPan2}.} for the cluster coordinates induced by an exact Lagrangian filling $L\sse(\D^4,\omega_\st)$.\hfill$\Box$
\end{remark}

In this section we present the context in which Legendrian weaves $\La(G)$ provide exact Lagrangian cobordisms. This is a viewpoint that we will use extensively in the reminder of the article, including Section \ref{sec:app3} and Appendix \ref{ssec:SoergelCalculus}.

\subsubsection{The geometric setup} Let $(\R^5,\xi_\st)$ have coordinates $(x,y,z,s,t)\in\R^5$, contact 1-form $\alpha_\st=e^s(dz-y_1dx_1)-dt$, and let $\pi:(\R^5,\xi_\st)\lr(\R^4,\la_\st)$ be the projection $\pi(x,y,z,s,t)=(x,y,z,s)$. Consider the contact 3-planes $(\R^3_l,\xi_\st):=\{t=1,s=l\}\sse\R^5$ and choose two Legendrians $\La_1\sse(\R^3_1,\xi_\st)$ and $\La_2=(\R^3_2,\xi_\st)$. Suppose that $\La\sse(\R^5,\xi_\st)$ is a Legendrian surface with isotropic boundaries $\dd\La=\La_1\sqcup\La_2$, and $\La_1=\La\cap(\R^3_1,\xi_\st)$, $\La_2=\La\cap(\R^3_2,\xi_\st)$.

The crucial geometric fact is that the projection $\pi(\La)\sse(\R^4,\la_\st)$ is an immersed exact Lagrangian, whose immersion points are in bijection with the Reeb chords of $\La\sse(\R^5,\alpha_\st)$. In particular, if the Legendrian surface $\La\sse(\R^5,\xi_\st)$ has {\it no} Reeb chords, then the Lagrangian image $\pi(\La)\sse(\R^4,\la_\st)$ is an embedded exact Lagrangian with boundary $\La_1\sqcup\La_2$. It is readily verified that $\pi(\La)$ is an exact Lagrangian cobordism from $\La_1$ to $\La_2$ (and {\it not} viceversa). The particular case of $\La_1=\varnothing$ yields exact Lagrangian fillings of $\La_2$.

In line with the constructions in this article, the Legendrians $\La_1,\La_2\sse(\R^3,\xi_\st)$ that we study arise from positive braids -- see \cite[Section 2]{CasalsHonghao} -- and thus can be described as satellites of the standard Legendrian unknot $\La_{\st}\sse(\R^3,\xi_\st)$. The description in the paragraph above is then modified as follows. Consider $(J^1(\S^1\times[1,2]),\xi_\st)$, two Legendrian links
$$\La_1\sse(J^1(\S^1\times\{1\})),\quad \La_2\sse(J^1(\S^1\times\{2\})),$$
and a Legendrian surface $\La\sse(J^1(\S^1\times[1,2]),\xi_\st)$ such that
$$\La\cap(J^1(\S^1\times\{1\}))=\La_1,\quad \La\cap(J^1(\S^1\times\{2\}))=\La_2.$$

Now, suppose that the surface $\La$ has {\it no} Reeb chords, then the Lagrangian projection $\pi(\La)\sse (J^1\S^1\times\R,\la_\st)$ in the symplectization of $(J^1\S^1,\xi_\st)$ provides an exact Lagrangian cobordisms from $\La_1$ to $\La_2$. The case in which $\La_1=\varnothing$ can be compactified to $(J^1\D^2,\xi_\st)$ in the $(J^1(\S^1\times\{1\}),\xi_\st)$ end, which symplectically corresponds to adding a standard symplectic 4-disk $(\D^4,\xi_\st)$ in the concave end of the symplectization, i.e. as an exact symplectic filling of $(\S^3,\xi_\st)$. Diagrammatically, this implies that we can describe exact Lagrangian fillings of a positive Legendrian braid $\La_2=\La(\beta)\sse(\S^3,\xi_\st)$ in $(D^4,\omega_\st)$ by drawing $N$-graphs in $\D^2$ whose free edges meet the boundary according to a positive braid word $\beta$. Here $\La(\beta)$ denotes the standard satellite of the Legendrian in $(J^1\S^1,\xi_\st)$ whose front in $\S^1\times\R$ is given by the positive braid (word) $\beta$.

In short, exact Lagrangian fillings between Legendrian links can be studied via the spatial wavefronts of their Legendrian lifts to the contactization, and the techniques we have developed for Legendrian surfaces can be applied. In particular, we can use our diagrammatic $N$-graph calculus to study and distinguish exact Lagrangian cobordisms.

\subsubsection{Free $N$-Graphs}\label{sssec:FreeNGraphs} Let $\mathscr{G}_\beta$ be the set of $N$-graphs on a 2-disk $\D^2$ with boundary braid word $\beta$. As stated above, in order to construct {\it embedded} exact Lagrangian fillings $L\sse(\D^4,\omega_\st)$ for $\La(\beta)\sse(\S^3,\xi_\st)$ as $N$-graphs $G\sse\D^2$ in $\mathscr{G}_\beta$, we must have that the Legendrian weave $\La(G)\sse(\R^5,\xi_\st)$ has no Reeb chords. Let us introduce the following:

\begin{definition}\label{def:FreeNGraph}
An $N$-graph $G\sse\D^2$ is said to be free if its associated Legendrian front $\Sigma(G)$ can be woven with no Reeb chords.\hfill$\Box$
\end{definition}

In this section many of the $N$-graphs $G\sse\D^2$ can be checked to be free by direct inspection.

\begin{ex}\label{ex:free2graphs} Let $G\sse\D^2$ be a $2$-graph such that $(\D^2\setminus G)/(\dd\D^2\cap (\D^2\setminus G))$ is simply-connected. Then $G$ is free if and only if $G$ has no faces contained in the interior of $\D^2$. Figure \ref{fig:ReebChordsExamples} shows four examples of 2-graphs.

\begin{center}
	\begin{figure}[h!]
		\centering
		\includegraphics[scale=1]{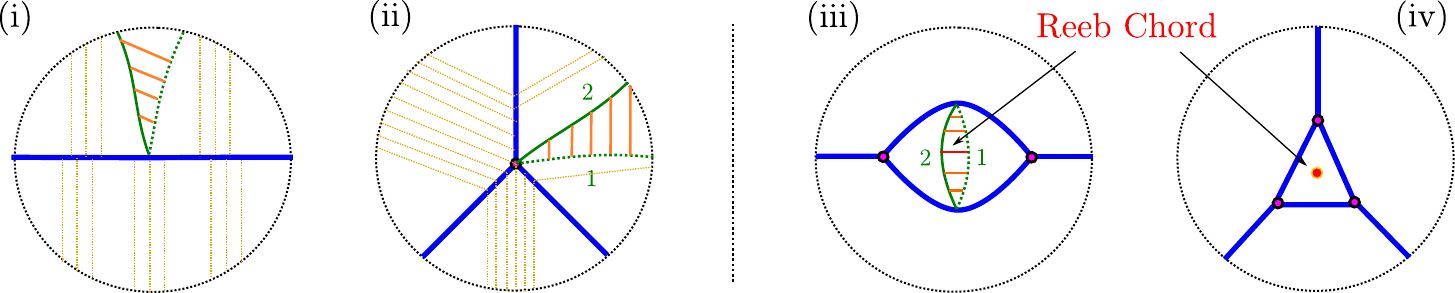}
		\caption{Two free 2-graphs $(i)$ and $(ii)$, shown on the Left. Two 2-graphs, $(iii)$ and $(iv)$, whose woven front must have a Reeb chord (Right). Each of the fronts associated to the non-free two 2-graph can be woven with exactly one Reeb chord, as indicated. In both cases, the green lines depict the two sheets of a woven front and the orange segments indicate the distance between these sheets. On the left, these length of the distance grows as we approach the boundary, whereas for the 2-graph $(iii)$ there must be a maximum for this distance, forcing a Reeb chord.}
		\label{fig:ReebChordsExamples}
	\end{figure}
\end{center}
\end{ex}

The two 2-graphs $(i),(ii)$ on Figure \ref{fig:ReebChordsExamples} (Left) are free. For that, consider a smooth 1-dimensional foliation of $\D^2\setminus G$ whose leaves are open intervals and such that the closure of each leave intersects $\dd\D^2$. The radial-like yellow foliations depicted in Figure \ref{fig:ReebChordsExamples} (Left) suffice. Then choose a woven front for such 2-graphs such that the differences between the heights of the two sheets of the front strictly increase along each of the leaves of this foliation, being $0$ at $G$ and having positive value at $\dd\D^2$. These woven fronts do not have Reeb chords, as the functions giving the differences of heights between the sheets do not have critical points. In contrast, such foliations do not exist for the two 2-graphs $(iii),(iv)$ on Figure \ref{fig:ReebChordsExamples} (Right), as $\D^2\setminus G$ contains a region whose closure is contained in the interior of $\D^2$. It can be shown that any front woven with respect to $(iii)$ or $(iv)$ must have a Reeb chord and there exists a woven front with a minimal number of Reeb chords, one per each interior face of $G$.\hfill$\Box$

From the perspective of Lagrangian fillings, the 2-graph $(i)$ in Figure \ref{fig:ReebChordsExamples} is an embedded (exact) Lagrangian filling for the 2-component standard unlink, which is the union of two disjoint Lagrangian disks $\D^2\cup\D^2$. The 2-graph $(ii)$ yields the embedded Lagrangian filling for the standard unknot, which is the standard flat Lagrangian disk $\D^2\sse\D^4$. This stands in contrast with the {\it immersed} Lagrangian fillings represented by $(iii)$ and $(iv)$. The 2-graph $(iii)$ is an immersed exact Lagrangian annulus with boundary the 2-component standard unlink, and $(iv)$ is an immersed exact Lagrangian once-punctured 2-torus filling the standard Legendrian unknot. In general, the following criterion is useful:

\begin{lemma}\label{lem:free} Let $G\sse\D^2$ be a free $N$-graph. Then the $N$-graph $\mu(G)\sse\D^2$, obtained from $G$ by performing a Legendrian mutation at any $\sf I$-cycle or $\sf Y$-tree of $G$, is also free.
\end{lemma}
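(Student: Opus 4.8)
The plan is to reduce the general case to the case of a \emph{short} monochromatic $\sf I$-cycle, where freeness can be checked directly from the local front model of Figure \ref{fig:LegendrianMutationFront}, and then to propagate this back through the equivalence moves of Section \ref{sec:moves}. The key observation is that all of these moves --- the surface Reidemeister moves of Theorem \ref{thm:surfaceReidemeister} (push-through, candy twist, flop, etc.) and the local mutation rules of Subsection \ref{ssec:legmutation_local} --- are compactly supported in a disk, and that a woven front realizing $\La(G)$ with no Reeb chords can be chosen to be \emph{arbitrarily close to the zero-sheet configuration} outside any prescribed neighborhood, using the dilation freedom in $(\R^5,\xi_\st)$. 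Thus freeness is an open/local condition that one can test near the support of the move.

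First I would record the elementary fact used in Example \ref{ex:free2graphs}: a front $\Sigma(G)$ can be woven with no Reeb chords if and only if there is a smooth $1$-dimensional foliation of each region of $(\D^2\setminus G)$ (equivalently, of the sheet-gaps of the front) by intervals whose closures meet $\dd\D^2$, along which the consecutive sheet-gaps can be taken to be strictly monotone, vanishing on $G$. Freeness of an $N$-graph is therefore inherited by any subgraph obtained by a move that does not create an interior face, and more importantly it is a condition that only needs to be verified in a neighborhood of the part of the graph that changed, since outside that neighborhood the old foliation and height functions can be retained (rescaled if necessary). Then I would verify by hand that the local mutation move in Figure \ref{fig:LegendrianMutationFront} (left-to-right) preserves the existence of such data: the mutated local model is again a configuration of two transversely intersecting sheets whose gap function has no interior critical point, exactly as in the left picture, so no Reeb chord is created locally.

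The main step is then the following. Given a free $N$-graph $G$ and an $\sf I$-cycle or $\sf Y$-tree $\gamma$, one uses push-through moves (Move II) and, for $\sf Y$-trees, the local rules of Figures \ref{fig:InternalYVertex}--\ref{fig:InternalYVertex6}, to slide trivalent vertices along $\gamma$ until $\gamma$ becomes a short monochromatic edge $e$ between two trivalent vertices, obtaining an intermediate graph $G'$ Reidemeister-equivalent to $G$. Each such move is a Legendrian isotopy of the associated weave relative to the boundary, hence (by the openness/rescaling argument above, together with the fact that these moves are realized by compactly supported homotopies of fronts with no vertical tangencies) it preserves freeness: if $\Sigma(G)$ has a Reeb-chord-free weaving, so does $\Sigma(G')$, since one can transport the foliation-and-height data through the isotopy. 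One must here be slightly careful that push-through does not merge two regions into one enclosing an interior face --- but push-through through a hexagonal vertex sends an edge-region to an edge-region and a $\sf Y$-region to a $\sf Y$-region, never trapping a face in the interior of $\D^2$, so the foliation condition persists. Now perform the short-edge mutation at $e$: by the local computation in the previous paragraph, $\mu_e(G')$ is free. Finally, run the sliding moves in reverse --- push the trivalent vertices back along the mutated cycle $\mu_\gamma$ --- to obtain $\mu_\gamma(G)$; each reverse move again preserves freeness by the same argument. Composing, $\mu_\gamma(G)$ is free.

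The hard part will be making precise the claim that freeness is transported by the isotopies coming from Moves I--XII and the $\sf Y$-tree rules, i.e.\ that a Reeb-chord-free weaving on one side of a move can always be chosen compatibly with a Reeb-chord-free weaving on the other side. The cleanest way to handle this is to phrase freeness intrinsically in terms of the front $\Sigma(G)\sse C\times\R$ (no interior critical points of any sheet-gap function) rather than combinatorially, observe that each move in Section \ref{sec:moves} is realized by an explicit compactly supported homotopy of fronts $\{\Sigma_s\}$ in which the sheet-gap functions never develop interior critical points at the endpoints, and then argue that a Reeb-chord-free representative at $s=0$ extends to $s=1$ by a partition-of-unity interpolation supported in the moving region, using the dilation freedom to keep the boundary behavior fixed. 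Granting this, every intermediate graph in the sequence $G \leadsto G' \leadsto \mu_e(G') \leadsto \mu_\gamma(G)$ is free, which is the assertion of the lemma.
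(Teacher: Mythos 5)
Your proof follows essentially the same route as the paper's: establish the monochromatic-edge base case via the foliation-and-height criterion, noting that the mutated front can be chosen to agree with the old one near $\dd\Op(e)$, and then reduce a general $\sf I$- or $\sf Y$-cycle mutation to a monochromatic one via push-through and the local rules of Subsection \ref{ssec:legmutation_local}, observing that each intermediate Reidemeister move is compactly supported relative to the boundary and hence preserves freeness. Your side-worry about push-through trapping an interior face is harmless but not needed once freeness is phrased via the foliation criterion rather than the ``no interior faces'' shorthand of Example \ref{ex:free2graphs}, which is specific to $2$-graphs.
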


\begin{proof} Consider the 2-graph mutation at a monochromatic $i$-edge of an $N$-graph $G$. Let $\Op(e)$ be a neighborhood of a monochromatic edge $e$ in a free $N$-graph. The 2-graph mutation along the 1-cycle $\gamma_e$ can then be performed by the exchange in Figure \ref{fig:ReebChordsExamples2}, which builds on Figure \ref{fig:LegendrianMutation} (Left). Since both 2-graphs $G$ and $\mu_e(G)$ in the exchange coincide in a neighborhood of the boundary, we can force that the front woven with respect to $\mu_e(G)$ coincides identically -- not just up to homotopy of Legendrian fronts -- with the given front $\Sigma(G)$ woven with respect to $G$. Let us choose a 1-dimensional foliation in $\D^2$ with respect to $G$, as in Example \ref{ex:free2graphs}, such that the difference between the heights of any pair of sheets in the woven front strictly increase (or decreases) as we move along the sheets of the foliations away from $G$. (This foliation exists because $G$ is free.) We have depicted such a foliation for $G$ in Figure \ref{fig:ReebChordsExamples2}.
	
\begin{center}
	\begin{figure}[h!]
		\centering
		\includegraphics[scale=0.65]{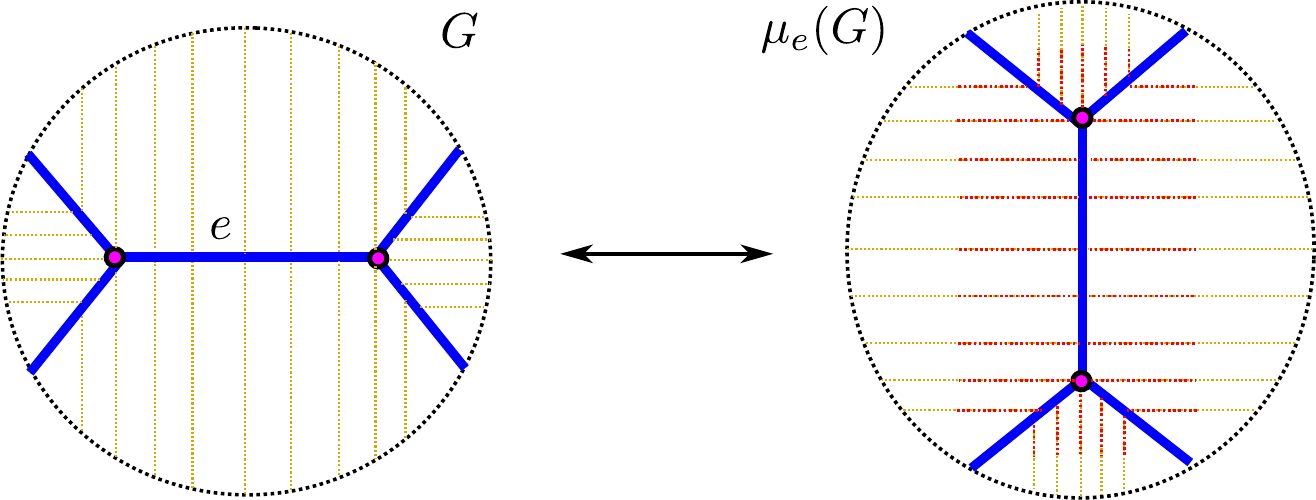}
		\caption{Mutation for an $N$-graph $G$ along a monochromatic $i$-edge $e$. The mutated graph $\mu_e(G)$ admits a woven front $\Sigma(\mu_e(G))$ which coincides with any front $\Sigma(G)$ woven with respect to $G$ near the boundary of the neighborhood $\Op(e)$. The yellow foliation near the boundary fixes the difference between the $i$th and $(i+1)$th sheets in both fronts $\Sigma(G)$ and $\Sigma(\mu_e(G))$. This foliation is extended to the interior in two different ways, yellow or red, depending on the graph being $G$ or $\mu_e(G)$.}
		\label{fig:ReebChordsExamples2}
	\end{figure}
\end{center}

In order to guarantee that $\mu_e(G)$ is free, we construct a front $\Sigma(\mu_e(G))$ woven with respect to $\mu_e(G)$ as follows: this new front is identical to that of $G$ near the boundary of the neighborhood of the monochromatic edge, and the $j$-th sheets for $\Sigma(\mu_e(G))$ coincide with those of $\Sigma(G)$ except for the sheets corresponding to $j=i,i+1$. The $i$th and $(i+1)$th sheets of $\Sigma(\mu_e(G))$ are woven according to $\mu_e(G)$ such that the difference in heights between the $i$th and the $(i+1)$th sheets increases (or decreases) strictly along the 1-dimensional red foliation as we move away from $\mu_e(G)$ as shown in Figure \ref{fig:ReebChordsExamples2} (Right). Since the red foliation is drawn to coincide with the yellow foliation at the boundary of the neighborhood $\Op(e)$, this is consistent with the sheets coinciding in that neighborhood. Given that the leaves of the 1-dimensional red foliation are intervals with a free end, it is possibly to build such a front, meeting the condition that the difference of heights between $i$th and $(i+1)$th strictly increases (or decreases). In addition, we can draw the front $\Sigma(\mu_e(G))$ such that the slopes of each sheet are arbitrarily close to the slopes of $\Sigma(G)$. This guarantees that $\mu_e(G)$ is free as required.

For a general $N$-graph mutation along a $\sf I$- or $\sf Y$-cycle, it suffices to observe that Subsections \ref{ssec:legmutation} and \ref{ssec:legmutation_local} show that such mutations are given by a composition of Legendrian Reidemeister moves, as presented in Subsection \ref{ssec:ReidemeisterMoves}, and mutations along monochromatic edges. Legendrian Reidemeister moves are local, relative to the boundary, and can be performed without ever introducing Reeb chords. Thus an $N$-graph mutation $\mu(G)$ of a free $G$ is free if the statement holds for 2-graph mutations, which we have already proven above.
\end{proof}

Lemma \ref{lem:free} allows us to perform Legendrian mutations to the $N$-graph and obtain potentially new embedded exact Lagrangian fillings. Examples of this are now illustrated. We will implicitly apply Lemma \ref{lem:free} in Subsection \ref{ssec:QuiverMutationsGeometric}, in order to realize cluster mutations as $N$-graph mutations of embedded exact Lagrangian fillings.

\subsubsection{Explicit Examples of Lagrangian Fillings} For the case of free $2$-graphs on a disk $\D^2$, this immediately yields that the max-tb Legendrian $(2,n)$-torus positive link $\La(2,n)$ has at least a Catalan $C_n$ number worth of exact Lagrangian fillings \cite{EkholmHondaKalman16,YuPan,STZ_ConstrSheaves,TreumannZaslow}. This is because $C_n$ counts binary trees, which are equivalent to free 2-graphs. These exact Lagrangian fillings are distinguished, up to Hamiltonian isotopy, through the use of cluster coordinates -- see Subsection \ref{sssec:ClusterCoord}. Now, the ability to increase $N\in\N$ greatly expands\footnote{This is particularly relevant for the study of exact Lagrangian fillings, as it is expected that any $\La(\beta)$ with $\beta\in\mbox{Br}^+_{2}$ has only {\it finitely} many exact Lagrangian fillings, and we will show in Theorem \ref{thm:ThurstonLinks} that this is {\it not} the case already for $N=3$.} the class of Legendrian links for which their Lagrangian fillings can be studied with $N$-graph calculus, including all Legendrian positive braids $\La(\beta)$, $\beta\in\mbox{Br}^+_{N}$ for any $N\in\N$.\\

{\it \underline{Example 1}}: Recently, the first examples of Legendrian links with infinitely many exact Lagrangian fillings were described in the article \cite{CasalsHonghao}. We exhibit them here in terms of 3-graphs. For any $(p,q)\in\N^2$, the max-tb Legendrian $(p,q)$-torus positive link $\La(p,q)\sse(\S^3,\xi_\st)$ is the satellite of the braid $\Delta(\sigma_1\sigma_2\cdot\ldots\cdot\sigma_{p-1})^q\Delta$ along the standard Legendrian unknot, where $\Delta=\Delta_p\in S_p$ is the $p$-stranded half-twist. Let us now illustrate how to diagrammatically visualize these infinitely many Lagrangian for the Legendrian link $\La(3,6)$.

\begin{remark}
Similar $p$-graphs can be drawn for $\La(p,q)$ for all $(p,q)\in\N^2$ and they produce infinitely many Lagrangian fillings if $p\geq3,q\geq6$ or $(p,q)=(4,4),(4,5)$. Alternatively, infinitely many exact Lagrangian fillings for $\La(p,q)\sse(\S^3,\xi_\st)$, $p\geq4,q\geq7$ can also be readily constructed from those of $\La(3,6)$ \cite[Corollary 1.5]{CasalsHonghao}.\hfill$\Box$
\end{remark}

Consider the braid word $\beta=(\sigma_1\sigma_2)^9=\Delta(\sigma_1\sigma_2)^6\Delta$ in the 1-jet space $(J^1\S^1,\xi_\st)$, where $\Delta=\Delta_3\in S_3$ is the 3-stranded half-twist $\Delta=\sigma_1\sigma_2\sigma_1$. This braid $\beta$ can be depicted as a set of points in the circle $\S^1$ labeled with two colors, corresponding to $\sigma_1,\sigma_2$. Figure \ref{fig:T36LagrangianConcordance} shows this braid $\beta$ in two circles, the inner circle $\S^1\times\{1\}$ and outer circle $\S^1\times\{2\}$ in the annulus $\S^1\times[1,2]$. These two marked circles are labeled by $\La(3,6)$, as the Legendrian link associated to the marking $\beta$ is the $(3,6)$-Legendrian link $\La(3,6)$.

\begin{center}
	\begin{figure}[h!]
		\centering
		\includegraphics[scale=0.45]{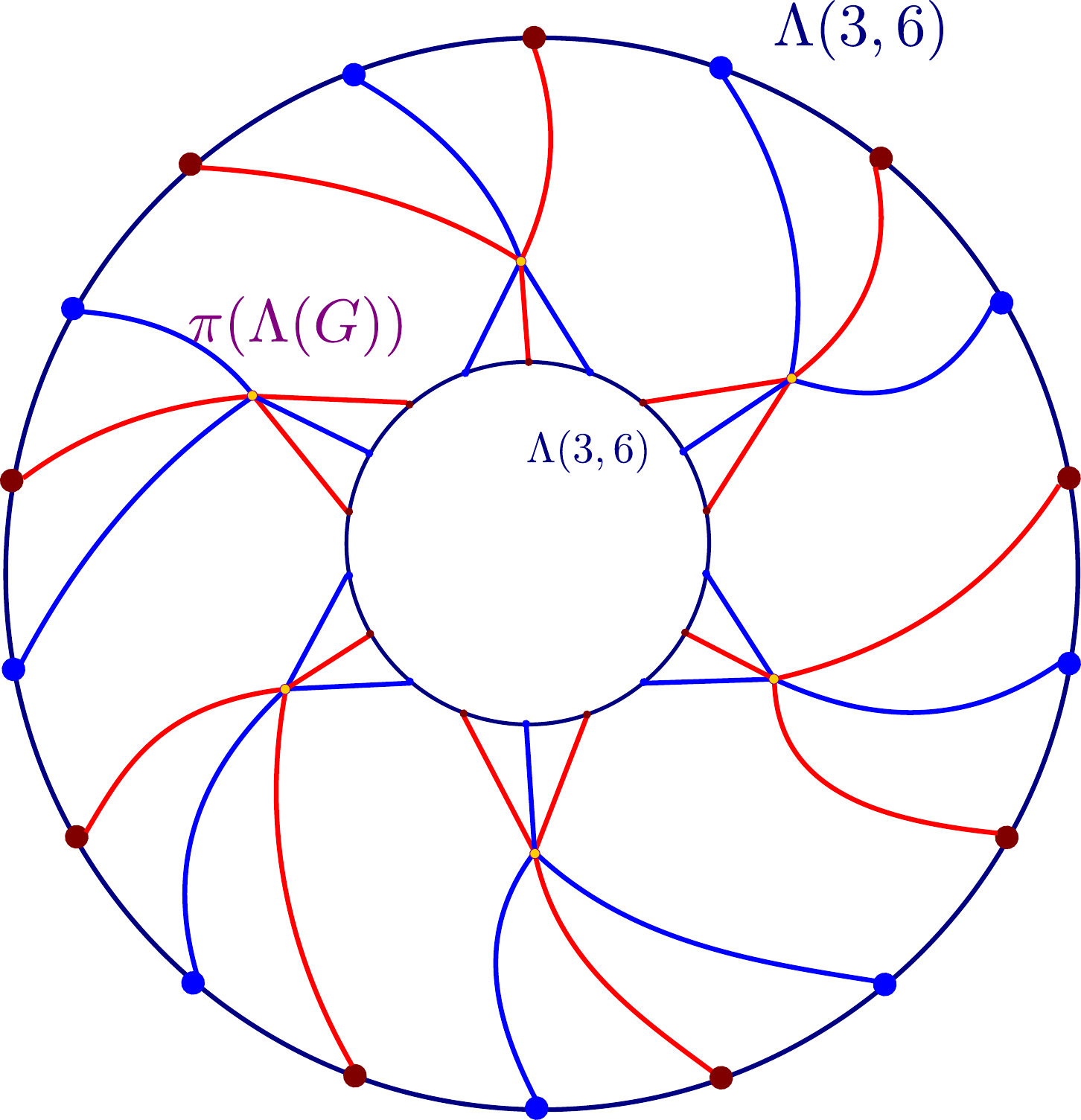}
		\caption{Legendrian weave whose Lagrangian projection defines an infinite order element in the fundamental group of the space of Legendrian links isotopic to $\La(3,6)$. In particular, this Lagrangian concordance has infinite order in the Lagrangian concordance monoid. Infinitely many Lagrangian fillings for $\La(3,6)$, and all torus links $\La(n,m)$, $n\geq3,m\geq6$, are obtained by concatenating this 3-graph.}
		\label{fig:T36LagrangianConcordance}
	\end{figure}
\end{center}

The 3-graph $G\sse \S^1\times[1,2]$ depicted in Figure \ref{fig:T36LagrangianConcordance} describes a Legendrian surface $\La(G)\sse (J^1\S^1\times[1,2],\xi_\st)$ with boundary $\La(3,6)\sqcup\La(3,6)$. By increasing the slope in the radial direction, the Legendrian surface $\La(G)$ can be assumed to have no Reeb chords, and thus $\pi(\La(G))$ is an exact Lagrangian cobordism from $\La(3,6)$ to itself. Since the graph $G$ has no trivalent vertices, $\La(G)$ has the topology of $\La(3,6)\times[1,2]$ and it is in fact an exact Lagrangian concordance. The remarkable property of the 3-graph $G$, and its Lagrangian projection $\pi(\La(G))$, is stated in the following:

\begin{thm}[\cite{CasalsHonghao}] The $3$-graph exact Lagrangian concordance in Figure \ref{fig:T36LagrangianConcordance} has infinite order. In particular, for any fixed exact Lagrangian filling of $\La(3,6)$, iterated concatenation of this $3$-graph yields infinitely many Lagrangian fillings of the Legendrian link $\La(3,6)\sse(\S^3,\xi_\st)$.\hfill$\Box$
\end{thm}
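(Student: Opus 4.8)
The plan is to transport the geometric claim into the algebra of flag moduli spaces and cluster transformations, and to reduce ``infinite order'' to non-periodicity of an explicit sequence of cluster mutations. First I would observe that the $3$-graph $G$ of Figure \ref{fig:T36LagrangianConcordance} has no interior faces, hence is free in the sense of Definition \ref{def:FreeNGraph}, so $\pi(\La(G))$ is an \emph{embedded} exact Lagrangian concordance $C$ from $\La(3,6)$ to $\La(3,6)$; concatenating $k$ copies of $G$ remains free (directly, or by Lemma \ref{lem:free} applied to the mutation presentation of $G$), giving an embedded concordance $C_k$. By Theorem \ref{thm:sheaves} together with the Legendrian invariance of \cite{GKS_Quantization}, $C$ induces a morphism $\Phi$ of the flag moduli space $\SM(\La(3,6))$, with $C_k$ inducing $\Phi^k$; and by Example \ref{ssec:concentric} this moduli space is the braid variety (open Bott--Samelson variety) $O(\beta)$ attached to $\beta=(\sigma_1\sigma_2)^9$. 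Thus it suffices to show $\Phi^k\neq\Phi^{k'}$ for $k\neq k'$ and, for the ``infinitely many fillings'' clause, that the induced action on a fixed filling has infinite orbit.

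\textbf{Step 2 (identification of $\Phi$ with a sequence of cluster mutations).} Next I would identify $\Phi$ explicitly. Using the diagrammatic rules of Subsections \ref{ssec:legmutation} and \ref{ssec:legmutation_local}, a single copy of $G$ can be rewritten, modulo the surface Reidemeister moves of Theorem \ref{thm:surfaceReidemeister}, as a finite sequence of $N$-graph mutations along $\sf I$- and $\sf Y$-cycles. In the microlocal-monodromy coordinates of Subsection \ref{ssec:computations_flagmoduli} (generalized cross-ratios of flags), each such Legendrian mutation acts by the corresponding cluster mutation on the intersection quiver $Q$ of the initial seed of $O(\beta)$. For $\La(3,6)$ the quiver $Q$ is of non-finite (affine) mutation type, and $\Phi$ is the element of the cluster modular group of $Q$ realized by that mutation sequence — one of the generating transformations of the geometric $\mbox{Br}_3$-action of \cite{CasalsHonghao}, equivalently a Donaldson--Thomas-type transformation.

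\textbf{Step 3 (infinite order and conclusion).} Then I would show this element has infinite order on cluster variables by tracking $g$-vectors (equivalently $c$-vectors, via sign-coherence) of a fixed cluster variable along the forward $\Phi$-orbit: affineness of $Q$ forces the $g$-vectors to be unbounded, so no power of $\Phi$ can fix the initial seed, whence $\Phi^k\neq\Phi^{k'}$ for $k\neq k'$; this is the content of \cite[Theorem 1.1]{CasalsHonghao}, where a faithful $\mbox{PSL}(2,\Z)$-action is extracted. Finally, fixing any exact Lagrangian filling $L_0$ of $\La(3,6)$ — one exists by Subsection \ref{ssec:ExactLagrangianFillings}, e.g.\ from a free $3$-graph on $\D^2$ with boundary braid $\beta$, or from a pinching sequence — the surface $L_k$ obtained by concatenating $L_0$ with $k$ copies of $C$ is an embedded exact Lagrangian filling, and its toric chart inside $O(\beta)$ is $\Phi^k$ applied to the chart of $L_0$; since Hamiltonian-isotopic fillings induce the same chart and the charts $\Phi^k(L_0)$ are pairwise distinct, the fillings $\{L_k\}_{k\ge 0}$ are pairwise non-Hamiltonian-isotopic, and $C$ has infinite order in $\L(\La(3,6))$.

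The hard part will be Step 3: proving the mutation sequence encoding $\Phi$ is genuinely non-periodic rather than seed-periodic. This forces one to pin down the quiver $Q$ precisely, to recognize $\Phi$ inside the cluster modular group of an affine cluster algebra, and then to prove unboundedness of the relevant $g$- or $c$-vectors (or to exhibit a provably infinite orbit of a single cluster coordinate). Steps 1 and 2 are comparatively bookkeeping — checking freeness of the concatenated weaves, matching boundary braid words, and transporting the already-established moduli-space invariance and cluster-transformation formulas.
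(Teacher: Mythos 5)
The paper does not prove this statement at all: it is attributed to \cite{CasalsHonghao} and stated as a black box, with the passage immediately following only remarking that ``the non-triviality, and infinite order, of this Lagrangian concordance is detected by studying its action on the cluster structure.'' So there is no ``paper's own proof'' to compare against; the question is whether your sketch is a sound reconstruction of the \cite{CasalsHonghao} argument. It is roughly in the right spirit, but Step 2 has a genuine conceptual error, and there are some smaller misattributions.

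The error in Step 2: the $3$-graph $G$ of Figure \ref{fig:T36LagrangianConcordance} has \emph{no trivalent vertices} (the paper says so explicitly --- ``Since the graph $G$ has no trivalent vertices, $\La(G)$ has the topology of $\La(3,6)\times[1,2]$''), so it cannot be ``rewritten, modulo surface Reidemeister moves, as a finite sequence of $N$-graph mutations along $\sf I$- and $\sf Y$-cycles.'' Mutation in the sense of Definition \ref{def:graphmut} and Theorem \ref{thm:LegMutations} is a surgery at a monochromatic edge between two \emph{trivalent} vertices, and it relates two \emph{fillings} of the same boundary Legendrian; it does not encode a concordance. What is true, and what \cite{CasalsHonghao} actually does, is that the concordance $C$ induces a map on the sheaf moduli space via the GKS kernel of the underlying Legendrian isotopy (a ``Legendrian loop''), and that this map is a specific cluster transformation --- the verification is a sheaf-kernel computation, not a rewriting of $G$ into mutations. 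If you want to stay in the $N$-graph language, the defensible version of your claim is that for a chosen filling $L_0$ the concatenated filling $L_0\cup C$ is related to $L_0$ by a sequence of mutations of their $3$-graphs; but even that would need to be checked for this particular $G$, and it is not automatic.

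Two smaller points. First, Example \ref{ssec:concentric} concerns concentric circles on $C=\S^2$, producing links of Legendrian $2$-spheres in $(J^1\S^2,\xi_\st)$; the identification of the moduli space of simple sheaves for the one-dimensional link $\La(3,6)\sse(\S^3,\xi_\st)$ with (a quotient/framed variant of) the open Bott--Samelson variety is from \cite{STZ_ConstrSheaves}, not that example. Second, the infinite-order argument in \cite{CasalsHonghao} proceeds by exhibiting a faithful $\mbox{PSL}(2,\Z)$-representation and locating the concordance as an infinite-order element of it; your $g$-vector/$c$-vector growth argument is a plausible alternative but is not ``the content of \cite[Theorem 1.1]{CasalsHonghao}'' as you state. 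Since the whole theorem is already a citation, invoking \cite{CasalsHonghao} for the hard step is unobjectionable, but you should be clear that the mutation-sequence identification in Step 2, as written, does not go through, and that the actual mechanism is the GKS sheaf-quantization of the concordance.
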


In fact, it is possible to describe the entire faithful modular PSL($2,\Z$)-representation in \cite{CasalsHonghao} with the diagrammatics of 3-graphs. Similarly, the diagrammatics of 4-graphs give explicit spatial wavefronts for the $M_{0,4}$-worth of the (Legendrian lift of the) Lagrangian fillings for the Legendrian link $\La(4,4)\sse(\S^3,\xi_\st)$. The non-triviality, and infinite order, of this Lagrangian concordance is detected by studying its action on the cluster structure of the coordinate ring of the moduli space of isomorphism classes of simple objects in $\Sh_{\Lambda(3,6)}(\R^2)$.\\

{\it \underline{Example 2}}: Let us address the following question. Given a positive braid $\beta$, and the Legendrian link $\La=\La(\Delta\beta\Delta)$, how do we diagrammatically produce an $N$-graph which represents an embedded exact Lagrangian filling for $\La\sse(\S^3,\xi_\st)$ ?

Let us begin with a simple example, with $\beta=\Delta^2=(\sigma_1\sigma_2)^3$ the full-twist, which is smoothly the $(3,3)$-torus link. The game is to draw $\tau_i$-edges along the boundary $\dd\D^2$ of a (planar) 2-disk $\D^2$ according to the braid word $\beta$ and complete these edges to an $N$-graph $G$ inside $\D^2$. The only rule is that the Legendrian weave $\La(G)$ should not have Reeb chords, or else it would yield an {\it immersed} Lagrangian filling, and thus we require $G$ to be free.

Consider the free 3-graph $G_1$ in Figure \ref{fig:T33LagrangianFillings} (upper Left). This represents an embedded exact Lagrangian filling $L_1$ of the max-tb Legendrian $(3,3)$-torus link $\La(3,3)=\La(\Delta\beta\Delta)=\La(\Delta^4)$. We can now apply the Legendrian mutation moves in Theorem \ref{thm:LegMutations} in order to produce another Lagrangian filling $L_2$ which is {\it not} Hamiltonian isotopic to the exact Lagrangian filling $L_2$. (Note that $L_1$ and $L_2$ are smoothly isotopic relative to their boundaries, and $L_2$ will be also embedded thanks to Lemma \ref{lem:free}.) In Figure \ref{fig:T33LagrangianFillings} we perform a Lagrangian disk surgery on $L_1$ along a Lagrangian 2-disk which bounds the 1-cycle in $H_1(L_1,\Z)$ graphically given by the $\sf Y$-cycle in surrounded by the dashed green curve.

\begin{center}
	\begin{figure}[h!]
		\centering
		\includegraphics[scale=0.8]{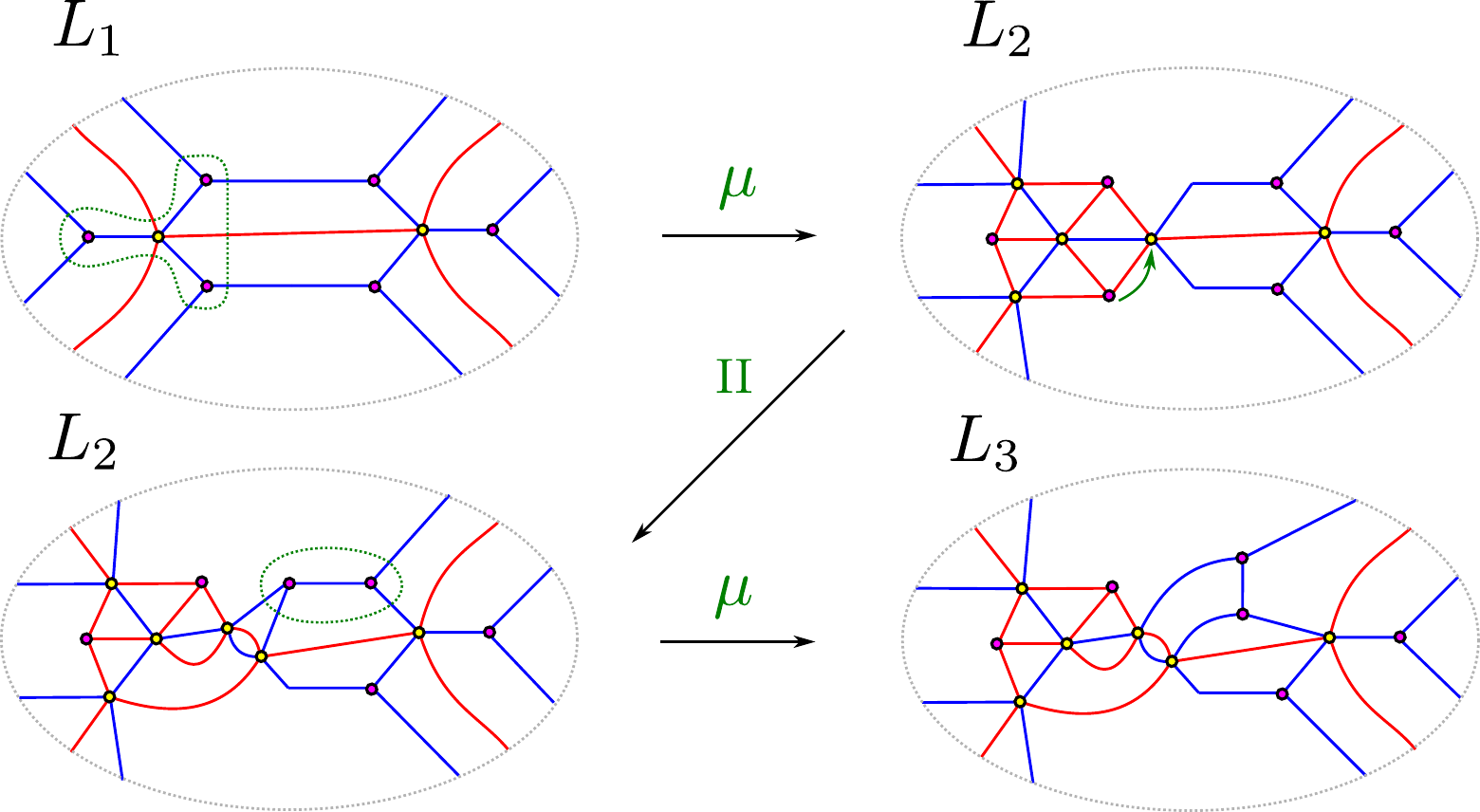}
		\caption{Four 3-graphs representing exact embedded Lagrangian fillings for the maximal-tb $(3,3)$-torus link $\La(3,3)$.}
		\label{fig:T33LagrangianFillings}
	\end{figure}
\end{center}

At this stage we can manipulate $L_2$ with Theorem \ref{thm:surfaceReidemeister}, in this case Figure \ref{fig:T33LagrangianFillings} (upper right) to \ref{fig:T33LagrangianFillings} (bottom left) shows how to apply Move II to push-through a hexagonal vertex through a trivalent vertex (as indicated by the green arrow). This is an interesting move because it makes a new $1$-cycle for $L_2$ readily visible, as represented by the blue monochromatic edge in \ref{fig:T33LagrangianFillings} (bottom left) surrounded by a dashed green curve. We can perform Lagrangian surgery at this monochromatic edge, as in Theorem \ref{thm:LegMutations}, to obtain another exact Lagrangian filling $L_3$, also embedded by Lemma \ref{lem:free}. It is immediate that $L_1$ and $L_3$ are not {\it not} Hamiltonian isotopic to $L_2$, as the cluster coordinates associated to these 3-graphs, as explained in Subsection \ref{sssec:ClusterCoord}, show that $L_1$ and $L_3$ are not Hamiltonian isotopic. In conclusion, the 3-graphs in Figure \ref{fig:T33LagrangianFillings} represent three distinct embedded exact Lagrangian fillings for $\La(3,3)$.


{\it \underline{Example 3}}: Let us illustrate what a {\it generic} $3$-graph diagram like for a positive braid $\beta\in\mbox{Br}^+_3$. The pictures in the case of $\beta\in\mbox{Br}^+_N$, $N\geq3$ are alike, with as many as $(N-1)$-colors instead. Let us consider a random braid
$$\beta=(\sigma_1\sigma_2\sigma_1)\sigma_2^2\sigma_1^2\sigma_2^2\sigma_1^3\sigma_2^3(\sigma_1\sigma_2\sigma_1),$$
which has no particular significance to us. To obtain exact Lagrangian fillings, we draw blue and red edges around a circle $\S^1\sse\R^2$, according to \color{blue}$\sigma_1$ \color{black} or \color{red}$\sigma_2$\color{black}, and construct 3-graphs with no Reeb chords and these boundary constraints. Figure \ref{fig:ThreeLagrangianFillings} shows four free $3$-graphs $G_i$, $i\in[1,4]$, such that the Lagrangian projections $\pi(\iota(\La(G_i)))\sse(\D^4,\omega_\st)$ are embedded exact Lagrangian fillings which are distinct up to Hamiltonian isotopy for $i\neq j$, $i,j\in[1,4]$.

\begin{remark}
From our experience drawing 3-graphs, the pictures in Figure \ref{fig:ThreeLagrangianFillings} accurately represent the generic appearance of exact Lagrangian fillings described by free 3-graphs. We presently do not know any example of a Lagrangian filling for a positive braid which does {\it not} arise as an $N$-graph, for some $N\in\N$.\hfill$\Box$
\end{remark}

\begin{center}
	\begin{figure}[h!]
		\centering
		\includegraphics[scale=0.8]{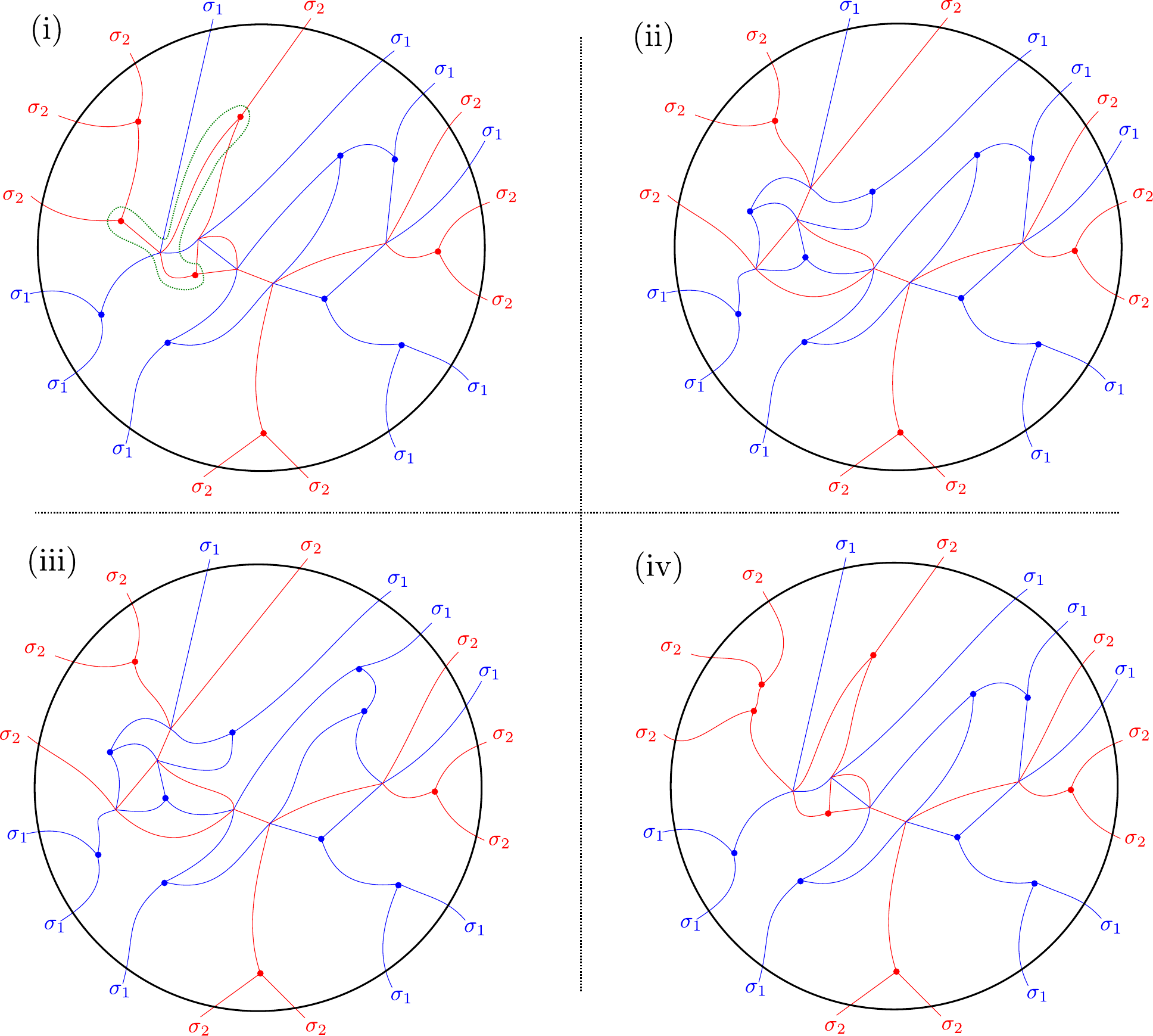}
		\caption{Four exact embedded Lagrangian fillings for the braid $\beta$ in Example 3. Their satellites in $(\R^4,\omega_\st)$ are smoothly isotopic relative to their boundaries, but {\it not} Hamiltonian isotopic.)}
		\label{fig:ThreeLagrangianFillings}
	\end{figure}
\end{center}

\begin{remark}
There exists a technique for producing many such free $N$-graphs $G$, filling $\beta$-boundary conditions at a circle and thus representing embedded exact Lagrangian fillings. This is ongoing work by the first author, which in particular proves that {\it any} Legendrian link $\La(\beta)$ arising from a positive braid $\beta\in\mbox{Br}^+_{N}$ admits an embedded Lagrangian filling whose Legendrian lift is a Legendrian weave. In precise terms, it can be proven that for each triangulation of a $|\beta|$-gon, one can assign a free $N$-graph which represented an embedded Lagrangian filling of $\beta$, where $|\beta|$ is the length of the positive braid $\beta$.\hfill$\Box$
\end{remark}


\subsection{Microlocal monodromies and cluster structures}\label{ssec:Microlocal}

In this section, we demonstrate how notions of cluster theory are borne out with $N$-graphs. This is an important ingredient in showing that microlocal monodromies can be used to distinguish exact Lagrangian fillings, as we do in Section \ref{ssec:QuiverMutationsGeometric} and as has been mentioned previously.

To orient the discussion, we recall that the cluster structures on the Fock-Goncharov moduli spaces of framed local systems described in \cite{FockGoncharov_ModuliLocSys} were given a sheaf-theoretic description in
\cite{STW,STWZ}.  In these works, the spectral surface associated to a bipartite graph, as defined in \cite[Section 2.2]{Goncharov_IdealWebs}, is described symplectically as an exact Lagrangian filling of the zigzag Legendrian curve. In the case of bipartite graphs associated to an $N$-triangulation, as in \cite[Section 1]{Goncharov_IdealWebs}, the zigzag curves isotope to concentric circles around the vertices of the triangulation, and the singular support of such a configuration translates to the data of a local system with a monodromy-invariant flag at each vertex. Sheaf quantization \cite{GKS_Quantization} then implies that local systems on the exact filling embed as a cluster chart of objects, the chart being provided by the bipartite graph (and its dual quiver), and the cluster coordinates given by microlocal monodromies.  The intersection form in $H_1(L,\Z)$, or its negative, corresponds to the skew-symmetric bilinear form in cluster theory. For us, the crucial point is that we can represent all these Lagrangian fillings by $N$-graphs, as in the diagrammatics of Subsection \ref{ssec:ExactLagrangianFillings}, and the cluster coordinates can be read directly from the $N$-graph, as we will now explain.

\begin{remark} In \cite{TreumannZaslow}, the case of Legendrian surfaces defined by trivalent 2-graphs was studied, giving a sheaf-theoretic description of the constructions in \cite{DimofteGabellaGoncharov}. In this setting, the microlocal monodromy functor $\mu mon$ induces, at the level of moduli of objects, a morphism from the sheaf moduli space to the cluster chart defined by the triangulation dual to the 2-graph. The image is a (holomorphic) Lagrangian in a (holomorphic) symplectic leaf, as in \cite{DimofteGabellaGoncharov},
	in a manner compatible with quantization of algebra of functions.\footnote{In work in progress with Linhui Shen, the second author will develop the relation to cluster theory more systematically, and prove Lagrangianicity of the moduli space.} Furthermore, in that work, the potential describing the local exact structure of the Lagrangian was interpreted as a generator of BPS states or disk invariants, following the analysis of Aganagic-Vafa \cite{AV1,AV2}. Here we generalize some of the constructions to $N$-graphs, $N\geq2$.\hfill$\Box$
\end{remark}

In this article, the Legendrian surfaces are described by $N$-graphs, a more complex construction, but we will now explain how the basic features should persist. That is, the microlocal monodromy functor allows us to read cluster coordinates for the moduli spaces of isomorphism classes of simple objects in $Sh_\La(\R^2)$, equivalently augmentation varieties, directly from $N$-graphs with boundary $\Lambda$.  Examples of these constructions are provided below.

\subsubsection{Microlocal monodromies as cluster coordinates}\label{sssec:ClusterCoord}

By definition, microlocal monodromy is a functor 
$$\mu mon:Sh_\Lambda\to Loc(\Lambda)$$
from the category $Sh_\La$ of sheaves  microsupported on the Legendrian surface $\Lambda$, as defined in Subsection \ref{ssec:constsheaves}, to the category of local systems on $\Lambda$ \cite{STZ_ConstrSheaves}.  This functor carries microlocal rank-one sheaves $F\in Sh^1_\La$, i.e.~simple sheaves, to rank one local systems on the surface $\La$.  Since it is locally defined, the monodromy of the local system $\mu mon(F)$ around a loop $\gamma \in H_1(\Lambda)$ can be evaluated by restricting the constructible sheaf $F$ to an annular tubular neighborhood of $\gamma$.  Below, these annuli are depicted as thin purple loops.  In short, the calculation for Legendrian weaves can be done using the microlocal monodromy functor $\mu mon$ as it is used for knots, as described in \cite{STZ_ConstrSheaves}.

The main point in these computations is that the stalk $\mu mon(F)\vert_\lambda$ at a point $\la\in\La$ is the cone of the restriction map corresponding to $\lambda$, and for flags this is the inclusion of subspaces, whence cones become cokernels. The transversality of adjacent flags ensures that these cokernels propogate as a local system. Let us now perform these calculations for 1-cycle $\gamma\in H_1(\La,\Z)$, starting at the $\sf I$-cycle represented by a monochromatic edge.

Let us consider a monochromatic edge with label $\tau_i$, as depicted in Figure \ref{fig:monoedge}.
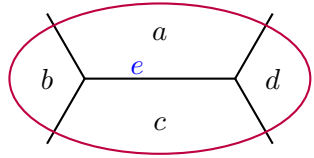
\begin{figure}[H]
	\begin{tikzpicture}
	\pgfmathsetmacro{\A}{.866}
	\draw[thick] (-1/2,\A) -- (0,0) -- (-1/2,-\A);
	\draw[thick] (0,0) -- (2,0);
	\draw[thick] (2.5,\A) -- (2,0) -- (2.5,-\A);
	\node at (1,.6) {$a$};
	\node at (-.5,0) {$b$};
	\node at (1,-.6) {$c$};
	\node at (2.5,0) {$d$};
	\node[blue] at (.7,.15) {$e$};
	\draw[purple,thick] (1,0) ellipse (2 and 1);
	\end{tikzpicture}
	\caption{Neighborhood of a monochromatic edge $e$ with the data determining a constructible sheaf $F$. As we show, the microlocal monodromy $\mu mon(F)$ along the 1-cycle $\gamma(e)$ is given by the cross-ratio $\langle a, b, c, d\rangle$.}
	\label{fig:monoedge}
\end{figure}
Near such a monochromatic edge, a sheaf object in a simply connected face is specified by the data of a quadruple of flags.  Each of these flags has the same subspaces $\SF^j$ in each region for $j\neq i$, and for $j = i$ we additionally require the data in each region of a line $l$ in the two-dimensional space $V := \SF^{i+1}/\SF^{i-1}.$  This is the data of four lines $a, b, c, d\sse V$, as specified in Figure \ref{fig:monoedge}.  Restricted to the purple oval shown, we have a cylindrical braid of type $\beta=\sigma_i^4$, where $\sigma_i$ is the lift of the transposition $\tau_i$ from the Coxeter group $S_N$ to the braid group $\mbox{Br}_{N}$.  Given the prescribed transversality imposed by the flag moduli of an $N$-graph, we further know that the cyclic chain of inequalities $a\neq b \neq c \neq d \neq a$ holds. We thus have the chain of isomorphisms of cokernels
$$a \cong V/b \cong c \cong V/d \cong a,$$
which computes the microlocal monodromy.  In this case, the isomorphism that we obtain is the cross ratio
$$\langle a, b, c, d\rangle = \frac{a\wedge b}{b\wedge c}\cdot\frac{c\wedge d}{d\wedge a}$$
of the four lines $a,b,c,d,$ and it is equal to the cluster coordinate associated to 1-cycle $\gamma$ as prescribed in \cite[Section 9]{FockGoncharov_ModuliLocSys}.

Let us now consider the cluster coordinate associated to a $\sf Y$-cycle, which is a new type of $1$-cycle, as it only appears for $N\geq3$. Figure \ref{fig:triple} depicts a $\sf Y$-cycle, drawn as a purple circle, along with the data determining a constructible sheaf in a neighborhood of this 1-cycle.
\begin{figure}[H]
	\begin{tikzpicture}
	\pgfmathsetmacro{\A}{1.732}
	\pgfmathsetmacro{\a}{.866}
	\draw[blue,thick] (1-1,\A+1/\A)--(1,\A)--(1+1,\A+1/\A);
	\draw[blue,thick] (0-1,0+1/\A)--(0,0)--(0+1,0+1/\A);
	\draw[blue,thick] (2-1,0+1/\A)--(2,0)--(2+1,0+1/\A);
	\draw[blue,thick] (2-\a,0+.5)--(2,0)--(2+\a,0+.5);
	\draw[blue,thick] (0,0)--(0,-2/\A);
	\draw[blue,thick] (2,0)--(2,-2/\A);
	\draw[blue,thick] (1,\A)--(1,\A-2/\A);
	\draw[red,thick] (1,\A-2/\A)--(1,\A-2/\A-\A);
	\draw[red,thick] (0+1,0+1/\A)--(0+1+1.5,0+1/\A+\a);
	\draw[red,thick] (0+1,0+1/\A)--(0+1-1.5,0+1/\A+\a);
	\node[blue] at (0,0) {$\bullet$};
	\node[blue] at (2,0) {$\bullet$};
	\node[blue] at (1,\A) {$\bullet$};
	\node at (3,-.4) {$(a,A)$};
	\node at (-1,-.4) {$(c,C)$};
	\node at (1,\A+.8) {$(b,B)$};
	\node[rotate=30] at (3.3,1.3) {$(a,ab)$};
	\node[rotate=30] at (2.6,2) {$(b,ab)$};
	\node[rotate=-30] at (-1,1.2) {$(c,bc)$};
	\node[rotate=-30] at (-.6,2) {$(b,bc)$};
	\node[rotate=-90] at (.5,-1.4) {$(c,ac)$};
	\node[rotate=-90] at (1.5,-1.4) {$(a,ac)$};
	\draw[purple,thick] (1,1/\A) ellipse (1.4 and 1.4);
	\end{tikzpicture}
	\caption{Neighborhood of a $\sf Y$-cycle with the data determining a constructible sheaf $F$. As we compute, the microlocal monodromy $\mu mon(F)$ along the associated 1-cycle $\gamma$ is given by the triple ratio of the three transverse flags.}
	\label{fig:triple}
\end{figure}
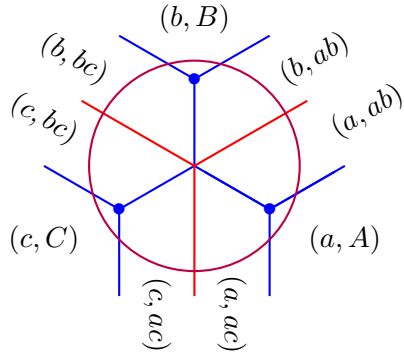
Following the notation in Section \ref{sec:app}, we denote by $ab$ the unique plane containing the two lines $a$ and $b$, while $AB$ denotes the intersection of the planes $A$ and $B$. The braid associated to the $\sf Y$-cycle $\gamma$, as drawn by the purple circle in Figure \ref{fig:triple}, is given by $\beta=(\sigma_i\sigma_{i+1}\sigma_i)^3$, where $\sigma_i$ corresponds to the crossing coming from a $\tau_i$-edge. By considering the three-dimensional vector space $V := \SF^{i+2}/\SF^{i-1}$, a given flag is specified by a line and a plane in $V$.  Since the word $\sigma_i\sigma_{i+1}\sigma_i$ represents the half-twist $\Delta$ for flags on $V$, and $\tau_i\tau_{i+1}\tau_i$ is the Coxeter element in $S_3$, the complete data specifying a constructible sheaf near the $\sf Y$-cycle is given by three transverse flags $(a,A),(b,B),(c,C)$ in $V$. In this notation, the line is written in lower case and the covector defining the the plane in upper case, thus $(a,A)$ determines a flag. Now, the microlocal monodromy functor $\mu mon$ along $\gamma$ is computed as the composition of the isomorphisms
$$a \cong V/B \cong c \cong V/A \cong b \cong V/C \cong a.$$ 
Let $v_a \in a,v_b\in b,v_c\in c,v_d \in d$ be non-zero vectors defining the corresponding one-dimensional lines. Then the parallel transport from $a$ to $c$ in this basis is given by the quotient $B(a)/B(c)$, where $B(a)$ is the pairing between the vector $v_a$ and the covector $B$. Iterating these isomorphisms, we conclude that the microlocal monodromy along the ${\sf Y}$-cycle is given by
$$\langle(a,A),(b,B),(c,C)\rangle:=\frac{B(a)C(b)A(c)}{B(c)C(a)A(b)}.$$
This expression is precisely the triple product of transverse flags as defined in \cite{FockGoncharov_ModuliLocSys}, and thus we have shown that the microlocal monodromy along a $\sf Y$-cycle determines a cluster coordinate.

\subsubsection{Legendrian Mutations are cluster transformations}\label{sssec:ClusterTransf}

The coordinate transformations upon Legendrian mutations can also be computed, as we will demonstrate in an example. The conclusion is that Legendrian mutations induce cluster transformations. The case of a monochromatic edge follows from the analysis in \cite{TreumannZaslow,STWZ}, and we now study the mutation at a $\sf Y$-cycle. To do so, consider the local geometry shown in Figure \ref{fig:ymutation-before}. We want to compute how the cluster coordinate associated to the unique monochromatic (blue) edge -- as in Subsection \ref{sssec:ClusterCoord} -- changes as we perform a Legendrian mutation along the $\sf Y$-cycle specified by the unique hexagonal vertex.

\begin{figure}[H]
	\begin{tikzpicture}
	\pgfmathsetmacro{\A}{1.732}
	\pgfmathsetmacro{\a}{.866}
	\draw[blue,thick] (1-1,\A+1/\A)--(1,\A)--(1+\A,\A+1);
	\draw[blue,thick] (0-1,0+1/\A)--(0,0)--(0+1,0+1/\A);
	\draw[blue,thick] (2-1,0+1/\A)--(2,0)--(2+1,0+1/\A);
	\draw[blue,thick] (2-\a,0+.5)--(2,0)--(2+\a,0+.5);
	\draw[blue,thick] (0,0)--(0,-2/\A);
	\draw[blue,thick] (2,0)--(2,-2/\A);
	\draw[blue,thick] (1,\A)--(1,\A-2/\A);
	
	\draw[red,thick] (1,\A-2/\A)--(1,\A-2/\A-\A);
	\draw[red,thick] (0+1,0+1/\A)--(0+1+2.5,0+1/\A+2.5/\A);
	\draw[red,thick] (0+1,0+1/\A)--(0+1-1.5,0+1/\A+\a);
	\node[blue] at (0,0) {$\bullet$};
	\node[blue] at (2,0) {$\bullet$};
	\node[blue] at (1,\A) {$\bullet$};
	\node[blue] at (1+\A,\A+1) {$\bullet$};
	\draw[blue,thick] (1.5+\A,\A+1)--(1+\A,\A+1)--(1+\A,\A+1.5);
	
	\node at (3,-.4) {$(a,A)$};
	\node at (-1,-.4) {$(c,C)$};
	\node at (1,\A+.8) {$(b,B)$};
	\node[rotate=30] at (3.3,1.3) {$(a,ab)$};
	\node[rotate=30] at (2.6,2) {$(b,ab)$};
	\node[rotate=-30] at (-1,1.2) {$(c,bc)$};
	\node[rotate=-30] at (-.6,2) {$(b,bc)$};
	\node[rotate=-90] at (.5,-1.4) {$(c,ac)$};
	\node[rotate=-90] at (1.5,-1.4) {$(a,ac)$};
	\node[rotate=-0] at (3.5,3.2) {$(b,B')$};
	\end{tikzpicture}
	\caption{The geometric setup before performing a Legendrian mutation at the $\sf Y$-cycle, where the cluster coordinate associated to the monochromatic edge is given by the cross-ratio $\langle B,bc,ab,B'\rangle$.}
	\label{fig:ymutation-before}
\end{figure}
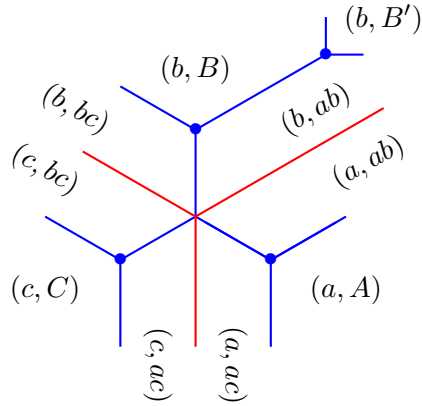
The monochromatic blue edge has monodromy equal to the cross ratio $z:=\langle B, bc, ab, B'\rangle$ of the four planes in the projective line of planes containing $b$. (This can be computed directly or by intersecting the four lines with any transverse line -- see Subsection \ref{sssec:ClusterCoord}.) Now, after Legendrian mutation at the ${\sf Y}$-cycle, the resulting 3-graph is shown Figure \ref{fig:ymutation-after}.
\begin{figure}[H]
	\begin{tikzpicture}[scale=.7]
	\pgfmathsetmacro{\A}{1.732}
	\pgfmathsetmacro{\a}{.866}
	\draw[red,thick] (1-1,\A+1/\A)--(1,\A)--(1+1,\A+1/\A);
	\draw[red,thick] (0-1,0+1/\A)--(0,0)--(0+1,0+1/\A);
	\draw[red,thick] (2-1,0+1/\A)--(2,0)--(2+1,0+1/\A);
	\draw[red,thick] (2-\a,0+.5)--(2,0)--(2+\a,0+.5);
	\draw[red,thick] (0,0)--(0,-2/\A);
	\draw[red,thick] (2,0)--(2,-2/\A);
	\draw[red,thick] (1,\A)--(1,\A-2/\A);
	\draw[blue,thick] (1,\A-2/\A)--(1,\A-2/\A-\A);
	\draw[blue,thick] (0+1,0+1/\A)--(0+1+1.5,0+1/\A+\a);
	\draw[blue,thick] (0+1,0+1/\A)--(0+1-1.5,0+1/\A+\a);
	
	\draw[red,thick] (0,-2/\A)--(1,-\A)--(2,-2/\A);
	\draw[red,thick] (1,-\A)--(1,-\A-3/\A);
	\draw[blue,thick] (0,-4/\A)--(1,-\A)--(2,-4/\A);
	\draw[blue,thick] (1,-2/\A)--(1,-\A);
	
	\draw[red,thick] (-1-1/2,\A+.5/\A)--(-1-1.5,\A+1.5/\A);
	
	\draw[red,thick] (3,1/\A)--(3,\A)--(2,4/\A);
	\draw[red,thick] (3,\A)--(3+2.5,\A+.8);
	\draw[blue,thick] (3,\A)--(3-.5,\A-.5/\A);
	\draw[blue,thick] (3+1,1/\A+1/\A)--(3,\A);
	
	\draw[blue,thick](3,\A)--(5,\A+2)--(5,\A+3);
	\draw[blue,thick] (5,\A+2)--(6,\A+2);
	\node[blue] at (5,\A+2) {$\bullet$};

	\draw[red,thick] (2-3,1/\A)--(2-3,\A)--(2-2,4/\A);
	\draw[red,thick] (2-3,\A)--(2-3-.5,\A+.5/\A);
	\draw[blue,thick] (2-3,\A)--(2-3+.5,\A-.5/\A);
	\draw[blue,thick] (2-3-1,1/\A+1/\A)--(2-3,\A)--(2-2-1,4/\A+1/\A);
	\node[blue] at (0,0) {$\bullet$};

	\node[red] at (0,0) {$\bullet$};
	\node[red] at (2,0) {$\bullet$};
	\node[red] at (1,\A) {$\bullet$};
	\node[rotate=16] at (4.7,1.8) {$(a,ab)$};
	\node[rotate=16] at (4.9,2.8) {$(b,ab)$};
	\node[rotate=30] at (2.2,.7) {${}_{(AB,A)}$};
	\node[rotate=30] at (1.8,1.5) {${}_{(AB,B)}$};
	\node[rotate=-30] at (-2.5,2) {$(c,bc)$};
	\node[rotate=-30] at (-2.1,3) {$(b,bc)$};
	\node[rotate=-30] at (.2,1.5) {${}_{(BC,B)}$};
	\node[rotate=-30] at (-.2,.7) {${}_{(BC,C)}$};
	\node[rotate=-90] at (.5,-3) {$(c,ac)$};
	\node[rotate=-90] at (1.5,-3) {$(a,ac)$};
	\node[rotate=-90] at (.5,-.6) {${}_{(AC,C)}$};
	\node[rotate=-90] at (1.5,-.6) {${}_{(AC,A)}$};

	\node[rotate=-0] at (6,\A+2.5) {$(b,B')$};
	\node at (3,-.4) {$(a,A)$};
	\node at (-1,-.4) {$(c,C)$};
	\node at (1,\A+.8) {$(b,B)$};
	\end{tikzpicture}
	\caption{The result of applying a Legendrian mutation to Figure \ref{fig:ymutation-before} along the {\sf Y}-cycle, along with the data of a constructible sheaf.}
	\label{fig:ymutation-after}
\end{figure}
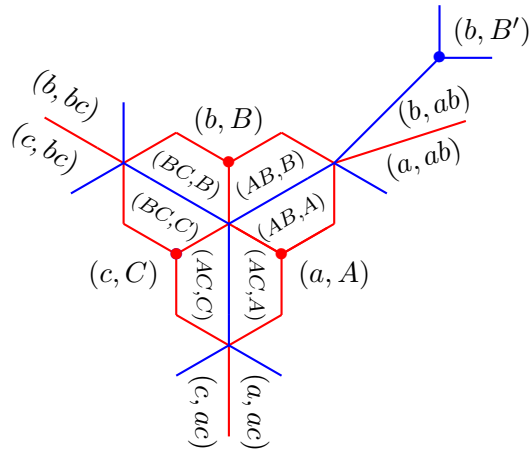
The 1-cycle determined by the blue monochromatic edge in Figure \ref{fig:ymutation-before} becomes a (bichromatic edge) 1-cycle contained in the 3-graph shown in Figure \ref{fig:ymutation-after-local}, which is itself a piece of Figure \ref{fig:ymutation-after}, in its upper-right corner\footnote{The trivalent blue vertex in Figure \ref{fig:ymutation-after-local} is the unique trivalent blue vertex in Figure \ref{fig:ymutation-after}. The trivalent red vertex in Figure \ref{fig:ymutation-after-local} is the rightmost trivalent red vertex in Figure \ref{fig:ymutation-after}. }:
\begin{figure}[H]
	\begin{tikzpicture}[scale=1.5]
	\pgfmathsetmacro{\A}{1.732}
	\pgfmathsetmacro{\a}{.866}
	\draw[red,thick] (-1,0)--(0,0);
	\draw[blue,thick] (0,0)--(1,0);
	\draw[red,thick] (-1,0)--(-1-.5,\a);
	\draw[red,thick] (-1,0)--(-1-.5,-\a);
	\draw[red,thick] (0,0)--(.5,\a);
	\draw[red,thick] (0,0)--(.5,-\a);
	\draw[blue,thick] (0,0)--(-.5,\a);
	\draw[blue,thick] (0,0)--(-.5,-\a);
	\draw[blue,thick] (1,0)--(1+.5,\a);
	\draw[blue,thick] (1,0)--(1+.5,-\a);
	\node[red] at (-1,0) {$\bullet$};
	\node[blue] at (1,0) {$\bullet$};
	
	\node[rotate=-0] at (1.5,0) {$(b,B')$};
	\node[rotate=-0] at (.67,.3) {$(b,B)$};
	\node[rotate=-0] at (0,.8) {${}_{(AB,B)}$};
	\node[rotate=-0] at (-.67,.3) {${(AB,A)}$};
	\node[rotate=-0] at (-1.7,0) {$(AC,A)$};
	\node[rotate=-0] at (-.67,-.3) {${(a,A)}$};
	\node[rotate=-0] at (0,-.8) {${}^{(a,ab)}$};
	\node[rotate=-0] at (.67,-.3) {$(b,ab)$};

	\end{tikzpicture}
	\caption{Local geometry near the 1-cycle after mutation.}
	\label{fig:ymutation-after-local}
\end{figure}
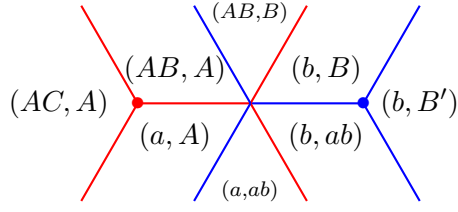
By applying Move II, we can push the red trivalent vertex in Figure \ref{fig:ymutation-after-local} through the hexavalent vertex. This allows us to represent the 1-cycle as a monochromatic edge again, as shown in Figure \ref{fig:ymutation-after-local-MoveII}:
\begin{center}
	\begin{figure}[H]
		\begin{tikzpicture}[scale=1.5]
		\pgfmathsetmacro{\A}{1.732}
		\pgfmathsetmacro{\a}{.866}
		\draw[blue,thick] (-1,0)--(0,0);
		\draw[blue,thick] (0,0)--(1,0);
		\draw[blue,thick] (-1,0)--(-1-.5,\a);
		\draw[blue,thick] (-1,0)--(-1-.5,-\a);
		\draw[blue,thick] (1,0)--(1+.5,\a);
		\draw[blue,thick] (1,0)--(1+.5,-\a);
		\node[blue] at (-1,0) {$\bullet$};
		\node[blue] at (1,0) {$\bullet$};
		
		\node[rotate=-0] at (1.5,0) {$(b,B')$};
		\node[rotate=-0] at (0,.5) {$(b,B)$};
		
		\node[rotate=-0] at (-1.7,0) {$(b, bAC)$};
		
		\node[rotate=-0] at (0,-.5) {$(b,ab)$};
		
		\end{tikzpicture}
		\caption{The constructible sheaf near the 1-cycle after Legendrian mutation and Move II. The new coordinate is thus the cross-ratio $\langle B,bAC,ab,B'\rangle$.}
		\label{fig:ymutation-after-local-MoveII}
	\end{figure}
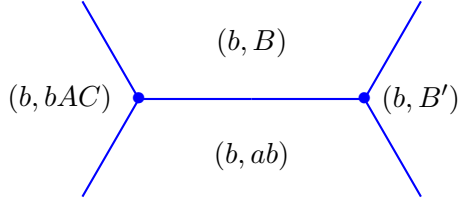
\end{center}

The required conclusion, stating that the new cross-ratio $z'=\langle B,bAC,ab,B'\rangle$ is obtained by a cluster transformation, follows from this:

\begin{lemma}\label{lem:clustertrans}
	Let $x=\langle(a,A), (b,B), (c,C)\rangle$ be the triple ratio of flags and $z=\langle B,bc,ab,B'\rangle$ the cross-ratio of lines. Denote by $z'=\langle B,bAC,ab,B'\rangle$ the new microlocal monodromy. Then
	$$z'=z(1+x).$$
\end{lemma}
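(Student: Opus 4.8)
The plan is to reduce the identity to a one-variable cross-ratio computation inside a suitable pencil of planes, and then to a short piece of linear algebra in a basis adapted to the three flags.

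First I would observe that the five planes entering the statement --- $B$, $bc$, $ab$, $B'$, and $bAC$ --- all contain the line $b$. Hence they determine five points on the projective line $\mathbb{P}^1 := \mathbb{P}(\mathrm{Ann}(b))$ parametrizing planes through $b$ (equivalently, lines in the $2$-dimensional quotient $V/b$), and by the discussion in Subsection \ref{sssec:ClusterCoord} the cross-ratios $z = \langle B, bc, ab, B'\rangle$ and $z' = \langle B, bAC, ab, B'\rangle$ are exactly the projective cross-ratios of these quadruples. Since $z$ and $z'$ share the entries $B$, $ab$, $B'$ and differ only in the second slot ($bc$ versus $bAC$), the ratio $z'/z$ will depend only on the positions of $B$, $bAC$, $ab$, and $B'$, and after placing $ab$ and $bc$ at the standard points $0$ and $\infty$ it will collapse to $z'/z = 1 - u_B/u_{bAC}$, where $u_B$ and $u_{bAC}$ are the affine coordinates of $B$ and $bAC$ in the pencil.

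Second I would choose coordinates. Transversality of $(a,A), (b,B), (c,C)$ means the lines $a,b,c$ span $V$, so I pick generators $v_a, v_b, v_c$ forming a basis, with dual basis $v_a^\ast, v_b^\ast, v_c^\ast$, and write $A = \ker\alpha$, $B = \ker\beta$, $C = \ker\gamma$ with $\alpha = \alpha_b v_b^\ast + \alpha_c v_c^\ast$, $\beta = \beta_a v_a^\ast + \beta_c v_c^\ast$, $\gamma = \gamma_a v_a^\ast + \gamma_b v_b^\ast$ (the vanishing components encode $a\subset A$, $b\subset B$, $c\subset C$). In the pencil $\mathrm{Ann}(b) = \langle v_a^\ast, v_c^\ast\rangle$, with affine coordinate $u = \phi_a/\phi_c$, one reads off $u_{ab}=0$, $u_{bc}=\infty$, $u_B = \beta_a/\beta_c$, and --- after computing the line $A\cap C$ and the plane it spans with $b$ --- $u_{bAC} = -\alpha_b\gamma_a/(\gamma_b\alpha_c)$. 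Meanwhile the triple-ratio formula from Subsection \ref{sssec:ClusterCoord} gives $x = \langle(a,A),(b,B),(c,C)\rangle = \dfrac{B(a)C(b)A(c)}{B(c)C(a)A(b)} = \dfrac{\beta_a\gamma_b\alpha_c}{\beta_c\gamma_a\alpha_b}$, so that $u_B/u_{bAC} = -x$; plugging into the formula from the previous paragraph yields $z'/z = 1-(-x) = 1+x$, which is the claim.

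I expect the main obstacle to be purely bookkeeping rather than conceptual: one must pin down the paper's sign convention for the cross-ratio $\langle a,b,c,d\rangle = \frac{a\wedge b}{b\wedge c}\cdot\frac{c\wedge d}{d\wedge a}$ and for the triple ratio, and identify $A\cap C$ and the plane $bAC$ correctly inside the pencil, since a single sign slip anywhere in this chain would produce $1-x$ or $(1+x)^{-1}$ instead of $1+x$. As an independent check I would recompute by intersecting the pencil with a line transverse to $b$, as suggested in the text, where the four planes become four honest lines and the cross-ratios are evaluated directly by wedge products; and I would verify that the monochromatic-edge case recovers the mutation rule already known from \cite{TreumannZaslow, STWZ}.
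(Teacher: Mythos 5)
Your proposal is correct, and the calculations go through as claimed. I checked in detail: with the cross-ratio formula of Subsection \ref{sssec:ClusterCoord} and the placement $u_{ab}=0$, $u_{bc}=\infty$, one gets $z = u_{B'}/(u_{B'}-u_B)$ and then $z'/z = 1 - u_B/u_{bAC}$ exactly as you assert; and with $\alpha=\alpha_b v_b^*+\alpha_c v_c^*$, $\beta=\beta_a v_a^*+\beta_c v_c^*$, $\gamma=\gamma_a v_a^*+\gamma_b v_b^*$, the plane $bAC$ has annihilator proportional to $v_a^* - (\alpha_b\gamma_a/(\alpha_c\gamma_b))v_c^*$, so $u_{bAC}=-\alpha_b\gamma_a/(\alpha_c\gamma_b)$ and $u_B/u_{bAC}=-\beta_a\gamma_b\alpha_c/(\beta_c\gamma_a\alpha_b)=-x$.

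The paper's proof and yours are at heart the same computation — both identify the relevant lines with points in the pencil of planes through $b$ and evaluate cross-ratios there — but you organize it differently, and I think your version is more illuminating. The paper uses the $\PGL_3$-action to normalize $a, A, b, B$ to standard positions and bakes the triple ratio $x$ into the coordinates of $c$ and $C$, then computes $B'$, $AC$, $bAC$ and reads off $z'=z(1+x)$ by brute force. You instead keep the covector coefficients symbolic in a basis adapted to $(a,b,c)$, derive $z'/z = 1 - u_B/u_{bAC}$ purely from the structure of the cross-ratio with $ab\mapsto 0$, $bc\mapsto\infty$, and then observe that $u_B/u_{bAC}$ is \emph{literally} $-x$ by the triple-ratio formula. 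This isolates exactly where the triple ratio enters (a pleasant structural point that the paper's explicit coordinates obscure), at the modest cost of having to say once that you are on the open locus where $a, b, c$ span $V$ — a genericity assumption the paper's coordinate choice makes tacitly as well. Your ``independent check'' of intersecting the pencil with a transversal line is also a reasonable way to sanity-check the sign conventions, which, as you anticipate, are the only real hazard here.
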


\begin{proof}
	By $\PGL_3$ invariance, we may assume that
	$$a = \begin{pmatrix}1\\0\\0\end{pmatrix}, A = (0,0,1),
	b = \begin{pmatrix}0\\0\\1\end{pmatrix}, B = (1,0,0), c = \begin{pmatrix}1\\-1\\1\end{pmatrix}, C = (1,1+x,x).$$
	Since the cross-ratio $z$ is prescribed, we find that $B' = (z,1,0)$, and similarly
	$$AC = \begin{pmatrix}1+x\\ -1\\ 0\end{pmatrix}.$$
	This implies that $bAC = (1,1+x,0)$, and thus $z'=z(1+x)$.
\end{proof}

Note that $z' = z(1+x)$ in Lemma \ref{lem:clustertrans} is the transformation expected for a cluster-X transformation.\footnote{The rule for a cluster-X transformation upon mutating at loop $k$ is that the monodromy $z_i$ transforms to $1/z_k$ if $i=k$ and otherwise
	$z_i' = z_i(1 + z_k^{-{\rm sgn}\epsilon_{ik}})^{-\epsilon_{ik}},$ where $\epsilon_{i,k}$ is the skew-symmetric cluster form.  We get agreement on the nose if
	we make this form the \emph{negative} of the intersection pairing.} This concludes that a Legendrian mutation at the ${\sf Y}$-cycle induces a {\it cluster} transformation for the microlocal monodromy coordinate at the monochromatic blue edge in Figure \ref{fig:ymutation-before}. The computation is analogous if we choose a different blue monochromatic edge to be added near the ${\sf Y}$-cycle. In particular, if we had chosen instead the blue edge attaching at the lower-right of the {\sf Y}-cycle and pointing upward, and again called its monodromy $z$, then we would have $A' = (0, z, 1)$ and would obtain
$$z' = \langle ab, aBC, A, A'\rangle = z\left(1+\frac{1}{x}\right)^{-1},$$
in agreement with the cluster transformation.\footnote{We remark that the case $x = -1$ is not a generic configuration of flags, since in this case $c \in ab,$ and thus not in the domain of the birational cluster map.}

\begin{ex}{\bf Flip of a $N$-triangulation}. Let $(C,\tau)$ be a punctured surface $C$, $\tau$ an ideal triangulation and $\tau'$ an ideal triangulation obtained from $\tau$ by a flip. Denote by $t_N$, resp. $t'_N$, the $N$-triangulation refinement of $\tau$, resp. $\tau'$. It is an exercise \cite[Prosition 1.1]{Goncharov_IdealWebs} to show that the Legendrian weave $\La(G(t'_N))$ differs from $\La(G(t_N))$ by a sequence of ${N+1 \choose 3}$ 2-graph mutations, i.e. $\La(G(t'_N))$ can be obtained from $\La(G(t_N))$ by performing ${N+1 \choose 3}$ Legendrian mutations along 1-cycles represented by monochromatic edges.
	
	For instance, \cite[Figure 9]{Goncharov_IdealWebs} translates into four monochromatic edge mutations for a flip in a $N=3$ triangulation, as we have depicted in Figure \ref{fig:Flip3Triangulation}. We can see how to perform the corresponding moves for $3$-triangulations with $3$-graphs. Indeed, referring to the notation in Figure \ref{fig:3triangles}, perform a monochromatic edge mutation at $z$ and $w$, then perform Move III, a flop of the two trivalent and two hexagonal vertices in the center, and proceed with a mutation at the remaining two monochromatic edges. In conclusion, the constructions of this paper can therefore be used to give a geometric understanding of the intermediate quivers arising when flipping $N$-triangulations.
	
	\begin{center}
		\begin{figure}[h!]
			\centering
			\includegraphics[scale=0.7]{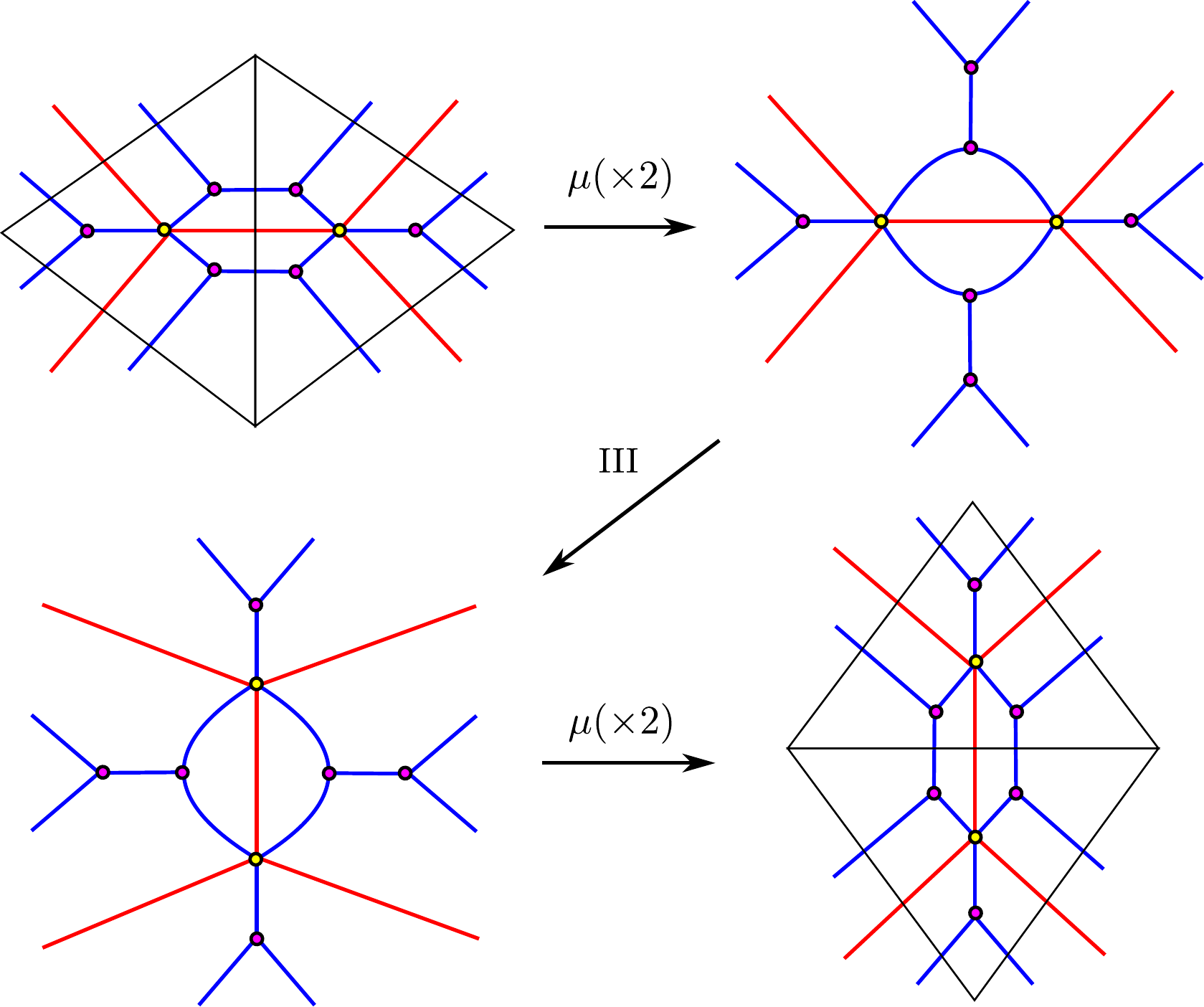}
			\caption{Flip in a 3-triangulation realized as four monochromatic edge mutations. In general, the Legendrian weaves associated to two $N$-triangulations which differ by a flip of the underlying (1-)triangulation differ by a sequence of ${N+1 \choose 3}$ such 2-graph mutations. Note the Move III flop isotopy in-between the two mutation pairs.}
			\label{fig:Flip3Triangulation}
		\end{figure}
	\end{center}
\end{ex}


\subsection{N-graph Realization of Quiver Mutations}\label{ssec:QuiverMutationsGeometric} In this subsection we explain how to use $N$-graphs in order to construct infinitely many Lagrangian fillings for certain Legendrian links in the standard contact 3-sphere $(\S^3,\xi_\st)$. These Lagrangian fillings are distinguished by the microlocal monodromies/cluster coordinates in Subsection \ref{ssec:Microlocal}.

Let $\beta\in\mbox{Br}^+_N$ be a positive braid, with a fixed braid word $w(\beta)$. Consider an $N$-graph $G\sse\D^2$ on the 2-disk such that the labels of the edges of $G$ near $\dd\D^2$, read cyclically, form the word $w(\beta)$. Following Subsection \ref{ssec:ExactLagrangianFillings}, the Lagrangian projection $L(G)=\pi(\iota(\La(G)))\sse(\R^4,\omega_\st)$ of $\iota(\La(G))\sse(\R^5,\xi_\st)$ is an exact Lagrangian filling of the Legendrian link $\La(\beta)\sse(\S^3,\xi_\st)$ associated to the positive braid $\beta$. All the $N$-graphs $G\sse\D^2$ which feature in this subsection will be free, and thus the Lagrangian projections are embedded, equivalently $\La(G)$ has no Reeb chords.

Now consider a free $N$-graph $G\sse\D^2$, $b_1(G):=\mbox{rk}(H_1(\La(G),\Z))$ and a basis
$$B=\{[\gamma_1],\ldots,[\gamma_{b_1(G)}]\}$$
for $H_1(\La(G),\Z)\cong\Z^{b_1(G)}$, equivalently a basis for the first homology group of its Lagrangian projection $L(G)$. For a choice of basis $B$, we denote by $Q(B)$ the intersection quiver of the 1-cycles $\gamma_i$, $i\in[1,b_1(G)]$. The vertices $v_i$ of the quiver $Q(B)$ are in bijection with elements of the homology basis $B$, and the number of arrows between two distinct vertices $v_i,v_j$ is given by the geometric intersection number $|\gamma_i\cap\gamma_j|$. The direction of each arrow is given by the sign of each geometric intersection, and there are no loops, i.e. no edges from a vertex $v_i$ to itself. The quiver obtained by mutation of a quiver $Q$ at the vertex $v_i$ will be denoted $\mu_i(Q)$.

We will study the realization of quiver mutations, algebraic in nature, as Legendrian mutations of free $N$-graphs, which are geometric. Suppose that there exists a subset $B_\mu$ of classes $[\gamma_i]$, $i\in[1,k]$, for some $k\leq b_1(G)$, such that $\gamma_i\in B_\mu$ is represented by a 3-graph cycle with {\it no} multiplicity. That is, each 1-cycle $\gamma_i$ is represented by either a $\sf Y$-cycle, a tree, a monochromatic edge $\sf I$-cycle or a long edge. Let $\{x_1,\ldots,x_k\}$ be the cluster coordinates associated to $\{\gamma_1,\ldots,\gamma_k\}$ via microlocal monodromies, as in Subsection \ref{sssec:ClusterCoord}.

\begin{remark}
In general, this set of cluster coordinates $\{x_1,\ldots,x_k\}$ is only a partial subset of the entire cluster seed $\{x_1,\ldots,x_{b_1(G)}\}$ for $H_1(L(G),\Z)$. The ability to work with a subset is an advantage that allows for our methods to be applied in more generality. From the viewpoint of cluster algebras, the vertices of $Q(B)$ which are {\it not} in $Q(B_\mu)$ are to be considered as frozen vertices, and the variables $\{x_{k+1},\ldots,x_{b_1(G)}\}$ as frozen coordinates.\hfill$\Box$
\end{remark}

By Subsection \ref{ssec:legmutation}, and Lemma \ref{lem:free}, we can perform a Legendrian mutation at $\gamma_i\in B_\mu$ and obtain a free $N$-graph $\mu_i(G)$. The intersection quiver $Q(\mu_i(B))$ associated to the mutated basis $\mu_i(B)$ is the mutated quiver $\mu_i(Q(B))$. The 1-cycle in the mutated graph $\mu_i(G)$ corresponding to $\gamma_j\in B$, under mutation at $\gamma_i$, is denoted by $\mu_i(\gamma_j)$. By Subsection \ref{sssec:ClusterTransf}, the cluster coordinate associated to $\mu_i(\gamma_j)$ is given by the $j$-th coordinate in the cluster transformation of $\{x_1,\ldots,x_{b_1(G)}\}$ at $x_i$. Therefore, the exact Lagrangian filling represented by the free $N$-graph $\mu_i(G)$ has intersection quiver $\mu_i(Q(B))$ and cluster coordinates obtained by mutation of the cluster seed $\{x_1,\ldots,x_{b_1(G)}\}$ for $L(G)$ at $x_i$. In conclusion, if the 1-cycles are represented by trees, performing {\it one} quiver (or cluster seed) mutation as a Legendrian mutation is possible, and the microlocal monodromies after the Legendrian mutation accurately reflect cluster mutation.

\begin{remark}
The challenging aspect of the {\it geometric} side is that iterating this procedure is not necessarily possible, or at least readily accessible. This aspect is {\it not} reflected in the algebra of quiver mutations (or cluster coordinate mutations) since, by definition, two opposite edges between vertices are canceled\footnote{In previous attempts to geometrically iterate Lagrangian mutations, such as \cite[Section 2]{STW}, this obstruction manifests itself as embedded curves becoming immersed upon performing Dehn twists, a problem which presently has no known solution.}.\hfill$\Box$
\end{remark}

The technology of 3-graphs and their mutations, as developed in Subsection \ref{ssec:legmutation}, allows us to iterate Legendrian mutations in an abundance of cases, including arbitrarily high genus. We will illustrate explicit cases in which an infinite sequence of quiver mutations can be realized as an infinite sequence of $N$-graph mutations. These cases can be inserted in (infinitely many) other examples, and the first consequence is the production of new families of Legendrian links with {\it infinitely many} exact Lagrangian fillings:

\begin{thm}\label{thm:ThurstonLinks} Let $\La_{s,t}=\La(\beta_{s,t})\sse(\S^3,\xi_\st)$ be the Legendrian link given by the standard satellite of the positive braid
	$$\beta_{s,t}=(\sigma_1^3\sigma_2)(\sigma_1^3\sigma_2^2)^s\sigma_1^3\sigma_2(\sigma_2^2\sigma_1^3)^t(\sigma_2\sigma_1^3)(\sigma_2^{t+1}\sigma_1^2\sigma_2^{s+2}),\qquad s,t\in\N,s,t\geq1.$$
	Then $\La_{s,t}\sse(\S^3,\xi_\st)$ admits infinitely many embedded exact Lagrangian fillings in $(\D^4,\la_\st)$ realized as $3$-graphs $G_{s,t}\sse\D^2$ and their Legendrian mutations.
\end{thm}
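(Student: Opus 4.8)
The plan is to realize an infinite sequence of quiver mutations of a suitable acyclic quiver as a sequence of Legendrian mutations of free $3$-graphs, and then to detect infinitude by the microlocal monodromy/cluster coordinates of Subsection \ref{ssec:Microlocal}. First I would identify the correct quiver: the braid $\beta_{s,t}$ is rigged so that the free $3$-graph $G_{s,t}\sse\D^2$ filling $\La(\beta_{s,t})$ has first homology $H_1(\La(G_{s,t}),\Z)$ whose intersection quiver $Q(B)$, for an appropriate homology basis $B$ built from $\mathsf{I}$-cycles and $\mathsf{Y}$-cycles, contains a mutable full subquiver $Q(B_\mu)$ that is (a framing of) an affine or wild quiver $-$ for instance of type $\widetilde{A}$, $\widetilde{D}$, or a Markov-type quiver $-$ the extra vertices being frozen. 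For such quivers it is classical (Fomin-Zelevinsky \cite{FominZelevinsky_ClusterI}) that there is an infinite sequence of mutations $\mu_{i_1},\mu_{i_2},\ldots$ producing pairwise non-equivalent seeds, detected by the fact that the associated $X$-cluster variables (equivalently, the $g$-vectors or $c$-vectors) are pairwise distinct. The parameters $s,t$ control the multiplicities of the arrows between the two ``wings'' of the quiver, which is what makes the relevant subquiver of infinite mutation type.

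Second, I would carry out the geometric realization. Starting from $G_{s,t}$, each vertex $v_i\in Q(B_\mu)$ must be represented by a $1$-cycle $\gamma_i$ that is a tree with $\mathsf{I}$- and $\mathsf{Y}$-pieces and \emph{no multiplicity}, so that the local mutation rules of Subsection \ref{ssec:legmutation_local} (Figures \ref{fig:InternalYVertex}$-$\ref{fig:InternalYVertex6}) apply; by Lemma \ref{lem:free} each Legendrian mutation keeps the graph free, so the Lagrangian projection stays embedded. The key structural point $-$ and this is where the choice of $\beta_{s,t}$ really pays off $-$ is that after performing the mutations $\mu_{i_k}$ along the prescribed infinite sequence, the $1$-cycles being mutated \emph{remain representable by trees with no multiplicity at every step}, so the obstruction mentioned in the remarks (embedded cycles becoming immersed, cf. \cite[Section 2]{STW}) never arises. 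I would prove this by an inductive normal-form argument: after each mutation one applies a controlled sequence of Moves I$-$III (candy twist, push-through, flop) to bring the mutated $3$-graph back into a standard ``ladder-like'' shape in which the next cycle to be mutated is again a short $\mathsf{I}$-cycle or a $\mathsf{Y}$-cycle; Figure \ref{fig:ThurstonQuiverIntro} is an instance of such a normal form after five mutations. The bookkeeping here is exactly the transport-of-cycles rules catalogued in Subsection \ref{ssec:legmutation_local}.

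Third, I would invoke the dictionary of Subsection \ref{sssec:ClusterCoord} and \ref{sssec:ClusterTransf}: the microlocal monodromy along $\gamma_i$ is the cross-ratio/triple-ratio cluster coordinate, and a Legendrian mutation at $\gamma_i$ induces precisely the cluster-$X$ transformation (Lemma \ref{lem:clustertrans}), with the intersection form $H_1(L(G),\Z)$ giving the (negative of the) exchange matrix. Hence the sequence of embedded exact Lagrangian fillings $G_{s,t},\ \mu_{i_1}(G_{s,t}),\ \mu_{i_2}\mu_{i_1}(G_{s,t}),\ldots$ has pairwise distinct tuples of microlocal monodromy coordinates $-$ this is the infinite-mutation-type input, where one only needs finitely many of the $X$-variables (the frozen ones are irrelevant) to separate the seeds. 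Since the microlocal monodromy local system is a Legendrian invariant (via \cite{GKS_Quantization}), and is preserved under Hamiltonian isotopy of the filling rel boundary, distinct coordinate tuples imply the fillings are pairwise non-Hamiltonian-isotopic rel $\La_{s,t}$. Therefore $\La_{s,t}$ admits infinitely many embedded exact Lagrangian fillings in $(\D^4,\la_\st)$.

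The main obstacle I expect is the inductive normal-form step: showing that the infinite prescribed sequence of Legendrian mutations can actually be performed, i.e. that at each stage the weave can be simplified by Reidemeister Moves I$-$III so that the next cycle in the sequence is again a short $\mathsf{I}$-cycle or an unmultiplied $\mathsf{Y}$-cycle. Abstractly the quiver mutations go on forever with no obstruction, but geometrically one must rule out the appearance of higher-multiplicity cycles; the content of the proof is that $\beta_{s,t}$ was engineered precisely so that the relevant piece of the weave ``regenerates'' its ladder structure after each mutation, so the tripling behavior visible in Figures \ref{fig:InternalYVertex4}$-$\ref{fig:InternalYVertex6} stays confined and the cycle to be mutated next is always accessible. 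Making this regeneration argument precise $-$ perhaps by exhibiting a finite ``fundamental domain'' of the weave that is reproduced (up to the controlled moves) under one period of the mutation sequence $-$ is the crux.
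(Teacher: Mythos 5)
Your plan matches the paper's proof in structure: fix a free $3$-graph $G_{s,t}$, choose $1$-cycles whose intersection quiver is of infinite mutation type, realize a periodic infinite mutation sequence geometrically by verifying that the cycles being mutated remain representable as trees with no multiplicity after every step, and distinguish the resulting fillings by microlocal monodromy/cluster coordinates. A few of your specifics are off, though none of them are fatal to the plan. The quiver is not affine, $\widetilde{D}$, or Markov-type but the rank-$6$ paracompact hyperbolic Coxeter quiver $\overline{L}_5=[3^{[1,1,1,1,1]}]$ (and its $(s,t)$-generalizations), and the parameters $s,t$ do not control arrow multiplicities: they control the number of quiver vertices, i.e.~the number of $\sf Y$-pieces in the tree cycle $\gamma_1$ and the count of monochromatic $\sf I$-cycles. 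Also, you should be careful calling the non-mutated vertex's coordinate ``irrelevant'': the vertex $\gamma_{s+t+4}$ (the $\gamma_6$ of the $s=t=1$ case) is never mutated at but is precisely the vertex whose arrow multiplicities grow without bound under the periodic sequence $\mu_{s+t+3}\cdots\mu_2\mu_1$, which is how one sees the quivers $(\mu_{s_n}\cdots\mu_1)(Q)$ are pairwise distinct. Finally, the paper does not renormalize the $3$-graph back to a ladder via Moves I--III after each mutation, as you propose; it simply lets the $3$-graph grow and verifies by local inspection near each monochromatic edge (Figure~\ref{fig:ThurstonQuiverNearMonochromatic}) that the $\gamma_i$ stay short monochromatic edges and that $\gamma_1$ stays an embedded $\sf Y$-tree with a fixed number of $\sf Y$-pieces --- the ``regeneration'' you correctly anticipate is automatic rather than engineered by Reidemeister moves.
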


\begin{proof}
The argument is uniform for all $s,t\in\N$ and all the difficulties, and their solutions, are already present for the simplest case.\footnote{We thank Dylan Thurston for useful discussions on quivers and their mutations. In particular, for providing the infinite sequence of mutations that we use in this proof.} Let us thus assume $s=t=1$ for now. First, we need to construct a free 3-graph $G=G_{1,1}$ which represents a Lagrangian filling for the Legendrian link $\La(\beta)$ associated to $\beta=\beta_{1,1}$. This 3-graph is shown in Figure \ref{fig:ThurstonQuiver}:

\begin{center}
	\begin{figure}[h!]
		\centering
		\includegraphics[scale=0.8]{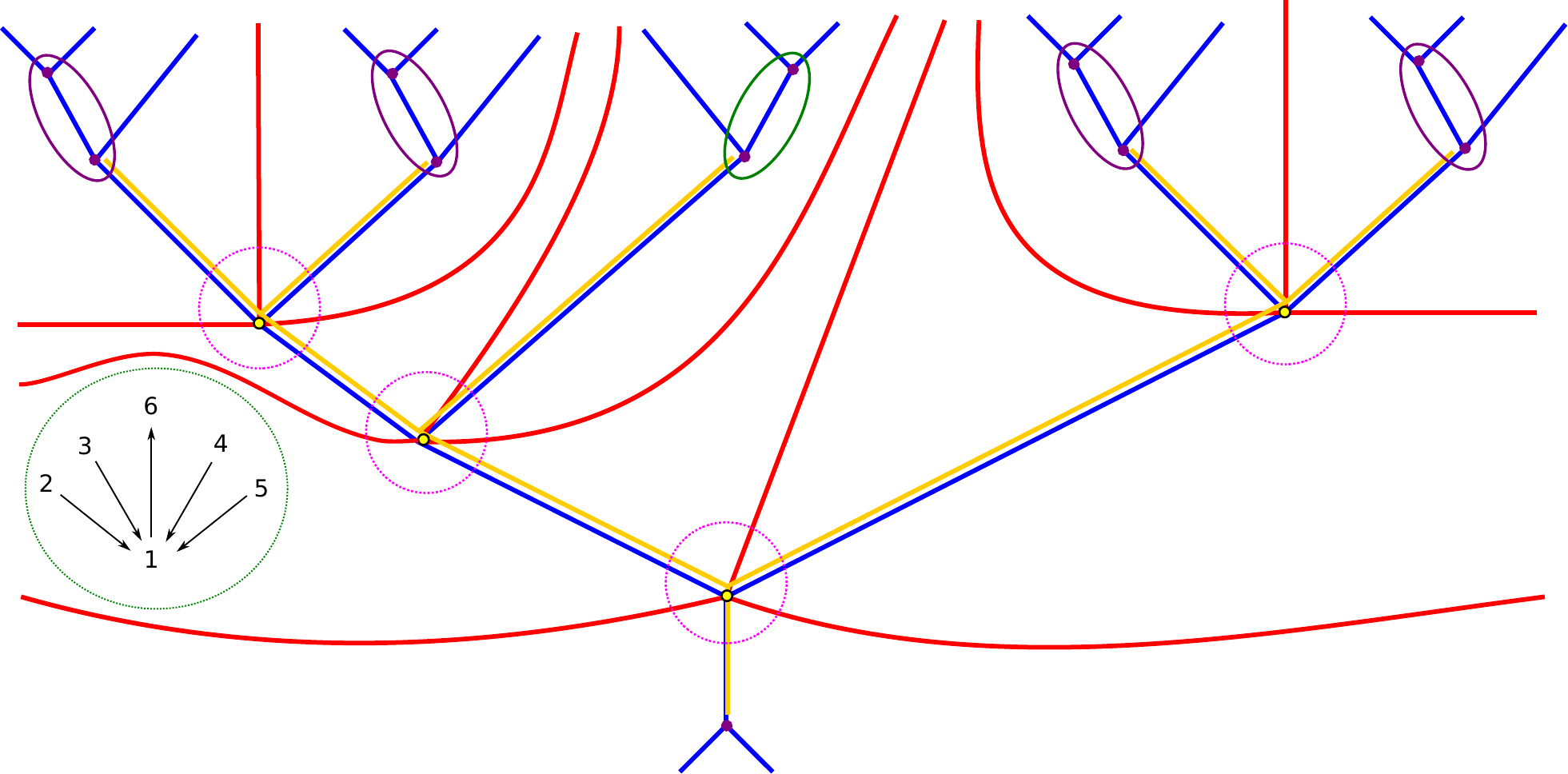}
		\caption{The 3-graph $G$ and the initial Quiver $Q$.}
		\label{fig:ThurstonQuiver}
	\end{figure}
\end{center}

The exact Lagrangian $L(G)$ associated to $G$ is a genus-4 surface with two boundary components, and thus $b_1(G)=9$. Let us consider the subset $B_\mu=\{\gamma_1,\gamma_2,\gamma_3,\gamma_4,\gamma_5\}$ given by the following 1-cycles: $\gamma_1$ is represented by the yellow 1-cycle in Figure \ref{fig:ThurstonQuiver}, which is a tree of $\sf Y$-pieces, and $\gamma_2,\gamma_3,\gamma_4,\gamma_5$ are represented by monochromatic edges, in purple in Figure \ref{fig:ThurstonQuiver}. In addition, we consider the 1-cycle $\gamma_6$ represented by the monochromatic edge, in green. The intersection quiver $Q=Q(B_\mu\cup\{\gamma_6\})$ is given by the quiver drawn in Figure \ref{fig:ThurstonQuiver}. The quiver $Q$ is of infinite mutation type, as it is associated\footnote{Precisely, the quiver $Q$ corresponds to the rank 6 paracompact hyperbolic Coxeter group $\overline{L}_5=[3^{[1,1,1,1,1]}]$.} to hyperbolic Coxeter diagram \cite[Table 1]{Lawson}. In fact, we claim that the sequence of quiver mutations $\mu_{s_n}$, where
$$s_n= \begin{cases}
1 &\mbox{if } n \equiv 1 \\
2 & \mbox{if } n \equiv 2\\
3 & \mbox{if } n \equiv 3\\
4 & \mbox{if } n \equiv 4\\
5 & \mbox{if } n \equiv 5
\end{cases} \pmod{5},$$
is an infinite sequence of quiver mutations. Indeed, each time we apply the sequence of mutations $\mu_5\mu_4\mu_3\mu_2\mu_1$, the number of arrows from the vertex $v_1$ to $v_6$ increases by two, and the number of arrows from $v_i$ to $v_6$, for $i\in[2,5]$ increases by one. In particular, at the $k$th iteration there are $2k+1$ arrows from $v_1$ to $v_6$ and $k$ arrows from $v_i$ to $v_6$, $i\in[2,5]$. Now we reach the core of the issue, which is realizing this infinite sequence of quiver mutations as Legendrian mutations of 3-graphs. For that, we observe the following two properties:

Let us perform a Legendrian mutation along the $\sf Y$-tree which represents the 1-cycle $\gamma_1$. The resulting 3-graph, which is free by Lemma \ref{lem:free}, is depicted in Figure \ref{fig:ThurstonQuiver2}:
\begin{center}
	\begin{figure}[h!]
		\centering
		\includegraphics[scale=0.8]{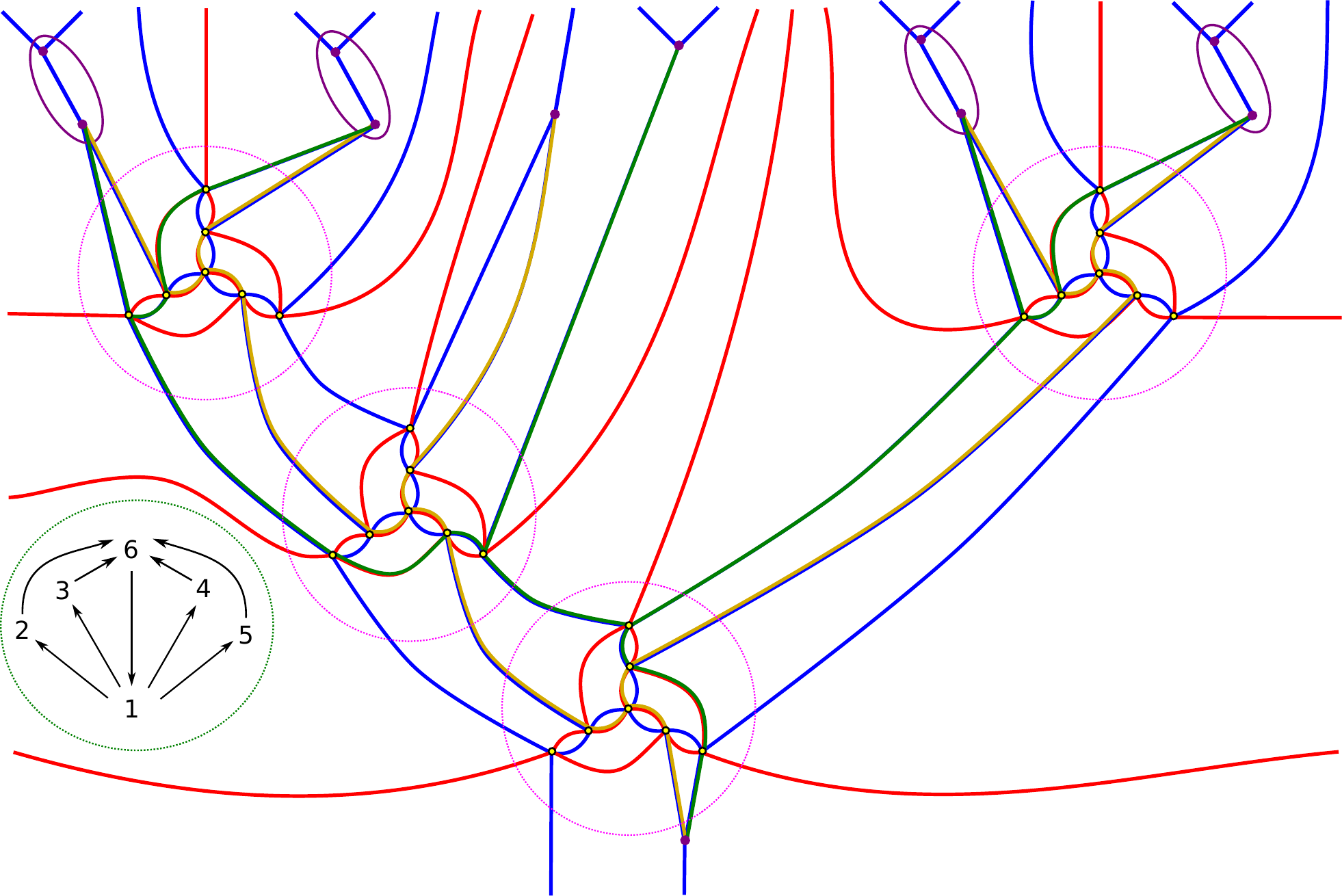}
		\caption{Mutated 3-graph $\mu_1(G)$ at the 1-cycle $\gamma_1$ (yellow) corresponding to vertex 1 in $Q$, and its associated intersection quiver $\mu_1(Q)$.}
		\label{fig:ThurstonQuiver2}
	\end{figure}
\end{center}
Now, upon this Legendrian mutation at $\gamma_1$ the 1-cycles $\gamma_2,\gamma_3,\gamma_4,\gamma_5$ are {\it still} represented by monochromatic edges. These new 1-cycles $\mu_1(\gamma_2),\mu_1(\gamma_3),\mu_1(\gamma_4),\mu_1(\gamma_5)$ are circled in purple in Figure \ref{fig:ThurstonQuiver2}. The figure also displays the mutated quiver $\mu_i(Q(B_\mu\cup\{\gamma_6\}))$ and the cycle $\gamma_6$ in green. Similarly, upon this Legendrian mutation, the 1-cycle $\mu_1(\gamma_1)$ is {\it still} represented by an embedded $\sf Y$-tree, as depicted in yellow in Figure \ref{fig:ThurstonQuiver2}.

These properties hold true as we now perform Legendrian mutations at the monochromatic edges $\gamma_2,\gamma_3,\gamma_4,\gamma_5$. The free 3-graph resulting from these four mutations is drawn in Figure \ref{fig:ThurstonQuiver3}:

\begin{center}
	\begin{figure}[h!]
		\centering
		\includegraphics[scale=0.8]{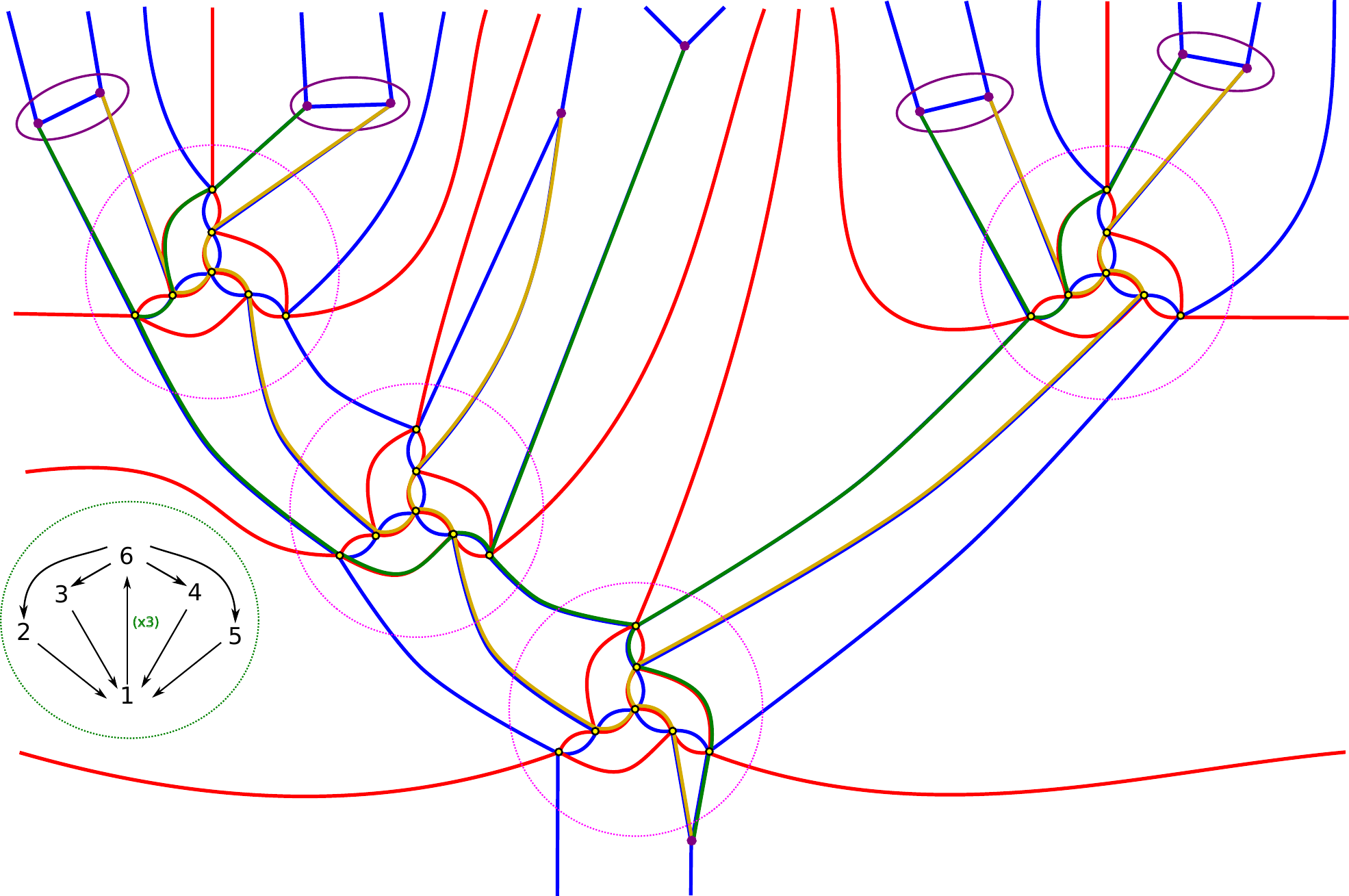}
		\caption{Mutated 3-graph $\mu_5\mu_4\mu_3\mu_2\mu_1(G)$ and its associated intersection quiver $\mu_5\mu_4\mu_3\mu_2\mu_1(Q)$.}
		\label{fig:ThurstonQuiver3}
	\end{figure}
\end{center}

The claim is that we can iterate the sequence of mutations $\mu_5\mu_4\mu_3\mu_2\mu_1$ geometrically as Legendrian mutation of the free 3-graph, and these two properties hold. That is, at any stage in the sequence of mutations $\mu_{s_n}$ we have that

\begin{itemize}
	\item[(i)] the 1-cycles $\gamma_2,\gamma_3,\gamma_4,\gamma_5$ are represented by monochromatic edges,
	\item[(ii)] the 1-cycle $\gamma_1$ is represented by an embedded $\sf Y$-tree, with no multiplicities.
\end{itemize}

In fact, the $\sf Y$-tree representing $\gamma_1$ always has exactly four $\sf Y$-pieces. These four pieces have been surrounded by a dashed pink circle in Figures \ref{fig:ThurstonQuiver} through \ref{fig:ThurstonQuiver4}. The two items above can be readily verified, as follows. The behavior of the mutated $3$-graph near each of the the monochromatic edges is as depicted in Figure \ref{fig:ThurstonQuiverNearMonochromatic}:

\begin{center}
	\begin{figure}[h!]
		\centering
		\includegraphics[scale=0.8]{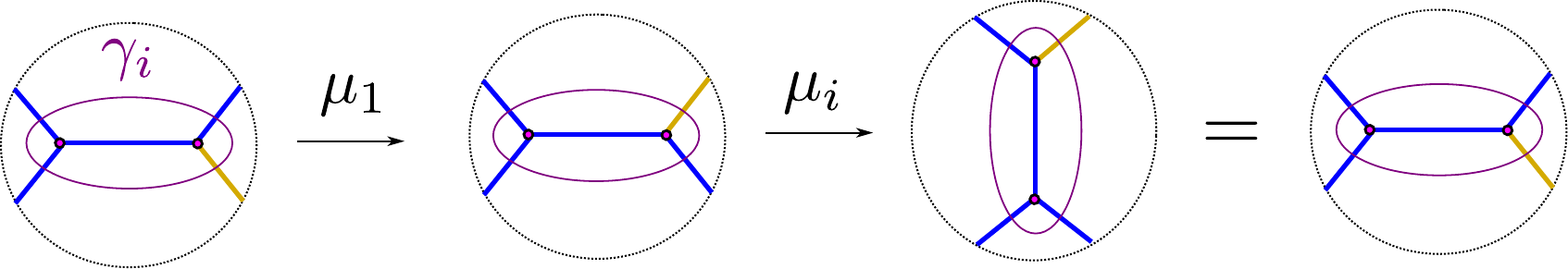}
		\caption{The effect of the sequence of mutations $\mu_5\mu_4\mu_3\mu_2\mu_1$ near the monochromatic edges $\gamma_i$, $i\in[2,5]$. The only mutations from these five that affect $\gamma_i$ are $\mu_1$ and $\mu_i$.}
		\label{fig:ThurstonQuiverNearMonochromatic}
	\end{figure}
\end{center}

It thus follows that $\gamma_i$, $i\in[2,5]$, remains a monochromatic edge upon {\it any} iteration of the $3$-graph mutation $\mu_5\mu_4\mu_3\mu_2\mu_1$. Similarly, according to the Legendrian mutation rules of Subsection \ref{ssec:legmutation}, each $\sf Y$-piece of the $\sf Y$-tree representing $\gamma_1$ itself mutates to a $\sf Y$-piece, and mutating at $\gamma_2,\gamma_3,\gamma_4,\gamma_5$ preserves this property. Thus the pattern persists upon any iteration. The two properties $(i)$ and $(ii)$ now allow us to perform the sequence of mutations $\mu_{s_n}$ up to {\it any} point in the sequence. For instance, the sequence of mutations $\mu_1\mu_5\mu_4\mu_3\mu_2\mu_1$ applied to $G$ lead to the 3-graph in Figure \ref{fig:ThurstonQuiver4}:

\begin{center}
	\begin{figure}[h!]
		\centering
		\includegraphics[scale=0.85]{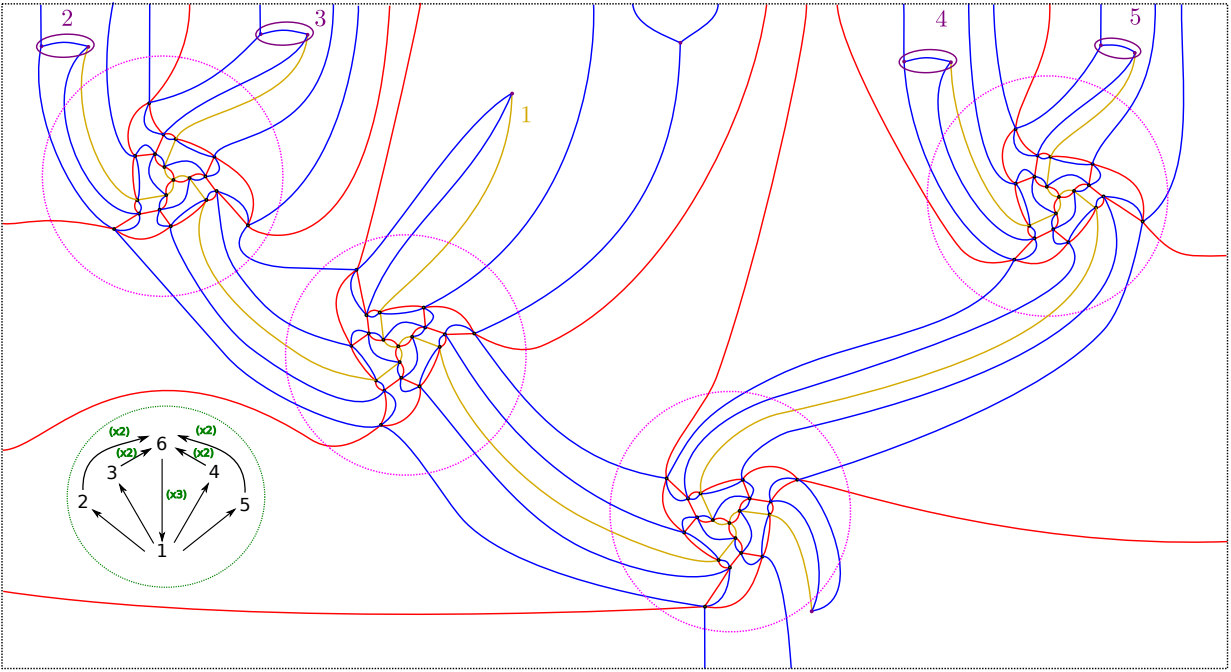}
		\caption{Mutated 3-graph $\mu_1\mu_5\mu_4\mu_3\mu_2\mu_1(G)$ and its associated intersection quiver $\mu_1\mu_5\mu_4\mu_3\mu_2\mu_1(Q)$.}
		\label{fig:ThurstonQuiver4}
	\end{figure}
\end{center}

In order to pairwise distinguish the exact Lagrangian fillings associated to the sequence of $3$-graphs $(\mu_{s_n}\mu_{s_{n-1}}\cdots\mu_{1})(G)$, up to Hamiltonian isotopy, we use the microlocal monodromies $\{x_1,x_2,x_3,x_4,x_5,x_6\}$ along the 1-cycles $\gamma_i$, $i\in[1,6]$, and their mutations. By Subsection \ref{sssec:ClusterTransf}, the cluster seed $\{x_1,x_2,x_3,x_4,x_5,x_6\}$ associated to the quiver $Q$ mutates to the cluster seed associated to $(\mu_{s_n}\mu_{s_{n-1}}\cdots\mu_{1})(Q)$ upon performing the Legendrian 3-graph mutations $(\mu_{s_n}\mu_{s_{n-1}}\cdots\mu_{1})(G)$. Since the quivers $(\mu_{s_n}\mu_{s_{n-1}}\cdots\mu_{1})(Q)$ are distinct, and so are the associated cluster seeds, it follows that the associated Lagrangian fillings are distinct. This concludes the proof for $s=t=1$.

The general case $s,t\in\N$ is proven with the same argument. Indeed, the free 3-graph $G_{1,1}$ in Figure \ref{fig:ThurstonQuiver} generalizes to a 3-graph whose boundary is $\beta_{s,t}$, just by adding $s$ copies of the leftmost pattern in $G_{1,1}$, to the left, and $t$ copies of the rightmost pattern in $G_{1,1}$, to the right. In this general case, it is still true that $\gamma_1$ is represented\footnote{In this case the $\sf Y$-tree has ($s+t+2$) $\sf Y$-pieces, $s+1$ to the left and $t+1$ to the right of the base root.} by a $\sf Y$-tree and the remaining $\{\gamma_2,\gamma_3,\ldots,\gamma_s,\gamma_{s+1},\ldots,\gamma_{s+t+3}\}$ cycles are represented by monochromatic edges. The argument is then identical, with the infinite sequence of mutations given by
$$s_n= i,\qquad \mbox{if }n \equiv i\pmod{s+t+3},\quad 1\leq i\leq s+t+3.$$
The reader can directly verify that this is an infinite sequence of mutations, as the multiplicity of the arrows to the cycle $\gamma_{s+t+4}$ -- generalizing the green cycle $\gamma_6$ in Figure \ref{fig:ThurstonQuiver} -- increases as we apply the mutations $\mu_{s+t+3}\mu_{s+t+2}\cdots\mu_{2}\mu_1$.
\end{proof}

\begin{remark}
For $s=t=1$, note that the sequence $\mu_{s_n}$ never mutates at the 1-cycle $\gamma_6$, i.e. at the sixth vertex $v_6$ in $Q$. It is nevertheless crucial to include $\gamma_6$ in the quiver as well as the cluster variable $x_6$, with its subsequent mutations. Note that the 1-cycle $\gamma_6$ is initially represented by an embedded curve in the 3-graph, but this curve develops immersed points as we iterate the sequence of mutations $\mu_{s_n}$ according to Subsection \ref{ssec:legmutation}. This still allows us to define the cluster coordinate associated to it {\it but} we would not be able to mutate along such a 1-cycle just with the rules developed in Subsection \ref{ssec:legmutation}. (This is just a side remark, since the argument for Theorem \ref{thm:ThurstonLinks} does {\it not} require mutating at $v_6$.)\hfill$\Box$
\end{remark}

The Legendrian links in Theorem \ref{thm:ThurstonLinks} are relatively simple. For instance, the Legendrian knot associated to $\beta_{1,1}$ is genus-4 two-component link. One of the components is an unknot and the other is the $(2,7)$-torus knot $7_1$. Note that $\La(\beta_{1,1})$ is (smoothly) distinct from the $(3,6)$-torus link that the first author studied in \cite{CasalsHonghao}, which also has genus-4. Thus, not only does Theorem \ref{thm:ThurstonLinks} bring a {\it new} method to construct infinitely many Lagrangian fillings, but it in fact provides new Legendrian links with infinitely many Lagrangian fillings.

\begin{remark} Note that the $\overline{L}_5$ quiver that we used in Theorem \ref{thm:ThurstonLinks} appears as a subquiver of the intersection quiver for several other positive braids. Following L. Lewark's positive braid table\footnote{Lukas Lewark's Positive Knots Table: \href{http://lewark.de/lukas/braids.html}{Braids and Trees} at ``{\it http://lewark.de/lukas/braids.html} ''.} each of the following positive genus-6 braids, $14n_{5644}$, $15n_{118169}$, $16n_{144958}$, $16n_{149517}$, $16n_{173894}$, $16n_{175324}$ and $16n_{339638}$, to name a few, contain $\overline{L}_5$ in their intersection quiver. We believe that an argument similar to Theorem \ref{thm:ThurstonLinks} should prove that the maximal-tb representative of each of these links has infinitely many exact Lagrangian fillings.\hfill$\Box$
\end{remark}

Finally, the contrast between Theorem \ref{thm:ThurstonLinks} and \cite[Corollary 1.5]{CasalsHonghao} is interesting. The former constructs an infinite family of Lagrangian fillings for a Legendrian link by directly using Legendrian mutations, which are themselves distinguished by their effect -- as cluster mutations -- on the microlocal monodromies. The latter result \cite{CasalsHonghao} is entirely about constructing infinite order Lagrangian concordances, coming from Legendrian loops of positive braids, and the infinite family of Lagrangian fillings is a byproduct of such construction. In particular, $N$-graph calculus should apply to much more general Legendrian links, and does not require knowing about the existence of an infinite order element in their Lagrangian concordance monoid.


\section{Moduli Space for $N$-triangles and Non-Abelianization}\label{sec:app3}

In this final section, we focus on $N$-graphs associated to $N$-triangulations, as introduced in Section \ref{sec:constr}. This class of $N$-graphs $G$ yields Legendrian weaves $\La(G)$ whose Lagrangian projections are related to the Goncharov-Kenyon conjugate Lagrangian surfaces \cite{GoncharovKenyon13,STWZ}. These Lagrangian surfaces have also appeared in the context of Gaiotto-Moore-Neitzke's spectral networks \cite{GMN_SpecNet13,GMN_Cluster}. In particular, we prove Theorem \ref{thm:FlagModuli_Ntriangle_Intro}, which computes the flag moduli space $\SM(G)$ for $G$ any $N$-triangle $t_N$, matching the algebraic results in \cite[Section 8]{GMN_SpecNetSnakes14} and \cite[Section 9]{FockGoncharov_ModuliLocSys}.

\subsection{Flag moduli space of the $N$-triangle}\label{ssec:FlagModuli_Ntriangle} Let us compute the flag moduli space associated to the $N$-graph $G(t_N)$ of an $N$-triangle $t_N$, as we defined in Section \ref{ssec:Ngraph_Ntriangle} (see Figure \ref{fig:NTriangles}). The result reads as follows:

\begin{thm}\label{thm:FlagModuli_Ntriangle}
	Let $G(t_N)$ be the $N$-graph associated to an $N$-triangle $t_N$. The flag moduli space of $G(t_N)$ is a ${N-1\choose 2}$-dimensional complex torus, i.e.
	$$\SM(t_N,G(t_N);k)\cong(k^*)^{{N-1\choose 2}}.$$
\end{thm}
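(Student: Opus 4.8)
The plan is to reduce the computation of $\SM(t_N, G(t_N); k)$ to a sequence of local moves and local flag-moduli computations established earlier in the paper. The $N$-graph $G(t_N)$ has, by the construction in Section \ref{ssec:Ngraph_Ntriangle}, exactly $\binom{N}{2}$ trivalent $\tau_1$-vertices (one at the center of each upward-pointing triangle of the $N$-subdivision) together with a collection of hexagonal vertices produced by the iterated collisions of $\tau_i$-edges. First I would use the Riemann--Hurwitz / Euler characteristic count from Subsection \ref{ssec:homology}: the number $v(G(t_N)) = \binom{N}{2}$ of trivalent vertices determines $\chi(\La(G(t_N)))$, and since $t_N$ is a disk the surface $\La(G(t_N))$ is planar with boundary, so $\operatorname{rk} H_1(\La(G(t_N)), \Z) = \binom{N-1}{2}$. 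This matches the claimed dimension and is the geometric source of the $(k^*)^{\binom{N-1}{2}}$.

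\textbf{Key steps.} After the homology bookkeeping, I would set up the flag incidence problem directly from Definition \ref{def:flagmoduli}: assign a flag to each face of $G(t_N)$, with the transversality constraint $\SF^j(F_1) = \SF^j(F_2)$ for $j \neq i$ across each $\tau_i$-edge. The core of the argument is to normalize using $\PGL_N$ and Lemmas \ref{lem:trivalent}, \ref{lem:hexagonal}, \ref{lem:hexagonal2}, \ref{lem:crossratio}: fix the three flags around one trivalent vertex (Lemma \ref{lem:trivalent} says this kills the gauge freedom, leaving a predictable stabilizer), and then propagate. Lemma \ref{lem:hexagonal} is crucial: at each hexagonal vertex a pair of opposite flags determines the other four, so the hexagonal vertices contribute no new moduli — they simply transmit constraints. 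The trivalent vertices each impose a point-condition (no moduli by Lemma \ref{lem:trivalent}), while the independent monochromatic/long $\sf I$-cycles and $\sf Y$-cycles in $H_1(\La(G(t_N)), \Z)$ each contribute one $k^*$ factor by Lemma \ref{lem:crossratio} and Example \ref{ex:Ycycle}. Since the $\binom{N-1}{2}$ one-cycles form a basis of $H_1$ and the face/edge incidences are, after normalization, in bijection with choosing one cross-ratio or triple-ratio per basis cycle, I would argue the moduli space is exactly the product of these $k^*$'s, with no residual relations. Concretely, it is cleanest to induct on $N$: the $N$-triangle contains the $(N-1)$-triangle as a subgraph (peel off the outermost layer of the hexagonal lattice $G_{N-1}$), giving $\SM(t_N) \cong \SM(t_{N-1}) \times (k^*)^{N-2}$, since the outermost layer adds exactly $N-2$ new trivalent vertices and the new homology classes they create are independent $\sf I$- or $\sf Y$-cycles each contributing a $k^*$. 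The base case $N=2$ (a single trivalent vertex, no interior cycle) gives a point $(k^*)^0$, and $N=3$ is Example \ref{ex:Ycycle}, which gives exactly $k^*$.

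\textbf{Main obstacle.} The delicate point will be showing that the $\binom{N-1}{2}$ cross-ratio/triple-ratio parameters are genuinely \emph{free} — i.e. that there are no hidden algebraic relations forced by the global combinatorics of $G(t_N)$, and that the propagation of flags around the many hexagonal vertices is consistent (no over-determination). This is exactly the kind of subtlety flagged in Subsection \ref{ssec:combhomology}, where the naive chain-level model $A_\bullet$ can fail to compute $H_1$. I expect to handle it by exploiting the simple connectivity of $t_N$ (so there is no global monodromy compatibility to worry about, by the discussion after Definition \ref{def:flagmoduli-general}) together with the explicit inductive layer-peeling, where at each stage one checks directly that the new flags are uniquely determined by one new free parameter via Lemma \ref{lem:hexagonal}. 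An alternative, and perhaps more robust, route to the same end is to invoke Theorem \ref{thm:DiagrammaticsI}: simplify $G(t_N)$ by surface Reidemeister moves (push-throughs, flops, candy twists) to a standard form in which the $\binom{N-1}{2}$ cycles are manifestly disjoint short $\sf I$-cycles, whence the flag moduli space is visibly $(k^*)^{\binom{N-1}{2}}$ by iterated application of Lemma \ref{lem:crossratio} and the product formula for Legendrian connected sums from the proof of Theorem \ref{thm:FlagLegSurgeries}. I would present the inductive argument as the main line and remark on the Reidemeister-simplification as the conceptual reason it works.
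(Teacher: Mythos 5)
Your overall strategy is the same as the paper's: induct on $N$ by peeling a layer off the $N$-triangle, use Lemma~\ref{lem:hexagonal} to show that hexagonal vertices transmit flag data without contributing moduli, and extract the $k^*$ factors from the $\sf I$- and $\sf Y$-cycles via Lemma~\ref{lem:crossratio} and Example~\ref{ex:Ycycle}. You also correctly locate the delicate point — showing the $\binom{N-1}{2}$ parameters are free with no hidden relations — and your final dimension count is right. However, two things should be tightened.

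First, a concrete counting slip: going from $t_{N-1}$ to $t_N$, the number of new trivalent vertices is $\binom{N}{2}-\binom{N-1}{2}=N-1$, not $N-2$. The number of new $k^*$ factors is indeed $N-2$, which means exactly one of the new trivalent-vertex choices is absorbed by residual gauge rather than contributing a modulus. Your proposal elides this distinction by treating "new trivalent vertex" and "new $k^*$ factor" as synonymous, which is precisely the place where a naive count would overcount.

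Second, and relatedly, the step you call the main obstacle — freeness of the parameters — is exactly where the paper invests its effort, and it is not handled merely by citing Lemma~\ref{lem:hexagonal}. The paper sets up explicit $(N+1)\times(N+1)$ projective matrix coordinates, identifies the residual $\PGL_{N+1}/\PGL_N$ gauge group as the group of upper block-triangular matrices with a free last column $(c_1,\dots,c_N,c_{N+1})$ with $c_{N+1}\in k^*$, and then uses up exactly one $c_i$ at each stage to normalize one of the two \emph{external} hexagonal (or trivalent) vertices of the new row, leaving the other external vertex to contribute a $k^*$; the \emph{internal} vertices are then rigid by Lemma~\ref{lem:hexagonal} because four of their six surrounding flags are already determined. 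Spelling out this gauge-fixing is what shows the parameters are free and that nothing is over-determined — without it, "propagate via Lemma~\ref{lem:hexagonal}" is a heuristic, not a proof. Your alternative route via Reidemeister simplification to disjoint short $\sf I$-cycles is not what the paper does, and while plausible in spirit, it would require its own verification that the moves can be performed globally inside the disk without changing boundary conditions; as written it is a remark rather than a fallback proof.
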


This rest of this subsection is devoted to the proof of Theorem \ref{thm:FlagModuli_Ntriangle}. The statement of Theorem \ref{thm:FlagModuli_Ntriangle} is an instance of how incidence geometry problems connect to the contact topology of Legendrian surfaces. Indeed, although our proof is entirely within projective geometry, the conclusion from Theorem \ref{thm:FlagModuli_Ntriangle} ought to be read as the fact that the moduli space $\SM(t_N,G(t_N);k)$ is parametrized by the toric coordinates provided by the holonomies $\mbox{Hom}(H_1(\La(G(t_N),\Z)),k^*)$. For $k=\C$, this complex torus should be related to the complex torus appearing in Fock-Goncharov \cite{FockGoncharov_ModuliLocSys,FockGoncharovII} in their study of cluster varieties, see \cite[Theorem 8.3]{Kuwagaki20}.

Theorem \ref{thm:FlagModuli_Ntriangle} can also be interpreted as follows. The triangle $t_N$ is topologically a disk $\D^2$ with boundary a circle $\dd\D^2=\S^1$.  The Legendrian weave
	$$\Lambda(G(t_N)) \subset (J^1(\D^2),\xi_\st)$$
	has a Lagrangian projection $L:= \pi(\La(G(t_N)))$, which is an exact Lagrangian submanifold, where $\pi:J^1(\D^2)\lr T^*\D^2$ is the projection along the standard (vertical) Reeb flow.  The Lagrangian $L$ has boundary in $T^*\D^2\vert_{S^1} \cong J^1(S^1),$ and it is checked that $\partial L$ is the cylindrical Legendrian braid $\Delta^3,$ where $\Delta$ is the half-twist positive braid corresponding to a longest word in the Weyl group, i.e.~the Garside element.  Since $G(t_N)$ is free, $L$ is an embedded exact Lagrangian filling of $\partial L$. Now, by looking at the boundary circle $\S^1$ and considering the moduli space \`a la \cite{STZ_ConstrSheaves}, we conclude that the moduli space of Lagrangian fillings should carry a cluster structure: the flag moduli space $\SM(t_N,G(t_N);k)$ is {\it one} such chart.
	
	In fact, by an argument akin to Lemma \ref{lem:hexagonal}, the flags at two vertices of the triangle $t_N$ determine the flags along the edge they bound, and therefore the flags along the boundary circle $\d\D^2=\S^1$ must be determined by the flags at the vertices, themselves three mutually completely transverse flags in the flag variety $\cB$.  This space of triples of mutually transverse flags is one of the Richardson varieties\footnote{We thank Ian Le for many discussions on the Richardson variety.} $\cR$.  Now, by the $\PGL_N$ action, two totally transverse flags can be put in standard position $B$, $B^-$, with residual symmetry the Cartan $H$ of diagonal matrices up to scale.  Then the moduli space $\cR/H$ is a cluster variety and the exact Lagrangian filling $L$ provides a cluster chart via its moduli of local systems
	$$Loc(L) \cong \mbox{Hom}(H_1(\La(G(t_N),\Z)),k^*) \cong (k^*)^{{N-1\choose 2}}.$$
	This torus can be checked to agree with that of \ref{thm:FlagModuli_Ntriangle}. Note also that, following Section \ref{sec:app2} many other cluster charts and exact Lagrangian fillings can be found by performing $N$-graph mutations. Let us now prove our result:

\begin{proof}[Proof of Theorem \ref{thm:FlagModuli_Ntriangle}]
	Let us argue by induction on $N$, where the base case $N=2$ follows from the fact that $\PGL_2$ acts transitively on triples of distinct points. Let us assume that $$\SM(t_N,G(t_N);k)\cong(k^*)^{{N-1\choose 2}}$$
	for the $N$-graph of an $N$-triangle. Consider an $(N+1)$-triangle with one side being an arbitrary fixed preferred base, and thus the row associated to this base contains $2N-3$ triangles. It is combinatorially apparent that the complement of this row in $t_{N+1}$ is in fact an $N$-triangle $t_N$, and thus we can construct $t_{N+1}$ by adding such base row to $t_{N}$. This combinatorial splitting is translated into a containment of an $N$-graph $G(t_N)$ within $G(t_{N+1})$. Let us describe such splitting in the $(N+1)$-graph by providing its construction starting from the $N$-graph $G(t_N)$.
	
	Start with the $N$-graph $G(t_N)$ -- see Section \ref{ssec:Ngraph_Ntriangle} -- and consider the ${N \choose 2}$ edges intersecting the base side of $t_N$. The edges are depicted vertically and the base side horizontally -- see Figure \ref{fig:NTriangles}. These are $\tau_i$-edges, $i=1,\ldots,N-1$, with exactly $(N-1-k)$ $\tau_k$-edges. The $(N+1)$-graph $G(t_{N+1})$ can be described in the following $N$ stages:
	
	\begin{itemize}
		\item[1.] First, insert an $(N-1,N)$-hexagonal point in the unique $\tau_{N-1}$ edge in the base side of $G(t_N)$. The $\tau_N$-edge aligned with the previously existing $\tau_{N-1}$-edge is continued down vertically. The remaining two $\tau_N$-edges are extended horizontally, respectively to the left and to the right, and the remaining two $\tau_{N-1}$ edges are continued down diagonally, in south-east and south-west direction respectively.\\
		
		\item[2.] Second, continue down the $\tau_{i}$-edges, $i=1,\ldots,N$, until the two $\tau_{N-1}$-edges intersect with the two originally existing $\tau{N-2}$-edges. In the moment of collision, insert two $(N-2,N-1)$-hexagonal vertices at the intersection point matching the two incoming $\tau_{N-1}$ and $\tau{N-2}$ trajectories. We extend the two $\tau_{N-1}$-edges adjacent to the incoming $\tau_{N-2}$-edges horizontally to the left and to the right.
		
		The remaining two pairs of three edges, each with two $\tau_{N-2}$-edges and a $\tau_{N-2}$-edge, are continued down, with the $\tau_{N-1}$-edges continued vertically and the $\tau_{N-2}$-edges continued diagonally in the south-east or south-west directions, accordingly.\\
		
		\item[3.] Iteratively, we proceed as follows in the $l$th stage, $2\leq l\leq N-1$. We continue down the $\tau_{i}$-edges, $i=1,\ldots,N$, and at this stage the only edges being continued diagonally down are $\tau_{N-l+1}$-edges. There are $2l-2$ of such edges, which can be gathered in two groups, internal and external.\\
		
		\noindent By definition, there are two external edges, which are the leftmost and rightmost $\tau_{N-l+1}$-edges, respectively continuing south-west and south-east. For these two external edges, we insert two $(N-l,N-l+1)$-hexagonal and describe the $N$-graph as described in Stage 2. The $2l-4$ internal $\tau_{N-l+1}$-edges, which continue down diagonally, ought to intersect with $\tau_{N-l}$-edges, which continue down vertically.\\
		
		\noindent For the $2(l-2)$ internal edges, there are $(l-2)$ such intersections, since an intersection occurs for each pair. For each such an intersection, insert a $(N-l,N-l+1)$-hexagonal vertex, and continue the outgoing three edges down as described by the local model for the hexagonal vertex. Hence, for each of these hexagonal vertices, the outgoing $\tau_{N-l+1}$-edge continues vertically down whereas the two $\tau_{N-l}$-edges continue down diagonally. Thus, at the $l$th stage we have inserted exactly $l$ $(N-l,N-l+1)$-hexagonal points.\\
		
		\item[4.] In the $N$th stage, all $\tau_i$-edges, $i\leq 2\leq N$ continue down vertically and we are left with $2(N-1)$ $\tau_1$-edges continuing diagonally. In line with the previous stages, there are two external $\tau_1$-edges and $2(N-2)$ internal edges. Insert two $\tau_1$-trivalent vertices at the end of the two external $\tau_1$-edges. The internal edges will meet in consecutive pairs at $N-2$ intersection points. In this final stage we insert a $\tau_1$-vertex in each of these intersection points, and continue the remaining $\tau_1$-edge vertically down.\\
	\end{itemize}
	
	
	Let us now compute the flag moduli space $\SM(t_N,G(t_{N+1});k)$ using this inductive construction of $G(t_{N+1})$. A crucial fact to be used is Lemma \ref{lem:hexagonal}, i.e. at a hexagonal vertex, four consecutive flags uniquely determine the remaining two flags. Let us assume that we have chosen a point in $\SM(t_N,G(t_{N});k)$ and we thus have the data of a flag $\SF$ in $\P^{N}$ for each open region in $\D^2\setminus G(t_{N})$. This data needs to be considered in the moduli space of flags, given that $G(t_{N+1})$ is an $(N+1)$-graph, and thus we fix an embedding $i_0:\P^N\lr\P^{N+1}$ and the corresponding inclusion $\PGL_N\sse\PGL_{N+1}$. Let us then start the construction $G(t_{N+1})$ from $G(t_{N})$ by stages, as described above, and prove the statement in Theorem \ref{thm:FlagModuli_Ntriangle}.
	
	In the first stage, the flag data at the inserted $(N-1,N)$-hexagonal point in the $\tau_{N-1}$-edge is uniquely determined by a choice of a codimension-2 projective subspace $H^2$ in $\P^{N}$, transverse to $i_0(\P^N)$. Note that the intersection of $H^2$ and $i_0(\P^N)$ is uniquely determined by the flag data coming from $\SM(t_N,G(t_{N});k)$. We claim that this choice in the first stage can be absorbed by the symmetry group $\PGL_{N+1}$.
	
	In order to understand the symmetry group, it is convenient to represent an element in $\PGL_{N+1}$ via the projective matrix
	
	\renewcommand{\arraystretch}{2}$$\newcommand*{\temp}{\multicolumn{1}{r|}{}}\left[\begin{array}{cccccc}
	a_{1,1} & a_{1,2} & \ldots & a_{1,N} &\temp & a_{1,N+1}\\
	a_{2,1} & a_{2,2} & \ldots & a_{2,N} &\temp & a_{2,N+1}\\
	\vdots & \vdots & \ddots & \vdots &\temp & \vdots\\
	a_{N,1} & a_{N,2} & \ldots & a_{N,N} &\temp & a_{N,N+1}\\
	\cline{1-6}
	a_{N+1,1} & a_{N+1,2} & \ldots & a_{N+1,N} &\temp & a_{N+1,N+1}\\\end{array}\right],$$
	
	where the subgroup $\PGL_N\sse\PGL_{N+1}$ is defined by
	$$\PGL_N=\{A\in\PGL_{N+1}:a_{N+1,N+1}=1,a_{i,N+1}=a_{N+1,i}=0,1\leq i\leq N\}.$$
	
	In these coordinates, we can assume that the subgroup $K\sse\PGL_{N+1}$ fixing our fixed hyperplane $i_0(\P^n)\sse\P^{N-1}$ is cut out by the equations
	$$K:=\{A\in\PGL_{N+1}:a_{N+1,i}=0,1\leq i\leq N\}.$$
	
	As a result, the remaining $\PGL_{N+1}$-symmetries (once the flag moduli space $\SM(t_N,G(t_{N});k)$ is fixed, and thus the symmetries of $\PGL_N$ have been used) consist of projective transformations of the form
	
	\renewcommand{\arraystretch}{2}$$\newcommand*{\temp}{\multicolumn{1}{r|}{}}\left[\begin{array}{cccccc}
	a_{1,1} & a_{1,2} & \ldots & a_{1,N} &\temp & c_1\\
	a_{2,1} & a_{2,2} & \ldots & a_{2,N} &\temp & c_2\\
	\vdots & \vdots & \ddots & \vdots &\temp & \vdots\\
	a_{N,1} & a_{N,2} & \ldots & a_{N,N} &\temp & c_N\\
	\cline{1-6}
	0 & 0 & \ldots & 0 &\temp & c_{N+1}\\
	\end{array}\right],$$
	where $a_{i,j}$ are fixed, $1\leq i,j\leq N$, $c_i\in k$, $1\leq i\leq N$, and $c_{N+1}\in k^*$ are free. Then, in this coordinate system, we can assume that the choice of the codimension-2 projective subspace $H^2$ uses the gauge provided by $c_1,c_2\in k$.
	
	In the second stage, two $(N-2,N-1)$-hexagonal vertices are inserted. For each of them, the flag data is fixed by induction in three out of the six regions near the hexagonal vertex. Hence, there is a choice of a codimension-3 projective subspace $H^3$ in each of these two vertices. Let us fix one of these choices by using the free coordinate $c_3\in k$ and notice that the other choice has an a priori moduli of $k$. Nevertheless, the $\tau_{N-3}$-edge that interacts with the $\tau_{N-2}$ edges in the third stage forces that moduli to be $k^*$, since the two newly chosen flags must be $\tau_{N-3}$-transverse. Thus in the second stage we have used the symmetry provided by $c_3\in k$ and we are left with a $k^*$ contribution to the flag moduli.
	
	In the $l$th stage, $3\leq l\leq N-1$, we proceed inductively as follows. We partition the $(N-l,N-l+1)$-hexagonal vertices inserted in this stage into two groups: external, containing two of them, and internal, containing $(l-2)$ of them. By definition, the two external $(N-l,N-l+1)$-hexagonal vertices are the leftmost and rightmost vertices. Each of these two external vertices have flags fixed in three out of the six regions, by the process in the $(l-1)$st stage. Thus, as in the second stage, there is exactly one choice of flag at each of these $(N-l,N-l+1)$-hexagonal vertices which determines each of their respective neighborhoods. This corresponds to a choice of codimension-$l$ projective subspace $H^{l+1}$ in accordance with the incidence conditions imposed by the given flags. Proceeding as in the second stage, we fix one of these choices with the free variable $c_{l+1}$ and the remaining choice contributes $k^*$ to the flag moduli.
	
	The $(l-2)$ internal $(N-l,N-l+1)$-hexagonal vertices have flags fixed in four out of the six regions, given the process in the $(l-1)$st stage. By Lemma \ref{lem:hexagonal}, these hexagonal vertices are uniquely determined in their neighborhoods. Thus, although the $k^*$ contribution of one of the external hexagonal vertices interacts with an internal vertex, no contributions to the flag moduli space come directly from the internal vertices.
	
	The argument then develops iteratively in the above manner until the $(N-1)$st stage is completed. The last $N$th stage consists of the insertion of $N$ $\tau_1$-trivalent vertices. Following the same pattern as before, only the two external trivalent vertices contribute to the flag moduli, since each of the internal trivalent vertices have their three surrounding flags determined at the $(N-1)$st stage. In this last stage, the variables $c_i$, $1\leq i\leq N$ have been fixed and the only remaining degree of free symmetry is $c_{N+1}\in k^*$. Let us use such symmetry to fix the choice in one of the two external trivalent vertices, and thus the contributions of this last stage to the flag moduli space is the $k^*$ choice of the remaining point coming from the remaining external trivalent vertex.
	
	The conclusion in the statement Theorem \ref{thm:FlagModuli_Ntriangle} now follows by gathering the contributions of the flag moduli space at each stage. Indeed, the first stage has no contribution, whereas each of the $(N-1)$ stages, from the second to the last $N$th stage, has a $k^*$ flag moduli space contribution. By the inductive hypothesis, the desired flag moduli space is
	
	$$\SM(t_{N+1},G(t_{N+1});k)\cong \SM(t_N,G(t_N);k)\times(k^*)^{N-1}\cong(k^*)^{{N-1\choose 2}}\times(k^*)^{N-1}\cong(k^*)^{{N\choose 2}},$$
	
	which corresponds to the statement, as required.
\end{proof}

\begin{remark}
	Note that the inductive combinatorial description of $G(t_{N+1})$ in terms of $G(t_N)$ used in the proof of Theorem \ref{thm:FlagModuli_Ntriangle} can be used to provide a third alternative definition of the local $N$-graph $G(t_N)$, in addition to the descriptions introduced in Subsection \ref{ssec:Ngraph_Ntriangle}.\hfill$\Box$
\end{remark}

\subsubsection{Tetrahedral Triangulations at $N=3$ and $N=4$} Let us study the Legendrian weaves $\La(G(\tau))$ and flag moduli space $\SM(\S^2,G(\tau))$ associated to 3- and 4-graphs $G(\tau)$ for the tetrahedral 3- and 4- triangulations $\tau$ of the 2-sphere $\S^2$. The case $N=2$ has been discussed in Subsection \ref{ssec:FirstComputations} above, where $\La(G)\cong\bT^2_c$ is the Legendrian Clifford Torus and $\SM(G)\cong\P^1\setminus\{0,1,\infty\}$. Let us denote the pair of pants $\P^1\setminus\{0,1,\infty\}$ by $\bH$.

Let us consider the 3-graph $G^{(3)}=G(\tau^{(3)})\sse\S^2$ associated to the tetrahedral 3-triangulation $\tau^{(3)}$ of the 2-sphere $\S^2$, according to the construction in Section \ref{sec:constr}. We want to compute its flag moduli space $\SM(G)$. This will be done directly by using the $N$-graph calculus computations in Section \ref{sec:moves}. Indeed, it is proven in Subsection \ref{ssec:RealLifeExample} that in this case the (satellite of the) Legendrian weave $\La(G)$ is Legendrian isotopic to the four-fold connected sum of the Clifford torus $\bT_c^2$. Hence, we obtain that $\SM(\S^2,G^{(3)})\cong\bH^4$. From the description in Theorem \ref{thm:FlagModuli_Ntriangle}, we are also giving a contact geometric proof of the following

\begin{cor}[\cite{FockGoncharov_ModuliLocSys}]
	The moduli of four generic flags in $\C^3$ is isomorphic to $\bH^4$.\hfill$\Box$
\end{cor}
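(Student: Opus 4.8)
The plan is to deduce this corollary as a straightforward specialization of Theorem \ref{thm:FlagModuli_Ntriangle}. Set $N=3$, so that $G(t_3)$ is the $3$-graph of a single $3$-triangle, and $\SM(t_3,G(t_3);\C)\cong(\C^*)^{{2\choose 2}}=(\C^*)^1=\C^*$ by the theorem. The first step is to identify this with the moduli of four generic flags in $\C^3$. Concretely, one forms the $3$-graph $G^{(3)}=G(\tau^{(3)})\sse\S^2$ dual to the tetrahedral $3$-triangulation, whose underlying $1$-triangulation has two triangles glued along their common boundary. By the computation in Subsection \ref{ssec:RealLifeExample} (see Figures \ref{fig:3TriangN3Part1} and \ref{fig:3TriangN3Part2}), the satellite Legendrian weave $\iota(\La(G^{(3)}))$ is Legendrian isotopic to the four-fold connected sum $\#^4\bT^2_c$ of the Legendrian Clifford torus; invoking Theorem \ref{thm:intro2} (or Theorem \ref{thm:sheaves}) together with the product formula for flag moduli under Legendrian connected sums from Theorem \ref{thm:FlagLegSurgeries}, this gives $\SM(\S^2,G^{(3)};\C)\cong\SM(\bT^2_c;\C)^4\cong\bH^4$, where $\bH=\P^1\setminus\{0,1,\infty\}$ is the pair of pants identified in Subsection \ref{ssec:FirstComputations}.

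Next I would match the two sides combinatorially. The $3$-triangulation of $\S^2$ with two underlying triangles has four vertices, and each vertex of the triangulation carries a flag in $\C^3$ which, by genericity (the incidence conditions imposed by the $3$-graph edges crossing each edge of the triangulation), must be pairwise completely transverse. So $\SM(\S^2,G^{(3)};\C)$ is exactly the quotient by $\PGL_3$ of the space of $4$-tuples of pairwise generic (complete) flags in $\C^3$ — this is the moduli $\mathrm{Conf}_4(\cB)$ of four generic flags à la \cite{FockGoncharov_ModuliLocSys}. Combining with the previous paragraph, $\mathrm{Conf}_4(\cB)\cong\bH^4$, which is the assertion.

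The one point that needs a little care — and is where I expect the main (mild) obstacle to lie — is the bookkeeping of how the four pair-of-pants factors arise and why the gluing along the tetrahedral triangulation produces precisely four copies of $\bH$ rather than something with extra identifications or fixed-point loci. The cleanest route is the one taken in Subsection \ref{ssec:RealLifeExample}: first peel off four Clifford-torus connected summands using Theorem \ref{thm:Legsurgeries}, each contributing an independent $\bH=\C\setminus\{0,1\}$ factor to the flag moduli (by the cone-singularity analysis, the microlocal monodromy $\lambda$ of a Clifford summand ranges over $\C^*\setminus\{1\}$), and then verify that the residual weave $\La(G')$ is the standard Legendrian unknot two-sphere, whose flag moduli is a point. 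Since the connected sums are taken over disjoint Darboux balls, the category of microlocal sheaves factors as a product and the moduli multiply, giving $\SM\cong\bH^4\times\{*\}\cong\bH^4$. An alternative, purely projective-geometric check would run the induction of Theorem \ref{thm:FlagModuli_Ntriangle} explicitly at $N=3$: fixing three of the four flags in standard position using $\PGL_3$ leaves the residual Cartan torus $(\C^*)^2$, and the fourth generic flag contributes one genuine modulus $\C^*$ per the computation of $\SM(t_3,G(t_3))$ — but reconciling the single $\C^*$ of one $3$-triangle with the $\bH^4$ of the doubled triangulation still requires the doubling/gluing argument above, so I would present the connected-sum route as the main proof and mention the direct one only as a remark.
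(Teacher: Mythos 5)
Your proposal lands on the same route as the paper: pass to the $3$-graph $G^{(3)}$ of the tetrahedral $3$-triangulation, invoke the isotopy $\iota(\La(G^{(3)}))\cong\#^4\bT_c^2$ from Subsection \ref{ssec:RealLifeExample} together with the connected-sum behavior of Theorem \ref{thm:FlagLegSurgeries}, conclude $\SM(\S^2,G^{(3)})\cong\bH^4$, and then read off the flag-configuration description (one flag at each vertex of the tetrahedron, pairwise totally transverse) to identify this with $\mathrm{Conf}_4(\cB)/\PGL_3$.

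There is, however, a real combinatorial slip that you should correct. You describe the tetrahedral triangulation as ``two triangles glued along their common boundary'' and refer to ``the doubled triangulation,'' yet the tetrahedral triangulation of $\S^2$ has \emph{four} triangular faces (and $6$ edges, $4$ vertices; $\chi = 4-6+4 = 2$). The $2$-triangle triangulation you invoke has only $3$ vertices and gives a genus-$1$ weave, not genus-$4$; its flag moduli parametrizes three flags, not four, and would not produce $\bH^4$. The corollary really does need the four-faced tetrahedron, which is what \ref{ssec:RealLifeExample} uses: $4\cdot 3 = 12$ trivalent vertices, $4$ hexagonal vertices, genus $4$, four Clifford summands.

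Relatedly, your opening paragraph — specializing Theorem \ref{thm:FlagModuli_Ntriangle} to a single $3$-triangle to get $\C^*$ and then proposing to ``identify this with the moduli of four generic flags'' — is off-track: $\C^*$ is the moduli of \emph{three} generic flags in $\C^3$ (one triple ratio), not four. You recover from this in the next paragraph by switching to $G^{(3)}$, but the framing should be dropped or rewritten; a single $3$-triangle cannot produce the corollary. With the triangulation corrected to the tetrahedron and the first paragraph trimmed, your argument coincides with the paper's.
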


The same argument, using $N$-graph calculus also allows us to study the flag moduli space $\SM(\S^2,G^{(4)})$, where $G^{(4)}=G(\tau^{(4)})\sse\S^2$ is the 4-graph associated to the tetrahedral 4-triangulation $\tau^{(4)}$ of the 2-sphere $\S^2$. It is left as an exercise for the reader to use Theorem \ref{thm:FlagModuli_Ntriangle} and conclude that $\SM(\S^2,G^{(4)})$ is isomorphic, as an algebraic variety, to
$$\SM(\S^2,G^{(4)})=\{(z_1,w_1,\ldots,z_5,w_5)\in(\C^*)^9:(1-\kappa)w_iz_i-z_i+1=0,1\leq i\leq5\}\times(\bH)^4,$$
where $\kappa=1-z_1z_2z_3z_4z_5\in\C^*$. The exercise is solved in \cite[Section 6.3.2]{DimofteGabellaGoncharov} in the language of the $3d$ $N=2$ superconformal field theory $T_4[\Delta,\Pi]$.


\subsection{A Computation of the Non-Abelianization Map}\label{ssec:Nonabelianization} We conclude the main body of the manuscript by exploring the relationship between Legendrian weaves and the works \cite{FockGoncharov_ModuliLocSys,AV1,AV2,Palesi15} in some explicit examples. In particular, we present a case in which the non-Abelianization map featured in \cite{GMN_SpecNet13,GMN_SpecNetSnakes14} can be realized by the microlocal monodromies associated to constructible sheaves microlocally supported along Legendrian weaves.

The context is described as follows. Let $(C,\tau_N)$ be a polygon endowed with an ideal $N$-triangulation $\tau_N$, and  choose a wavefront for $\La(G(\tau_N))$ with no Reeb chords, such that the Lagrangian projection is a smooth exact Lagrangian $L$ embedded in the cotangent bundle $(T^*C,\la_\st)$. This Lagrangian projection $L$ has a sheaf quantization \cite{Sheaves3} to a rank-$N$ sheaf on $C$ with no singular support, i.e. a local system in $C$. Now, of course, all local systems on polygons are trivial, {\it but} the crucial point is that the Lagrangian covering gives a preferred\footnote{In the Floer-theoretic languange of the Fukaya category, the basis elements are the intersections of the exact Lagrangian with the cotangent fibers.} basis for the fibers of the local system, which can undergo changes \`a la handle-slides in the Morse context -- see \cite{GKS_Quantization}. Here, the Lagrangian covering is given by the restriction $\pi|_L:L\lr C$ of the projection $\pi:T^*C\lr C$ onto the zero section. Now, the $N$-graphs and the microlocal monodromies, as discussed in Section \ref{ssec:Microlocal}, precisely encode these changes. In our context, the non-Abelianization map is the construction that recovers the constructible sheaf from its microlocal monodromy.

We illustrate this in the following example. Figure \ref{fig:3triangles} shows the 3-graph $G$ associated to two adjacent 3-triangles. Suppose that we are given a local system on $\La(G)$. Denote by $x,y$ the two monodromies of the corresponding Legendrian weave around the two {\sf Y}-cycles, and $z,w$ the two microlocal monodromies along the two $\sf I$-cycles represented by the two (red) monochromatic edges.  
\begin{figure}[H]
	\vskip-.2in
	\includegraphics[scale=.6]{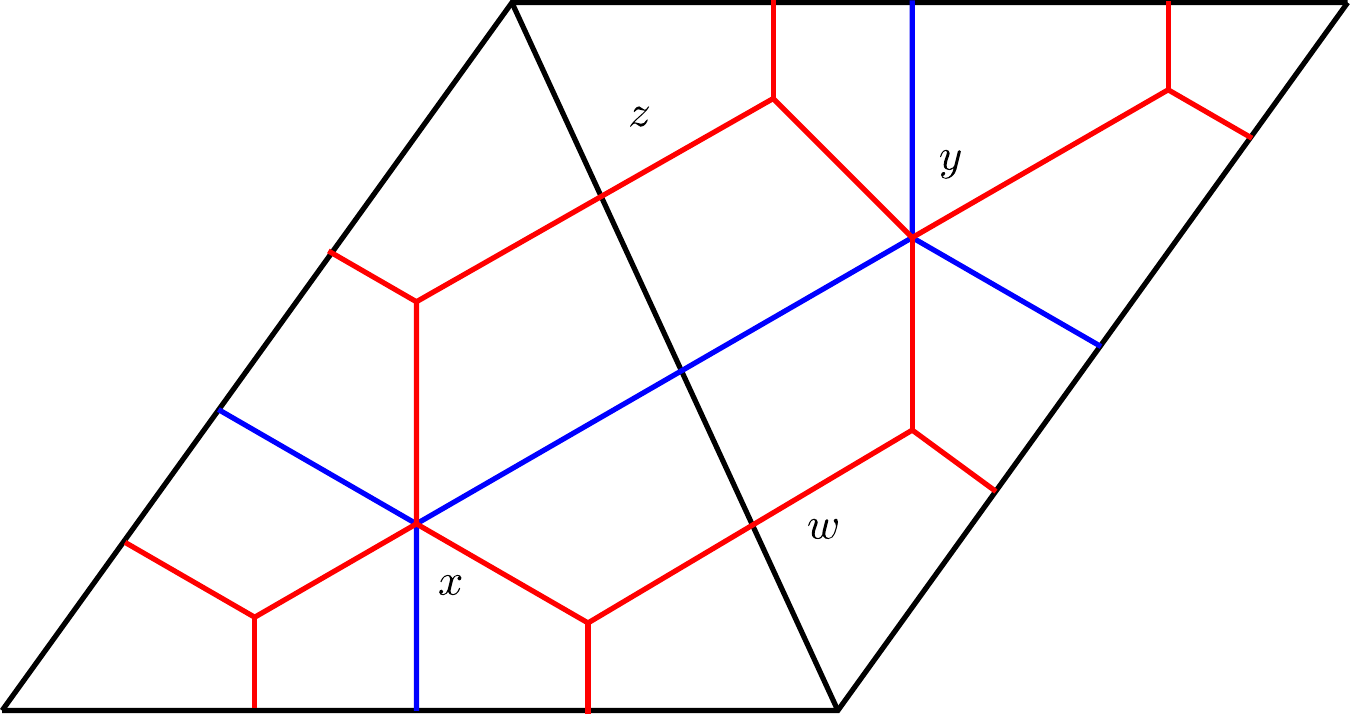}
	\vskip -.2in
	\caption{The 3-graph associated to an adjacent pair of $3$-triangles. The monodromies along the four 1-cycles are labeled $x,y,z,w$.}
	\label{fig:3triangles}
\end{figure}

The Legendrian weave $\La(G)$ is a thrice-punctured genus-one surface and these four 1-cycles are a basis for $H_1(\La(G),\Z)$. We would like to reconstruct the flag data, specifying a constructible sheaf, from the monodromies $x,y,z,w$ of the local system.  Indeed, this will realize the Non-Abelianization map \cite{GMN_SpecNet13} from rank-one local systems on the (spectral, or conjugate) Lagrangian -- parametrized by monodromies -- to decorated rank-two local systems on the base surface.  Since the base surface here is contractible, the only degrees of freedom are the choices of flags at vertices.  The map is computed as follows.

Let $(a,A), (b,B), (c,C)$ be the flags at the vertices of the left triangle, and let $(d,D)$ be the remaining flag.  We would like a birational map from the monodromies $(x,y,z,w)$ to the choice of flags.  
By using the $\PGL_3$-action, we may assume $(a,A), (b,B), (c,C)$ are as in Subsection \ref{sssec:ClusterTransf} above, with triple product $x$. Then the flag $(c,D)$ is determined by the cross ratios $z$ and $w$, and the triple product $y$. For instance, $z$ is the cross ratio $\langle b,BC,AB,BD\rangle$ while we find $w = \langle a,AD,AB,AC\rangle.$  These determine $D$, whence the triple product $y$ fixes $d \in D$. Direct computation shows
$$d = \begin{pmatrix}-\frac{x}{q}(1+x)\\x(1+y)\\-py(1+x)\end{pmatrix},\quad D = (pq,p(1+x),x).$$
This thus recovers \cite{FockGoncharov_ModuliLocSys,Palesi15} from the perspective of $N$-graphs.

\subsection*{Example: Tetrahedron with 3-Triangulation.} Let us conclude this subsection by analyzing the genus-4 Legendrian weave $\La$ in Example \ref{ssec:RealLifeExample} from the microlocal perspective. We also compute, following \cite{TreumannZaslow}, the primitive which characterizes (a discrete cover of) $\SM(G)$ as an exact Lagrangian subvariety. Following \cite{AV1,AV2}, this primitive -- the superpotential of an effective 4d theory -- is interpreted as a generating function of BPS numbers, and should have integrality properties. We check this for this example.

Consider the tetrahedron with its unique 3-triangulation, as in Example \ref{ssec:RealLifeExample}, which gives rise to a 3-graph $G$. An object in the category of simple constructible sheaves $Sh^1_{\La(G)}(\S^2\times \bR,\Lambda)_0$ microlocally supported along $\La(G)$ is defined by a four-tuple of transverse flags in $V \cong \bC^3$, placed at the vertices of the tetrahedron, as in Figure \ref{fig:triple}.

Note that there are $4\cdot 3 = 12$ total nodes, and the Legendrian surface indeed has genus $g = 4$. We therefore have $2g = 8$ cluster variables, specified by the monodromies around each of the eight loops, which themselves are a basis for $H_1(\La(G),\Z)\cong\Z^8$. Four of the monodromies are the triple ratios along the faces.  
Let us label the faces by the three unordered vertices it contains, e.g. we write $x_{123}$ for the monodromy of the loop deteremined by the minimal triangle
at the center of the face (123):  it is the triple ratio of the three flags at vertices 1, 2 and 3.
There are $4\times 3 = 12$ more edge monodromies, but we will find $4\cdot 2 = 8$ relations
among all these 16 total, giving 8 independent monodromies as expected for a genus-$4$ surface.  Let us compute the edge monodromies.

First, following \cite{TreumannZaslow}, for each edge $e$ we define a corresponding coordinate $x_e$ to be the \emph{negative} of the cross ratio.\footnote{We believe the sign appears due to the fact that we should be considering \emph{twisted} local systems, i.e.~lifts to the circle bundle of the surface that have monodromy $-1$ over the circle fibers, as in \cite{FockGoncharov_ModuliLocSys} and \cite[Section 10]{GMN_SpecNet13}.}
Now there are two relations for each vertex:  first, the product of the edge coordinates around the encircling triangular face is unity; second, the product of edge and ${\sf Y}$-monodromies encircling the vertex at a greater distance is unity.  There are thus $8$ independent coordinates, and we can take two from each of the triangles surrounding the four vertices. Let us then write
$$x_{12} = -\frac{w_1\wedge v_4}{v_4\wedge v_2}\frac{v_2\wedge v_3}{v_3\wedge w_1}$$
for the coordinate associated to the edge of the triangle encircling vertex $1$ and traversing the one-simplex of the triangulation between vertices $1$ and $2$,
where $v_i$, $i\in[1,4]$, are generators of lines and $w_i$ are generators for planes (thought of as anti-symmetric two-vectors) --- and likewise for the other edges.
Then the relation for the triangle encircling vertex $1$ is $x_{12}x_{13}x_{14}=1$, and likewise for the other vertices.
Recall that we have similarly denoted by $x_{123}$ the inverse of the coordinate corresponding to the {\sf Y}-cycle in the face containing vertices $1$, $2$ and $3$ --- and likewise for the coordinates $x_{ijk}$, $i,j,k\in[1,4]$. For the first vertex we have the unital relation
$$x_{123} x_{21} x_{142} x_{41} x_{134} x_{31} = 1,$$
and likewise for the other three vertices. This expresses the flag moduli in terms of generators, given by $x_{ij},x_{ijk}$, and relations, as above.

Let us verify that these coordinates define a (holomorphic) Lagrangian embedding of flag moduli space $\SM(G)$ associated to the genus-4 Legendrian $\La(G)$ into the moduli space of framed local systems for $C=\S^2$. The symplectic 2-form is computed from the intersection form to be
$$\omega = -d\log x_{12} \wedge d\log x_{13}+d\log x_{23}\wedge d\log x_{24} -d\log x_{34}\wedge d\log x_{31}+d\log x_{41}\wedge d\log x_{42}.$$
We can directly compute the following four relations
$$x_{12} = \frac{-1}{1+x_{13}},\qquad
x_{24} = \frac{-1}{1+x_{23}},\qquad
x_{34} = \frac{-1}{1+x_{31}},\qquad
x_{42} = \frac{-1}{1+x_{41}},$$
which readily imply that the embedding of the flag moduli space $\cM(G)$ in each of the cluster charts for the moduli space of framed local systems is Lagrangian. This holomorphic Lagrangian $\cM(G)$ is in fact {\it exact} and we can compute a primitive function $W$ for the restriction of the Liouville 1-form $\la_\st$. This would allow us to write $\cM(G)$ as the graph $\Gamma_{dW}$ of the 1-form $dW$ in this chart. This primitive encodes the BPS states associated to some Lagrangian filling, given by the Lagrangian projection of $\La(G)$, determined by a phase and a framing (implicit here) as in \cite[Section 4.8]{TreumannZaslow} -- see also \cite{AV1,AV2}. For that, we define the variables
$$U_1 = -x_{13},\quad V_1 = -x_{12},\quad U_2 = -x_{23},\quad V_2 = -x_{24},$$
$$U_3 = -x_{31},\quad V_3 = -x_{34},\quad U_4 = -x_{41},\quad V_4 = -x_{42}.$$
Also, recall that if we have $U + V^{-1} = 1$ with $U = e^u$ and $V=e^v$, then we can write
$$v = -\log(1-U) = \partial_u {\rm Li}_2(U).$$
Hence, since we have $U_i + V_i^{-1} = 1$ for all $i$, with symplectic 2-form $\omega = \sum_{i} du_i \wedge dv_i$, we conclude that $\cM(G) = \Gamma_{dW}$ where
$$W(U_1,U_2,U_3,U_4) = \sum_{i=1}^4 {\rm Li}_2(U_i).$$
This computation for the BPS potential is in line with the results in \cite[Section 5]{TreumannZaslow}.

Finally, let us review how {\it geometric} methods, as developed in Section \ref{sec:moves}, would lead to this result. Instead of the algebraic computation above, we could have directly used the diagrammatic calculus, as in Example \ref{ssec:RealLifeExample}, and deduced that our Legendrian weave $\La(G)\cong\#_{i=1}^4\bT^2_{c}$ is the Legendrian connected sum of four Clifford 2-tori $\bT^2_c$. Since the generating function of BPS numbers for $\bT^2_c$ is given by one dilogarithm ${\rm Li}_2(U)$, by direct computation, and the potential $W$ is additive under connected sum, we could have directly deduced that $W(U_1,U_2,U_3,U_4) = \sum_{i=1}^4 {\rm Li}_2(U_i)$. This concludes that our algebraic computation above is consistent with the contact topology of the underlying Legendrian weave.


\appendix

\section{Soergel Calculus and Legendrian Weaves}\label{ssec:SoergelCalculus}

In this appendix, we provide a construction and a concise speculation regarding the symplectic geometrization of Soergel Calculus via Legendrian weaves. The following discussion owes a good deal to B. Elias and E. Gorsky, as explained in the introduction, to whom we are very grateful. Soergel calculus, as developed by B. Elias, M. Khovanov and G. Williamson \cite{Soergel1,Soergel3}, provides a {\it diagrammatic} presentation of the category of Soergel bimodules, which itself categorifies the Hecke algebra.  The similarities between Elias' diagrammatic calculus and our Legendrian weaves are apparent. Legendrian weaves can be understood as a {\it geometric} approach to the study of the algebra of certain complexes of Soergel bimodules.  We explain this below.

\begin{remark}
Soergel bimodules are essential to categorifications of knot invariants \cite{Rouquier06,Soergel07,Khovanov07,Soergel1}.  The link between these and moduli spaces of sheaves for Legendrian braid closures was described in \cite[Section 6]{STZ_ConstrSheaves}.  From this perspective, it is not unnatural to seek a connection between planar Soergel structures and planar structure defined by Legendrian weaves, the two-dimensional version of braids.\hfill$\Box$
\end{remark}

The category of Soergel bimodules is the Karoubi completion of the subcategory of Bott-Samelson bimodules, arising as the equivariant cohomology of a closed Bott-Samelson variety, and thus it suffices to understand the relation to this latter class of bimodules. The key connection between the present work and Soergel bimodules is that a subclass of Legendrian weaves yields exact Lagrangian cobordisms between Legendrian links, which are themselves represented as positive braids. The moduli space of microlocal constructible sheaves supported on a {\it singular} compactification of a positive braid is a closed Bott-Samelson variety, and our Legendrian weaves, understood as Lagrangian cobordisms -- and singularly compactified -- induce morphisms between these closed Bott-Samelson varieties.

Thus, we are able to geometrize the diagrammatics of Soergel calculus by considering the $D_4^-$-singularity for the trivalent vertices in \cite{Soergel1,Soergel3}, the $A_3$-swallowtail singularity for the univalent vertex and the $A_1^3$-singularity for their hexagonal vertices. (The Soergel calculus we geometrize corresponds to the $m=2$ Coxeter exponent.)\\

\begin{remark}
Exact Lagrangian cobordisms are {\it directed}, due to the convexity directionality in symplectic topology. The dissonance arises from the fact that, as of today, Soergel calculus only considers closed Bott-Samelson varieties, whereas the moduli space of microlocal sheaves supported on a positive braid is an {\it open} Bott-Samelson variety. Thus, the Soergel calculus is geometrized by {\it singular} compactifications of our Legendrian weaves, and our Legendrian weave calculus, without compactification, should naturally induce a Soergel calculus for {\it open} Bott-Samelson varieties.\hfill$\Box$
\end{remark}

For instance, the $A_3$-Zamolodchikov relation from Soergel calculus corresponds to the $A_1^4$-Reidemeister move in Legendrian weave calculus, as depicted in Figure \ref{fig:A3Zamolodzhikov}.

\begin{center}
	\begin{figure}[h!]
		\centering
		\includegraphics[scale=0.55]{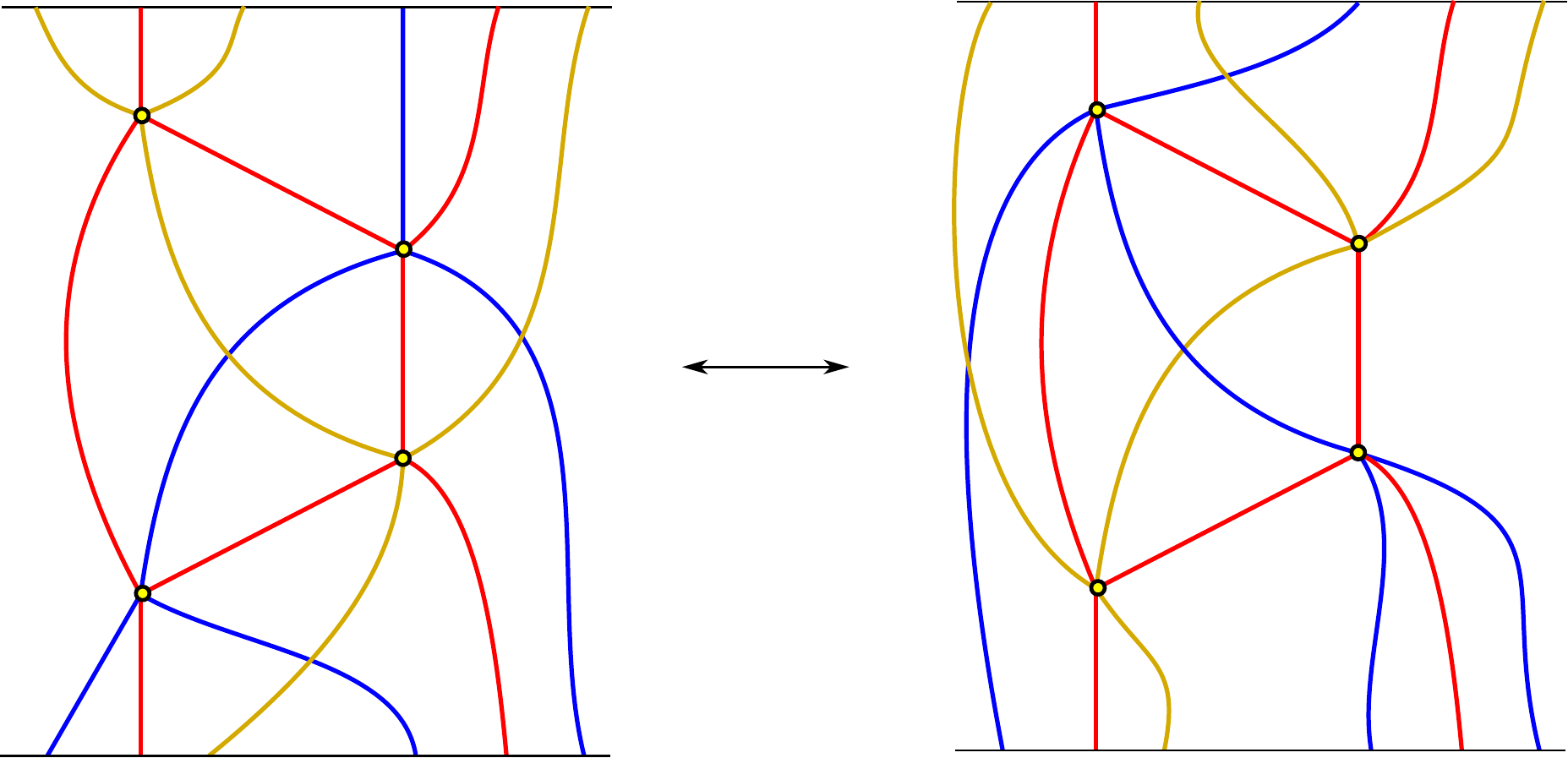}
		\caption{Contact Isotopy among Legendrian weaves, relative to the boundaries. The lack of Reeb chords allows us to interpret these as exact Lagrangian cobordisms between the positive braids $\sigma_{i+1}\sigma_{i}\sigma_{i-1}\sigma_{i+1}\sigma_{i}\sigma_{i+1}$ and $\sigma_{i-1}\sigma_{i}\sigma_{i-1}\sigma_{i+1}\sigma_{i}\sigma_{i-1}$. The fact that these Lagrangian cobordisms are Hamiltonian isotopic implies that the morphism induced between the associated Bott-Samelson bimodules must coincide.\hfill$\Box$}
		\label{fig:A3Zamolodzhikov}
	\end{figure}
\end{center}

Now, let us consider two positive braids $\beta_1,\beta_2\in\mbox{Br}^+_n$, $n\in\N$, and their associated Legendrian (long) links $\La(\beta_1),\La(\beta_2)\sse(J^1[0,1],\xi_\st)$ \cite[Section 2]{CasalsHonghao}. A Legendrian weave $\La\sse(J^1([0,1]\times[1,2]),\xi_\st)$ with no Reeb chords and boundaries $\La(\beta_1)$ at $[0,1]\times\{1\}$, and $\La(\beta_2)$ at $[0,1]\times\{2\}$, yields an embedded and exact Lagrangian cobordism $L(\Sigma)$ from $\La(\beta_1)$ to $\La(\beta_2)$ in the symplectization of $(J^1[0,1],\xi_\st)$, as in Section \ref{sec:app2}. In particular, each trivalent vertex $\Sigma(G_{tri})$ and hexagonal vertex $\Sigma(G_{hex})$ yield the following exact Lagrangian cobordism:

\begin{itemize}
	\item[(i)] The Lagrangian projection $L(G_{tri})$ of the Legendrian weave $\La(G_{tri})$ is a Lagrangian cobordism from the Legendrian tangle $\La(\beta_1)$ given by one crossing in two strands $\beta_1=\sigma_i$, to the Legendrian tangle $\La(\beta_2)$ given by two crossing in two strands $\beta_1=\sigma^2_i$, where $i\in\N$ is labeling the transposition $\tau_i$ of the edges of $G_{tri}$. Smoothly, this is a saddle cobordism obtained by an index-1 handle attachment to the Lagrangian cone $\La(\beta_1)\times[0,\varepsilon]$ in the symplectization, for $\varepsilon\in\R^+$ small.\\
	
	\item[(ii)] The Lagrangian projection $L(G_{hex})$ of the Legendrian weave $\La(G_{hex})$ is a Lagrangian concordance from the Legendrian tangle $\La(\beta_1)$ given by three crossings in three strands $\beta_1=\sigma_i\sigma_{i+1}\sigma_i$, to the Legendrian tangle $\La(\beta_2)$ given by $\beta_2=\sigma_{i+1}\sigma_{i}\sigma_{i+1}$, where $i\in\N$ is labeling the transpositions $\tau_i,\tau_{i+1}$ in the edges of $G_{hex}$. Smoothly, this is a Lagrangian surface obtained by graphing a Reidemeister three move.\\
\end{itemize}

For simplicity, let us suppose that the relative homology $H_1(L,\dd_-L;\Z)$, which we denote by $H_1(L)$, is a free $\Z$-module and the surface $L$ is spin, as is verified for the two local cobordisms above. An exact Lagrangian cobordism $L\sse (J^1[0,1],\xi_\st)\times[1,2]$ from $\La(\beta_1)$ to $\La(\beta_2)$ yields an algebraic map
$$\Phi_L:\widehat{\SM}(\La(\beta_1))\lr\SM(\La(\beta_2)),$$
where $\widehat{\SM(\La(\beta_1))}$ is an algebraic $(\C^*)^{b_1(L)}$-bundle over $\SM(\La(\beta_1))$, and $\SM(\La(\beta))$ denotes the moduli space of microlocal rank-1 objects in the dg-category of of microlocal sheaves in $\S^1\times$ microlocally supported on $\La(\beta)$, as described in \cite[Section 3]{CasalsHonghao}, \cite{STWZ,STZ_ConstrSheaves}.

\begin{remark}
In the Floer-theoretic context, the map $\Phi_L$ is obtained by applying the contravariant functor $Hom(\cdot,k)$ in the category of dg-algebras to the morphism $$\Phi_L^{Fl}:\SA(\La(\beta_2))\lr\SA(\La(\beta_1))\otimes_\Z\Z[H_1(L)]$$
of the Legendrian Contact dg-algebras $\SA(\La(\beta))$ associated to Legendrian links $\La(\beta)$. The Floer theoretic map $\Phi_L^{Fl}$ is described in \cite{EkholmHondaKalman16,YuPan}, and it is a count of holomorphic strips whose boundary homology classes are encoded in $\Z[H_1(L)]$. To ease the geometry, we have tensored by the flag moduli space map $\Phi_L$ above $\C[H_1(L)]\cong\Z[H_1(L)]\otimes_\Z\C$ to base change the $\mbox{Spec}(\Z[H_1(L)])$-bundle to a complex variety $\widehat{\SM}(\La(\beta_1))$.\hfill$\Box$
\end{remark}

The relation to Soergel calculus now arises because the moduli space of simple microlocal sheaves $\SM(\La(\beta_1))$ is (explicitly) isomorphic to the open Bott-Samelson variety associated to $\beta$, also known as the Brou\'e-Michel variety of $\beta$ \cite{STZ_ConstrSheaves,OBS,CasalsHonghao}. Let $R=H^*(\SB)$ denote the cohomology of the complete flag variety for $\GL(N,\C)$, $N\in\N$, and $B_{s_i}$ the Bott-Samelson Soergel ($R\otimes R$)-bimodule associated to a permutation $s_i\in S_N$ in the Weyl group $S_N$ of $\GL(N,\C)$. The Rouquier complex $T_i:=[B_{s_i}\lr R]$ will be denoted by $T_i$, for all $i\in\N$. Consider a braid
$$\beta=\prod_{j=1}^{l}\sigma_{i_j},\quad 1\leq i_j\leq k-1,$$
where $\sigma_{i_1}$ is the leftmost crossing in the front diagram of the Legendrian braid, and the crossings are read from left to right. Then the (singular) compactly supported cohomology of algebraic variety $\SM(\La(\beta))$ is described by the tensor product
$$T_\beta=T_{i_1}\otimes_RT_{i_2}\otimes_R\cdots \otimes_RT_{i_l},$$
of Rouquier complexes.

\begin{remark} Should the reader be interested in the closure of the Legendrian $\La(\beta)\sse(J^1S^1,\xi_\st)$, instead of the long link $(J^1[0,1],\xi_\st)$, the cohomology of the corresponding moduli space $\SM(\La(\beta))$ is obtained by applying Hochschild homology to the above complex $T_\beta$. In particular, $H^*(\SM(\La(\beta)))$ coincides with the triply-graded homology of the knot associated to $\beta$, equivalently, Khovanov-Rozansky link homology -- see \cite[Theorem 6.14]{STZ_ConstrSheaves}.\hfill$\Box$
\end{remark}

In conclusion, the geometric map $\Phi_L:\widehat{\SM}(\La(\beta_1))\lr\SM(\La(\beta_2))$ functorially induces
$$H^*_c(\Phi_L):H^*_c(\widehat{\SM}(\La(\beta_1)))\lr H^*_c(\SM(\La(\beta_2))),$$
which is a map of (products of) Rouquier complexes $\widehat{T}_{\beta_1}\lr T_{\beta_2}$, where $\widehat{T}_{\beta_1}$ is the compactly supported cohomology of $\widehat{\SM}(\La(\beta_1))$, which contains the information of the compactly supported cohomology $T_{\beta_1}$ of the open Bott-Samelson variety for $\beta_1$.

Now, applying this to the two Lagrangian cobordisms associated to the trivalent vertices $G_{tri}$ and the hexagonal vertices $G_{hex}$, we obtain the following two maps:

\begin{itemize}
	\item[(i)] The map $\Phi_{L(G_{tri})}:T_{s_i}\otimes H^*(\S^1)\lr T_{s_i}\otimes_R T_{s_i}$, where $i$ labels the $\tau_i$-edges of $G_{tri}$, and we have identified the fiber bundle $\widehat{\SM}(\La(\beta_1))\cong(\La(\beta_1))\times\C^*$ with the Cartesian product, as in this case the bundle is topologically trivial. The fact that there is {\it one} copy of $\C^*=\S^1\times\R$ corresponds to the fact that the Lagrangian cobordism $L(G_{tri})$ has a unique index 1 critical point and its cocore carries the data $\C^*$.\\
	
	\item[(ii)] The map $\Phi_{L(G_{hex})}:T_{s_i}\otimes T_{s_{i+1}}\otimes T_{s_i} \lr T_{s_{i+1}}\otimes T_{s_i}\otimes T_{s_{i+1}}$, where in this case $\widehat{\SM}(\La(\beta_1))\cong\SM(\La(\beta_1))$ as the Lagrangian $L(G_{hex})$ is a cylinder and $H_1(L)\cong\{0\}$ is trivial.\\
\end{itemize}

In conclusion, the above discussion can summarized according to the following tenet:

\begin{principle} Let $T_{\beta_1},T_{\beta_2}$ be the Rouquier complexes associated to positive braids $\beta_1,\beta_2$ and $\Psi:T_{\beta_1}\lr T_{\beta_2}$ the morphism given by a graph $G_\Psi$ with only (upwards) trivalent and hexagonal morphisms in (open) Soergel calculus. Then the Lagrangian projection of the Legendrian weave $\La(G_\Psi)$ yields an embedded exact Lagrangian cobordism $L$ from $\La(\beta_1)$ to $\La(\beta_2)$ and a geometric map
$$\Phi_L:\widehat{\SM}(\La(\beta_1))\lr\La(\beta_2)$$
such that $H^*_c(\Phi_L)=\Psi$.\hfill$\Box$
\end{principle}
The difference between the principle above and a theorem lies on the correct definition of {\it open} Soergel calculus, of which we are not aware at this stage. That said, since the trivalent and the hexagonal vertices are two of the main building blocks for closed Soergel calculus, the above construction provides a potential symplectic geometrization of {\it open} Soergel calculus, associated to Rouquier complexes, instead of Soergel bimodules. In particular, in the context of {\it open} Bott-Samelson varieties, the Lagrangian cobordisms above indicate the need for additional data from $H_1(L)=\Z^{|V|}$ in specifying a morphism, where $|V|$ is the number of trivalent vertices. The development of open Soergel calculus, the computations establishing that our geometric maps induce the expected algebraic maps, as well as the Lagrangian description of the univalent vertex, will be the subject of upcoming and more algebraic work.


\bibliographystyle{alpha}
\bibliography{LegendrianWeaves_CasalsZaslow}

\end{document}